\documentclass{article}
\usepackage[T1]{fontenc}
\usepackage{lmodern}
\usepackage{mathtools, amssymb, amsthm}
\usepackage{enumerate}
\usepackage[utf8]{inputenc}
\usepackage{listings}
\usepackage{graphicx}
\usepackage{thmtools}
\usepackage{biblatex}
\usepackage{tikz-cd}
\usepackage{tikz}
\usepackage{hyperref}
\usepackage{biblatex}
\usepackage{titlesec}
\usepackage[left=2.5cm,right=2.5cm,top=3.75cm,bottom=1.5cm]{geometry}% Easy control of page dimensions

\usepackage{scalerel}
\usepackage{stmaryrd}
\usepackage{bbm}
 \usepackage{standalone}

\bibstyle{plain}
\addbibresource{references.bib}

\theoremstyle{definition}
\newtheorem{definition}{Definition}[section]
\newtheorem{situation}[definition]{Situation}
\newtheorem{question}[definition]{Question}
\newtheorem{construction}[definition]{Construction}
\newtheorem{example}[definition]{Example}
\newtheorem{remark}[definition]{Remark}

\theoremstyle{plain}
\newtheorem{theorem}[definition]{Theorem}
\newtheorem{prop}[definition]{Proposition}
\newtheorem{lemma}[definition]{Lemma}

\newtheorem{corollary}[definition]{Corollary}

\theoremstyle{remark}

% operators

\DeclareMathOperator{\id}{id}
\DeclareMathOperator{\Spec}{Spec}

\DeclareMathOperator{\Ext}{Ext}
\DeclareMathOperator{\HH}{HH}
\DeclareMathOperator{\coh}{H}
\DeclareMathOperator{\fib}{fib}
\DeclareMathOperator{\Fun}{Fun}
\DeclareMathOperator{\Map}{Map}

\DeclareMathOperator{\Ch}{Ch}
\DeclareMathOperator{\Sym}{Sym}

\DeclareMathOperator{\ob}{ob}
\DeclareMathOperator{\colim}{colim}
\DeclareMathOperator{\cofib}{cofib}
\DeclareMathOperator{\sk}{sk}

% set letters
\newcommand{\Ani}{\mathrm{Ani}}
\newcommand{\AniPair}{\mathrm{AniPair}}
\newcommand{\forget}{\mathrm{forget}}
\newcommand{\AniPDPair}{\mathrm{AniPDPair}}

\newcommand{\Lenv}{\mathrm{Lenv}}
\newcommand{\Fil}{\mathrm{F}}
\newcommand{\gr}{\mathrm{gr}}
\newcommand{\cc}{\mathrm{c}}
\newcommand{\op}{\mathrm{op}}
\newcommand{\NN}{\mathbb{N}}
\newcommand{\ZZ}{\mathbb{Z}}
\newcommand{\QQ}{\mathbb{Q}}

\newcommand{\GG}{\mathbb{G}}
\newcommand{\CC}{\mathbb{C}}
\newcommand{\EE}{\mathbb{E}}
\newcommand{\PP}{\mathbb{P}}

\newcommand{\LL}{\mathbb{L}}
\newcommand{\derL}{\mathrm{L}}

\newcommand{\HKR}{\mathrm{HKR}}
\newcommand{\catO}{\mathcal{O}}
\newcommand{\derD}{\mathcal{D}}
\newcommand{\derR}{\mathrm{R}}
\newcommand{\Poly}{\mathrm{Poly}}
\newcommand{\Hom}{\mathrm{Hom}}

\newcommand{\sss}{\mathrm{s}}
\newcommand{\an}{\mathrm{an}}

\newcommand{\Alg}{\mathrm{CAlg}^\heart}

\newcommand{\CAlg}{\mathrm{CAlg}}
\newcommand{\AlgMod}{\mathrm{CAlgMod}}
\newcommand{\Cech}{\mathrm{Cech}}
\newcommand{\Set}{\mathrm{Set}}
\newcommand{\Ab}{\mathrm{Ab}}

\newcommand{\Crys}{\mathrm{Crys}}
\newcommand{\adic}{\mathrm{adic}}
\newcommand{\pdadic}{\mathrm{PD-adic}}

\newcommand{\At}{\mathrm{At}}
\newcommand{\Comp}{\mathrm{Comp}}
\newcommand{\Spaces}{\mathrm{Spaces}}

\newcommand{\CrysCon}{\mathrm{Cryscon}}
\newcommand{\Tot}{\mathrm{Tot}}
\newcommand{\Day}{\mathrm{Day}}
\newcommand{\ins}{\mathrm{ins}}
\newcommand{\fil}{\mathrm{fil}}
\newcommand{\coins}{\mathrm{coins}}

\newcommand{\surj}{\mathrm{surj}}
\newcommand{\gen}{\mathrm{gen}}

\newcommand{\csmall}{\mathrm{c}}
\newcommand{\ch}{\mathrm{ch}}

\newcommand{\Perf}{\mathrm{Perf}}

\newcommand{\har}{\mathrm{Har}}
\newcommand{\spet}{\text{Sp\'et}}
\newcommand{\spdm}{\mathrm{SpDM}}
\newcommand{\spsch}{\mathrm{SpSch}}
\newcommand{\QCoh}{\mathrm{QCoh}}
\newcommand{\Cat}{\mathrm{Cat}}
\newcommand{\B}{\mathrm{B}}
\newcommand{\Shv}{\mathrm{Shv}}
\newcommand{\Pic}{\mathrm{Pic}}

\newcommand{\tr}{\mathrm{tr}}
\newcommand{\Tr}{\mathrm{Tr}}

\newcommand{\Sch}{\mathrm{Sch}}

\newcommand{\Aut}{\mathrm{Aut}}
\newcommand{\Vect}{\mathrm{Vect}}
\newcommand{\St}{\mathrm{St}}
\newcommand{\fppf}{\text{fppf}}

\newcommand{\matP}{\mathrm{P}}

\newcommand{\GL}{\mathrm{GL}}
\newcommand{\ev}{\mathrm{ev}}
\newcommand{\coev}{\mathrm{coev}}

\newcommand{\PDPair}{\mathrm{PDPair}}
\newcommand{\Pair}{\mathrm{Pair}}

\newcommand{\sqzero}{\mathrm{sqz}}
\newcommand{\comp}{\mathrm{comp}}
\newcommand{\const}{\mathrm{const}}
\newcommand{\stacksref}[1]{\cite[\href{https://stacks.math.columbia.edu/tag/#1}{#1}]{stacks-project}}
\newcommand{\heart}{\ensuremath\heartsuit}
\newcommand{\otimesr}{\, \underset{R'}{\widehat{\otimes}} \,}

\numberwithin{equation}{section}

% Disable shorthands in bibliography
\DeclareSourcemap{
  \maps[datatype=bibtex]{
    \map{
      \step[fieldset=shorthand, null]
    }
  }
}

% extra commando's

\newcommand{\dR}{\mathrm{dR}}
\renewcommand{\(}{\left(}
\renewcommand{\)}{\right)}

% simplicial arrows
\newcommand{\stackspace}{2.7}
\newcommand{\stack}[2][1cm]{\;\tikz[baseline, yshift=.65ex]%
    {\foreach \k [evaluate=\k as \r using (.5*#2+.5-\k)*\stackspace] in {1,...,#2}{%
    \ifodd\k{\draw[->](0,\r pt)--(#1,\r pt);}%
    \else{\draw[<-](0,\r pt)--(#1,\r pt);}\fi
    }}\;}

%% Document
\begin{document}
\begin{center}
\LARGE{On the deformation theory of Fourier--Mukai transforms between Calabi--Yau varieties}\\
\normalsize{Wouter Rienks, University of Amsterdam}
\end{center}

\begin{abstract}
We study the deformation theory of fully faithful Fourier--Mukai transforms in both characteristic zero and mixed characteristic. Our main result shows that obstructions to deforming such transforms can be completely controlled by Hodge theory when the source variety has trivial canonical bundle. This generalizes work of Addington-Thomas and Lieblich-Olsson. The main technical contribution is a formula for the obstruction class measuring the failure of a Chern character to remain within the Hodge filtration as a cup product with a (derived) Kodaira--Spencer class.
\end{abstract}

\tableofcontents

%!TEX root = main.tex

%Introduction
\section{Introduction}
For a smooth and projective variety $X$, we denote with $\Perf(X)$ the category of perfect complexes on $X$. The goal of this article is to study the following question in both characteristic zero and mixed characteristic.
\begin{question}\label{iaa}
Let $X$ and $Y$ be smooth and projective varieties. Assume $X$ has trivial canonical bundle. Suppose we are given a fully faithful embedding $\Perf(X) \hookrightarrow \Perf(Y)$. If $\mathcal{X}$ and $\mathcal{Y}$ are deformations of $X$ and $Y$ respectively, when does the embedding extend to a fully faithful embedding $\Perf(\mathcal{X}) \hookrightarrow \Perf(\mathcal{Y})$?
\end{question}
This question has been studied before. For example, in 2014 Addington and Thomas studied the case where $X$ is a K3 surface and $Y$ is a cubic fourfold in characteristic zero, and in 2011 Lieblich and Olsson \cite{liebols2} studied the case when $X$ and $Y$ are K3 surfaces (and the embedding is an equivalence) in mixed characteristic. 

By a theorem of Orlov \cite[Theorem 5.14]{huybrechts} any fully faithful exact functor $\Perf(X) \to \Perf(Y)$ corresponds to a unique perfect complex $\mathcal{E} \in \Perf(X \times Y)$ (the \emph{Fourier--Mukai kernel}). Thus one may reduce Question \ref{iaa} to the following question (where $Z = X \times Y$). 
\begin{question}\label{iab}
Let $Z$ be a smooth and projective variety, and let $\mathcal{E} \in \Perf(Z)$. If $\mathcal{Z}$ is a deformation of $Z$, when does there exist an object $\tilde{\mathcal{E}}$ in $\Perf(\mathcal{Z})$ such that the derived restriction of $\tilde{\mathcal{E}}$ to $Z$ is quasi-isomorphic to $\mathcal{E}$? 
\end{question}
A necessary condition for the object $\tilde{\mathcal{E}}$ to exist is that the Chern character of $\mathcal{E}$ \emph{remains within the Hodge filtration} along $\mathcal{Z}$. This condition on the Chern character is the same as the condition used in formulating the \emph{variational Hodge conjecture} \cite[Footnote 13]{grothendieck}. We refer to Section \ref{sec_main_res} and Section \ref{sec_def_theory_hodge} for details on this condition.

The result of Addington and Thomas \cite[Theorem 7.1]{addingtonthomas14} then can be formulated as follows.
\begin{theorem}
Let $X$ be a K3 surface and let $Y$ be a cubic fourfold. Let $\mathcal{X}$ and $\mathcal{Y}$ be deformations of $X$ and $Y$ over the ring $\CC[t] / (t^n)$. Let $\mathcal{E} \in \Perf(X \times Y)$ be the kernel of a fully faithful transform $\Perf(X) \hookrightarrow \Perf(Y)$. If the Chern character of $\mathcal{E}$ remains within the Hodge filtration along $\mathcal{X} \times \mathcal{Y}$, then there exists $\tilde{\mathcal{E}}$ in $\Perf(\mathcal{X} \times \mathcal{Y})$ such that the derived restriction of $\tilde{\mathcal{E}}$ to $X \times Y$ is quasi-isomorphic to $\mathcal{E}$.
\end{theorem}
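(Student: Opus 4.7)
\emph{Proof proposal.} I would proceed by induction on $n$. Write $\mathcal{X}_k$ and $\mathcal{Y}_k$ for the restrictions of $\mathcal{X}$ and $\mathcal{Y}$ to $\Spec \CC[t]/(t^k)$, set $\mathcal{Z}_k := \mathcal{X}_k \times \mathcal{Y}_k$, and assume inductively that the kernel has been extended to some $\mathcal{E}_k \in \Perf(\mathcal{Z}_k)$; the base case $k = 1$ is tautological. The inclusion $\mathcal{Z}_k \hookrightarrow \mathcal{Z}_{k+1}$ is a square-zero thickening whose conormal sheaf, pulled back to the central fibre $Z := X \times Y$, is a trivial line bundle. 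Standard deformation theory of perfect complexes then produces an obstruction
\[
\omega_k \in \Ext^2_Z(\mathcal{E}, \mathcal{E}),
\]
which vanishes if and only if $\mathcal{E}_k$ lifts to some $\mathcal{E}_{k+1} \in \Perf(\mathcal{Z}_{k+1})$, so the induction step reduces to showing $\omega_k = 0$.

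The crucial input is the formula advertised in the abstract: under the Chern character / trace map
\[
\tr \colon \Ext^2_Z(\mathcal{E}, \mathcal{E}) \longrightarrow \bigoplus_{p \geq 0} \coh^{p+2}(Z, \Omega^p_Z),
\]
the class $\omega_k$ is sent to the cup product of the Kodaira--Spencer class $\kappa$ of $\mathcal{Z}_{k+1} \to \mathcal{Z}_k$ with $\ch(\mathcal{E}_k)$, which is exactly the obstruction to $\ch(\mathcal{E}_k)$ remaining inside the Hodge filtration along $\mathcal{Z}_{k+1}$. The hypothesis of the theorem is precisely that this Hodge obstruction vanishes, so $\omega_k \in \ker(\tr)$.

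It now suffices to show that $\tr$ is injective in our situation. By Orlov's theorem (\cite[Theorem 5.14]{huybrechts}) and full faithfulness of the induced transform, the usual adjunction identifications give $\Ext^*_Z(\mathcal{E}, \mathcal{E}) \cong \coh^*(X, \catO_X)$, so for the K3 surface $X$ the source is one-dimensional: $\Ext^2_Z(\mathcal{E}, \mathcal{E}) \cong \CC$. By the Künneth formula, the Hodge numbers of a K3, and the Fano-type vanishing $\coh^q(Y, \catO_Y) = 0$ for $q > 0$ on a cubic fourfold $Y$, the pullback $\coh^2(X, \catO_X) \to \coh^2(Z, \catO_Z)$ is an isomorphism. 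Matching this map, up to a nonzero scalar, with the $(0,2)$-component of $\tr$ under the Orlov identification then forces $\tr$ to be injective, and hence $\omega_k = 0$, completing the induction.

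The main obstacle is, of course, deriving the obstruction formula itself, which is the paper's technical contribution: one has to make rigorous the equality $\omega_k = \mathrm{At}(\mathcal{E}_k) \cup \kappa$ in $\Ext^2_Z(\mathcal{E},\mathcal{E})$ with $\kappa$ the appropriate derived Kodaira--Spencer class, and then check that under $\tr$ this matches the Hodge-filtration obstruction on $\ch(\mathcal{E}_k)$ on the nose. Once that formula is in place, compatibility of the trace with cup products reduces the Hodge identification in the second paragraph to a formal manipulation, and the injectivity check in the last step should be essentially routine given the available Hodge diamonds.
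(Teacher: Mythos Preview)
Your overall strategy---induction on $n$, the square-zero obstruction class in $\Ext^2$, identifying its image under a semiregularity map with the Hodge-theoretic obstruction, and then checking injectivity---is exactly the shape of the paper's argument (Proposition~\ref{prop_induction_char0} and the proof of Theorem~\ref{thm_zero}). But your injectivity step contains a genuine error.

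The claim that full faithfulness gives $\Ext^*_Z(\mathcal{E},\mathcal{E}) \cong \coh^*(X,\catO_X)$ is false. What full faithfulness of $\Phi_\mathcal{E}$ actually buys you (via the convolution adjunction, cf.\ Lemma~\ref{lem_right_adjoint_fourier} and the proof of Theorem~\ref{thm_semireg_inj}) is that $\mathcal{E}\star(-)\colon \derD^b(X\times X)\to\derD^b(X\times Y)$ is fully faithful, hence
\[
\Ext^*_{X\times Y}(\mathcal{E},\mathcal{E}) \;\cong\; \Ext^*_{X\times X}(\catO_{\Delta_X},\catO_{\Delta_X}) \;=\; \HH^*(X),
\]
the Hochschild cohomology of $X$. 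For a K3 surface, HKR gives
\[
\HH^2(X)\;\cong\; \coh^0(X,\wedge^2 T_X)\oplus \coh^1(X,T_X)\oplus \coh^2(X,\catO_X)\;\cong\;\CC\oplus\CC^{20}\oplus\CC,
\]
which is $22$-dimensional, not one-dimensional. So the ``one-dimensional source, match a single Hodge component'' shortcut collapses: knowing that the $(0,2)$-piece of the target is hit tells you nothing about the $20$-dimensional $\coh^1(T_X)$ summand.

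The paper's remedy is to prove that the \emph{total} semiregularity map is injective whenever $X$ is Calabi--Yau (Theorem~\ref{thm_semireg_inj}, Corollary~\ref{corol_semireg_inj}). The argument transports along $\mathcal{E}\star(-)$ to reduce to $\catO_{\Delta_X}$, identifies the resulting map with the action of $\HH^*(X)$ on $\HH_*(X)$ (Corollary~\ref{corol_actions}), and then uses the Calabi--Yau isomorphism $\HH_i(X)\cong\HH^{d-i}(X)$ to see that this action is faithful because evaluation at $\id_{\catO_{\Delta_X}}$ already is (Lemma~\ref{lem_diagonal_inj}). Your sketch needs this Hochschild argument (or the original Addington--Thomas Mukai-pairing version of it) in place of the dimension count.
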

In particular, the above theorem can also be interpreted as a very specific case of the variational Hodge conjecture.

In this work, we generalize the above theorem to the case where $X$ is a smooth and projective variety with trivial canonical bundle, $Y$ is any smooth and projective variety, and the base is any local Artinian $\CC$-algebra. This generalization to an arbitrary base forces us to use more complicated machinery than Addington and Thomas, since the semiregularity result of Buchweitz and Flenner \cite{buchflen} no longer applies (see also Section \ref{sec_compare_bloch} for a more detailed comparison of our results to the classical result of Bloch \cite{bloch}). Ultimately, the generalization to arbitrary base allows us to generalize to mixed characteristic with relative ease. This will allow us to prove a more general version of a theorem of Lieblich and Olsson \cite{liebols2}. 
\subsection{Statement of main results}
\label{sec_main_res}
Let $A$ be an Artinian local $\CC$-algebra, and $\mathcal{X} \to \Spec(A)$ be a smooth and projective morphism with special fiber 
\[
X := \mathcal{X} \times_{\Spec(A)} \Spec(\CC)
\]
Then by \cite[Proposition 3.8]{bloch} there exists a canonical isomorphism
\[
\varphi_X \colon \coh^*_{\mathrm{dR}}(X / \CC) \otimes_\CC A  \xrightarrow{\sim} \coh^*_{\mathrm{dR}}( \mathcal{X} / A) 
\]
relating the de Rham cohomology of $\mathcal{X}$ with that of the special fiber. In the case that $A = \CC[t] / (t^n)$ the isomorphism $\varphi_X$ can be completely described in terms of the Gauss--Manin connection: It is the unique $A$-linear map sending $v \otimes 1$ to the unique $w$ such that $w \equiv v \pmod{\mathfrak{m}_A}$, and $\nabla(w) = 0$. For general $A$, the map $\varphi_X$ can be constructed using GAGA, we refer the reader to Section \ref{sec_def_theory_hodge} for the details.

For any morphism $X \to S$ of schemes over $\CC$ the de Rham cohomology comes with a canonical filtration 
\begin{align*}
\Fil^m \coh^*_{\mathrm{dR}}( X / S)  \subseteq \coh^*_{\mathrm{dR}} ( X / S) 
\end{align*}
for $m \in \ZZ_{\geq 0}$, called the \emph{Hodge filtration} \stacksref{0FM7}. Finally, for $i \in \ZZ_{\geq 0}$ and  $\mathcal{E} \in \Perf(X)$ one may define a Chern character 
\[
\ch_i(\mathcal{E}) \in \Fil^i \coh^{2i}_{\dR}(X / S)
\]
see \stacksref{0FWB}. 

We are now ready to state our main result in characteristic zero.
\begin{theorem}\label{thm_zero}Let $A$ be an Artinian local $\CC$-algebra. Let $X$ and $Y$ be smooth and projective varieties over $\CC$, such that $X$ has trivial canonical bundle. Let $\mathcal{X}$ and $\mathcal{Y}$ be deformations of $X$ and $Y$ over $A$.  Finally, let $\mathcal{E} \in \Perf(X \times Y)$ be the kernel of a fully faithful transform $\Phi_{\mathcal{E}} \colon \Perf(X) \hookrightarrow \Perf(Y)$. Then the following are equivalent: 
\begin{enumerate}[(i)]
\item The kernel $\mathcal{E}$ deforms to an object $\tilde{\mathcal{E}} \in \Perf(\mathcal{X} \times \mathcal{Y})$. 
\item One has
\[
\varphi_{X \times Y}(\ch_i(\mathcal{E}) \otimes 1) \in \Fil^i \coh^{2i}_{\dR}(\mathcal{X} \times \mathcal{Y} / A)
\]
for all $i$. 
\end{enumerate}
If (i) and (ii) hold, then $\Phi_{\tilde{\mathcal{E}}}$ is always fully faithful, and $\Phi_{\tilde{\mathcal{E}}}$ is an equivalence if and only if $\Phi_{\mathcal{E}} $ is an equivalence.
\end{theorem}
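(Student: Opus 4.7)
The plan is to handle the equivalence (i) $\Leftrightarrow$ (ii) by induction along square-zero extensions, and the additional statements about $\Phi_{\tilde{\mathcal{E}}}$ via base change and Nakayama. The implication (i) $\Rightarrow$ (ii) is formal: given an extension $\tilde{\mathcal{E}} \in \Perf(\mathcal{X} \times \mathcal{Y})$ of $\mathcal{E}$, the relative Chern character $\ch_i(\tilde{\mathcal{E}})$ lies in $\Fil^i \coh^{2i}_{\dR}(\mathcal{X} \times \mathcal{Y} / A)$ by the very construction of Chern classes in filtered relative de Rham cohomology (\stacksref{0FWB}), and the naturality of the comparison isomorphism $\varphi_{X \times Y}$ with respect to base change to the special fiber identifies this class with $\varphi_{X \times Y}(\ch_i(\mathcal{E}) \otimes 1)$.

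The heart of the theorem is the reverse implication. Filter $A$ as a tower of surjections $A_n \twoheadrightarrow A_{n-1}$ with square-zero kernel $I_n$ annihilated by the maximal ideal of $A_n$, and assume inductively that $\mathcal{E}$ has been extended to $\mathcal{E}_{n-1}$ on $(\mathcal{X} \times \mathcal{Y})_{n-1}$. The obstruction to a further extension is then a class
\[
o_n \in \Ext^2_{X \times Y}(\mathcal{E}, \mathcal{E}) \otimes_\CC I_n .
\]
The main technical contribution announced in the abstract provides an explicit formula for $o_n$ in terms of a cup product of the Atiyah class of $\mathcal{E}$ with a derived Kodaira--Spencer class of the deformation $(\mathcal{X} \times \mathcal{Y})_n \to \Spec(A_n)$, and in parallel identifies the image of $o_n$ under a semiregularity-type map into $\bigoplus_i \coh^{2i}_{\dR}(X \times Y / \CC) \otimes I_n$ with the obstruction to the Hodge-filtration condition of (ii) extending from order $n-1$ to order $n$. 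Thus (ii) ensures the vanishing of this image, and the problem is reduced to showing that the semiregularity map is injective on the relevant part of $\Ext^2_{X \times Y}(\mathcal{E}, \mathcal{E})$.

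Injectivity is where the trivial canonical bundle of $X$ becomes indispensable, and this is where I expect the main obstacle of the proof to lie. The idea, inspired by the work of Buchweitz--Flenner but adapted to Fourier--Mukai kernels, is to exploit the fact that $\omega_X \cong \catO_X$ forces the left and right adjoints of the fully faithful functor $\Phi_{\mathcal{E}}$ to coincide up to a shift by $\dim X$. This biadjunction, together with Serre duality on $X$, allows one to factor the semiregularity map through a Hochschild-theoretic map that can be shown to be injective by comparison with the HKR decomposition. Setting up this factorization requires combining the derived Kodaira--Spencer formalism of the paper with the full faithfulness of $\Phi_{\mathcal{E}}$ in a compatible way, which is delicate but avoids the smoothness assumption on $\mathcal{E}$ that appears in Addington--Thomas. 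Once injectivity is established, $o_n$ vanishes, the induction proceeds, and one obtains $\tilde{\mathcal{E}}$.

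Finally, the additional statements about $\Phi_{\tilde{\mathcal{E}}}$ reduce to checking that certain natural comparison maps are isomorphisms. For full faithfulness, the relevant map is the adjunction unit $\id \to \Phi_{\tilde{\mathcal{E}}}^R \circ \Phi_{\tilde{\mathcal{E}}}$, which arises from a map of kernels in $\Perf(\mathcal{X} \times \mathcal{X})$ between $A$-flat perfect complexes; its restriction to the special fiber is an isomorphism by hypothesis, so Nakayama's lemma extends this to an isomorphism over $\mathcal{X} \times \mathcal{X}$. The equivalence statement is analogous: one uses both the unit and the counit of the adjunction, both of which descend to isomorphisms on the special fiber exactly when $\Phi_{\mathcal{E}}$ is an equivalence, and both of which lift to isomorphisms over $A$ by the same Nakayama argument.
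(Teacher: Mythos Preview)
Your proposal is correct and follows the same architecture as the paper's proof: induction along a chain of square-zero extensions with kernel annihilated by the maximal ideal, identification of the obstruction to lifting $\mathcal{E}$ with the image under the semiregularity map of the Hodge-theoretic obstruction (this is the paper's Theorem~\ref{thm_compare_obs_char0}), injectivity of the semiregularity map from the Calabi--Yau hypothesis plus full faithfulness, and a Nakayama argument for the final full-faithfulness/equivalence claims (the paper cites \cite[Proposition~2.15]{rumasa} for this last step, which is exactly your argument).

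The one place where your sketch diverges from the paper is the mechanism for injectivity. You propose to use that $\omega_X\cong\catO_X$ makes the left and right adjoints of $\Phi_{\mathcal{E}}$ coincide up to shift. The paper does not use this biadjunction. Instead it first passes, via HKR (Proposition~\ref{prop_semireg_compare}), to the Hochschild semiregularity map $\sigma^{\HH_*}_{\mathcal{E}}$, and then (following an idea of Toda) uses full faithfulness to build a \emph{left inverse} to the convolution functor $\mathcal{E}\star(-)$, which reduces the problem to showing that $\sigma^{\HH_*}_{\catO_{\Delta_X}}\colon\HH^*(X)\to\HH_{-*}(X\times X)$ is injective (Theorem~\ref{thm_semireg_inj}). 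The Calabi--Yau hypothesis enters only at this final step: since $S_X\cong\catO_X[d]$ one has $\HH_i(X)\cong\HH^{d-i}(X)$, and under this identification the semiregularity map for the diagonal becomes the action of $\HH^*(X)$ on $\HH_*(X)$, which is visibly injective because evaluation at $\id_{\catO_{\Delta_X}}\in\HH^0(X)$ recovers the identity (Lemma~\ref{lem_diagonal_inj}, Corollary~\ref{corol_actions}). Your biadjunction heuristic is in the right neighbourhood, but the paper's reduction-to-the-diagonal argument is sharper and avoids having to analyse the semiregularity map for $\mathcal{E}$ directly.
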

This has been studied before. In 2007 Toda \cite{toda} studied the case where $A = \CC[x] / (x^2)$. In 2009, Huybrechts, Macri and Stellari \cite{huybrechtsmacristellari} proved the above to be true when $X$ and $Y$ are K3--surfaces, $A = \CC[x] / (x^n)$ and $\Phi$ is an equivalence. In 2013, Addington and Thomas \cite{addingtonthomas14} proved the above to be true in the case that $A = \CC[x] / (x^n)$, $X$ is a K3-surface and $Y$ is a cubic fourfold.

Our second main result is a generalization of the above to mixed characteristic.
\begin{theorem}\label{thm_mixed}
Let $p$ be a prime number, let $A$ be an Artinian local $\ZZ_{(p)}$-algebra with a divided power structure $\gamma$ on $\mathfrak{m}_A$ such that $\gamma_p$ acts nilpotently on $\mathfrak{m}_A$ (see Definition \ref{gae}).  Set $k = A / \mathfrak{m}_A$. Let $X$ and $Y$ be smooth and projective varieties over $k$, such that $X$ has trivial canonical bundle. Let $\mathcal{X}$ and $\mathcal{Y}$ be lifts of $X$ and $Y$ over $A$. Let $\mathcal{E} \in \Perf(X \times Y)$ be the kernel of a fully faithful transform $\Phi_{\mathcal{E}} \colon \Perf(X) \hookrightarrow \Perf(Y)$. Assume that $p > \dim(X) + \dim(Y)$. Then the following are equivalent:
\begin{enumerate}[(i)]
\item The kernel $\mathcal{E}$ admits a lift $\tilde{\mathcal{E}} \in \Perf(\mathcal{X} \times \mathcal{Y})$.
\item The crystalline Chern character 
\[
\ch_i(\mathcal{E}) \in \Fil^i \coh^{2i}_{\dR}(\mathcal{X} \times \mathcal{Y} / A)
\]
of $\mathcal{E}$ lands in the Hodge filtration for all $i$. 
\end{enumerate}
If (i) and (ii) hold, then $\Phi_{\tilde{\mathcal{E}}}$ is always fully faithful, and $\Phi_{\tilde{\mathcal{E}}}$ is an equivalence if and only if $\Phi_{\mathcal{E}}$ is an equivalence.
\end{theorem}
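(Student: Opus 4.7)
My plan is to proceed by obstruction theory, working one square-zero extension at a time and matching the deformation obstruction to the vanishing of a Hodge-theoretic invariant via the formula promised in the abstract. The direction (i) $\Rightarrow$ (ii) is formal: if $\tilde{\mathcal{E}}$ exists, then $\ch_i(\tilde{\mathcal{E}}) \in \Fil^i \coh^{2i}_{\dR}(\mathcal{X}\times\mathcal{Y}/A)$ is automatic, and by construction of the crystalline comparison $\varphi_{X\times Y}$ one checks that this lifted Chern character agrees with $\varphi_{X\times Y}(\ch_i(\mathcal{E})\otimes 1)$.

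For (ii) $\Rightarrow$ (i), I would filter $A$ by DP subideals of $\mathfrak{m}_A$ (using the nilpotence of $\gamma_p$ to guarantee the filtration is finite) and reduce to a sequence of PD small extensions $A' \twoheadrightarrow A_0$ with square-zero kernel $I$, then argue by induction. Writing $Z = X \times Y$ and $Z_0, Z'$ for the corresponding base changes of $\mathcal{X}\times\mathcal{Y}$, at each step the obstruction to lifting a given $\mathcal{E}_0 \in \Perf(Z_0)$ to $\Perf(Z')$ lies in $\Ext^2_Z(\mathcal{E}, \mathcal{E}) \otimes_k I$ and, by the standard Atiyah-class computation, equals the cup product $\omega := \At(\mathcal{E}) \cup \kappa$ of the Atiyah class of $\mathcal{E}$ with the Kodaira--Spencer class $\kappa$ of $Z'/Z_0$. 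The main technical contribution of the paper then supplies the matching Chern-character formula: the obstruction to $\ch_i(\mathcal{E})$ remaining in $\Fil^i$ of crystalline cohomology is the trace of $\At(\mathcal{E})^{i-1}\cup\omega$ in an appropriate Hodge piece. Hypothesis (ii) therefore gives the vanishing of an entire family of traces of $\omega$ paired against powers of $\At(\mathcal{E})$.

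To conclude $\omega = 0$, I would combine full faithfulness with the triviality of $\omega_X$. Via the unit of adjunction, full faithfulness identifies $\mathrm{RHom}_Z(\mathcal{E}, \mathcal{E})$ with the Hochschild cohomology $\HH^*(X)$, and the bound $p > \dim(X) + \dim(Y)$ ensures that the HKR decomposition $\HH^*(X) \cong \bigoplus_{p+q=*} \coh^p(X, \wedge^q T_X)$ holds integrally. Contraction with a trivializing section of $\omega_X$ identifies this with $\bigoplus_{p+q=*} \coh^p(X, \Omega^{\dim X - q}_X)$, and under this identification the pairings against powers of $\At(\mathcal{E})$ become precisely the Hodge-filtration obstructions on $Z$ pushed to $X$. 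Non-degeneracy of Serre duality on $X$ then turns the vanishing of these traces into $\omega = 0$, producing the desired lift $\mathcal{E}' \in \Perf(Z')$.

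Finally, preservation of full faithfulness (and of being an equivalence) follows from the fact that both conditions are equivalent to unit/counit maps of adjunction being isomorphisms on perfect complexes; by Nakayama for perfect complexes over the Artinian local ring $A$, such a map is an isomorphism if and only if its special-fiber restriction is. The main obstacle is the third step: establishing the Chern-character obstruction formula at the crystalline level in mixed characteristic, and verifying that the HKR and Serre-duality identifications survive the passage to characteristic $p$ under the bound $p > \dim(X)+\dim(Y)$. This is the technical heart of the paper that the abstract advertises.
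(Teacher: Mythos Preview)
Your proposal is correct and follows essentially the same architecture as the paper: the DP filtration of $\mathfrak{m}_A$ via nilpotence of $\gamma_p$ (Proposition~\ref{gag}), the identification of the deformation obstruction with $\At(\mathcal{E})\cup\kappa$ (Proposition~\ref{prop_obstruction_class}), the crystalline obstruction formula relating this to the Hodge-filtration obstruction for $\ch_i(\mathcal{E})$ (Theorem~\ref{thm_compare_obs_p}), injectivity of the total semiregularity map under Calabi--Yau + full faithfulness + HKR (Corollary~\ref{corol_semireg_inj}), and Nakayama for the preservation of full faithfulness.

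The one place where the paper's execution differs from your sketch is the injectivity step. You phrase it as contracting $\HH^*(X)\cong\bigoplus \coh^p(X,\wedge^q T_X)$ against a trivialization of $\omega_X$ and invoking Serre duality on $X$; the paper instead passes through C\u{a}ld\u{a}raru's Hochschild machinery: it defines a Hochschild-theoretic semiregularity map $\sigma^{\HH_*}_{\mathcal{E}}:\Ext^*(\mathcal{E},\mathcal{E})\to\HH_{-*}(X\times Y)$, shows it agrees with your $\sigma$ via HKR on $X\times Y$ (Proposition~\ref{prop_semireg_compare}), uses functoriality of $\HH_*$ under Fourier--Mukai transforms (Proposition~\ref{prop_functorial_semiregular}) together with full faithfulness to reduce to $\mathcal{E}=\catO_{\Delta_X}$, and finally identifies $\sigma^{\HH_*}_{\catO_{\Delta_X}}$ with the action of $\HH^*(X)$ on $\HH_*(X)$ (Corollary~\ref{corol_actions}), which is visibly injective when $S_X\cong\catO_X[d]$. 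Your phrase ``the pairings become the Hodge-filtration obstructions on $Z$ pushed to $X$'' is where this content hides; making it precise is exactly what Section~\ref{sec_hochschild_semiregular} does.
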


\begin{corollary}\label{corol_mixed}
Let $k$ be a field of characteristic $p > 2$ and let $W = W(k)$ be the ring of Witt vectors over $k$. Let $X$ and $Y$ be smooth and projective varieties over $k$, such that $X$ has trivial canonical bundle. Let $\mathcal{X}$ and $\mathcal{Y}$ be lifts of $X$ and $Y$ over $W$.  Let $\mathcal{E} \in \Perf(X \times Y)$ be the kernel of a fully faithful transform $\Phi_{\mathcal{E}} \colon \Perf(X) \hookrightarrow \Perf(Y)$. Assume that $p > \dim(X) + \dim(Y)$. Then the following are equivalent:
\begin{enumerate}[(i)]
\item The kernel $\mathcal{E}$ admits a lift $\tilde{\mathcal{E}} \in \Perf(\mathcal{X} \times \mathcal{Y})$. 
\item The crystalline Chern character 
\[
\ch_i(\mathcal{E}) \in \Fil^i \coh^{2i}_{\dR}(\mathcal{X} \times \mathcal{Y} / W)
\]
of $\mathcal{E}$ lands in the Hodge filtration for all $i$. 
\end{enumerate}
If (i) and (ii) hold, then $\Phi_{\tilde{\mathcal{E}}}$ is always fully faithful, and $\Phi_{\tilde{\mathcal{E}}}$ is an equivalence if and only if $\Phi_{\mathcal{E}}$ is an equivalence.
\end{corollary}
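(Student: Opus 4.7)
My plan is to reduce Corollary \ref{corol_mixed} to Theorem \ref{thm_mixed} applied to each Artinian quotient $W_n := W/p^nW$, and then to algebraize the resulting compatible system of lifts by formal GAGA. Each $W_n$ is a local Artinian $\ZZ_{(p)}$-algebra with residue field $k$, and its maximal ideal $pW_n$ carries the canonical divided power structure $\gamma_i(p) = p^i/i!$, which exists because $p>2$. A direct valuation count shows that $\gamma_p$ sends an element of $p$-adic valuation $m \geq 1$ to one of valuation at least $mp-(p-1) \geq m+1$, so $\gamma_p$ acts nilpotently on $pW_n$. The other hypotheses on $X$, $Y$, and $p$ transfer verbatim. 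To see that condition (ii) over $W$ implies condition (ii) over every $W_n$, I would use that $\mathcal{X}\times\mathcal{Y} \to \Spec(W)$ is proper and smooth with total relative dimension less than $p$, so Deligne--Illusie degeneration makes $\coh^{*}_{\dR}(\mathcal{X}\times\mathcal{Y}/W)$ together with its Hodge filtration into a finite flat $W$-module whose formation commutes with base change along $W \to W_n$, and compatibly identifies the crystalline Chern characters.

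Next, I construct a compatible system of lifts $\tilde{\mathcal{E}}_n \in \Perf(\mathcal{X}\times\mathcal{Y}\otimes_W W_n)$ by induction on $n$, extending $\tilde{\mathcal{E}}_n$ across the square-zero PD-extension $W_{n+1} \to W_n$. Invoking Theorem \ref{thm_mixed} separately at each $n$ would produce a priori unrelated lifts, so I instead use the obstruction-theoretic content of its proof: the obstruction to each such step is identified with a Hodge-theoretic class that vanishes under condition (ii). Iterating yields a coherent family $\{\tilde{\mathcal{E}}_n\}_n$ in $\varprojlim_n \Perf(\mathcal{X}\times\mathcal{Y}\otimes_W W_n)$.

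Finally, since $\mathcal{X}\times\mathcal{Y} \to \Spec(W)$ is projective, Grothendieck's existence theorem for perfect complexes on a projective formal scheme algebraizes the formal family $\{\tilde{\mathcal{E}}_n\}$ to a single $\tilde{\mathcal{E}} \in \Perf(\mathcal{X}\times\mathcal{Y})$ whose restriction to each $\mathcal{X}\times\mathcal{Y}\otimes_W W_n$ recovers $\tilde{\mathcal{E}}_n$; in particular its restriction to $X\times Y$ is quasi-isomorphic to $\mathcal{E}$. Full faithfulness of $\Phi_{\tilde{\mathcal{E}}}$ and the equivalence criterion can be detected after reduction modulo each $p^n$, where they are supplied by Theorem \ref{thm_mixed}, and they transfer along the algebraization. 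The step I expect to be the main obstacle is the second one: guaranteeing compatibility of the $\tilde{\mathcal{E}}_n$ forces me to use the proof of Theorem \ref{thm_mixed} in its square-zero inductive form, rather than invoking its statement on each Artinian base as a black box.
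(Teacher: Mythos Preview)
Your approach is essentially the same as the paper's: reduce to the Artinian quotients $W_n$, invoke the square-zero inductive step underlying Theorem~\ref{thm_mixed} (i.e., Proposition~\ref{prop_mixed_def}) to build a compatible system, and algebraize via Grothendieck existence for perfect complexes (the paper cites Lieblich), with full faithfulness handled by \cite[Proposition~2.15]{rumasa}. Your observation that one must use the inductive content of the proof rather than the black-box statement of Theorem~\ref{thm_mixed} is exactly right and is implicit in the paper's phrase ``by induction on $n$''; note also that your valuation estimate should read $v_p(\gamma_p(x)) = p\,v_p(x) - 1$ rather than $p\,v_p(x) - (p-1)$, though the conclusion is unaffected.
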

In 2011,  Lieblich and Olsson \cite{liebols2} showed the above to be true in the special case that $X$ and $Y$ are K3--surfaces and $k$ is algebraically closed of characteristic $p > 2$, using a technique specific to  K3--surfaces and their moduli spaces of perfect complexes. 

\subsection{Overview of the text}
The main technical difficulty in the proofs of Theorem \ref{thm_zero} and Theorem \ref{thm_mixed} is analyzing whether or not a Chern class remains of Hodge type when deforming a variety. The starting point on this subject is a classical article by Bloch \cite{bloch}, who showed that one can define a \emph{Hodge-theoretic obstruction class} measuring the failure of the Chern class to remain within the Hodge filtration along a square zero extension. Moreover, Bloch gave an expression for the Hodge-theoretic obstruction class as a \emph{cup product with a Kodaira--Spencer class} (with conditions on the base $A$).

It was already shown that a similar expression exists for the \emph{obstruction class to deforming a vector bundle} by Illusie \cite{ill71two}, using his \emph{cotangent complex}. This was generalized to the case of a \emph{perfect complex} by Huybrechts and Thomas \cite{huto}. Moreover, Buchweitz and Flenner \cite{buchflen} constructed a \emph{semiregularity map} relating the two obstruction classes. 

In Section \ref{sec_def_theory_hodge}, we give an expression for the Hodge-theoretic obstruction class as a cup product with a Kodaira--Spencer class for a general base $A$. The main difficulty here is that Bloch's construction is of a topological nature, and therefore hard to apply algebraic techniques to. This is where derived algebraic geometry makes its appearance: the work of Pridham \cite{pridham} suggests that one could replace the topological isomorphism in the work of Bloch by nil-invariance of derived de Rham cohomology. It is here that the main technical results are stated.

To achieve this we will construct a theory of Chern classes in derived de Rham cohomology in Section \ref{sec_drc_stacks} (essentially following Bhatt--Lurie \cite{bhatt-lurie}). Moreover, we will show this Chern class corresponds to the trace of the Atiyah class, unifying it with Illusie's construction of the Chern class in \cite{ill71two} (see Proposition \ref{prop_compare_chern}). 

It then turns out that the above generalizes quite easily to mixed characteristic, if one replaces derived de Rham cohomology with the theory of derived crystalline cohomology (see work of Bhatt \cite{bhatt-padic} and Mao \cite{mao}). Throughout the text, the various related results for the crystalline case will usually be stated directly after the characteristic zero result, allowing for an easy comparison. 

Using the work of Căldăraru \cite{caldararu1, caldararu2}, we will finally show the semiregularity map is injective under the conditions of Theorem \ref{thm_mixed}. In Section \ref{sec_hochschild_semiregular}, we provide a Hochschild--theoretic formulation of the semiregularity map by means of the Hochschild--Kostant--Rosenberg isomorphism. Finally in Section \ref{sec_deforming_transforms} we show the semiregularity map is injective in the cases we need, to prove Theorems \ref{thm_zero} and \ref{thm_mixed}.

Our proof of Theorem \ref{thm_zero} relies heavily on derived algebraic geometry, which is needed since we assume $A$ to be very general. If one is only interested in the case $A = \CC[t] / (t^n)$, one can give a classical proof using only Theorem \ref{thm_semireg_inj} and $T^1$-lifting methods. The main upshot of our methods is that they can also be generalized to mixed characteristic.

\subsection*{Acknowledgements}
I wish to thank my advisor Lenny Taelman for suggesting the problem and his support along the way. Furthermore, I would like to express my gratitude to Richard Thomas for numerous comments and suggestions, to Dhyan Aranha for helping me figure out the argument for Lemma \ref{pushout}, and to Gijs Heuts, Zhouhang Mao, Tasos Moulinos and Alexander Petrov and  for useful comments and discussions. This research has been carried out at as part of the author's PhD thesis and was funded by the European Research Council (ERC), grant 864145.
%!TEX root = main.tex

\section{Preliminaries}
\label{sec_notation}
We will use the language of $\infty$-categories as developed in \cite{htt}. By an $n$-category we mean an $\infty$-category in which all mapping spaces are $(n - 1)$-truncated. For example, a $1$-category is an $\infty$-category in which all mapping spaces are discrete. We write $\mathcal{S}$ for the $\infty$-category of spaces, and denote with $\mathcal{S}_{\leq n}$ the full subcategory of $n$-truncated spaces. We write $\Cat_\infty$ for the $\infty$-category of $\infty$-categories. The inclusion 
\[
\mathcal{S} \to \Cat_\infty
\]
has a right adjoint which we will denote by $(-)^\simeq$. 

For an $\infty$-category $\mathcal{C}$, we shall denote with $\sss \mathcal{C}$, resp. $\cc \mathcal{C}$ the $\infty$-category of simplicial, resp. cosimplicial diagrams in $\mathcal{C}$. 

For any stable $\infty$-category $\mathcal{C}$ and $n \in \ZZ$ we shall denote with $[n] \colon \mathcal{C} \to \mathcal{C}$ the $n$-fold composition of the suspension functor \cite[Notation 1.1.2.7]{ha}.

For $k$ a ring, we denote with $\Ch(k)$ the $1$-category of chain complexes over $k$, with $\Ch(k)_{\mathrm{dg}}$ the dg-category of chain complexes over $k$ and with $\derD(k)$ the stable $\infty$-category $\mathrm{N}_{\mathrm{dg}}(\Ch(k)_{\mathrm{dg}})$. Note that we have a canonical functor $\Ch(k) \to \derD(k)$.

If $\mathcal{C}$ is a symmetric monoidal $\infty$-category, we shall denote with $\CAlg(\mathcal{C})$ the $\infty$-category of $\EE_{\infty}$-algebras in $\mathcal{C}$, see \cite{ha}. We will write $\CAlg_k := \CAlg(\derD(k))$, and denote with $\Alg_k$ the $1$-category of discrete commutative $k$-algebras.
\subsection{Filtrations}
The structure of a partially ordered set on $\NN$ gives $\NN$ the structure of a $1$-category such that there is a unique morphism $i \to j$ if $i \leq j$. We denote $\NN^{\mathrm{disc}}$ for the $1$-category with objects the natural numbers, and all morphisms the identity.
\begin{definition}
Let $\mathcal{C}$ be an arbitrary $\infty$-category. Then we define
\[
\mathcal{C}_\fil := \Fun(\NN^\op, \mathcal{C})
\]
the $\infty$-category of filtered objects in $\mathcal{C}$. For $X \in \mathcal{C}_\fil$, we write $\Fil^i X := X(i)$. Similarly, we let 
\[
\mathcal{C}_\gr := \Fun(\NN^{\mathrm{disc}}, \mathcal{C})
\]
If $\mathcal{C}$ is stable, define a functor $\gr \colon \mathcal{C}_\fil \to \mathcal{C}_\gr$ by 
\[
\gr (X)(i) = \cofib(\Fil^{i + 1}(X) \to \Fil^i(X))
\]
on $X \in \mathcal{C}_\fil$. We will refer to $\gr(X)$ as the \emph{associated graded} of the filtered object $X$, and use the shorthand notation $\gr^i(X) := \gr(X)(i)$. 
\end{definition}

One may give $\NN^{\mathrm{disc}}$ and $\NN^\op$ the structure of a symmetric monoidal category by setting $[p] \otimes [q] := [p + q]$. If $\mathcal{C}$ is symmetric monoidal and its tensor product preserves colimits in each variable separately, the procedure of Day convolution \cite[$\mathsection$2.2.6]{ha} then gives $\mathcal{C}_\gr$ and $\mathcal{C}_\fil$ the structure of a symmetric monoidal category. Explicitly, one has
\begin{align*}
\Fil^n \(X \otimes^\Day Y\) &:= \underset{p + q \geq n}{\colim}\  \Fil^p X \otimes \Fil^q Y \\
(A \otimes^\Day B)(n) &:= \underset{p + q = n}{\bigsqcup}\  A(p) \otimes B(q)
\end{align*}
for $X, Y \in \mathcal{C}_\fil$ and $A, B \in \mathcal{C}_\gr$. Moreover, $\gr$ has a canonical structure of a symmetric monoidal functor, that is there exist canonical isomorphisms
\[
\gr^n \(X \otimes^\Day Y\) \cong \underset{p + q = n}{\bigsqcup}\  \gr^p X \otimes \gr^q Y
\]
for $X, Y \in \mathcal{C}_\fil$. We will denote with 
\begin{equation}\label{proj_i}
\pi^\gr_i \colon \gr^p(X \otimes^\Day Y) \to \gr^{p - i}(X) \otimes \gr^i(Y)
\end{equation}
the projection on the $i$th component.

\begin{definition}[Filtered $\EE_\infty$-algebras]\label{def_filt_alg}
Let $\mathcal{C}$ be a symmetric monoidal $\infty$-category whose tensor product preserves colimits in each variable separately. We define the $\infty$-category of filtered $\EE_\infty$-algebras in $\mathcal{C}$ as 
\[
\CAlg_\fil(\mathcal{C}) := \CAlg((\mathcal{C}_\fil, \otimes^\Day)).
\]
\end{definition}
\begin{lemma}\label{lem_alg_coprod}
The symmetric monoidal structure on $\CAlg_\fil(\mathcal{C})$ is cocartesian, that is the coproduct of algebras is given by the Day convolution product of their underlying objects. In particular $\CAlg_\fil(\mathcal{C})$ admits finite coproducts.
\end{lemma}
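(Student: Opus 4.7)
The plan is to deduce the lemma from a general fact proved by Lurie in \cite[Proposition 3.2.4.7]{ha}: for any symmetric monoidal $\infty$-category $\mathcal{D}$, the $\infty$-category $\CAlg(\mathcal{D})$ admits finite coproducts, and the coproduct of $A, B \in \CAlg(\mathcal{D})$ is naturally identified with the tensor product $A \otimes B$ in $\mathcal{D}$, equipped with its canonical $\EE_\infty$-algebra structure. Equivalently, the symmetric monoidal structure on $\CAlg(\mathcal{D})$ inherited from $\mathcal{D}$ via the $\CAlg(-)$ construction is cocartesian.

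I would then apply this result to the symmetric monoidal $\infty$-category $\mathcal{D} = (\mathcal{C}_\fil, \otimes^{\Day})$. The Day convolution symmetric monoidal structure on $\mathcal{C}_\fil$ is well-defined precisely because the tensor product on $\mathcal{C}$ preserves colimits in each variable separately, which is the standing hypothesis in Definition \ref{def_filt_alg}. By the same definition, $\CAlg_\fil(\mathcal{C}) = \CAlg(\mathcal{D})$, so the general result immediately identifies the coproduct $A \sqcup B$ in $\CAlg_\fil(\mathcal{C})$ with $A \otimes^{\Day} B$ (carrying its canonical $\EE_\infty$-structure); forgetting the algebra structure recovers the Day convolution of the underlying filtered objects, which is the content of the lemma.

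Since the argument is essentially formal once the Day convolution structure on $\mathcal{C}_\fil$ is in place, I do not anticipate any substantive obstacle. The only point of verification is that the symmetric monoidal structure on $\CAlg_\fil(\mathcal{C})$ referenced implicitly in the lemma is indeed the one produced by applying $\CAlg(-)$ to $(\mathcal{C}_\fil, \otimes^{\Day})$, which holds essentially by construction of the $\CAlg$ functor on symmetric monoidal $\infty$-categories.
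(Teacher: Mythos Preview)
Your proposal is correct and matches the paper's approach exactly: the paper's proof consists solely of the citation ``See \cite[Proposition 3.2.4.7]{ha}.'' Your elaboration simply unpacks why that proposition applies to $\mathcal{D} = (\mathcal{C}_\fil, \otimes^{\Day})$, which is precisely the intended argument.
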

\begin{proof}
See \cite[Proposition 3.2.4.7]{ha}.
\end{proof}
We warn the reader that this is a distinctly different category then $\CAlg(\mathcal{C})_\fil$. For $k$ a discrete commutative ring, we will write $\CAlg_\fil(k) := \CAlg_\fil(\derD(k))$. 

For any stable symmetric monoidal $\infty$-category $\mathcal{C}$ whose tensor product preserves colimits in each variable separately and $p \in \NN$, we have a lax symmetric monoidal functor
\[
\gr^{[0, p)} \colon \mathcal{C}_\fil \to \mathcal{C}_\fil
\]
defined by
\begin{equation}
\Fil^i \gr^{[0, p)}(X) := \begin{cases}
\cofib(\Fil^p X \to \Fil^i X) & i \leq p \\
0 & i > p
\end{cases} \label{aae}
\end{equation}
For any $p < q$ we have a natural transformation $\gr^{[0, q)} \to \gr^{[0, p)}$.  Thus if $\mathcal{C}$ admits all limits, we may define the \emph{completion functor}
\begin{align*}
\widehat{(-)} \colon \mathcal{C}_\fil &\to \mathcal{C}_\fil \\ 
X &\mapsto \widehat{X} := \lim_{p \to \infty} \gr^{[0, p)}(X)
\end{align*}
which is also lax symmetric monoidal. Explicitly, one has
\[
\Fil^p \widehat{X} = \lim_{q \to \infty} \cofib(\Fil^q X \to \Fil^p X)
\]
for any $X \in \mathcal{C}_\fil$. We thus get induced functors
\begin{align*}
\gr^{[0, p]} \colon \CAlg_\fil(\mathcal{C}) \to \CAlg_\fil(\mathcal{C}) \\
\widehat{(-)} \colon \CAlg_\fil(\mathcal{C}) \to \CAlg_\fil(\mathcal{C})
\end{align*}
By \cite[Corollary 3.2.2.4]{ha}, we have an equality
\[
\widehat{(-)} = \lim_{p \to \infty} \gr^{[0, p)}
\]
of functors $\CAlg_\fil(\mathcal{C}) \to \CAlg_\fil(\mathcal{C})$. If one sets $\mathcal{C}_{\fil, \comp}$ to be the full subcategory consisting of those $X \in \mathcal{C}_\fil$ such that the natural map $X \to \widehat{X}$ is an equivalence, one may show the functor $\widehat{(-)} \colon \mathcal{C}_\fil \to \mathcal{C}_{\fil, \comp}$ is left adjoint to the inclusion $\mathcal{C}_{\fil, \comp} \to \mathcal{C}_\fil$. For $X, Y \in \mathcal{C}_{\fil, \comp}$ we will denote 
\[
X \widehat{\otimes} Y := \widehat{X \otimes Y}
\]
and similarly for complete $X, Y \in \CAlg_\fil(\mathcal{C})$. 

Finally we will often use without mention that the functor $\gr \colon \mathcal{C}_{\fil, \comp} \to \mathcal{C}_\gr$ is conservative.
\subsection{Sheaves and stacks}
\label{extend_functors}
For any $\infty$-category $\mathcal{C}$ equipped with a Grothendieck topology $\tau$, and  any $\infty$-category $\mathcal{D}$ in which all limits exist, we shall denote by $\Shv_{\tau}(\mathcal{C}, \mathcal{D})$ the $\infty$-category of $\mathcal{D}$-valued sheaves on $\mathcal{C}$, see \cite[Definition  7.3.3.1]{htt}. More generally, for any $\infty$-topos $\mathcal{X}$ we shall denote by $\Shv_{\mathcal{D}}(\mathcal{X})$ the category of $\mathcal{D}$-valued sheaves on $\mathcal{X}$, i.e. the category of functors $\mathcal{X}^\op \to \mathcal{D}$ that preserve small limits. 
\begin{definition}
We define
\[
\St_k := \Shv_\fppf((\Alg_k)^\op, \mathcal{S})
\]
the $\infty$-category of \emph{higher stacks} over $k$. 
\end{definition}
By \cite[Proposition 6.2.2.7]{htt}, $\St_k$ has the structure of an $\infty$-topos. We warn the reader that these are \emph{underived} stacks, since $\Alg_k$ is the $1$-category of discrete commutative $k$-algebras. Note that inclusion $\Set \to \mathcal{S}$ induces a functor $\Sch_{/ k} \to \St_k$, so in particular we get a fully faithful Yoneda embedding 
\[
\Spec \colon (\Alg_k)^\op \to \St_k
\]
 (essentially because the fppf topology on affine schemes is subcanonical, see \stacksref{03O4}). 
 
For any $\infty$-category $\mathcal{D}$ in which all limits exist, a functor $\mathcal{F} \colon \Alg_k \to \mathcal{D}$ induces (by right Kan extension) a unique functor $\St_k^\op \to \mathcal{D}$ which we shall also denote by $\mathcal{F}$. 
Explicitly, for any $k$-stack $X$ one has
\begin{equation}\label{def_extend}
F(X) := \lim_{\Spec(R) \to X} \mathcal{F}(R)
\end{equation}
and in particular $\mathcal{F}(\Spec(R)) = \mathcal{F}(R)$. Using \cite[Theorem 4.1.3.1]{htt}, one may show that if $X$ is a scheme, the above can be computed as
\begin{equation}\label{def_extend2}
\mathcal{F}(X) = \lim_{\Spec(R) \subseteq X} \mathcal{F}(R)
\end{equation}
where the limit is over all affine opens $\Spec(R)$ in $X$. 

For any $\infty$-category $\mathcal{D}$ admitting all limits, the inclusion
\[
\Shv_{\mathcal{D}}(\St_k) \subseteq \Fun(\St_k^\op, \mathcal{D})
\]
admits a left adjoint $\mathcal{F} \mapsto \mathcal{F}^\dagger$ called \emph{sheafification}. If $\mathcal{F}$ is a sheaf one has $\mathcal{F}^\dagger(X) = \mathcal{F}(X)$ for all $k$-stacks $X$. 

\subsection{A survey of animation}
If $A$ is a ring and $M$ is an $A$-module, the functor $- \otimes_A M$ is in general not exact. However, it is exact when restricting to the subcategory of free modules. One may introduce the notion of a free resolution $P_\bullet \to N$ of a general $A$-module $N$, and define
\[
N \otimes^\derL_A M := P_\bullet \otimes_A N
\]
to get a better behaved tensor product. For this to make any sense, one needs a category of free resolutions, and it is well known that $\derD(A)_{\geq 0}$ is a good notion for this category. 

In this section we give a quick survey of \emph{animation}, a technique introduced in \cite{cs19} (based on ideas in \cite{quillen-rings} and \cite[\S 5.5.8]{htt}) to give a general way of achieving the above. Given a $1$-category $\mathcal{C}$ generated under colimits by a full subcategory $\mathcal{C}_0$ of nice (compact and projective) objects, we can form the $\infty$-category $\Ani(\mathcal{C})$ freely generated under sifted colimits of these objects. Given a functor $\mathcal{C} \to \mathcal{D}$ which behaves well on $\mathcal{C}_0$, one then obtains a well-behaved functor $\Ani(\mathcal{C}) \to \mathcal{D}$. 

We will give a quick and by no means complete survey, we urge the reader to read \cite[\S 5.5.8]{htt} and \cite{mao} first. Throughout this section, $n$ can be any natural number or the symbol $\infty$.

\begin{definition}[{\cite[\S 5.5.8]{htt}, \cite[Definition A.18]{mao}}]\label{def_compact}
Let $\mathcal{C}$ be a cocomplete category, and $C \in \mathcal{C}$. We say $C$ is \emph{compact} if the functor $\Map_{\mathcal{C}}(C, -) \colon \mathcal{C} \to \mathcal{S}$ commutes with filtered colimits. 

If $\mathcal{C}$ is a cocomplete $n$-category, we say that $C$ is $n$-projective if the functor $\Map_{\mathcal{C}}(C, -) \colon \mathcal{C} \to \Spaces_{\leq n - 1}$ commutes with geometric realizations. If $n = \infty$, we wil say $C$ is projective. 
\end{definition}
Note that although we work with $n$-projective for general $n$, we shall only be interested in the cases $n = 1$ and $n = \infty$.
\begin{definition}[{\cite[Definition A.22]{mao}}]\label{def_gen}
Let $\mathcal{C}$ be an $n$-category and $S \subseteq \mathcal{C}$ a set of objects in $\mathcal{C}$. We say that $S$ is a \emph{set of compact $n$-projective generators} for $\mathcal{C}$ if
\begin{enumerate}
\item $\mathcal{C}$ is cocomplete.
\item Every $X \in S$ is compact $n$-projective.
\item The set $S$ generates $\mathcal{C}$ under small colimits. 
\end{enumerate}
If there exists a set of compact $n$-projective generators of $\mathcal{C}$, we say that $\mathcal{C}$ is \emph{compact $n$-projectively generated}. If $n = \infty$, we say $\mathcal{C}$ is \emph{compact projectively generated}. 
\end{definition}
\begin{definition}
Let $\mathcal{C}$ a $n$-category which admits finite coproducts. We write
\[
\mathcal{P}_n(\mathcal{C}) := \Fun(\mathcal{C}^\op, \mathcal{S}_{\leq n - 1})
\]
and denote  with $\mathcal{P}_{\Sigma,n}(\mathcal{C}) \subseteq \mathcal{P}_n(\mathcal{C})$ the full subcategory consisting of those functors which preserve finite products. 
\end{definition}

\begin{prop}
 Let $\mathcal{C}$ be an $n$-category, and let $S$ be a set of compact $n$-projective generators for $\mathcal{C}$. Let $\mathcal{C}_0$ be the full subcategory on finite coproducts of objects in $S$. Then the Yoneda embedding $P_{\Sigma, n}(\mathcal{C}_0) \to \mathcal{C}$ is an equivalence.
\end{prop}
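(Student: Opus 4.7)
The plan is to show that the functor $F \colon \mathcal{P}_{\Sigma, n}(\mathcal{C}_0) \to \mathcal{C}$ obtained by extending the inclusion $\iota \colon \mathcal{C}_0 \hookrightarrow \mathcal{C}$ along the Yoneda embedding $\mathcal{C}_0 \to \mathcal{P}_{\Sigma, n}(\mathcal{C}_0)$ is an equivalence. By the universal property of $\mathcal{P}_{\Sigma, n}(\mathcal{C}_0)$ as the free sifted-cocompletion of $\mathcal{C}_0$ among $n$-categories, such an $F$ exists uniquely (up to contractible choice) and preserves sifted colimits; it remains to show that $F$ is fully faithful and essentially surjective.

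For fully faithfulness, I would exhibit a right adjoint $G \colon \mathcal{C} \to \mathcal{P}_{\Sigma, n}(\mathcal{C}_0)$ by the formula $G(X)(C) := \Map_{\mathcal{C}}(\iota(C), X)$. This lands in $\mathcal{P}_{\Sigma, n}(\mathcal{C}_0)$ because $\mathcal{C}$ is an $n$-category (so mapping spaces are $(n-1)$-truncated) and $\iota$ preserves finite coproducts (so $G(X)$ is finite-product-preserving). The adjunction $F \dashv G$ holds on representables by the Yoneda lemma, and since both $\Map_{\mathcal{P}_{\Sigma, n}(\mathcal{C}_0)}(-, G(X))$ and $\Map_{\mathcal{C}}(F(-), X)$ send sifted colimits in the first variable to limits, while $\mathcal{P}_{\Sigma, n}(\mathcal{C}_0)$ is generated under sifted colimits by the representables, the adjunction extends to all of $\mathcal{P}_{\Sigma, n}(\mathcal{C}_0)$. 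Next I would verify that $G$ preserves sifted colimits: compactness of the generators in $S$ handles filtered colimits, $n$-projectivity handles geometric realizations valued in $\mathcal{S}_{\leq n - 1}$, and a standard decomposition of sifted colimits (HTT 5.5.8.15) lets one combine the two. Since $\id \to GF$ is an equivalence on $\mathcal{C}_0$ directly from Yoneda, and both $\id$ and $GF$ preserve sifted colimits, the unit is an equivalence everywhere.

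For essential surjectivity, I would use the standard fact that every small colimit can be expressed as a sifted colimit of finite coproducts. Combined with the hypothesis that $S$ generates $\mathcal{C}$ under small colimits and that $\mathcal{C}_0$ is by definition closed under finite coproducts of objects in $S$, every object of $\mathcal{C}$ is a sifted colimit of objects in $\mathcal{C}_0$. As $F$ preserves sifted colimits and restricts to $\iota$ on $\mathcal{C}_0$, such an object lies in the essential image.

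The main obstacle will be the careful bookkeeping around $n$-truncation: verifying that $G$ lands in $\mathcal{S}_{\leq n - 1}$-valued finite-product-preserving presheaves \emph{and} that it preserves sifted colimits both hinge on using $n$-projectivity rather than ordinary projectivity, so that the compatibility with geometric realizations is taken in the correct $\infty$-category $\mathcal{S}_{\leq n - 1}$. For the two cases relevant later in the paper ($n = 1$ and $n = \infty$) this bookkeeping collapses into the familiar classical and $\infty$-categorical statements, respectively.
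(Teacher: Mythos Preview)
The paper does not give its own proof: it simply cites \cite[Proposition A.29]{mao}. Your sketch is the standard argument for this result (the $n$-truncated analogue of \cite[Proposition 5.5.8.22]{htt}) and is in substance what the cited reference carries out, so there is nothing to compare at the level of strategy.

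One small point worth tightening: you invoke ``the universal property of $\mathcal{P}_{\Sigma, n}(\mathcal{C}_0)$ as the free sifted-cocompletion among $n$-categories'' to produce $F$. This is true, but it is not literally \cite[Proposition 5.5.8.15]{htt}, which treats the untruncated case $\mathcal{P}_{\Sigma}$. The cleanest way to obtain $F$ directly is to note that your $G$ (restricted Yoneda) is an accessible functor between presentable categories preserving limits, hence admits a left adjoint by the adjoint functor theorem; alternatively, apply \cite[Proposition 5.5.8.15]{htt} to get $\mathcal{P}_{\Sigma}(\mathcal{C}_0) \to \mathcal{C}$ and then observe it factors through the localization $\mathcal{P}_{\Sigma}(\mathcal{C}_0) \to \mathcal{P}_{\Sigma, n}(\mathcal{C}_0)$ because $\mathcal{C}$ is an $n$-category. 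Either route closes the gap without changing your overall structure.
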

\begin{proof}
See \cite[Proposition A.29]{mao}.
\end{proof}
\begin{definition}[{\cite[Definition A.32]{mao}}]
Let $\mathcal{C}$ be a compact $n$-projectively generated $n$-category, let $\mathcal{S}$ be a set of compact and $n$-projective generators, and let $\mathcal{C}_0$ be the full subcategory spanned by finite coproducts of objects in $\mathcal{S}$. We define
\[
\Ani(\mathcal{C}) := \mathcal{P}_{\Sigma, \infty}(\mathcal{C}_0)
\]
the \emph{animation} of $\mathcal{C}$. 
\end{definition}

It is good to observe that $\Ani(\mathcal{C})$ is independent of the choice of compact projective generators for $\mathcal{C}$, see \cite[Remark A.33]{mao}. Note that we have a natural Yoneda embedding $\mathcal{C} \to \Ani(\mathcal{C})$ given by $X \mapsto \Map_{\mathcal{C}}(X, -)$. Moreover, by \cite[Remark 5.5.8.26]{htt}, the Yoneda embedding $\mathcal{C} \to \Ani(\mathcal{C})$ admits a left adjoint $\pi_0$.

\begin{lemma}\label{ani_gen}
Let $\mathcal{C}$ be a compact $n$-projectively generated $n$-category and let $S$ be a set of compact and $n$-projective generators. Then $\Ani(\mathcal{C})$ is compact projectively generated, and $S$ is a set of compact projective generators.
\end{lemma}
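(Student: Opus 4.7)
The plan is to verify the three conditions of Definition \ref{def_gen} for $S$ regarded as a subset of $\Ani(\mathcal{C})$ via the Yoneda embedding $\mathcal{C}_0 \hookrightarrow \mathcal{P}_{\Sigma, \infty}(\mathcal{C}_0) = \Ani(\mathcal{C})$. Cocompleteness of $\Ani(\mathcal{C})$ is standard: by \cite[\S 5.5.8]{htt} the category $\mathcal{P}_{\Sigma, \infty}(\mathcal{C}_0)$ is the free completion of $\mathcal{C}_0$ under sifted colimits, and it inherits finite coproducts from $\mathcal{C}_0$ (indeed Yoneda into $\mathcal{P}_\Sigma$ preserves finite coproducts), so it admits all small colimits.

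Next I would show that each $X \in S$ is compact and projective in $\Ani(\mathcal{C})$. The universal property of $\mathcal{P}_{\Sigma, \infty}(\mathcal{C}_0)$ as a free sifted cocompletion implies that sifted colimits in $\Ani(\mathcal{C})$ are computed pointwise on $\mathcal{C}_0^{\op}$; this reflects the fact that sifted colimits commute with finite products in $\mathcal{S}$. By the Yoneda lemma, the functor $\Map_{\Ani(\mathcal{C})}(X, -) \colon \Ani(\mathcal{C}) \to \mathcal{S}$ is naturally equivalent to evaluation at $X$, $F \mapsto F(X)$, which tautologically preserves pointwise colimits. Hence it preserves all sifted colimits, in particular filtered colimits and geometric realizations, so $X$ is compact and projective as an object of $\Ani(\mathcal{C})$.

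Finally, for generation, the same universal property gives that every object of $\Ani(\mathcal{C})$ is a sifted colimit of representables indexed by objects of $\mathcal{C}_0$. By construction each object of $\mathcal{C}_0$ is a finite coproduct, hence a finite colimit, of objects of $S$. Concatenating these two facts, every object of $\Ani(\mathcal{C})$ is a small colimit of images of elements of $S$, which establishes the generation condition. The only subtle point — and what I expect to be the main obstacle to articulate cleanly — is that projectivity in $\Ani(\mathcal{C})$ is stronger than the $n$-projectivity hypothesis in $\mathcal{C}$, because the target spaces are now untruncated; this strengthening is automatic from the formal universal property above, and is essentially the raison d'être of the animation construction.
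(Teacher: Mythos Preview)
Your proof is correct and follows the same approach as the paper's: verify directly from the definition $\Ani(\mathcal{C}) = \mathcal{P}_{\Sigma,\infty}(\mathcal{C}_0)$ that representables are compact projective (via Yoneda and pointwise sifted colimits) and that they generate under colimits. The paper's proof is a two-line sketch (``by definition'' and ``clearly''), and you have simply unpacked exactly those two assertions.
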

\begin{proof}
By definition, for any $X \in S$ the image in $\Ani(\mathcal{C})$ is compact and projective. Since clearly any element in $\Ani(\mathcal{C})$ can be written as a colimit of objects in $S$, the result follows.
\end{proof}
\begin{definition}Let $\mathcal{C}$ and $\mathcal{D}$ be $\infty$-categories. We denote with 
\[
\Fun_\Sigma(\mathcal{C}, \mathcal{D}) \subseteq \Fun(\mathcal{C}, \mathcal{D}) 
\]
the full subcategory of those functors which preserve filtered colimits and geometric realizations. 
\end{definition}
\begin{prop}[{\cite[Proposition 5.5.8.15]{htt}}]\label{left_kan}
Let $\mathcal{C}$ be a cocomplete $\infty$-category, and let $S \subseteq \mathcal{C}$ be a set of compact projective generators for $\mathcal{C}$. Let $\mathcal{C}_0 \subseteq \mathcal{C}$ be the full subcategory spanned by finite coproducts of objects in $S$. Let $\mathcal{D}$ be any $\infty$-category which admits filtered colimits and geometric realizations. 

Then the restriction map
\[
\Psi \colon \Fun_{\Sigma} (\mathcal{C}, \mathcal{D}) \to \Fun(\mathcal{C}_0, \mathcal{D})
\]
induces an equivalence of categories.  Moreover, any $g \in \Fun_{\Sigma} (\mathcal{C}, \mathcal{D})$ commutes with all colimits if and only if $\Psi(g)$ commutes with finite coproducts. Finally, for any $f \in \Fun(\mathcal{C}_0, \mathcal{D})$ the inverse image $\Psi^{-1}(f)$ is given by left Kan extension.
\end{prop}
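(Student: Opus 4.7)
The plan is to reduce everything to the universal property of $\mathcal{P}_{\Sigma}(\mathcal{C}_0)$ as the free cocompletion of $\mathcal{C}_0$ under sifted colimits. First I would invoke the analog of the proposition stated just above (in the $\infty$-categorical setting with $n = \infty$), namely that the Yoneda functor $\mathcal{C} \to \mathcal{P}_{\Sigma, \infty}(\mathcal{C}_0) = \Ani(\mathcal{C}_0)$ sending $X$ to $\Map_{\mathcal{C}}(-, X)\vert_{\mathcal{C}_0}$ is an equivalence, provided $S$ consists of compact projective generators. This is the content of \cite[Proposition 5.5.8.22]{htt}, and it reduces the statement about $\mathcal{C}$ to a statement about $\mathcal{P}_{\Sigma}(\mathcal{C}_0)$.

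Next I would apply the universal property of $\mathcal{P}_{\Sigma}(\mathcal{C}_0)$: restriction along the Yoneda embedding $j \colon \mathcal{C}_0 \hookrightarrow \mathcal{P}_{\Sigma}(\mathcal{C}_0)$ induces an equivalence between $\Fun(\mathcal{C}_0, \mathcal{D})$ and the full subcategory of $\Fun(\mathcal{P}_{\Sigma}(\mathcal{C}_0), \mathcal{D})$ spanned by functors that preserve sifted colimits, with inverse given by left Kan extension along $j$. This is \cite[Proposition 5.5.8.15]{htt}. The key point to check is the compatibility with our definition of $\Fun_{\Sigma}$: by \cite[Corollary 5.5.8.17]{htt}, sifted colimits in any $\infty$-category are generated (as a saturated class) by filtered colimits and geometric realizations, so a functor preserves sifted colimits if and only if it preserves both of those, which is exactly the definition of $\Fun_{\Sigma}$.

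For the assertion about preserving all colimits, I would decompose an arbitrary colimit as a sifted colimit of finite coproducts. Concretely, \cite[Proposition 5.5.8.15]{htt} combined with \cite[Lemma 5.5.8.14]{htt} shows that every object of $\mathcal{P}_{\Sigma}(\mathcal{C}_0)$ can be written as the geometric realization of a simplicial diagram in $\mathcal{C}_0$, and more generally every colimit in $\mathcal{P}_{\Sigma}(\mathcal{C}_0)$ can be rewritten as a sifted colimit of finite coproducts of representables. Thus if $g \in \Fun_{\Sigma}(\mathcal{C}, \mathcal{D})$ already preserves sifted colimits, it preserves all colimits if and only if $\Psi(g)$ preserves finite coproducts.

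The main obstacle is not really computational but conceptual: one must be careful that the universal property of $\mathcal{P}_{\Sigma}$ matches our bookkeeping of which colimits are preserved. All the substantive work has been done in \cite{htt}, so the proof is essentially an unwinding of those statements together with the identification $\mathcal{C} \simeq \mathcal{P}_{\Sigma}(\mathcal{C}_0)$.
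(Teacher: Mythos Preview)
The paper does not give its own proof of this proposition: it simply records the statement with a citation to \cite[Proposition 5.5.8.15]{htt} and moves on. Your proposal is therefore not competing against an argument in the paper but rather unpacking the cited reference, and your outline does this correctly: identify $\mathcal{C}\simeq\mathcal{P}_\Sigma(\mathcal{C}_0)$, invoke the universal property of $\mathcal{P}_\Sigma$, and match the preservation conditions.

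One small caution on phrasing: the claim that ``sifted colimits in any $\infty$-category are generated (as a saturated class) by filtered colimits and geometric realizations'' is not literally true for arbitrary source categories. What \cite[Corollary 5.5.8.17]{htt} actually gives is that for a functor out of $\mathcal{P}_\Sigma(\mathcal{C}_0)$, preservation of filtered colimits and geometric realizations suffices to guarantee preservation of all sifted colimits; this is a statement about the particular structure of $\mathcal{P}_\Sigma(\mathcal{C}_0)$ (every object is a geometric realization of a simplicial object in the image of $\mathcal{C}_0$, etc.), not a general fact about saturated classes of colimit shapes. Since you have already reduced to $\mathcal{C}\simeq\mathcal{P}_\Sigma(\mathcal{C}_0)$ this does not affect the validity of your argument, but the wording should be tightened to reflect that the implication runs through the specific source, not through a universal statement about sifted diagrams.
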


We end by giving two examples of compact projectively generated categories which are essential to the rest of the text.
\begin{example}
Let $k$ be a ring. The set $S = \{k\}$ is a set of compact $1$-projective generators for the category of discrete $k$-modules $\derD(k)^\heart$. The full subcategory spanned by finite coproducts of objects in $S$ is the $1$-category of finite free $k$-modules. Moreover one has $\Ani(\derD(k)^\heart) \cong \derD(k)_{\geq 0}$. 
\end{example}
\begin{example}
For $k$ a ring, we denote with $\Alg_k$ the $1$-category of commutative $k$-algebras. The set $S = \{k[x]\}$ is a set of compact $1$-projective generators, and the full subcategory $\Poly_k \subseteq \Alg_k$ spanned by coproducts of objects in $S$ is the category of finitely generated polynomial $k$-algebras. The $\infty$-category $\CAlg_k^\an := \Ani(\Alg_k)$ is equivalent to the $\infty$-category of simplicial rings. 
\end{example}

\subsection{Higher algebraic stacks}
We now give an inductive definition of higher algebraic stacks, following Lurie's thesis \cite{thesislurie}.
\begin{definition}
Let $k$ be a ring. A morphism $f \colon X \to Y$ in $\St_k$ is a \emph{relative $0$-stack} if for any $A \in \Alg_k$ and any map $\Spec(A) \to Y$ in $\St_k$, the fiber product $\Spec(A) \times_{Y} X$ is an algebraic space in the sense of \stacksref{025X}. We say that $f$ is smooth if the maps $\Spec(A) \times_{Y} X \to \Spec(A)$ are smooth.

For $n > 0$, a morphism $f \colon X \to Y$ in $\St_k$ is a \emph{relative $n$-stack} if for any $A \in \Alg_k$ and any map $\Spec(A) \to Y$ in $\St_k$, there exists an effective epimorphism $p \colon U \to \Spec(A) \times_Y X$ which is a smooth relative $(n - 1)$-stack, where $U$ is a disjoint union of affine schemes. We will say that a relative $n$-stack $f \colon X \to Y$ is \emph{smooth} if for all $\Spec(A) \to Y$, the cover $U$ can be chosen to be smooth over $\Spec(A)$.

Finally, we define an \emph{algebraic stack} to be a morphism $X \to \Spec(k)$ which is a relative $n$-stack for some $n \in \NN$. 
\end{definition}

We may similarly define open immersions of relative $n$ stacks inductively.
\begin{definition}
A morphism of relative $0$-stacks is an open immersion if it is an open immersion of algebraic spaces. For $n > 0$, we say that a morphism $U \to X$ of relative $n$-stacks is an \emph{open immersion} if there exists a surjective map $T \to X$ which is a relative $(n - 1)$-stack such that $U_T \to X_T$ is an open immersion. In this case, we say $U \subseteq X$ is an \emph{open substack}. 
\end{definition}

For $G$ an affine group scheme (in the classical sense) we can form a simplicial object
\[
\cdots \stack{7} G \times_{\Spec(k)} G \stack{5} G \stack{3} \Spec(k)
\]
in $\sss \St_k$, see \cite[Definition 4.25]{khanstacks}.  We define the \emph{classifying stack} $\mathrm{B}G$ to be the colimit of this diagram in $\St_k$. One may show $\mathrm{B}G$ is an algebraic $1$-stack. By \cite[Theorem 4.28]{khanstacks}, for any $k$-scheme $X$ one may canonically identify the groupoid $\Map_k(X, \mathrm{B}G)$ with the groupoid of $G$-torsors $T \to X$. 

 \subsection{Module categories on stacks}
Let $k$ be a ring. For any stack $X$ over $k$, we denote with $\derD(X)$ the stable $\infty$-category defined as 
\[
\derD(X) := \lim_{\Spec(A) \subseteq X} \derD(A)
\]
For $A \in \Alg_k$, we say that an object $\mathcal{E} \in \derD(A)$ is \emph{perfect} if it is compact. For a general $X$ in $\St_k$, we say that $\mathcal{E} \in \derD(X)$ is perfect if the pullback $f^*(\mathcal{E})$ is perfect for all $f \colon \Spec(A) \to X$. We denote with $\Perf(X)$ the full subcategory of perfect objects.  We will say that an object $\mathcal{E} \in \derD(X)$ is finite locally free (of rank $n$) if $f^*(\mathcal{E})$ is a finite locally free $A$-module  (of rank $n$)  in homological degree $0$ for all maps $f \colon \Spec(A) \to X$. We will denote with $\Vect(X)$ the full subcategory of finite locally free modules, and with $\Vect_n(X)$ the full subcategory of finite locally free modules of rank $n$. We will write 
\[
\Pic(X) := \Vect_1(X)^\simeq
\]
We refer to elements $\mathcal{E} \in \Vect(X)$ as \emph{vector bundles}, and to elements $\mathcal{L} \in \Pic(X)$ as \emph{line bundles}. We will write $\mathrm{K}_0(\Vect(X))$ for the abelian group generated by the vector bundles on $X$ with relations coming from short exact sequences, see \stacksref{0FDE}. For $\iota \colon Z \hookrightarrow X$ a closed immersion of schemes and $\mathcal{E} \in \derD(X)$, we will sometimes write $\mathcal{E} \rvert_Z := \iota^*(\mathcal{E})$. 
\begin{lemma}\label{vect_sheaf2}
Let $k$ be a ring. Let 
\[
\mathcal{F} \colon \St_k^\op \to \Cat_\infty
\]
be an element of $\{\derD(-), \Perf(-), \Vect(-), \Vect_n(-), \Pic\}$. Then $\mathcal{F}$ is a sheaf (for the fppf topology).
\end{lemma}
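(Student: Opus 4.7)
The plan is to reduce the sheaf property on $\St_k$ to fppf descent on the affine site $\Alg_k$. Each functor $\mathcal{F}$ in the list is obtained from its restriction to $\Alg_k$ by right Kan extension along the Yoneda embedding, via formula (\ref{def_extend}). Since $\St_k$ is by definition the $\infty$-topos of fppf sheaves on $\Alg_k^\op$, there is a standard equivalence between the category of $\Cat_\infty$-valued sheaves on $\St_k$ and the category of functors $\Alg_k \to \Cat_\infty$ sending fppf covers to limits; this equivalence is realized by restriction along the Yoneda embedding $\Spec \colon \Alg_k^\op \hookrightarrow \St_k$ in one direction and by right Kan extension in the other, compare \cite[\S 6.2.3]{htt}. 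It thus suffices to show that for every fppf cover $A \to B$ in $\Alg_k$ with \v{C}ech conerve $B^{\bullet+1}$, the canonical map
\[
\mathcal{F}(A) \longrightarrow \lim_{[n] \in \Delta} \mathcal{F}(B^{\otimes_A (n+1)})
\]
is an equivalence in $\Cat_\infty$.

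Next, I would verify this descent statement separately for each functor. For $\mathcal{F} = \derD(-)$, it is faithfully flat (indeed fpqc) descent for the stable $\infty$-category of quasi-coherent sheaves, which in the $\infty$-categorical setting can be found in Lurie's \emph{Spectral Algebraic Geometry}, Appendix D, following earlier work of Bhatt and Mathew. The remaining cases reduce to this one: perfectness, finite local freeness, and rank are all fpqc-local properties, so the inclusions $\Perf, \Vect, \Vect_n \hookrightarrow \derD$ are preserved under the limit formation, and descent for these full subcategories is inherited from that of $\derD$ (see also \stacksref{023T} for the classical descent of finite locally free modules). Finally, $\Pic(-) = \Vect_1(-)^\simeq$ is the maximal subgroupoid of $\Vect_1$, and the functor $(-)^\simeq \colon \Cat_\infty \to \mathcal{S}$ is right adjoint to the inclusion $\mathcal{S} \hookrightarrow \Cat_\infty$, hence commutes with limits; the sheaf property for $\Pic$ thus follows from that of $\Vect_1$.

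The main obstacle, insofar as there is one, is the reduction step itself: namely, that right Kan extension along $\Spec \colon \Alg_k^\op \hookrightarrow \St_k$ really transports fppf sheaves on $\Alg_k$ to limit-preserving functors $\St_k^\op \to \Cat_\infty$. I would confirm this by a direct computation: for $X \in \St_k$ and an fppf hypercover $U_\bullet \to X$ with each $U_n$ a disjoint union of affine schemes, one expands both sides of the candidate equivalence $\mathcal{F}(X) \simeq \lim_\Delta \mathcal{F}(U_\bullet)$ using (\ref{def_extend}) and invokes cofinality of fppf covers of affines among all fppf covers of $X$, reducing to the affine descent statement just established. Modulo this general principle, no new argument is needed: the lemma is essentially a packaging of classical fppf descent for modules into the language of stacks.
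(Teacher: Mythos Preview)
Your proposal is correct and follows essentially the same route as the paper: cite faithfully flat descent for $\derD(-)$ from Lurie's SAG (the paper uses \cite[Corollary D.6.3.3]{sag}), then deduce the subcategory cases from the fact that perfectness and local freeness of given rank are fppf-local conditions (the paper cites \cite[Proposition 2.8.4.2]{sag} and \cite[Corollary 3.3.3.2]{htt} for the passage to full subcategories). Your treatment is more explicit on two points the paper leaves implicit: the reduction from $\St_k$ to the affine site via right Kan extension (which the paper invokes elsewhere as \cite[Proposition 1.3.1.7]{sag}, e.g.\ in the proof of Proposition~\ref{prop_dr_sheaf}), and the separate handling of $\Pic$ via the limit-preservation of $(-)^\simeq$.
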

\begin{proof}
For $\derD(-)$ this follows from \cite[Corollary D.6.3.3]{sag}. Since the condition that an object is perfect or locally free (of rank $n$) is local for the flat topology by \cite[Proposition 2.8.4.2]{sag}, the rest follow by \cite[Corollary 3.3.3.2]{htt}.
\end{proof}
%!TEX root = main.tex
\section{Derived de Rham cohomology}
In this section we give the constructions of derived de Rham cohomology and derived crystalline cohomology, and state various basic results we need. Most definitions and results are due to Bhatt and Mao, see \cite{bhatt-cddrc} \cite{bhatt-padic} \cite{mao}. In Section \ref{ss_comp_dr} we state and prove a result comparing (Hodge completed) derived de Rham cohomology of a surjective ring map $A \to (A / I)$ with the derived completion of $A$ in $I$, a result originally due to Bhatt \cite{bhatt-cddrc}.  
 
Throughout this section, fix a base ring $k$.
\label{sec_cohomology_affine}
\subsection{The derived de Rham complex}

In this section, we briefly recall the definition of the derived de Rham complex, and state some of the properties we need. Recall that classical de Rham cohomology is defined for a morphism of rings $A \to B$. Thus one would like to define derived de Rham cohomology for a morphism of animated rings $A \to B$. To do this, it is helpful to find a set of compact projective generators for the category $\Fun(\Delta^1, \CAlg_k^\an)$. This can be done in big generality as follows.

For $\mathcal{C}$ a compact and $1$-projectively generated $1$-category with ${S}$ a set of compact $1$-projective generators, the $\infty$-category $\Fun(\Delta^p, \Ani(\mathcal{C}))$ is compact projectively generated. By Lemma \ref{lem_fun_proj_gen}, a set of compact projective generators is given by
\[
S_p := \left \{\underbrace{0 \to \dots \to 0}_{i \text{ times}} \to \underbrace{X \to \dots \to X}_{p - i + 1 \text{ times}} \mid \substack{i \in \{0, \dots, p\} \\ X \in S} \right \}
\]
We will write $\Fun(\Delta^p, \Ani(\mathcal{C}))_{\mathrm{gen}}$ for the full subcategory of $\Fun(\Delta^p, \Ani(\mathcal{C}))$ spanned by finite coproducts of elements in $S_p$ (see Definition \ref{def_gen_fun}), note that this depends on a choice of compact $1$-projective generators for $\mathcal{C}$.

We are now ready to define the derived de Rham complex. 
\begin{definition}[Derived de Rham complex]
We define the \emph{derived de Rham complex}
\[
\dR_{- / -} : \Fun(\Delta^1, \CAlg_k^\an) \to \CAlg_\fil(k)
\]
as the left Kan extension of the composition 
\[
\Fun(\Delta^1, \CAlg_k^\an)_\gen \subseteq \Fun(\Delta^1, \Poly_k) \xrightarrow{\Omega^\bullet_{-/-}}  \CAlg(\Ch(k)_\fil) \to \CAlg_\fil(k)
\]
where $\Omega^\bullet_{-/-}$ denotes the classical de Rham complex equipped with the Hodge filtration (\stacksref{0FKL}) and (graded) multiplication. For $p \in \NN$ will write
\[
\LL^p_{- / -} := \gr^p \dR_{- / -}[p]
\]
for the graded pieces, and refer to the completion $\widehat{\dR}_{-/-}$ as the  \emph{Hodge completed derived de Rham complex}. 
\end{definition}

If $A \to B$ is a smooth map of $k$-algebras one has $\LL_{B / A} = \Omega_{B / A}$. It follows that for any map of $k$-algebras $A \to B$, the complex $\LL_{B / A}$ coincides with Illusie's cotangent complex (see \cite{ill71}).

\begin{construction}\label{dr_iso_z}
Let $A \to B$ be a smooth map of $k$-algebras. Then $\Omega_{B / A}$ is a finitely generated $B$ module. Hence the Hodge filtration on $\Omega^\bullet_{B / A}$ is finite, and in particular $\Omega^\bullet_{B / A}$ is complete. We thus get a canonical map
\[
\widehat{\dR}_{B / A} \to \Omega^\bullet_{B / A}
\]
in $\CAlg_\fil(k)$. Applying \stacksref{08R5} to the graded pieces, we get a canonical equivalence
\[
\widehat{\dR}_{B / A} \xrightarrow{\sim} \Omega^\bullet_{B / A}
\]
in $\CAlg_\fil(k)$.
\end{construction}

\begin{lemma}\label{dr_iso_p}
 Let $p$ be a prime, let $k$ be a $\ZZ / p^n \ZZ$-algebra for some $n \in \NN$, and let $A \to B$ a smooth map of $k$-algebras. Then the maps
 \[
{\dR}_{B / A} \to \widehat{\dR}_{B / A} \to \Omega^\bullet_{B / A}
 \]
are  equivalences in $\CAlg_\fil(k)$. 
\end{lemma}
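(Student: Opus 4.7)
By Construction~\ref{dr_iso_z}, the map $\widehat{\dR}_{B/A} \to \Omega^\bullet_{B/A}$ is already an equivalence, and this argument uses only smoothness of $A \to B$ and no hypothesis on the characteristic. So the content of the lemma lies in the first arrow: I need to show that $\dR_{B/A}$ is itself Hodge-complete, i.e.\ that the fiber $\lim_p \Fil^p \dR_{B/A}$ of $\dR_{B/A} \to \widehat{\dR}_{B/A}$ vanishes.

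For smooth $A \to B$, smoothness is finitely presented, so $\LL_{B/A} \simeq \Omega_{B/A}$ is a finitely generated projective $B$-module of rank bounded by some integer $N$. Consequently the derived exterior powers $\LL^i_{B/A} \simeq \wedge^i_B \Omega_{B/A}$ vanish for $i > N$, and every graded piece $\gr^i \dR_{B/A} \simeq \LL^i_{B/A}[-i]$ vanishes for $i > N$. The Hodge filtration therefore stabilizes from step $N+1$ onward, so $\lim_p \Fil^p \dR_{B/A} \simeq \Fil^{N+1} \dR_{B/A}$, and it remains to show this object vanishes.

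Since vanishing of graded pieces does not by itself imply vanishing of the filtration --- only stabilization --- I would upgrade this to actual vanishing by reducing to an explicit presentation. By the local structure of smooth morphisms, étale-locally on $B$ the map $A \to B$ factors as $A \to P := A[x_1, \ldots, x_N] \xrightarrow{\text{ét.}} B$. For the polynomial step, $\dR_{P/A} \simeq \Omega^\bullet_{P/A}$ by the very definition of the left Kan extension, and here $\Fil^{N+1} \Omega^\bullet_{P/A} = \Omega^{\geq N+1}_{P/A} = 0$. For the étale step, the vanishing of $\LL_{B/P}$ forces $\dR_{B/P} \simeq B$, concentrated in Hodge degree zero. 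A base change / Künneth formula for derived de Rham of compositions, combined with étale descent for $\dR_{-/-}$, then propagates this boundedness and yields $\Fil^{N+1} \dR_{B/A} = 0$ globally.

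The main obstacle is assembling the technical inputs needed for this local reduction: étale descent for $\dR_{-/-}$, the base change formula for compositions, and the étale case $\dR_{B/P} \simeq B$ (itself a special case of completeness, but much easier to establish because all positive-degree graded pieces vanish identically). The hypothesis that $k$ is a $\ZZ/p^n\ZZ$-algebra is invoked here to ensure these inputs are available in the clean form provided by the work of Bhatt~\cite{bhatt-padic} and Mao~\cite{mao}; in characteristic zero the formalism behaves differently and would require separate treatment.
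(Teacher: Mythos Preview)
Your reduction has a genuine gap, and it is precisely the one you yourself flag for the general case. You correctly observe that vanishing of the graded pieces beyond degree $N$ only gives \emph{stabilization} of the filtration, not vanishing of $\Fil^{N+1}\dR_{B/A}$. But the same logic applies verbatim to your ``easier'' étale step: for étale $P \to B$ you know $\gr^i \dR_{B/P} = 0$ for all $i \geq 1$, which tells you only that $\Fil^1 \dR_{B/P} = \Fil^2 \dR_{B/P} = \cdots$, not that this common value is zero. So the claim that $\dR_{B/P} \simeq B$ with trivial filtration is exactly the same kind of completeness statement you are trying to prove, and you have not explained why it is any easier. (It is not: in characteristic zero, where the lemma fails, the étale case already fails.) Moreover, the auxiliary inputs you invoke --- étale descent for the \emph{uncompleted} $\dR_{-/-}$ and a base-change formula for compositions --- are not established in the paper (Proposition~\ref{prop_dr_sheaf} is for $\widehat{\dR}$), and you do not indicate where in Bhatt or Mao they appear in the form you need.

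The paper's argument is quite different and avoids the local reduction entirely. It cites \cite[Corollary~3.10]{bhatt-padic} as a black box: over a $\ZZ/p^n\ZZ$-algebra, the \emph{underlying} (unfiltered) map $\dR_{B/A} \to \Omega^\bullet_{B/A}$ in $\derD(k)$ is an equivalence for smooth $A \to B$. This is the step that genuinely uses the $p$-adic hypothesis, via the derived Cartier isomorphism. One then upgrades to a filtered equivalence by a fiber-sequence argument: since $\LL^j_{B/A} \simeq \Omega^j_{B/A}$ for smooth maps, the quotients $\dR_{B/A}/\Fil^i \to \Omega^\bullet_{B/A}/\Fil^i$ are equivalences for each $i$; combined with the underlying equivalence on $\Fil^0$, the fiber sequences force each $\Fil^i \dR_{B/A} \to \Fil^i \Omega^\bullet_{B/A}$ to be an equivalence. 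This is where the characteristic hypothesis actually enters, and it is the ingredient your argument is missing.
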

\begin{proof}
It suffices to show the first map is an equivalence, as the second was already shown to be an equivalence in general.  By \cite[Corollary 3.10]{bhatt-padic} the natural map
\begin{equation} \label{dr_iso}
\dR_{B / A} \to \Omega^\bullet_{B / A}
\end{equation}
is an  equivalence in $\derD(k)$. For any $i \geq 0$, we have a commutative diagram
\[
\begin{tikzcd}
\Fil^i \dR_{B / A} \dar  \rar& \dR_{B / A} \dar  \rar& \dR_{B / A} / \Fil^i \dar \\
\Fil^i \Omega^\bullet_{B / A} \rar& \Omega^\bullet_{B / A} \rar& \Omega^\bullet_{B / A} / \Fil^i
\end{tikzcd}
\]
in $\derD(k)$, where the rows are fiber sequences. The middle vertical map is an equivalence by (\ref{dr_iso}). Since $A \to B$ is smooth, the map $\LL^j_{B / A} \to \Omega^j_{B / A}$ is an equivalence for all $j$, hence the right vertical map is an equivalence as well. It follows that the left map is an equivalence for all $i \geq 0$, and thus the map 
\[
\dR_{B / A} \to \widehat{\dR}_{ B / A}
\]
is an equivalence in $\derD(k)_\fil$. Since the map $\CAlg_\fil(k) \to \derD(k)_\fil$ is conservative, we conclude.
\end{proof}
On the contrary, if $\QQ \subseteq k$, one may show using the Poincar\'e lemma that $\dR_{P / A} \cong A$ for any polynomial $A$-algebra $P$. By the lemma below, it follows that $\Fil^0\dR_{B / A} \cong A$ for all $A$-algebras $B$.
\begin{lemma}\label{lem_dr_colim}
The functor 
\[
\dR_{-/-} \colon \Fun(\Delta^1, \CAlg_k^\an) \to \CAlg_\fil(k)
\]
commutes with small colimits.
\end{lemma}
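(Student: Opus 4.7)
The plan is to invoke Proposition \ref{left_kan}. Set $\mathcal{C}_0 := \Fun(\Delta^1, \CAlg_k^\an)_\gen$. By construction, $\dR_{-/-}$ is the left Kan extension along $\mathcal{C}_0 \hookrightarrow \Fun(\Delta^1, \CAlg_k^\an)$ of the composite through $\Omega^\bullet_{-/-}$. The cited proposition then guarantees that $\dR_{-/-}$ automatically preserves sifted colimits, and that it preserves all small colimits if and only if its restriction to $\mathcal{C}_0$ preserves finite coproducts. So the task reduces to verifying this last condition for the classical Hodge-filtered de Rham complex.

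The generators defining $\mathcal{C}_0$ are the arrows $(k \to P)$ and $(P \to P)$ with $P$ a finitely generated polynomial $k$-algebra. Coproducts in $\Fun(\Delta^1, \CAlg_k^\an)$ are computed pointwise in $\CAlg_k^\an$, where on polynomial algebras they are given by $\otimes_k$. On the target side, by Lemma \ref{lem_alg_coprod}, finite coproducts in $\CAlg_\fil(k)$ are given by Day convolution $\otimes^\Day$. Thus preservation of finite coproducts reduces, via pairwise cases, to the Künneth identity
\[
\Omega^\bullet_{P \otimes_k Q / k} \;\simeq\; \Omega^\bullet_{P/k} \otimes^\Day \Omega^\bullet_{Q/k}
\]
for polynomial $k$-algebras $P, Q$, together with the base-change analogue $\Omega^\bullet_{P \otimes_k Q / Q} \simeq \Omega^\bullet_{P/k} \otimes^\Day Q$ and the trivial case of two identity generators. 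The nontrivial identity follows from the classical decomposition $\Omega^1_{P \otimes_k Q / k} \cong (\Omega^1_{P/k} \otimes_k Q) \oplus (P \otimes_k \Omega^1_{Q/k})$ by passage to the exterior algebra: each Hodge piece $\Omega^n_{P \otimes_k Q / k}$ splits as $\bigoplus_{p + q = n} \Omega^p_{P/k} \otimes_k \Omega^q_{Q/k}$, matching exactly the $n$th graded piece of the Day convolution on the right-hand side.

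The main obstacle is upgrading this Künneth isomorphism from one of filtered chain complexes to one of filtered $E_\infty$-algebras, i.e.\ checking that the comparison survives the passage from $\CAlg(\Ch(k)_\fil)$ to $\CAlg_\fil(k)$. For this I would use that the de Rham complex of a polynomial $k$-algebra is termwise free as a module over that algebra, so the strict filtered tensor product in $\Ch(k)_\fil$ already computes the derived (homotopy-correct) one; the Künneth map is then manifestly a strict morphism of filtered graded-commutative dg-algebras, whose image in $\CAlg_\fil(k)$ is an equivalence of filtered $E_\infty$-algebras. Assembled with the reduction in the first paragraph, this yields the lemma.
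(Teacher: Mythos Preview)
Your approach is correct and essentially matches the paper's proof: both reduce via Proposition~\ref{left_kan} and Lemma~\ref{lem_alg_coprod} to the K\"unneth formula for the Hodge-filtered de Rham complex of polynomial $k$-algebras. The paper sidesteps your ``upgrading'' concern more cleanly by observing that the forgetful functor $\CAlg_\fil(k) \to \derD(k)_\fil$ is conservative, so the coproduct comparison map---which already exists canonically in $\CAlg_\fil(k)$---need only be checked to be an equivalence on underlying filtered complexes, where it reduces (on associated gradeds) to the wedge-power decomposition you describe.
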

\begin{proof}
By Proposition \ref{left_kan}, it suffices to show the functor
\[
\Fun(\Delta^1, \Poly_k)_\gen \xrightarrow{\Omega^\bullet_{-/-}} \CAlg_\fil(k)
\]
preserves finite coproducts. By  Lemma \ref{lem_alg_coprod} and an induction argument, it suffices to show the natural maps
\begin{align}
\dR_{B / A} \otimes^\Day_{k} k[x] &\to \dR_{B[x] / A[x]} \label{eq_coprod_dr1} \\
\dR_{B / A} \otimes^\Day_{k} \dR_{k[x] / k}  &\to \dR_{B[x] / A} \label{eq_coprod_dr2}
\end{align}
are equivalences in $\CAlg_\fil(k)$ for any $(A \to B) \in \Fun(\Delta^1, \Poly_k)_\gen$. Since the forgetful functor $\CAlg_\fil(k) \to \derD(k)_\fil$ is conservative, it suffices to check (\ref{eq_coprod_dr1}) and (\ref{eq_coprod_dr2}) are equivalences in $\derD(k)_\fil$. 

Now (\ref{eq_coprod_dr1}) is an equivalence by \stacksref{0FL5}. To show that (\ref{eq_coprod_dr2}) is an equivalence in $\derD(k)_\fil$, it suffices to check that the induced map on associated gradeds in $\derD(k)_\gr$ is an isomorphism (note that filtered objects are complete as we are considering finitely generated polynomial algebras). We thus need to confirm the natural map
\[
\Omega^{p - 1}_{B / A} \otimes_k \Omega_{k[x] / k} \oplus \Omega^{p}_{B / k} \otimes_k k[x] \to \Omega^{p}_{B[x] / A}
\]
is an equivalence in $\derD(k)$ for all $p$, which follows by taking wedge powers of the equation
\[
\Omega_{B / A} \otimes_k k[x] \oplus B[x] \mathrm{d} x = \Omega_{B[x] / A} \qedhere
\]
\end{proof}
\begin{corollary}\label{dr_coprod_fil}
The functor 
\[
\dR_{-/-} \colon \Fun(\Delta^1, \CAlg_k^\an) \to \derD(k)_\fil
\]
commutes with sifted colimits, and sends finite coproducts to finite Day convolution products.
\end{corollary}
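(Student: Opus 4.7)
The plan is to deduce this directly from Lemma \ref{lem_dr_colim} by transferring the colimit-preservation statement from $\CAlg_\fil(k)$ down to $\derD(k)_\fil$ via the forgetful functor
\[
U \colon \CAlg_\fil(k) \to \derD(k)_\fil.
\]
The composition $U \circ \dR_{-/-}$ is the functor we need to analyze.

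First, I would handle the sifted colimits claim. By Lemma \ref{lem_dr_colim}, $\dR_{-/-}$ preserves all small colimits, in particular sifted ones, into $\CAlg_\fil(k)$. The forgetful functor $U$ preserves sifted colimits: this is a standard fact for $\EE_\infty$-algebras in a symmetric monoidal presentable $\infty$-category (apply \cite[Proposition 3.2.3.1]{ha} to the presentable symmetric monoidal $\infty$-category $(\derD(k)_\fil, \otimes^\Day)$, whose tensor product preserves colimits separately in each variable). Composing, $U \circ \dR_{-/-}$ preserves sifted colimits.

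Next, for finite coproducts: again by Lemma \ref{lem_dr_colim}, $\dR_{-/-}$ sends finite coproducts in $\Fun(\Delta^1, \CAlg_k^\an)$ to finite coproducts in $\CAlg_\fil(k)$. By Lemma \ref{lem_alg_coprod}, the coproduct in $\CAlg_\fil(k)$ is computed as the Day convolution product of the underlying filtered objects. Hence $U$ turns finite coproducts in $\CAlg_\fil(k)$ into finite Day convolution products in $\derD(k)_\fil$, and the composite $U \circ \dR_{-/-}$ has the required property.

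There is no real obstacle here; the entire content is that Lemma \ref{lem_dr_colim} was stated at the level of filtered algebras (so as to encode the multiplicative structure and to get the Day convolution description of coproducts), while the corollary is the shadow of this in $\derD(k)_\fil$. The only care needed is to invoke the correct fact about the forgetful functor from $\EE_\infty$-algebras preserving sifted colimits, rather than all colimits (it does not preserve coproducts, which is precisely why the coproduct becomes Day convolution rather than remaining a coproduct in $\derD(k)_\fil$).
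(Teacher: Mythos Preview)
Your proof is correct and follows essentially the same approach as the paper: the paper's own argument also combines Lemma \ref{lem_dr_colim} with the fact that the forgetful functor $\CAlg_\fil(k) \to \derD(k)_\fil$ preserves sifted colimits (citing \cite[Corollary 3.2.3.2]{ha} rather than Proposition 3.2.3.1), and then invokes Lemma \ref{lem_alg_coprod} for the coproduct statement.
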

\begin{proof}
The statement about sifted colimits follows by combining \cite[Corollary 3.2.3.2]{ha} with Lemma \ref{lem_dr_colim}. The statement about finite coproducts follows by combining Lemma \ref{lem_alg_coprod} with Lemma \ref{lem_dr_colim}.
\end{proof}
\begin{corollary}[K\"unneth formula]\label{corol_kunneth}
Let $k$ be a ring, let $X$ be a smooth scheme over $k$, and let $Y$ be any stack over $k$. Then the natural map
\begin{equation}
\dR_{X / k} \otimes \dR_{Y / k} \to \dR_{X \times Y / k} \label{kunneth}
\end{equation}
is an equivalence in $\CAlg_\fil(k)$. 
\end{corollary}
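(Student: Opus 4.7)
The plan is to reduce the Künneth formula to the affine case, where it is a direct consequence of Corollary~\ref{dr_coprod_fil}. The main subtlety will be commuting the Day convolution tensor product with the limits that arise when extending $\dR_{-/k}$ from affines to stacks.

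For the base case, take $X = \Spec(A)$ and $Y = \Spec(B)$ both affine with $A$ smooth over $k$. Since smooth morphisms are flat, $A \otimes_k^{\derL} B = A \otimes_k B$, and this tensor product represents both the classical fiber product $X \times Y$ and the coproduct of $A$ and $B$ in $\CAlg_k^{\an}$. Applying Corollary~\ref{dr_coprod_fil} directly yields
\[
\dR_{X \times Y / k} \;=\; \dR_{A \otimes_k B / k} \;\cong\; \dR_{A/k} \otimes^{\Day} \dR_{B/k} \;=\; \dR_{X/k} \otimes^{\Day} \dR_{Y/k}
\]
in $\CAlg_\fil(k)$.

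Next, I would fix $X = \Spec(A)$ affine smooth and extend to arbitrary stacks $Y$. Writing $Y = \colim_\alpha \Spec(B_\alpha)$ in $\St_k$, universality of colimits in the $\infty$-topos $\St_k$ gives $X \times Y \cong \colim_\alpha \Spec(A \otimes_k B_\alpha)$. Combining the right Kan extension formula~(\ref{def_extend}) with the base case yields
\[
\dR_{X \times Y / k} \;\cong\; \lim_\alpha \dR_{A \otimes_k B_\alpha / k} \;\cong\; \lim_\alpha \bigl( \dR_{A/k} \otimes^{\Day} \dR_{B_\alpha / k} \bigr).
\]
Since $\dR_{X/k} \otimes^{\Day} \dR_{Y/k} = \dR_{A/k} \otimes^{\Day} \lim_\alpha \dR_{B_\alpha/k}$, it remains to commute the tensor product with this limit. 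I would derive this from $k$-flatness of the filtered pieces $\Fil^p \dR_{A/k}$: each graded piece $\gr^q \dR_{A/k} \cong \Omega^q_{A/k}[-q]$ is a finitely generated projective $A$-module, and together with the $k$-flatness of $A$ and the boundedness of the Hodge filtration for smooth $A$, an induction shows each $\Fil^p \dR_{A/k}$ is $k$-flat; Day convolution with such an object then preserves the limit.

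Finally, for $X$ a general smooth scheme, I would choose a finite affine cover $\{U_i\}$ (reducing to qcqs $X$ and then gluing). Both $\dR_{X/k}$ and $\dR_{X \times Y/k}$ are computed as finite Čech totalizations of their values on $U_\bullet$ and $U_\bullet \times Y$, respectively. Since these totalizations are finite limits in a stable $\infty$-category, they commute with Day convolution, and the result for general $X$ follows from the affine case. The hardest part of the argument is the commutation of Day convolution with the potentially infinite limits appearing in the second step; the flatness of $\dR_{A/k}$ for smooth $A/k$ is the essential input that makes this work.
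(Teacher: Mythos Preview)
Your three-step reduction (affine base case via Corollary~\ref{dr_coprod_fil}; extend to arbitrary stacks $Y$ with $X = \Spec(A)$ smooth affine; extend to general smooth $X$ by a finite \v{C}ech cover) is exactly the paper's strategy. The paper dispatches step~2 in one line by citing the right Kan extension formula~(\ref{def_extend}), and handles step~3 via~(\ref{def_extend2}) rather than a \v{C}ech totalization, but these are cosmetic differences.

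Your justification for step~2 contains a real gap, however. You propose to commute $\dR_{A/k} \otimes^{\Day} (-)$ with the limit defining $\dR_{Y/k}$ by invoking $k$-flatness of each $\Fil^p \dR_{A/k}$. But flatness of a complex $M$ only says that $M \otimes_k (-)$ is exact, hence preserves \emph{finite} limits in the stable setting; it does not give preservation of arbitrary limits. For instance, $k[x]$ is $k$-flat, yet $k[x] \otimes_k \prod_{i \in I} k \to \prod_{i \in I} k[x]$ is not an isomorphism for infinite $I$. Since the boundedness of the Hodge filtration only reduces the Day convolution to a finite colimit of such tensor functors, this does not help. What would actually be required is dualizability of each $\Fil^p \dR_{A/k}$ in $\derD(k)$, which fails whenever $A$ is not finite over $k$. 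The paper's terse proof does not address this point either; in every application of the corollary later in the paper the stack $Y$ is in fact affine or a qcqs scheme, and in those cases only finite limits arise and your step~3 reasoning (finite totalizations commute with $\otimes^{\Day}$ in a stable category) suffices.
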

\begin{proof}
If $A$ and $B$ are discrete $k$-algebras and $A$ is smooth, then the underived tensor product $A \otimes_k B$ computes the coproduct in $\CAlg_k^\an$. It follows that the map
\[
\dR_{A / k} \otimes \dR_{B / k} \to \dR_{A \otimes B / k} 
\]
is an equivalence in $\CAlg_\fil(k)$ whenever $A$ is smooth. Using (\ref{def_extend}), it follows that for any stack $Y$ over $k$ and any smooth $k$-algebra $A$ the natural map
\[
\dR_{A / k} \otimes \dR_{Y / k} \to \dR_{\Spec(A) \times Y / k} 
\]
is an equivalence in $\CAlg_\fil(k)$. The result follows using (\ref{def_extend2}). 
\end{proof}

We warn the reader that $\widehat{\dR}$ generally does not commute with colimits. If it would, then since $\widehat{\dR}_{P / \QQ} \simeq \QQ$ for all polynomial $\QQ$-algebras, one would have $\widehat{\dR}_{B / \QQ} \simeq \QQ$ for any $\QQ$-algebra $B$. But since $\QQ[x, \frac{1}x]$ is smooth over $\QQ$, by Construction \ref{dr_iso_z} we would get an equivalence
\[
\QQ \xrightarrow{\sim} \Omega^\bullet_{\QQ[x, \frac{1}{x}] / \QQ}
\]
in $\derD(\QQ)$. This clearly cannot exist since 
\[
\coh^1(\Omega^\bullet_{\QQ[x, \frac{1}{x}] / \QQ}) = \QQ
\]
but $\coh^1(\QQ) = 0$. 

For the reader's convenience, we give an explicit description of both $\dR$ and $\widehat{\dR}$. By \cite[Lemma 5.5.8.13]{htt}, for any $A \in \CAlg_k^\an$, there exists a simplicial ring $A_* \in \Fun(\Delta^\op, \Poly_k)$ such that
\[
A = \underset{\Delta^\op}\colim \ A_*
\]
One may use Lemma \ref{lem_dr_colim} to show the chain complexes
\[
\Tot^\oplus\(\Omega^\bullet_{A_* / k}\) \text{ and } \Tot^\Pi\(\Omega^\bullet_{A_* / k}\) 
\]
are isomorphic to $\Fil^0 \dR_{A / k}$ and $\Fil^0  \widehat{\dR}_{A / k}$ respectively (in $\derD(k)$). 

We end by showing that $\dR_{- / -} / \Fil^p$ and $\widehat{\dR}_{- / -}$ satisfy a derived descent statement, as was first observed by \cite[Remark 2.8]{bhatt-cddrc}. To formulate the statement, we first introduce some notation. 

Recall that for any $\infty$-category $\mathcal{C}$, we denote with $\cc \mathcal{C}$ the $\infty$-category of cosimplicial diagrams in $\mathcal{C}$. For any $A \in \Alg_k$, the functor
\[
\ev_{[0]} \colon \cc \Alg_{A}  \to \Alg_A
\]
commutes with all limits by \cite[Proposition 5.1.2.3]{htt}, and thus admits a left adjoint 
\[
\Cech(A \to -) \colon \Alg_A \to \cc \Alg_A 
\]
Explicitly, for $B \in \Alg_A$ the cosimplicial object $\Cech(A \to B) \in \cc\Alg_{A}$ is given by
\[
[n] \mapsto \underbrace{B \otimes_A \dots \otimes_A B}_{n + 1 \text{ times}}
\]
Moreover, for any element $(A \to A')$ in $\Fun(\Delta^1, \Alg_k)$, one has a commutative diagram
\[
\begin{tikzcd}
\Alg_A & \lar{\ev_{[0]}}  \cc \Alg_A \\
\Alg_{A'}\uar{\mathrm{forget}} & \lar{\\ev_{[0]}}  \cc \Alg_{A'} \uar{\mathrm{forget}}
\end{tikzcd}
\]
For any $B \in \Alg_{A'}$ one has $(\ev_{[0]} \circ \mathrm{forget})(\Cech(A' \to B)) = B$. Thus the counit of the adjunction
\[
\Cech(A \to -) \vdash \ev_{[0]}
\]
induces a natural transformation $\Cech(A \to -) \to \Cech(A' \to -)$. We thus get a functor
\[
\Cech(- \to -) \colon \Fun(\Delta^1, \CAlg_k^\an)_\gen \to \cc \CAlg_k^\an
\]
and by left Kan extension a functor
\[
\Cech(- \to -) \colon \Fun(\Delta^1, \CAlg_k^\an) \to \cc\CAlg_k^\an
\]
\begin{lemma}\label{bhatt_neat}
Let $(A \to B \to C) \in \Fun(\Delta^2, \CAlg_k^\an)$. Then for all $p \geq 1$, 
\begin{align*}
\lim_\Delta \LL^p_{\Cech(A \to B) / A} \cong 0 \\
\lim_\Delta C \otimes_{\Cech(A \to B)} \LL^p_{\Cech(A \to B) / A} \cong 0
\end{align*}
in $\derD(k)$.
\end{lemma}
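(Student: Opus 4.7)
The plan is to use the Künneth formula (Corollary \ref{dr_coprod_fil}) to decompose the graded pieces of $\dR_{\Cech(A\to B)^\bullet/A}$ and then construct an explicit contracting homotopy on the resulting combinatorial cosimplicial object. By Corollary \ref{dr_coprod_fil}, the functor $\dR_{-/A}$ preserves colimits, so iterating gives a natural equivalence $\dR_{\Cech(A\to B)^n/A} \simeq \dR_{B/A}^{\otimes^\Day_A(n+1)}$ in $\CAlg_\fil(A)$. Writing $R := \dR_{B/A}$ and passing to associated gradeds via the Day convolution formula,
\[
\gr^p \dR_{\Cech(A\to B)^n/A} \simeq \bigoplus_{p_0 + \cdots + p_n = p} \gr^{p_0} R \otimes_A \cdots \otimes_A \gr^{p_n} R,
\]
with $\gr^0 R \simeq B$ and $\gr^{p_i} R \simeq \LL^{p_i}_{B/A}[-p_i]$. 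Under this identification, the face map $d^k$ inserts the unit $A \to B$ at position $k$, that is, inserts a $\gr^0$-factor corresponding to a $0$-part in the partition at position $k$.

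For the second equivalence, tensoring with $C$ over $\Cech(A\to B)^n = B^{\otimes(n+1)}$ via $B^{\otimes(n+1)} \xrightarrow{\mathrm{mult}} B \to C$ collapses every $A$-tensor product into a $B$-tensor product. Sorting summands by the support $s = |\{i : p_i \geq 1\}|$ and the induced composition $(q_1,\dots,q_s)$ of $p$ with $q_j \geq 1$, the level-$n$ term becomes $\binom{n+1}{s}$ copies of the $C$-module $C \otimes_B \LL^{q_1}_{B/A}[-q_1] \otimes_B \cdots \otimes_B \LL^{q_s}_{B/A}[-q_s]$, whose cosimplicial structure is determined by the combinatorial cosimplicial $k$-module $K^\bullet_s$ with $K^n_s := k\bigl[\binom{[n+1]}{s}\bigr]$ and face maps $d^k$ shifting subsets to avoid position $k$. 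I then construct an explicit contracting homotopy $h \colon K^{n+1}_s \to K^n_s$ by setting $h(e_S) = 0$ if $0 \in S$ and $h(e_S) = e_{S-1}$ otherwise. A direct verification yields $h d^0 = \id$ and $h d^k = d^{k-1} h$ for $k \geq 1$, hence $h d + d h = \id$, giving $\lim_\Delta K^\bullet_s \simeq 0$ for $s \geq 1$. Since $p \geq 1$ forces every composition to have $s \geq 1$, the second equivalence follows.

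For the first equivalence, the same Künneth decomposition applies but the $A$-tensor products are not collapsed, so the naive analog of $h$ no longer satisfies the cosimplicial identity $hd + dh = \id$ strictly (the discrepancy is of the form $b \otimes \omega - 1 \otimes b\omega$ in $B \otimes_A \LL_{B/A}$, which vanishes only after tensoring over $B$). I therefore first handle the case of a map of polynomial $k$-algebras $A = k[\bar x] \to B = k[\bar x, \bar y]$ directly: here $\Omega^p_{B^{\otimes(n+1)}/A}$ decomposes, as a $k$-vector space, into summands indexed by (subset of differential indices, multi-degree of monomial) pairs. An analogous combinatorial contracting homotopy killing basis elements whose first-coordinate exponent $a_0$ is positive or whose subset contains $0$, and shifting the rest down by one, establishes the vanishing in the polynomial case. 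The general animated case then follows by writing $(A \to B)$ as a sifted colimit of polynomial maps and using that both the cosimplicial object $\LL^p_{\Cech(A\to B)^\bullet/A}$ and its totalization are built from operations natural in $(A \to B)$ through the left Kan extension defining $\dR$.

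The main obstacle is the first statement, where the clean contracting homotopy available over $B$-tensors breaks down over $A$-tensors. Resolving this requires either a careful reduction to the polynomial case (with attention to the fact that $\lim_\Delta$ generally does not commute with sifted colimits, so the nullhomotopy must be constructed functorially at the level of simplicial resolutions) or, alternatively, a filtration argument producing a natural nullhomotopy on the cochain complex assembled from the Künneth decomposition.
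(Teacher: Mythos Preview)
Your treatment of the second statement is correct and is essentially the argument the paper invokes: the paper reduces to \cite[Lemma 2.5]{bhatt-cddrc} by observing that $\LL_{B_n/A}\otimes_{B_n}C\cong(\LL_{B/A}\otimes_B C)^{\oplus n}$, and Bhatt's Lemma 2.5 is proved by exactly the combinatorial contracting homotopy you wrote down on $K^\bullet_s$ (together with wedge powers). So for that half, your approach and the paper's coincide.

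For the first statement there is a genuine gap. Your diagnosis is accurate: once you do not base change to $C$, the summands $B_{n+1}\otimes_{B,i}\LL_{B/A}$ carry different $B_{n+1}$-module structures, and the naive homotopy $h$ (kill the $i=0$ summand, shift the others down via $s^0$) satisfies $h\partial+\partial h=d^0s^0$ rather than $\id$; your ``$b\otimes\omega$ versus $1\otimes b\omega$'' discrepancy is exactly this. However, your proposed repair does not close the gap. Establishing the vanishing for a fixed map of polynomial algebras via a basis-dependent homotopy, and then ``passing to sifted colimits'', does not work as stated: $\lim_\Delta$ does not commute with sifted colimits, and the basis-dependent homotopy you describe is not natural in morphisms of $\Fun(\Delta^1,\Poly_k)_{\gen}$ (it depends on the choice of variables), so it does not assemble into a functorial nullhomotopy of the cosimplicial object that would survive the colimit. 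You flag this yourself in the last paragraph, but you do not actually carry out either of the two alternatives you list, so the argument for the first statement is incomplete.

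The paper sidesteps this entirely by citing \cite[Corollary 2.7]{bhatt-cddrc}. Bhatt's Lemma 2.5 is stated and proved for an \emph{arbitrary} cosimplicial ring $A^\bullet$ and an $A^0$-module $M$: the cosimplicial abelian group $[n]\mapsto\bigoplus_{\phi\colon[0]\to[n]}A^n\otimes_{A^0,\phi}M$ is shown to be homotopy equivalent to zero directly, without any reduction to polynomial rings. Applying this with $A^\bullet=\Cech(A\to B)$ and $M=\LL_{B/A}$, and then taking wedge powers, gives the first statement for arbitrary $(A\to B)\in\Fun(\Delta^1,\CAlg_k^{\an})$ in one stroke. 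I would recommend either reading Bhatt's proof of Lemma 2.5 and adapting it here, or reformulating your argument so that the nullhomotopy is built uniformly over the varying cosimplicial ring rather than chosen level-by-level in a polynomial chart.
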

\begin{proof}
The first statement is \cite[Corollary 2.7]{bhatt-cddrc}. For the second statement, write $B_n = B^{ \otimes_A n }$. Then note that by Lemma \ref{lem_dr_colim} one has
\[
\LL_{B_n/A} \otimes_{B_n} C \cong (\LL_{B / A} \otimes_B C)^{\oplus n}
\]
Thus the second statement follows by taking wedge powers of \cite[Lemma 2.5]{bhatt-cddrc}, where one takes $A$ to be the constant cosimplicial ring $C$ and one takes $M$ to be $\LL_{B / A} \otimes_B C$.
\end{proof}
\begin{corollary}\label{corol_descent_dr2}
For any $(A \to B \to C) \in \Fun(\Delta^2, \CAlg_k^\an)$ and $p \in \ZZ_{\geq 0}$, the natural map
\[
\lim_\Delta \Cech(B \to C) \otimes_B \LL^p_{B / A} \to \lim_\Delta \LL^p_{\Cech(B \to C) / A}
\]
is an equivalence.
\end{corollary}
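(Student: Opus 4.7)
The plan is to apply the transitivity fiber sequence for cotangent complexes at each level of the \v{C}ech cosimplicial object, take derived $p$th wedge powers, and then appeal to Lemma~\ref{bhatt_neat} to kill the extra graded pieces that appear.

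Concretely, at each $[n] \in \Delta$, transitivity applied to $A \to B \to \Cech(B \to C)_n$ yields a fiber sequence
\[
\LL_{B/A} \otimes_B \Cech(B \to C)_n \to \LL_{\Cech(B \to C)_n / A} \to \LL_{\Cech(B \to C)_n / B},
\]
and taking $p$th derived wedge powers (equivalently, using the multiplicativity of the Hodge filtration on $\dR_{-/-}$) produces a natural finite filtration $F^p \subseteq F^{p-1} \subseteq \cdots \subseteq F^0 = \LL^p_{\Cech(B \to C)_n / A}$ whose bottom piece is $F^p \simeq \LL^p_{B/A} \otimes_B \Cech(B \to C)_n$ and whose associated gradeds are $F^i / F^{i+1} \simeq \LL^i_{B/A} \otimes_B \LL^{p-i}_{\Cech(B \to C)_n / B}$. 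The inclusion $F^p \hookrightarrow F^0$ is natural in $[n]$, and upon applying $\lim_\Delta$ it recovers precisely the map in the statement. Its cofiber therefore carries a finite filtration with associated gradeds
\[
\lim_\Delta \bigl( \LL^{p-j}_{B/A} \otimes_B \LL^j_{\Cech(B \to C) / B} \bigr), \qquad j = 1, \ldots, p,
\]
so the proof reduces to showing each of these vanishes.

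For the vanishing, I would first use the $\Cech(B \to C)_n$-module structure on $\LL^j_{\Cech(B \to C)_n / B}$ to rewrite
\[
\LL^{p-j}_{B/A} \otimes_B \LL^j_{\Cech(B \to C)_n / B} \simeq \bigl( \LL^{p-j}_{B/A} \otimes_B \Cech(B \to C)_n \bigr) \otimes_{\Cech(B \to C)_n} \LL^j_{\Cech(B \to C)_n / B},
\]
putting the expression into the format handled by Lemma~\ref{bhatt_neat}(2). The main obstacle will be that Lemma~\ref{bhatt_neat}(2) as stated requires the coefficient to be a $B$-algebra, whereas here it is only a $B$-module. I would circumvent this by revisiting the proof of Lemma~\ref{bhatt_neat}(2): its only inputs are the K\"unneth-style identification $\LL_{\Cech(B \to C)_n / B} \otimes_{\Cech(B \to C)_n} N \simeq (\LL_{C/B} \otimes_B N)^{\oplus n}$, which is functorial in any $B$-module $N$, together with the abstract vanishing \cite[Lemma 2.5]{bhatt-cddrc}, which is itself a statement about arbitrary coefficient modules. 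Taking $N = \LL^{p-j}_{B/A}$ (and iterating the Künneth identification for the wedge powers), the same argument yields the required vanishing for $j \geq 1$ and thus establishes the corollary.
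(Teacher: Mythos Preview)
Your proposal is correct and follows essentially the same route as the paper: transitivity for $A \to B \to \Cech(B \to C)$, the induced filtration on $\LL^p$, and the vanishing of the graded pieces with $j \geq 1$ via Lemma~\ref{bhatt_neat}. The only cosmetic difference is that the paper invokes the \emph{first} statement of Lemma~\ref{bhatt_neat} (with the constant $B$-module $\LL^{p-j}_{B/A}$ tensored in) rather than the second; your observation that one really needs the module-coefficient form of \cite[Lemma~2.5]{bhatt-cddrc} is exactly the point that makes either citation work.
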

\begin{proof}
We follow \cite[Theorem 3.1]{integralpadic}. The transitivity sequence for the sequence of cosimplicial rings $A \to B \to \Cech(B \to C)$ yields a short exact sequence 
\[
\Cech(B \to C) \otimes_B \LL_{B / A} \to \LL_{\Cech(B \to C) / A} \to \LL_{\Cech(B \to C) / B}
\]
in $\cc \derD(k)$. Taking (pointwise) wedge powers, we see that $\LL^p_{\Cech(B \to C) / A}$ comes with a natural filtration with graded pieces
\[
\gr^j(\LL^p_{\Cech(B \to C) / A}) = \LL^j_{B / A} \otimes_B \LL^{p - j}_{\Cech(B \to C) / B}
\]
By the first statement in Lemma \ref{bhatt_neat}, all graded pieces except the $j = p$ piece vanish after taking the limit over $\Delta$, which gives the result.
\end{proof}
\begin{prop}\label{prop_dr_sheaf}
Let $k$ be a ring. The functor
\[
\widehat{\dR}_{- / k} \colon \St_k^\op \to \CAlg_\fil(k)
\]
is a sheaf for the fppf topology.
\end{prop}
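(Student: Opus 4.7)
The plan is to verify the sheaf condition at the level of affine schemes; since $\widehat{\dR}_{-/k}$ on general stacks is defined by right Kan extension from $\Alg_k$, and $\St_k$ is the $\infty$-topos of fppf sheaves on $(\Alg_k)^\op$, showing the sheaf property on affines will give the sheaf property on all of $\St_k$. Concretely, I need to show that for any fppf cover $A \to B$ in $\Alg_k$, the canonical map
\[
\widehat{\dR}_{A/k} \;\longrightarrow\; \lim_\Delta \widehat{\dR}_{\Cech(A \to B)/k}
\]
is an equivalence in $\CAlg_\fil(k)$.

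My first reduction is to pass to $\derD(k)_\fil$: limits in $\CAlg_\fil(k)$ are created by the forgetful functor to $\derD(k)_\fil$, so it suffices to check the equivalence after forgetting the algebra structure. Both sides are complete filtered objects, because completeness is preserved under limits, so by conservativity of the functor $\gr \colon \derD(k)_{\fil, \comp} \to \derD(k)_\gr$ it suffices to check the induced map on each graded piece. Since $\gr^p$ is exact in the stable setting and hence commutes with totalizations, and $\gr^p \widehat{\dR}_{-/k} = \LL^p_{-/k}[-p]$, the problem reduces to showing that for each $p \geq 0$,
\[
\LL^p_{A/k} \;\longrightarrow\; \lim_\Delta \LL^p_{\Cech(A \to B)/k}
\]
is an equivalence in $\derD(k)$.

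The heart of the argument is Corollary \ref{corol_descent_dr2}, applied to the composition $(k \to A \to B)$: it produces a canonical equivalence
\[
\lim_\Delta \(\Cech(A \to B) \otimes_A \LL^p_{A/k}\) \;\xrightarrow{\sim}\; \lim_\Delta \LL^p_{\Cech(A \to B)/k}.
\]
Since $A \to B$ is faithfully flat, fpqc descent for $\derD(A)$ identifies the cosimplicial $A$-module $\LL^p_{A/k} \otimes_A \Cech(A \to B)$ with (constant) $\LL^p_{A/k}$ in the limit. This yields the desired equivalence. The substantive input — descent for wedge powers of the cotangent complex along a Čech nerve — is already encapsulated in Corollary \ref{corol_descent_dr2}; the remaining reductions are purely formal, so I anticipate no real obstacle beyond careful bookkeeping of filtered-to-graded reductions.
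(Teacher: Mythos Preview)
Your proof is correct and follows essentially the same approach as the paper: both reduce to affines, then to the graded pieces $\LL^p_{-/k}$ via the completeness of $\widehat{\dR}$ (your ``conservativity of $\gr$'' argument is what the paper compresses into ``commuting limits with limits''), and both invoke Corollary~\ref{corol_descent_dr2} applied to $(k \to A \to B)$ together with faithfully flat descent for $A$-modules to finish. Your write-up simply makes explicit the fpqc descent step and the passage to graded pieces that the paper leaves implicit.
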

\begin{proof}
By \cite[Proposition 1.3.1.7]{sag}, it suffices to show the functor
\[
\widehat{\dR}_{- / k} \colon \Alg_k \to \CAlg_\fil(k)
\]
is a sheaf for the fppf topology. Commuting limits with limits, this follows directly from Lemma \ref{corol_descent_dr2}. 
\end{proof}
We also need the following statement, which can best be described as `descent on the base'.
\begin{lemma}\label{lem_descent_dr}
For any $(A \to B \to C) \in \Fun(\Delta^2, \CAlg_k^\an)$ and $p \in \ZZ_{\geq 0}$, the natural map
\[
\dR_{C / A} / \Fil^p \to \lim_\Delta \dR_{C / \Cech(A \to B)} / \Fil^p
\]
is an equivalence.
\end{lemma}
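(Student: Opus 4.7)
The plan is to filter both sides by the Hodge filtration, reduce the claim to a statement about graded pieces, and then apply the transitivity fiber sequence of cotangent complexes. Both sides carry finite ($p$-step) Hodge filtrations whose associated gradeds are $\LL^i_{C/A}[-i]$ and $\LL^i_{C/\Cech(A \to B)}[-i]$ respectively (the latter pointwise in the cosimplicial direction) for $i = 0, \dots, p-1$. Since cosimplicial limits commute with finite limits, $\lim_\Delta$ preserves these finite filtrations, so it suffices to show that the natural map
\[
\LL^i_{C/A} \longrightarrow \lim_\Delta \LL^i_{C/\Cech(A \to B)}
\]
is an equivalence in $\derD(C)$ for each $i \geq 0$. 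The case $i = 0$ is immediate, since both sides reduce to the constant cosimplicial object $C$.

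For $i \geq 1$, I would apply the transitivity fiber sequence of cotangent complexes to the composition $A \to \Cech(A \to B) \to C$ in $\cc \CAlg_k^\an$ (with $A$ and $C$ regarded as constant cosimplicial rings). This yields
\[
C \otimes^{\derL}_{\Cech(A \to B)} \LL_{\Cech(A \to B)/A} \longrightarrow \LL_{C/A} \longrightarrow \LL_{C/\Cech(A \to B)}
\]
in $\cc \derD(C)$. Taking $i$-th derived exterior powers, exactly as in the proof of Corollary \ref{corol_descent_dr2}, produces a length-$(i+1)$ filtration on the constant cosimplicial object $\LL^i_{C/A}$ whose $j$-th associated graded is
\[
\LL^j_{\Cech(A \to B)/A} \otimes^{\derL}_{\Cech(A \to B)} \LL^{i-j}_{C/\Cech(A \to B)}, \qquad j = 0, \dots, i.
\]
After applying $\lim_\Delta$, the $j = 0$ piece is $\lim_\Delta \LL^i_{C/\Cech(A \to B)}$ and the total object is $\LL^i_{C/A}$, so the statement reduces to showing that each graded piece for $j \geq 1$ vanishes after $\lim_\Delta$.

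This last vanishing is the main obstacle. It strengthens the second statement of Lemma \ref{bhatt_neat}, where the constant cosimplicial coefficient module $C$ is replaced by the nontrivial cosimplicial $\Cech(A \to B)$-module $\LL^{i-j}_{C / \Cech(A \to B)}$. I expect the strategy to carry over: the identification $\LL_{B^{\otimes_A(n+1)}/A} \otimes_{B^{\otimes_A(n+1)}} (-) \cong \bigl(\LL_{B/A} \otimes_B (-)\bigr)^{\oplus (n+1)}$ expresses the $j$-th wedge power in cosimplicial degree $n$ as a sum of external tensor products, so the cosimplicial object in question is built from tensor powers of $\LL_{B/A} \otimes_B -$. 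An extra-degeneracy argument adapted from \cite[Lemma 2.5]{bhatt-cddrc} to allow nontrivial cosimplicial coefficients should then yield the vanishing of $\lim_\Delta$; the main subtlety will be checking that the contraction remains compatible with the cosimplicial structure on $\LL^{i-j}_{C/\Cech(A \to B)}$, which I expect to follow from the fact that this module is itself built from $\LL_{B/A}$-terms via the same transitivity sequence, so both factors of the tensor product carry compatible extra-degeneracy structures.
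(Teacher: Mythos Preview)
Your overall strategy is exactly the paper's: reduce to the graded pieces $\LL^i_{C/A} \to \lim_\Delta \LL^i_{C/\Cech(A\to B)}$, then filter $\LL^i_{C/A}$ via the transitivity sequence for $A \to \Cech(A\to B) \to C$, with graded pieces
\[
\LL^j_{\Cech(A\to B)/A}\otimes_{\Cech(A\to B)} \LL^{i-j}_{C/\Cech(A\to B)},
\]
and show the $j\geq 1$ pieces vanish after $\lim_\Delta$.

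Where you hesitate, the paper closes the gap without any strengthening of Lemma~\ref{bhatt_neat} or compatibility check. The trick is to factor the tensor over the \emph{constant} ring $C$:
\[
\bigl(\LL^j_{\Cech(A\to B)/A}\otimes_{\Cech(A\to B)} C\bigr)\otimes_C \LL^{i-j}_{C/\Cech(A\to B)}.
\]
This is now the levelwise tensor product of two cosimplicial $C$-modules $M^\bullet$ and $N^\bullet$, i.e.\ the diagonal of the bicosimplicial object $(p,q)\mapsto M^p\otimes_C N^q$. By Eilenberg--Zilber, $\lim_\Delta$ of the diagonal agrees with the iterated limit $\lim_q\lim_p(M^p\otimes_C N^q)$. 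The second part of Lemma~\ref{bhatt_neat} gives $\lim_\Delta M^\bullet = 0$ via an explicit contracting homotopy (this is how Bhatt's Lemma~2.5 is proved), and that contraction survives $-\otimes_C N^q$ for each fixed $q$, so the inner limit already vanishes. No information about the cosimplicial structure of $N^\bullet = \LL^{i-j}_{C/\Cech(A\to B)}$ is needed; the two cosimplicial directions are decoupled by Eilenberg--Zilber, which is precisely what obviates your ``compatibility'' worry.
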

\begin{proof}
It suffices to show the natural map 
\[
\LL^p_{C / A} \to \lim_{\Delta} \LL^p_{C / \Cech(A \to B)}
\]
is an equivalence for all $p$. The transitivity sequence for $A \to \Cech(A \to B) \to C$ induces a filtration on $\LL^p_{C / A}$ with 
\[
\gr^j(\LL^p_{C / A}) = (\LL^j_{\Cech(A \to B) / A} \otimes_{\Cech(A \to B)} C) \otimes_C \LL^{p - j}_{C / \Cech(A \to B)}
\]
for $j \in \{0, \dots, p\}$. By the second part of Lemma \ref{bhatt_neat} and an Eilenberg-Zilber argument, after taking the limit over $\Delta$ all terms vanish except the $j = 0$ term, which proves the result.
\end{proof}
Finally, we will need the following nil-invariance result.
\begin{theorem}[Nil-invariance of derived de Rham cohomology]\label{theorem_goodwillie_dr}
Let $k$ be a ring such that $\QQ \subseteq k$, and let $A \to B$ be a morphism in $\CAlg^\an_k$ such that $\pi_0(A) \to \pi_0(B)$ is surjective. If $\ker(\pi_0(A) \to \pi_0(B))$ is a nilpotent ideal in $\pi_0(A)$, then the natural map
\begin{equation}
\widehat{\dR}_{A / k} \to \widehat{\dR}_{B / k} \label{eq_goodwillie_dr}
\end{equation}
is an equivalence in $\CAlg_k$. 
\end{theorem}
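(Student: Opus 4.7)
The theorem asserts an equivalence in $\CAlg_k$, not in $\CAlg_\fil(k)$: the underlying Hodge filtrations typically differ (already for $A = k[\epsilon]/\epsilon^2$ and $B = k$, the graded piece $\LL^p_{A/k}[-p]$ is non-trivial for all $p \geq 1$, while $\widehat{\dR}_{B/k} \simeq k$ is concentrated in graded degree $0$). Hence the strategy of passing to associated graded is not available, and the plan is instead to work on the underlying $E_\infty$-$k$-algebra.

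The first step is a standard reduction to a square-zero extension. Writing the nilpotent kernel $I = \ker(\pi_0(A) \to \pi_0(B))$ with $I^n = 0$, I would factor $A \to B$ as a composition of square-zero extensions by successively lifting the quotients $\pi_0(A)/I^j$ to animated $k$-algebras, and induct on $n$ so as to reduce to a single square-zero extension of animated rings.

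The second (and essential) step is the square-zero case. For a square-zero extension $A \to B$ in $\CAlg_k^\an$ with kernel the connective $B$-module $M$, one has $\LL_{B/A} \simeq M[1]$, and the transitivity cofiber sequence $\LL_{A/k} \otimes_A B \to \LL_{B/k} \to M[1]$ yields, via derived wedge powers, a secondary filtration on $\LL^p_{B/k}$ with graded pieces $\LL^{p-j}_{A/k} \otimes_A B \otimes_B \wedge^j_B(M[1])$ for $0 \leq j \leq p$. In characteristic zero, the d\'ecalage identification $\wedge^j_B(M[1]) \simeq \Sym^j_B(M)[j]$ aligns the cohomological degree shifts with the Hodge filtration index, and combining this secondary filtration with the Hodge filtration on $\widehat{\dR}_{B/k}$ identifies the cofiber of $\widehat{\dR}_{A/k} \to \widehat{\dR}_{B/k}$ with a completed Koszul-type complex whose total vanishes. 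An equivalent, more conceptual formulation is $\mathbb{A}^1$-homotopy invariance: by Corollary \ref{corol_kunneth} applied to the smooth factor $k[t]$ and the Poincar\'e lemma $\widehat{\dR}_{k[t]/k} \simeq k$, one obtains $\widehat{\dR}_{A[t]/k} \simeq \widehat{\dR}_{A/k}$, and applying this to the interpolating family $B \oplus tM$ handles the trivial square-zero extension; a general extension is twisted from the trivial one by a class in $\Ext^1_B(\LL_{B/k}, M)$ that can be scaled away using the same $\mathbb{G}_a$-action.

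The main obstacle is the rigorous execution of the cancellation (or equivalently the $\mathbb{A}^1$-invariance) argument in the square-zero case: bookkeeping the two filtrations and verifying that all contributions outside $\widehat{\dR}_{A/k}$ cancel. The characteristic-zero hypothesis enters essentially through the d\'ecalage identification $\wedge^j(M[1]) \simeq \Sym^j(M)[j]$ and through the Poincar\'e lemma for $k[t]$, and indeed nil-invariance of Hodge-completed derived de Rham cohomology genuinely fails in positive characteristic without further (e.g. divided-power) hypotheses.
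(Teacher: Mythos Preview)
The paper does not give a self-contained proof here: it only points to three external arguments (Bhatt's comparison with Hartshorne's algebraic de Rham cohomology, Pridham's comparison with periodic cyclic homology and Goodwillie's theorem, and a direct proof in a separate paper of the author). Your outline is an attempt at a direct argument, different in kind from the first two cited approaches.

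That said, your square-zero step contains a concrete error. For a classical square-zero extension $A \to B$ with kernel $M$ it is \emph{not} true that $\LL_{B/A} \simeq M[1]$. With $A = k[\epsilon]/\epsilon^2$ and $B = k$, transitivity for $k \to A \to k$ gives $\LL_{B/A} \simeq (\LL_{A/k} \otimes_A k)[1]$; since $A$ is lci over $k$ one has $\LL_{A/k} \simeq [A \xrightarrow{2\epsilon} A]$ in degrees $[-1,0]$, and tensoring down yields $\LL_{B/A} \simeq k[1] \oplus k[2]$, not $k[1]$. (This is the same phenomenon as in Example~\ref{example_derived_ideal_powers}: the derived adic filtration on a square-zero ideal does not terminate.) Hence your secondary filtration on $\LL^p_{B/k}$ does not have the graded pieces you wrote, and the Koszul-type cancellation does not occur as stated. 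Your $\mathbb{A}^1$-invariance alternative is also not a proof as written: from $\widehat{\dR}_{C[t]/k} \simeq \widehat{\dR}_{C/k}$ one cannot simply specialize to handle $B \oplus M$, since forming the family $B[t] \oplus M[t]$ over $\mathbb{A}^1$ reproduces the same problem over $B[t]$ rather than solving it; one would need an honest $\mathbb{A}^1$-homotopy between ring maps from a fixed source, which you have not supplied. A direct argument in this spirit can be made to work, but it requires substantially more than the sketch provides.
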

\begin{proof}
One may prove this using the analogous result for Harthshorne's algebraic de Rham cohomology (see \cite{bhatt-cddrc}) or by comparing with periodic Hochschild homology (see \cite{pridham}). An alternative direct proof may be found in \cite{rienks}.
\end{proof}

\subsection{The derived crystalline complex}
In this section, we give a short survey of the Mao's construction of \emph{derived crystalline cohomology} \cite{mao}. We start by generalizing our definition of de Rham cohomology to morphisms of divided power rings. Throughout this section, $k$ can be any commutative ring. Before we can give the definition, we need some preliminaries.

We will (as in \cite{mao}) denote with $\PDPair_k$ the $1$-category of PD-rings $(R, I, \gamma)$ such that $R$ is a commutative $k$-algebra, see \stacksref{07GU}.  We write $I^{[p]} \subseteq R$ for the $p$-th \emph{divided power ideal}, see \stacksref{07HQ}. We will often omit $\gamma$ from the notation and denote a PD-ring $(R, I, \gamma)$ with $(R \to R / I)$ instead. For $A$ a ring, we will denote with $(A\langle x_1, \dots, x_n \rangle \to A)$ the PD-ring freely generated on $n$ variables $x_1, \dots, x_n$, see \stacksref{07H4}. For $(R' \to R) \to (A' \to A)$ a morphism of PD rings, we will denote with $\Omega_{(A'\to A) / (R' \to R)}$ the $A'$-module of \emph{divided power differentials} over $R'$, see \stacksref{07HQ} (note that it only depends on $(A' \to A)$ and $R'$). Finally we will write 
\[
\Omega^p_{(A'\to A) / (R' \to R)} := \bigwedge^p_{A'} \Omega_{(A'\to A) / (R' \to R)}
\]

Unfortunately, the category $\PDPair_k$ is not compact projectively generated, but we can remedy the situation. Define $\PDPair_{k,\gen}$ to be the full subcategory of f.g. free PD-rings over f.g. polynomial algebras, i.e. the full subcategory on  objects of the form $k[y_1, \dots, y_m]\langle x_1, \dots, x_n \rangle$. Then \cite[Lemma 3.13]{mao} shows there exists a fully faithful embedding $\PDPair_k \hookrightarrow \mathcal{P}_{\Sigma, 1}(\PDPair_{k, \gen})$. Following \cite{mao}, we set
\[
\AniPDPair_k := \Ani(\PDPair_{k,\gen})
\]
the category of \emph{animated divided power algebras}.

\begin{definition}[PD-de Rham cohomology]
Let 
\[
\begin{tikzcd}
A' \arrow[->>]{r} & A\\
R' \uar \arrow[->>]{r} &R \uar
\end{tikzcd}
\]
be an object of $\Fun(\Delta^1, \PDPair_k)$. Write $J = \ker(A' \to A)$. We define the \emph{PD de Rham complex}
by 
\begin{align*}
 \Omega^\bullet_{(A' \to A) / (R' \to R)} := \left[A' \to \Omega_{(A' \to A) / (R' \to R)} \to \Omega^2_{(A' \to A) / (R' \to R)} \to \dots\right] \in \Ch(k)
\end{align*}
It comes with a filtration $\Fil_{\pdadic}^\bullet$ given by
\[
\Fil^i \Omega^p_{(A' \to A) / (R' \to R)} := \begin{cases}
J^{[i - p]} \otimes_{A'} \Omega^p_{(A' \to A) / (R' \to R)} & i \geq p \\
\Omega^p_{(A' \to A) / (R' \to R)} & i < p
\end{cases}
\]
and a canonical (graded) multiplication. We define the \emph{derived PD filtered de Rham complex} 
\[
\Fil^\bullet_\pdadic \dR_{(- \to -) / (- \to -)} \colon \Fun(\Delta^1, \AniPDPair_k) \to \CAlg_\fil(k) 
\]
as the left Kan extension of the functor 
\[
\Fil^\bullet_\pdadic \Omega^\bullet_{(- \to -) / (- \to -)}  \colon \Fun(\Delta^1, \PDPair_k)_\gen \to \CAlg_\fil(k) 
\]
\end{definition}
Observe that $\Omega^\bullet_{(A' \to A) / (R' \to R)}$ only depends on $A' \to A$ and $R'$. However, $\dR_{(A' \to A) / (R' \to R)}$  does depend on the pair $(R' \to R)$. Clearly, for any element $(A \to B) \in \Fun(\Delta^1, \Alg_k)$ one has
\[
\dR_{(B \to B) / (A \to A)} = \dR_{B / A}
\]
in $\CAlg_\fil(k)$. 
\begin{lemma}\label{lem_extended_dr_colim}
The functor 
\[
\Fil^\bullet_\pdadic \dR_{(- \to -) / (- \to -)} \colon \Fun(\Delta^1, \AniPDPair_k) \to \CAlg_\fil(k) 
\]
commutes with small colimits.
\end{lemma}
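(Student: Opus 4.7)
The plan is to mirror the proof of Lemma \ref{lem_dr_colim}. Since the functor is defined as a left Kan extension from $\Fun(\Delta^1, \PDPair_k)_\gen$, by Proposition \ref{left_kan} combined with \cite[Corollary 3.2.3.2]{ha} it already preserves sifted colimits, so it suffices to show that it preserves finite coproducts. Invoking Lemma \ref{lem_alg_coprod} (coproducts in $\CAlg_\fil(k)$ are Day convolutions), together with Proposition \ref{left_kan} and an induction on the generators, this further reduces to checking the claim on a set of basic generators of $\Fun(\Delta^1, \PDPair_{k, \gen})$.

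Concretely, for any object $(R' \to R) \to (A' \to A)$ of $\Fun(\Delta^1, \PDPair_{k, \gen})$, the relevant comparison maps in $\CAlg_\fil(k)$ are of the form
\begin{align*}
\dR_{(A' \to A)/(R' \to R)} \otimes^\Day_k \dR_{(k[y] \to k[y])/(k \to k)} &\to \dR_{(A'[y] \to A[y])/(R' \to R)}, \\
\dR_{(A' \to A)/(R' \to R)} \otimes^\Day_k \dR_{(k\langle x\rangle \to k)/(k \to k)} &\to \dR_{(A'\langle x\rangle \to A)/(R' \to R)},
\end{align*}
together with analogous statements obtained by adjoining polynomial or PD variables to the base pair $(R' \to R)$. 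Since the PD-differentials in all these situations involve only finitely many generators the filtered objects are complete, and since the forgetful functor $\CAlg_\fil(k) \to \derD(k)_\fil$ is conservative, it suffices to verify these maps at the level of associated gradeds in $\derD(k)_\gr$.

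On associated gradeds the claim reduces to the direct-sum decomposition
\[
\Omega_{(A'[y] \to A[y])/(R' \to R)} \cong \Omega_{(A' \to A)/(R' \to R)} \otimes_{A'} A'[y]\ \oplus\ A'[y]\, \mathrm{d}y
\]
and its PD-free analogue for $A'\langle x\rangle$, after which one takes wedge powers and combines this with the multiplicativity of the divided-power ideals $J^{[m]}$ under such extensions. The main obstacle will be the PD-bookkeeping: one must verify that the divided-power generators $x^{[n]}$ contribute exactly the expected summands to $\gr^\bullet_{\pdadic}$ of the Day convolution, as this is where the PD-filtration genuinely differs from the usual Hodge filtration. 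This is a direct computation from the definition of $\Fil^\bullet_\pdadic$ together with the explicit structure of PD-free extensions, but it is the only step where the crystalline setting adds content beyond the de Rham argument of Lemma \ref{lem_dr_colim}.
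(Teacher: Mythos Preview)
Your overall strategy matches the paper's: reduce to finite coproducts on generators via Proposition~\ref{left_kan} and Lemma~\ref{lem_alg_coprod}, then handle the four basic cases (adjoining a polynomial or PD variable to the total pair, or to both base and total). However, your reduction to associated gradeds contains a genuine error.

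The claim that ``the filtered objects are complete'' is false. The PD-adic filtration is not the Hodge filtration: beyond the differential degree it also filters by the divided powers $J^{[m]}$ of $J = \ker(A' \to A)$. For a generator $(A' \to A) = (k[y_j]\langle x_i\rangle \to k[y_j])$ in $\PDPair_{k,\gen}$ with at least one PD variable, one has $J^{[m]} \neq 0$ for all $m$ (it contains $x_1^{[m]}$), so the filtration is infinite and not complete. Concretely, $k\langle x\rangle$ with $\Fil^m = I^{[m]}$ has completion the PD power series ring $k\langle\!\langle x\rangle\!\rangle \neq k\langle x\rangle$. The same applies to the target and to the case of adjoining a PD variable to both base and total (your ``analogous statement''), where the second tensor factor is $\Fil^\bullet_\pdadic\Omega^\bullet_{(k\langle x\rangle \to k)/(k\langle x\rangle \to k)} = I^{[\bullet]}$. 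Hence checking on $\gr$ does not suffice to conclude the map of filtered objects is an equivalence.

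The paper avoids this by working directly at each filtration level. For each $n$ it analyzes the comparison
\[
\underset{p+q \geq n}{\colim}\ \Fil^p\Omega^\bullet_{T/Q} \otimes_k \Fil^q\Omega^\bullet_{(-)/(-)} \longrightarrow \Fil^n\Omega^\bullet_{(\cdots)/(\cdots)}
\]
in $\derD(k)$. In the case of a PD variable adjoined to both base and total, a cofinality argument reduces the left side to a finite colimit of discrete modules along cofibrations, so it can be computed $1$-categorically as the ideal $\sum_{p+q\geq n} J^{[p]} I^{[q]} = H^{[n]}$. In the case of a PD variable adjoined only to the total, one first computes $\Fil^q\Omega^\bullet_{(k\langle x\rangle \to k)/(k\to k)} \simeq 0$ for $q>0$ and $\simeq k$ for $q=0$ via an explicit homotopy (a filtered PD-Poincar\'e lemma), which collapses the Day convolution and reduces the statement to another explicit homotopy. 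Your ``PD-bookkeeping'' anticipation is correct in spirit, but it must be carried out on $\Fil^n$, not on $\gr^n$.
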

\begin{proof}
Similarly as in the proof of Lemma \ref{lem_dr_colim}, by Proposition \ref{left_kan} and Lemma \ref{lem_alg_coprod} it suffices to show that for any element 
\[
((R' \to R) \to (A' \to A)) \in \Fun(\Delta^1, \PDPair_k)_\gen
\]
the maps
\begin{align}
\underset{p + q \geq n}{\colim} \ \Fil^p \Omega^\bullet_{T / Q} \underset{k}{\otimes} \Fil^q \Omega^\bullet_{k[x] / k[x]} &\to \Fil^n \Omega^\bullet_{(A'[x] \rangle \to A[x]) / (R'[x] \to R[x])} \label{crys_coprod2} \\
\underset{p + q \geq n}{\colim} \  \Fil^p \Omega^\bullet_{T / Q} \underset{k}{\otimes} \Fil^q \Omega^\bullet_{k[x] / k} &\to \Fil^n \Omega^\bullet_{(A'[x] \rangle \to A[x]) / (R' \to R)} \label{crys_coprod4} \\
\underset{p + q \geq n}{\colim} \  \Fil^p \Omega^\bullet_{T / Q} \underset{k}{\otimes} \Fil^q \Omega^\bullet_{(k \langle x \rangle \to k) / (k \langle x \rangle \to k)} &\to \Fil^n \Omega^\bullet_{(A'\langle x \rangle \to A) / (R'\langle x \rangle \to R)} \label{crys_coprod1} \\
\underset{p + q \geq n}{\colim} \  \Fil^p \Omega^\bullet_{T / Q} \underset{k}{\otimes} \Fil^q \Omega^\bullet_{(k \langle x \rangle \to k) / (k \to k)} &\to \Fil^n \Omega^\bullet_{(A'\langle x \rangle \to A) / (R' \to R)} \label{crys_coprod3} 
\end{align}
are equivalences in $\derD(k)$ for all $n \geq 0$, where for typographical reasons we use the shorthand notation $T := (A' \to A)$ and $Q := (R' \to R)$. One immediately sees (\ref{crys_coprod2}) is an equivalence after observing that
\[
\Omega_{(A'[x] \to A[x]) / (R'[x] \to R[x])} \cong \Omega_{(A' \to A) / (R' \to R)} \otimes_k k[x]
\]
(see \stacksref{07HS}). Similarly, (\ref{crys_coprod4}) is seen to be an equivalence after observing that
\[
\Omega_{(A'[x] \to A[x]) / (R' \to R)} \cong \Omega_{(A' \to A) / (R' \to R)} \otimes_k k[x] \oplus A'[x]dx
\]
(see \stacksref{07HS}). For (\ref{crys_coprod1}), write $J = \ker(A' \to A)$, write $I = \ker(k\langle x \rangle \to k)$ and write $H = \ker(A'\langle x \rangle \to A)$. One observes first that
\[
\Fil^q \Omega^\bullet_{(k \langle x \rangle \to k) / (k \langle x \rangle \to k)} = I^{[q]}
\]
so it suffices to show 
\[
\underset{p + q \geq n}{\colim} J^{[p]} \otimes_k I^{[q]} = H^{[n]}
\]
holds in $\derD(k)$ for all $n \geq 0$. By a cofinality argument, one may reduce this to the finite colimit diagram 
\[
\underset{\substack{p + q \geq n \\ n \geq p \\ n \geq q}}{\colim} J^{[p]} \otimes_k I^{[q]} = H^{[n]}
\]
In this diagram, all objects are discrete $k$-modules and all maps are cofibrations for the standard model structure on chain complexes. Hence one may compute the colimit in the $1$-category of discrete $k$-modules, and reduce to the statement
\[
\underset{p + q \geq n}{\sum} J^{[p]}  I^{[q]} = H^{[n]}
\]
which is classical.
Finally for (\ref{crys_coprod3}), a computation shows that
\[
\Fil^q \Omega^\bullet_{(k \langle x \rangle \to k) / (k \to k)} = \begin{cases}
k[0] & q = 0 \\
0 & q > 0
\end{cases}
\]
hence it suffices to show the natural map
\[
\Fil^n \Omega^\bullet_{(A' \to A) / (R' \to R)} \to \Fil^n \Omega^\bullet_{(A'\langle x \rangle \to A) / (R' \to R)}
\]
is a quasi-isomorphism. This can be easily achieved by constructing an explicit homotopy between the composition 
\[
\Fil^n \Omega^\bullet_{(A'\langle x \rangle \to A) / (R' \to R)} \xrightarrow{x \mapsto 0} \Fil^n \Omega^\bullet_{(A' \to A) / (R' \to R)} \to \Fil^n \Omega^\bullet_{(A'\langle x \rangle \to A) / (R' \to R)}
\] 
and the identity map (see \cite[Theorem 6.13]{berthelot-yellow-book}). 
\end{proof}
\begin{lemma}[Filtered Poincar\'e lemma]\label{lem_poincare}
Let $(A' \to A) \in \PDPair_{k,\gen}$, and let $I = \ker(A' \to A)$. The natural map
\[
\Fil^p_\pdadic \Omega^\bullet_{(A' \langle x_1, \dots, x_n \rangle \to A) / (A' \to A)} \to I^{[p]}[0]
\]
is a quasi-isomorphism. 
\end{lemma}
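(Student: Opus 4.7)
The plan is to reduce the statement to the one-variable base case $(k\langle x\rangle \to k)/(k \to k)$ via the Day-convolution formula from Lemma \ref{lem_extended_dr_colim}, and then handle the base case by a direct computation. For the reduction, I iterate equation (\ref{crys_coprod3}) in the proof of Lemma \ref{lem_extended_dr_colim}: with $(R' \to R) = (A' \to A)$, one application gives a filtered equivalence
\[
\Fil^n \Omega^\bullet_{(A'\langle x\rangle \to A)/(A'\to A)} \simeq \underset{p+q \geq n}{\colim}\ \Fil^p \Omega^\bullet_{(A' \to A)/(A'\to A)} \otimes_k \Fil^q \Omega^\bullet_{(k\langle x\rangle \to k)/(k\to k)},
\]
and iterating once per variable $x_i$ yields
\[
\Fil^\bullet \Omega^\bullet_{(A'\langle \vec x\rangle \to A)/(A'\to A)} \simeq \Fil^\bullet\!\left(\Omega^\bullet_{(A' \to A)/(A' \to A)} \otimes^\Day_k \bigotimes_{i=1}^n \Omega^\bullet_{(k\langle x_i\rangle \to k)/(k\to k)}\right).
\]
Since $\Omega^\bullet_{(A'\to A)/(A'\to A)} = A'[0]$ with $\Fil^p = I^{[p]}[0]$, it suffices to establish a filtered quasi-isomorphism $\Omega^\bullet_{(k\langle x\rangle \to k)/(k\to k)} \xrightarrow{\sim} k[0]$ where the target carries the filtration $\Fil^0 = k$, $\Fil^{\geq 1} = 0$. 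Under such an equivalence the Day convolution collapses, leaving only the $q = 0$ contribution, and the colimit reduces to $\Fil^n A'[0] = I^{[n]}[0]$.

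For the one-variable base case, the complex $\Fil^q \Omega^\bullet_{(k\langle x\rangle \to k)/(k\to k)}$ is the two-term complex
\[
(x)^{[q]} \xrightarrow{d} (x)^{[q-1]} \cdot dx
\]
for $q \geq 1$ (and $k\langle x\rangle \to k\langle x\rangle \cdot dx$ for $q = 0$), with $d(x^{[a]}) = x^{[a-1]} dx$. For $q \geq 1$ this differential is a $k$-linear bijection between the spans of $\{x^{[a]} : a \geq q\}$ and $\{x^{[b]} dx : b \geq q - 1\}$, so the complex is acyclic; for $q = 0$, the kernel is the constants $k$ and the cokernel vanishes, giving the desired quasi-isomorphism with $k[0]$.

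The points to verify carefully are (i) that the natural map of the statement coincides with the one produced by this analysis — namely the map $H^{[p]} \to I^{[p]}$ at degree zero induced by the PD augmentation $A'\langle \vec x\rangle \to A'$ killing the $x_i$, where $H = \ker(A'\langle \vec x\rangle \to A)$ — and (ii) that a filtered quasi-isomorphism with the unit object $k[0]$ in filtered complexes propagates through Day convolution, which is immediate from the defining colimit formula. No substantial obstacle is anticipated; once Lemma \ref{lem_extended_dr_colim} is in hand, the rest is bookkeeping with the filtration.
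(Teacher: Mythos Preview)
Your proposal is correct and follows essentially the same route as the paper: reduce to one variable via the coproduct-preservation of Lemma \ref{lem_extended_dr_colim}, then handle the base case directly. The paper stops the reduction at $n=1$ over the general base $(A'\to A)$ and invokes an explicit homotopy (citing Berthelot--Ogus), whereas you factor one step further to reach $(k\langle x\rangle \to k)/(k\to k)$ and do the kernel/cokernel calculation there; this is only a cosmetic difference, and in fact your base-case computation already appears inside the paper's proof of (\ref{crys_coprod3}).
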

\begin{proof}
By Lemma \ref{lem_extended_dr_colim}, we may (by factoring into coproducts) reduce to the case $n = 1$, which again can be easily done by constructing an explicit homotopy (see \cite[Theorem 6.13]{berthelot-yellow-book}). 
\end{proof}
The input to the classical crystalline cohomology functor is a PD-ring $(A, I, \gamma)$ and a morphism of rings $A / I \to R$. The following $\infty$-category, introduced by Mao \cite[p.49]{mao}, thus gives a natural input category for derived crystalline cohomology.
\begin{definition}For $k$ a ring, we define the $\infty$-category
\[
\CrysCon_k := \AniPDPair_k \times_{\CAlg_k^\an} \Fun(\Delta^1, \CAlg_k^\an)
\]
where the functor $\PDPair_k \to \CAlg_k^\an$ is given by $(A \to A') \mapsto A'$, and the functor $\Fun(\Delta^1, \CAlg_k^\an) \to \CAlg_k^\an$ is given by $(A \to B) \mapsto A$. 
\end{definition} 
By \cite[p. 52]{mao}, a set of compact projective generators is given by objects of the form
\[
\begin{tikzcd}
& k[x_1, \dots, x_n, y_1, \dots, y_m] \\
k\langle z_1, \dots, z_\ell \rangle[x_1, \dots, x_n] \rar& k[x_1, \dots, x_n] \uar 
\end{tikzcd}
\]
We write $\CrysCon_{k,\gen}$ for the full subcategory spanned by these objects. Note that the forgetful functor $\AniPDPair_k \to \Fun(\Delta^1, \CAlg_k^\an)$ induces a functor
\[
\CrysCon_k \to \Fun(\Delta^2, \CAlg_k^\an)
\]
sending $((A' \to A), (A \to R)) \mapsto (A' \to A \to R)$. 
\begin{lemma}\label{coprod_cryscon}
The functor
\[
\CrysCon_k \to \Fun(\Delta^2, \CAlg_k^\an)
\]
commutes with colimits.
\end{lemma}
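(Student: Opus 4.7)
The plan is to realize the forgetful functor $F\colon \CrysCon_k \to \Fun(\Delta^2, \CAlg_k^\an)$ as a pullback of colimit-preserving functors, thereby reducing the question to showing that the forgetful functor $U\colon \AniPDPair_k \to \Fun(\Delta^1, \CAlg_k^\an)$ preserves small colimits.

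First I rewrite the target as
\[
\Fun(\Delta^2, \CAlg_k^\an) \simeq \Fun(\Delta^1, \CAlg_k^\an) \times_{\CAlg_k^\an} \Fun(\Delta^1, \CAlg_k^\an),
\]
obtained by gluing two composable arrows at their middle object (via $\ev_1$ on the first factor and $\ev_0$ on the second). Under this identification, $F$ is the functor induced by $U$ on the first factor and the identity on the second. The evaluation functors $\ev_0,\ev_1$ preserve all colimits (colimits in functor categories are computed pointwise), and the functor $\AniPDPair_k \to \CAlg_k^\an$, $(A \to A') \mapsto A'$, is the left Kan extension from $\PDPair_{k,\gen}$ of the map sending a generator $k[y_1,\dots,y_m]\langle x_1,\dots,x_n\rangle \to k[y_1,\dots,y_m]$ to its quotient; on generators this trivially preserves finite coproducts, so by Proposition \ref{left_kan} it preserves all colimits. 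All three legs of the two pullbacks thus preserve colimits, which implies colimits in $\CrysCon_k$ and $\Fun(\Delta^2, \CAlg_k^\an)$ are computed componentwise, and $F$ preserves colimits if and only if $U$ does.

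By construction $U$ is the left Kan extension along the Yoneda embedding of the evident forgetful functor $\PDPair_{k,\gen} \to \Fun(\Delta^1, \CAlg_k^\an)$, so it preserves sifted colimits automatically. By Proposition \ref{left_kan} it therefore suffices to check that the restriction to $\PDPair_{k,\gen}$ preserves finite coproducts. The coproduct in $\PDPair_{k,\gen}$ of two generators $k[y]\langle x\rangle \to k[y]$ and $k[y']\langle x'\rangle \to k[y']$ is $k[y, y']\langle x, x'\rangle \to k[y, y']$, while the coproduct of the underlying arrows in $\Fun(\Delta^1, \CAlg_k^\an)$ is the componentwise derived tensor product over $k$. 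The task therefore reduces to the identifications $k[y] \otimes_k k[y'] \simeq k[y, y']$ and $k[y]\langle x\rangle \otimes_k k[y']\langle x'\rangle \simeq k[y, y']\langle x, x'\rangle$ in $\CAlg_k^\an$; since PD-polynomial algebras are free $k$-modules on the monomials $y^\alpha x^{[\beta]}$, they are $k$-flat and the derived tensor product coincides with the underived one, so the identification reduces to a standard comparison of $k$-bases and multiplication tables.

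I expect the main obstacle is the identification of the PD-coproduct with the ordinary $k$-tensor product on generators: although elementary given flatness and explicit monomial bases, it is the only nonformal input, and the entire reduction collapses without it.
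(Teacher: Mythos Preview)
Your proposal is correct and follows essentially the same route as the paper. Both arguments reduce, via the fiber-product description of $\CrysCon_k$ and $\Fun(\Delta^2,\CAlg_k^\an)$, to showing that the forgetful functor $U\colon \AniPDPair_k \to \Fun(\Delta^1,\CAlg_k^\an)$ preserves colimits; the paper invokes \cite[Lemma 5.4.5.5]{htt} for the reduction and then cites \cite[Proposition 3.34]{mao} together with Lemma~\ref{forget_colim} for the colimit preservation of $U$, whereas you spell out the reduction by hand and verify the key step directly on generators using flatness of PD-polynomial algebras. Your explicit check that $k[y]\langle x\rangle \otimes_k^{\mathrm L} k[y']\langle x'\rangle \simeq k[y,y']\langle x,x'\rangle$ is exactly the content hidden behind the Mao citation, so the two proofs are the same up to where one draws the box around ``known''.
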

\begin{proof}
By \cite[Lemma 5.4.5.5]{htt} it suffices to show the forgetful functor
\[
\AniPDPair_k \to \Fun(\Delta^1, \CAlg_k^\an)
\]
preserves colimits, which follows from Lemma \ref{forget_colim} and \cite[Proposition 3.34]{mao}.
\end{proof}

It is also shown in \cite[p. 52]{mao} that we have a forgetful functor 
\[
L \colon \Fun(\Delta^1, \AniPDPair_k) \to \CrysCon_k
\]
informally given by
\[
\begin{tikzcd}[column sep = small, row sep = tiny]
A' \arrow[->>]{r} & A\\
R' \uar \arrow[->>]{r} &R \uar
\end{tikzcd} \mapsto 
\begin{tikzcd}[column sep = small, row sep = tiny]
 & A\\
R' \arrow[->>]{r} &R \uar
\end{tikzcd}
\]
with right adjoint 
\[
R \colon \CrysCon_k \to \Fun(\Delta^1, \AniPDPair_k)
\]
informally given by
\[
\begin{tikzcd}[column sep = small, row sep = tiny]
 & A\\
R' \arrow[->>]{r} &R \uar
\end{tikzcd} \mapsto
\begin{tikzcd}[column sep = small, row sep = tiny]
A \arrow[->>]{r} & A\\
R' \uar \arrow[->>]{r} &R \uar
\end{tikzcd}
\]
The following might be somewhat surprising. 
\begin{lemma}\label{lem_R_colim}
The functor $R \colon \CrysCon_k \to \Fun(\Delta^1, \AniPDPair_k)$ preserves small colimits.
\end{lemma}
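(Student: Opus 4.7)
The plan is to apply Proposition~\ref{left_kan} and reduce to checking $R$ on generators. Since both $\CrysCon_k$ and $\Fun(\Delta^1, \AniPDPair_k)$ are compact projectively generated (the former by the explicit list preceding Lemma~\ref{coprod_cryscon}, the latter by the discussion preceding Lemma~\ref{lem_fun_proj_gen}), it suffices to verify that $R$ preserves sifted colimits and that its restriction to $\CrysCon_{k,\gen}$ sends finite coproducts to finite coproducts.

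For sifted colimits, recall that by \cite[Proposition 3.34]{mao} the forgetful functor $\AniPDPair_k \to \Fun(\Delta^1, \CAlg_k^\an)$ preserves all colimits; combined with Lemma~\ref{coprod_cryscon}, this reduces sifted colimits in both $\CrysCon_k$ and $\Fun(\Delta^1, \AniPDPair_k)$ to pointwise sifted colimits of the underlying rings in $\CAlg_k^\an$. Since the construction of $R$ -- adding a trivial PD-pair $(A = A)$ with identity map -- acts as the identity on the underlying $\CAlg_k^\an$-data, preservation of sifted colimits is automatic.

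For finite coproducts, let $X_1, X_2 \in \CrysCon_{k,\gen}$. By Lemma~\ref{coprod_cryscon} the coproduct $X_1 \sqcup X_2$ is computed pointwise in $\Fun(\Delta^2, \CAlg_k^\an)$, so it is again a generator of $\CrysCon_{k,\gen}$ built from the disjoint union of the $x$-, $y$- and $z$-variables. Applying $R$ gives the morphism of PD-pairs whose source is the free PD-pair on all $x$- and $z$-variables and whose target is the trivial PD-pair on the polynomial ring in all $x$- and $y$-variables. On the other hand, the coproduct $R(X_1) \sqcup R(X_2)$ in $\Fun(\Delta^1, \AniPDPair_k)$ is computed pointwise in $\AniPDPair_k$: the coproduct of two trivial PD-pairs is manifestly the trivial PD-pair on the tensor product, and the coproduct of the two source PD-generators agrees with their classical coproduct in $\PDPair_{k,\gen}$ because the Yoneda embedding $\PDPair_{k,\gen} \hookrightarrow \AniPDPair_k$ preserves finite coproducts (finite coproducts not being sifted). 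The two descriptions then evidently coincide.

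The only subtle point is the identification of the coproduct in $\AniPDPair_k$ of two PD-generators with the classical PD-coproduct; but as noted this is purely formal, reducing to the standard formula $(k\langle z_1\rangle[x_1]) \sqcup (k\langle z_2\rangle[x_2]) = k\langle z_1, z_2\rangle[x_1, x_2]$ for free PD-algebras on disjoint variable sets.
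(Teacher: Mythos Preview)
Your argument is correct, but it takes a longer route than necessary, and the detour is avoidable by your own reasoning. The paper proceeds as follows: by Lemma~\ref{coprod_cryscon} and \cite[Proposition~3.34]{mao}, the forgetful functors from $\CrysCon_k$ and from $\Fun(\Delta^1,\AniPDPair_k)$ to their underlying ring-diagram categories preserve \emph{all} colimits (and the latter is conservative), so it suffices to check that the induced functor
\[
\Fun(\Delta^2,\CAlg_k^{\an}) \longrightarrow \Fun(\Delta^1\times\Delta^1,\CAlg_k^{\an}),\qquad
(A\to B\to C)\ \longmapsto\
\begin{tikzcd}[column sep=small, row sep=tiny]
C \rar & C \\ A \uar \rar & B \uar
\end{tikzcd}
\]
preserves all colimits; but this is restriction along a map of index categories, hence immediate from \cite[Proposition~5.1.2.3]{htt}.

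Your ``sifted colimits'' paragraph is exactly this reduction, only you stop at sifted colimits and then separately verify finite coproducts on generators via Proposition~\ref{left_kan}. There is no need for the split: the forgetful functors you invoke preserve \emph{all} colimits, not just sifted ones, so the same argument disposes of coproducts as well. (Also, the phrase ``acts as the identity on the underlying $\CAlg_k^{\an}$-data'' is slightly misleading---$R$ is not the identity on underlying rings but rather the restriction functor displayed above; the point is that restriction along a map of diagrams preserves pointwise colimits.) Your explicit coproduct computation is correct but redundant; what you gain is a concrete picture on generators, at the cost of a longer proof.
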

\begin{proof}
By Lemma \ref{coprod_cryscon} and \cite[Proposition 3.34]{mao} it suffices to show the functor
\[
\Fun(\Delta^2, \CAlg_k^\an) \to \Fun(\Delta^1 \times \Delta^1, \CAlg_k^\an) 
\]
sending
\[
(A \to B \to C) \mapsto \begin{tikzcd}[column sep = small, row sep = tiny]
C \rar & C\\
A \uar \rar &B \uar
\end{tikzcd}
\]
commutes with colimits, which follows directly from \cite[Proposition 5.1.2.3]{htt}.
\end{proof}
By the above lemma, the functor $R$ also admits a right adjoint. We hope to study this adjoint in future work, we believe it to be related to the functor $\GG_a^\#$ defined in \cite[Definition 2.4.1]{bhattgauges}. 
\begin{definition}
Let $k$ be a ring. We define the \emph{PD-adic filtered derived crystalline cohomology} functor
$\Crys_{- / (- \to -)} \colon \CrysCon \to \CAlg_\fil(k)$
as 
\[
\Crys_{- / (- \to -)} :=  \Fil_{\pdadic} \dR_{(- \to -) / (- \to -)} \circ R
 \]
We will write
\[
\LL^p_{- / (- \to -)} \colon \CrysCon \to \derD(k)
\]
for the $p$-th suspension of the $p$-th graded piece. 
\end{definition}
By definition, for any $(R \to A) \in \Fun(\Delta^1, \CAlg_k^\an)$ we have 
\[
\Crys_{A / (R \to R)} = \dR_{A / R}
\]
in $\CAlg_\fil(k)$. 
\begin{lemma}\label{lem_crys_colim}
The functor 
\[
\Crys_{- / (- \to -)} \colon \CrysCon \to \CAlg_\fil(k) 
\]
commutes with small colimits.
\end{lemma}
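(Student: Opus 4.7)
The plan is to observe that this lemma is an immediate consequence of the two preceding lemmas combined with the definition. By construction, the functor $\Crys_{-/(-\to -)}$ is defined as the composition
\[
\CrysCon_k \xrightarrow{R} \Fun(\Delta^1, \AniPDPair_k) \xrightarrow{\Fil^\bullet_\pdadic \dR_{(-\to-)/(-\to-)}} \CAlg_\fil(k).
\]
Hence it suffices to check that each of the two functors in this composition preserves small colimits.

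For the first functor, this is precisely the content of Lemma \ref{lem_R_colim}, which tells us that although $R$ was introduced as a right adjoint, it also happens to preserve small colimits (via the observation that the forgetful functor from $\Fun(\Delta^1, \AniPDPair_k)$ to a suitable square diagram category preserves colimits, combined with \cite[Proposition 3.34]{mao}). For the second functor, colimit preservation is exactly Lemma \ref{lem_extended_dr_colim}, which was established by reducing through Proposition \ref{left_kan} and Lemma \ref{lem_alg_coprod} to verifying that the classical PD de Rham complex sends coproducts of compact projective generators in $\PDPair_{k,\gen}$ to Day convolution products of filtered algebras.

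Composing these two colimit-preserving functors yields the result. There is essentially no new content here — the lemma is stated for convenience as a direct corollary of Lemmas \ref{lem_extended_dr_colim} and \ref{lem_R_colim}, and no step should pose a genuine obstacle.
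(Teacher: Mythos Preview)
Your proposal is correct and matches the paper's proof exactly: the paper simply writes ``Combine Lemma \ref{lem_extended_dr_colim} with Lemma \ref{lem_R_colim},'' which is precisely your argument that $\Crys_{-/(-\to-)} = \Fil^\bullet_\pdadic \dR_{(-\to-)/(-\to-)} \circ R$ is a composition of two colimit-preserving functors.
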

\begin{proof}
Combine Lemma \ref{lem_extended_dr_colim} with Lemma \ref{lem_R_colim}. 
\end{proof}
The following proposition basically states that the crystalline cohomology of $A$ over $(R' \to R)$ can be computed as the de Rham cohomology of a lift $A'$ over $R'$. 
\begin{prop}\label{prop_iso_crys}
Let $k$ be a ring, and 
\[
(R' \to R) \to (A' \to A) \in \Fun(\Delta^1, \AniPDPair_k)
\]
The unit of the adjunction $L \dashv R$ induces an equivalence
\[
\Fil_\pdadic \dR_{(A' \to A) / (R' \to R)} \to \Crys_{A / (R' \to R)}
\]
in $\CAlg_\fil(k)$.
\end{prop}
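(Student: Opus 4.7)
The approach is to reduce the claim to a verification on generators via colimit-preservation, and then to invoke the filtered Poincaré lemma. Both $F_1 := \Fil^\bullet_\pdadic \dR_{(-\to -)/(-\to -)}$ and $F_2 := \Crys_{-/(-\to -)} \circ L = F_1 \circ R \circ L$, viewed as functors $\Fun(\Delta^1, \AniPDPair_k) \to \CAlg_\fil(k)$, preserve small colimits: $F_1$ by Lemma~\ref{lem_extended_dr_colim}, and $F_2$ as a composition of colimit-preserving functors, since $L$ is a left adjoint, $R$ preserves colimits by Lemma~\ref{lem_R_colim}, and $F_1$ preserves colimits. The unit $\eta\colon \id \to RL$ induces a natural transformation $F_1 \to F_2$ between these colimit-preserving functors. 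Since $\Fun(\Delta^1, \AniPDPair_k)$ is generated under colimits by $\Fun(\Delta^1, \PDPair_k)_{\gen}$, a standard argument (cf.\ Proposition~\ref{left_kan}) reduces the claim to verifying that the unit becomes an equivalence after applying $F_1$ for $Y \in \Fun(\Delta^1, \PDPair_k)_{\gen}$.

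On such a generator $Y = ((R' \to R) \to (A' \to A))$, the target arises as a coproduct $(A' \to A) \cong (R' \to R) \sqcup_{(k \to k)} (k[y_1,\ldots,y_m]\langle w_1,\ldots,w_r\rangle \to k[y_1,\ldots,y_m])$ in $\PDPair_k$, obtained by adjoining polynomial variables $y$ in both slots and PD variables $w$ only to $A'$. Accordingly, in $\Fun(\Delta^1, \PDPair_k)$ one has a pushout decomposition
\[
Y \;\simeq\; Y_{\mathrm{poly}} \;\sqcup_{((R' \to R) \xrightarrow{\id} (R' \to R))}\; Y_{\mathrm{PD}},
\]
with $Y_{\mathrm{poly}} = ((R' \to R) \to (R'[y] \to R[y]))$ and $Y_{\mathrm{PD}} = ((R' \to R) \to (R'\langle w\rangle \to R))$. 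Applying $F_1$ (which preserves the pushout and turns coproducts into Day convolutions), the filtered Poincaré lemma (Lemma~\ref{lem_poincare}) identifies $F_1(Y_{\mathrm{PD}})$ with $F_1((R' \to R) \xrightarrow{\id} (R' \to R)) \cong (R', J^{[\bullet]})$, where $J := \ker(R' \to R)$. The Day convolution thus collapses to yield $F_1(Y) \simeq F_1(Y_{\mathrm{poly}})$.

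The residual task is the equivalence $F_1(Y_{\mathrm{poly}}) \simeq F_2(Y) = F_1(RL(Y))$, where $RL(Y) = ((R' \to R) \to (R[y] \to R[y]))$. This says the natural PD-lift $R'[y] \to R[y]$ and the tautological ``lift'' $R[y] = R[y]$ compute the same derived PD de Rham complex over $(R' \to R)$, a derived-categorical incarnation of the classical independence of crystalline cohomology from the choice of PD-lift. The main obstacle is this final identification, since $RL(Y)$ lies outside the generating subcategory and $F_1(RL(Y))$ must therefore be computed via the left Kan extension formula. The cleanest approach is to exhibit $RL(Y)$ as the geometric realization of the Čech nerve of the surjective map $Y_{\mathrm{poly}} \to RL(Y)$ in $\AniPDPair_k$ (an effective epimorphism, ensured by the surjection $R'[y] \twoheadrightarrow R[y]$ on the PD-thickening), and to combine another application of Lemma~\ref{lem_poincare} (now along the kernel $JR'[y]$ of this surjection) with a descent argument in the spirit of Lemma~\ref{bhatt_neat} and Corollary~\ref{corol_descent_dr2} to show that $F_1$ renders the non-degenerate simplicial directions of the Čech nerve trivial, collapsing its realization to $F_1(Y_{\mathrm{poly}})$.
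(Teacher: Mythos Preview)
Your overall strategy—reduce to generators via colimit-preservation of both sides, then invoke the filtered Poincar\'e lemma—matches the paper's. The paper, however, reduces all the way to the four \emph{atomic} generators $\ins_i(X)$ for $X \in \{(k[x]\to k[x]),\,(k\langle x\rangle \to k)\}$ and $i\in\{0,1\}$, rather than to an arbitrary object of $\Fun(\Delta^1,\PDPair_k)_\gen$. Since both $F_1$ and $F_2$ preserve finite coproducts this is legitimate, and it makes your coproduct decomposition $Y \simeq Y_{\mathrm{poly}} \sqcup Y_{\mathrm{PD}}$ unnecessary. On the atomic generators two cases ($A'=A$) are trivial and one ($\ins_1(k\langle x\rangle\to k)$) follows immediately from Lemma~\ref{lem_poincare}; your ``residual task'' is precisely the remaining generator $\ins_0(k\langle x\rangle\to k)$, where one must compare $\Fil_\pdadic\dR_{(k\langle x\rangle\to k)/(k\langle x\rangle\to k)}$ with $\Fil_\pdadic\dR_{(k\to k)/(k\langle x\rangle\to k)}$.

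For this last case the paper writes down an explicit bar-type simplicial resolution $C_n = (k\langle x,x_1,\dots,x_n\rangle\to k)$ of $(k\to k)$ over $(k\langle x\rangle\to k)$ and applies Lemma~\ref{lem_poincare} termwise to see that $\Fil^p_\pdadic\dR_{C_n/(k\langle x\rangle\to k)} \simeq I^{[p]}$ is constant in $n$. Your proposed \v{C}ech nerve of $Y_{\mathrm{poly}}\to RL(Y)$ does not accomplish this directly: the \v{C}ech-nerve terms are iterated fibre products in $\AniPDPair_k$, and already in the simplest instance $(k\langle x\rangle\to k)\times_{(k\to k)}(k\langle x\rangle\to k)$ the underlying ring $k\langle x\rangle\times_k k\langle x\rangle$ is not a free PD-polynomial algebra, so the terms do not lie in $\PDPair_{k,\gen}$ and Lemma~\ref{lem_poincare} is unavailable. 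Likewise the descent lemmas you cite (Lemma~\ref{bhatt_neat}, Corollary~\ref{corol_descent_dr2}) concern $\CAlg_k^\an$, not $\AniPDPair_k$, and do not transfer without further argument. Replacing the \v{C}ech nerve by a bar resolution whose terms manifestly lie in the generating subcategory—and first reducing to the four atomic generators—turns your sketch into the paper's proof.
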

\begin{proof}
See also \cite[Proposition 4.16]{mao}, we give some more details. Since all functors involved commute with small colimits it suffices to show this for the four types of generators of $\Fun(\Delta^1, \AniPDPair_k)$ given by Lemma \ref{lem_fun_proj_gen}, i.e. we need to show the maps
\begin{align}
\Fil_\pdadic \dR_{(k\langle x \rangle \to k) / (k \langle x \rangle \to k)} &\to \Fil_\pdadic \dR_{(k \to k) / (k\langle x \rangle \to k)} \label{eq_qiso_1}\\
\Fil_\pdadic \dR_{(k\langle x \rangle \to k) / (k \to k)} &\to \Fil_\pdadic \dR_{(k \to k) / (k \to k)} \label{eq_qiso_2}\\
\Fil_\pdadic \dR_{(k[x] \to k[x]) / (k[x] \to k[x])} &\to \Fil_\pdadic \dR_{(k[x] \to k[x]) / (k[x] \to k[x])}\label{eq_qiso_3} \\
\Fil_\pdadic \dR_{(k[x] \to k[x]) / (k \to k)} &\to \Fil_\pdadic \dR_{(k[x] \to k[x]) / (k \to k)} \label{eq_qiso_4}
\end{align}
are filtered quasi-isomorphisms. Now (\ref{eq_qiso_3}) and (\ref{eq_qiso_4}) are evidently quasi-isomorphisms, and (\ref{eq_qiso_2}) is a quasi-isomorphism by Lemma \ref{lem_poincare}. To compute the right hand side of (\ref{eq_qiso_1}), we need to find a simplicial resolution of $(k \to k)$ over $(k\langle x \rangle \to k)$. A construction analogous to \cite[Construction 4.16]{simplicial-res} gives a simplicial resolution $C_\bullet \to (k \to k)$ over $(k\langle x \rangle \to k)$ with 
\[
C_n = (k \langle x, x_1, \dots, x_n \rangle \to k)
\]
If we write $I = \ker(k\langle x \rangle \to k)$, then Lemma \ref{lem_poincare} tells us the natural map
\[
\Fil^p_\pdadic \dR_{C_n / (k\langle x \rangle \to k)} \to I^{[p]}
\]
is a quasi-isomorphism for all $n$, which shows (\ref{eq_qiso_1}) is a quasi-isomorphism, establishing the result. 
\end{proof}
%Finally we have the following descent statement for crystalline cohomology.
%\begin{lemma}\label{lem_descent_crys}
%Let $k$ be a ring, and let
%\[
%((R' \to R), R \to A \to B) \in \AniPDPair_k \times_{\CAlg_k^\an} \Fun(\Delta^2, \CAlg_k^\an)
%\]
%Then the natural map
%\[
%\lim_\Delta \Cech(A \to B) \otimes_A \LL^p_{A / (R' \to R)} \to \lim_\Delta \LL^p_{\Cech(A \to B) /  (R' \to R)}
%\]
%is an isomorphism. 
%\end{lemma}
%\begin{proof}
%We give the proof for $p = 1$, the generalization to $p > 1$ is similar to the proof of Corollary \ref{corol_descent_dr2}. Consider the diagram 
%\[
%\begin{tikzcd}[row sep = tiny]
%& B^{\otimes_A^n} \\
%R' \rar[->>]& R \uar 
%\end{tikzcd} \leftarrow 
%\begin{tikzcd}[row sep = tiny]
%& A \\
%R' \rar[->>]& R \uar 
%\end{tikzcd} \mapsto
%\begin{tikzcd}[row sep = tiny]
%& A \\
%A \rar[->>]& A \uar 
%\end{tikzcd}
%\]
%in $\CrysCon$, the pushout is clearly $B^{\otimes_A^n} / (A \twoheadrightarrow A)$. By Lemma \ref{lem_crys_colim} we get an exact triangle 
%\[
%\LL_{A / (R' \to R)} \otimes_A \Cech(A \to B) \to \LL_{\Cech(A \to B) / (R' \to R)} \to \LL_{\Cech(A \to B) / (A \to A)}
%\]
%the result follows as $\lim_\Delta \LL_{\Cech(A \to B) / (A \to A)} \cong 0$ by Lemma \ref{lem_descent_dr}. 
%\end{proof}
\subsection{Comparison with derived completions}\label{ss_comp_dr}
In this section we define for any surjection of rings $A \to A / I$ the \emph{derived completion} $\Comp(A \to A / I)$ also known as the \emph{Adams completion}, see \cite{bhatt-cddrc}. If $A$ is Noetherian this completion agrees with the usual completion, however for general $A$ it can be different. 

Moreover, Bhatt \cite[Remark 4.5]{bhatt-cddrc} shows that for any surjection of $\QQ$-algebras $A \to A / I$, there exists a canonical equivalence 
\[
\Comp(A \to A / I) \cong \widehat{\dR}_{(A / I) / A}
\]
in $\CAlg_\fil(k)$.

Before we can begin we need a good source category for the derived completion functor, which the definition below gives for $p = 1$. We will consider this in slightly bigger generality and consider a composition of multiple surjective ring maps, as we will need this later.

\begin{definition}
We denote with $\Fun(\Delta^p, \CAlg_k^\an)_\surj$ the full subcategory of $\Fun(\Delta^p, \CAlg_k^\an)$ consisting of objects $A_0 \to  \dots \to  A_p$ such that $\pi_0(A_0) \to \pi_0(A_i)$ is surjective for all $i$.
\end{definition}

By Corollary \ref{corol_gen_surj}, a set of compact projective generators can be described as follows. For $i \in \{0, \dots, p\}$, let $F_i \colon \Delta^p \to \Alg_k$ be the unique functor satisfying
\[
F_i(j) := \begin{cases}
k[x] & j \leq i \\
k  & j > i 
\end{cases}
\]
where the maps $k[x] \to k$ are given by $x \mapsto 0$, and all other maps are the identity. Then the set $S_p := \{F_0, \dots, F_p\}$ is a set of compact projective generators for $\Fun(\Delta^p, \CAlg_k^\an)_\surj$. For example, if $p = 2$ we have 
\[
S_2 = \left\{\begin{array}{c} k[x] \to k \to k, \\ k[x] \to k[x] \to k,\\ k[x] \to k[x] \to k[x]\end{array}\right\} 
\]
We shall write $\Fun(\Delta^p, \Poly_k)_{\surj, \gen}$ for the full subcategory spanned by coproducts of objects in $S_p$ (see Definition \ref{def_gen_surj}).
 
Following \cite{mao}, we will denote with $\Pair_k$ the $1$-category of surjections $R \to R'$ of (discrete) commutative $k$-algebras. We warn the reader that $\Pair_k$ is not compact $1$-projectively generated, however \cite[Lemma 3.7]{mao} shows there does exist a fully faithful embedding $\Pair_k \hookrightarrow \Fun(\Delta^1, \CAlg_k^\an)_\surj$. We will often abuse notation by writing (see also Definition \ref{def_anipair})
\[
\AniPair_k := \Fun(\Delta^1, \CAlg_k^\an)_\surj
\]
there is no chance for confusion as the left hand side is a priori not well-defined. 

Note that if $F \colon \Fun(\Delta^1, \CAlg_k^\an) \to \mathcal{D}$ preserves (sifted) colimits, then so does $F \colon \AniPair_k  \to \mathcal{D}$, by Lemma \ref{forget_colim}. 

\begin{definition}\label{aba}
We define the \emph{derived divided power envelope} functor
\[
(-)^\Lenv \colon \AniPair_k \to \AniPDPair_k 
\]
as the left Kan extension of the composition
\[
\Pair_{k,\gen} \xrightarrow{\mathrm{env}} \PDPair_{k,\gen} \subseteq \AniPDPair_k
\]
where $\mathrm{env}$ is the functor sending the surjective ring map 
\[
k[x_1, \dots, x_n, y_1, \dots, y_m] \twoheadrightarrow k[x_1, \dots, x_n]
\]
to the element $k[x_1, \dots, x_n]\langle y_1, \dots, y_m \rangle \twoheadrightarrow k[x_1, \dots, x_n]$ in $\PDPair_{k,\gen}$. 
\end{definition}
By \cite[Corollary 2.2]{mao} the derived divided power envelope admits a right adjoint $\AniPDPair_k \to \PDPair_k$ to which we will refer as the \emph{forgetful functor}. 

We now wish to discuss filtrations.

\begin{definition}\label{derived_fil_adic}
Let $(A \to A / I) \in \Pair_k$ where $A$ is a $k$-algebra and $I$ is an ideal of $A$. Then the rule $n \mapsto I^{n}[0]$ defines an object in the $\infty$-category $(\derD(k)_{\geq 0})_\fil = \Fun(\NN^\op, \derD(k)_{\geq 0})$. Using the multiplication on $A$ this defines a functor
\begin{align}
\Fil_{\adic} \colon \Pair_k &\to \CAlg_\fil(k)_{\geq 0} \label{eq_underived_fil}\\
(A \to A / I) &\mapsto \{I^{n}\}_n \nonumber
\end{align}
We define the \emph{derived adic filtration functor}
\begin{align*}
\derL \Fil_\adic \colon \AniPair_k \to \CAlg_\fil(k)_{\geq 0}
\end{align*}
as the left Kan extension of $\Fil_{\adic}$ restricted to $\Fun(\Delta^1, \Poly_k)_{\surj, \gen}$. We shall sometimes write
\[
I^{(n)} := \derL \Fil_\adic^n(A \to A / I)
\]
and refer to it as the \emph{derived $n$-th power} of $I$. 
\end{definition}

\begin{lemma}\label{lem_fil_colim}
The functor
\[
\derL \Fil_\adic \colon \AniPair_k \to \CAlg_\fil(k)_{\geq 0}
\]
preserves small colimits.
\end{lemma}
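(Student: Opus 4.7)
The strategy is to mimic Lemma \ref{lem_dr_colim} and Lemma \ref{lem_extended_dr_colim}. By Proposition \ref{left_kan}, since $\derL\Fil_\adic$ is the left Kan extension of a functor defined on the compact projective generators $\Fun(\Delta^1, \Poly_k)_{\surj, \gen}$, it already preserves sifted colimits; to upgrade this to all small colimits, it suffices to show that the restriction of $\Fil_\adic$ to $\Fun(\Delta^1, \Poly_k)_{\surj, \gen}$ preserves finite coproducts. By Lemma \ref{lem_alg_coprod} and an induction, I only need to verify that for any two generators $F, F' \in \{F_0, F_1\}$ the natural map
\[
\Fil_\adic(F) \otimes^{\Day}_{k} \Fil_\adic(F') \longrightarrow \Fil_\adic(F \sqcup F')
\]
is an equivalence in $\CAlg_\fil(k)_{\geq 0}$.

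Next I would unwind what this coproduct is. Coproducts in $\Fun(\Delta^1, \CAlg_k^\an)$ are pointwise, and on the generators they are computed by underived tensor product (polynomial rings are flat). Thus writing $F = (k[x] \to k[x]/I)$ and $F' = (k[y] \to k[y]/J)$ with $I, J \in \{0, (x)\}, \{0, (y)\}$ respectively, one has
\[
F \sqcup F' = \bigl(k[x,y] \to k[x,y]/K\bigr), \qquad K = I\cdot k[x,y] + J\cdot k[x,y].
\]
The adic filtration on the right is $\Fil^n_\adic(F \sqcup F') = K^n = \sum_{p+q=n} I^p \otimes_k J^q$, viewed as a submodule of $k[x,y]$. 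On the other side, Day convolution gives
\[
\Fil^n\bigl(\Fil_\adic(F) \otimes^\Day \Fil_\adic(F')\bigr) = \underset{p + q \geq n}{\colim}\ I^p \otimes_k J^q.
\]

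The key observation, which handles all four cases $(I,J) \in \{0,(x)\} \times \{0,(y)\}$ uniformly, is that each $I^p$ and $J^q$ is a \emph{free} $k$-module (with explicit monomial basis), so all tensor products involved in the Day convolution colimit are underived and all transition maps are injective. Hence the colimit is just the union $\sum_{p+q \geq n} I^p \otimes_k J^q$ inside $k[x,y]$. A direct check (any element of $I^p \otimes J^q$ with $p+q \geq n$ lies in some $I^{p'} \otimes J^{q'}$ with $p'+q' = n$) identifies this sum with $K^n$, and a verification on monomials confirms that the induced map respects the filtration and the algebra structure. This establishes the required equivalence.

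The main obstacle is not a deep technical point but the potential confusion about derived versus underived tensor products in the Day convolution colimit; the proof hinges on the flatness of $I^p$ and $J^q$ over $k$, which is special to the polynomial generators chosen. Once this is clarified, the rest is essentially bookkeeping plus one application of Proposition \ref{left_kan}.
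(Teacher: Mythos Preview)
Your approach matches the paper's: reduce via Proposition~\ref{left_kan} to preservation of finite coproducts on $\Fun(\Delta^1,\Poly_k)_{\surj,\gen}$, use Lemma~\ref{lem_alg_coprod} to identify coproducts with Day convolution, and check the comparison by hand using that the ideal powers involved are free $k$-modules.

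There is one small gap in your reduction. Knowing the comparison map is an equivalence when both $F,F'$ are single generators does \emph{not} suffice for the induction: at the inductive step $F((A'\sqcup G)\sqcup B)=F(A'\sqcup(G\sqcup B))$ you need the comparison for the pair $(A', G\sqcup B)$, and neither factor is a single generator. The paper (here and in Lemmas~\ref{lem_dr_colim}, \ref{lem_extended_dr_colim}) instead checks the comparison with one factor an arbitrary $(P\to Q)\in\Pair_{k,\gen}$ and the other a single generator; then the induction goes through. Your explicit computation works verbatim in that generality---the ideal $(y_1,\dots,y_m)^p\subset k[\underline{x},\underline{y}]$ is still free over $k$---so the fix is a one-line change to the setup. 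A second minor point: the Day convolution colimit is not filtered, so ``all transition maps injective $\Rightarrow$ colimit is the union'' needs justification in $\derD(k)$. The paper first passes by cofinality to the finite subdiagram $\{(p,q):p+q\geq n,\ p\leq n,\ q\leq n\}$ and then notes that all maps are cofibrations, so the homotopy colimit agrees with the ordinary one.
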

\begin{proof}
By Proposition \ref{left_kan} it suffices to show the map
\[
\Fil_{\adic} \colon \Pair_{k, \gen} \to \CAlg_\fil(k)
\]
commutes with coproducts. By an induction argument, we thus need to show that for any $t \in \NN$ and any $(P \to Q) \in \Pair_{k, \gen}$ the natural maps
\begin{align*}
\Fil^t (\Fil^\bullet_\adic(P \to Q) \otimes \Fil^\bullet_\adic (k[z] \to k)) \to \Fil^t_\adic(P[z] \to Q) \\
\Fil^t (\Fil^\bullet_\adic(P \to Q) \otimes \Fil^\bullet_\adic (k[z] \to k[z])) \to \Fil^t_\adic(P[z] \to Q[z])
\end{align*}
are equivalences. We shall only give a proof for the first map, the second is similar but easier. Write $P = k[x_1, \dots, x_n, y_1, \dots, y_m]$ and $Q = k[x_1, \dots, x_n]$ so that the map $P \to Q$ is given by $y_i \mapsto 0$. 
By Lemma \ref{lem_alg_coprod} it suffices to show
\[
\underset{p + q \geq t}\colim \ (y_1, \dots, y_m)^p \otimes_k (z)^q \to (y_1, \dots, y_m, z)^t
\]
is an equivalence in $\derD(k)$. By a similar argument as in the proof of Lemma \ref{lem_extended_dr_colim}, one may reduce this to showing that
\[
\sum_{p + q \geq t} (y_1, \dots, y_m)^p \cdot (z)^q = (y_1, \dots, y_m, z)^t
\]
as discrete ideals in $P[z]$, which we leave for the reader to verify.
\end{proof}
\begin{remark}
Explicitly, using \cite[Lemma 5.5.8.13]{htt} and Corollary \ref{corol_gen_surj} one can show that if $k$ is a commutative $\QQ$-algebra, one may represent any object $(A \to A / I) \in \AniPair_k$ by a simplicial ring $A_*$ with a simplicial ideal $I_*$, such that $A_i$ is a polynomial algebra for all simplicial degrees $i$, and $I_i \subseteq A_i$ is generated by a subset of the generators of $A_i$. Then $\derL \Fil_{\adic}^n (A \to A / I)$ is isomorphic to the image of $I_*^n$ in $\derD(k)$ under the Dold-Kan correspondence.
\end{remark}

\begin{construction}\label{cons_comp_map_adfil}
Denote with $\tau_{\leq 0} \colon \CAlg_\fil(k) \to \CAlg_\fil(k)$ the truncation map induced by the t-structure on $\derD(k)$. Write $\tau_{\leq 0} \derL \Fil_\adic = \tau_{\leq 0} \circ \derL \Fil_\adic$. The natural transformation $\id \to \tau_{\leq 0}$ of functors $\CAlg_\fil(k) \to \CAlg_\fil(k)$ induces a natural transformation
\[
\derL \Fil_\adic \to \tau_{\leq 0} \derL \Fil_\adic
\]
of functors $\AniPair_k \to \CAlg_\fil(k)$. For any $(A \to B) \in \Pair_k$ one has 
\[
\tau_{\leq 0} \Fil_\adic(A \to B) = \Fil_\adic(A \to B)
\]
and thus we get canonical natural transformations
\[
\derL \Fil_\adic \to \tau_{\leq 0} \derL \Fil_\adic \to \Fil_\adic
\]
of functors $\Pair_k \to \CAlg_\fil(k)$.
\end{construction}
\begin{example}\label{example_derived_ideal_powers}We warn the reader that even in characteristic $0$, for a general surjective map $A \to A / I$ of discrete rings, it is not generally the case that $\tau_{\leq 0}\derL \Fil_\adic = \Fil_\adic$. For example, let $k$ be a ring of characteristic $0$, $A = k[x] / (x^2)$ and $I = (x)$. Applying the resolution from \cite[Construction 4.16]{simplicial-res} to the regular element $(t - x^2) \in k[t, x]$ one obtains a simplicial resolution for $k[x] / (x^2)$. Using this resolution, one may show that $\tau_{\leq 0}(\derL \Fil^n_{\adic}) \neq 0$ for all $n \geq 0$, even though $I^{n} = 0$ for $n \geq 2$.
\end{example}

We now give a divided power analogue of Definition \ref{derived_fil_adic}. 
\begin{definition}\label{derived_pdadic_fil}
Let $A$ be a $k$-algebra, $I$ an ideal of $A$, and $\gamma$ a PD-structure on $I$, so that $(A \to A/I) \in \PDPair_k$. Then the rule $n \mapsto I^{[n]}[0]$ defines an object in $(\derD(k)_{\geq 0})_\fil = \Fun(\NN^\op, \derD(k)_{\geq 0})$. Using the multiplication on $A$ this defines a functor
\begin{align}
\Fil_{\pdadic} \colon \PDPair_k &\to \CAlg_\fil(k)_{\geq 0} \label{eq_underived_pdfil}\\
(A \to A / I) &\mapsto \{I^{[n]}\}_n \nonumber
\end{align}
We define the \emph{derived PD-adic filtration functor}
\begin{align*}
\derL \Fil_\pdadic \colon \AniPDPair_k \to \CAlg_\fil(k)_{\geq 0}
\end{align*}
as the left Kan extension of $\Fil_{\pdadic}$ restricted to the subcategory ${\PDPair_{k,\gen}}$. 
\end{definition}
\begin{lemma}\label{lem_pdfil_colim}
The functor
\[
\derL \Fil_\pdadic \colon \AniPDPair_k \to \CAlg_\fil(k)_{\geq 0}
\]
preserves small colimits.
\end{lemma}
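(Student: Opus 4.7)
My proof plan is to closely mirror the argument for Lemma \ref{lem_fil_colim}, which handled the analogous statement for the adic (rather than PD-adic) filtration. Since the functor is defined as a left Kan extension from $\PDPair_{k,\gen}$, Proposition \ref{left_kan} reduces the claim to showing that the restriction
\[
\Fil_{\pdadic} \colon \PDPair_{k,\gen} \to \CAlg_\fil(k)
\]
preserves finite coproducts. By Lemma \ref{lem_alg_coprod}, coproducts in $\CAlg_\fil(k)$ are Day convolutions, and since every generator is an iterated coproduct of the two elementary types $(k[y] \to k[y])$ and $(k\langle x \rangle \to k)$, a straightforward induction reduces us to checking that the natural maps
\begin{align*}
\underset{p+q\geq t}\colim\ I^{[p]} \otimes_k \Fil^q_\pdadic(k[y]\to k[y]) &\to (I\cdot A'[y])^{[t]}, \\
\underset{p+q\geq t}\colim\ I^{[p]} \otimes_k (x)^{[q]} &\to \bigl(I\cdot A'\langle x\rangle + (x)\bigr)^{[t]}
\end{align*}
are equivalences in $\derD(k)$ for every $(A' \to A) \in \PDPair_{k,\gen}$ with PD ideal $I$.

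The first map is easy: since the PD ideal of $(k[y]\to k[y])$ is zero, $\Fil^q_\pdadic(k[y]\to k[y]) = 0$ for $q\geq 1$ and equals $k[y]$ for $q=0$, so the colimit collapses to $I^{[t]}\otimes_k k[y]$. This coincides with the PD ideal of the extended PD structure on $A'[y]$, as extending scalars along the flat map $A'\to A'[y]$ preserves divided powers. For the second map, note that all terms $I^{[p]}$ and $(x)^{[q]}$ are free $k$-modules (direct sums of PD-monomials) and every transition map is an inclusion of such summands; hence the derived colimit may be computed in the $1$-category of discrete $k$-modules, and reduces to the underived identity
\[
\sum_{p+q\geq t}\ I^{[p]}\cdot (x)^{[q]} \;=\; \bigl(I\cdot A'\langle x\rangle + (x)\bigr)^{[t]}
\]
of ideals in $A'\langle x\rangle$.

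The hardest (though classical) input is the last identity, which I expect to be the main obstacle in making the write-up clean. It follows from the standard divided power formula $\gamma_n(a+b) = \sum_{i+j=n}\gamma_i(a)\gamma_j(b)$ applied to the compatible PD ideals $I\cdot A'\langle x\rangle$ and $(x)$ in $A'\langle x\rangle$ (these are compatible because the inclusion $A'\to A'\langle x\rangle$ is a PD morphism and $(x)$ carries its canonical PD structure). This gives the inclusion $\gamma_n(K) \subseteq \sum_{i+j=n} I^{[i]} \cdot (x)^{[j]}$, and conversely $I^{[p]}\cdot(x)^{[q]} \subseteq K^{[p+q]} \subseteq K^{[t]}$ whenever $p+q\geq t$, from which the equality follows. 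With this identity in hand, both diagram chases go through and the induction closes, yielding the claim.
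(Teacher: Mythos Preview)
Your proposal is correct and takes essentially the same approach as the paper: the paper's proof reads simply ``Analogous to the proof of Lemma~\ref{lem_extended_dr_colim}'', and the key computation you spell out --- reducing via Proposition~\ref{left_kan} and Lemma~\ref{lem_alg_coprod} to the identity $\sum_{p+q\geq t} I^{[p]}\cdot(x)^{[q]} = H^{[t]}$ in $A'\langle x\rangle$ --- is exactly the verification of case~(\ref{crys_coprod1}) in that proof. Your justification of the classical PD identity via the addition formula is a welcome expansion of what the paper leaves as ``classical''.
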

\begin{proof}
Analogous to the proof of Lemma \ref{lem_extended_dr_colim}. 
\end{proof}
We shall sometimes abuse notation by writing 
\[
\derL \Fil_{\adic} \colon \AniPDPair_k \to \CAlg_\fil(k)
\]
for the composition
\[
\AniPDPair_k  \to \AniPair_k \xrightarrow{\derL \Fil_{\adic} } \CAlg_\fil(k)
\]
Note that we have a natural transformation 
\begin{equation} \label{abb}
\derL \Fil_{\adic} \to \derL \Fil_{\pdadic}
\end{equation}
 of functors $\AniPDPair_k \to \CAlg_\fil(k)$ induced by the inclusion $I^\ell \subseteq I^{[\ell]}$ on $\PDPair_{k, \gen}$. 
\begin{lemma}
If $k$ is a $\QQ$-algebra, the natural functor
\[
\derL \Fil_{\adic} \to \derL \Fil_{\pdadic}
\]
is an equivalence of functors $\AniPDPair_k \to \CAlg_\fil(k)$.
\end{lemma}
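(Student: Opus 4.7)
The plan is to reduce to checking the natural transformation on the compact projective generators of $\AniPDPair_k$, where the statement collapses to the classical fact that over $\QQ$ one has $I^{[\ell]} = I^\ell$ for a PD-ideal $I$.

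First I would verify that both $\derL \Fil_\adic, \derL \Fil_\pdadic \colon \AniPDPair_k \to \CAlg_\fil(k)$ preserve small (in particular sifted) colimits. For $\derL \Fil_\pdadic$ this is exactly Lemma \ref{lem_pdfil_colim}. For $\derL \Fil_\adic$ viewed on $\AniPDPair_k$, I would factor it as
\[
\AniPDPair_k \xrightarrow{\text{forget}} \AniPair_k \xrightarrow{\derL \Fil_\adic} \CAlg_\fil(k),
\]
where the first functor preserves colimits by \cite[Proposition 3.34]{mao} combined with the observation (used already in the paper) that $\AniPair_k$ is closed under colimits in $\Fun(\Delta^1, \CAlg_k^\an)$, and the second preserves colimits by Lemma \ref{lem_fil_colim}.

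Second, I would invoke Proposition \ref{left_kan} to conclude it suffices to check that (\ref{abb}) is an equivalence on $\PDPair_{k, \gen}$. A typical generator has the form $A' = k[y_1, \dots, y_m]\langle x_1, \dots, x_n\rangle$ surjecting onto $A = k[y_1, \dots, y_m]$ with PD-ideal $I = (x_1, \dots, x_n)$. Since $k \supseteq \QQ$, divided powers are given by $x_i^{[\ell]} = x_i^\ell/\ell!$, so the underlying $k$-algebra of $A'$ is just the polynomial ring $k[y_1, \dots, y_m, x_1, \dots, x_n]$ and the divided power ideal $I^{[\ell]}$ is the $k$-span of the same set of monomials as $I^\ell$. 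Hence $I^{[\ell]} = I^\ell$ as ideals of $A'$ and the canonical map (\ref{abb}) is already an equality on generators.

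The only point that requires real care is the colimit-preservation in the first step, since $\derL \Fil_\adic$ is defined on $\AniPair_k$ and we need it to behave well when precomposed with the forgetful functor from $\AniPDPair_k$ — this is where the explicit Mao reference and the closure of $\AniPair_k$ under colimits are essential. Once that is in place, the argument is formal: the identification $I^{[\ell]} = I^\ell$ on $\PDPair_{k, \gen}$ extends by left Kan extension to an equivalence of colimit-preserving functors on all of $\AniPDPair_k$.
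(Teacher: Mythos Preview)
Your proposal is correct and takes essentially the same approach as the paper's one-line proof, which simply states that it suffices to check on $\PDPair_{k,\gen}$ where the statement is obvious. You have supplied the details the paper leaves implicit: that both functors preserve sifted colimits (so Proposition \ref{left_kan} applies) and that $I^{[\ell]} = I^\ell$ on the generators when $\QQ \subseteq k$.
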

\begin{proof}
It suffices to check this on $\PDPair_{k, \gen}$ where the statement is obvious.
\end{proof}
Note that for $(A \to A_0) \in \AniPair_k$, the unit of the adjunction 
\[
(-)^\Lenv \vdash \forget
\]
induces a canonical map 
\[
\derL\Fil_\adic(A \to A_0) \to \derL\Fil_\adic((A \to A_0)^\Lenv)
\] 
\begin{lemma}\label{compare_01}
Let $k$ be a ring, and $(A \to A_0) \in \AniPair_k$. Then the composition
\begin{equation} \label{abc}
\derL\Fil_\adic(A \to A_0) \to \derL\Fil_\adic((A \to A_0)^\Lenv) \to \derL\Fil_\pdadic((A \to A_0)^\Lenv)
\end{equation}
induces an equivalence
\[
\derL \gr^i_{\adic}(A \to A_0) \xrightarrow{\sim}  \derL \gr^i_\pdadic((A \to A_0)^\Lenv)
\]
for $i = 0, 1$. 
\end{lemma}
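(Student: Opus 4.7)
The plan is to reduce to the compact projective generators of $\AniPair_k$ and then compute both graded pieces explicitly. First I would verify that both functors
\[
\derL \gr^i_{\adic}, \quad \derL \gr^i_{\pdadic} \circ (-)^\Lenv \;\colon\; \AniPair_k \longrightarrow \derD(k)
\]
preserve small colimits: this combines Lemma \ref{lem_fil_colim} and Lemma \ref{lem_pdfil_colim} with the facts that $(-)^\Lenv$ is a left adjoint (by the remark following Definition \ref{aba}) and that the $i$-th associated graded of a filtered object is a cofiber, hence preserves colimits out of a stable category. Consequently, by Proposition \ref{left_kan}, it suffices to verify the natural transformation induced by (\ref{abc}) is an equivalence on the generating subcategory $\Fun(\Delta^1, \Poly_k)_{\surj, \gen}$.

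On a generator $(P \to Q) = (k[x_1, \dots, x_n, y_1, \dots, y_m] \twoheadrightarrow k[x_1, \dots, x_n])$ with $I := \ker(P \to Q) = (y_1, \dots, y_m)$, the envelope is $(P \to Q)^\Lenv = (P^\Lenv \to Q)$ where $P^\Lenv := Q\langle y_1, \dots, y_m\rangle$ lies in $\PDPair_{k,\gen}$ with PD-ideal $J := \ker(P^\Lenv \to Q)$. Since both $(P \to Q) \in \Pair_{k,\gen}$ and $(P \to Q)^\Lenv \in \PDPair_{k,\gen}$, the derived filtrations coincide with their underived counterparts by construction, giving $\derL \Fil_\adic^n(P \to Q) = I^n$ and $\derL \Fil_\pdadic^n((P \to Q)^\Lenv) = J^{[n]}$. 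Tracing through the definitions, on generators the composite (\ref{abc}) is the canonical $P$-module map $I^n \to J^{[n]}$ induced by $P \to P^\Lenv$, $y_i \mapsto y_i$.

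The case $i = 0$ is then immediate, since both $\gr^0_\adic(P \to Q) = P/I = Q$ and $\gr^0_\pdadic((P \to Q)^\Lenv) = P^\Lenv/J = Q$, with the induced map being the identity. For $i = 1$, the module $I/I^2$ is the free $Q$-module on $\bar y_1, \dots, \bar y_m$; I would identify $J/J^{[2]}$ by invoking the standard structure theorem presenting $Q\langle y_1, \dots, y_m\rangle$ as a free $Q$-module on the monomials $y^{[\alpha]} := \prod_i y_i^{[\alpha_i]}$, in which $J^{[n]}$ is exactly the submodule spanned by the $y^{[\alpha]}$ with $|\alpha| \geq n$. This presents $J/J^{[2]}$ as the free $Q$-module on the classes of $y_1, \dots, y_m$, and the map $I/I^2 \to J/J^{[2]}$ sends $\bar y_i \mapsto \bar y_i$, hence is an isomorphism. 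The main (mild) obstacle is just confirming this explicit $Q$-basis of the free PD-envelope and of the submodule $J^{[2]}$, which is classical.
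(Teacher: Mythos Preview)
Your argument is correct and follows essentially the same approach as the paper's proof: reduce to generators using that all functors involved commute with colimits, then compute the graded pieces explicitly. The paper is slightly terser, checking only on the two basic generators $k[x] \to k[x]$ and $k[x] \to k$ (rather than on all of $\Fun(\Delta^1,\Poly_k)_{\surj,\gen}$) and leaving the verification that $(x)/(x^2) \to \gr^1_{\pdadic}(k\langle x\rangle \to k)$ is an isomorphism to the reader, whereas you carry out the computation in full for arbitrary $m$ variables.
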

\begin{proof}
Since all functors commute with colimits it suffices to check this for the elements $k[x] \to k[x]$ and $k[x] \to k$. The only nontrivial thing to check is that the map
\[
(x) / (x^2) \to \gr^1_{\pdadic}(k\langle x \rangle \to k)
\]
is an isomorphism in $\derD(k)^\heart$, which we leave for the reader to verify.
\end{proof}

\begin{construction}\label{construction_dr_adic_compare}
Let $k$ be a ring. Let $(A \to A_0) \in \AniPair_k$. Then the maps 
\[
(A \to A) \to (A \to A_0) \to (A_0 \to A_0)
\]
in $\AniPair_k$ induce maps
\[
(A \to A) \to (A \to A_0)^\Lenv \to (A_0 \to A_0)
\]
in $\AniPDPair_k$. We thus get maps
\begin{align*}
\dR_{A_0 / A} &\xleftarrow{\sim} \dR_{(A \to A_0)^\Lenv / (A \to A)} \\&\rightarrow \dR_{(A \to A_0)^\Lenv / (A \to A_0)^\Lenv} \\&\cong \derL \Fil_\pdadic ((A \to A_0)^\Lenv)
\end{align*}
where the first arrow is an equivalence by Proposition \ref{prop_iso_crys}. Inverting the first arrow, we obtain a map
\begin{equation} \label{drpdcompare}
\dR_{A_0 / A} \to \derL \Fil_\pdadic ((A \to A_0)^\Lenv)
\end{equation}
in $\CAlg_\fil(k)$, functorial in $(A \to A_0) \in \AniPair_k$. 

%By animating from $\PDPair_{k,\gen}$ one may construct a canonical functor  $\AniPDPair_k \to \Fun(\Delta^1, \CrysCon_k) $ given informally by 
%\[
%(A \to A_0) \mapsto \left[\begin{tikzcd}[row sep = tiny, column sep=small] &  A_0 \\ A \rar[->>] & A \uar \end{tikzcd} \mapsto\begin{tikzcd}[row sep = tiny, column sep=small] &  A_0 \\ A \rar[->>] & A_0 \uar \end{tikzcd} \right]
%\]
%We thus get a natural map
%\[
%\Crys_{A_0 / (A \to A)} \to \Crys_{A_0 / (A \to A_0)}
%\]
%of functors $\AniPDPair_k \to \CAlg_\fil(k)$. 
%
%Using the equivalence from Proposition \ref{prop_iso_crys}, we get equivalences
%\begin{align*}
%\dR_{A_0 / A} &\cong \Fil_\pdadic \dR_{(A_0 \to A_0) / (A \to A)} \cong \Crys_{A_0 / (A \to A)} \\
%\Crys_{A_0 / (A \to A_0)} &\cong \Fil_\pdadic \dR_{(A \to A_0) / (A \to A_0)} \cong \derL \Fil_\pdadic (A \to A_0)
%\end{align*}
%and thus we get a natural map 
%\begin{equation}
%\dR_{A_0 / A} \to \derL \Fil_\pdadic (A \to A_0) \label{eq_comparison_map}
%\end{equation}
%in $\CAlg_\fil(k)$, functorial in $(A \to A_0) \in \AniPDPair_k$. 

\end{construction}

\begin{prop}\label{surj_dr_adfil}
Let $k$ be a ring. Then the map  (\ref{drpdcompare}) induces an equivalence
\[
\dR_{- / -} \xrightarrow{\sim} \derL \Fil_{\pdadic}((- \to -)^\Lenv)
\] 
of functors $\AniPair_k \to \CAlg_\fil(k)$. 
\end{prop}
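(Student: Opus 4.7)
My plan is to reduce the claim to a direct verification on compact projective generators. As noted in the discussion preceding Definition \ref{aba}, the $\infty$-category $\AniPair_k = \Fun(\Delta^1, \CAlg_k^\an)_\surj$ is compact projectively generated by the two objects $(k[x] \to k,\ x \mapsto 0)$ and $(k[x] \to k[x])$ (the latter being the identity). If I can show that both sides of (\ref{drpdcompare}) preserve small colimits as functors $\AniPair_k \to \CAlg_\fil(k)$, then Proposition \ref{left_kan} reduces the equivalence to a check on these two generators.

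For colimit preservation, the source $\dR_{-/-}$ preserves colimits by Lemma \ref{lem_dr_colim} together with Lemma \ref{forget_colim}, which ensures that restricting a colimit-preserving functor along $\AniPair_k \hookrightarrow \Fun(\Delta^1, \CAlg_k^\an)$ still yields a colimit-preserving functor. On the target, $(-)^\Lenv$ is a left adjoint by \cite[Corollary 2.2]{mao}, and $\derL \Fil_\pdadic$ preserves colimits by Lemma \ref{lem_pdfil_colim}, so the composite preserves colimits as well.

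On the generator $(k[x] \to k[x])$ everything is trivial: the divided power envelope is itself with zero ideal, and all three filtered algebras appearing in Construction \ref{construction_dr_adic_compare} reduce to $k[x]$ in $\Fil^0$ with $\Fil^{\geq 1} = 0$. On the generator $(k[x] \to k)$ the envelope is the classical PD-polynomial pair $(k\langle x \rangle \to k)$, so $\derL \Fil_\pdadic$ collapses to the underived filtration with $\Fil^p = (x)^{[p]}$. Inverting the first arrow in Construction \ref{construction_dr_adic_compare} via Proposition \ref{prop_iso_crys}, the question reduces to whether the canonical base-change map
\[
\dR_{(k\langle x \rangle \to k)/(k[x] \to k[x])} \longrightarrow \dR_{(k\langle x \rangle \to k)/(k\langle x \rangle \to k)} = \Fil_\pdadic(k\langle x \rangle \to k)
\]
is a filtered equivalence. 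The key observation is that $\Omega_{k\langle x \rangle/k[x]} = 0$: every PD-derivation over $k[x]$ must annihilate $x$, hence kills every $x^{[n]}$ via $d(x^{[n]}) = x^{[n-1]}\, dx = 0$. The PD-de Rham complex on the left therefore collapses to $k\langle x \rangle$ in degree zero with Hodge filtration precisely $(x)^{[p]}$, matching the right-hand side on the nose.

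The main obstacle I anticipate is checking functoriality of Construction \ref{construction_dr_adic_compare} in $(A \to A_0) \in \AniPair_k$; in particular, one must verify that the arrow being inverted is an equivalence of \emph{functors} (and not merely pointwise), so that (\ref{drpdcompare}) is a well-defined natural transformation whose values on generators pin it down on all of $\AniPair_k$ by Proposition \ref{left_kan}. Once this functoriality bookkeeping is in place, the computation on the two generators is an elementary exercise in classical PD-de Rham algebra.
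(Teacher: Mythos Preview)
Your high-level strategy---reduce to the two generators $(k[x]\to k[x])$ and $(k[x]\to k)$ via colimit preservation---is correct, and the check on $(k[x]\to k[x])$ is fine. The functoriality worry you flag at the end is not the real issue: Construction~\ref{construction_dr_adic_compare} already produces a natural transformation, and the arrow being inverted is an equivalence of functors by Proposition~\ref{prop_iso_crys}.

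The genuine gap is in your computation on $(k[x]\to k)$. You reduce correctly to checking that
\[
\dR_{(k\langle x\rangle \to k)/(k[x]\to k[x])} \longrightarrow \Fil_\pdadic(k\langle x\rangle \to k)
\]
is a filtered equivalence, and then argue that the left side collapses to $k\langle x\rangle$ with its PD-adic filtration because the underived module $\Omega_{(k\langle x\rangle \to k)/(k[x]\to k[x])}$ vanishes. But $\dR_{(-\to -)/(-\to -)}$ is defined as a left Kan extension from $\Fun(\Delta^1,\PDPair_k)_\gen$, and the object $((k[x]\to k[x])\to(k\langle x\rangle \to k))$ with $x\mapsto x$ does \emph{not} lie in that generating subcategory: any finite coproduct of the basic objects $\ins_i(k[y]\to k[y])$ and $\ins_i(k\langle z\rangle\to k)$ whose source is $(k[x]\to k[x])$ must involve a factor of $\ins_0(k[y]\to k[y])$, and the target of such a coproduct then retains a polynomial variable in its quotient---it cannot be the bare PD-pair $(k\langle x\rangle\to k)$. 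So you are not entitled to replace $\dR$ by $\Omega^\bullet$ at this object without further argument.

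Unwinding the first equivalence via Proposition~\ref{prop_iso_crys}, what remains is exactly the filtered identification $\dR_{k/k[x]}\simeq\Fil_\pdadic(k\langle x\rangle\to k)$, which is the substance of the proposition on its first nontrivial input. The graded pieces do match (both are $k$ in cohomological degree~$0$, via $\LL^p_{k/k[x]}\simeq\Gamma^p(k)[p]$ on one side and $(x)^{[p]}/(x)^{[p+1]}\simeq k$ on the other), but neither filtered object is complete, so matching associated gradeds does not close the argument. The paper defers this computation to \cite[Proposition~4.64]{mao}; your argument, as written, does not supply it.
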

\begin{proof}
See \cite[Proposition 4.64]{mao}.
\end{proof}

\begin{corollary}
Let $k$ be a ring, and $(A \to A_0) \in \AniPair_k$. The natural map $A \to \dR_{A_0 / A}$ induces an equivalence
\[
A / \derL\Fil^2_\adic(A \to A_0) \to \dR_{A_0 / A} / \Fil^2
\]
in $\CAlg_\fil(k)$. 
\end{corollary}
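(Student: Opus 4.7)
The plan is to factor the claimed map through the derived divided power envelope and then reduce to Proposition \ref{surj_dr_adfil} together with Lemma \ref{compare_01}. By Proposition \ref{surj_dr_adfil}, the natural transformation $\dR_{A_0/A} \xrightarrow{\sim} \derL\Fil_\pdadic((A \to A_0)^\Lenv)$ is an equivalence in $\CAlg_\fil(k)$, and under this identification the natural map $A \to \dR_{A_0/A}$ is the one obtained from the unit of $(-)^\Lenv \dashv \forget$ composed with the transformation (\ref{abb}). Thus the map in question is, up to equivalence, the map
\[
A/\derL\Fil^2_\adic(A \to A_0) \longrightarrow \derL\Fil_\pdadic((A \to A_0)^\Lenv)/\Fil^2
\]
in $\CAlg_\fil(k)$, and it suffices to show this is an equivalence.

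Next, since the forgetful functor $\CAlg_\fil(k) \to \derD(k)_\fil$ is conservative, it is enough to check this is an equivalence in $\derD(k)_\fil$. Both filtered objects satisfy $\Fil^i = 0$ for $i \geq 2$, so using the fiber sequences $\Fil^{i+1} \to \Fil^i \to \gr^i$ and induction downwards on $i$, a map between such filtered objects is an equivalence if and only if it induces equivalences on $\gr^i$ for $i = 0, 1$ (for $i \geq 2$ the graded pieces are automatically zero on both sides).

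Finally, the induced maps on $\gr^0$ and $\gr^1$ are precisely the maps
\[
\derL\gr^i_\adic(A \to A_0) \longrightarrow \derL\gr^i_\pdadic((A \to A_0)^\Lenv),
\]
which are equivalences for $i = 0, 1$ by Lemma \ref{compare_01}. This concludes the argument. There is no substantial technical obstacle here: the only thing that requires some care is verifying that the map in the corollary really agrees (via the equivalence of Proposition \ref{surj_dr_adfil}) with the natural factorization through $\derL\Fil_\pdadic((A \to A_0)^\Lenv)$, but this is automatic from naturality of the constructions in Construction \ref{construction_dr_adic_compare}.
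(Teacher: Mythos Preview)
Your proof is correct and is exactly the approach the paper takes: the paper's proof reads simply ``Combine Proposition \ref{surj_dr_adfil} and Lemma \ref{compare_01},'' and you have spelled out precisely how those two results combine, including the reduction to graded pieces $i=0,1$ and the compatibility check via Construction \ref{construction_dr_adic_compare}.
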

\begin{proof}
Combine Proposition \ref{surj_dr_adfil} and Lemma \ref{compare_01}.
\end{proof}

If $k$ is a ring such that $\QQ \subseteq k$, the categories $\AniPDPair_k$ and $\AniPair_k$ are canonically isomorphic, and we get the following result.
\begin{corollary}
Let $k$ be a ring such that $\QQ \subseteq k$. Then the map (\ref{drpdcompare}) induces an equivalence
\[
\dR_{- / -} \xrightarrow{\sim} \derL \Fil_{\adic}(- \to -)
\]
of functors $\AniPair_k \to \CAlg_\fil(k)$. 
\end{corollary}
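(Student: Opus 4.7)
The plan is to chain Proposition \ref{surj_dr_adfil} with the preceding lemma, bridging them via the observation that when $\QQ \subseteq k$, the derived divided power envelope becomes a canonical equivalence. Concretely, I would like to assemble a natural chain
\[
\dR_{A_0/A} \xrightarrow{\sim} \derL\Fil_\pdadic((A \to A_0)^\Lenv) \xleftarrow{\sim} \derL\Fil_\adic((A \to A_0)^\Lenv) \xrightarrow{\sim} \derL\Fil_\adic(A \to A_0),
\]
where the leftmost arrow is the map \eqref{drpdcompare}, provided by Proposition \ref{surj_dr_adfil}; the middle arrow is from the preceding lemma (applied to the object $(A \to A_0)^\Lenv \in \AniPDPair_k$); and the rightmost arrow must still be constructed.

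The first step is to show that the adjunction $(-)^\Lenv \dashv \forget$ between $\AniPair_k$ and $\AniPDPair_k$ is an equivalence of $\infty$-categories when $\QQ \subseteq k$. Both functors preserve small colimits (by Lemma \ref{forget_colim} together with the construction of $(-)^\Lenv$ as a left Kan extension), so it suffices to check the unit and counit on compact projective generators. This reduces to the elementary observation that the canonical map
\[
k[y_1, \ldots, y_m, x_1, \ldots, x_n] \to k[y_1, \ldots, y_m]\langle x_1, \ldots, x_n \rangle
\]
is an isomorphism once $\QQ \subseteq k$, since then $x_i^{[n]} = x_i^n/n!$ already lives in the polynomial algebra. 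Hence the sets of generators for $\PDPair_{k,\gen}$ and $\Pair_{k,\gen}$ agree, making $(-)^\Lenv$ and $\forget$ mutually inverse.

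Under this equivalence, the functor $\derL\Fil_\adic \colon \AniPair_k \to \CAlg_\fil(k)$ is by definition intertwined with the composite $\derL\Fil_\adic \circ \forget \colon \AniPDPair_k \to \CAlg_\fil(k)$ (this is the convention introduced just before \eqref{abb}). In particular, the counit $(A \to A_0) \xleftarrow{\sim} \forget((A \to A_0)^\Lenv)$ yields a natural equivalence
\[
\derL\Fil_\adic((A \to A_0)^\Lenv) \xrightarrow{\sim} \derL\Fil_\adic(A \to A_0),
\]
which supplies the rightmost arrow in the chain above. Combining all three arrows finishes the proof, giving the claimed equivalence $\dR_{-/-} \xrightarrow{\sim} \derL\Fil_\adic(-\to-)$ of functors $\AniPair_k \to \CAlg_\fil(k)$.

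The only nontrivial bookkeeping is the verification that $(-)^\Lenv \dashv \forget$ is an equivalence over $\QQ$-algebras; once that is in hand, everything else is a direct citation of the two results immediately preceding the corollary. I do not expect any serious obstacle.
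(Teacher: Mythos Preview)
Your proposal is correct and follows essentially the same route as the paper: the paper remarks just before the corollary that when $\QQ \subseteq k$ the categories $\AniPair_k$ and $\AniPDPair_k$ are canonically isomorphic, and then combines this with Proposition~\ref{surj_dr_adfil} and the preceding lemma identifying $\derL\Fil_\adic$ with $\derL\Fil_\pdadic$. One small terminological slip: the natural map $(A\to A_0)\to \forget((A\to A_0)^\Lenv)$ you invert is the \emph{unit} of the adjunction $(-)^\Lenv \dashv \forget$, not the counit; since the adjunction is an equivalence this makes no difference to the argument.
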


\begin{definition}
We define the \emph{derived completion} functor
\[
\Comp(- \to -) \colon \Fun(\Delta^1, \CAlg_k^\an)_\surj \to \CAlg_\fil(k)
\]
as the composition
\[
\Fun(\Delta^1, \CAlg_k^\an)_\surj \xrightarrow{\derL \Fil_\adic^\bullet} \CAlg_\fil(k) \xrightarrow{\widehat{(-)}}  \CAlg_\fil(k) 
\]
We will refer to the filtration on $\Comp(A \to A / I)$ as the \emph{derived adic filtration}.
\end{definition}
One may think of $\Comp(A \to A / I)$ as the filtered $\EE_\infty$-algebra whose $p$-filtered piece is given by
\[
\lim_{n \to \infty} \cofib\(I^{(n)} \to  I^{(p)}\)
\]
using the suggestive notation from Definition \ref{derived_fil_adic}. 

\begin{remark}\label{rem_compute_derived_comp}
More explicitly, one may compute $\Comp(A \to A / I)$ as follows. Start by taking a simplicial ring $A_*$ with a simplicial ideal $I_*$ such that for all simplicial degrees $i$, $A_i$ is a polynomial algebra, $I_i \subseteq A_i$ is generated by a subset of the generators of $A_i$, and 
\[
A \to (A / I) = \underset{i \in \Delta^\op}{\colim}\  A_i \to A_i / I_i
\]
in $\AniPair_k$. 

Then
\[
\Comp(A \to A / I) = \lim_{n \to \infty}  \underset{i \in \Delta^\op} \colim \ A_i / I_i^n
\]
in $\CAlg_k^\an$.
\end{remark}
\begin{remark}
Using Proposition \ref{surj_dr_adfil},  \cite[Proposition 8.5]{quillen-rings} and \cite[Corollary 10.4(iii)]{quillen-rings} one may in fact show that if $k$ is of characteristic zero, $A_*$ is a simplicial ring and $I_*$ is a (termwise) quasi-regular ideal, then the comparison map
\[
\Comp\(\underset{i \in \Delta^\op}{\colim}\  A_i \to A_i / I_i\) \to \lim_{n \to \infty} \underset{i \in \Delta^\op}{\colim}\ A_i \to A_i / I_i^n
\]
is an equivalence in $\CAlg_k^\an$. In particular, if $A$ is a discrete ring and $I \subseteq A$ is a quasi-regular ideal, the derived completion agrees with the usual completion. 
\end{remark}
\begin{corollary}\label{surj_dr_comp}
If $\QQ \subseteq k$, the natural transformation
\[
\widehat{\dR}_{-/-} \to \Comp(- \to -)
\]
of functors
\[
\AniPair_k \to \CAlg_\fil(k)
\]
induced by Construction \ref{construction_dr_adic_compare} is an equivalence.
\end{corollary}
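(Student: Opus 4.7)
The plan is to reduce this directly to the corollary immediately preceding it. That corollary states that when $\QQ \subseteq k$, the map (\ref{drpdcompare}) of Construction \ref{construction_dr_adic_compare} induces an equivalence
\[
\dR_{-/-} \xrightarrow{\sim} \derL \Fil_{\adic}(- \to -)
\]
of functors $\AniPair_k \to \CAlg_\fil(k)$. (This uses Proposition \ref{surj_dr_adfil} together with the fact that $\derL \Fil_\adic \simeq \derL \Fil_\pdadic$ on $\AniPDPair_k$ in characteristic zero, and that the forgetful functor $\AniPDPair_k \to \AniPair_k$ is an equivalence in that case.)

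Next I would apply the completion functor $\widehat{(-)} \colon \CAlg_\fil(k) \to \CAlg_\fil(k)$ to both sides of this equivalence. Since $\widehat{(-)}$ is a functor, it carries equivalences to equivalences, so we obtain a natural equivalence
\[
\widehat{\dR}_{-/-} \xrightarrow{\sim} \widehat{\derL \Fil_{\adic}(-\to -)}
\]
of functors $\AniPair_k \to \CAlg_\fil(k)$. By the very definition of $\Comp(-\to -)$ as the composition of $\derL \Fil_\adic$ with $\widehat{(-)}$, the right hand side is equal to $\Comp(-\to -)$, and the left hand side is by definition the Hodge completed derived de Rham complex. The resulting map agrees with the one induced by Construction \ref{construction_dr_adic_compare} by functoriality of completion.

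There is no real obstacle here: the substantive content — the filtered comparison between derived de Rham cohomology and the (PD-)adic filtration on the derived divided power envelope — has already been absorbed into Proposition \ref{surj_dr_adfil} and its characteristic-zero specialization. The corollary is then a formal consequence of completing both sides.
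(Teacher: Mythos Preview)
Your proposal is correct and follows essentially the same route as the paper: the paper's one-line proof (``Follows directly by applying Proposition \ref{surj_dr_adfil}'') amounts to exactly what you do, namely using the characteristic-zero identification $\dR_{-/-}\simeq \derL\Fil_{\adic}(-\to -)$ and then completing, with $\Comp(-\to -)$ being by definition the completion of $\derL\Fil_\adic$.
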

\begin{proof}
Follows directly by applying Proposition \ref{surj_dr_adfil}.
\end{proof}
%\begin{remark}
%We would like to point out that the proof of \cite[Proposition 4.16]{bhatt-cddrc}, which states that
%\[
%\widehat{\dR}_{B / A} \cong \Comp(A \to B)
%\] 
%is an isomorphism in $\CAlg(k)$ for any $(A \to B) \in \Fun(\Delta^1, \CAlg_k^\an)$, requires careful reading. Bhatt shows $\pi_0(B_m) \to \pi_0(B)$ is an isomorphism, \emph{therefore} $\pi_0(\fib(B_m \to B)) = 0$, and then applies Quillen's convergence theorem. However this requires some care if $\pi_1(B) \neq 0$, because of the phenomenon explained in Example \ref{example_derived_ideal_powers}. It could be that in his case (due to the explicit form of $B_m$) the ideal $I = \fib(B_m \to B)$ still satisfies $\pi_0(I) = 0$, however we did not see the argument for this. In our setup, this result is now an easy corollary of Proposition \ref{surj_dr_adfil} (essentially because we used a stronger version for Quillen's convergence theorem to compare with Bhatt's derived completion). 
%\end{remark}

%!TEX root = main.tex
\section{Chern classes in derived de Rham cohomology}
Let $k$ be a ring. Let $X$ be a scheme over $k$ with the resolution property (\stacksref{0F85}). In this section, we define for any $\mathcal{E} \in \Perf(X)$ and any $i \geq 0$ a Chern class
\[
c_i(\mathcal{E}) \in \coh^{2i}(\Fil^i \widehat{\dR}_{X / k})
\]
Moreover, if $i!$ is invertible in $k$, we will construct the $i$th part of the Chern character
\[
\ch_i(\mathcal{E}) \in \coh^{2i}(\Fil^i \widehat{\dR}_{X / k})
\]
Then we will show our construction is uniquely determined by additivity, functoriality and its value on line bundles, see Proposition \ref{prop_chern_unique}. 

We mostly follow the approach of Bhatt and Lurie from \cite[\S7, \S 9.2]{bhatt-lurie}. To avoid needing to introduce syntomic cohomology, we adapt their construction of higher Chern classes to the case of derived de Rham cohomology.  The first three sections merely provide results needed to make the machinery work, the entire construction is contained in the last section. 
\label{sec_drc_stacks}
\subsection{Relative derived de Rham cohomology}
In this section, we define derived de Rham cohomology relative to an open subset, and construct a cup product map for relative derived de Rham cohomology in a very general way.
\begin{definition}
Let $k$ be a ring, let $X$ be an algebraic stack over $k$, and let $U \subseteq X$ be an open substack. Then we define the \emph{relative derived de Rham cohomology}
\[
\widehat{\dR}_{(X, U) / k} := \fib(\widehat{\dR}_{X / k} \to \widehat{\dR}_{U / k})
\]  
in $\CAlg_\fil(k)$. 
\end{definition}
The limit exists by \cite[Proposition 3.2.2.1]{ha} and can be computed in $\derD(k)_\fil$. Note that since this is a filtered algebra, we immediately get a canonical map
\[
\Fil^p \widehat{\dR}_{(X, U) / k} \otimes_k \Fil^q \widehat{\dR}_{(X, U) / k} \to \Fil^{p + q} \widehat{\dR}_{(X, U) / k}
\]
The remainder of this section is devoted to constructing a map 
\[
\Fil^p \widehat{\dR}_{(X, U) / k} \otimes_k \Fil^q \widehat{\dR}_{(X, V) / k} \to \Fil^{p + q} \widehat{\dR}_{(X, U \cup V) / k}
\]
for different open substacks $U, V \subseteq X$. We start with a lemma.

\begin{lemma}\label{pushout}
Let $k$ be a ring. Let $X$ be an algebraic stack over $k$, and let $U, V \subseteq X$ be open substacks which cover $X$ (i.e. the map $U \coprod V \to X$ is an effective epimorphism in $\St_k$). Then the diagram
\[
\begin{tikzcd}
\dar U \times_X V \rar& U \dar \\
V \rar& X 
\end{tikzcd}
\]
is a pushout diagram in $\St_k$. 
\end{lemma}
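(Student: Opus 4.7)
The plan is to exploit that $\St_k$ is an $\infty$-topos (as noted earlier in the paper via \cite[Proposition 6.2.2.7]{htt}), hence has universal colimits and satisfies descent: pullback along an effective epimorphism is conservative on slice categories. A second crucial ingredient is that open immersions of algebraic stacks are monomorphisms in $\St_k$, so any iterated fiber product over $X$ of copies of $U$ and $V$ collapses to one of $U$, $V$, or $U \times_X V$, depending on which factors appear (e.g.\ $U \times_X U \simeq U$, $U \times_X V \times_X U \simeq U \times_X V$).

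First, I would form the pushout $P := U \sqcup_{U \times_X V} V$ in $\St_k$. The inclusions $U \hookrightarrow X$ and $V \hookrightarrow X$ agree on $U \times_X V$, producing by the universal property of the pushout a canonical comparison map $\varphi \colon P \to X$. The problem then reduces to showing that $\varphi$ is an equivalence.

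I would verify this by descent along the given effective epimorphism $f \colon U \coprod V \to X$: it suffices to check that the pullback $f^* \varphi \colon (U \coprod V) \times_X P \to U \coprod V$ is an equivalence. By universality of colimits, $(U \coprod V) \times_X P$ is itself the pushout
\[
\bigl((U \coprod V) \times_X U\bigr) \sqcup_{(U \coprod V) \times_X (U \times_X V)} \bigl((U \coprod V) \times_X V\bigr).
\]
Distributing the fiber product over the coproducts and collapsing each iterated intersection via the monomorphism identities above, every term decomposes into a coproduct of copies of $U$, $V$, and $U \times_X V$. A direct identification then exhibits the resulting pushout as $U \coprod V$, with $f^* \varphi$ the identity map.

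The main obstacle will be the combinatorial bookkeeping: distributing a fiber product over both a coproduct and a pushout produces four quadrants, and one must carefully reassemble them and track the gluing maps using the monomorphism identities. A structurally cleaner alternative is to work Čech-theoretically from the start—writing $X \simeq \lvert C_\bullet(f) \rvert$, observing that after mono-collapse the Čech nerve is built only out of $U$, $V$, and $U \times_X V$, and showing that the colimit of this simplicial object agrees with the colimit of the span $U \leftarrow U \times_X V \to V$—but this trades the distributivity calculation for a cofinality argument in $\Delta^{\op}$, which is of comparable technical weight.
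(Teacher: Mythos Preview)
Your argument is correct, but it takes a different route from the paper. The paper argues by induction on the level $n$ of the algebraic stack: forming the pushout $Z$, one reduces to showing $Z \to X$ becomes an equivalence after pulling back along a smooth atlas $T \to X$ which is a relative $(n-1)$-stack, and then invokes the induction hypothesis for $U_T, V_T \subseteq T$. Your approach instead descends directly along the effective epimorphism $U \coprod V \to X$ and uses that open immersions are monomorphisms to collapse the iterated fiber products; this is a clean, purely topos-theoretic argument that avoids the inductive scaffolding of the definition of higher algebraic stacks. The trade-off is that the ingredient ``open immersions of algebraic $n$-stacks are monomorphisms in $\St_k$'' is itself most naturally proved by the same kind of induction on $n$ (checking $\Delta_{U/X}$ is an equivalence after base change to an atlas), so you have effectively repackaged the induction into a lemma you then invoke. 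That said, isolating this fact makes the main argument more transparent, and your Čech-nerve variant is a reasonable alternative bookkeeping device for the same idea.
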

\begin{proof}
We thank Dhyan Aranha for teaching us the following argument. We prove the following more general statement: For any relative $n$-strack $X \to Y$ and $U, V \subseteq X$ open relative $n$-substacks which cover $X$, the diagram
\[
\begin{tikzcd}
\dar U \times_X V \rar& U\dar  \\
V \rar& X
\end{tikzcd}
\] 
is a pushout square.

If $X$ is an algebraic space, then this follows from the analogous statement in the category of sheaves of sets. Now suppose the statement is known for any relative $(n - 1)$-stack, and pick a relative $n$-stack $f \colon X \to Y$. Let $U, V \subseteq X$ be relative $n$-substacks which cover $X$. Let $Z$ be defined by the pushout diagram
\[
\begin{tikzcd}
\dar U \times_X V \rar& U\dar  \\
V \rar& Z
\end{tikzcd}
\]
It suffices to show the natural map $Z \to X$ is an open immersion. Let $T \to X$ be a smooth cover of $X$ which is a relative $(n - 1)$-stack. Since $T \to X$ is a smooth cover, it suffices to show the map $Z_T \to T$ is an open immersion. But $Z_T$ fits in a pushout diagram
\[
\begin{tikzcd}
\dar U_T \times_T V_T \rar& U_T \dar  \\
V_T \rar& Z_T
\end{tikzcd}
\]
so the result follows by the induction hypothesis.
\end{proof}

\begin{prop}\label{prop_fib_stable_infty}
Let $\mathcal{C}^\otimes$ be a symmetric monoidal stable $\infty$-category. Let $X$ be any $\infty$-topos, and let $\mathcal{F}, \mathcal{G}$ be $\mathcal{C}$-valued sheaves on $X$. Let $U, V \in X$, and define $U \cup V$ as the unique object sitting in a pushout square 
\[
\begin{tikzcd}
\dar U \times_X V \rar & U \dar \\
V \rar& U \cup V
 \end{tikzcd}
\] 
in $X$. Write
\[
\mathcal{F}(X, U) := \fib(\mathcal{F}(X) \to \mathcal{F}(U))
\]
Then there exists a map
\[
\mathcal{F}(X, U) \otimes \mathcal{G}(X, V) \to (\mathcal{F} \otimes \mathcal{G})(X, U \cup V)
\]
fitting in a commutative diagram
\[
\begin{tikzcd}
 \dar \mathcal{F}(X, U) \otimes \mathcal{G}(X, V) 
 \rar& \dar (\mathcal{F} \otimes \mathcal{G})(X, U \cup V) \\ 
\mathcal{F}(X) \otimes \mathcal{G}(X) \rar& (\mathcal{F} \otimes \mathcal{G})(X)
\end{tikzcd}
\]
in $\mathcal{C}$.
\end{prop}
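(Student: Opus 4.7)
My approach is to express both sides of the desired map as total fibers of $2 \times 2$ squares, and then obtain the map as the one induced on total fibers by a natural map between those squares.

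For the left hand side, since $\mathcal{C}$ is stable and its tensor product preserves colimits in each variable, it also preserves finite limits (as finite colimits and finite limits coincide up to shift in a stable category). Consequently, the fiber sequences $\mathcal{F}(X,U) \to \mathcal{F}(X) \to \mathcal{F}(U)$ and $\mathcal{G}(X,V) \to \mathcal{G}(X) \to \mathcal{G}(V)$ combine to exhibit $\mathcal{F}(X,U) \otimes \mathcal{G}(X,V)$ as the total (iterated) fiber of the square
\[
\begin{tikzcd}
\mathcal{F}(X) \otimes \mathcal{G}(X) \rar \dar & \mathcal{F}(X) \otimes \mathcal{G}(V) \dar \\
\mathcal{F}(U) \otimes \mathcal{G}(X) \rar & \mathcal{F}(U) \otimes \mathcal{G}(V).
\end{tikzcd}
\]
For the right hand side, $U \cup V$ is by construction the pushout of $U \leftarrow U \times_X V \to V$ in $X$. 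Since $\mathcal{F} \otimes \mathcal{G}$ is a sheaf (taking this tensor product in the symmetric monoidal structure on $\Shv_{\mathcal{C}}(X)$ inherited from $\mathcal{C}$), it sends this pushout to a pullback, and hence $(\mathcal{F} \otimes \mathcal{G})(X, U \cup V)$ identifies with the total fiber of
\[
\begin{tikzcd}
(\mathcal{F} \otimes \mathcal{G})(X) \rar \dar & (\mathcal{F} \otimes \mathcal{G})(V) \dar \\
(\mathcal{F} \otimes \mathcal{G})(U) \rar & (\mathcal{F} \otimes \mathcal{G})(U \times_X V).
\end{tikzcd}
\]

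Next I would construct a natural map from the first square to the second. At each vertex $W$ there is a canonical lax monoidality map $\mathcal{F}(W) \otimes \mathcal{G}(W) \to (\mathcal{F} \otimes \mathcal{G})(W)$ coming from the fact that the section functor at $W$ is lax symmetric monoidal (as the right adjoint of a symmetric monoidal left adjoint). The four vertex maps of the squares are built from this multiplication composed with suitable restrictions in one or both factors: for example $\mathcal{F}(X) \otimes \mathcal{G}(V) \to \mathcal{F}(V) \otimes \mathcal{G}(V) \to (\mathcal{F} \otimes \mathcal{G})(V)$ at the upper right, and $\mathcal{F}(U) \otimes \mathcal{G}(V) \to \mathcal{F}(U \times_X V) \otimes \mathcal{G}(U \times_X V) \to (\mathcal{F} \otimes \mathcal{G})(U \times_X V)$ at the lower right, with analogous recipes on the remaining corners. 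Coherence of these four maps into a genuine morphism of squares follows from the naturality of restriction and of the lax monoidal structure. Taking total fibers then yields the desired map, and the commutativity of the outer diagram in the statement is visible at the upper-left corner by construction.

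The main obstacle I expect is producing the map of squares as a genuine morphism in the $\infty$-category $\Fun(\Delta^1 \times \Delta^1, \mathcal{C})$, rather than as something that is only pointwise commutative on homotopy categories. The cleanest way to handle this is probably to package the entire construction as a single functor out of $\Delta^1 \times \Delta^1 \times \Delta^1$ into $\mathcal{C}$, obtained by applying $\otimes$ to the pair of fiber sequences $\mathcal{F}(X,U) \to \mathcal{F}(X) \to \mathcal{F}(U)$ and $\mathcal{G}(X,V) \to \mathcal{G}(X) \to \mathcal{G}(V)$ and then composing with the natural transformation from the pointwise tensor product of the presheaves underlying $\mathcal{F}$ and $\mathcal{G}$ to the sheaf tensor product $\mathcal{F} \otimes \mathcal{G}$.
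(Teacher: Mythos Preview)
Your proposal is correct and follows essentially the same route as the paper. The paper phrases it as: form the pullback $T$ of the three off-diagonal corners of your first square, produce a map $T \to (\mathcal{F} \otimes \mathcal{G})(U \cup V)$ from the sheaf pullback square, fit this into a square with $\mathcal{F}(X)\otimes\mathcal{G}(X) \to (\mathcal{F}\otimes\mathcal{G})(X)$, and then take vertical fibers; the identification $\mathcal{F}(X,U)\otimes\mathcal{G}(X,V)\simeq \fib(\mathcal{F}(X)\otimes\mathcal{G}(X)\to T)$ is isolated as a separate lemma (Lemma~\ref{lem_fib_stable_infty}). Your ``map of squares, then take total fibers'' is the same argument packaged more symmetrically, and you are right to flag the $\infty$-categorical coherence of the square map as the only point needing care---the paper glosses over the analogous coherence when constructing its diagram~(\ref{diagramt}).
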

\begin{proof}
Define $T \in \mathcal{C}$ as the unique object fitting in a cartesian diagram
\[
\begin{tikzcd}
T  \dar \arrow[dr, phantom, "\square"]  \rar& \mathcal{F}(X) \otimes \mathcal{G}(V) \dar \\ 
 \mathcal{F}(U) \otimes \mathcal{G}(X) \rar&  \mathcal{F}(U) \otimes \mathcal{G}(V)
\end{tikzcd}
\]
We then have a diagram
\begin{equation}
\begin{tikzcd}
T  \dar \rar& \mathcal{F}(V) \otimes \mathcal{G}(V) \dar \\ 
 \mathcal{F}(U) \otimes \mathcal{G}(U) \rar&  \mathcal{F}(U \times_X V) \otimes \mathcal{G}(U  \times_X V)
\end{tikzcd}\label{diagramt}
\end{equation}
By the sheaf property of $\mathcal{F} \otimes \mathcal{G}$ we have a pullback diagram
\[
\begin{tikzcd}
(\mathcal{F} \otimes \mathcal{G})(U \cup V)  \dar \rar& (\mathcal{F} \otimes \mathcal{G})(V) \dar \\ 
( \mathcal{F} \otimes \mathcal{G})(U) \rar&  (\mathcal{F} \otimes \mathcal{G})(U  \times_X V)
\end{tikzcd}
\]
Thus the diagram (\ref{diagramt}) induces a map 
\[
T \to (\mathcal{F} \otimes \mathcal{G})(U \cup V)
\]
fitting in a commutative diagram
\[
\begin{tikzcd}
\dar \mathcal{F}(X) \otimes \mathcal{G}(X) \rar& T \dar \\
(\mathcal{F} \otimes \mathcal{G})(X) \rar& (\mathcal{F} \otimes \mathcal{G})(U \cup V)
\end{tikzcd}
\]
By Lemma \ref{lem_fib_stable_infty} the canonical map
\[
\mathcal{F}(X, U) \otimes \mathcal{G}(X, V) \to \fib(\mathcal{F}(X) \otimes \mathcal{G}(X) \to T)
\]
is an equivalence, the result follows.
\end{proof}
\begin{corollary}\label{prop_relative_map}
Let $X$ be an algebraic stack, and let $U, V \subseteq X$ be open substacks. Then there exists a map
\[
\Fil^p \widehat{\dR}_{(X, U) / k} \otimes \Fil^q \widehat{\dR}_{(X, V) / k} \to \Fil^{p + q} \widehat{\dR}_{(X, U \cup V) / k}
\]
fitting in a commutative diagram
\[
\begin{tikzcd}
\Fil^p \widehat{\dR}_{(X, U) / k} \otimes \Fil^q \widehat{\dR}_{(X, V) / k} \dar \rar& \dar \Fil^{p + q} \widehat{\dR}_{(X, U \cup V) / k} \\
\Fil^p \widehat{\dR}_{X / k} \otimes \Fil^q \widehat{\dR}_{X / k} \rar& \Fil^{p + q} \widehat{\dR}_{X / k}
\end{tikzcd}
\]
in $\derD(k)$.
\end{corollary}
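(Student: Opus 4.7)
The strategy is to apply Proposition \ref{prop_fib_stable_infty} with $\mathcal{F} = \Fil^p \widehat{\dR}_{-/k}$ and $\mathcal{G} = \Fil^q \widehat{\dR}_{-/k}$, viewed as $\derD(k)$-valued sheaves on the $\infty$-topos $\St_k$, and then to post-compose with the filtered algebra multiplication on $\widehat{\dR}_{-/k}$.

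First I would invoke Proposition \ref{prop_dr_sheaf} to conclude that $\widehat{\dR}_{-/k}$ is a sheaf of filtered $\EE_\infty$-algebras on $\St_k$; in particular each $\Fil^p \widehat{\dR}_{-/k}$ is a $\derD(k)$-valued sheaf, and Day convolution supplies a natural multiplication map of sheaves
\[
\Fil^p \widehat{\dR}_{-/k} \otimes \Fil^q \widehat{\dR}_{-/k} \to \Fil^{p+q} \widehat{\dR}_{-/k}.
\]
Next, Lemma \ref{pushout} identifies the set-theoretic union of open substacks $U \cup V \subseteq X$ with the pushout $U \coprod_{U \times_X V} V$ in $\St_k$, which is exactly the construction feeding into Proposition \ref{prop_fib_stable_infty}.

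Applying Proposition \ref{prop_fib_stable_infty} then yields a natural map
\[
\Fil^p \widehat{\dR}_{(X,U)/k} \otimes \Fil^q \widehat{\dR}_{(X,V)/k} \to (\Fil^p \widehat{\dR}_{-/k} \otimes \Fil^q \widehat{\dR}_{-/k})(X, U \cup V),
\]
sitting in a commutative square over the corresponding map on $X$. Composing this with the multiplication map above (evaluated at both $X$ and $U \cup V$) gives the desired map into $\Fil^{p+q} \widehat{\dR}_{(X, U \cup V)/k}$, and the required commutative square is obtained by stacking the square from Proposition \ref{prop_fib_stable_infty} with the naturality square of the multiplication.

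The only subtlety is to interpret the tensor product $\mathcal{F} \otimes \mathcal{G}$ in Proposition \ref{prop_fib_stable_infty} as the tensor product in the symmetric monoidal category of sheaves (i.e. the sheafification of the pointwise tensor), so that the sheaf property invoked in its proof is correct and so that the multiplication map of sheaves is directly available as a post-composition. With this convention in place the construction is entirely formal, and I expect no further obstacle.
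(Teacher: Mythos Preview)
Your proposal is correct and follows essentially the same route as the paper: invoke Proposition \ref{prop_dr_sheaf} so that each $\Fil^p \widehat{\dR}_{-/k}$ is an fppf sheaf, use Lemma \ref{pushout} to realize $U\cup V$ as the pushout, apply Proposition \ref{prop_fib_stable_infty}, and then compose with the multiplication map. Your explicit remark that $\mathcal{F}\otimes\mathcal{G}$ must be read as the sheafified tensor product is well-placed, since the proof of Proposition \ref{prop_fib_stable_infty} uses the sheaf condition on $\mathcal{F}\otimes\mathcal{G}$.
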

\begin{proof}
Since $\Fil^p \widehat{\dR}$ is a sheaf for any $p$ by Proposition \ref{prop_dr_sheaf}, we may apply Lemma \ref{pushout}  and Proposition \ref{prop_fib_stable_infty} to find a map
\[
\Fil^p \widehat{\dR}_{(X, U) / k} \otimes \Fil^q \widehat{\dR}_{(X, V) / k} \to \(\Fil^{p} \widehat{\dR}_{(X, U \cup V) / k} \otimes \Fil^{q} \widehat{\dR}_{(X, U \cup V) / k}\)
\]
The result follows by composing with the multiplication map.
\end{proof}
\subsection{The classifying stack of short exact sequences}
\label{section_torsors}
For $n \in \NN$, denote with denote with $\GL_{n}$ the group scheme of invertible matrices over $\Spec(\ZZ)$, that is
\[
\GL_{n} := \Spec\(\ZZ[x_{ij} \mid i, j \in \{1, \dots, n\}]_{\det(x_{ij})}]\)
\]
The following identifies $\mathrm{B}\GL_{n}$ as the classifying stack of vector bundles of rank $n$.
\begin{lemma}\label{prop_vectngln}
Let $k$ be a ring and let $X$ be a stack over $k$. Then there exists an equivalence of spaces
\[
\Vect_n(X)^\simeq \simeq \Map_{\St_k}(X, \mathrm{B} \GL_n)
\]
functorial in $X$.
\end{lemma}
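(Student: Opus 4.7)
The plan is to reduce to the affine case, construct an explicit functor in each direction, and use fppf descent to glue.

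First I would invoke the cited result \cite[Theorem 4.28]{khanstacks}, which identifies $\Map_{\St_k}(X, \mathrm{B}\GL_n)$ with the groupoid of $\GL_n$-torsors on $X$ functorially in $X$. Thus the statement reduces to producing a natural equivalence between $\GL_n$-torsors and rank $n$ vector bundles. Since both $\Vect_n(-)^\simeq$ (by Lemma \ref{vect_sheaf2}) and the functor sending $X$ to $\GL_n$-torsors on $X$ are fppf sheaves of spaces on $\St_k$, it suffices to produce such an equivalence on affine schemes $\Spec(A)$, functorially in $A \in \Alg_k$, and then right Kan extend.

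Next I would write down the two constructions on affines. In one direction, associate to a rank $n$ vector bundle $\mathcal{E}$ on $\Spec(A)$ the frame scheme $\uisom_A(\mathcal{O}^n, \mathcal{E})$; since $\mathcal{E}$ is fppf-locally isomorphic to $\mathcal{O}^n$ and $\GL_n$ acts on $\mathcal{O}^n$ by precomposition, this is an fppf $\GL_n$-torsor. In the other direction, associate to a $\GL_n$-torsor $T \to \Spec(A)$ the twist $T \times^{\GL_n} \mathbb{A}^n$, formed as the fppf-sheaf quotient of $T \times \mathbb{A}^n$ by the diagonal $\GL_n$-action; since $T$ is fppf-locally trivial and the quotient of $\GL_n \times \mathbb{A}^n$ is $\mathbb{A}^n$, this is fppf-locally the trivial bundle, and hence (again by Lemma \ref{vect_sheaf2}) a rank $n$ vector bundle.

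Then I would verify these two constructions are mutually inverse. One direction $T \mapsto \uisom(\mathcal{O}^n, T \times^{\GL_n} \mathbb{A}^n) \simeq T$ is the canonical identification of a torsor with its frame scheme; the other $\mathcal{E} \mapsto \uisom(\mathcal{O}^n, \mathcal{E}) \times^{\GL_n} \mathbb{A}^n \simeq \mathcal{E}$ is the evaluation map. Both equivalences can be checked after an fppf cover trivializing the bundle/torsor, where everything reduces to the tautological statement about $\GL_n$ acting on $\mathbb{A}^n$. Functoriality in $A$ is built into each construction by pullback, and the two maps are natural transformations of sheaves of groupoids.

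The main obstacle I anticipate is not conceptual but bookkeeping: packaging the associated-bundle construction as a morphism of $\infty$-functors $\Alg_k \to \mathcal{S}$, rather than just a levelwise map of groupoids, and checking the two natural transformations are homotopy inverses at the level of $\infty$-groupoids. This is handled by noting that both sides are $1$-truncated (valued in $\mathcal{S}_{\leq 1}$), so an equivalence of the underlying ordinary groupoids suffices; the $\infty$-categorical coherence comes for free from the $(1,1)$-categorical nature of both targets. After this the right Kan extension along $(\Alg_k)^{\mathrm{op}} \hookrightarrow \St_k^{\mathrm{op}}$, as described in \eqref{def_extend}, produces the claimed natural equivalence of sheaves on $\St_k$.
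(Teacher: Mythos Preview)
Your proposal is correct and follows essentially the same approach as the paper: reduce to affines using that both sides are fppf sheaves, observe that on affines both sides are $1$-groupoids, and then invoke the classical equivalence between rank $n$ vector bundles and $\GL_n$-torsors. The paper's proof is terser—it simply cites \cite[Example 4.32]{khanstacks} for the classical construction—whereas you spell out the frame-bundle/associated-bundle constructions explicitly; but the logical skeleton is identical.
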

\begin{proof}
Since both sides are sheaves on $\St_k$ (see Lemma \ref{vect_sheaf2}), it suffices to construct the equivalence of functors when restricted to  $(\Alg_k)^\op$. In this case, both sides are $1$-groupoids, and the construction is classical (see e.g. \cite[Example 4.32]{khanstacks}). 
\end{proof}
We now wish to generalize the above to short exact sequences of vector bundles. We start by introducing the corresponding group scheme. For $m, n \in \NN$, we denote $\matP_{n, m}$ the group scheme of upper triangular block matrices in $\GL_{n + m}$. Explicitly
\[
\matP_{n, m} := \Spec\(\ZZ[x_{ij} \mid i, j \in \{1, \dots, n + m\}]_{\det(x_{ij})}] / I_{n,m} \)
\]
where $I_{n,m}$ is the ideal generated by all $x_{ij}$ with $i > n$ and $j \leq n$.  

For any stack $X$ over $k$, we denote with $\mathrm{ExtVect}_{n, m}(X)$ the category of exact triangles 
\[
A \to B \to C \xrightarrow{+1}
\]
in $\derD(X)$ such that $A \in \Vect_n(X)$, $B \in \Vect_{n + m}(X)$ and $C \in \Vect_m(X)$.
\begin{lemma}\label{sheaf_vects}
The functor
\[
\mathrm{ExtVect}_{n, m}(-)^\simeq \colon \St_k^\op \to \mathcal{S}
\]
is a sheaf for the fppf topology.
\end{lemma}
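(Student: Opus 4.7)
The plan is to realize $\mathrm{ExtVect}_{n,m}(-)^\simeq$ as a full subsheaf of a larger $\mathcal{S}$-valued sheaf, namely the sheaf of arrows in $\derD(-)$, cut out by local conditions on source, target, and cofiber.

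First, I would use that in any stable $\infty$-category an exact triangle $A \to B \to C \xrightarrow{+1}$ is determined (up to contractible choice) by its first arrow, with $C$ recovered canonically as the cofiber. Concretely, the functor sending the pushout square
\[
\begin{tikzcd}
A \rar \dar & B \dar \\
0 \rar & C
\end{tikzcd}
\]
to the arrow $A \to B$ identifies the space of exact triangles in $\derD(X)$ with $\Fun(\Delta^1, \derD(X))^\simeq$. Under this identification, $\mathrm{ExtVect}_{n,m}(X)^\simeq$ corresponds to the full subgroupoid of those $f \colon A \to B$ with $A \in \Vect_n(X)$, $B \in \Vect_{n+m}(X)$, and $\cofib(f) \in \Vect_m(X)$.

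Second, I would check that $X \mapsto \Fun(\Delta^1, \derD(X))^\simeq$ is a sheaf for the fppf topology. By Lemma \ref{vect_sheaf2}, $\derD(-) \colon \St_k^\op \to \Cat_\infty$ is an fppf sheaf. Since $\Cat_\infty$ is cartesian closed, $\Fun(\Delta^1, -)$ is right adjoint to $- \times \Delta^1$ and hence preserves limits; similarly $(-)^\simeq \colon \Cat_\infty \to \mathcal{S}$ is a right adjoint and preserves limits. Composing, the functor $X \mapsto \Fun(\Delta^1, \derD(X))^\simeq$ is an $\mathcal{S}$-valued fppf sheaf.

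Finally, I would verify that the subgroupoid condition defining $\mathrm{ExtVect}_{n,m}$ is local for the fppf topology. Given a fppf cover $\{U_i \to X\}$ and descent data $(f_i \colon A_i \to B_i)$ gluing to an arrow $f \colon A \to B$ in $\derD(X)$, the hypotheses $A_i \in \Vect_n(U_i)$ and $B_i \in \Vect_{n+m}(U_i)$ force $A \in \Vect_n(X)$ and $B \in \Vect_{n+m}(X)$ by the sheaf property of $\Vect_n$ and $\Vect_{n+m}$ from Lemma \ref{vect_sheaf2}. Since pullback along $U_i \to X$ is exact and therefore commutes with cofibers, $\cofib(f)\rvert_{U_i} \simeq \cofib(f_i) \in \Vect_m(U_i)$, so again by Lemma \ref{vect_sheaf2} applied to $\Vect_m$ we conclude $\cofib(f) \in \Vect_m(X)$. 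This shows $\mathrm{ExtVect}_{n,m}(-)^\simeq$ is itself an fppf sheaf.

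There is no substantive obstacle here: the result is a routine consequence of the fact that exact triangles in a stable $\infty$-category are classified by arrows, together with descent for $\derD$ and for each $\Vect_r$ from Lemma \ref{vect_sheaf2}. The only point requiring a word of care is the compatibility of cofibers with fppf pullback, which is immediate since pullback is a symmetric monoidal exact functor of stable $\infty$-categories.
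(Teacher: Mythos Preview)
Your proof is correct and follows essentially the same approach as the paper: both realize $\mathrm{ExtVect}_{n,m}$ as a full subfunctor of a diagram category in $\derD(-)$, cut out by fppf-local conditions on the objects (being locally free of prescribed rank) and on the shape (exactness), then invoke that $\derD(-)$ is a sheaf and $(-)^\simeq$ preserves limits. The only cosmetic difference is that the paper works with $\Fun(\Delta^1 \times \Delta^1, \derD(-))$ and the condition of being a pullback square, whereas you pass to $\Fun(\Delta^1, \derD(-))$ via the equivalence between exact triangles and arrows in a stable $\infty$-category and impose the condition on the cofiber instead.
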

\begin{proof}
For any $k$-stack $X$ and any $\mathcal{F} \in \Fun(\Delta^1 \times \Delta^1, \derD(X))$, the condition that $\mathcal{F}(i, j)$ is a vector bundle of a certain rank for some $(i, j) \in \{0, 1\}^2$  is local for the flat topology. Moreover, the condition that $\mathcal{F}$ is a pullback square is also local for the flat topology. It follows that the functor
\[
\mathrm{ExtVect}_{n, m} \colon \St_k^\op \to \Cat_\infty
\]
is a sheaf for the fppf topology. The result follows after observing that $(-)^\simeq$ preserves limits.
\end{proof}

\begin{prop}\label{prop_bpnm}
Let $k$ be a ring and let $X$ be a stack over $k$. Then there exists an equivalence of spaces
\[
\mathrm{ExtVect}_{n, m}(X)^\simeq \simeq \Map_{\St_k}(X, \mathrm{B} \matP_{n, m})
\]
functorial in $X$.
\end{prop}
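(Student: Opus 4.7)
The plan is to follow the pattern of Lemma \ref{prop_vectngln}. Both functors
\[
\mathrm{ExtVect}_{n, m}(-)^\simeq \quad \text{and} \quad \Map_{\St_k}(-, \mathrm{B} \matP_{n, m})
\]
are sheaves on $\St_k$ for the fppf topology: the former by Lemma \ref{sheaf_vects}, and the latter because $\mathrm{B}\matP_{n,m}$ lies in $\St_k$, so the presheaf it represents automatically preserves limits. It therefore suffices to construct a functorial equivalence after restriction to $(\Alg_k)^\op$, where both sides are $1$-groupoids (the left side by Lemma \ref{sheaf_vects}, the right because $\mathrm{B}\matP_{n, m}$ is an algebraic $1$-stack).

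Next I would identify $\matP_{n, m}$ as the automorphism group of the \emph{standard extension}
\[
\mathrm{std}_{n,m} := \left( 0 \to \mathcal{O}^n \xrightarrow{\iota} \mathcal{O}^{n + m} \xrightarrow{\pi} \mathcal{O}^m \to 0 \right)
\]
over $\Spec(\ZZ)$, with $\iota$ the inclusion of the first $n$ coordinates and $\pi$ the projection onto the last $m$. A direct matrix computation shows that an endomorphism of $\mathrm{std}_{n, m}$ compatible with $\iota$ and $\pi$ is precisely an upper triangular block matrix with blocks of sizes $n \times n$, $n \times m$, and $m \times m$, whence the automorphism group scheme of $\mathrm{std}_{n, m}$ in $\mathrm{ExtVect}_{n, m}(\Spec(A))$ is naturally isomorphic to $\matP_{n, m} \times \Spec(A)$ for any $A \in \Alg_k$. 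By \cite[Theorem 4.28]{khanstacks}, the groupoid $\Map_{\St_k}(\Spec(A), \mathrm{B}\matP_{n,m})$ is equivalent to the groupoid of $\matP_{n,m}$-torsors over $\Spec(A)$.

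With these identifications in place, I would define the equivalence in both directions. Given a $\matP_{n,m}$-torsor $T \to \Spec(A)$, twisting $\mathrm{std}_{n, m}$ by $T$ under the canonical action of $\matP_{n,m}$ on the sequence yields a short exact sequence of vector bundles of the required ranks. Conversely, given $\mathcal{E} \in \mathrm{ExtVect}_{n, m}(\Spec(A))$, the isomorphism sheaf $\uisom(\mathrm{std}_{n, m}, \mathcal{E})$ carries a canonical $\matP_{n, m}$-action from the right and forms a $\matP_{n, m}$-torsor. Functoriality in $A$ and the fact that these two constructions are mutually inverse are then routine.

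The main thing to verify is that $\uisom(\mathrm{std}_{n, m}, \mathcal{E})$ really is a torsor, i.e.\ that any $\mathcal{E} \in \mathrm{ExtVect}_{n, m}(\Spec(A))$ is Zariski-locally isomorphic to $\mathrm{std}_{n, m}$. This follows from the fact that on an affine scheme every short exact sequence of vector bundles splits (since the cokernel, being a vector bundle, is projective), so globally $\mathcal{B} \cong \mathcal{A} \oplus \mathcal{C}$ compatibly with $\iota$ and $\pi$; a Zariski-local trivialization of $\mathcal{A}$ and $\mathcal{C}$ then identifies $\mathcal{E}$ with $\mathrm{std}_{n, m}$ in the relevant sense.
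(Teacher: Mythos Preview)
Your proposal is correct and follows essentially the same approach as the paper: reduce to affines via the sheaf property, identify $\matP_{n,m}$ with the automorphism group of the standard extension, and use local triviality to pass between torsors and extensions. You are simply more explicit than the paper about the two directions of the equivalence and about why every extension is locally standard (splitting via projectivity of the cokernel), but the underlying argument is the same.
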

\begin{proof}
Again, since both sides are sheaves on $\St_k$ by Lemma \ref{sheaf_vects}, it suffices to construct the equivalence of functors when restricted to  $(\Alg_k)^\op$, in which case both sides are $1$-groupoids. Consider the standard short exact sequence 
\[
S := \left[ 0 \to \catO_X^{\oplus n} \to \catO_X^{\oplus(n + m)} \to \catO_X^{\oplus m} \to 0\right]
\]
on $X$. Since any short exact sequence $T \in \Vect_{n,m}(X)$ is locally isomorphic to $S$, and moreover $\mathrm{Aut}(S) \simeq \matP_{n, m}(\catO_X)$, the result follows.
\end{proof}
\subsection{A result of Totaro}
In this section, we slightly adapt a theorem on the de Rham cohomology classifying spaces that is originally due to Totaro. The motivation for this excursion is to provide a crucial ingredient for the Cartan formula for Chern classes in derived de Rham cohomology in the next section. We start by recalling the result, see \cite[Theorem 6.1]{totaro} for the proof.
\begin{theorem}[Totaro]\label{thm_totaro}
Let $k$ be a field, and let $P$ be a parabolic subgroup of a reductive group $G$ over a field $k$. Let $L$ be the Levi quotient of $\matP$. Then the restriction 
\[
\derR \Gamma(\mathrm{B}P , \Omega^j) \to \derR \Gamma(\mathrm{B}L, \Omega^j)
\]
is an equivalence for all $j$. 
\end{theorem}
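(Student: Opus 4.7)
The natural approach is to use the Levi decomposition $P = L \ltimes U$, where $U$ denotes the unipotent radical of $P$, and to proceed by induction on $\dim U$. The splitting of the quotient $P \to L$ produces a section $s \colon \mathrm{B}L \to \mathrm{B}P$ of the projection $\pi \colon \mathrm{B}P \to \mathrm{B}L$ induced by $P \to L$. The restriction map in the statement is precisely $s^*$, so it has $\pi^*$ as a right inverse, and it therefore suffices to show that $\pi^*$ induces an equivalence on $\derR \Gamma(-,\Omega^j)$ for each $j$.

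For the induction step, I would pick a one-dimensional $L$-stable normal subgroup $U_1 \subseteq U$ with $U_1 \cong \mathbb{G}_a$. Such a subgroup exists because $P$ is a parabolic in the reductive group $G$: the unipotent radical $U$ is generated by root subgroups $U_\alpha \cong \mathbb{G}_a$ indexed by the roots in the nilradical of $\mathrm{Lie}(P)$, and a highest such root furnishes an $L$-stable copy of $\mathbb{G}_a$ inside $U$. The short exact sequence $1 \to U_1 \to P \to P/U_1 \to 1$ then factors $\pi$ through $\mathrm{B}(P/U_1)$, and by the induction hypothesis applied to $P/U_1$ over $L$, the problem reduces to the one-step case where $U \cong \mathbb{G}_a$ and $L$ acts on $U$ through a single character.

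The base case is the technical heart of the argument. Writing $P' = L \ltimes \mathbb{G}_a$, I would compute the pushforward of $\Omega^j_{\mathrm{B}P'/k}$ along the projection $\mathrm{B}P' \to \mathrm{B}L$ by descending along the simplicial bar resolution of $\mathrm{B}\mathbb{G}_a$, tracking the induced $L$-equivariance at each cosimplicial level. The goal is to identify this pushforward with $\Omega^j_{\mathrm{B}L/k}$, which would complete the induction. The main obstacle is controlling the computation in positive characteristic: the Hodge cohomology of $\mathrm{B}\mathbb{G}_a$ does not vanish on the nose for $j > 0$, and one has to use the weight decomposition coming from the $L$-character on $\mathbb{G}_a$ to annihilate the unwanted classes in the totalisation. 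This weight-theoretic cancellation is precisely where the hypothesis that $L$ is the Levi of a parabolic in a reductive group, rather than an arbitrary group acting on $\mathbb{G}_a$, enters in an essential way.
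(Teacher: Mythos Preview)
The paper does not prove this theorem at all: its entire proof is the sentence ``See \cite[Theorem 6.1]{totaro} for the proof.'' So there is nothing to compare your argument against in the paper itself; the result is imported as a black box and only the consequence over $\ZZ$ (Proposition~\ref{corol_totaro}) is actually argued in the text.

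That said, your sketch does follow the general shape of Totaro's argument (Levi decomposition, filtration of the unipotent radical by copies of $\mathbb{G}_a$, weight analysis), but there is a genuine gap in how you set up the induction. You want to apply the induction hypothesis to $P/U_1$ over $L$, but $P/U_1$ is in general \emph{not} a parabolic subgroup of any reductive group, so the theorem as stated does not apply to it. What you actually need to induct on is a statement of the form ``for $L$ reductive acting on a split unipotent $U$ with all $\mathbb{G}_a$-subquotients carrying nonzero $L$-weights, the map $\mathrm{B}(L\ltimes U) \to \mathrm{B}L$ induces an equivalence on $\derR\Gamma(-,\Omega^j)$''; the parabolic hypothesis is then used only once, at the start, to guarantee that the weights on the root subgroups are nonzero. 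Without reformulating the induction hypothesis this way, the step from $P$ to $P/U_1$ does not go through.

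Your description of the base case is honest about where the difficulty lies (the nonvanishing of Hodge cohomology of $\mathrm{B}\mathbb{G}_a$ in positive characteristic), and the weight argument you gesture at is indeed the mechanism Totaro uses; but as written it is a plan rather than a proof.
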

We now wish to extend this result slightly, and show $k$ can be any ring. 
\begin{lemma}
Let $A \in \derD(\ZZ)$. If $A \otimes \QQ \cong 0$ and $A \otimes (\ZZ / p \ZZ) \cong 0$ for all prime numbers $p$, then $A \cong 0$. 
\end{lemma}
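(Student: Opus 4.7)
The plan is to show that the hypotheses force $A$ to be uniquely divisible and simultaneously rationally trivial. First I would use the short exact sequence
\[
0 \to \ZZ \xrightarrow{p} \ZZ \to \ZZ/p\ZZ \to 0
\]
in $\derD(\ZZ)$. Tensoring with $A$ yields a cofiber sequence
\[
A \xrightarrow{p} A \to A \otimes (\ZZ/p\ZZ)
\]
in $\derD(\ZZ)$. The assumption $A \otimes (\ZZ/p\ZZ) \cong 0$ therefore means that multiplication by $p$ is an equivalence on $A$ for every prime $p$.

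Next I would combine this with the classical identification $\QQ \cong \colim(\ZZ \xrightarrow{2} \ZZ \xrightarrow{3} \ZZ \xrightarrow{5} \ZZ \to \cdots)$ (a sequential colimit over multiplication by successive primes) in $\derD(\ZZ)$. Tensoring with $A$ and using that each structure map becomes an equivalence by the previous step, I obtain that the canonical map
\[
A \to A \otimes \QQ
\]
is an equivalence in $\derD(\ZZ)$. Since $A \otimes \QQ \cong 0$ by hypothesis, we conclude $A \cong 0$.

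I do not anticipate a genuine obstacle here. The only point to be slightly careful about is working at the level of $\derD(\ZZ)$ rather than in the heart: both the cofiber sequence and the presentation of $\QQ$ as a filtered colimit are statements in $\derD(\ZZ)$, and the tensor product $- \otimes -$ denotes the derived tensor product, which preserves cofiber sequences and filtered colimits in each variable. Both hypotheses are genuinely used: $A \otimes \ZZ/p \cong 0$ for all $p$ alone is not enough (it is satisfied by $A = \QQ$), and $A \otimes \QQ \cong 0$ alone is not enough either (it is satisfied by any torsion complex).
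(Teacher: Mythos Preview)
Your argument is correct in spirit and slightly more direct than the paper's, but there is one small slip: the sequential colimit $\ZZ \xrightarrow{2} \ZZ \xrightarrow{3} \ZZ \xrightarrow{5} \cdots$ over each prime \emph{once} is not $\QQ$ but only the $\ZZ$-submodule of $\QQ$ consisting of fractions with squarefree denominator (for instance $1/4$ is missing). The fix is immediate: once you know multiplication by every prime is an equivalence on $A$, multiplication by every nonzero integer is an equivalence, so you may use any correct presentation such as $\QQ \cong \colim(\ZZ \xrightarrow{2} \ZZ \xrightarrow{3} \ZZ \xrightarrow{4} \cdots)$ (multiplication by $n$ at the $n$-th step) and conclude $A \to A \otimes \QQ$ is an equivalence as you intended.

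For comparison, the paper takes a slightly different route: it first inducts on the number of prime factors to get $A \otimes (\ZZ/n\ZZ) \cong 0$ for all $n$, then uses $\QQ/\ZZ \cong \colim_N \ZZ/N\ZZ$ to deduce $A \otimes (\QQ/\ZZ) \cong 0$, and finally tensors the triangle $\ZZ \to \QQ \to \QQ/\ZZ$ with $A$. Your approach avoids the auxiliary object $\QQ/\ZZ$ and the induction by going straight from ``multiplication by $p$ is invertible'' to ``$A$ is a $\QQ$-module''; it is marginally shorter and arguably more transparent. Both arguments use the same two ingredients (commutation of $\otimes$ with cofibers and with filtered colimits), just organized differently.
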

\begin{proof}
This is well known, we follow \cite[Lemma E.9.3.1]{sag}. By induction on the number of prime divisors of $n$, one may first show that $A \otimes_\ZZ (\ZZ / n \ZZ) \cong 0$ for all $n \in \NN$, by choosing a prime divisor $p \mid n$ and tensoring the exact triangle
\[
(\ZZ / p \ZZ) \to (\ZZ / n \ZZ) \to (\ZZ / (n / p) \ZZ) \xrightarrow{+1}
\] 
with $A$. 

Next, note that we have
\[
\QQ / \ZZ = \underset{N \in \NN} \colim  \ \ZZ / N \ZZ 
\]
in $\derD(\ZZ)^\heart$. Since taking homology of chain complexes commutes with filtered colimits, it follows that the above equality also holds in $\derD(\ZZ)$. Commuting tensor products with colimits it follows that
\[
A \otimes_\ZZ \QQ / \ZZ\cong 0
\]
in $\derD(\ZZ)$. The result now follows by tensoring the exact triangle
\[
\ZZ \to \QQ \to (\QQ / \ZZ) \xrightarrow{+1}
\]
with $A$. 
\end{proof}
\begin{corollary}\label{corol_qiso_z}
If $f \colon A \to B$ is a morphism in $\derD(\ZZ)$ such that $f \otimes \QQ$ is an equivalence,  and $f \otimes_\ZZ (\ZZ / p \ZZ)$ is an equivalence for all primes $p$, then $f$ is an equivalence. 
\end{corollary}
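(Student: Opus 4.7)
The plan is to reduce directly to the previous lemma by passing to the cofiber. Set $C := \cofib(f) \in \derD(\ZZ)$. Since $\derD(\ZZ)$ is stable and the tensor product $- \otimes_\ZZ M$ is exact (i.e.\ commutes with finite colimits) for any $M \in \derD(\ZZ)$, the cofiber commutes with tensoring: one has $C \otimes \QQ \simeq \cofib(f \otimes \QQ)$ and $C \otimes (\ZZ/p\ZZ) \simeq \cofib(f \otimes \ZZ/p\ZZ)$ for every prime $p$.

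By assumption each of these maps is an equivalence, so both cofibers vanish. Thus $C$ satisfies the hypotheses of the preceding lemma, hence $C \simeq 0$. In a stable $\infty$-category a morphism with trivial cofiber is an equivalence, so $f$ is an equivalence.

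The only point that requires a moment of care is the exchange of cofiber and tensor product, but this is automatic: tensoring with any fixed object is a colimit-preserving endofunctor of $\derD(\ZZ)$, so it preserves cofiber sequences. There is no real obstacle, the statement is a formal consequence of the previous lemma.
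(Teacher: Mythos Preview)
Your proof is correct and matches the paper's own argument exactly: the paper simply says the result follows by applying the previous lemma to the cone of $f$, which is precisely what you do (cofiber and cone being the same here).
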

\begin{proof}
Follows by applying the previous lemma to the cone of $f$. 
\end{proof}
With this, we can state and prove the version of Totaro's theorem that we need.
\begin{prop}\label{corol_totaro}
Let $k$ be a ring. Let $m, n \in \NN$. Write $\matP = \matP_{n, m}$, and $\mathrm{L} = \GL_n \times \GL_m$. Then for any $p \geq 0$, the map 
\[
\Fil^p \widehat{\dR}_{\mathrm{BP}_k / k} \to \Fil^p \widehat{\dR}_{\mathrm{BL}_k / k}
\]
is an equivalence in $\derD(k)$. 
\end{prop}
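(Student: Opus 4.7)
The plan is to bootstrap Totaro's theorem (Theorem \ref{thm_totaro}) from fields to an arbitrary base ring $k$ by base-changing to $\ZZ$ and invoking Corollary \ref{corol_qiso_z}. First I would pass to the level of the full filtered algebras: it suffices to prove that the map of complete filtered $\EE_\infty$-algebras $\widehat{\dR}_{\B\matP_k/k} \to \widehat{\dR}_{\B\mathrm{L}_k/k}$ is an equivalence in $\CAlg_\fil(k)$, from which the equivalence on each $\Fil^p$ follows. Since the associated-graded functor is conservative on complete filtered objects, it is enough to check the induced map on $j$-th graded pieces is an equivalence for every $j \geq 0$. Because $\B G_k$ is a smooth algebraic $1$-stack for $G \in \{\matP, \mathrm{L}\}$, combining Construction \ref{dr_iso_z} (on the smooth affine \v{C}ech nerve of $\Spec k \to \B G_k$) with the fppf sheaf property of $\widehat{\dR}$ (Proposition \ref{prop_dr_sheaf}) identifies the $j$-th graded piece of $\widehat{\dR}_{\B G_k/k}$ with $\derR\Gamma(\B G_k, \Omega^j_{\B G_k/k})[-j]$ in $\derD(k)$. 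Thus it suffices to prove
\[
\derR\Gamma(\B\matP_k, \Omega^j_{\B\matP_k/k}) \to \derR\Gamma(\B\mathrm{L}_k, \Omega^j_{\B\mathrm{L}_k/k})
\]
is an equivalence in $\derD(k)$ for every $j$.

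Next I would reduce to the universal case $k = \ZZ$. Both $\matP$ and $\mathrm{L}$ are smooth affine group schemes defined over $\ZZ$, and via the \v{C}ech presentation one has $\derR\Gamma(\B G_k, \Omega^j_{\B G_k/k}) \simeq \lim_\Delta \Omega^j(G^\bullet_k)$, with $\Omega^j(G^\bullet_k) \cong \Omega^j(G^\bullet_\ZZ) \otimes_\ZZ k$ computed termwise. Provided the cosimplicial limit commutes with $\otimes^\derL_\ZZ k$ --- equivalently, provided $\derR\Gamma(\B G_\ZZ, \Omega^j)$ has finite Tor-amplitude and is pseudocoherent over $\ZZ$ --- the question reduces to showing the map is an equivalence in $\derD(\ZZ)$. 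By Corollary \ref{corol_qiso_z} this may be checked after applying $\otimes^\derL_\ZZ \QQ$ and $\otimes^\derL_\ZZ (\ZZ/p\ZZ)$ for every prime $p$, and running the same base-change argument in reverse over the fields $\QQ$ and $\ZZ/p\ZZ$ identifies these reductions with the statements of Theorem \ref{thm_totaro}, giving the result.

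The principal obstacle is the base-change step, namely establishing the required $\ZZ$-finiteness of $\derR\Gamma(\B G_\ZZ, \Omega^j)$ for $G \in \{\matP, \mathrm{L}\}$. For $G = \GL_n$ one can extract this from the classical computation of $\derR\Gamma(\B\GL_n, \Omega^\bullet)$ as a polynomial ring on Chern classes, finitely generated and free as a $\ZZ$-module in each cohomological degree; the case $\mathrm{L} = \GL_n \times \GL_m$ then follows by K\"unneth (Corollary \ref{corol_kunneth}). For $G = \matP$, one may proceed analogously using the Levi decomposition $\matP \cong N \rtimes \mathrm{L}$ with unipotent radical $N$ an affine space, whose classifying stack $\B N$ contributes only trivially to de Rham cohomology, reducing the finiteness for $\B\matP$ to that for $\B\mathrm{L}$. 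Once this finiteness is secured, the reduction through $\ZZ$ to Totaro's theorem over $\QQ$ and $\ZZ/p\ZZ$ proceeds as described.
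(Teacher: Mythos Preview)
Your overall strategy matches the paper's exactly: pass to graded pieces, identify them with $\derR\Gamma(\B G_k, \Omega^j)$, base-change to $\ZZ$, and then apply Corollary~\ref{corol_qiso_z} together with Totaro's theorem over $\QQ$ and the fields $\mathbb{F}_p$. The only substantive issue lies in how the base-change step is justified, and here your argument has a genuine gap.

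You correctly identify that the obstacle is establishing $\derR\Gamma(\B G_\ZZ, \Omega^j) \otimes^\derL_\ZZ k \simeq \derR\Gamma(\B G_k, \Omega^j)$, and for $G = \matP$ you propose to use the Levi decomposition together with the claim that $\B N$ ``contributes only trivially to de Rham cohomology.'' This fails in positive characteristic: already for $N = \GG_a$ over $\mathbb{F}_p$ one computes
\[
\coh^1(\B\GG_a, \mathcal{O}) \;\cong\; \{\,f \in \mathbb{F}_p[t] : f(s+t) = f(s) + f(t)\,\},
\]
the space of additive polynomials, which is infinite-dimensional (spanned by $t, t^p, t^{p^2}, \ldots$). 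Thus $\B N$ has neither trivial nor finite $\Omega^j$-cohomology over $\mathbb{F}_p$, and you cannot deduce finiteness for $\B\matP$ from that of $\B\mathrm{L}$ via the fibration $\B N \to \B\matP \to \B\mathrm{L}$. Indeed, the cancellation that makes $\B\matP$ and $\B\mathrm{L}$ have isomorphic $\Omega^j$-cohomology despite this large fiber \emph{is} the content of Totaro's theorem, so invoking it at this stage would be circular. The paper sidesteps the issue entirely: it cites that $\B\matP$ is a \emph{perfect stack} by \cite[Corollary~3.22]{bzfrna} and then applies the general base-change result \cite[Proposition~3.10]{bzfrna} for perfect stacks, thereby obtaining the base-change isomorphism without any direct finiteness computation on the cohomology of $\B\matP_\ZZ$.
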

\begin{proof}
Note that $\mathrm{BP}$ and $\mathrm{BL}$ can be written as a limit of smooth affine stacks (over a cosimplicial diagram), so by Lemma \ref{dr_iso_p} it suffices to show that the natural map 
\[
\derR\Gamma(\mathrm{BP}_k, \Omega^{\geq p}_{- / k}) \to \derR \Gamma(\mathrm{BL}_k, \Omega^{\geq p}_{- / k})
\]
is a quasi-isomorphism. By the (convergent) spectral sequence for hypercohomology it suffices to show that 
\[
\derR \Gamma(\mathrm{BP}_k, \Omega^i_{- / k}) \to\derR \Gamma(\mathrm{BL}_k, \Omega^i_{- / k})
\]
is a quasi-isomorphism for all $i$. Note that $\mathrm{BP}$ is perfect by \cite[Corollary 3.22]{bzfrna}. Moreover, $\mathrm{BP}$ is smooth over $\Spec(\ZZ)$, hence $\mathrm{BP}_k$ is the derived pullback 
\[
\mathrm{BP} \times_{\Spec(\ZZ)} \Spec(k)
\]
computed in $\St_\ZZ$. Finally the sheaf $\Omega^i_{- / \ZZ}$ on $\mathrm{BP}$ is flat since $\mathrm{BP}$ is smooth over $\ZZ$, hence the derived pullback is equal to $\Omega^i_{- / k}$. Thus, by \cite[Proposition 3.10]{bzfrna} the base change map
\[
\derR \Gamma(\mathrm{BP}, \Omega^i_{- / \ZZ}) \otimes_\ZZ^{\derL} k \to \derR \Gamma(\mathrm{BP}_k, \Omega^i_{- / k})
\]
is an equivalence (similarly for $\mathrm{BL}$), so it suffices to prove the result in the case $k = \ZZ$. This follows directly from Corollary \ref{corol_qiso_z} and Theorem \ref{thm_totaro}. 
\end{proof}
\subsection{Chern classes in derived de Rham cohomology}
In this section we define Chern classes in Hodge-completed derived de Rham cohomology, and show that they satisfy the usual axioms. To avoid having to introduce syntomic cohomology, we adapt the results from \cite[\S 9,2]{bhatt-lurie} from the syntomic case. 
\begin{definition}
Define the functor
\begin{align*}
\GG_m \colon \Alg_k &\to \derD(\ZZ)^\heart \subseteq \derD(\ZZ) \\
R &\mapsto R^\times
\end{align*}
By right Kan extension to $\St_k$ and sheafification, we get an induced functor
\[
\derR\Gamma(-, \GG_m) \in \Shv_{\fppf}(\St_k, \derD(\ZZ))
\]
\end{definition}
Since $\GG_m$ is smooth, for any scheme $X$ one has
\[
\derR\Gamma(X, \GG_m)  = \derR \Gamma(X_{\mathrm{et}}, \GG_m)
\]
where the right hand side denotes the cohomology of $\GG_m$ on the étale site of $X$. Moreover, one has a canonical isomorphism of spaces
\[
\Pic(X) \simeq \tau_{\geq 0}(\derR\Gamma(X, \GG_m)[1])
\]
for any $k$-stack $X$ (here we think of the right hand side of a space via the Dold-Kan construction). 

\begin{lemma}\label{gm_extend}
The functor 
\begin{align*}
\GG_m \colon \Alg_k &\to \Ab \subseteq \derD(\ZZ) \\
R & \mapsto R^\times
\end{align*}
is the left Kan extension from its restriction to smooth $k$-algebras.
\end{lemma}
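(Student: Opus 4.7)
The approach is to exploit representability: $\GG_m$ is represented by the smooth $k$-algebra $A := k[t, t^{-1}]$, i.e.\ $\GG_m(R) = R^\times = \Hom_{\Alg_k}(A, R)$. Since the representing object lies in $\mathrm{Sm}_k$, the lemma becomes an instance of the general principle that a representable functor $\Hom_\mathcal{C}(A,-)$ with $A$ in a full subcategory $\mathcal{C}_0 \subseteq \mathcal{C}$ agrees with the left Kan extension of its restriction to $\mathcal{C}_0$.

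Concretely, write
\[
L(R) := \colim_{(S, \phi) \in \mathrm{Sm}_k/R} S^\times
\]
in $\derD(\ZZ)$ for the left Kan extension, with the canonical evaluation map $L(R) \to R^\times$ sending a class represented by $(S, \phi, u)$ (where $u \in S^\times$ corresponds to $f \colon A \to S$, $t \mapsto u$) to $\phi \circ f$. First I would verify the map is a bijection on $\pi_0$ by a short co-Yoneda style argument. For surjectivity, every $r \in R^\times$ arises from $t \in A^\times$ via the smooth structure map $A \to R$, $t \mapsto r$. For injectivity, given $u \in S^\times$ with $\phi(u) = 1 \in R^\times$, I would use the morphism $A \to S$, $t \mapsto u$, which lies in $\mathrm{Sm}_k/R$ when $A$ is equipped with the structure map $t \mapsto 1$, to identify the class of $u$ with the class of $t \in A^\times$; the morphism $A \to k$, $t \mapsto 1$ (also in $\mathrm{Sm}_k/R$) then identifies $t$ with $1 \in k^\times$, completing the zigzag.

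Before this, a small reduction: both $\GG_m$ and left Kan extensions commute with filtered colimits, so one may assume $R$ is finitely presented. I would also note that $\mathrm{Sm}_k/R$ is sifted, with binary coproducts given by the tensor product $S_1 \otimes_k S_2$ (smooth over smooth is smooth), which ensures that $\pi_0$ of the colimit in $\derD(\ZZ)$ agrees with the colimit in $\Ab$.

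The main obstacle is upgrading the $\pi_0$-statement to a full statement in $\derD(\ZZ)$, i.e.\ showing $\pi_n L(R) = 0$ for $n > 0$. This is not automatic: sifted colimits of discrete abelian groups in $\derD(\ZZ)$ need not be discrete, and $\mathrm{Sm}_k/R$ is genuinely not filtered (smooth coequalizers of parallel pairs do not exist in general). I expect to handle this by invoking the general principle that the left Kan extension of a representable functor $\Hom_\mathcal{C}(A,-)$ along a full subcategory inclusion $\mathcal{C}_0 \hookrightarrow \mathcal{C}$ with $A \in \mathcal{C}_0$ preserves discreteness in any cocomplete target, as a consequence of the enriched co-Yoneda lemma; alternatively one can identify an explicit filtered cofinal subdiagram of $\mathrm{Sm}_k/R$ (for instance, by considering iterated tensor powers of $A$ mapping to $R$) on which the values of $\GG_m$ stabilize to $R^\times$.
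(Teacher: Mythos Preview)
Your core approach is exactly the paper's: the paper's entire proof is the observation that $\GG_m(R) = \Hom_{\Alg_k}(k[x,\tfrac{1}{x}], R)$ together with the general principle that for $\mathcal{C}_0 \subseteq \mathcal{C}$ full and $X \in \mathcal{C}_0$, the left Kan extension of $\Hom_{\mathcal{C}_0}(X,-)$ along the inclusion is $\Hom_{\mathcal{C}}(X,-)$. Your first paragraph already says this; everything afterwards is elaboration the paper does not provide.

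A few remarks on that elaboration. Your worry about higher $\pi_n$ is legitimate---the paper states the general principle for $\Hom$-functors without specifying the target, and passing from $\Set$ to $\derD(\ZZ)$ is not automatic. Your first proposed fix (the ``general principle'' for representable functors in any cocomplete target) is correct and can be made precise as follows: $\mathrm{Sm}_k/R$ has finite coproducts and is therefore sifted, the forgetful functor $\derD(\ZZ)_{\geq 0} \to \mathcal{S}$ preserves sifted colimits, so the underlying space of $L(R)$ is the colimit in $\mathcal{S}$; this colimit is the nerve of the category of elements, which is the category of factorisations $A \to S \to R$ through smooth $S$, and each fibre over $r \in R^\times$ has the initial object $(A, r, \id_A)$, so the nerve is the discrete space $R^\times$. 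This is really what your $\pi_0$ zigzag argument is computing, upgraded to all $\pi_n$ at once.

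Two minor points: the reduction to finitely presented $R$ is unnecessary (the argument above works for arbitrary $R$), and your alternative suggestion of a filtered cofinal subdiagram built from tensor powers of $A$ does not work---that subcategory is not cofinal in $\mathrm{Sm}_k/R$.
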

\begin{proof}
Since
\[
\GG_m(R) = \Hom_{\Alg_k}(k[x, \frac{1}{x}], R)
\] 
this is a consequence of the following much more general statement: If $\mathcal{C}$ is any category, $\mathcal{C}_0 \subseteq \mathcal{C}$ is a full subcategory and $X \in \mathcal{C}_0$, the left Kan extension of the functor $\Hom_{\mathcal{C}_0}(X, -)$ along the inclusion is given by $\Hom_{\mathcal{C}}(X, -)$. 
\end{proof}
\begin{construction}[First Chern class for line bundles]
Let $k$ be a ring. For any ring $R$ which is smooth over $k$, we have a commutative diagram
\[
\begin{tikzcd}
R^\times \dar{\mathrm{d} \log} \rar& 0 \dar  \rar& \dots \\
\Omega_{R / k} \rar& \Omega^2_{R / k} \rar& \dots  
\end{tikzcd}
\]
defining a functor
\begin{align*}
\Alg_k &\to \Fun(\Delta^1, \derD(\ZZ)) \\
R &\mapsto (R^\times[-1] \to \Fil^1 \Omega^\bullet_{R / k})
\end{align*}
By Construction \ref{dr_iso_z}, we thus get for any smooth ring $R$ over $k$ a canonical map
\begin{align*}
\GG_m(R)[-1] \to \Fil^1 \widehat{\dR}_{R / k}
\end{align*}
functorial in $R$. By Lemma \ref{gm_extend} the functor $\GG_m$ is the left Kan extension of its restriction to smooth $k$-algebras. By \cite[Proposition 4.3.2.17]{htt} we thus get a canonical map
\[
\GG_m(R)[-1] \to \Fil^1 \widehat{\dR}_{R / k}
\]
functorial in $R \in \Alg_k$. 

By Proposition \ref{prop_dr_sheaf}, the functor $R \mapsto \Fil^1 \widehat{\dR}_{R / k}$ is a sheaf and thus
we get a natural map
\[
\csmall_1^{\widehat{\dR}} \colon  \Gamma(\Spec(R), \GG_m[-1]) \to  \Fil^1 \widehat{\dR}_{R / k}
\]
for any $R \in \Alg_k$. By right Kan extension, this induces a functor 
\[
\St_k \to \Fun(\Delta^1, \derD(\ZZ))
\]
given on any $k$-stack $X$ by the map
\begin{equation}
\csmall_1^{\widehat{\dR}} \colon  \Gamma(X, \GG_m[-1]) \to  \Fil^1 \widehat{\dR}_{X / k} \label{chern_first_map}
\end{equation}
in $\derD(\ZZ)$. We call this the \emph{first Chern class in derived de Rham cohomology}. 
\end{construction}
\begin{definition}\label{defc1dr}
Let $k$ be a ring, let $X$ be a stack over $k$, and let $\mathcal{L}$ be a line bundle on $X$ corresponding to an element $[\mathcal{L}] \in \coh^1(\derR \Gamma(X, \GG_m))$. We define its first Chern class 
\[
\csmall^{\widehat{\dR}}_{1}(\mathcal{L}) \in \coh^2 (\Fil^1 \widehat{\dR}_{X /k })
\]
to be the image of $[\mathcal{L}]$.
\end{definition}
\begin{remark}\label{chern_pullback}
Since (\ref{chern_first_map}) is functorial in $X$, the construction of $\csmall^{\widehat{\dR}}_{1}$ commutes with pullbacks, i.e. for any morphism of stacks $f \colon X \to Y$ and $\mathcal{L} \in \Pic(Y)$ one has
\[
f^* \csmall^{\widehat{\dR}}_{1} (\mathcal{L}) = \csmall^{\widehat{\dR}}_{1}(f^* \mathcal{L})
\]
\end{remark}

\begin{lemma}[Projective space bundle formula]
Let $k$ be a ring, let $X$ be a stack over $k$, and let $\mathcal{E}$ be a vector bundle on $X$ of constant rank $r$.  Set
\[
t := -\csmall_1^{\widehat{\dR}}(\mathcal{O}(1)) \in \coh^2\(\Fil^1 \widehat{\dR}_{\PP(\mathcal{E}) / k}\)
\]
Then for all $m$, the map
\begin{equation}\label{eq_proj_bundle}
\bigoplus_{i = 0}^{r - 1} \Fil^{m - i} \widehat{\dR}_{X / k}[-2i] \xrightarrow{(1, t, \dots, t^{r - 1})}  \Fil^m \widehat{\dR}_{\PP(\mathcal{E}) / k}
\end{equation}
is an equivalence in $\derD(k)$. 
\end{lemma}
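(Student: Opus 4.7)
The plan is to upgrade the asserted map to a morphism of complete filtered objects in $\derD(k)$, reduce via fppf descent to the case where $\mathcal{E}$ is trivial, and then compute both sides explicitly using the K\"unneth formula together with the classical Hodge cohomology of $\PP^{r-1}$.

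First I would observe that both sides are fppf sheaves in the pair $(X,\mathcal{E})$: for the source this follows from Proposition \ref{prop_dr_sheaf} applied summand by summand, while for the target one combines this with the fact that $\PP(-)$ commutes with fppf base change, so that $\widehat{\dR}_{\PP(\mathcal{E})/k} \simeq \lim_\Delta \widehat{\dR}_{\PP(\mathcal{E}|_{U^\bullet})/k}$ for any fppf cover $U \to X$ with Cech nerve $U^\bullet$. The first Chern class is compatible with pullback (Remark \ref{chern_pullback}), so the map $(1, t, \ldots, t^{r-1})$ is natural in $(X,\mathcal{E})$. Choosing an fppf cover $U \to X$ trivializing $\mathcal{E}$ (for instance the $\GL_r$-torsor classifying it), $\mathcal{E}$ becomes trivial on every $U^n$, so it suffices to prove the statement for the trivial bundle $\mathcal{E} \simeq \catO_X^{\oplus r}$, in which case $\PP(\mathcal{E}) \simeq \PP^{r-1}_k \times X$. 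Moreover, since $\gr$ is conservative on complete filtered objects (see the end of Section \ref{sec_notation}), after lifting the left hand side to the filtered object $\bigoplus_{i=0}^{r-1} \widehat{\dR}_{X/k}(-i)[-2i]$ it is enough to verify the induced map on associated gradeds.

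In the trivial case, Corollary \ref{corol_kunneth} applied with the smooth scheme $\PP^{r-1}_k$ gives a filtered equivalence $\dR_{\PP^{r-1}_k / k} \otimes^\Day \dR_{X / k} \simeq \dR_{\PP^{r-1}_k \times X / k}$, which on $\gr^m$ (and using that the associated gradeds of $\widehat{\dR}$ and $\dR$ coincide) reads
\[
\gr^m \widehat{\dR}_{\PP^{r-1}_k \times X / k} \;\simeq\; \bigoplus_{p + q = m} \gr^p \widehat{\dR}_{\PP^{r-1}_k / k} \otimes_k \gr^q \widehat{\dR}_{X / k}.
\]
Since $\PP^{r-1}_k$ is smooth, Construction \ref{dr_iso_z} identifies $\gr^p \widehat{\dR}_{\PP^{r-1}_k/k}$ with $\derR\Gamma(\PP^{r-1}_k, \Omega^p)[-p]$, and the classical Hodge cohomology of projective space computes this as $k[-2p]$, spanned by the class $t^p$, for $0 \leq p \leq r-1$, and as $0$ otherwise. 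Substituting yields the decomposition $\bigoplus_{i=0}^{r-1} \gr^{m-i} \widehat{\dR}_{X/k}[-2i]$, matching the associated graded of the left hand side, and tracing through the identifications confirms that the $i$-th summand is included via cup product with $t^i$.

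The step I expect to be most delicate is the descent bookkeeping: assembling $(X, \mathcal{E}) \mapsto \widehat{\dR}_{\PP(\mathcal{E})/k}$ into a genuine functor of pairs and verifying that the constructed map $(1, t, \ldots, t^{r-1})$ is natural enough in both arguments to commute with the Cech nerve of a trivializing fppf cover. Once this is arranged, the remaining ingredients (K\"unneth and the Hodge cohomology of $\PP^{r-1}$) make the computation essentially automatic.
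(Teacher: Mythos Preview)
Your approach is correct and essentially matches the paper's: both reduce locally (the paper says ``since we have a map, it suffices to give a proof locally on $X$'') to the case of a trivial bundle, then invoke the K\"unneth formula (Corollary~\ref{corol_kunneth}) to separate off the $\PP^{r-1}_k$ factor. The only difference is cosmetic: the paper finishes by reducing to $X = \Spec(k)$ and citing \stacksref{0FMJ}, whereas you unpack this by passing to associated gradeds and reading off the Hodge cohomology of $\PP^{r-1}_k$ directly.
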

\begin{proof}
Since we have a map, it suffices to give a proof locally on $X$, in which case $\PP(\mathcal{E}) \cong X \times_{\Spec(k)} \PP^r_k$. By the Künneth formula (Corollary \ref{corol_kunneth}) one may reduce to the case $X = \Spec(k)$, which is the statement of \stacksref{0FMJ}. 
\end{proof}
In particular, setting $m = 0$ we get an equivalence
\[
\bigoplus_{i = 0}^{r - 1} \widehat{\dR}_{X / k}[-2i] \xrightarrow{(1, t, \dots, t^{r - 1})} \widehat{\dR}_{\PP(\mathcal{E}) / k}
\]
in $\derD(k)$.
\begin{definition}[Higher Chern classes]
Let $k$ be a ring, let $X$ be a stack over $k$, and let $\mathcal{E}$ be a vector bundle on $X$ of constant rank $r$.  Write $t = -\csmall_1^\dR(\mathcal{O}(1))$. We define the $i$-th \emph{Chern class}
\[
\csmall^{\widehat{\dR}}_i(\mathcal{E}) \in \coh^{2i}\(\Fil^i \widehat{\dR}_{X / k}\)
\] 
as the $(r - i)$-th component of the image of $-t^r$ under the inverse of the isomorphism (\ref{eq_proj_bundle}). 

For a general vector bundle $\mathcal{E} \in \Vect(X)$, we define the $i$-th Chern class by decomposing $X$ into components on which $\mathcal{E}$ has constant rank. 
\end{definition}
%\begin{lemma}\label{chern_lift_canon}
%Let $p$ be a prime number, and let $k$ be a ring over $\ZZ / p^n \ZZ$ for some $n \in \NN$. Let $X\in \St_k$. For any vector bundle $\mathcal{E}$ on $X$ and $i \in \NN$, there exists a canonical element
%\[
%\csmall^\dR_{i}(\mathcal{E}) \in \coh^{2i} (\Fil^i {\dR}_{X /k })
%\]
%lifting the element $\csmall^{\widehat{\dR}}_{i}(\mathcal{E}) \in \coh^{2i}(\Fil^i \widehat{\dR}_{X /k })$. 
%\end{lemma}
%\begin{proof}
%Let $r = \mathrm{rank}(\mathcal{E})$. Then the proof is analogous to the proof of Lemma \ref{can_lift_c1}, one simply replaces $\mathrm{B}\GG_{m, k}$ with $\mathrm{BGL}_{r, k}$. 
%\end{proof}
%From now on, we shall use the shorthand notation $c_i := c_i^{\widehat{\dR}}$ and only consider Hodge-completed derived de Rham cohomology.
\begin{theorem}[Cartan formula]\label{thm_cartan}
Let $k$ be a ring, let $X$ be a stack over $k$, and let
\[
0 \to \mathcal{E} \to \mathcal{F} \to \mathcal{G} \to 0
\]
be a short exact sequence of vector bundles on $X$. Then
\[
c_i(\mathcal{F}) = \sum_{j = 0}^{i} c_j(\mathcal{E}) c_{i - j}(\mathcal{G})
\]
in $\coh^{2i} \( \Fil^i \widehat{\dR}_{X / k}\)$. 
\end{theorem}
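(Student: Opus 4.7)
The plan is to combine the classical splitting principle with Totaro's Proposition~\ref{corol_totaro}, reducing the question first to a split sequence and then to a direct sum of line bundles, where a direct calculation with the projective bundle formula suffices.

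First I would reduce to the universal situation. By Proposition~\ref{prop_bpnm} the sequence $0 \to \mathcal{E} \to \mathcal{F} \to \mathcal{G} \to 0$ with $n = \mathrm{rk}(\mathcal{E})$, $m = \mathrm{rk}(\mathcal{G})$ is classified by a morphism $f \colon X \to \B\matP_{n,m}$. Since the projective bundle formula (\ref{eq_proj_bundle}) is evidently compatible with pullback in $X$, so are the Chern classes $c_i^{\widehat{\dR}}$; hence it suffices to check the Cartan formula for the tautological short exact sequence on $\B \matP_{n,m}$. By Proposition~\ref{corol_totaro} the natural map
\[
\iota \colon \B\GL_n \times \B\GL_m \longrightarrow \B\matP_{n,m}
\]
induces an equivalence on $\Fil^p \widehat{\dR}_{-/k}$ for every $p$, so the identity can be verified after pulling back along $\iota$. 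The pullback of the tautological sequence is a split sequence $\mathcal{E}_{\mathrm{u}} \to \mathcal{E}_{\mathrm{u}} \oplus \mathcal{G}_{\mathrm{u}} \to \mathcal{G}_{\mathrm{u}}$, and so it remains to prove the Cartan formula in the split case.

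Iterating the projective bundle formula (\ref{eq_proj_bundle}), the pullback of $\Fil^p \widehat{\dR}_{-/k}$ along the full flag bundle $\mathrm{Fl}(\mathcal{E}) \times_Y \mathrm{Fl}(\mathcal{G}) \to Y$ is a split injection on cohomology; on this base both $\mathcal{E}$ and $\mathcal{G}$, and hence $\mathcal{F} = \mathcal{E} \oplus \mathcal{G}$, decompose as direct sums of line bundles. It therefore suffices to treat the case $\mathcal{F} = \bigoplus_{i=1}^r \mathcal{L}_i$, where $\mathcal{E}$ and $\mathcal{G}$ are obvious partial direct summands. Here I would show by induction on $r$ the factorisation
\[
\sum_{j=0}^r c_j(\mathcal{F}) \, t^{r-j} \;=\; \prod_{i=1}^r \bigl( t + c_1(\mathcal{L}_i)\bigr)\quad \text{in }\widehat{\dR}_{\PP(\mathcal{F})/k},\qquad t := -c_1(\mathcal{O}(1)),
\]
from which $c_j(\mathcal{F}) = e_j\bigl(c_1(\mathcal{L}_1),\ldots,c_1(\mathcal{L}_r)\bigr)$ and the Cartan formula in the split case follows from multiplicativity of the elementary symmetric polynomial generating function.

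The main obstacle will be proving this last factorisation. For $\mathcal{F} = \mathcal{L} \oplus \mathcal{F}'$ the inclusion $\mathcal{L} \hookrightarrow \mathcal{F}$ provides a section $s \colon X \hookrightarrow \PP(\mathcal{F})$ whose complement is an affine bundle over $\PP(\mathcal{F}')$. Using Corollary~\ref{prop_relative_map} to compare the Hodge-completed derived de Rham cohomology of $\PP(\mathcal{F})$ with that of the closed subscheme $s(X)$ and the open complement, one obtains a filtered Mayer--Vietoris picture in which the defining relation of $c_\bullet(\mathcal{F})$ on $\PP(\mathcal{F})$ decomposes as $(t + c_1(\mathcal{L}))$ times the defining relation for $c_\bullet(\mathcal{F}')$ on $\PP(\mathcal{F}')$. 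The delicate point is to carry out this comparison while rigorously controlling the filtration degrees, so that the factorisation produces equalities of classes in the correct $\coh^{2j}(\Fil^j \widehat{\dR}_{X/k})$ rather than only in $\coh^{2j}(\widehat{\dR}_{X/k})$.
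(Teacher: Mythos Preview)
Your first two reductions --- to the universal sequence on $\B\matP_{n,m}$ via Proposition~\ref{prop_bpnm}, then to the split case on $\B\GL_n\times\B\GL_m$ via Totaro (Proposition~\ref{corol_totaro}) --- coincide exactly with the paper. The divergence is in how the split case is handled.

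Your route reduces further, via iterated flag bundles, to a sum of line bundles, and then appeals to a closed--open decomposition of $\PP(\mathcal{L}\oplus\mathcal{F}')$ into the section $s(X)$ and its complement. Here your invocation of Corollary~\ref{prop_relative_map} is misplaced: that corollary produces a cup product
\[
\Fil^p\widehat{\dR}_{(X,U)/k}\otimes\Fil^q\widehat{\dR}_{(X,V)/k}\longrightarrow\Fil^{p+q}\widehat{\dR}_{(X,U\cup V)/k}
\]
for two \emph{open} substacks; it is not a localization sequence or Mayer--Vietoris device for a closed--open pair. As written, your final step does not go through, and the ``delicate point'' you flagged is precisely where the argument is incomplete.

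The paper's argument bypasses both the flag reduction and this difficulty. After reducing to $\mathcal{F}\simeq\mathcal{E}\oplus\mathcal{G}$, one works on $\PP(\mathcal{F})$ with the two opens $U_{\mathcal{E}}$, $U_{\mathcal{G}}$ where $\catO(-1)\hookrightarrow\pi^*\mathcal{E}$ (resp.\ $\pi^*\mathcal{G}$) is a subbundle; these cover $\PP(\mathcal{F})$. Since $U_{\mathcal{E}}$ retracts onto $\PP(\mathcal{E})$, the Chern polynomial $\sum_i\pi^*c_i(\mathcal{E})(-t)^{n-i}$ restricts to zero on $U_{\mathcal{E}}$, hence lifts to a class $\eta$ in relative cohomology $\coh^{2n}(\Fil^n\widehat{\dR}_{(\PP(\mathcal{F}),U_{\mathcal{E}})/k})$; likewise one gets $\eta'$ for $\mathcal{G}$. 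Corollary~\ref{prop_relative_map} then places $\eta\cdot\eta'$ in $\coh^{2(n+m)}(\Fil^{n+m}\widehat{\dR}_{(\PP(\mathcal{F}),\PP(\mathcal{F}))/k})=0$, forcing the product of the two Chern polynomials to vanish, which is exactly the Cartan identity. This is essentially your inductive step executed once, in arbitrary ranks, with the correct pair of opens in place of your closed--open pair; and because Corollary~\ref{prop_relative_map} is already stated at the filtered level, the filtration bookkeeping you worried about is automatic.
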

\begin{proof}
After decomposing $X$ into pieces where the vector bundles have constant rank, by Proposition \ref{prop_bpnm} we only need to verify this for $X = \mathrm{BP}_{n, m}$ for all $n, m \in \NN$, so in particular we may assume that $X$ is an algebraic stack.

Base changing along the map $\mathrm{BGL}_{n} \times \mathrm{BGL}_m \to \mathrm{BP}_{n, m}$, by Totaro's theorem (Proposition \ref{corol_totaro}) and Lemma \ref{prop_vectngln} it suffices to show the equality holds for the universal (split) short exact sequence on $\mathrm{BGL}_{n} \times \mathrm{BGL}_{m}$. We may thus reduce to the case where $\mathcal{F} \cong \mathcal{E} \oplus \mathcal{G}$, where $\mathcal{E}$ and $\mathcal{G}$ are vector bundles of constant rank $n$ and $m$ respectively. 

Let $\pi \colon \PP(\mathcal{F}) \to X$ be the projection map, and let $\catO_{\PP(\mathcal{F})}(-1)$ be the tautological subbundle of $\pi^*(\mathcal{F})$, with Chern class $t := c_1(\catO_{\PP(\mathcal{F})}(-1))$. Let $U_\mathcal{E} \subseteq \PP(\mathcal{F})$ be the open subset of $\PP(\mathcal{F})$ for which the composite
\[
\catO_{\PP(\mathcal{F})}(-1) \to \pi^*(\mathcal{F}) \to \pi^*(\mathcal{E})
\]
is the inclusion of a subbundle, and similarly for $U_{\mathcal{G}}$. Clearly $U_\mathcal{E} \cup U_{\mathcal{G}} = \PP(\mathcal{F})$. 

Note that we have a commutative diagram
\[
\begin{tikzcd}
U_\mathcal{E} \dar{p_\mathcal{E}} \rar{\pi}& X \\
\PP(\mathcal{E}) \arrow{ur}
\end{tikzcd}
\]
and moreover, 
\[
p_{\mathcal{E}}^*(\catO_{\PP(\mathcal{E})}(-1)) = \catO_{\PP(\mathcal{F})}(-1)|_{U_{\mathcal{E}}}
\] 
It follows that the element
\[
\sum_{i = 0}^{n} \pi^*(c_i(\mathcal{E})) (-t)^i \in \coh^{2n}\(\Fil^{n} \widehat{\dR}_{U_{\mathcal{E}} / k}\) 
\]
is equal to zero. Hence there exists an element $\eta \in \coh^{2n}(\Fil^n \widehat{\dR}_{(X, U_\mathcal{E}) / k})$ in the relative cohomology group mapping to 
\[
\sum_{i = 0}^{n} \pi^*(c_i(\mathcal{E})) (-t)^i \in \coh^{2n}\(\Fil^{n} \widehat{\dR}_{X / k}\) 
\]
Similarly, we find the existence of an element $\eta'  \in \coh^{2m}(\Fil^{m} \widehat{\dR}_{(X, U_\mathcal{G}) / k})$ in the relative cohomology group mapping to 
\[
\sum_{i = 0}^{m} \pi^*(c_i(\mathcal{G})) (-t)^i \in \coh^{2m}\(\Fil^{m} \widehat{\dR}_{X / k}\) 
\]
Using Corollary \ref{prop_relative_map}, we therefore find the existence of an element
\[
\eta \cdot \eta'  \in \coh^{2(n + m )}(\Fil^{n + m} \dR_{(X, X) / k})
\]
mapping to 
\[
\(\sum_{i = 0}^{n} \pi^*(c_i(\mathcal{E})) (-t)^i\)\cdot \(\sum_{i = 0}^{m} \pi^*(c_i(\mathcal{G})) (-t)^i\)
\]
Since clearly $\Fil^{n + m} \dR_{(X, X) / k} = 0$ we find that this last expression is zero, which implies the theorem. 
\end{proof}
\begin{corollary}
Let $k$ be a ring. The map $\csmall_i$ defined above induces for any $k$-stack $X$ a unique map 
\[
\csmall_i \colon \mathrm{K}_0(\mathrm{Vect}(X)) \to \coh^{2i}\(\Fil^i \widehat{\dR}_{X / k}\)
\]
satisfying $c_i(f^* \mathcal{E}) = f^* c_i(\mathcal{E})$. 
\end{corollary}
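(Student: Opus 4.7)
The plan is to organize the $c_i$ into a total Chern class living in a multiplicative group, apply the Cartan formula to check multiplicativity across short exact sequences, and then project back down. Concretely, I would form the graded-commutative $k$-algebra
\[
R := \prod_{i \geq 0} \coh^{2i}\(\Fil^i \widehat{\dR}_{X / k}\),
\]
with multiplication induced from the filtered algebra structure on $\widehat{\dR}_{X/k}$, and consider the subset $R^{(1)} \subseteq R$ of elements of the form $1 + \sum_{i \geq 1} a_i$. Because the product $R^{(1)}$ is closed under multiplication and the formal inverse $(1 + x)^{-1} = 1 - x + x^2 - \dots$ converges degreewise (only finitely many terms contribute in each grading), $R^{(1)}$ is an abelian group under multiplication.

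Next, I would define $c(\mathcal{E}) := \sum_{i \geq 0} c_i(\mathcal{E}) \in R^{(1)}$ for a vector bundle $\mathcal{E}$. Theorem \ref{thm_cartan} states exactly that $c(\mathcal{F}) = c(\mathcal{E}) \cdot c(\mathcal{G})$ whenever $0 \to \mathcal{E} \to \mathcal{F} \to \mathcal{G} \to 0$ is short exact. Since $\mathrm{K}_0(\Vect(X))$ is (by definition) the free abelian group on isomorphism classes of vector bundles modulo relations $[\mathcal{F}] = [\mathcal{E}] + [\mathcal{G}]$ coming from short exact sequences, the assignment $\mathcal{E} \mapsto c(\mathcal{E})$ factors uniquely through a group homomorphism $c \colon \mathrm{K}_0(\Vect(X)) \to R^{(1)}$. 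Projecting onto the $i$-th factor yields the desired $c_i \colon \mathrm{K}_0(\Vect(X)) \to \coh^{2i}(\Fil^i \widehat{\dR}_{X / k})$, agreeing with the earlier definition on generators; uniqueness of the extension is immediate because $\mathrm{K}_0$ is generated by the classes of vector bundles.

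For the pullback compatibility $c_i(f^*\mathcal{E}) = f^* c_i(\mathcal{E})$ under a morphism $f \colon X \to Y$, I would reduce first to a vector bundle $\mathcal{E}$ of constant rank $r$. The base change square $\PP(f^*\mathcal{E}) \cong X \times_Y \PP(\mathcal{E})$ identifies the tautological line bundle, so by Remark \ref{chern_pullback} the class $t = -c_1^{\widehat{\dR}}(\catO(1))$ is preserved under pullback. The projective bundle formula isomorphism $\bigoplus_{i=0}^{r-1} \Fil^{m-i} \widehat{\dR}_{Y/k}[-2i] \xrightarrow{\sim} \Fil^m \widehat{\dR}_{\PP(\mathcal{E})/k}$ given by $(1, t, \dots, t^{r-1})$ is natural in the base, so pullback commutes with the decomposition used to extract $c_i$ from $-t^r$. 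The general case follows by decomposing $X$ into components on which $\mathcal{E}$ has constant rank. This naturality then extends directly to $\mathrm{K}_0$ since the map was constructed via generators.

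There is no genuine obstacle here — the only point to watch is the formalism in the first step (working with the multiplicative group $R^{(1)}$ rather than trying to make $c_i$ additive individually, which it is not). Once the Cartan formula is in place from Theorem \ref{thm_cartan}, everything reduces to the standard observation that the universal property of $\mathrm{K}_0$ converts a multiplicative map on short exact sequences into a homomorphism to a multiplicative group.
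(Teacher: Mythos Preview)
Your proposal is correct and is exactly the standard argument. The paper itself gives no proof for this corollary; it is stated as an immediate consequence of the Cartan formula (Theorem~\ref{thm_cartan}), and your write-up simply makes explicit what the paper leaves implicit: package the $c_i$ into the total Chern class living in the multiplicative group of units with leading term $1$, invoke the Cartan formula to get multiplicativity on short exact sequences, and appeal to the universal property of $\mathrm{K}_0(\Vect(X))$. Your treatment of functoriality via naturality of the projective bundle decomposition is likewise the expected route, and matches how the paper set up the higher Chern classes.
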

\begin{definition}[Chern character]\label{def_chern_character}
Let $k$ be a ring such that $i!$ is invertible in $k$. Let $X$ be a stack over $k$. For $i > 0$, let $\sigma_i \in k[x_1, \dots, x_n]$ be the $i$-th symmetric polynomial, and let $\theta_i \in k[\sigma_1, \dots, \sigma_i]$ be the unique polynomial such that
\[
\theta_i(\sigma_1, \dots, \sigma_i) = x_1^i + \dots + x_n^i
\]
We define the $i$-th \emph{Chern character}
\[
\ch_i \colon \mathrm{K}_0(X) \to \coh^{2i} \(\Fil^i  \widehat{\dR}_{X / k}\)
\]
by
\[
\ch_i(\mathcal{E}) := \frac{\theta_i(c_1(\mathcal{E}), \dots, c_i(\mathcal{E}))}{i!} 
\]
using the algebra structure on 
\[
\bigoplus_{j} \coh^{2j}(\Fil^j  \widehat{\dR}_{X / k})
\]
induced by the filtered $\EE_\infty$-algebra structure on $\widehat{\dR}_{X / k}$. 
\end{definition}

We now wish to generalize to perfect complexes. Let $X$ be a quasi-compact and quasi-separated scheme over $k$ which has the resolution property \stacksref{0F8D}. By \stacksref{0F8E} any $\mathcal{E} \in \Perf(X)$ can be represented by a bounded complex of vector bundles, so that    we may talk about its image $[\mathcal{E}] \in \mathrm{K}_0(\Vect(X))$.

\begin{definition}
Let $k$ be a ring, and let $X$ be a quasi-compact and quasi-separated scheme over $k$ which has the resolution property. For $\mathcal{E} \in \Perf(X)$, define the $i$-th \emph{Chern class} 
\[
\csmall_i (\mathcal{E}) \in  \coh^{2i}\(\Fil^i  \widehat{\dR}_{X / k}\)
\]
as $\csmall_i (\mathcal{E}) := \csmall_i ([\mathcal{E}])$. If $i!$ is invertible in $k$, we define $\ch_i(\mathcal{E}) := \ch_i([\mathcal{E}])$ for any $\mathcal{E} \in \Perf(X)$. 
\end{definition}

Using the natural map $\Fil^i  \widehat{\dR}_{X / k} \to \LL^i_{X / k}[-i]$, we also obtain Chern classes and characters in $\coh^i(\LL^i_{X / k})$. 

\begin{prop}\label{prop_chern}
Let $k$ be a ring, and let $i$ be a number such that $i!$ is invertible in $k$. The $i$th Chern character $\ch_i$ satisfies the following properties:
\begin{enumerate}
\item For any quasi-compact and quasi-separated scheme $X$ over $k$ with the resolution property and any exact triangle
\[
\mathcal{E} \to \mathcal{F} \to \mathcal{G} \xrightarrow{+1}
\]
in $\Perf(X)$, one has
\[
{\ch}_i(\mathcal{F}) = {\ch}_i(\mathcal{E}) + {\ch}_i(\mathcal{G})
\]
\item If $X, Y$ are quasi-compact and quasi-separated schemes over $k$ with the resolution property and $f \colon X \to Y$ is a morphism of schemes over $k$, then 
\[
{\ch}_i(f^* \mathcal{E}) = f^* {\ch}_i(\mathcal{E})
\]
for all $\mathcal{E} \in \Perf(X)$.
\item For any scheme $X$ over $k$ and any line bundle $\mathcal{L}$ on $X$ one has 
\[
{\ch}_i(\mathcal{L}) = \frac{c_1(\mathcal{L})^i}{i!}
\]
\end{enumerate}
\end{prop}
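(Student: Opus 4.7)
The plan is to verify the three properties in the order (iii), (ii), (i), reducing (i) to the case of short exact sequences of vector bundles and then to a universal polynomial identity (Newton's formula). Property (iii) is immediate from the definitions: for a line bundle $\mathcal{L}$, the projective bundle formula has no new input, so $c_j(\mathcal{L}) = 0$ for $j > 1$. The defining relation $\theta_i(\sigma_1, \dots, \sigma_i) = x_1^i + \cdots + x_n^i$ specialised to a single formal variable reads $\theta_i(x_1, 0, \dots, 0) = x_1^i$, which yields $\ch_i(\mathcal{L}) = c_1(\mathcal{L})^i / i!$.

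For (ii), the functor $\widehat{\dR}_{-/k}$ lands in $\CAlg_\fil(k)$, so pullback along $f$ is a map of filtered $\EE_\infty$-algebras and therefore respects the cup product used in Definition \ref{def_chern_character}. By Remark \ref{chern_pullback} first Chern classes commute with pullback. The inductive construction of higher Chern classes via the projective bundle formula is functorial in the pair $(X, \mathcal{E})$, so $f^* c_j(\mathcal{E}) = c_j(f^* \mathcal{E})$ for every $j$; as $\ch_i$ is a universal polynomial in $c_1, \dots, c_i$, it also commutes with pullback.

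For (i), I would first reduce to the case of a short exact sequence $0 \to \mathcal{E} \to \mathcal{F} \to \mathcal{G} \to 0$ of vector bundles. Since $X$ has the resolution property, any exact triangle in $\Perf(X)$ can be represented (up to quasi-isomorphism) by an honest short exact sequence of bounded complexes of vector bundles, and computing the alternating sums degree-wise gives $[\mathcal{F}] = [\mathcal{E}] + [\mathcal{G}]$ in $\mathrm{K}_0(\Vect(X))$, through which $\ch_i$ factors by construction. The vector bundle case is then a direct consequence of Theorem \ref{thm_cartan} combined with Newton's identity: Cartan gives $c_j(\mathcal{F}) = \sum_{a + b = j} c_a(\mathcal{E}) c_b(\mathcal{G})$, which in formal Chern roots $\{x_k\}$ of $\mathcal{E}$ and $\{y_\ell\}$ of $\mathcal{G}$ expresses $c_j(\mathcal{F})$ as the $j$-th elementary symmetric polynomial in $\{x_k\} \cup \{y_\ell\}$. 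Newton's identity $\theta_i(\sigma_1, \dots, \sigma_i) = \sum x^i$ is additive under disjoint union of variables, hence $\theta_i(c_\bullet(\mathcal{F})) = \theta_i(c_\bullet(\mathcal{E})) + \theta_i(c_\bullet(\mathcal{G}))$, and dividing by $i!$ gives the required equality. This last manipulation is a polynomial identity in $\ZZ[\tfrac{1}{i!}][a_1, a_2, \dots, b_1, b_2, \dots]$ and requires no geometric input once Cartan is in hand.

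The main obstacle I anticipate is the K-theoretic reduction in (i), namely extracting a short exact sequence of bounded complexes of vector bundles from an $\infty$-categorical exact triangle in $\Perf(X)$ and checking that the naive alternating-sum class in $\mathrm{K}_0(\Vect(X))$ is independent of the chosen representative. This is standard material (it amounts to the identification $\mathrm{K}_0(\Vect(X)) \cong \mathrm{K}_0(\Perf(X))$ under the resolution property), but it is the only step that is not formal; the symmetric-function calculation and properties (ii) and (iii) are immediate from the already-established Theorem \ref{thm_cartan} and Remark \ref{chern_pullback}.
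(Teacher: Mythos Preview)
Your proposal is correct and follows essentially the same approach as the paper: the paper's proof simply says that (2) follows from Remark \ref{chern_pullback} and that (1) and (3) follow from elementary symmetric polynomial identities combined with the Cartan formula (Theorem \ref{thm_cartan}), which is exactly what you spell out in detail. Your explicit treatment of the $\mathrm{K}_0$-reduction in (i) is a point the paper leaves implicit (since $\ch_i$ is already defined to factor through $\mathrm{K}_0(\Vect(X))$), and your identification of Newton's identity and the specialisation $\theta_i(x_1,0,\dots,0)=x_1^i$ makes precise the ``elementary identities'' the paper invokes.
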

\begin{proof}
Statement (2) follows from Remark \ref{chern_pullback}. Statement (1) and (3) follow from elementary identities between symmetric polynomials combined with Theorem \ref{thm_cartan}.
\end{proof}
%If $p$ is a prime number and $k$ lives over $\ZZ / p^n \ZZ$ for some $n \in \NN$, we clearly have a canonical factorization
%\[
%\mathrm{K}_0(\mathrm{Vect}(X)) \xrightarrow{\ch_i}\coh^{2i} \(\Fil^i  {\dR}_{X / k}\) \to \coh^{2i} \( \Fil^i \widehat{\dR}_{X / k}\)
%\]
%see Lemma \ref{chern_lift_canon}.
We now show these properties characterize the Chern character uniquely. As we will later need this for Hodge cohomology, we formulate the statement for both de Rham and Hodge cohomology.
\begin{prop}[Uniqueness of Chern character]\label{prop_chern_unique}
Let $k$ be a ring such that $i!$ is invertible in $k$. Let $A^i \in \{\LL^i_{- / k}[-i], \Fil^i \widehat{\dR}_{- / k}\}$. 

Suppose that for every quasi-compact and quasi-separated scheme $X$ over $k$ which has the resolution property and any $\mathcal{E} \in \Perf(X)$, we are given an element
\[
\widetilde{\ch}_i(\mathcal{E}) \in \coh^{2i}(A^i_{X / k})
\]
satisfying the properties from Proposition \ref{prop_chern}. 

Then $\widetilde{\ch}_i(\mathcal{E}) = \ch_i(\mathcal{E})$ for all quasi-compact and quasi-separated schemes $X$ over $k$ with the resolution property, and all $\mathcal{E} \in \Perf(X)$. 
\end{prop}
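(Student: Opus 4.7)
The plan is to prove this by reducing first to vector bundles, then to line bundles via the splitting principle, and finally appealing to the projective bundle formula for injectivity.

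First, I would reduce from perfect complexes to vector bundles. Since $X$ has the resolution property and is quasi-compact and quasi-separated, any $\mathcal{E} \in \Perf(X)$ can be represented by a bounded complex $\mathcal{E}^\bullet$ of vector bundles by \stacksref{0F8E}. The brutal (stupid) filtration on $\mathcal{E}^\bullet$ produces a finite sequence of exact triangles whose associated graded terms are the shifts $\mathcal{E}^j[-j]$. Iterated application of the additivity property (1) for both $\ch_i$ and $\widetilde{\ch}_i$ reduces the problem to comparing the two Chern characters on honest vector bundles.

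Next, I would invoke the splitting principle. Given a vector bundle $\mathcal{F}$ of constant rank $r$, I construct the associated full flag bundle $f \colon \mathrm{Fl}(\mathcal{F}) \to X$ as an iterated projective bundle, which remains quasi-compact, quasi-separated, and has the resolution property. On $\mathrm{Fl}(\mathcal{F})$, the pullback $f^*\mathcal{F}$ admits a complete filtration by sub-vector-bundles whose graded pieces are line bundles $\mathcal{L}_1, \dots, \mathcal{L}_r$. By iterated additivity, the identity
\[
\widetilde{\ch}_i(f^*\mathcal{F}) \;=\; \sum_{j=1}^r \widetilde{\ch}_i(\mathcal{L}_j) \;=\; \sum_{j=1}^r \frac{c_1(\mathcal{L}_j)^i}{i!} \;=\; \ch_i(f^*\mathcal{F})
\]
follows from (1) and the line bundle formula (3) (applied to both characters), so the two characters agree after pulling back to the flag bundle. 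Functoriality (2) then gives $f^*\widetilde{\ch}_i(\mathcal{F}) = f^*\ch_i(\mathcal{F})$ in $\coh^{2i}(A^i_{\mathrm{Fl}(\mathcal{F})/k})$.

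To conclude, I need $f^*$ to be injective on the relevant cohomology group. When $A^i = \Fil^i \widehat{\dR}_{-/k}$, this follows from the projective bundle formula proved earlier, applied successively to each projective bundle in the tower defining $\mathrm{Fl}(\mathcal{F})$: the pullback appears as a direct summand in the decomposition by powers of $c_1(\mathcal{O}(1))$. Passing to associated gradeds yields the analogous split injectivity for $A^i = \LL^i_{-/k}[-i]$. The main technical obstacle is bookkeeping — verifying that the iterated flag bundle retains the resolution property (so that $\widetilde{\ch}_i$ is defined there) and that the split injectivity of $f^*$ is preserved under iteration — but both facts are standard for projective bundles over such schemes.
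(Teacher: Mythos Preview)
Your proposal is correct and follows essentially the same strategy as the paper: reduce to vector bundles via the resolution property and additivity, then use the splitting principle together with injectivity of pullback from the projective bundle formula to reduce to line bundles, where property (3) finishes. The paper's proof is terser—it writes a single pullback to $\PP(\mathcal{E})$ and asserts one may ``reduce to the case where $\mathcal{E}$ has a filtration with graded quotients given by line bundles,'' which is exactly your iterated flag-bundle construction spelled out more explicitly.
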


\begin{proof}
Let $X$ be a quasi-compact and quasi-separated scheme over $k$ with the resolution property, and $\mathcal{E} \in \Perf(X)$. We wish to show that $\widetilde \ch_i(\mathcal{E}) = \ch_i(\mathcal{E})$. Since $X$ has the resolution property we may represent $\mathcal{E}$ by a boundex complex of locally free sheaves, hence using (1) we may reduce to the case where $\mathcal{E}$ is a locally free sheaf on $X$. Since the pullback map 
\[
\coh^{2i}(A^i_{X / k}) \to \coh^{2i}(A^i_{\PP(\mathcal{E}) / k})
\]
is injective for all $i \geq 0$, we may reduce to the case where $\mathcal{E}$ has a filtration with graded quotients given by line bundles. The result then follows by applying (1) and (3). 
\end{proof}

\subsection{Chern classes without Hodge completion}
In this section we construct Chern classes in uncompleted derived de Rham cohomology in the $p$-adic case. The following lemma is the main ingredient in the construction.

\begin{lemma}\label{iso_bgln}
Let $p$ be a prime number, and let $k$ be a ring over $\ZZ / p^n \ZZ$ for some $n \geq 1$. Let $G$ be a smooth affine group scheme over $k$.

Then for any $i \geq 0$, the map
\[
\coh^{2i}(\Fil^i \dR_{\mathrm{B}G / k}) \xrightarrow{\sim} \coh^{2i}(\Fil^i \widehat{\dR}_{\mathrm{B}G / k}) 
\]
is an isomorphism.
\end{lemma}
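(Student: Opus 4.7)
The plan is to reduce to the computation on the smooth affine pieces $G^{\times n}$ where Lemma \ref{dr_iso_p} applies. The key observation is that $\mathrm{B}G$ is, by construction, the colimit in $\St_k$ of the simplicial diagram $\cdots \rightrightarrows G \times G \rightrightarrows G \rightrightarrows \Spec(k)$, so any limit-preserving functor out of $\St_k^{\op}$ turns this colimit into a limit.

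First, I would observe that both $\dR_{-/k}$ and $\widehat{\dR}_{-/k}$ are, as functors $\St_k^{\op} \to \CAlg_\fil(k)$, defined via the right Kan extension procedure of equation~(\ref{def_extend}). Right Kan extensions along the Yoneda embedding are right adjoints to restriction and therefore preserve limits in $\St_k^{\op}$, which are colimits in $\St_k$. Applying this to the colimit presentation of $\mathrm{B}G$ yields canonical identifications
\[
\dR_{\mathrm{B}G/k} \;\simeq\; \lim_{[n]\in\Delta} \dR_{G^{\times n}/k}, \qquad
\widehat{\dR}_{\mathrm{B}G/k} \;\simeq\; \lim_{[n]\in\Delta} \widehat{\dR}_{G^{\times n}/k}
\]
in $\CAlg_\fil(k)$. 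Since the functor $\Fil^i$ preserves limits, the same identifications hold after applying $\Fil^i$.

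Second, I would note that because $G$ is a smooth affine group scheme over $k$, each $G^{\times n}$ is a smooth affine scheme over $k$. As $k$ is a $\ZZ/p^n\ZZ$-algebra, Lemma \ref{dr_iso_p} provides an equivalence of filtered $\EE_\infty$-algebras $\dR_{G^{\times n}/k} \xrightarrow{\sim} \widehat{\dR}_{G^{\times n}/k}$, functorially in $[n]$. Taking $\Fil^i$ and then the limit over $\Delta$ produces an equivalence $\Fil^i \dR_{\mathrm{B}G/k} \xrightarrow{\sim} \Fil^i \widehat{\dR}_{\mathrm{B}G/k}$ in $\derD(k)$, from which passing to $\coh^{2i}$ yields the claimed isomorphism (in fact of the whole underlying complexes, not merely cohomology in degree $2i$).

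The only subtle point to spell out is that the comparison map going into the argument really is the map of the lemma, and not some other naturally occurring map; this is automatic from naturality of the right Kan extension in the functor being extended, applied to the canonical natural transformation $\dR_{-/k} \to \widehat{\dR}_{-/k}$ on affines. There is no serious obstacle: all the substantive content has been encoded in Lemma \ref{dr_iso_p} and in the identification of $\mathrm{B}G$ as a colimit in $\St_k$.
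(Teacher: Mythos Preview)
Your proposal is correct and follows essentially the same approach as the paper: write $\mathrm{B}G$ as the colimit of the bar construction $G^{\times\bullet}$, use that the de Rham functors on stacks are defined by right Kan extension so that this colimit becomes a totalization, and then invoke Lemma~\ref{dr_iso_p} on each smooth affine term $G^{\times n}$. The paper's proof is a one-sentence version of exactly this, and you even note the slightly stronger conclusion (equivalence of the full complexes, not just $\coh^{2i}$).

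One small remark on your justification: the sentence ``right Kan extensions along the Yoneda embedding are right adjoints to restriction and therefore preserve limits in $\St_k^{\op}$'' conflates two things. The right Kan extension \emph{functor} $\Fun(\Alg_k,\mathcal{D}) \to \Fun(\St_k^{\op},\mathcal{D})$ is a right adjoint and preserves limits of diagrams of functors; this is not the same as saying that the right Kan extension $\mathrm{Ran}\,\mathcal{F}$ of a fixed $\mathcal{F}$ preserves limits in $\St_k^{\op}$. The paper does not spell this out either, so your argument is no less rigorous than the paper's, but it is worth being aware that the relevant input is that $\St_k$ is generated under colimits by affines (so the right-Kan-extended functor is determined by its limit formula) together with the specific presentation of $\mathrm{B}G$ as a geometric realization of affines.
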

\begin{proof}
Since $\mathrm{B}G$ can be written as a colimit of smooth affine stacks over a simplicial diagram and we defined de Rham cohomology of stacks by right Kan extension, its de Rham cohomology can be computed by computing it for the affine schemes and taking the limit over the cosimplicial diagram. Thus the result follows from Lemma \ref{dr_iso_p}.
\end{proof}
\begin{construction}
Let $p$ be a prime number, and let $k$ be a ring over $\ZZ / p^n \ZZ$ for some $n \geq 1$. Let $\mathcal{E}_{\mathrm{univ}}$ be the universal rank $r$ vector bundle on $\mathrm{BGL}_{r, k}$. Let 
\[
c^\mathrm{univ}_i \in \coh^{2i}(\Fil^i \dR_{\mathrm{BGL}_{r, k}})
\]
be the inverse image of $c_i^{\widehat{\dR}}(\mathcal{E}_{\mathrm{univ}})$ under the isomorphism 
\[
\coh^{2i}(\Fil^i {\dR}_{\mathrm{BGL}_{r, k}}) \xrightarrow{\sim} \coh^{2i}(\Fil^i \widehat\dR_{\mathrm{BGL}_{r, k}})
\]
from Lemma \ref{iso_bgln}. 

For $X$ any stack over $k$ and $\mathcal{E}$ a vector bundle of rank $r$ on $X$ corresponding to a map 
\[
f_{\mathcal{E}} \colon X \to \mathrm{BGL}_{r, k}
\]
(see Lemma \ref{prop_vectngln}), we define $c_i(\mathcal{E}) := f_{\mathcal{E}}^*(c_i^{\mathrm{univ}})$. For general $\mathcal{E} \in \Vect(X)$ we define its Chern class by decomposing $X$ into pieces on which $\mathcal{E}$ has constant rank. If $i!$ is invertible in $k$, we define the $i$-th Chern character
\[
\ch_i(\mathcal{E}) := \frac{\theta_i(c_1(\mathcal{E}), \dots, c_i(\mathcal{E}))}{i!} 
\]
When $X$ is a quasi-compact and quasi-separated scheme over $k$ which has the resolution property, and $\mathcal{E} \in \Perf(X)$ is a perfect complex, define the $i$-th \emph{Chern class} as $\csmall_i (\mathcal{E}) := \csmall_i ([\mathcal{E}])$. If $i!$ is invertible in $k$, we define $\ch_i(\mathcal{E}) := \ch_i([\mathcal{E}])$ for any $\mathcal{E} \in \Perf(X)$. 
\end{construction}
We leave it to the reader to verify that the above definition is the (unique) construction satisfying the properties from Proposition \ref{prop_chern} (the only nontrivial thing to check is the Cartan formula, which can be done by applying Lemma \ref{iso_bgln} to $\mathrm{BP}_{n,m,k}$).

%!TEX root = main.tex
\label{sec_def_theory_hodge}
\section{Kodaira--Spencer classes and variations of Hodge structure}
Let $A$ be a local Artinian $\CC$-algebra and let $X$ be a smooth and proper variety over $A$. Let 
\[
X_0 = X \times_{\Spec(A)} \Spec(\CC)
\]
By \cite[Lemma 5.5.3]{deligne68} the maps
\begin{align*}
A \to  \Omega^\bullet_{X^\an / A} \\
 \CC \to \Omega^\bullet_{X_0^\an / \CC}
\end{align*}
are quasi-isomorphisms of complexes of sheaves on $X^\an$. Combined with GAGA we obtain isomorphisms
\begin{align*}
\coh^*(X^\an, A) &\cong \coh^*(X, \Omega^\bullet_{X / A}) \\
\coh^*(X_0^\an, \CC) &\cong \coh^*(X_0, \Omega^\bullet_{X_0 / \CC})
\end{align*}
Denote with $\varphi$ the composition
\begin{equation}\label{eq_stratifying_map}
\coh^*(X_0, \Omega^\bullet_{X_0 / \CC}) \otimes_\CC A \xrightarrow{\sim} \coh^*(X_0^\an, \CC) \otimes_\CC A \cong \coh^*(X^\an, A) \xrightarrow{\sim} \coh^*(X, \Omega^\bullet_{X / A})
\end{equation}
Given $i \geq 0$ and an element 
\[
v \in \Fil^i \coh^{2i}(X_0, \Omega^\bullet_{X_0 / \CC})
\]
we want to determine whether or not $\varphi(v)$ lies in the $i$-th part of the Hodge filtration.

Bloch \cite{bloch} showed this can be studied using the Gauss--Manin connection, however his procedure only works with conditions on the base $A$. The goal of this section is to generalize his method to general bases (and even to mixed characteristic). The main idea is to replace the isomorphism $\varphi$ with its algebraic analogue (\ref{eq_goodwillie_dr}), an idea originally due to Pridham \cite{pridham}. This will allow us to generalize Bloch's algebraic computation to all $A$.

\subsection{Hodge--theoretic obstructions for completed derived de Rham cohomology}
\label{sec_bloch_dr}
In this section, we rephrase Bloch's problem in terms of a more algebraic problem. We first define an algebraic analogue of the map (\ref{eq_stratifying_map}) for completed derived de Rham cohomology, using nil-invariance. We then define an obstruction class that measures whether or not a cohomology class that sits within the Hodge filtration over the base remains within the Hodge filtration when that smaller base is enlarged by a nilpotent thickening. 

We start by introducing a more general notion of a local Artinian $\CC$-algebra. Note that we will only be considering \emph{discrete} thickenings. 
\begin{definition}
Let $k$ be a ring. A \emph{nilpotent thickening} of $k$ is a commutative $k$-algebra $R$ and a nilpotent ideal $I \subseteq R$ such that the composition 
\[
k \to R \to (R / I)
\]
is an isomorphism. A \emph{morphism of nilpotent thickenings} is a commutative diagram
\[
\begin{tikzcd}
& k \arrow{dr} \arrow{dl} \\
R' \arrow{dr} \arrow[two heads]{rr} & & R \arrow{dl} \\
& k 
\end{tikzcd}
\]
such that $R' \to R$ is surjective. Finally, we say that a morphism of nilpotent thickenings is \emph{square zero} if $J  =\ker(R' \to R)$ satisfies $J^2 = 0$. 
\end{definition}
\begin{example}
Any local Artinian $\CC$-algebra is a nilpotent thickening of $\CC$. 
\end{example}
\begin{remark}\label{aak}
If $k$ is any ring such that $\QQ \subseteq k$ and $R$ is a nilpotent thickening of $k$, for any smooth scheme $X$ over $R$ with $X_0 := X \times_{\Spec(R)} \Spec(k)$, the map
\[
\widehat{\dR}_{X / R} \to \widehat{\dR}_{X_0 / R}
\]
is an equivalence in $\derD(R)$ by globalizing Theorem \ref{theorem_goodwillie_dr} using (\ref{def_extend}).
\end{remark}
\begin{definition}\label{aal}
Let $k$ be a ring such that $\QQ \subseteq k$, let $R$ be a nilpotent thickening of $k$, and let $X$ be a smooth scheme over $R$. Let $X_0 = X \times_{\Spec(R)} \Spec(k)$. Define the \emph{stratifying map} 
\[
\varphi_{\widehat{\dR},X} \colon 
\widehat{\dR}_{X_0 / k} \otimes_k R  \to \widehat{\dR}_{X  / R }
\]
as the composition
\[
\begin{tikzcd}
\widehat{\dR}_{X_0 / k} \otimes_k R  \rar&  \widehat{\dR}_{X_0 / R } & \lar{\sim} \widehat{\dR}_{X  / R }
\end{tikzcd}
\]
after inverting the right equivalence.
\end{definition}
Before we continue, we verify that our stratifying map agrees with the map (\ref{eq_stratifying_map}), so that there can be no confusion about the map $\varphi_{X \times Y}$ in Theorem \ref{thm_zero}. 
\begin{lemma}\label{lem_strat_agree}
Let $R$ be a local Artinian $\CC$-algebra, and let $X$ be a smooth and proper scheme over $R$. Denote with $X_0 := X \times_{\Spec(R)} \Spec(\CC)$. Then the diagram
\[
\begin{tikzcd}
\dar{\sim} \coh^*(X_0, \widehat{\dR}_{X_0 / \CC}) \otimes_\CC R  \rar{\varphi_{\widehat{\dR},X }}& \coh^*(X , \widehat{\dR}_{X  / R }) \dar{\sim} \\
\coh^*(X_0, \Omega^\bullet_{X_0 / \CC}) \otimes_\CC R  \rar{(\ref{eq_stratifying_map})}& \coh^*(X , \Omega^\bullet_{X  / R })
\end{tikzcd}
\]
commutes. 
\end{lemma}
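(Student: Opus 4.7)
The strategy is to identify both maps with a common topological comparison via complementary procedures (algebraic nil-invariance of derived de Rham cohomology on one side, Deligne's analytic Poincaré lemma on the other), and then verify that the two procedures are intertwined by GAGA. First, since $X/R$ and $X_0/\CC$ are smooth, Construction~\ref{dr_iso_z} identifies the vertical equivalences in the diagram with the canonical identifications $\widehat{\dR}_{X/R} \xrightarrow{\sim} \Omega^\bullet_{X/R}$ and $\widehat{\dR}_{X_0/\CC} \xrightarrow{\sim} \Omega^\bullet_{X_0/\CC}$. Unwinding Definition~\ref{aal}, the algebraic stratifying map on cohomology becomes the composition
\[
\coh^*(X_0, \Omega^\bullet_{X_0/\CC}) \otimes_\CC R \;\longrightarrow\; \coh^*(X, \widehat{\dR}_{X_0/R}) \;\xleftarrow{\sim}\; \coh^*(X, \Omega^\bullet_{X/R})
\]
whose second arrow is the algebraic nil-invariance equivalence of Theorem~\ref{theorem_goodwillie_dr}. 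What remains is to show this composition equals the classical map~(\ref{eq_stratifying_map}).

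To compare, I would construct the commutative ladder
\[
\begin{tikzcd}[column sep=small]
\derR\Gamma(X_0, \Omega^\bullet_{X_0/\CC}) \otimes_\CC R \rar\dar{\sim} & \derR\Gamma(X, \widehat{\dR}_{X_0/R}) \dar{\sim} & \derR\Gamma(X, \Omega^\bullet_{X/R}) \lar[swap]{\sim} \dar{\sim} \\
\derR\Gamma(X_0^{\an}, \Omega^\bullet_{X_0^{\an}/\CC}) \otimes_\CC R \rar & \derR\Gamma(X^{\an}, R) & \derR\Gamma(X^{\an}, \Omega^\bullet_{X^{\an}/R}) \lar[swap]{\sim}
\end{tikzcd}
\]
whose outer vertical arrows are the GAGA equivalences, and whose bottom row composes to the classical map~(\ref{eq_stratifying_map}); the middle horizontal identification is supplied by the analytic Poincaré lemma combined with the topological identity $|X^{\an}| = |X_0^{\an}|$. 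The middle vertical arrow is obtained by composing GAGA with the analytic resolution of the constant sheaf $R$ by $\Omega^\bullet_{X^{\an}/R}$. Commutativity of the full ladder then yields the lemma.

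The main obstacle is commutativity of the right-hand square, which asserts that the algebraic nil-invariance equivalence of Theorem~\ref{theorem_goodwillie_dr} is intertwined by GAGA with the analytic Poincaré lemma. I would address this by reducing to an affine open cover of $X$, over which both sides of the square become explicit computations with the de Rham complex, and match them directly using the standard horizontal-section formula for lifting along a nilpotent thickening. The left-hand square is then the naturality of the base-change map $\widehat{\dR}_{X_0/\CC} \otimes_\CC R \to \widehat{\dR}_{X_0/R}$ under analytification, which is straightforward.
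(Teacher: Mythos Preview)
Your overall strategy matches the paper's: build a ladder with algebraic de Rham on top, analytic/singular cohomology on the bottom, and GAGA as the vertical comparisons; then chase. The difference lies in how the middle column is handled, and that is where your argument has a genuine gap.

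The object $\widehat{\dR}_{X_0/R}$ sits over a \emph{non-smooth} morphism $X_0 \to \Spec(R)$, so there is no direct analytification or GAGA statement for it. Your sentence ``the middle vertical arrow is obtained by composing GAGA with the analytic resolution of the constant sheaf $R$ by $\Omega^\bullet_{X^{\an}/R}$'' effectively \emph{defines} the middle vertical by going around the right square, which makes that square commute tautologically and pushes the entire content into the left square. But then your claim that the left square is ``straightforward naturality under analytification'' is circular: there is no independent analytification of $\widehat{\dR}_{X_0/R}$ against which to check naturality, so the left square now encodes exactly the comparison you were trying to prove. Your fallback plan of ``reducing to an affine open cover and matching directly via the horizontal-section formula'' does not help either: on an affine open the algebraic nil-invariance equivalence is still Theorem~\ref{theorem_goodwillie_dr}, a nontrivial statement about completed simplicial de Rham complexes, and matching it with the analytic Poincar\'e lemma is precisely the content of the lemma, not a computation one can do by hand.

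The paper resolves this by inserting Hartshorne's algebraic de Rham cohomology $\coh^*_{\har}(X_0/R)$ as the middle column. This theory is defined for embeddable non-smooth morphisms, is functorial in the pair $(Y,B)$, and comes with two ready-made comparison isomorphisms: Bhatt's theorem identifying it with $\coh^*(\widehat{\dR}_{X_0/R})$, and Hartshorne's own GAGA-type result identifying it with the analytic version $\coh^*_{\har}(X_0^{\an}/R^{\an})$, which in turn agrees with $\coh^*(X_0^{\an},R)$ by Deligne's lemma. All squares in the resulting $3\times 4$ grid then commute by plain functoriality of $\coh^*_{\har}$, and the diagram chase is mechanical. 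The moral: you need an intermediary cohomology theory for the non-smooth pair $X_0/R$ that already knows how to talk to both sides; Hartshorne's theory is exactly that, and your ladder is missing it.
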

\begin{proof}
For $Y \to \Spec(B)$ a morphism of schemes of finite type $\CC$ such that $Y$ can be embedded in a smooth $B$-scheme, write $\coh_\har^*(Y / B)$ for Harthshorne's algebraic de Rham cohomology \cite[\S II.1]{harthshornedr}, and $\coh_\har^*(Y^\an/B^\an)$ for the holomorphic (analytic) version \cite[\S IV.1]{harthshornedr}. The main properties we need of Harthshorne's theory are the following. 
\begin{itemize}
\item The cohomology groups $\coh_\har^*(Y / B)$ and $\coh_\har^*(Y^\an/B^\an)$ are functorial in the pair $(Y, B)$.
\item Whenever $Y \to \Spec(B)$ is smooth, there are functorial isomorphisms $\coh_\dR^*(Y / B) \cong \coh_\har^*(Y / B)$ and $\coh_\dR^*(Y^\an / B^\an) \cong \coh_\har^*(Y^\an/B^\an)$. 
\end{itemize}
We get a commutative diagram
\[
\begin{tikzcd}
\coh^*(\widehat{\dR}_{X  / R }) \rar{\sim} \dar{\sim} & \coh_\har^*(X / R ) \rar{\sim} \dar & \coh_\har^*((X)^\an / (R)^\an)  \dar &\lar{\sim} \coh^*(X^\an , R ) \dar{\sim} \\
\coh^*(\widehat{\dR}_{X_0 / R } )\rar{\sim} & \coh_\har^*(X_0 / R ) \rar{\sim} & \coh_\har^*(X_0^\an / (R )^\an)  &\lar{\sim} \coh^*(X_0^\an, R ) \\
\coh^*(\widehat{\dR}_{X_0 / \CC}) \uar \rar{\sim} &  \uar \coh_\har^*(X_0 / \CC) \rar{\sim} &  \uar \coh_\har^*(X_0^\an / \CC^\an)  & \uar\lar{\sim} \coh^*(X_0^\an, \CC)
\end{tikzcd}
\]
By \cite[Corollary 4.27]{bhatt-cddrc} the left horizontal arrows are isomorphisms. Then note that $X_0 \to R$ is proper. Moreover $\coh_\har^i(X_0/ R)$ is finite for all $i$, for example by comparing with $\coh_\har^*\(X / R\)$ via (\ref{eq_goodwillie_dr}). Thus by \cite[Proposition 4.1]{harthshornedr} the middle horizontal arrows are isomorphisms. Finally by \cite[Lemma 5.5.3]{deligne68} the right horizontal arrows are isomorphisms.

The top left vertical arrow is an isomorphism by (\ref{eq_goodwillie_dr}). It follows that all top vertical arrows are isomorphisms.  The result now follows by carefully chasing through the diagram after inverting all the relevant arrows: Going straight up from the bottom left to the top left gives $\varphi_{\widehat{\dR}, X}$, going all the way right-up-left gives (\ref{eq_stratifying_map}). 
\end{proof}
%\begin{situation}[Derived deformation context in characteristic zero]% \label{sit_der_context_sch}
%Let $k$ be a ring with $\QQ \subseteq k$, and suppose $R' \to R$ is a square zero morphism of nilpotent thickenings with $I = \ker(R' \to R)$. Let $X'  \xrightarrow{f} \Spec(R')$ be a smooth and proper morphism, and set
%\begin{align*}
%X &:= X'  \times_{\Spec(R' )} \Spec(R) \\
%X_0 &:= X'  \times_{\Spec(R' )} \Spec(k)
%\end{align*}
%Finally write $\mathcal{I} := f^{-1} I \otimes_{R} \catO_{X}$.
%\end{situation}

The following lemma is the algebraic version of the statement that Chern classes are horizontal for the Gauss--Manin connection.
\begin{lemma}[Horizontality of Chern classes]\label{lem_chern_horizontal}
Let $k$ be a ring and let $R$ be a nilpotent thickening of $k$. Let $X$ be a smooth and proper scheme over $R$. Let $\mathcal{E} \in \Perf(X)$, and let $\mathcal{E}_0 = \mathcal{E}\rvert_{X_k}$. Then
\[
\varphi_{\widehat{\dR}, X }\(\ch_i(\mathcal{E}_0) \otimes 1\) = \ch_i(\mathcal{E})
\]
in $\coh^{2i}(\widehat{\dR}_{X  / R })$. 
\end{lemma}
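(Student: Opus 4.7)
My plan is to factor the stratifying map as in Definition \ref{aal} and verify the identity on each piece separately. Writing $\varphi_{\widehat{\dR}, X}$ as the composite
\[
\widehat{\dR}_{X_0 / k} \otimes_k R \xrightarrow{\alpha} \widehat{\dR}_{X_0 / R} \xleftarrow[\sim]{\beta} \widehat{\dR}_{X / R},
\]
where $\beta$ is the nil-invariance equivalence of Remark \ref{aak} (globalized from Theorem \ref{theorem_goodwillie_dr}) which is subsequently inverted, the claim reduces to the identity
\[
\alpha\bigl(\ch_i(\mathcal{E}_0) \otimes 1\bigr) = \beta\bigl(\ch_i(\mathcal{E})\bigr)
\]
in $\coh^{2i}(\widehat{\dR}_{X_0 / R})$.

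For the right hand side, the map $\beta$ is pullback along the closed immersion $\iota \colon X_0 \hookrightarrow X$ of $R$-schemes. Since the Chern character is compatible with pullbacks (Proposition \ref{prop_chern}(2)), one has $\beta(\ch_i(\mathcal{E})) = \ch_i(\iota^* \mathcal{E}) = \ch_i(\mathcal{E}_0)$, where the last Chern class is the one computed in $\coh^{2i}(\widehat{\dR}_{X_0 / R})$. The remaining task is to show that $\alpha$ sends the Chern character of $\mathcal{E}_0$ computed relative to the base $k$ to the Chern character of $\mathcal{E}_0$ computed relative to the base $R$; this is a general base-change compatibility of Chern classes. I would establish it by invoking the uniqueness of the Chern character (Proposition \ref{prop_chern_unique}): the rule on $\Perf(X_0)$ sending $\mathcal{F}$ to $\alpha(\ch_i(\mathcal{F}) \otimes 1)$ is additive on exact triangles (since $\alpha$ is exact and $\ch_i$ is additive) and functorial under morphisms of $X_0$-schemes (since $\alpha$ is natural in $X_0$), so uniqueness reduces the identity to the case of a line bundle. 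For a line bundle $\mathcal{L}$ the polynomial formula of Definition \ref{def_chern_character} further reduces the problem to the identity $\alpha(c_1(\mathcal{L}) \otimes 1) = c_1(\mathcal{L})$ in $\coh^2(\Fil^1 \widehat{\dR}_{X_0/R})$, and this follows from the construction of $c_1^{\widehat{\dR}}$ as the left Kan extension of the $\mathrm{d}\log$ map from smooth algebras followed by sheafification, because both the $\mathrm{d}\log$ map and the Kan extension/sheafification step are manifestly natural in the base ring.

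The main obstacle is the last compatibility for line bundles: one has to trace through the Kan-extension and sheafification construction of $c_1^{\widehat{\dR}}$ and verify that it interacts correctly with $\alpha$. Once this is done, the extension to higher Chern classes via the splitting principle (as used in Proposition \ref{prop_chern_unique}) and from vector bundles to perfect complexes via the resolution property is formal, and the theorem follows.
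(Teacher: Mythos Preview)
Your approach is correct and matches the paper's: factor the stratifying map through $\widehat{\dR}_{X_0/R}$ and check that the Chern character is compatible with each leg. The paper records this as a single commutative diagram with $\mathrm{K}_0$ on top and the three de Rham cohomology groups on the bottom, treating both squares as immediate from naturality of $\ch_i$ (Proposition~\ref{prop_chern}(2) for the right square, naturality in the base ring for the left); your reduction of the base-change leg to line bundles via Proposition~\ref{prop_chern_unique} works but is more effort than necessary, since the construction of $c_i$ in Section~\ref{sec_drc_stacks} is already visibly functorial in the base ring.
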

\begin{proof}
Since the diagram
\[
\begin{tikzcd}
\mathrm{K}_0(X_k) \dar{\ch_i} \arrow[equals]{r} &  \dar{\ch_i} \mathrm{K}_0(X_k) & \lar \mathrm{K}_0({X}) \dar{\ch_i}   \\
\coh^{2i}( \widehat{\dR}_{X_k / k}) \rar& \coh^{2i}(\widehat{\dR}_{X_k / R }) & \lar \coh^{2i}( \widehat{\dR}_{X  / R })
\end{tikzcd}
\]
commutes, this follows immediately by definition of the stratifying map.
\end{proof}

%Now suppose one is in Situation \ref{sit_der_context_sch}, and is given some $\mathcal{E}_B \in \derD({X})$. Write $\mathcal{E}_0 = \mathcal{E}_B \rvert_{X_0}$. A question one is often interested in is whether or not there exists $\mathcal{E} \in \derD_{X' }$. If such $\mathcal{E}$ exists, then 
%\[
%\ch_p(\mathcal{E}) \in \coh^{2i}\(X' , \Fil^i \widehat{\dR}_{X'  / A}\)
%\]
%for all $p$. 
%Hence by the above lemma, if such $\mathcal{E}$ exists the image of $\ch_p(\mathcal{E}_0)$ under the composition
%\[
%\coh^{2i} \(X_0, \widehat{\dR}_{X_0 / k}\) \xrightarrow{\varphi_{\widehat{\dR},A} \otimes 1} \coh^{2i} \(X' , \widehat{\dR}_{X'  / A} \) \to \coh^{2i} \(X' ,  \widehat{\dR}_{X'  / A} / \Fil^i\)
%\]
%is equal to zero. This motivates the following definition.

%Moreover, since $\mathcal{E}_B$ exists, we know that the image of $\ch_p(\mathcal{E}_0)$ under the composition 
%\[
%\coh^{2i} \(X_0, \widehat{\dR}_{X_0 / k}\) \xrightarrow{\varphi_{\widehat{\dR},A} \otimes 1} \coh^{2i} \(X' , \widehat{\dR}_{X'  / A} \) \to \coh^{2i} \(X, \widehat{\dR}_{X / B} / \Fil^i\)
%\]
%is equal to zero, so that $\ch_p(\mathcal{E}_0)$ lands in 
%\[
%\coh^{2i}\(X, \widehat{\dR}_{X / B} / \Fil^i \otimes \ \mathcal{I} \) \subseteq \coh^{2i}\(X' _A, \widehat{\dR}_{X'  / A} / \Fil^i \)
%\]
We now finally give our definition of the obstruction class, as promised. 
\begin{definition}\label{our_obs_class}
Let $k$ be a ring and let $R$ be a nilpotent thickening of $k$. Let $X$ be a smooth and proper scheme over $R$.  Let $v_0 \in \coh^{2i}(\Fil^i \widehat{\dR}_{X_k / k})$. 

We define the \emph{obstruction class to $v_0$ staying in the Hodge filtration}
\[
\ob^{\widehat{\dR}}_{X  / R }(v_0) \in \coh^{2i}\(\widehat{\dR}_{X / R } / \Fil^i  \)
\]
as the image of $v_0 \otimes 1$ under the composition
\[
\coh^{2i} (\widehat{\dR}_{X_k / k}) \otimes_k R \xrightarrow{\varphi_{\widehat{\dR},X }} \coh^{2i} (\widehat{\dR}_{X / R} ) \to \coh^{2i} (\widehat{\dR}_{X / R } / \Fil^i) 
\]
\end{definition}
Almost by definition, we see that the obstruction class to $v_0$ staying in the Hodge filtration vanishes if and only if $v_0$ lands in the $i$-th parth of the Hodge filtration on $\coh^{2i}(\widehat{\dR}_{X / R }  )$, which explains the terminology. 
\subsection{Comparison with Bloch's construction}
\label{sec_compare_bloch}
In this section, we compare the construction of the obstruction class from Definition \ref{our_obs_class} with the classical construction of Bloch \cite{bloch}. Let $k = \CC$, and suppose $R' \to R$ is a square zero morphism of nilpotent thickenings with $I = \ker(R' \to R)$. Let $X'  \xrightarrow{f} \Spec(R')$ be a smooth and proper morphism, and set
\begin{align*}
X &:= X'  \times_{\Spec(R' )} \Spec(R) \\
X_0 &:= X'  \times_{\Spec(R' )} \Spec(\CC)
\end{align*}
Suppose $v_0 \in \coh^{2i}_{\dR}(X_0 / \CC)$ is such that 
\[
\ob^{\widehat{\dR}}_{X  / R }(v_0) \in \coh^{2i}_{\dR}(X / R) / \Fil^i
\]
vanishes, so that we may consider the horizontal lift
\[
v \in \Fil^i \coh^{2i}_{\dR}(X / R)
\]
In this case, Bloch \cite{bloch} defined an obstruction class in
\[
\ob^{\mathrm{Bloch}}_{X' / R'}(v_0) \in (\coh^{2i}_{\dR}(X' / R') / \Fil^i) \otimes_{R'} \Omega_{R' / \CC}
\]
as follows. 
\begin{construction}
For $a \in I$ and $\omega \in \Fil^i \coh^{2i}(X' / R')$, the Gauss--Manin connection
\[
\coh^{2i}_{\dR}(X' / R') \xrightarrow{\nabla} \coh^{2i}_{\dR}(X' / R') \otimes_{R'} \Omega_{R' / \CC}
\]
satisfies $\nabla(a \cdot \omega) = a \cdot \nabla(\omega) + \mathrm{d}a \wedge \omega$, hence 
\[
\nabla(a \cdot \omega) \equiv 0 \pmod M
\] 
where $M \subseteq \coh^{2i}_{\dR}(X' / R') \otimes_{R'} \Omega_{R' / \CC}$ is the submodule generated by the submodules $\Fil^i \coh^{2i}_{\dR}(X' / R') \otimes_{R'} \Omega_{R' / \CC}$ and $I \cdot \coh^{2i}_{\dR}(X' / R') \otimes_{R'} \Omega_{R' / \CC}$. 

Identifying 
\begin{align*}
\coh^{2i}_{\dR}(X' / R') / (I \cdot \coh^{2i}_{\dR}(X' / R')) &\cong \coh^{2i}_{\dR}(X / R) \\
\coh^{2i}_{\dR}(X' / R') \otimes_{R'} \Omega_{R' / \CC} / M &\cong (\coh^{2i}_{\dR}(X / R) / \Fil^i) \otimes_{R'} \Omega_{R' / \CC}
\end{align*}
one obtains a map
\[
\Fil^i \coh^{2i}_{\dR}(X / R) \xrightarrow{\overline{\nabla}} (\coh^{2i}_{\dR}(X / R) / \Fil^i) \otimes_{R'} \Omega_{R' / \CC}
\]
Bloch's obstruction class can then be defined as 
\[
\ob^{\mathrm{Bloch}}_{X' / R'}(v_0)  := \overline{\nabla}(v)
\]
\end{construction}
In \cite[Proposition 4.2]{bloch}, Bloch showed the following. 
\begin{lemma}[Bloch] Assume the composition 
\begin{equation}\label{aaaaa}
I \to R'  \xrightarrow{\mathrm{d}} \Omega_{R'  / \CC} \to \Omega_{R' / \CC} \otimes_{R'} R
\end{equation}
is injective. Then the following are equivalent:
\begin{enumerate}
\item $\varphi(v_0) \in \Fil^i \coh^{2i}_{\dR}(X' / R')$ 
\item $\ob^{\mathrm{Bloch}}_{X' / R'}(v_0) = 0$
\end{enumerate}
\end{lemma}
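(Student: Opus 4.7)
The plan is to unwind both definitions so that the equivalence reduces to a module-theoretic assertion about a single element. The two key players are the horizontal class $v' := \varphi(v_0) \in \coh^{2i}_{\dR}(X'/R')$ produced by the stratifying map, and an arbitrary lift $\tilde v \in \Fil^i \coh^{2i}(X'/R')$ of $v := \varphi_X(v_0) \in \Fil^i \coh^{2i}(X/R)$. Both lift $v$, so their difference $\delta := \tilde v - v'$ lies in $I \cdot \coh^{2i}(X'/R') \cong I \otimes_R \coh^{2i}(X/R)$; the entire equivalence amounts to the statement that $\delta$ can be arranged inside $I \otimes_R \Fil^i \coh^{2i}(X/R)$ precisely when $\ob^{\mathrm{Bloch}}(v_0)$ vanishes.

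The forward direction is immediate: if $\varphi(v_0) \in \Fil^i$, I take $\tilde v := \varphi(v_0)$ as the lift used in the definition of $\overline{\nabla}$; its horizontality yields $\nabla(\tilde v) = 0$, and hence $\ob^{\mathrm{Bloch}}(v_0) = 0$.

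For the converse, I would compute $\nabla(\tilde v) = \nabla(\delta)$, using horizontality of $v'$. Writing $\delta = \sum_j a_j u_j$ with $a_j \in I$ and $u_j \in \coh^{2i}(X'/R')$, the Leibniz rule for the Gauss--Manin connection combined with $I^2 = 0$ produces
\[
\nabla(\tilde v) \equiv \sum_j (da_j) \wedge u_j \pmod{I \cdot \coh \otimes \Omega_{R'/\CC}},
\]
so that the vanishing of $\ob^{\mathrm{Bloch}}(v_0)$ translates into the vanishing of $\sum_j \overline{da_j} \otimes \bar u_j$ in $(\coh(X/R)/\Fil^i) \otimes_R (\Omega_{R'/\CC} \otimes_{R'} R)$.

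The main obstacle is the concluding module-theoretic step: upgrading the vanishing of $\sum_j \overline{da_j} \otimes \bar u_j$ in the $\Omega_{R'/\CC} \otimes_{R'} R$-tensor product to the vanishing of $\sum_j a_j \otimes \bar u_j$ in $I \otimes_R (\coh(X/R)/\Fil^i)$, so that $\delta \in I \otimes \Fil^i \coh(X/R)$ and hence $\varphi(v_0) = \tilde v - \delta \in \Fil^i \coh^{2i}(X'/R')$. This is precisely where hypothesis (\ref{aaaaa}) intervenes: one needs the injectivity of $d \colon I \to \Omega_{R'/\CC} \otimes_{R'} R$ to persist after tensoring with $\coh(X/R)/\Fil^i$, which follows under Bloch's hypotheses on the base (where the relevant cohomology module is sufficiently flat for the injection to survive).
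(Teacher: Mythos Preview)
The paper does not actually prove this lemma; it simply attributes it to \cite[Proposition 4.2]{bloch} and moves on. So there is no ``paper's own proof'' to compare against directly. That said, your computation is exactly the one the paper carries out in the \emph{next} lemma (relating $\ob^{\widehat{\dR}}$ and $\ob^{\mathrm{Bloch}}$): write $v' = \varphi(v_0)$ as a filtered lift $w_1$ plus a correction $a \cdot w_2$ with $a \in I$, use horizontality of $v'$ and the Leibniz rule to compute $\nabla(w_1) \equiv -w_2\,\mathrm{d}a$ modulo $M$. Your argument is the same, just organized around $\delta = \tilde v - v'$ instead.

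One point to sharpen: in the final step you invoke ``Bloch's hypotheses on the base'' to justify that the injection $\mathrm{d}\colon I \hookrightarrow \Omega_{R'/\CC} \otimes_{R'} R$ survives tensoring with $\coh^{2i}_{\dR}(X/R)/\Fil^i$. The hypothesis on the base (injectivity of (\ref{aaaaa})) gives you the injection; what makes it survive the tensor is that $\coh^{2i}_{\dR}(X/R)/\Fil^i$ is \emph{free} over $R$. This has nothing to do with the base hypothesis --- it comes from the fact that $X \to \Spec(R)$ is smooth and proper over a local Artinian $\CC$-algebra, so Hodge--de Rham degeneration and cohomology-and-base-change give that each $\gr^j \coh^{2i}_{\dR}(X/R)$ is free, hence $\Fil^i$ is a direct summand. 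With that correction your argument is complete.
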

That is, Bloch's obstruction class measures whether or not $v$ remains wihtin the $i$-th part of the Hodge filtration when deforming to $X'$ as long as the map (\ref{aaaaa}) is injective (this condition on $I$ also appears in the paper of Buchweitz and Flenner, see \cite[5.6]{buchflen}). The condition on $I$ is satisfied for many important examples, and in particular it is satisfied for the ideal $(t^n) \subseteq \CC[t] / (t^{n + 1})$. 

However, this is not sufficient to handle all Artinian $\CC$-algebra's $A$, as is shown by the following example.
\begin{example}[See \cite{mohan}]
Consider the polynomial
\[
f = x^2y^2 + x^5 + y^5 \in \CC[x, y]
\]
Note $A = \CC[x, y] / (f_x, f_y)$ is an Artinian local $\CC$-algebra. One may verify that $f \not \in (f_x, f_y)$. Thus $df = 0$ in $\Omega_{A / \CC}$ but $f \neq 0$ in $A$. In particular, there exists no ideal $I \subseteq A$ containing $f$ for which the map (\ref{aaaaa}) is injective. 
\end{example}

For completeness, we end this section by relating Bloch's obstruction class to our obstruction class.
\begin{lemma}
Denote with $\mathrm{d}$ the composition
\[
I \hookrightarrow R' \xrightarrow{\mathrm{d}} \Omega_{R' / \CC} \to \Omega_{R' / \CC} \otimes_{R'} R
\]
Then the map
\begin{equation} \label{xxyyzz}
(\coh^{2i}_{\dR}(X / R) / \Fil^i) \otimes_{R} I \xrightarrow{\id \otimes \mathrm{d}}  \coh^{2i}_{\dR}(X / R) / \Fil^i) \otimes_{R} \Omega_{R' / \CC}
\end{equation}
maps $\ob^{\widehat{\dR}}_{X' / R'}(v_0) \mapsto -\ob^{\mathrm{Bloch}}_{X' / R'}(v_0)$. 
\end{lemma}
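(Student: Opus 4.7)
The plan is to translate both sides of the desired equality into chain-level quantities on the classical de Rham complex via Construction \ref{dr_iso_z} (which gives $\widehat{\dR}_{X' / R'} \simeq \Omega^\bullet_{X' / R'}$ as filtered $\EE_\infty$-algebras, since $X' \to R'$ is smooth), and then apply the Leibniz rule for the Gauss--Manin connection. The key input is Lemma \ref{lem_strat_agree}: the stratifying map $\varphi_{\widehat{\dR}, X'}$ computes the classical flat section, so $\tilde v := \varphi_{\widehat{\dR}, X'}(v_0 \otimes 1) \in \coh^{2i}(\widehat{\dR}_{X' / R'})$ is the unique lift of $v_0$ with $\nabla(\tilde v) = 0$, whose reduction modulo $I$ is the horizontal lift $v \in \Fil^i \coh^{2i}(\widehat{\dR}_{X / R})$.

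I would first pick a cocycle $\tilde\omega \in \Omega^\bullet_{X' / R'}$ representing $\tilde v$ together with a lift $u \in \Fil^i \Omega^\bullet_{X' / R'}$ of a chosen representative $\omega \in \Fil^i \Omega^\bullet_{X / R}$ of $v$. Because $X' \to R'$ is smooth and proper and $I^2 = 0$, base change identifies the fiber of $\widehat{\dR}_{X' / R'} \to \widehat{\dR}_{X / R}$ with $\widehat{\dR}_{X / R} \otimes_R I$. Thus $\tilde\omega - u$ lies in this fiber, and its class modulo $\Fil^i$ is a cocycle in $(\widehat{\dR}_{X / R} / \Fil^i) \otimes_R I$ which represents the canonical lift of $\ob^{\widehat{\dR}}_{X' / R'}(v_0)$ to the source of (\ref{xxyyzz}); that such a lift is well-defined is forced by the assumption $\ob^{\widehat{\dR}}_{X / R}(v_0) = 0$ via the long exact sequence of the fiber sequence $(\widehat{\dR}_{X / R} / \Fil^i) \otimes I \to \widehat{\dR}_{X' / R'} / \Fil^i \to \widehat{\dR}_{X / R} / \Fil^i$.

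Second, from $\nabla(\tilde v) = 0$ we get $\nabla(\tilde v - u) = -\nabla(u)$. Writing $\tilde v - u = \sum_j a_j \alpha_j$ with $a_j \in I$ and $\alpha_j \in \coh^{2i}(\widehat{\dR}_{X' / R'})$, and expanding via $\nabla(a_j \alpha_j) = a_j \nabla(\alpha_j) + da_j \wedge \alpha_j$, the first summand lies in $I \cdot \coh^{2i}(\widehat{\dR}_{X' / R'}) \otimes \Omega_{R' / \CC}$ and thus vanishes after projecting to the target $(\coh^{2i}(\widehat{\dR}_{X / R}) / \Fil^i) \otimes_R \Omega_{R' / \CC}$, while the second summand is exactly the image $(\id \otimes d)(\tilde v - u)$. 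By Griffiths transversality, $\nabla(u) \in \Fil^{i-1} \coh^{2i}(\widehat{\dR}_{X' / R'}) \otimes \Omega_{R' / \CC}$, so modulo $\Fil^i \otimes \Omega$ it projects to $\overline\nabla(v) = \ob^{\mathrm{Bloch}}_{X' / R'}(v_0)$. Combining these three identities yields $(\id \otimes d)(\ob^{\widehat{\dR}}_{X' / R'}(v_0)) = -\ob^{\mathrm{Bloch}}_{X' / R'}(v_0)$, which is the claim.

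The main technical hurdle will be justifying the identifications carefully: that the fiber of reduction factors as $\widehat{\dR}_{X / R} \otimes_R I$ compatibly with the Hodge filtration; that $\tilde\omega - u$ genuinely represents the intended canonical lift of $\ob^{\widehat{\dR}}_{X' / R'}(v_0)$ to the source of (\ref{xxyyzz}), independently of the choice of $u$; and that the Leibniz identity together with Griffiths transversality can be applied at the chain level. Once these are pinned down, the argument amounts to Bloch's original Leibniz computation, with the minus sign in the statement arising transparently from $\nabla(\tilde v - u) = -\nabla(u)$.
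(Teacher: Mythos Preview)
Your approach is essentially the same as the paper's: decompose the horizontal lift $v' = \varphi_{\widehat{\dR},X'}(v_0)$ into a $\Fil^i$-part plus an $I$-torsion part, then exploit $\nabla(v')=0$ together with the Leibniz rule to identify the image under $\id\otimes\mathrm{d}$ with $-\overline{\nabla}$ of the $\Fil^i$-part. The paper carries this out directly at the level of cohomology classes (writing $v' = w_1 + a\cdot w_2$ with $w_1 \in \Fil^i\coh^{2i}_{\dR}(X'/R')$, $a\in I$), which sidesteps the chain-level hurdles you flag; your detour through cocycle representatives $\tilde\omega,u$ is unnecessary, since surjectivity of $\Fil^i\coh^{2i}_{\dR}(X'/R')\to\Fil^i\coh^{2i}_{\dR}(X/R)$ already provides the needed lift in cohomology.
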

\begin{proof}
Write 
\[
v' = \varphi_{\widehat{\dR},X'}(v_0) \in \coh^{2i}_{\dR}(X' / R'), \qquad v = \varphi_{\widehat{\dR},X}(v_0) \in \coh^{2i}_{\dR}(X / R)
\]
Since $v \in \Fil^i \coh^{2i}_{\dR}(X / R)$, we may write 
\[
v' = w_1 + a \cdot w_2
\]
where $w_1 \in \Fil^i \coh^{2i}_{\dR}(X' / R')$, $a \in I$ and $w_2 \in \coh^{2i}_{\dR}(X' / R')$. Note that by definition one has 
\[
\ob^{\mathrm{Bloch}}_{X' / R'}(v_0) \equiv \overline{\nabla}(\overline{w_1}) \pmod M
\]
Since $v'$ is horizontal, we have $\nabla(v') = 0$, thus 
\[
\nabla(w_1) = -\nabla(a \cdot w_2) = -a \cdot \nabla(w_2) + w_2 \mathrm{d}a 
\] 
Reducing modulo $I$ we obtain
\[
\ob^{\mathrm{Bloch}}_{X' / R'}(v_0) = \overline{\nabla}(\overline{w_1}) =  -w_2 \mathrm{d}a
\]
in $\coh^{2i}_{\dR}(X / R) / \Fil^i) \otimes_{R'} \Omega_{R' / \CC}$, which proves the result since the right hand side is precisely the image of $-\ob^{\widehat{\dR}}_{X' / R'}(v_0)$ under the map \ref{xxyyzz} by definition.
\end{proof}

\subsection{Hodge--theoretic obstructions for derived crystalline cohomology}
In this section we give definitions analogous to those in Section \ref{sec_bloch_dr}. Note however that here we are working with uncompleted theories, which are not well behaved in characteristic zero. We start by giving the crystalline analogue of the map (\ref{eq_stratifying_map}). 
\begin{definition}\label{abk}
Let $k$ be a ring. Let $(R \to R_0) \in \PDPair_k$ and let $X \to \Spec(R) $ be a smooth scheme. Write $X_0 = X \times_{\Spec(R)} \Spec(R_0)$. We define the \emph{crystalline stratification map} 
\[
\varphi_{\Crys,X} \colon {\dR}_{X_0  / k}  \to {\dR}_{X / R}
\]
in $\derD(k)$ as the map obtained by inverting the equivalences in the diagram
\[
\dR_{X_0 / k} \to {\Crys}_{X_0 / (R \to R_0)} \xleftarrow{\sim}  {\Crys}_{X / (R \to R)} \otimes_{R} \Crys_{R_0 / (R \to R_0)} \xleftarrow{\sim} {\dR}_{X / R}
\]
where the middle map is an equivalence  by Lemma \ref{lem_crys_colim}, and the last map is an equivalence since \[
\Crys_{R_0 / (R \to R_0)} \cong R
\]
We will denote with 
\begin{equation}
\alpha_\Crys \colon \Crys_{X_0 / (R \to R_0)} \xrightarrow{\sim} \dR_{X / R} \label{abh}
\end{equation}
 the map obtained by inverting the two equivalences in the diagram above. 
\end{definition}
\begin{remark}
By \cite[Proposition 3.25]{bhatt-padic} or \cite[Proposition 4.66, 4.87, 4.90]{mao}, if $R$ is a $(\ZZ / p^n \ZZ)$-algebra we have a diagram
\[
\begin{tikzcd}
\coh^*(\Crys_{X_0 / (R \to R_0)}) \dar{\sim}  \rar{\alpha_\Crys} & 
\coh^*(\dR_{X / R}) \dar{\sim} \\
\coh^*_{\mathrm{cris}}(X_0 / (R \to R_0)) \rar{\sim}&  \coh^*_{\mathrm{dR}}(X / R)
\end{tikzcd}
\]
Thus our construction agrees with more classical constructions. 
\end{remark}
\begin{definition}
Let $k$ be a ring. Let $(R \to R_0) \in \PDPair_k$, and let $X$ be a smooth and proper scheme over $R$. Write $X_0 := X \times_{\Spec(R)} \Spec(R_0)$. Let $v_0 \in \coh^{2i}(\Fil^i {\dR}_{X_0 / k})$.

We define the \emph{obstruction class to $v_0$ staying in the Hodge filtration}
\[
\ob^\Crys_{X / R}(v_0) \in \coh^{2i}\( {\dR}_{X / R} / \Fil^i  \)
\]
as the image of $v_0$ under the composition
\begin{align*}
\coh^{2i}(\Fil^i \dR_{X_0 / k}) &\to \coh^{2i} \( \Crys_{X_0 / (R \to R_0)}\) \\& \xrightarrow{\varphi_{{\Crys},X }} \coh^{2i} \( {\dR}_{X / R} \) \\ &\to \coh^{2i} \( {\dR}_{X / R} / \Fil^i\) 
\end{align*}
\end{definition}

 \subsection{The (derived) Kodaira--Spencer map}

Let $k$ be a commutative ring. In this section we define the derived analogue of the Kodaira--Spencer map. That is, for any surjection of $k$-algebras $A' \to A$ with kernel $I$, we wish to construct a map
\[
\LL_{A / k}[-1] \to I / I^2
\]
 in $\derD(A)$. Moreover, this map should be functorial in the pair $(A' \to A)$. In fact, we believe this to be the map in \stacksref{0GPT}, however we will not verify this.  The starting point is the following construction.
\begin{construction}\label{cons_ks}
Let $k$ be a ring. Combining Proposition \ref{surj_dr_adfil} and Lemma \ref{compare_01}, we get for any any $(A' \to A) \in \AniPair_k$ an equivalence
\begin{equation}
\dR_{A / A'} / \Fil^2 \xrightarrow{\sim} A' / \derL\Fil_\adic^2  \label{dr_filadic}
\end{equation}
in $\CAlg_\fil(k)$. If we now assume $A' \to A$ is a map of discrete $k$-algebras with kernel $I$, we can consider the composition
\begin{equation}
\dR_{A / A'} / \Fil^2 \xrightarrow{\sim} A' / \derL\Fil_\adic^2  \to A' / \Fil_\adic^2 \label{aaa}
\end{equation}
in $\CAlg_\fil(k)$. Applying $\gr^1(-)$ we get a map
\[
\kappa_{A / A' / k} \colon \LL_{A / A'}[-1] \to I / I^2
\]
We will also denote the composition
\[
\LL_{A /k}[-1] \to \LL_{A / A'}[-1] \to \gr^1_\adic(A' \to A) = I / I^2
\]
with $\kappa_{A / A' / k}$, and refer to it as the \emph{Kodaira--Spencer map}.
\end{construction}
\begin{remark}\label{aai}
Note that we may have chosen $k = A'$ in the above construction, so that we get a commutative diagram
\begin{equation}
\begin{tikzcd}
\dar A'  \rar& A' / \Fil^2_\adic \\
\dR_{A / A'}  \rar & \dR_{A / A'} / \Fil^2\uar{(\ref{aaa})} 
\end{tikzcd}
\end{equation}
in $\derD(A')$. 
\end{remark}

 We now wish to lift $\kappa_{A / A' / k}$ to a map in $\derD(A)$, functorial in $A$. For this, we first need to construct a good target category.
\begin{definition}
We denote with $\AlgMod_k^\heart$ the $1$-category of pairs $(A, M)$ where $A$ is a discrete commutative $k$-algebra and $M$ is a discrete $A$ module. A morphism $(A, M) \to (B, N)$ is given by a map of $k$-algebras $A \to B$ (which gives $N$ the structure of an $A$-module), and a map of $A$-modules $M \to N$. 
\end{definition}

A set of compact $1$-projective generators for $\AlgMod_k^\heart$ is given by the set $S = \{(k[x], 0), (k, k)\}$. The full subcategory spanned by coproducts of elements in $S$ is the $1$-category of pairs $(A, M)$ such that $A$ is a finitely generated polynomial algebra over $k$ and $M$ is a finite free $A$-module. 

\begin{definition}
We define $\AlgMod_k^\an := \Ani(\AlgMod_k^\heart)$. 
\end{definition}

One may show that for any $A \in \Alg_k$ one has 
\[
\AlgMod_k^\an \times_{\CAlg_k^\an} \{A\} \cong \derD(A)_{\geq 0}
\] 
We denote with 
\begin{align*}
p_0 \colon \AlgMod_k^\an \to \CAlg_k^\an
\end{align*}
the functor informally given by $(A, M) \mapsto A$, and with
\begin{align*}
p_1 \colon \AlgMod_k^\an \to \derD(k)
\end{align*}
the functor informally given by $(A, M) \mapsto M$ (these functors are easily constructed by animating). Write $\ev_1 \colon \AniPair_k \to \CAlg_k^\an$ for the morphism informally given by $(A' \to A) \mapsto A$, and 
\[
\const \colon \AniPair_k \to \Fun(\Delta^1, \AniPair_k)
\]
for the functor informally given by $A \mapsto (A \xrightarrow{\id} A)$.

Informally, the following lemma shows there exists a functor sending an (animated) surjective ring map $A' \twoheadrightarrow A$ to a morphism in $\derD(A)_{\geq 0}$ lifting the morphism 
\[
\LL_{A / A'}[-1] \to \derL \gr^1_\adic(A' \to A)
\]
in $\derD(k)$. 
\begin{lemma}\label{lem_hard_lifting_ks_map}
Let $k$ be a ring. Denote with $\Theta \colon \AniPair_k \to \Fun(\Delta^1, \derD(k)_\fil)$ the functor sending an object $(A' \to A)$ to the map (\ref{dr_filadic}).

There exists a unique colimit-preserving functor 
\[
\psi \colon \AniPair_k \to \Fun(\Delta^1, \AlgMod_k^\an)
\]
such that  $p_0 \circ \psi \simeq \const \circ \mathrm{ev}_1$ as functors $\AniPair_k \to \Fun(\Delta^1, \CAlg_k^\an)$ and $p_1 \circ \psi  \simeq \gr^1(\Theta)$ as functors $\AniPair_k \to \Fun(\Delta^1, \derD(k))$, 
\end{lemma}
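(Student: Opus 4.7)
The plan is to construct $\psi$ via left Kan extension from the full subcategory $\Fun(\Delta^1, \Poly_k)_{\surj, \gen}$ of compact projective generators of $\AniPair_k$, and then verify the compatibilities using Proposition \ref{left_kan}.

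First, for any polynomial surjection $(A' \to A)$ in $\Fun(\Delta^1, \Poly_k)_{\surj, \gen}$, the equivalence (\ref{dr_filadic}) naturally upgrades to an equivalence in $\derD(A)_\fil$: both $\dR_{A/A'}$ (via the augmentation $A' \to A$) and $A'/\derL\Fil^2_\adic$ (whose $\gr^1$ piece $I/I^2$ with $I := \ker(A' \to A)$ is a discrete $A$-module) carry compatible $A$-module structures on their filtered pieces. Taking $\gr^1$ produces a morphism $\LL_{A/A'}[-1] \to I/I^2$ in $\derD(A)_{\geq 0}$, which, together with the identity ring map $A \to A$, assembles into an object of $\Fun(\Delta^1, \AlgMod_k^\an)$. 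This construction is visibly functorial in $(A' \to A)$ and thus defines
\[
\tilde\psi \colon \Fun(\Delta^1, \Poly_k)_{\surj, \gen} \to \Fun(\Delta^1, \AlgMod_k^\an).
\]
A direct computation shows that $\tilde\psi$ preserves finite coproducts on these polynomial generators, by tracking the compatible Day convolution structures on both $\dR_{-/-}$ and $\derL\Fil_\adic$ (using Corollary \ref{dr_coprod_fil} and Lemma \ref{lem_fil_colim}).

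By Proposition \ref{left_kan}, $\tilde\psi$ extends uniquely to a colimit-preserving functor $\psi \colon \AniPair_k \to \Fun(\Delta^1, \AlgMod_k^\an)$. The compatibilities $p_0 \circ \psi \simeq \const \circ \ev_1$ and $p_1 \circ \psi \simeq \gr^1(\Theta)$ then follow: both sides of each equivalence preserve colimits (using Lemma \ref{lem_dr_colim}, Lemma \ref{lem_fil_colim}, and colimit-preservation of $\ev_1$) and agree on the generators by construction, so the equivalence part of Proposition \ref{left_kan} identifies them. Uniqueness of $\psi$ among colimit-preserving extensions is the same argument: any such $\psi$ is pinned down on generators, up to contractible choice, by the $p_0$- and $p_1$-conditions, and its left Kan extension is then unique.

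The main obstacle will be the first step: carefully upgrading (\ref{dr_filadic}) to $\derD(A)_\fil$ in a manner functorial in polynomial surjections, keeping precise track of how the $A$-module structures on both sides are induced from the augmentation and how they transform under morphisms in $\Fun(\Delta^1, \Poly_k)_{\surj, \gen}$. Once this underived bookkeeping is in place, the remainder of the argument is a formal application of the universal property of animation.
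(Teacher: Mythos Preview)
Your overall strategy matches the paper's: define $\tilde\psi$ on the generators $\Fun(\Delta^1,\Poly_k)_{\surj,\gen}$ and then left Kan extend. The difference lies precisely in the step you flag as ``the main obstacle.''

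You propose to upgrade (\ref{dr_filadic}) to $\derD(A)_\fil$ by observing that both sides carry $A$-module structures and asserting compatibility. But the equivalence (\ref{dr_filadic}) comes from Proposition~\ref{surj_dr_adfil}, which is built through derived PD-envelopes and Proposition~\ref{prop_iso_crys}; it is constructed only in $\CAlg_\fil(k)$, and verifying directly that the resulting $\gr^1$ map is $A$-linear (functorially in the pair) is exactly the nontrivial content you have not supplied. The paper sidesteps this entirely with a one-line trick: the construction $(P\to Q)\mapsto\bigl(\LL_{Q/P}[-1]\to I/I^2\bigr)$ is functorial in the base ring $k$, and a standard surjection $P\twoheadrightarrow Q$ of polynomial $k$-algebras is \emph{also} a standard surjection of polynomial $Q$-algebras. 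Rerunning the construction with $Q$ in place of $k$ produces the same underlying map but now living in $\derD(Q)^\heart$ automatically. This is the idea you are missing; once you have it, the rest of your outline goes through.

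One minor point: you explicitly check that $\tilde\psi$ preserves finite coproducts before extending, which is needed for the extension to be genuinely colimit-preserving rather than only sifted-colimit-preserving. The paper's proof leaves this implicit (``one easily checks''), so your care here is not misplaced.
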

\begin{proof}
Let
\begin{align*}
\phi \colon \Fun(\Delta^1, \CAlg_k^\an)_\surj &\to \Fun(\Delta^1, \derD(k))
\end{align*}
be defined by $\phi :=  \gr^1\(\Theta\) $. Informally, $\phi$ is given by
\[
(A' \to A) \mapsto \(\LL_{A / A'}[-1] \xrightarrow{\Theta} \derL \gr^1_\adic(A' \to A)\)
\]
By \stacksref{08SI}, we may restrict to get a functor
\[
\phi \colon \Fun(\Delta^1, \Poly_k)_{\surj, \gen} \to \Fun(\Delta^1, \derD(k)^\heart)
\]
In particular, for any standard surjective ring map between polynomial algebras
\[
k[x_1, \dots, x_n, y_1, \dots, y_m] \xrightarrow{x_i \mapsto 0} k[y_1, \dots, y_m]
\]
from now on denoted $P \to Q$ with kernel $I := (x_1, \dots, x_n)$, we obtain a map
\begin{equation} \label{eq_comp_map_mod}
\LL_{Q / P}[-1] \xrightarrow{\phi_{Q / P}} I / I^2
\end{equation}
in $\derD(k)^\heart$, functorial in $P \to Q$. 

We claim that for a fixed standard surjection of polynomial $k$-algebras $P \to Q$ the map (\ref{eq_comp_map_mod})  is a map of $Q$-modules. Indeed, the entire construction above is functorial in $k$, and we can consider the surjection $P \to Q$ as a standard surjection of polynomial $Q$-algebras, so we may simply have chosen $k = Q$ at the beginning.

We thus have a unique lift
\begin{align*}
\tilde{\psi} \colon \Fun(\Delta^1, \Poly_k)_{\surj, \gen} \to \Fun(\Delta^1, \AlgMod_k^\heart) \\
(Q \to P) \mapsto \((Q, \LL_{Q / P}) \xrightarrow{\phi_{Q / P}} (Q, I / I^2)\)
\end{align*}
By animating $\tilde{\psi}$, we obtain our desired functor
\[
\begin{tikzcd}
\psi \colon \AniPair_k \to \Fun(\Delta^1, \AlgMod_k^\an)
\end{tikzcd}
\]
one easily checks that it satisfies the compatibilities outlined in the statement. Uniqueness is clear as the colimit-preserving property implies that $\psi$ is determined by $\tilde{\psi}$.
\end{proof}
 We now can construct our lifted Kodaira--Spencer map.
 
\begin{construction}\label{ks_map_mod}
Let $k$ be a ring. Using animation, one may construct a functor given informally by 
\begin{align*}
\psi_0 \colon \AniPair_k &\to \Fun(\Delta^1, \AlgMod_k^\an) \\
(A' \to A) &\mapsto ((A, \LL_{A / k}) \to (A, \LL_{A / A'}))
\end{align*}
Animating the functor
\begin{align*}
\tilde{\psi_1} \colon \Pair_k &\to \AlgMod_k^\heart \\
(A \to A / I) &\mapsto (A / I, I / I^2)
\end{align*}
we set $\psi_1 \colon \Ani(\tilde{\psi_1}) \to \tilde{\psi_1}$ to be the canonical natural transformation. Thus $\psi_1$ is a functor
\begin{align*}
\psi_1 \colon \Pair_k &\to \Fun(\Delta^1, \AlgMod_k^\an) \\
(A \to A / I) &\mapsto ((A, \derL \gr^1_\adic(A \to A / I)) \to (A, I / I^2))
\end{align*}
We define the functor
\begin{align*}
\kappa \colon \Pair_k \to \Fun(\Delta^1, \AlgMod_k^\an)
\end{align*}
defined as the (pointwise) composition $\psi_1 \circ \psi \circ \psi_0$. 
\end{construction}
For $R' \to R$ a surjective map of $k$-algebras with kernel $I$, we thus get a map 
\[
(p_1 \circ \kappa)(R' \to R) \colon \LL_{R / k}[-1] \to I / I^2
\] 
in $\derD(R)_{\geq 0}$, which maps to $\kappa_{R / R' / k}$ under the forgetful functor to $\derD(k)$.

\begin{lemma}\label{c_k_compare}
Let $R' \to R$ be a surjective map with kernel $I$. Denote with $c_{R / R'} \colon I[1] \to \LL_{R / R'}$ the map induced on cofibers of the commutative square
\[
\begin{tikzcd}
\dar R' \rar& R \dar \\
\dR_{R / R'} / \Fil^2 \rar& R
\end{tikzcd}
\]
Then the composition
\[
I[1] \xrightarrow{c_{R / R'}} \LL_{R/R'} \xrightarrow{\kappa_{R / R' / k}} I / I^2[1]
\]
in $\derD(R')$ is (the suspension of) the natural projection $I \to I / I^2$. 
\end{lemma}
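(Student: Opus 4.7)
The plan is to realize both $c_{R/R'}$ and the shift $\kappa_{R/R'/k}[1]$ as induced maps on horizontal cofibers of a single three-row commutative diagram, then conclude by functoriality of cofibers.

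First, I would combine the commutative square of Remark \ref{aai} with the natural projections onto $R = \dR_{R/R'}/\Fil^1 = R'/\Fil^1_\adic$ to build the diagram
\[
\begin{tikzcd}
R' \rar \dar & R \arrow[equals]{d} \\
\dR_{R/R'}/\Fil^2 \rar \dar & R \arrow[equals]{d} \\
R'/\Fil^2_\adic \rar & R
\end{tikzcd}
\]
whose middle-to-bottom left vertical arrow is (\ref{aaa}), and whose full left column composes to the canonical quotient $R' \to R'/\Fil^2_\adic$ by Remark \ref{aai}. Commutativity of the lower square uses that (\ref{aaa}) is a filtered map and therefore compatible with the $\Fil^1$-quotients on both sides.

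Second, I would take horizontal cofibers row by row. Using the exact triangles $\Fil^1/\Fil^2 \to \dR_{R/R'}/\Fil^2 \to R$ and $\Fil^1_\adic/\Fil^2_\adic \to R'/\Fil^2_\adic \to R$, the three horizontal cofibers are $I[1]$, $(\Fil^1/\Fil^2)[1] = \LL_{R/R'}$, and $(I/I^2)[1]$ respectively. The induced map $I[1] \to \LL_{R/R'}$ is $c_{R/R'}$ by definition, while the induced map $\LL_{R/R'} \to (I/I^2)[1]$ is the shift of $\gr^1$ applied to (\ref{aaa}), which by Construction \ref{cons_ks} is precisely $\kappa_{R/R'/k}[1]$.

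Finally, by functoriality the composite $\kappa_{R/R'/k}[1] \circ c_{R/R'}$ equals the map on horizontal cofibers induced by the outer rectangle, whose left vertical is the quotient $R' \to R'/\Fil^2_\adic$ and whose right vertical is the identity on $R$. This induced cofiber map is visibly the suspension of the natural projection $I \to I/I^2$, which is the claim. The only nonformal step is to confirm the commutativity of the three-row diagram, but this follows by inspection from Remark \ref{aai} and the definition of (\ref{aaa}); no deeper input is required.
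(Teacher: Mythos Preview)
Your proof is correct and follows essentially the same approach as the paper: both arguments stack the square defining $c_{R/R'}$ on top of the square from Remark~\ref{aai} (with bottom row $R'/\Fil^2_\adic \to R$), observe that the composite left column $R' \to \dR_{R/R'}/\Fil^2 \to R'/I^2$ is the natural projection, and then apply $\cofib(- \to R)$ rowwise. The paper compresses this into two sentences, but your three-row diagram and the identification of the middle-to-bottom cofiber map with $\kappa_{R/R'/k}[1]$ via $\gr^1$ of (\ref{aaa}) is exactly the content of ``applying $\cofib(- \to R)$ to the composition''.
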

\begin{proof}
First observe that the composition
\[
R' \to \dR_{R / R'} / \Fil^2 \to R' / (\derL \Fil^2_\adic (R' \to R)) \to R' / I^2
\]
is the natural projection. The result follows by applying the functor $\cofib(- \to R)$ to the composition above.
\end{proof}

\begin{lemma}\label{lem_kodaira_base_change}
Let $k$ be a ring, and let $R' \to R$ be a surjective ring map of $k$-algebras with kernel $I$. For $A' \in \CAlg_{R'}^\an$, write $J = A'  \otimes_{R'} I$ and $A = A' \otimes_{R'} R$, so that $(A' \to A) \in \AniPair_k$. There exists a commutative diagram
\[
\begin{tikzcd}
\dar{\id \otimes \kappa_{R / R' / R'}} A \otimes_{R} \LL_{R / R'}[-1] \rar{\sim} & \LL_{A / A'}[-1] \dar{\kappa_{A / A' / R'}}  \\
A \otimes_{R} I \rar{\sim} & J
\end{tikzcd}
\]
in $\AlgMod_{R'}^\an$, functorial in $A' \in \CAlg_{R'}^\an$. 
\end{lemma}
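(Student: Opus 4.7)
The strategy is to exhibit the lemma as an instance of the colimit-preservation of the Kodaira--Spencer functor $\kappa \colon \AniPair_{R'} \to \Fun(\Delta^1, \AlgMod_{R'}^\an)$ from Construction \ref{ks_map_mod}, applied with $k = R'$. Each constituent functor $\psi_0, \psi, \psi_1$ preserves small colimits: $\psi$ by Lemma \ref{lem_hard_lifting_ks_map}, and $\psi_0, \psi_1$ because they are obtained by animation of functors on compact projective generators and therefore defined as left Kan extensions.

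I would begin by observing that the square
\[
\begin{tikzcd}
(R' \to R') \rar \dar & (R' \to R) \dar \\
(A' \to A') \rar & (A' \to A)
\end{tikzcd}
\]
is a pushout in $\AniPair_{R'}$, where the horizontal maps are the unique ones from the trivial pairs. Indeed, by Lemma \ref{forget_colim} colimits in $\AniPair_{R'}$ are computed pointwise in $\Fun(\Delta^1, \CAlg_{R'}^\an)$, so the claim reduces to the evident componentwise equivalences $A' = A' \otimes^\derL_{R'} R'$ and $A = A' \otimes^\derL_{R'} R$ in $\CAlg_{R'}^\an$.

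Applying $\kappa$ then yields a pushout square in $\Fun(\Delta^1, \AlgMod_{R'}^\an)$. Unwinding the definitions, $\kappa$ sends a trivial pair $(S \to S)$ to the zero morphism $(S, 0) \to (S, 0)$, since both $\LL_{S/S}$ and $\derL \gr^1_\adic(S \to S)$ vanish. Pushouts in $\AlgMod_{R'}^\an$ along a span with a zero module on the apex realize classical base change of modules, so the resulting pushout is pointwise the morphism $(A, A \otimes_R \LL_{R/R'}[-1]) \to (A, A \otimes_R I)$ induced by $\id \otimes \kappa_{R/R'/R'}$. Identifying this with $\kappa(A' \to A) = ((A, \LL_{A/A'}[-1]) \to (A, J))$ produces both the horizontal equivalences and the commutativity of the square in the statement, and functoriality in $A' \in \CAlg_{R'}^\an$ is automatic from the construction.

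The main obstacle is the administrative verification that $\psi_0, \psi, \psi_1$ really are colimit-preserving in the sense required here, and that pushouts in $\AlgMod_{R'}^\an$ along a span with a zero-module apex realize module base change as claimed. Both are formal consequences of the animation framework, but they require some care in tracking which category each object lives in and in unwinding the animated definitions of $\psi_0$ and $\psi_1$ so that the pushout identification matches the Kodaira--Spencer map of $(A' \to A)$ on the nose.
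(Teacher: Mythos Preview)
Your approach is essentially the paper's: the paper's proof is the single line ``Follows since the equivalence in Proposition~\ref{surj_dr_adfil} commutes with coproducts,'' and you are unpacking exactly this, expressing $(A' \to A)$ as the pushout $(A' \to A') \amalg_{(R' \to R')} (R' \to R)$ in $\AniPair_{R'}$ and invoking colimit preservation of the Kodaira--Spencer construction.

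There is one imprecision worth flagging. You assert that $\psi_1$ is colimit-preserving because it is obtained by animation, but this is not the case: $\psi_1$ is the natural transformation $\Ani(\tilde{\psi}_1) \to \tilde{\psi}_1$, and its target $\tilde{\psi}_1 \colon (A \to A/I) \mapsto (A/I, I/I^2)$ is the underived functor, defined only on the $1$-category $\Pair_{R'}$. Thus the composite $\kappa = \psi_1 \circ \psi \circ \psi_0$ as written in Construction~\ref{ks_map_mod} is a functor out of $\Pair_{R'}$, and cannot be evaluated directly on the animated pair $(A' \to A')$ appearing in your pushout. The fix is straightforward and is what the paper's reference to Proposition~\ref{surj_dr_adfil} really encodes: run your pushout argument with $\psi \circ \psi_0$ alone (both genuinely colimit-preserving on $\AniPair_{R'}$), obtaining the square with $\derL\gr^1_\adic$ in the bottom row, and then pass to $I$ and $J$ by post-composing with the fixed comparison $\derL\gr^1_\adic(R' \to R) \to I/I^2$ from $\psi_1$, base-changed along $R \to A$. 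No colimit preservation of $\psi_1$ is needed for this last step.
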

\begin{proof}
Follows since the equivalence in Proposition \ref{surj_dr_adfil} commutes with coproducts. 
\end{proof}
\begin{definition}
Let $k$ be a ring, $R' \to R$ a surjective ring map with kernel $I$ and $X'$ a smooth scheme over $R'$. Write $X = X' \times_{\Spec(R')} \Spec(R)$. For any smooth $R'$-algebra $A'$ and any map $\Spec(A') \to X'$ the Kodaira--Spencer map from Construction \ref{ks_map_mod} defines a map
\[
\LL_{A / k}[-1] \to I / I^2 \otimes_R A
\]
in $\derD(A)$, where $A := R' \otimes_R A$, functorial in $A'$. Hence we obtain a map
\[
\LL_{X / k}[-1] \to I / I^2 \otimes_R \catO_X
\]
in $\derD(X)$. We obtain a class
\[
\kappa_{X / X' / k} \in \Ext^1_X(\LL_{X / k}, I / I^2 \otimes_R \catO_X)
\]
which we call the \emph{Kodaira--Spencer class}.
\end{definition}
\subsection{The computation for a square-zero extension}
In this section we do the main computation relating the Kodaira--Spencer map with an abstract stratifying map to handle both the characteristic zero and the $p$-adic case at once. We start by introducing the latter, for this we need some setup.

\begin{situation}[Square-zero deformation context]\label{sit_sqzero}
Let $k$ be a ring, and let $R' \to R$ be a surjective map of $k$-algebras such that $I = \ker(R' \to R)$ satisfies $I^2 = 0$. Let 
\[
X' \xrightarrow{f} \Spec(R')
\]
 be a smooth morphism of schemes over $k$, and set 
\[
X := X' \times_{\Spec(R')} \Spec(R)
\]
Finally write $\mathcal{I} := I \otimes_R \catO_X$. 
\end{situation}
In this situation, the map
\[
\widehat{\dR}_{X' / R'} \otimesr \widehat{\dR}_{R / R'} \xrightarrow{\sim}  \widehat{\dR}_{X / R'} 
\]
is an equivalence in $\derD(k)_\fil$ by Corollary \ref{corol_kunneth}. We will denote with 
\begin{equation}
K_{X / R'} \colon  \widehat{\dR}_{X / R'}\xrightarrow{\sim}   \widehat{\dR}_{X' / R'} \otimesr \widehat{\dR}_{R / R'} \label{kunneth_sqz}
\end{equation}
the inverse  in $\derD(k)_\fil$.
\begin{definition}\label{defalpha}
In Situation \ref{sit_sqzero}. Denote with $\alpha$ the composition
\[
\widehat{\dR}_{R / R'} \to R' / \Fil^2_\adic = R'
\]
in $\derD(k)_\fil$, where the first map is (\ref{aaa}), $R'$ is equipped with the adic filtration, and the last identity holds since $I^2 = 0$. 
\end{definition}
By definition we may identify
\begin{align}
\gr^0(\alpha) &\simeq (\id \colon R \to R) \nonumber \\ 
\gr^1(\alpha) &\simeq (\kappa_{R / R' / k} \colon \LL_{R / R'}[-1] \to I / I^2) \label{aab}
\end{align}
in $\derD(k)$. 
\begin{definition}\label{acc}
In Situation \ref{sit_sqzero}. Define the \emph{square-zero stratification map}
\[
\alpha_\sqzero \colon \widehat{\dR}_{X / R'} \to \widehat{\dR}_{X' / R'}
\]
in $\derD(k)_\fil$ as the composition
\[
\widehat{\dR}_{X / R'} \xrightarrow{K_{X / R'}} \widehat{\dR}_{X' / R'} \otimesr \widehat{\dR}_{R / R'} \xrightarrow{\id \otimes \alpha} \widehat{\dR}_{X' / R'} \otimesr R' \to \widehat{\dR}_{X' / R'}
\]
where $R'$ is equipped with the adic filtration.
\end{definition}
\begin{lemma}\label{aam}
Let $k$ be a $\QQ$-algebra. Then $\alpha_\sqzero$ is inverse to the isomorphism
\[
\widehat{\dR}_{X' / R'} \to \widehat{\dR}_{X / R'}
\]
in $\derD(k)$ from Remark \ref{aak}. 
\end{lemma}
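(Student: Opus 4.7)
The plan is to exploit that the map $p \colon \widehat{\dR}_{X'/R'} \to \widehat{\dR}_{X/R'}$ from Remark \ref{aak} is already known to be an equivalence: applying that remark with the roles $(\text{``}k\text{''}, \text{``}R\text{''}, \text{``}X\text{''}) = (R, R', X')$ (noting $\QQ \subseteq R$ and that $R' \twoheadrightarrow R$ is a nilpotent thickening since $I^2 = 0$) identifies $p$ as an equivalence in $\derD(R')$, hence in $\derD(k)$. Therefore it suffices to show that $\alpha_\sqzero \circ p = \id_{\widehat{\dR}_{X'/R'}}$, since any one-sided inverse to an equivalence is automatically two-sided.

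The key step will be to identify $p$ under the Künneth isomorphism $K_{X/R'}$ of (\ref{kunneth_sqz}). I claim $p$ corresponds to the map
\[
\id \otimes u \colon \widehat{\dR}_{X'/R'} \,\underset{R'}{\widehat{\otimes}}\, R' \longrightarrow \widehat{\dR}_{X'/R'} \,\underset{R'}{\widehat{\otimes}}\, \widehat{\dR}_{R/R'},
\]
where $u \colon R' = \widehat{\dR}_{R'/R'} \to \widehat{\dR}_{R/R'}$ is the pullback along $R' \twoheadrightarrow R$, equivalently the $R'$-algebra unit of $\widehat{\dR}_{R/R'}$. This should be a formal consequence of the naturality of the Künneth equivalence from Corollary \ref{corol_kunneth}, applied to the base change square comparing $X' = X' \times_{R'} R'$ with $X = X' \times_{R'} R$.

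Granting this identification, the composite $\alpha_\sqzero \circ p$ becomes $(\id \otimes \alpha) \circ (\id \otimes u) = \id \otimes (\alpha \circ u)$, and it remains only to check that $\alpha \circ u \colon R' \to R'$ is the identity. By Definition \ref{defalpha}, this composite factors as
\[
R' \xrightarrow{u} \widehat{\dR}_{R/R'} \twoheadrightarrow \widehat{\dR}_{R/R'}/\Fil^2 \xrightarrow{\sim} R'/\derL\Fil^2_\adic \longrightarrow R'/\Fil^2_\adic = R',
\]
the final equality using $\Fil^2_\adic(R' \to R) = I^2 = 0$. The commutative diagram in Remark \ref{aai} (with $A' = R'$, $A = R$) then says precisely that this composite coincides with the canonical projection $R' \to R'/\Fil^2_\adic$, and so equals $\id_{R'}$.

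The main obstacle I anticipate is pinning down the naturality-of-Künneth step rigorously, i.e.\ identifying $p$ with $\id \otimes u$ within the filtered symmetric monoidal structure on Hodge-completed derived de Rham cohomology. Once that identification is in hand, the remainder is a short bookkeeping exercise built on Remark \ref{aai} and the square-zero hypothesis $I^2 = 0$.
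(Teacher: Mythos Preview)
Your proposal is correct and follows essentially the same approach as the paper: both arguments reduce to showing $\alpha \circ u = \id_{R'}$ via the commutative square of Remark \ref{aai}, and then use naturality of the Künneth equivalence to identify the map $\widehat{\dR}_{X'/R'} \to \widehat{\dR}_{X/R'}$ with $\id \otimes u$. The paper makes the Künneth naturality step explicit by writing out the square
\[
\begin{tikzcd}
\widehat{\dR}_{X'/R'} \rar & \widehat{\dR}_{X/R'} \\
\widehat{\dR}_{X'/R'} \otimesr \widehat{\dR}_{R'/R'} \uar \rar & \widehat{\dR}_{X'/R'} \otimesr \widehat{\dR}_{R/R'} \uar
\end{tikzcd}
\]
and then inverting the appropriate arrows, which is exactly the step you flagged as needing care.
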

\begin{proof}
Identifying $\dR_{R' / R'} \simeq R'$ and $R' / \Fil^2_\adic \simeq R'$ (since $I^2 = 0$), by Remark \ref{aai} we get a commutative diagram
\[
\begin{tikzcd}
\dar \dR_{R' / R'}  \rar& R' \\
\dR_{R / R'}  \rar & \dR_{R / R'} / \Fil^2\uar{(\ref{aaa})} 
\end{tikzcd}
\]
Since the bottom map factors through $\widehat{\dR}_{R / R'}$ we get a commutative diagram 
\[
\begin{tikzcd}
\dar \widehat{\dR}_{R' / R'}  \rar& R' \\
\widehat{\dR}_{R / R'}  \rar & \dR_{R / R'} / \Fil^2\uar{(\ref{aaa})} 
\end{tikzcd}
\]
in $\derD(k)$. It follows that the map $\alpha \colon \widehat{\dR}_{R / R'} \to R'$ is inverse to the composition
\[
R' \simeq \widehat{\dR}_{R' / R'} \to \widehat{\dR}_{R / R'}
\]
Now consider the commutative diagram
\[
\begin{tikzcd}
\widehat{\dR}_{X' / R'} \rar& \widehat{\dR}_{X / R'} \\
\uar \widehat{\dR}_{X' / R'} \otimesr \widehat{\dR}_{R' / R'} \rar& \widehat{\dR}_{X' / R'} \otimesr \widehat{\dR}_{R / R'} \uar 
\end{tikzcd}
\]
in which all arrows are isomorphisms. Inverting the bottom and rightmost arrow, and identifying $\widehat{\dR}_{R' / R'} \simeq R'$  we get a commutative diagram
\[
\begin{tikzcd}
\widehat{\dR}_{X' / R'} \rar& \widehat{\dR}_{X / R'} \dar{K_{X / R'}}\\
\uar \widehat{\dR}_{X' / R'} \otimesr R' & \lar{\id \otimes \alpha} \widehat{\dR}_{X' / R'} \otimesr \widehat{\dR}_{R / R'}
\end{tikzcd}
\]
in which all arrows are isomorphisms. Since the composition down--left--up is $\alpha_\sqzero$ by definition, the result follows.
\end{proof}

We now wish to relate the square--zero stratification map to the Kodaira--Spencer map. We isolate an essential ingredient in the following lemma.
\begin{lemma}\label{abstract_sqzero}
Let $\mathcal{C}^\otimes$ be a symmetric monoidal stable $\infty$-category whose tensor product preserves colimits in each variable separately. Assume that $A, B \in \derD(\mathcal{C})_\fil$ are such that $\Fil^j(B) = 0$ for all $j \geq 2$. 

Then for any $i \in \NN$, there exists a commutative diagram
\[
\begin{tikzcd}
\Fil^i(A \otimes B) \dar \rar& \Fil^0 A \otimes  \Fil^0  B \rar& \gr^{[0, i)} A \otimes \Fil^0 B \\
\gr^i(A \otimes B)\rar{\pi^\gr_1} & \gr^{i - 1}(A) \otimes \gr^{1}(B) \rar{\id \otimes \sigma} & \gr^{i - 1}(A) \otimes \Fil^0 B \uar
\end{tikzcd}
\]
in $\mathcal{C}$, where $\pi_1^\gr$ is defined in (\ref{proj_i}),  $\gr^{[0, i)}$ in (\ref{aae}), and $\sigma$ denotes the composition
\[
\gr^{1}(B) \simeq \Fil^1 B \to \Fil^0 B
\]
in $\mathcal{C}$. 
\end{lemma}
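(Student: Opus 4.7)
The plan is to exhibit $\Fil^i(A \otimes B)$ as a concrete pushout, and then check commutativity of both routes of the square separately on each summand of that pushout.

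First, I would unwind the Day convolution formula
\[
\Fil^i(A \otimes B) = \underset{p + q \geq i}{\colim}\ \Fil^p A \otimes \Fil^q B,
\]
and use the hypothesis $\Fil^j B = 0$ for $j \geq 2$ to discard all terms with $q \geq 2$. A direct cofinality argument on the remaining indexing category (which splits into a $q = 0$ strand $\{(p, 0) : p \geq i\}$ and a $q = 1$ strand $\{(p, 1) : p \geq i - 1\}$, joined whenever $p \geq i$) collapses each strand and identifies
\[
\Fil^i(A \otimes B) \simeq \Fil^i A \otimes \Fil^0 B \coprod_{\Fil^i A \otimes \Fil^1 B} \Fil^{i-1} A \otimes \Fil^1 B
\]
in $\mathcal{C}$.

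Second, I would record two factorization observations. The composition $\Fil^i A \to \Fil^0 A \to \gr^{[0,i)} A$ is null by the definition in \eqref{aae}, so the map $\Fil^{i-1} A \to \gr^{[0,i)} A$ factors canonically through $\gr^{i-1} A = \cofib(\Fil^i A \to \Fil^{i-1} A)$. And the vanishing $\Fil^2 B = 0$ yields $\gr^1 B \simeq \Fil^1 B$, under which $\sigma$ becomes literally the structural map $\Fil^1 B \to \Fil^0 B$.

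Third, by the universal property of the pushout it suffices to check that the two routes of the square agree after precomposition with each of the two maps $\Fil^i A \otimes \Fil^0 B \to \Fil^i(A \otimes B)$ and $\Fil^{i-1} A \otimes \Fil^1 B \to \Fil^i(A \otimes B)$. On the first leg, the top route is nullhomotopic because $\Fil^i A$ dies in $\gr^{[0,i)} A$, and the bottom route is nullhomotopic because this leg lands in the $\gr^i A \otimes \gr^0 B$ summand of $\gr^i(A \otimes B)$, which is killed by $\pi_1^\gr$. On the second leg, both routes factor through the common composition
\[
\Fil^{i-1} A \otimes \Fil^1 B \to \gr^{i-1} A \otimes \Fil^0 B \to \gr^{[0,i)} A \otimes \Fil^0 B,
\]
where the first arrow is obtained by tensoring the cofiber projection $\Fil^{i-1} A \to \gr^{i-1} A$ with the structural map $\Fil^1 B \to \Fil^0 B$ (the two possible orders of applying these tensor factors coincide by functoriality of $\otimes$). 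This yields commutativity of the square.

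The main obstacle I anticipate is the first step: computing a colimit over a non-filtered poset in a stable $\infty$-category with full homotopical precision. Once the $q \geq 2$ contributions are shown to be cofinally discarded (a standard argument since those terms are zero objects), the remaining diagram is essentially a span, and the rest of the argument is purely diagrammatic using cofiber sequences and the symmetric monoidal structure.
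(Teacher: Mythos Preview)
Your approach is correct and reaches the same conclusion, but by a different route than the paper. You compute $\Fil^i(A \otimes B)$ explicitly as a pushout via a cofinality argument and then verify the two routes agree on each leg. The paper instead observes that the top route, precomposed with $\Fil^{i+1}(A \otimes B) \to \Fil^i(A \otimes B)$, is null: for every term $\Fil^\ell A \otimes \Fil^j B$ in the Day--convolution colimit with $\ell + j \geq i+1$, either $j \geq 2$ (so the term vanishes) or $\ell \geq i$ (so the first factor dies in $\gr^{[0,i)} A$). This gives a factorization of the top route through $\gr^i(A \otimes B)$, and the bottom route is then read off from the splitting $\gr^i(A \otimes B) \simeq (\gr^i A \otimes \gr^0 B) \oplus (\gr^{i-1} A \otimes \gr^1 B)$. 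The paper's argument is shorter because it never computes $\Fil^i(A\otimes B)$; yours is more explicit about where each piece lands.

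One caution: your sentence ``by the universal property of the pushout it suffices to check that the two routes agree after precomposition with each leg'' is a $1$-categorical reflex. In an $\infty$-category a map out of $A \amalg_C B$ also records a homotopy over $C$, so matching the two routes on $A$ and on $B$ separately does not by itself produce a homotopy between them as maps out of the pushout. Here this is harmless and easily repaired --- for example, both routes visibly factor through the map of filtered objects $A \otimes B \to \gr^{[0,i)}(A) \otimes B$, and $\Fil^{i+1}$ of the target is the zero object because every term in its Day--convolution colimit is zero; this produces the factorization through $\gr^i(A\otimes B)$ coherently rather than leg by leg. The paper's proof is equally informal at the analogous step (``the result follows''), so you are in good company, but it is worth flagging.
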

\begin{proof}
For any $\ell, j \in \NN$ with $\ell + j \geq i + 1$, one either has $\ell \geq i$ or $j \geq 2$. Thus either $\ell \geq i$ or $\Fil^j(B) = 0$. It follows that the composition
\[
\Fil^\ell(A) \otimes \Fil^j(B)  \to \Fil^0 A \otimes \Fil^0 B \to \gr^{[0, i)} A \otimes \Fil^0 B
\]
is zero. Since
\[
 \Fil^{i + 1}(A \otimes B) = \underset{\ell + j \geq i + 1}{\colim} \Fil^\ell(A) \otimes \Fil^j(B)
\]
it follows that the composition 
\[
\Fil^{i + 1}(A \otimes B)  \to \Fil^0 A \otimes \Fil^0 B \to \gr^{[0, i)} A \otimes \Fil^0 B
\]
is zero. We thus get a diagram
\[
\begin{tikzcd}
\Fil^i(A \otimes B) \dar \rar&  \Fil^0 A \otimes \Fil^0 B \dar  \\
\gr^i(A \otimes B)\rar& \gr^{[0, i)} A \otimes \Fil^0 B
\end{tikzcd}
\]
Since 
\[
\gr^i(A \otimes B) = (\gr^i(A) \otimes \gr^0(B)) \oplus (\gr^{i - 1}(A) \otimes \gr^1(B))
\]
the result follows.
\end{proof}
The following lemma contains the essential computation, relating the square--zero stratification map with the Kodaira--Spencer map.
\begin{lemma}\label{main_comput}
In Situation \ref{sit_sqzero}. There exists a commutative diagram
\begin{equation}
\begin{tikzcd}
\Fil^i\(\widehat{\dR}_{X' / R'} \otimesr \widehat{\dR}_{R / R'}\) \dar  \rar & \gr^i\(\widehat{\dR}_{X' / R'} \otimesr \widehat{\dR}_{R / R'}\) \dar{\pi^\gr_1} & \\
\widehat{\dR}_{X' / R'} \otimesr  \widehat{\dR}_{R / R'}  \dar{q_i \otimes \alpha} &   \LL^{i - 1}_{X' / R'}[1 - i] \underset{R'}{\otimes} \LL_{R / R'} [-1] \arrow{d}{\id \otimes \kappa} \\
\widehat{\dR}_{X' / R'} / \Fil^i \otimesr R' & \LL^{i - 1}_{X' / R'}[1 - i] \underset{R'}{\otimes} I \lar 
\end{tikzcd} \label{aac}
\end{equation}
%\begin{equation}
%\begin{tikzcd}
%\Fil^i\(\widehat{\dR}_{X' / R'} \otimesr \widehat{\dR}_{R / R'}\) \dar  \rar & \widehat{\dR}_{X' / R'} \otimesr  \widehat{\dR}_{R / R'}  \rar{q_i \otimes \alpha} & \widehat{\dR}_{X' / R'} / \Fil^i \otimesr R'& \\
%\gr^i\(\widehat{\dR}_{X' / R'} \otimesr \widehat{\dR}_{R / R'}\) \rar{\pi^\gr_1} & \LL^{i - 1}_{X' / R'}[1 - i] \underset{R'}{\otimes} \LL_{R / R'} [-1]  \arrow{r}{\id \otimes \kappa} & \LL^{i - 1}_{X' / R'}[1 - i] \underset{R'}{\otimes} I \uar 
%\end{tikzcd} \label{aac}
%\end{equation}
in $\derD(k)$, where $\kappa$ is shorthand for $\kappa_{R / R' / k}$, and $q_i$ is the quotient map.
\end{lemma}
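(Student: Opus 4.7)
The strategy is to reduce the statement to Lemma \ref{abstract_sqzero} applied to $A = \widehat{\dR}_{X'/R'}$ and $B = R'$ equipped with its adic filtration. Since $I^2 = 0$, we have $\Fil^j_\adic(R') = 0$ for $j \geq 2$, so the hypothesis of Lemma \ref{abstract_sqzero} is satisfied; moreover $\gr^0_\adic(R') = R$, $\gr^1_\adic(R') = I$, and the map $\sigma$ appearing there becomes the inclusion $I \hookrightarrow R'$. Under the identifications $\gr^j(\widehat{\dR}_{X'/R'}) \simeq \LL^j_{X'/R'}[-j]$ and $\Fil^0(\gr^{[0,i)}(\widehat{\dR}_{X'/R'})) \simeq \widehat{\dR}_{X'/R'}/\Fil^i$, Lemma \ref{abstract_sqzero} then produces a commutative diagram of exactly the required shape, but with $\widehat{\dR}_{R/R'}$ replaced by $R'$ and $\kappa$ replaced by $\sigma$.

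To pass from this auxiliary diagram to the one stated, I would use the filtered morphism $\alpha \colon \widehat{\dR}_{R/R'} \to R'$ of Definition \ref{defalpha}, together with the functoriality of Day convolution in the second variable. Tensoring with $A$ on the left, $\alpha$ induces compatible morphisms
\[
\Fil^i(A \otimesr \widehat{\dR}_{R/R'}) \to \Fil^i(A \otimesr R'), \qquad \gr^i(A \otimesr \widehat{\dR}_{R/R'}) \to \gr^i(A \otimesr R'),
\]
and likewise on underlying objects. The upper half of the diagram of Lemma \ref{main_comput} maps into the $(A, R')$-version via these morphisms, while the bottom-left object $\widehat{\dR}_{X'/R'}/\Fil^i \otimesr R'$ and the bottom-right object $\LL^{i-1}_{X'/R'}[1-i] \otimes_{R'} I$ coincide in both versions. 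Using the identification $\gr^1(\alpha) = \kappa_{R/R'/k}$ from (\ref{aab}) together with naturality of $\pi^\gr_1$ in the second variable, the composite $\pi^\gr_1$ followed by $\id \otimes \kappa$ agrees with $\gr^i(\id \otimes \alpha)$ followed by $\pi^\gr_1$ and $\id \otimes \sigma$ in the $(A,R')$-diagram. Analogously $q_i \otimes \alpha$ factors as $\id \otimes \alpha$ on underlying objects followed by $q_i \otimes \id$. The desired commutativity then follows from commutativity of Lemma \ref{abstract_sqzero} combined with this naturality.

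The main obstacle is careful bookkeeping of the three distinct filtrations at play — the adic filtration on $R'$, the Hodge filtration on $\widehat{\dR}_{X'/R'}$ and $\widehat{\dR}_{R/R'}$, and the Day convolution filtration on their tensor product — and verifying that Lemma \ref{abstract_sqzero} is genuinely natural in the second filtered variable so that $\id \otimes \alpha$ can be inserted coherently at every vertex. Once this naturality is set up, the diagram essentially assembles itself, and the only nontrivial computational input is the identification $\gr^1(\alpha) = \kappa_{R/R'/k}$, which was already recorded in (\ref{aab}).
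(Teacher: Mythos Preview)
Your proposal is correct and follows essentially the same route as the paper: apply Lemma~\ref{abstract_sqzero} with $A=\widehat{\dR}_{X'/R'}$ and $B=R'$ (adic filtration), then precompose with the filtered map $\id\otimes\alpha$ and invoke $\gr^1(\alpha)=\kappa_{R/R'/k}$ from (\ref{aab}) to identify the right-hand column. The paper writes out the stacked diagram explicitly rather than phrasing it as naturality of Lemma~\ref{abstract_sqzero} in the second variable, but the content is identical.
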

\begin{proof}
Applying Lemma \ref{abstract_sqzero} (and transposing the diagram) to $A = \Fil^\bullet \widehat{\dR}_{X'/R'}$ and $B = \Fil^\bullet_\adic R'$ we obtain a commutative diagram
\begin{equation}
\begin{tikzcd}
\Fil^i\(\widehat{\dR}_{X' / R'} \otimesr R'\) \dar  \rar & \gr^i\(\widehat{\dR}_{X' / R'} \otimesr R'\)  \dar{\pi^\gr_1}  \\
\widehat{\dR}_{X' / R'} \otimesr R' \dar & \LL^{i - 1}_{X' / R'}[1 - i] \underset{R'}{\otimes} I \dar \\
\widehat{\dR}_{X' / R'} / \Fil^i \otimesr R' & \LL^{i - 1}_{X' / R'}[1 - i] \underset{R'}{\otimes} R' \lar
\end{tikzcd} \label{6231}
\end{equation}
%\begin{equation}
%\begin{tikzcd}
%\Fil^i\(\widehat{\dR}_{X' / R'} \otimesr R'\) \dar  \rar & \widehat{\dR}_{X' / R'} \otimesr R'\rar & \widehat{\dR}_{X' / R'} / \Fil^i \otimesr R'& \\
%\gr^i\(\widehat{\dR}_{X' / R'} \otimesr R'\) \rar{\pi^\gr_1} & \LL^{i - 1}_{X' / R'}[1 - i] \underset{R'}{\otimes} I \rar& \LL^{i - 1}_{X' / R'}[1 - i] \underset{R'}{\otimes} R' \uar
%\end{tikzcd} \label{6231}
%\end{equation}
We thus see the existence of the following commutative diagram
\begin{equation}
\begin{tikzcd}
\dar{\id \otimes \alpha}  \Fil^i\(\widehat{\dR}_{X' / R'} \otimesr \widehat{\dR}_{R / R'}\)  \rar &  \gr^i\(\widehat{\dR}_{X' / R'} \otimesr \widehat{\dR}_{R / R'}\) \dar{\id \otimes \alpha}\\
\dar  \rar \Fil^i\(\widehat{\dR}_{X' / R'} \otimesr R'\)  \dar \ar[rd, phantom, "(\ref{6231})" label] & \gr^i\(\widehat{\dR}_{X' / R'} \otimesr R'\)\dar{\pi^\gr_1} \\
\widehat{\dR}_{X' / R'} / \Fil^i \otimesr R' &  \LL^{i - 1}_{X' / R'}[1 - i] \underset{R'}{\otimes} R' \lar
\end{tikzcd}  \label{aad}
\end{equation}
%\begin{equation}
%\begin{tikzcd}
%\Fil^i\(\widehat{\dR}_{X' / R'} \otimesr \widehat{\dR}_{R / R'}\) \rar{\id \otimes \alpha} \dar & \dar \Fil^i\(\widehat{\dR}_{X' / R'} \otimesr R'\)  \dar \ar[rd, phantom, "(\ref{6231})" label] \rar & \widehat{\dR}_{X' / R'} / \Fil^i \otimesr R'& \\
%\gr^i\(\widehat{\dR}_{X' / R'} \otimesr \widehat{\dR}_{R / R'}\) \rar{\id \otimes \alpha} & \gr^i\(\widehat{\dR}_{X' / R'} \otimesr R'\) \rar{\pi^\gr_1} & \LL^{i - 1}_{X' / R'}[1 - i] \underset{R'}{\otimes} R' \uar
%\end{tikzcd}  \label{aad}
%\end{equation}
By general properties of the tensor products of filtered objects, the composition of the left vertical arrows in (\ref{aad}) is equivalent to the composition of the left vertical arrows in (\ref{aac}). Since 
\[
\pi_1^\gr \circ (\id \otimes \alpha) \simeq (\id \otimes \gr^1(\alpha)) \circ \pi^1_\gr \simeq (\id \otimes \kappa_{R / R' / k}) \circ \pi_1^\gr
\]
(the second equivalence follows from (\ref{aab})), the result follows.
\end{proof}

We now wish to slightly tweak the above result, to improve our understanding of the map $\pi_1^\gr$. We start by defining the antisymmetrization map 
\[
\Delta_{i - 1} \colon \bigwedge^i M \to  \( \bigwedge^{i - 1}M\)  \otimes M
\]
for any (animated) ring $A$ and any connective $A$-module $M$. 
\begin{definition} \label{def_delta} Let $k$ be a ring. Define the \emph{antisymmetrization map} functor 
\[
\Fun(\Delta^1, \AlgMod_k^\an) \to \Fun(\Delta^1, \AlgMod_k^\an)
\]
as the left Kan extension of the functor
\begin{align*}
\Fun(\Delta^1, \AlgMod_k^\heart)_\gen &\to \Fun(\Delta^1,  \AlgMod_k^\heart)  \\
(A, M) &\mapsto \(A, \bigwedge^i(M) \to  \bigwedge^{i - 1}(M)  \otimes M\)
\end{align*}
where $\Delta_{i - 1}$ is given by
\begin{align*}
m_1 \wedge \dots \wedge m_i &\mapsto \sum_{k = 1}^i (-1)^k m_1 \wedge \dots \wedge \widehat{m_k} \wedge \dots \wedge m_i \otimes m_k
\end{align*}
where by $\widehat{m_k} $ we mean that $m_k$ does not appear in the wedge product.
\end{definition}
For any fixed ring $R'$, by right Kan extension we obtain for any stack $X$ over $k$ a map
\[
\Delta_{i - 1} \colon \LL^i_{X / R'} \to \LL^{i - 1}_{X / R'} \underset{\catO_X}{\otimes} \LL_{X / R'}
\]
in $\derD(X)$. 

\begin{definition}
Let $k$ be a ring. For any two objects $A, B \in \CAlg_k^\an$, write $C = A \otimes_k B$ and denote with $\beta$ the composition
\[
\LL^i_{A / k} \underset k \otimes \LL_{B / k} \xrightarrow{\sim} (\LL^{i - 1}_{A / k} \underset A \otimes C) \underset C \otimes (C \underset B \otimes \LL_{B /k}) \xrightarrow{\sim} \LL_{C / B}^{i - 1} \underset C \otimes \LL_{C / A}
\]
clearly $\beta$ is an isomorphism. Define the map 
\[
\beta_{C / k} \colon \LL^{i - 1}_{C / k} \underset{C}{\otimes} \LL_{C / k} \to \LL^{i - 1}_{A / k} \underset{k}\otimes \LL_{B / k}
\]as the unique map fitting in a commutative diagram
\begin{equation}
\begin{tikzcd}
\LL^{i - 1}_{C / k} \underset{C} \otimes \LL_{C / k} \dar \rar{\beta_{C / k}} & \LL^{i - 1}_{A / k} \underset{k} \otimes \LL_{B / k} \arrow{dl}{\beta}[swap]{\sim} \\
\LL^{i - 1}_{C / B} \underset{C} \otimes \LL_{C / A}
\end{tikzcd}\label{aaf}
\end{equation}
%In Situation \ref{sit_sqzero}. Denote with $\beta$ the composition
%\begin{align*}
%\LL^{i - 1}_{X' / R'} \underset{R'}\otimes \LL_{R / R'} \xrightarrow{\sim} (\LL^{i - 1}_{X' / R'} \underset{\catO_X'}{\otimes} \catO_X) \underset{\catO_X} \otimes (\catO_X \underset{R}{\otimes} \LL_{R / R'}) \xrightarrow{\sim} \LL^{i - 1}_{X / R} \underset{\catO_X}{\otimes} \LL_{X / X'}
%\end{align*}
%clearly $\beta$ is an isomorphism. Define the map 
%\[
%\beta_{X / R'} \colon \LL^{i - 1}_{X / R'}[1 - i] \underset{\catO_X}{\otimes} \LL_{X / R'}[-1] \to \LL^{i - 1}_{X' / R'}[1 - i] \underset{R'}\otimes \LL_{R / R'}[-1]
%\]
%as the unique map fitting in a commutative diagram
%\begin{equation}
%\begin{tikzcd}
%% \LL^{i - 1}_{X / R'}[1 - i] \underset{\catO_X}{\otimes} \LL_{X / R'}[-1] \to \LL^{i - 1}_{X / R}[1 - i] \underset{\catO_X}{\otimes} \LL_{X / X'}[-1] \xrightarrow{\beta^{-1}} \LL^{i - 1}_{X' / R'}[1 - i] \underset{R'}\otimes \LL_{R / R'}[-1]
%\LL^{i - 1}_{X / R'}[1 - i] \underset{\catO_X} \otimes \LL_{X / R'}[-1] \dar \rar{\beta_{X / R'}} & \LL^{i - 1}_{X' / R'}[1 - i] \underset{R'} \otimes \LL_{R / R'}[-1] \arrow{dl}{\beta}[swap]{\sim} \\
%\LL^{i - 1}_{X / R}[1 - i] \underset{\catO_X} \otimes \LL_{X / X'}[-1]
%\end{tikzcd}
% \end{equation}
in $\derD(k)$. 
\end{definition}
If $A = k[x_1, \dots, x_n]$ and $B = k[x_{n + 1}, \dots, x_{n + m}]$ are polynomial algebras over $k$, then a $k$-basis for the module $\LL_{C / k}^i \otimes \LL_{C / k}$ can be given by the set
\[
\left \{ \(\prod_{k = 1}^{n + m} x_k^{a_k} \) \cdot \mathrm{d} x_{v_1} \wedge \dots \wedge \mathrm{d}x_{v_i} \otimes \mathrm{d}x_\ell \ \bigg \rvert \ \substack{a_i \geq 0 \\
0 \leq v_1 < \dots < v_i \leq n + m \\
0 \leq \ell \leq n + m} \right \}
\]
% where $a_i \in \ZZ_{\geq 0}$,  $(t_1, \dots, t_i)$ ranges over all ordered subsets of the ordered set $(x_1, \dots, x_n, y_1, \dots, y_m)$ and $t \in \{x_1, \dots, x_n, y_1, \dots, y_m\}$ is arbitrary. 
One may check the map $\beta_{C / k}$ is the unique $k$-linear map sending
\[
\(\prod_{k = 1}^{n + m} x_k^{a_k} \) \cdot \mathrm{d} x_{v_1} \wedge \dots \wedge \mathrm{d}x_{v_i} \otimes \mathrm{d}x_\ell 
\]
to
\[
\(\(\prod_{k = 1}^{n} x_k^{a_k} \) \cdot \mathrm{d} x_{v_1} \wedge \dots \wedge \mathrm{d}x_{v_i}\) \otimes \(\(\prod_{k = n + 1}^{n + m} x_k^{a_k}\) \mathrm{d}x_\ell\)
\]
if $v_k \leq n$ for all $k \in \{1, \dots, n\}$ and $\ell \geq n + 1$, and sending everything else to $0$. 

By right Kan extending from (\ref{aaf}) and using (\ref{def_extend2}), we obtain for $k$-algebra $R$ and any smooth scheme $X$ over $k$ a diagram
\[
\begin{tikzcd}
\LL^{i - 1}_{X_R / k} \underset{\catO_{X_R}} \otimes \LL_{X_R / k} \dar \rar{\beta_{X_R / k}} & \LL^{i - 1}_{X / k} \underset{k} \otimes \LL_{R / k} \arrow{dl}{\beta}[swap]{\sim} \\
\LL^{i - 1}_{X_R / R} \underset{\catO_{X_R}} \otimes \LL_{X_R / X}
\end{tikzcd}
\]
in $\derD(k)$. In particular, in Situation \ref{sit_sqzero} we obtain a diagram
\begin{equation}
\begin{tikzcd}
\LL^{i - 1}_{X / R'}[1 - i] \underset{\catO_X} \otimes \LL_{X / R'}[-1] \dar \rar{\beta_{X / R'}} & \LL^{i - 1}_{X' / R'}[1 - i] \underset{R'} \otimes \LL_{R / R'}[-1] \arrow{dl}{\beta}[swap]{\sim} \\
\LL^{i - 1}_{X / R}[1 - i] \underset{\catO_X} \otimes \LL_{X / X'}[-1]
\end{tikzcd}\label{aag}
\end{equation}
in $\derD(k)$. 
\begin{lemma}\label{beta_lemma}
In Situation \ref{sit_sqzero}. There exists a commutative diagram
\[
\begin{tikzcd}
\LL^i_{X / R'}[-i] \dar{\Delta_{i - 1}} & \lar{\sim}[swap]{\gamma} \dar{\pi_1^\gr} \gr^i\(\widehat{\dR}_{X' / R'} \otimesr \widehat{\dR}_{R / R'}\)  \\
\LL^{i - 1}_{X / R'}[1 - i] \underset{\catO_X}{\otimes} \LL_{X / R'}[-1] \rar{\beta_{X / R'}}& \LL^{i - 1}_{X' / R'}[1 - i] \underset{R'}{\otimes} \LL_{R / R'}[-1]
\end{tikzcd}
\]
in $\derD(k)$, where $\gamma := \gr^i(K_{X / R'})$ is the $i$-th graded piece of the K\"unneth isomorphism (\ref{kunneth_sqz}). 
\end{lemma}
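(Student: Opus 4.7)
The plan is to reduce the commutativity of the diagram to a direct computation on a polynomial algebra. All maps in the diagram --- the K\"unneth isomorphism $\gamma$, the projection $\pi_1^\gr$, the antisymmetrization $\Delta_{i-1}$, and the map $\beta_{X/R'}$ --- are natural in the pair $(X'/R', R/R')$, and the source and target are quasi-coherent sheaves on $X$. Since $\Delta_{i-1}$ (Definition \ref{def_delta}) and $\beta_{X/R'}$ are constructed by left Kan extension, respectively defined via an explicit formula, from the case of polynomial algebras, commutativity is local on $X'$ and may be checked after reducing to $X' = \Spec(A)$ with $A = R'[x_1, \ldots, x_n]$ a polynomial $R'$-algebra.

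Suppose then that $A = R'[x_1, \ldots, x_n]$ and write $C := A \otimes_{R'} R$. Smoothness of $A/R'$ identifies $\LL_{A/R'}$ with the free $A$-module $\Omega_{A/R'} = \bigoplus_i A \cdot dx_i$, and the pushout structure of $C$ together with the transitivity sequence for $R' \to A \to C$ yields a splitting
\[
\LL_{C/R'} \simeq (C \otimes_A \Omega_{A/R'}) \oplus (C \otimes_R \LL_{R/R'}) =: M_1 \oplus M_2
\]
in $\derD(C)$. Taking derived wedge powers gives a natural decomposition
\[
\LL^i_{C/R'} \simeq \bigoplus_{p+q = i} (C \otimes_A \Omega^p_{A/R'}) \otimes_C (C \otimes_R \LL^q_{R/R'}),
\]
which on $\gr^i$ matches the K\"unneth decomposition induced by $\gamma$, so that $\pi_1^\gr \circ \gamma^{-1}$ is simply projection onto the $(p, q) = (i-1, 1)$ summand.

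To compute $\beta_{X/R'} \circ \Delta_{i-1}$ on this decomposition, recall from the explicit description of $\beta_{C/R'}$ that it annihilates any tensor $\omega \otimes \eta \in \LL^{i-1}_{C/R'} \otimes_C \LL_{C/R'}$ for which either $\eta$ has a component in $M_1$, or $\omega$ has any factor coming from $M_2$. Applying the antisymmetrization formula to a pure tensor $\omega_1 \wedge \omega_2$ with $\omega_1 \in \wedge^p M_1$ and $\omega_2 \in \wedge^q M_2$ gives a Leibniz-type expansion $\Delta_{p-1}(\omega_1) \wedge \omega_2 + (-1)^p \omega_1 \wedge \Delta_{q-1}(\omega_2)$; the first summand is killed by $\beta_{X/R'}$ because its last tensor factor lies in $M_1$, while the second summand is killed unless $q = 1$ and $p = i - 1$, in which case it yields $\pm \omega_1 \otimes \omega_2$. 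This agrees with $\pi_1^\gr \circ \gamma^{-1}$ up to a consistent sign, establishing the identity in the polynomial case and hence in general.

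The main obstacle lies in making rigorous the derived Leibniz rule for $\Delta_{i-1}$ with respect to the splitting $\LL_{C/R'} \simeq M_1 \oplus M_2$, since $M_2$ is a genuine complex rather than a projective module. Since both $\Delta_{i-1}$ and the wedge-power decomposition of a direct sum of connective complexes are defined via left Kan extension from the polynomial case, the Leibniz rule transfers to the derived level by naturality; but tracking signs and identifying the K\"unneth grading (from $\gamma$) with the wedge-power grading on $\LL^i_{C/R'}$ requires care.
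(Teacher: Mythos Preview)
Your strategy---localize, reduce to polynomial algebras, and compute with the explicit description of $\beta$ and $\Delta_{i-1}$---is the same as the paper's, but you stop one step short of the full reduction. You only take $X' = \Spec(A)$ to be polynomial over $R'$, keeping the given square-zero extension $R'\to R$; this is what forces you to handle the genuine complex $M_2 = C\otimes_R \LL_{R/R'}$ and creates the ``main obstacle'' you flag at the end.

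The paper sidesteps this entirely by observing that every corner and arrow of the square is functorial not just in $X'$ but in the whole span $(A' \leftarrow R' \to R) \in \Fun(\Lambda^2_0,\CAlg_k^\an)$, and that each of the functors $\LL^j_{-/-}$, $\Delta_{i-1}$, $\beta$, and the K\"unneth map is (by construction) left Kan extended from the subcategory $\Fun(\Delta^1,\Poly_k)_{\gen}\times_{\Poly_k}\Fun(\Delta^1,\Poly_k)_{\gen}$. One may therefore also take $R$ to be a polynomial $R'$-algebra, so that $\Omega_{C/R'}$ is a free discrete module with an explicit monomial basis and the diagram sits in $\derD(k)^\heart$; the commutativity is then the elementary identity you wrote down, with no derived Leibniz rule needed. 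Your proposed resolution of the obstacle---``the Leibniz rule transfers to the derived level by naturality''---is exactly this argument once made precise, but making it precise requires setting up the functoriality in the $R$-variable and invoking left Kan extension there as well, which you have not done. Once you do, the sign ambiguity you mention also disappears, since the verification is purely in the heart.
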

\begin{proof}
Since all corners of the square define $\derD(R')$-valued sheaves on $\St_{R'}$ it suffices to construct the diagram in the case $X = \Spec(A')$ for some smooth $R'$-algebra $A'$, functorially in $A'$. 

It thus suffices to construct a functor
\[
\Fun(\Lambda^2_0, \CAlg_{k}^\an) \to \Fun(\Delta^1 \times \Delta^1, \derD(k))
\]
given on $(A' \leftarrow R' \rightarrow R) \in \Fun(\Lambda^2_0, \CAlg_{k}^\an)$ by 
\[
\begin{tikzcd}
\LL^i_{A / R'} \dar{\Delta_{i - 1}} &  \lar{\sim} \bigoplus_{k = 0}^p \LL^{i - k}_{A' / R'} \underset{R'}\otimes \LL^k_{R / R'}\dar{\pi^\gr_1} \\
\LL^{i - 1}_{A / R'} \underset{A}\otimes \LL_{A / R'} \rar{\beta_{A / R'}} & \LL^{i - 1}_{A' / R'} \otimes_{R'} \LL_{R / R'}
\end{tikzcd}
\]
where we write $A = A' \otimes_{R'} R$. 

We leave it to the reader to verify that for any
\[
(A' \leftarrow R' \to R) \in \Fun(\Delta^1, \Poly_k)_\gen \times_{\Poly_k} \Fun(\Delta^1, \Poly_k)_\gen
\]
the diagram
\[
\begin{tikzcd}
\Omega^i_{A / R'} \dar{\Delta_{i - 1}} &  \lar{\sim} \bigoplus_{k = 0}^i \Omega^{i - k}_{A' / R'} \underset{R'}\otimes \Omega^k_{R / R'}\dar{\pi^\gr_1} \\
\Omega^{i - 1}_{A / R'} \underset{A}\otimes \Omega_{A / R'} \rar{\beta_{A / R'}} & \Omega^{i - 1}_{A' / R'} \otimes_{R'} \Omega_{R / R'}
\end{tikzcd}
\]
in $\derD(k)^\heart$ commutes, where again $A = A' \otimes_{R'} R$. The desired functor is then obtained by left Kan extension.
\end{proof}
\begin{prop}\label{kappa_comp_sqz}
In Situation \ref{sit_sqzero}. There exists a commutative diagram
\[
\begin{tikzcd}
\dar \Fil^i \widehat{\dR}_{X / R'} \rar & \LL^i_{X / R'}[-i] \dar{\Delta_{i - 1}}  \\ 
\dar{\alpha_\sqzero} \widehat{\dR}_{X / R'} & \LL_{X / R}^{i - 1}[1 - i] \underset{\catO_X}{\otimes} \LL_{X / X'} [-1] \dar{\id \otimes \kappa}\\
\widehat{\dR}_{X' / R'} \dar & \LL^{i- 1}_{X / R}[1 - i] \underset{\catO_X}{\otimes} \mathcal{I} \dar{\sim}\\
\widehat{\dR}_{X' / R'} / \Fil^i &  \LL^{i- 1}_{X / R}[1 - i] \underset{R}{\otimes} I \lar
\end{tikzcd}
\]
in $\derD(k)$, where $\kappa = \kappa_{X / X' / k}$. 
\end{prop}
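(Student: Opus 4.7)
The plan is to combine Lemma \ref{main_comput}, Lemma \ref{beta_lemma}, and Lemma \ref{lem_kodaira_base_change} via the Künneth isomorphism $K_{X/R'}$ of (\ref{kunneth_sqz}). The overall diagram will be obtained by translating the left column using $K_{X/R'}$ to land in $\widehat{\dR}_{X'/R'} \otimesr \widehat{\dR}_{R/R'}$, where both preceding lemmas apply directly.

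First I would unfold the top horizontal arrow $\Fil^i \widehat{\dR}_{X/R'} \to \LL^i_{X/R'}[-i]$ as the composition of the projection $\Fil^i \to \gr^i$ with the canonical identification $\gr^i \widehat{\dR}_{X/R'} \simeq \LL^i_{X/R'}[-i]$. Applying $K_{X/R'}$ identifies this target with $\gr^i(\widehat{\dR}_{X'/R'} \otimesr \widehat{\dR}_{R/R'})$ via the map $\gamma$ of Lemma \ref{beta_lemma}. Lemma \ref{beta_lemma} then lets me replace $\gr^i \xrightarrow{\pi^\gr_1} \LL^{i-1}_{X'/R'}[1-i] \otimes_{R'} \LL_{R/R'}[-1]$ with the composition $\Delta_{i-1}$ followed by $\beta_{X/R'}$, which produces the $\Delta_{i-1}$ arrow in the right column of the target diagram (after base change from $\LL_{X/R'}$ to $\LL_{X/R}$, which is harmless since these agree modulo $\LL_{R/R'}$-contributions that are killed upon tensoring with the square-zero ideal $\mathcal{I}$).

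Next I would apply Lemma \ref{main_comput} to handle the left column through $\alpha_\sqzero$: by Definition \ref{acc} of the square-zero stratification map, postcomposing $K_{X/R'}$ with $\id \otimes \alpha$ and projecting to $\widehat{\dR}_{X'/R'}/\Fil^i$ gives precisely the map $(q_i \otimes \alpha)$ appearing in Lemma \ref{main_comput}. That lemma identifies the composition $\Fil^i(\widehat{\dR}_{X'/R'} \otimesr \widehat{\dR}_{R/R'}) \to \widehat{\dR}_{X'/R'}/\Fil^i \otimesr R'$ with the composition $\gr^i \to \LL^{i-1}_{X'/R'}[1-i] \otimes_{R'} I$ via $\pi^\gr_1$ and $\id \otimes \kappa_{R/R'/k}$.

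Finally I would use Lemma \ref{lem_kodaira_base_change} (globalized over $X$ in the same manner that $\kappa_{X/X'/k}$ was globally defined) to rewrite $\id \otimes \kappa_{R/R'/k}$ as $\kappa_{X/X'/k}$ under the base change isomorphism $\LL_{X/X'} \simeq \LL_{R/R'} \otimes_{R'} \catO_X$ and $\mathcal{I} \simeq I \otimes_R \catO_X$; this is exactly the middle square of the target diagram. Concatenating all of these commuting squares yields the proposition. The main technical obstacle will be bookkeeping: keeping careful track of the various tensor-product identifications ($\otimes_{R'}$ vs.\ $\otimes_R$ vs.\ $\otimes_{\catO_X}$) and verifying that the Künneth isomorphism $K_{X/R'}$ intertwines $\alpha_\sqzero$ with the map $(q_i \otimes \alpha)$ of Lemma \ref{main_comput} — however, this last compatibility is essentially the definition of $\alpha_\sqzero$ (Definition \ref{acc}), so no additional computation is required beyond assembling the lemmas.
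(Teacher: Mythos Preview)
Your proposal is correct and follows essentially the same approach as the paper: both proofs combine Lemma \ref{main_comput}, Lemma \ref{beta_lemma}, and (a globalized) Lemma \ref{lem_kodaira_base_change} via the K\"unneth isomorphism $K_{X/R'}$, with the compatibility between $\alpha_\sqzero$ and $q_i \otimes \alpha$ coming directly from Definition \ref{acc}. One minor imprecision: the passage from $\LL^{i-1}_{X/R'}\otimes_{\catO_X}\LL_{X/R'}$ to $\LL^{i-1}_{X/R}\otimes_{\catO_X}\LL_{X/X'}$ is not justified by ``contributions killed upon tensoring with $\mathcal{I}$'' but rather is exactly the vertical map in diagram (\ref{aag}) defining $\beta_{X/R'}$, which the paper invokes explicitly when assembling the intermediate diagram (\ref{aah}).
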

\begin{proof}
Globalizing Lemma \ref{lem_kodaira_base_change} and combining it with Lemma \ref{beta_lemma} and (\ref{aag}), we obtain a commutative diagram
\begin{equation}
\begin{tikzcd}
\rar{\sim} \gr^i\(\widehat{\dR}_{X' / R'} \otimesr \widehat{\dR}_{R / R'}\) \dar{\pi^\gr_1} \ar[rd, phantom, "\text{Lemma \ref{beta_lemma}}" label]  & \LL^i_{X / R'} \dar{\Delta_{i - 1}}  \\
\LL^{i - 1}_{X' / R'}[1 - i] \underset{R'} \otimes \LL_{R / R'}[-1] \arrow{dr}{\beta}[swap]{\sim} \arrow{dd}{\id \otimes \kappa_{R / R' / k}} &  \LL^{i - 1}_{X / R'}[1 - i] \underset{\catO_X} \otimes \LL_{X / R'}[-1] \dar \lar[swap]{\beta_{X / R'}}\\
 \ar[rd, phantom, "\text{Lemma \ref{lem_kodaira_base_change}}" label] & \LL^{i - 1}_{X / R}[1 - i] \underset{\catO_X} \otimes \LL_{X / X'}[-1] \dar{\id \otimes \kappa_{X / X' / k}} \\
\LL^{i - 1}_{X / R}[1 - i] \underset{R} \otimes I \rar{\sim} & \LL^{i - 1}_{X / R}[1 - i] \underset{\catO_X} \otimes \mathcal{I} \end{tikzcd}\label{aah}
\end{equation}
in $\derD(k)$. Additionally, by Lemma \ref{main_comput} we see the existence of the commutative diagram
\[
\begin{tikzcd} 
\Fil^i \widehat{\dR}_{X / R'} \rar \dar{K_{X / R'}}  & \LL^i_{X / R'}[-i] \dar{K_{X / R'}} \\
\ar[dr, phantom, "\text{Lemma \ref{main_comput}}" label] \Fil^i \(\widehat{\dR}_{X' / R'} \otimesr \widehat{\dR}_{R / R'}\)  \rar \dar & \gr^i\(\widehat{\dR}_{X' / R'} \otimesr \widehat{\dR}_{R / R'}\) \dar{\pi^\gr_1} \\
\ar[dr, phantom, "" label] \widehat{\dR}_{X' / R'} \otimesr \widehat{\dR}_{R / R'} \dar{q_i \otimes \alpha} & \LL^{i - 1}_{X' / R'}[1 - i] \underset{R'}{\otimes} \LL_{R / R'} [-1]  \arrow{d}{\id \otimes \kappa} \\
\widehat{\dR}_{X' / R'} / \Fil^i \otimesr R' & \LL^{i - 1}_{X' / R'}[1 - i] \underset{R'}{\otimes} I \lar \\
\end{tikzcd}
\]
in $\derD(k)$. The result follows by combining the two above diagrams, together with the definition of $\alpha_\sqzero$. 
\end{proof}

\subsection{The obstruction class as a cup product with the Kodaira--Spencer class in characteristic zero}
\label{sec_compute_char}
In this section we refine Bloch's computation, expressing the Hodge-theoretic obstruction class as a cup product with a (derived) Kodaira--Spencer class. The following essentially is a refinement of \cite[Proposition 3.6]{bloch}.
\begin{prop}\label{label_chern_result}
Let $k$ be a ring such that $\QQ \subseteq k$, and suppose that $R' \to R$ is a square zero morphism of nilpotent thickenings with $I = \ker(R' \to R)$. Let 
\[
X'  \xrightarrow{f} \Spec(R')
\]
be a smooth and proper morphism, and set
\begin{align*}
X &:= X'  \times_{\Spec(R' )} \Spec(R) \\
X_0 &:= X'  \times_{\Spec(R' )} \Spec(k)
\end{align*}
Write $\mathcal{I} := I \otimes_{R} \catO_{X}$. 
Let $v \in \coh^{2i}(\Fil^i \widehat{\dR}_{X / k})$. Let $v_0 \in \coh^{2i}(\Fil^i \widehat{\dR}_{X_0 / k})$ be the image of $v_0$.

Then the composition
\begin{align*}
\coh^{2i} \(\Fil^i \widehat{\dR}_{X / k}\) &\to  \coh^{i} \(\LL^i_{X / k}\) \\
&\to  \coh^{i} \(\LL^i_{X / R'}\) \\
&\xrightarrow{\Delta_{i - 1}}  \coh^{i} \(\LL^{i - 1}_{X / R'} \underset{\catO_X} \otimes \LL_{X / R'}\) \\
&\to  \coh^{i} \(\Omega^{i - 1}_{X / R} \underset{\catO_X} \otimes \LL_{X / X'}\) \\
&\xrightarrow{\id \otimes {\kappa_{X / X'  / R'}}} \coh^{i + 1}\( \Omega^{i - 1}_{X / R} \underset{\catO_X} \otimes \mathcal{I}\)\\
&\to \coh^{2i}\( \widehat{\dR}_{X' / R'} / \Fil^i\) \underset{R'} \otimes I
\end{align*}
maps $v$ to $\ob^{\widehat{\dR}}_{X' / R'}(v_0)$. 
\end{prop}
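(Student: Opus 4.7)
The proof proceeds by rephrasing the statement into a form to which Proposition \ref{kappa_comp_sqz} directly applies. The plan is the following. First, observe that the composition in the statement begins with $\coh^{2i}(\Fil^i \widehat{\dR}_{X/k}) \to \coh^i(\LL^i_{X/k}) \to \coh^i(\LL^i_{X/R'})$. By naturality of the Hodge filtration under base change along $k \to R'$, this factors through $\coh^{2i}(\Fil^i \widehat{\dR}_{X/R'})$; write $\bar v$ for the image of $v$ there. After the identification $\mathcal{I} \simeq I \otimes_R \catO_X$, all subsequent arrows in the composition match, cohomology-degree by cohomology-degree, the right column of the diagram in Proposition \ref{kappa_comp_sqz} applied to $\bar v$.

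By Proposition \ref{kappa_comp_sqz}, the image of $\bar v$ under this right column equals its image under the left column, namely the projection modulo $\Fil^i$ of $\alpha_\sqzero(\bar v) \in \coh^{2i}(\widehat{\dR}_{X'/R'})$. It therefore remains to identify $\alpha_\sqzero(\bar v)$ with $\varphi_{\widehat{\dR}, X'}(v_0 \otimes 1)$ in $\coh^{2i}(\widehat{\dR}_{X'/R'})$.

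For this I would invoke Lemma \ref{aam}, applicable since $\QQ \subseteq k$, which identifies $\alpha_\sqzero$ as the inverse of the nil-invariance equivalence $\widehat{\dR}_{X'/R'} \xrightarrow{\sim} \widehat{\dR}_{X/R'}$ of Remark \ref{aak}. Combining this with the commutative diagram
\[
\begin{tikzcd}
\widehat{\dR}_{X/k} \rar \dar & \widehat{\dR}_{X_0/k} \dar \\
\widehat{\dR}_{X/R'} \rar{\sim} & \widehat{\dR}_{X_0/R'}
\end{tikzcd}
\]
(bottom arrow an equivalence again by Remark \ref{aak}) one sees that both $\alpha_\sqzero(\bar v)$ (the unique preimage of $\bar v$ under $\widehat{\dR}_{X'/R'} \xrightarrow{\sim} \widehat{\dR}_{X/R'}$) and $\varphi_{\widehat{\dR}, X'}(v_0 \otimes 1)$ (the unique preimage of $v_0 \otimes 1$ under $\widehat{\dR}_{X'/R'} \xrightarrow{\sim} \widehat{\dR}_{X_0/R'}$) become equal after further restriction to $\widehat{\dR}_{X_0/R'}$; by uniqueness of the lift they coincide. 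Projecting modulo $\Fil^i$ yields $\ob^{\widehat{\dR}}_{X'/R'}(v_0)$ by Definition \ref{our_obs_class}.

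The main obstacle I expect is purely bookkeeping: carefully verifying that the right column of Proposition \ref{kappa_comp_sqz}, after taking $\coh^{2i}$, really does agree with the six-step composition in the statement, which requires tracking the suspensions, the smoothness identifications $\LL^{i-1}_{X/R'} \simeq \Omega^{i-1}_{X/R'}$, and the coherent matching of the target $\coh^{2i}(\widehat{\dR}_{X'/R'}/\Fil^i) \otimes_{R'} I$ with the bottom arrow $\LL^{i-1}_{X/R}[1-i] \otimes_R I \to \widehat{\dR}_{X'/R'}/\Fil^i$ of Proposition \ref{kappa_comp_sqz}. Beyond this diagrammatic verification no new computation is needed: the substantive analytic content has already been packaged into Proposition \ref{kappa_comp_sqz}, and the present statement is essentially its reinterpretation under the characteristic-zero equivalence of Lemma \ref{aam}.
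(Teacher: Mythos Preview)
Your proposal is correct and follows essentially the same approach as the paper: reduce to Proposition \ref{kappa_comp_sqz} after identifying $\alpha_\sqzero(\bar v)$ with $\varphi_{\widehat{\dR},X'}(v_0\otimes 1)$ via Lemma \ref{aam} and the commutative triangle of nil-invariance equivalences among $\widehat{\dR}_{X'/R'}$, $\widehat{\dR}_{X/R'}$, and $\widehat{\dR}_{X_0/R'}$. The paper draws this triangle explicitly rather than phrasing it as a uniqueness-of-lift argument, but the content is identical, and your assessment that the only remaining work is the degree-shift and target bookkeeping is accurate.
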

The reader should keep the following example in mind (which will be the only application for us): $\mathcal{E}$ is a vector bundle on $X$ with Chern character $v = \ch_i(\mathcal{E})$, so that $v_0 = \ch_i(\mathcal{E}\rvert_{X_0})$. 
\begin{proof}[Proof of Proposition \ref{label_chern_result}]
Consider the commutative diagram
\[
\begin{tikzcd}
\widehat{\dR}_{X / R'} \rar{\sim} & \widehat{\dR}_{X_0 / R'} \\
&  \widehat{\dR}_{X' / R'} \uar{\sim} \arrow{ul}{\sim}
\end{tikzcd}
\]
where all arrows are seen to be isomorphisms by Remark \ref{aak}. Inverting the two vertical arrows (using Lemma \ref{aam}), we obtain a commutative diagram
\[
\begin{tikzcd}
\Fil^i \widehat{\dR}_{X / R'} \rar & \widehat{\dR}_{X / R'} \arrow{dr}{\sim}[swap]{\alpha_\sqzero} \rar{\sim} & \arrow{d}{\sim} \widehat{\dR}_{X_0 / R'} & \lar{\sim} \dR_{X_0 / k} \otimes_k R' \arrow{dl}{\varphi_{\widehat{\dR}, X'}} \\
& & \widehat{\dR}_{X' / R'}
\end{tikzcd}
\]
where the right triangle exists by definition of $\varphi_{\widehat{\dR}, X'}$ (see Definition \ref{aal}). 

Applying $\coh^{2i}(-)$ and chasing $v$ through the diagram we see that the obstruction class $\ob^{\widehat{\dR}}_{X' / R'}(v_0)$ is equal to the image of $v$ under the composition
\[
\coh^{2i} (\Fil^i \widehat{\dR}_{X / R'}) \to \coh^{2i} (\widehat{\dR}_{X / R'}) \xrightarrow{\alpha_\sqzero} \coh^{2i}( \widehat{\dR}_{X' / R'}) \to \coh^{2i}(\widehat{\dR}_{X' / R'} / \Fil^i)
\]
Thus the result follows from Proposition \ref{kappa_comp_sqz}.
\end{proof}

\subsection{The obstruction class as a cup product with the Kodaira--Spencer class in the $p$-adic case}
In this section we state the analogue of the result in Section \ref{sec_compute_char} for the $p$-adic case. 
\begin{prop}\label{label_chern_result_pd}
Let $k$ be a ring over $\ZZ / p^n \ZZ$ for some $n \geq 1$, and let $R_0$ be a $k$-algebra. Let 
\[
(R' \to R_0, \gamma' ) \to (R \to R_0, \gamma)
\]
 be a morphism in $\PDPair_k$ such that $R' \to R$ is a surjection with kernel $I$ and $I^{[2]} = 0$. 

Let $X'  \xrightarrow{f} \Spec(R')$ be a smooth and proper morphism, and set
\begin{align*}
X &:= X'  \times_{\Spec(R' )} \Spec(R) \\
X_0 &:= X'  \times_{\Spec(R' )} \Spec(R_0)
\end{align*}
Write $\mathcal{I} := I \otimes_{R} \catO_{X}$. Let $v \in \coh^{2i}(\Fil^i \dR_{X / k})$, write $v_0 \in \coh^{2i}(\Fil^i \dR_{X_0 / k})$ for the image of $v_0$.
 
Then the composition
\begin{align*}
\coh^{2i} \(\Fil^i \dR_{X / k}\) &\to  \coh^{i} \(\LL^i_{X / k}\) \\
 &\to  \coh^{i} \(\LL^i_{X / R'}\) \\
&\xrightarrow{\Delta_{i - 1}}  \coh^{i} \(\LL^{i - 1}_{X / R'} \underset{\catO_X} \otimes \LL_{X / R'}\) \\
&\to  \coh^{i} \(\Omega^{i - 1}_{X / R} \underset{\catO_X} \otimes \LL_{X / X'}\) \\
&\xrightarrow{\id \otimes {\kappa_{X / X'  / R'}}} \coh^{i + 1}\(\Omega^{i - 1}_{X / R} \otimes \mathcal{I}\) \\
&\to \coh^{2i}\( \widehat{\dR}_{X' / R'} / \Fil^i \otimes \mathcal{I}\)
\end{align*}
maps $v$ to $\ob^\Crys_{X' / R'}(v_0)$. 
\end{prop}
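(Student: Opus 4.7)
The plan is to mirror the proof of Proposition~\ref{label_chern_result}, replacing throughout the Hodge-completed de Rham complex by the uncompleted one and the nil-invariance of Remark~\ref{aak} by the crystalline comparison $\alpha_\Crys$ of Definition~\ref{abk}, and weakening the hypothesis $I^2 = 0$ of Situation~\ref{sit_sqzero} to the divided power condition $I^{[2]} = 0$.

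First I would introduce the PD analogue of the square-zero stratification map of Definition~\ref{acc}. Under the hypotheses $(R' \to R_0) \to (R \to R_0)$ in $\PDPair_k$ with $I = \ker(R' \to R)$ and $I^{[2]} = 0$, and with $X = X' \times_{R'} R$, define
\[
\widetilde\alpha_\sqzero \colon \dR_{X/R'} \to \dR_{X'/R'}
\]
in $\derD(k)_\fil$ as the composition
\[
\dR_{X/R'} \xleftarrow{\sim} \dR_{X'/R'} \otimes_{R'} \dR_{R/R'} \xrightarrow{\id \otimes \widetilde\alpha} \dR_{X'/R'} \otimes_{R'} R' \simeq \dR_{X'/R'},
\]
where the first equivalence is the uncompleted K\"unneth formula of Corollary~\ref{corol_kunneth} and $\widetilde\alpha \colon \dR_{R/R'} \to R'$ is the composite
\[
\dR_{R/R'} \to \dR_{R/R'}/\Fil^2 \xrightarrow{\sim} R'/\derL\Fil^2_\pdadic\bigl((R' \to R)^\Lenv\bigr) = R',
\]
coming from Proposition~\ref{surj_dr_adfil} together with $I^{[2]} = 0$. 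By Lemma~\ref{compare_01} one has $\gr^1(\widetilde\alpha) \simeq \kappa_{R/R'/k} \colon \LL_{R/R'}[-1] \to I/I^2$, exactly as in Construction~\ref{cons_ks}.

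Next I would establish the PD analogue of Lemma~\ref{aam}: the stratification map $\widetilde\alpha_\sqzero$ fits into a commutative diagram
\[
\begin{tikzcd}[column sep=small]
\dR_{X_0/k} \rar \arrow{dr} & \Crys_{X_0 / (R' \to R_0)} & \lar{\sim} \dR_{X'/R'} \\
& \dR_{X/R'} \uar \arrow{ur}{\widetilde\alpha_\sqzero}
\end{tikzcd}
\]
in $\derD(k)$, whose top row realizes $\varphi_{\Crys, X'}$ (after inverting the right equivalence) and whose upward-vertical arrow comes from functoriality of $\Crys$ in the PD-pair combined with Proposition~\ref{prop_iso_crys} applied to $X$. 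Commutativity amounts to a compatibility between the K\"unneth decomposition of $\dR_{X/R'}$ and the crystalline envelope of $(R' \to R)$, checkable on generating polynomial PD-algebras.

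With this in place, the abstract filtered computation of Lemma~\ref{abstract_sqzero} applies verbatim (being purely formal), and the analogues of Lemma~\ref{main_comput}, Lemma~\ref{beta_lemma} and Proposition~\ref{kappa_comp_sqz} go through unchanged after replacing $\widehat{\dR}$ by $\dR$ and $\Fil_\adic$ by $\Fil_\pdadic$, their only non-formal input being the identification $\gr^1(\widetilde\alpha) \simeq \kappa_{R/R'/k}$. Chasing $v$ through the resulting diagram exactly as in Section~\ref{sec_compute_char} then yields the claimed formula. The main obstacle will be the commutativity of the diagram comparing $\widetilde\alpha_\sqzero$ with $\varphi_{\Crys, X'}$; this requires carefully unwinding the definition of $\alpha_\Crys$ (in particular the adjunction $L \dashv R$ and K\"unneth for crystalline cohomology from Lemma~\ref{lem_crys_colim}) and matching it against the K\"unneth decomposition used to define $\widetilde\alpha_\sqzero$.
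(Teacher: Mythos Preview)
Your overall plan is right—reduce to Proposition~\ref{kappa_comp_sqz} after establishing that the crystalline stratification $\varphi_{\Crys,X'}$ is compatible with a square-zero stratification map—but your execution diverges from the paper in two places and contains one imprecision.

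First, your definition of $\widetilde\alpha$ is not quite right as written. You claim $\dR_{R/R'}/\Fil^2 \xrightarrow{\sim} R'/\derL\Fil^2_\pdadic\bigl((R'\to R)^\Lenv\bigr) = R'$, but the last equality fails: the object on the left is the quotient of the \emph{derived} PD envelope, not of $(R'\to R)$ with its given PD structure, and these differ in general (compare Example~\ref{example_derived_ideal_powers}). What you actually need is the \emph{counit} $\epsilon\colon (R'\to R)^\Lenv \to (R'\to R)$ of the adjunction $(-)^\Lenv \dashv \forget$, which gives a map (not an isomorphism) to $\gr^{[0,2)}_\pdadic(R'\to R) = R'$. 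The paper exploits exactly this: it writes $(T'\to R):=(\forget(R'\to R))^\Lenv$, uses the triangle identity $\epsilon\circ\eta=\id$ to produce diagram~(\ref{abe}), and thereby shows the adic $\alpha$ of Definition~\ref{defalpha} factors compatibly through $\Crys_{R/(R'\to R)}$ (diagrams~(\ref{abf})--(\ref{abm})). This adjunction argument is precisely the ``main obstacle'' you flag at the end.

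Second, the paper does \emph{not} redo Lemmas~\ref{main_comput}, \ref{beta_lemma}, \ref{kappa_comp_sqz} in a PD setting. Since $I^{[2]}=0$ forces $I^2=0$, Situation~\ref{sit_sqzero} applies verbatim, and the paper simply reuses the adic $\alpha$ and the Hodge-completed $\alpha_\sqzero$. It defines an uncompleted lift $\alpha_X\colon\dR_{X/R'}\to\dR_{X'/R'}$ via K\"unneth and $\id\otimes\alpha$, checks it maps to $\alpha_\sqzero$ under completion (diagram~(\ref{acb})), and then invokes Proposition~\ref{kappa_comp_sqz} as a black box. Your plan to replace $\widehat\dR$ by $\dR$ throughout would work too (over $\ZZ/p^n\ZZ$ the two agree on smooth inputs by Lemma~\ref{dr_iso_p}), but it is unnecessary extra labour. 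The paper's route—reuse the completed computation and spend the effort on the crystalline-to-adic comparison via the envelope adjunction—is more economical.
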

\begin{proof}
Recall that the derived divided power envelope functor (see Definition \ref{aba})
\[
(-)^\Lenv \colon \AniPair_k \to \AniPDPair_k 
\]
admits a right adjoint 
\[
\forget \colon \AniPDPair_k \to \AniPair_k
\]
We will denote the unit for this adjunction with $\eta$ and the counit with $\epsilon$. 

We may consider $(R' \to R)$ as an object in $\PDPair_k \subseteq \AniPDPair_k$ by giving it the trivial PD--structure (since $I^{[2]} = 0$). Write
\[
(T' \to R) := (\forget(R' \to R))^\Lenv
\]
which lives in $\AniPDPair_k$. 

By the triangle identity for an adjunction, the composition of the counit and unit 
\[
\forget(R' \to R) \xrightarrow{\eta} \forget(T' \to R) \xrightarrow{\epsilon} \forget(R' \to R)
\]
is equivalent to the identity in $\AniPair_k$, so that the composition
\[
\derL\Fil_\adic(R' \to R) \xrightarrow{\eta} \derL\Fil_\adic(T' \to R) \xrightarrow{\epsilon} \derL\Fil_\adic(R' \to R)
\]
is equivalent to the identity in $\CAlg_\fil(k)$. We thus get a commutative diagram
\[
\begin{tikzcd}
\derL \Fil_\pdadic (T' \to R) \dar{\epsilon} & \lar[swap]{(\ref{abb})} \dar{\epsilon} \derL \Fil_\adic (T' \to R) & \derL \Fil_\adic (R' \to R) \lar[swap]{\eta} \arrow{dl}{\id} \\
\derL \Fil_\pdadic (R' \to R) & \lar[swap]{(\ref{abb})} \derL \Fil_\adic (R' \to R)
\end{tikzcd}
\]
in $\CAlg_\fil(k)$, inducing a commutative diagram
\[
\begin{tikzcd}
\derL \gr^{[0, 2)}_\pdadic (T' \to R) \dar{\epsilon} & \derL \gr^{[0, 2)}_\adic (R' \to R) \lar{\sim}[swap]{(\ref{abc})} \arrow{dl}{(\ref{abb})} \\
\derL \gr^{[0, 2)}_\pdadic (R' \to R)
\end{tikzcd}
\]
in $\derD(k)$, where the horizontal map is an equivalence by Lemma \ref{compare_01}. We thus get a commutative diagram
\begin{equation}
\begin{tikzcd}
\dar{(\ref{drpdcompare})}[swap]{\sim}\dR_{R / R'} / \Fil^2 \arrow{dr}{(\ref{dr_filadic})}  \\
  \derL \gr^{[0, 2)}_\pdadic (T' \to R) \dar{\epsilon} & \derL \gr^{[0, 2)}_\adic (R' \to R) \lar{\sim}[swap]{(\ref{abc})} \arrow{dl}{(\ref{abb})} \\
 \derL \gr^{[0, 2)}_\pdadic (R' \to R)
\end{tikzcd}\label{abe}
\end{equation}
in $\derD(k)$. 

On the other hand, the commutative diagram
\[
\begin{tikzcd}[column sep = tiny]
{\dR}_{R / R'} \dar & \lar[swap]{\sim} \dar \dR_{(T' \to R) / (R' \to R')} \rar& \dar \dR_{(T' \to R) / (T' \to R)}  \rar{\sim} &  \derL \Fil_\pdadic (T' \to R)\dar{\id}  \\
\Crys_{R / (R' \to R)} \rar{\id} & \Crys_{R / (R' \to R)} \rar{\eta} & \Crys_{R / (T' \to R)}   \rar{\sim} & \derL \Fil_\pdadic (T' \to R)
\end{tikzcd}
\]
induces a commutative diagram
\[
\begin{tikzcd}
{\dR}_{R / R'} \dar \rar{(\ref{drpdcompare})} &  \derL \Fil_\pdadic (T' \to R)\dar{\id}  \\
\Crys_{R / (R' \to R)} \rar{\eta} &  \derL \Fil_\pdadic (T' \to R)
\end{tikzcd}
\]
and hence a commutative diagram
\begin{equation}
\begin{tikzcd}
{\dR}_{R / R'} \dar \rar{(\ref{drpdcompare})} &  \derL \Fil_\pdadic (T' \to R)\dar{\epsilon}  \\
\Crys_{R / (R' \to R)} \rar{\sim} &  \derL \Fil_\pdadic (R' \to R)
\end{tikzcd}\label{abf}
\end{equation}
in $\CAlg_\fil(k)$. Applying $\gr^{[0, 2)}(-)$ to the diagram (\ref{abf}) and combining it with the diagram (\ref{abe}), we get a commutative diagram
\begin{equation}
\begin{tikzcd}
\arrow{r}{(\ref{dr_filadic})} \dR_{R / R'} / \Fil^2 \dar & \derL \gr^{[0, 2)}_\adic (R' \to R) \arrow{d}{(\ref{abb})} \\
\rar{\sim} \Crys_{R / (R' \to R)} / \Fil^2 & \derL \gr^{[0, 2)}_\pdadic (R' \to R)
\end{tikzcd}
\end{equation}
in $\derD(k)$, and hence a commutative diagram
\begin{equation}
\begin{tikzcd}
\arrow{r}{(\ref{aaa})} \dR_{R / R'} / \Fil^2 \dar &  \gr^{[0, 2)}_\adic (R' \to R) \arrow{d} \\
\rar{\sim} \Crys_{R / (R' \to R)} / \Fil^2 & \gr^{[0, 2)}_\pdadic (R' \to R)
\end{tikzcd}\label{abg}
\end{equation}
in $\derD(k)$. Using that $I^{[2]} = 0$, we may identify 
\[
\gr^{[0, 2)}_\adic (R' \to R) \simeq \gr^{[0, 2)}_\pdadic (R' \to R) \simeq R'
\]
so that by moving around the arrows in (\ref{abg}) we obtain a commutative diagram
\[
\begin{tikzcd}
\dR_{R / R'} / \Fil^2 \arrow{dr}{(\ref{aaa})} \rar & \Crys_{R / (R' \to R)} / \Fil^2  \arrow{d}{\sim} \\
& R'
\end{tikzcd}
\]
in $\derD(k)$. Denote with $\alpha$ also the composition
\[
\dR_{R / R'} \to \widehat{\dR}_{R / R'} \xrightarrow{\alpha} R'
\]
so that by definition of $\alpha$ (Definition \ref{defalpha}) we obtain a commutative diagram
\begin{equation}
\begin{tikzcd}
\dR_{R / R'} \arrow{dr}{\alpha} \rar& \Crys_{R / (R' \to R)} \arrow{d}{\sim} \\
& R'
\end{tikzcd}\label{abm}
\end{equation}
in $\derD(k)$. Applying $\dR_{X' / R'} \underset{R'} \otimes (-)$ to the diagram (\ref{abm}), we obtain a diagram
\begin{equation}
\begin{tikzcd}
\dR_{X' / R'} \otimes \dR_{R / R'} \arrow{dr}[swap]{\id \otimes \alpha} \rar & \dR_{X' / R'} \underset{R'} \otimes \Crys_{R / (R' \to R)} \dar{\sim} & \lar{\sim} \Crys_{X / (R' \to R)} \arrow{dl}{\alpha_\Crys} \\
& \dR_{X' / R'} \underset{R'} \otimes R'
\end{tikzcd}\label{abo}
\end{equation}
in $\derD(k)$, where the right triangle comes by definition of $\alpha_\Crys$, see (\ref{abh}).

Define $\alpha_X$ as the unique map fitting in a commutative diagram
\begin{equation}
\begin{tikzcd}
 \dR_{X' / R'} \otimes \dR_{R / R'} \dar{(\ref{kunneth})}[swap]{\sim}  \rar{\id \otimes \alpha} &  \dR_{X' / R'} \otimes R' \dar{\sim} \\
 \dR_{X / R'} \rar{\alpha_X} &  \dR_{X' / R'}
\end{tikzcd}  \label{abl}
\end{equation}
in $\derD(k)$. By definition of $\alpha_\sqzero$ (see Definition \ref{acc}), we have a commutative diagram
\begin{equation}
\begin{tikzcd}
\dar \dR_{X / R'} \rar{\alpha_X} &  \dR_{X' / R'} \dar \\
 \widehat{\dR}_{X / R'} \rar{\alpha_\sqzero} &  \widehat{\dR}_{X' / R'}
\end{tikzcd}  \label{acb}
\end{equation}
in $\derD(k)$. 

Combining (\ref{abo}) and (\ref{abl}) we get a diagram
\begin{equation}
\begin{tikzcd}
\dR_{X / R'} \arrow{dr}[swap]{\alpha_X} \rar &  \Crys_{X / (R' \to R)} \arrow{d}{\alpha_\Crys} \\
  & \dR_{X' / R'}
\end{tikzcd}\label{abj}
\end{equation}
in $\derD(k)$. 

On the other hand, the commutative diagram
\[
\begin{tikzcd}
\Crys_{R / (R' \to R)} \rar{\sim} & \Crys_{R_0 / (R' \to R_0)} \\
\dR_{R' / R'} 	 `\uar{\sim} \arrow{ur}{\sim} 
\end{tikzcd}
\]
in $\derD(k)$ induces a commutative diagram 
\[\begin{tikzcd}
\Crys_{X / (R' \to R)} \dar{\alpha_\Crys} \rar{\sim} & \Crys_{X_0 / (R' \to R_0)} \arrow{dl}{\alpha_\Crys} \\
\dR_{X' / R}
\end{tikzcd}
\]
in $\derD(k)$. Combining the above diagram with the diagram (\ref{abj}), we get a commutative diagram
\[
\begin{tikzcd}
\dR_{X / R'} \arrow{dr}[swap]{\alpha_X} \rar &  \Crys_{X_0 / (R' \to R_0)} \arrow{d}{\alpha_\Crys}[swap]{\sim}  \\
  & \dR_{X' / R'} 
\end{tikzcd}
\]
in $\derD(k)$. By definition of $\varphi_{{\Crys}, X'}$ (see Definition \ref{abk}), we thus get a commutative diagram
\begin{equation} \label{aca}
\begin{tikzcd}
\dR_{X / k} \rar \dar &  \dR_{X_0 / k} \dar \arrow[bend left = 30]{ddr}{\varphi_{{\Crys}, X'}}  \\
\dR_{X / R'} \rar \arrow[bend right = 15]{drr}{}[swap]{\alpha_X} &  \Crys_{X_0 / (R' \to R_0)} \arrow{dr}{\alpha_\Crys}[swap]{\sim}  \\
 & & \dR_{X' / R'} 
\end{tikzcd}
\end{equation}
in $\derD(k)$. We thus get a commutative diagram
\[
\begin{tikzcd}[column sep = small]
\dar \coh^{2i}(\Fil^i \dR_{X / k}) \rar&  \rar \dar \coh^{2i}(\dR_{X / k}) \ar[rd, phantom, "\text{(\ref{aca})}" label] &  \coh^{2i}(\dR_{X_0 / k}) \dar{\varphi_{{\Crys}, X'}}  \\
\dar \coh^{2i}(\Fil^i \dR_{X / R'}) \rar  & \ar[rd, phantom, "\text{(\ref{acb})}"] \dar \coh^{2i}(\dR_{X / R'}) \rar{\alpha_X} & \dar \coh^{2i}(\dR_{X' / R'})  \arrow{dr} \\
\coh^{2i}(\Fil^i \widehat{\dR}_{X / R'}) \rar  &  \coh^{2i}(\widehat{\dR}_{X / R'}) \rar{\alpha_\sqzero} & \coh^{2i}(\widehat{\dR}_{X' / R'})  \rar & \coh^{2i}(\widehat{\dR}_{X' / R'} / \Fil^i)
\end{tikzcd}
\]
Tracing $v$ around the edges of the diagram above, it follows that $\ob^{\Crys}_{X' / R'}(v_0)$ is equal to the image of $v$ under the composition
\begin{align*}
\coh^{2i}(\Fil^i \dR_{X / k}) & \to \coh^{2i}(\Fil^i \widehat{\dR}_{X / R'}) \to  \coh^{2i}(\widehat{\dR}_{X / R'}) \\ & \xrightarrow{\alpha_\sqzero} \coh^{2i}(\widehat{\dR}_{X' / R'})  \to \coh^{2i}(\widehat{\dR}_{X' / R'} / \Fil^i)
\end{align*}
The result then follows from Proposition \ref{kappa_comp_sqz}.
\end{proof}

%!TEX root = main.tex
\section{Obstruction theory for complexes}
The goal of this section is to study the obstruction class to deforming a complex, and relate it to the Hodge-theoretic obstruction class of its Chern class. 

In Section \ref{sec_univ_der}, we study square zero extensions of rings, expressing them as a pullback square involving the Kodaira--Spencer class and the de Rham differential. In Section \ref{sec_mod_sq_zero} we study the derived category of modules over a split square zero extension of rings. These two sections are rather technical and only needed for the proofs in Section \ref{sec_ob_complex}.

In Section \ref{sec_atiyah}, we define the Atiyah class and show that its trace equals the Chern character from Definition \ref{def_chern_character}. Then in Section \ref{sec_ob_complex} we define the obstruction class to deforming a complex along a square zero extension, and express it as a product of the Kodaira--Spencer class with the Atiyah class. Finally in Section \ref{sec_relate} we relate with the Hodge-theoretic obstruction class of its Chern class, by means of the semiregularity map. 
\subsection{The universal derivation and square zero extensions}
\label{sec_univ_der}
The main purpose of this technical section is the proof of Proposition \ref{prop_pb_ani_ring}, relating square zero extensions of rings with the Kodaira--Spencer class and the de Rham differential. 
\begin{definition}\label{def_univ_der}
Let $k$ be a ring. We define the universal derivation 
\[
\delta \colon \Fun(\Delta^1, \CAlg_k^\an) \to \Fun(\Delta^1, \CAlg_k^\an)
\]
as the left derived functor of the functor
\begin{align*}
\Fun(\Delta^1, \Poly_k)_\gen &\to \Fun(\Delta^1, \Alg_k) \\
\(P \to Q\) &\mapsto \(Q \xrightarrow{(\id, \mathrm{d})} Q \oplus \Omega_{Q / P}\)
\end{align*}
where the multiplication on $Q \oplus \Omega_{Q / P}$ is given by
\[
(x, \omega) \cdot (y, \eta) := (xy, x \eta + y \omega)
\]
By right Kan extension, we obtain for any stack $X$ over $k$ a map 
\[
\catO_X \xrightarrow{\delta} \catO_X \oplus \LL_{X / k}
\]
in $\Shv_{\CAlg_k}(X)$. 
\end{definition}
In particular, for any map of (animated) rings $A \to B$, we obtain a map $\delta \colon B \to B \oplus \LL_{B / A}$ in $\CAlg_k^\an$, which is informally given by sending $b \mapsto (b, \mathrm{d}b)$.
\begin{lemma}\label{lem_com_square_der}
Let $k$ be a ring and let $(A \to B) \in \Fun(\Delta^1, \CAlg_k^\an)$. There exists a commutative diagram 
\[
\begin{tikzcd}
A \dar \rar& B \dar{\delta} \arrow[bend left = 30]{rdd}{\id} \\
B \arrow[bend right = 30]{drr}{\id} \rar{(\id, 0)}  & \arrow{dr} B \oplus \LL_{B / A} \\
& & B
\end{tikzcd}
\]
in $\CAlg_k^\an$, functorially in $(A \to B) \in \Fun(\Delta^1, \CAlg_k^\an)$. 

\end{lemma}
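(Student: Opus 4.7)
The plan is to encode the desired diagram as the value of a functor
\[
\Fun(\Delta^1, \CAlg_k^\an) \to \Fun(D, \CAlg_k^\an),
\]
where $D$ is a small $1$-category whose nerve is the shape of a commutative square together with a common codomain $B$ for the two retractions. Since the universal derivation $\delta$ from Definition \ref{def_univ_der} is itself defined as the left Kan extension of an explicit functor on $\Fun(\Delta^1, \Poly_k)_\gen$, and since left Kan extensions into functor categories are computed vertex-wise, it suffices to produce a coherent diagram-valued functor on this generating subcategory of pairs of finitely generated polynomial algebras and then extend.

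On $\Fun(\Delta^1, \Poly_k)_\gen$ the entire construction is classical and strict. For a map of polynomial algebras $P \to Q$ one writes the commutative square
\[
\begin{tikzcd}
P \rar \dar & Q \dar{(\id,\mathrm{d})} \\
Q \rar{(\id,0)} & Q \oplus \Omega_{Q/P}
\end{tikzcd}
\]
equipped with the first projection $\pi_1 \colon Q \oplus \Omega_{Q/P} \to Q$. Commutativity of the square is the single observation that $\mathrm{d}p = 0$ in $\Omega_{Q/P}$ for every $p \in P$; both retraction identities $\pi_1 \circ (\id,\mathrm{d}) = \id$ and $\pi_1 \circ (\id,0) = \id$ are immediate from the definition of $\pi_1$ as projection to the first factor. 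These assignments are manifestly functorial in $(P \to Q)$.

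Applying Proposition \ref{left_kan} to left Kan extend along $\Fun(\Delta^1, \Poly_k)_\gen \hookrightarrow \Fun(\Delta^1, \CAlg_k^\an)$ then yields the desired functor into $\Fun(D, \CAlg_k^\an)$. Its postcompositions with the projections at the various vertices of $D$ recover the previously defined functors (the evaluations at $A$ and $B$, the universal derivation $\delta$, and the split inclusion $(\id, 0)$), since each arises from the same animation procedure. The main obstacle I anticipate is purely bookkeeping: pinning down the shape $D$ so that a functor into $\Fun(D, \CAlg_k^\an)$ genuinely encodes the claimed diagram with all stated compatibilities, and verifying that the pointwise recoveries at the vertices agree with the constructions already in the paper. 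Once this is set up, the mathematical content reduces to the tautology $\mathrm{d}p = 0$ for $p \in P$, and animation propagates it to all of $\CAlg_k^\an$.
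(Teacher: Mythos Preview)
Your proposal is correct and follows essentially the same approach as the paper: verify the diagram commutes strictly for morphisms of polynomial algebras, then animate (left Kan extend via Proposition \ref{left_kan}) to obtain the functorial diagram on all of $\Fun(\Delta^1, \CAlg_k^\an)$. The paper's proof is simply a terser version of yours, omitting the explicit packaging into a diagram-shaped functor category and the bookkeeping you flag.
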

\begin{proof}
For any morphism $P \to Q$ of polynomial algebras, the diagram
\[
\begin{tikzcd}
P \dar \rar& Q \dar{\delta} \arrow[bend left=30]{rdd}{\id} \\
Q \arrow[bend right=30]{drr}{\id} \rar{(\id, 0)} & \arrow{dr} Q \oplus \Omega_{Q / P} \\
& & Q
\end{tikzcd}
\]
commutes, hence the diagram in the lemma is obtained by extending by sifted colimits.
\end{proof}

\begin{lemma}\label{lem_dr_diff}
Let $k$ be a ring, and $(A \to B) \in \CAlg_k^\an$. The composition
\[
B \xrightarrow{\delta} B \oplus \LL_{B / A} \xrightarrow{\pi} \LL_{B / A} 
\]
in $\derD(k)$ agrees with the map $B \to \LL_{B / A}$ coming from the fiber sequence
\[
\LL_{B / A}[-1] \to \dR_{B / A} / \Fil^2 \to B
\]
\end{lemma}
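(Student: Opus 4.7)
The plan is to present both natural maps $B \to \LL_{B/A}$ as the left Kan extension (in the sense of Proposition \ref{left_kan}) of a single transformation defined on the subcategory $\Fun(\Delta^1, \Poly_k)_\gen$, where in each case it is visibly the classical de Rham differential $\mathrm{d}\colon Q \to \Omega_{Q/P}$. To do this I would view both constructions as natural transformations of functors $\Fun(\Delta^1, \CAlg_k^\an) \to \Fun(\Delta^1, \derD(k))$ that preserve sifted colimits, so that the universal property of left Kan extension pins them down from their restriction to polynomial generators.

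For the left-hand side this is almost tautological: by Definition \ref{def_univ_der}, $\delta$ is obtained by left Kan extension from the assignment $(P \to Q) \mapsto (Q \xrightarrow{(\id, \mathrm{d})} Q \oplus \Omega_{Q/P})$. Post-composing with the projection $Q \oplus \Omega_{Q/P} \to \Omega_{Q/P}$ (which is a map in $\derD(k)$, not a ring map, but this is harmless since we only compare in $\derD(k)$) and using that left Kan extension commutes with pointwise post-composition by a functor preserving sifted colimits, we see that $\pi \circ \delta$ is the left Kan extension of $\mathrm{d}\colon Q \to \Omega_{Q/P}$.

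For the right-hand side, the cofiber sequences of filtered pieces give $\Fil^1\dR_{B/A}/\Fil^2\dR_{B/A} \simeq \gr^1 \dR_{B/A} \simeq \LL_{B/A}[-1]$ and $\dR_{B/A}/\Fil^1 \simeq B$, producing the fiber sequence $\LL_{B/A}[-1] \to \dR_{B/A}/\Fil^2 \to B$ naturally in $(A \to B)$. Since each $\Fil^i \dR_{-/-}$ preserves sifted colimits by Corollary \ref{dr_coprod_fil}, and fibers/cofibers of sifted-colimit-preserving functors do as well, the connecting map $B \to \LL_{B/A}$ defines a sifted-colimit-preserving natural transformation. Hence it too is determined by its restriction to $\Fun(\Delta^1, \Poly_k)_\gen$.

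It remains to identify this connecting map on a polynomial pair $(P \to Q)$. By Construction \ref{dr_iso_z}, the filtered $\EE_\infty$-algebra $\dR_{Q/P}$ is canonically equivalent to $\Omega^\bullet_{Q/P}$ (completeness is automatic as $\Omega_{Q/P}$ is finitely generated). Thus $\dR_{Q/P}/\Fil^2$ is represented by the two-term complex $[Q \xrightarrow{\mathrm{d}} \Omega_{Q/P}]$ concentrated in cohomological degrees $0$ and $1$, and the connecting map of the fiber sequence is by inspection exactly $\mathrm{d}$. Both sides therefore restrict to the same transformation on generators, and by Proposition \ref{left_kan} they agree. The only delicate point is ensuring that the various sifted-colimit-preservation statements go through on the level of arrow categories so that the uniqueness clause of Proposition \ref{left_kan} applies; this is routine once one notes that $\Fun(\Delta^1, -)$ preserves the relevant properties.
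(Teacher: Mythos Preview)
Your proposal is correct and follows essentially the same approach as the paper: both constructions preserve sifted colimits, so one reduces to the polynomial case where the connecting map of the fiber sequence is visibly the de Rham differential $\mathrm{d}$. One small remark: for $(P \to Q) \in \Fun(\Delta^1, \Poly_k)_\gen$ the identification $\dR_{Q/P} \simeq \Omega^\bullet_{Q/P}$ holds by the very definition of $\dR$ as a left Kan extension from this subcategory, so there is no need to invoke Construction~\ref{dr_iso_z} or any completeness argument.
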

\begin{proof}
Since both constructions of the map commute with sifted colimits, it suffices to show this in the case that $B$ is a polynomial $A$-algebra, in which case this follows from the construction of the boundary map in the long exact sequence.
\end{proof}
\begin{prop}\label{prop_pb_ani_ring}
Let $k$ be a ring, and let $A \to B$ be a surjective ring map with kernel $I$ such that $I^2 = 0$. There exists a pullback diagram
\[
\begin{tikzcd}
A \dar \rar& B \dar{\eta} \\
B \rar{(\id, 0)} & B \oplus I[1]
\end{tikzcd}
\]
in $\CAlg_k^\an$. Moreover, the composition
\[
B \xrightarrow{\eta}  B \oplus I[1] \to I[1]
\]
agrees with the composition
\[
B \to \LL_{B / A} \xrightarrow{\kappa_{B / A / k}} I[1]
\]
in $\derD(k)$ (see Definition \ref{cons_ks}).
\end{prop}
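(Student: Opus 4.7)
The plan is as follows. First I would construct $\eta$ explicitly as the composition
\[
B \xrightarrow{\delta} B \oplus \LL_{B/k} \xrightarrow{(\id, \kappa_{B/A/k})} B \oplus I[1],
\]
where $\delta$ is the universal derivation from Definition \ref{def_univ_der}, and the second map is induced from the Kodaira--Spencer morphism (viewed as a map $\LL_{B/k} \to I[1]$ after identifying $I/I^2 \simeq I$ using $I^2 = 0$ and shifting by $[1]$) via the functoriality of the trivial square-zero extension construction $B \oplus -$ on connective $B$-modules. The ``moreover'' clause of the proposition is then immediate from Lemma \ref{lem_dr_diff}, which identifies the projection of $\delta$ onto $\LL_{B/k}$ with the connecting map of the fiber sequence $\LL_{B/k}[-1] \to \dR_{B/k}/\Fil^2 \to B$.

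Second, I would verify that the claimed square commutes, so as to produce an induced map $A \to T$ to the pullback $T$. It suffices to show that the composition $A \xrightarrow{p} B \to \LL_{B/k} \xrightarrow{\kappa_{B/A/k}} I[1]$ is null in $\derD(k)$, where $p$ is the given surjection. By naturality of $\delta$ (Lemma \ref{lem_com_square_der}) applied to $(A \xrightarrow{p} B) \in \Fun(\Delta^1, \CAlg_k^\an)$, the composite $A \to B \to \LL_{B/k}$ factors through $A \to \LL_{A/k} \to \LL_{B/k}$. On the other hand, by Construction \ref{cons_ks} the Kodaira--Spencer map $\kappa_{B/A/k}$ factors through $\LL_{B/k} \to \LL_{B/A}$, and the transitivity cofiber sequence $\LL_{A/k} \otimes^{\derL}_A B \to \LL_{B/k} \to \LL_{B/A}$ makes $\LL_{A/k} \to \LL_{B/A}$ null, yielding the required null-homotopy.

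Finally, I would show the induced map $A \to T$ is an equivalence. Since the forgetful functor $\CAlg_k^\an \to \derD(k)$ preserves limits, it suffices to work in $\derD(k)$, where $T \simeq \fib(\bar\kappa \colon B \to I[1])$ with $\bar\kappa$ the composite $B \to \LL_{B/k} \xrightarrow{\kappa_{B/A/k}} I[1]$, while $A \simeq \fib(\partial \colon B \to I[1])$ with $\partial$ the boundary map of the fiber sequence $I \to A \to B$. The main obstacle will be to identify $\bar\kappa = \partial$. For this I would invoke Lemma \ref{c_k_compare}: taking cofibers of the horizontal arrows in the commutative square defining $c_{B/A}$ yields the commutative diagram
\[
\begin{tikzcd}
B \dar{\id} \rar{\partial} & I[1] \dar{c_{B/A}} \\
B \rar{d'} & \LL_{B/A}
\end{tikzcd}
\]
where $d'$ is the connecting map of $\LL_{B/A}[-1] \to \dR_{B/A}/\Fil^2 \to B$; applying naturality of the fiber sequences to the map $\dR_{B/k}/\Fil^2 \to \dR_{B/A}/\Fil^2$ identifies $d'$ with the composite $B \xrightarrow{d} \LL_{B/k} \to \LL_{B/A}$ (with $d$ as in Lemma \ref{lem_dr_diff}). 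Post-composing the diagram with $\kappa_{B/A/k} \colon \LL_{B/A} \to I[1]$ and using the conclusion of Lemma \ref{c_k_compare} that $\kappa_{B/A/k} \circ c_{B/A}$ is the identity of $I[1]$, we conclude $\partial = \kappa_{B/A/k} \circ d' = \bar\kappa$, completing the proof.
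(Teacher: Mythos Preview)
Your proof is correct and follows essentially the same strategy as the paper's: construct $\eta$ by composing the universal derivation with the Kodaira--Spencer map on square-zero extensions, then verify the square is a pullback by comparing the induced map on fibers using Lemma~\ref{c_k_compare}. The paper phrases the last step as showing the induced map $\epsilon \colon I[1] \to I[1]$ is an isomorphism; you phrase it as identifying the boundary map $\partial$ with $\bar\kappa$ --- these are equivalent.

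One organizational difference worth noting: you route the construction through $\LL_{B/k}$ and then verify commutativity of the square separately via naturality of $\delta$ and the transitivity sequence, whereas the paper applies Lemma~\ref{lem_com_square_der} directly to the pair $(A \to B)$, which lands in $B \oplus \LL_{B/A}$ and yields the commutative square in $\CAlg_k^\an$ for free. Your detour works, but be careful with the sentence ``it suffices to show that the composition \ldots\ is null in $\derD(k)$'': nullity of a $k$-linear map $A \to I[1]$ does not by itself imply that two algebra maps $A \to B \oplus I[1]$ over $B$ are homotopic --- what you actually need (and what your transitivity argument does establish) is that the classifying derivation $\LL_{A/k} \to I[1]$ is null in $\derD(A)$. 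The paper's route through $\LL_{B/A}$ avoids this subtlety entirely.
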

\begin{proof}
Shifting the Kodaira--Spencer map, we obtain a map
\[
\kappa(A \to B) \colon \LL_{B / A} \to I[1]
\]
in $\derD(B)_{\geq 0}$. We thus get a map
\[
B \oplus \LL_{B / A} \to B \oplus I[1]
\]
in $\CAlg_k^\an$. Thus by Lemma \ref{lem_com_square_der} we get the desired commutative diagram
\begin{equation}\label{show_pullback}
\begin{tikzcd}
A \dar \rar& B \dar{\eta} \\
B \rar{(\id, 0)} & B \oplus I[1]
\end{tikzcd}
\end{equation}
Remains to show this is a pullback square in $\CAlg_k^\an$. Let $A' = B \times_{B \oplus I[1]} B$ be the actual pullback, so that we have a commutative diagram
\[
\begin{tikzcd}
A\arrow{dr} \arrow[bend left = 30]{rrd} \arrow[bend right = 30]{rdd} \\
& A' \dar \rar& B \dar{\eta} \\
& B \rar{(\id, 0)} & B \oplus I[1]
\end{tikzcd}
\]
We wish to show the map $A \to A'$ is an isomorphism. The commutative square
\[
\begin{tikzcd}
\dar A \rar& B \dar \\
A' \rar& B
\end{tikzcd}
\]
induces a morphism of fiber sequences
\[
\begin{tikzcd}
\dar A \rar& B\dar  \rar& I[1] \dar {\epsilon}\\
A' \rar& B \rar{\pi_I \circ \eta} & I[1]
\end{tikzcd}
\]
It suffices to verify that $\epsilon$ is an isomorphism. But since $A' = \fib(B \to I[1])$ and $\dR_{B /A} = \fib(B \to \LL_{B / A})$, we may factor as
\[
\begin{tikzcd}
\dar A \rar& B\dar  \rar& I[1] \dar \arrow[bend left=30]{dd}{\epsilon}\\
\dar \dR_{B / A} / \Fil^2 \rar& B  \rar{\mathrm{d}} \dar & \dar{\kappa} \LL_{B / A} \\
A' \rar& B \rar{\pi_I \circ \eta} & I[1]
\end{tikzcd}
\]
By Lemma \ref{c_k_compare}, the map $\epsilon$ is induced by the projection $I \to I / I^2$, hence it is an isomorphism since we assumed $I^2 = 0$. 
\end{proof}

\subsection{Modules over split square zero extensions}
\label{sec_mod_sq_zero}
In this section, we venture slightly into the world of derived algebraic geometry. The goal is to eventually construct an obstruction class to deforming complexes along a square zero extension in Section \ref{sec_ob_complex}. We shall use the notion of a \emph{spectral scheme} as in Lurie \cite[Definition 1.1.2.8]{sag}. 

Although we could theoretically use the notion of a derived scheme for all of the constructions we need, the main reason for choosing to work with spectral schemes is that the theory is substantially better developed, so that we can bootstrap the results we need from \cite{sag}.

For any spectral scheme $X$, we shall denote by $\derD(X)$ the category of quasi-coherent sheaves on $X$ (see \cite[Definition 2.2.2.1]{sag}). 

Let $X$ be a scheme, and let $M \in \derD(X)_{\geq 0}$. Let $X^M$ be the spectral scheme $(X, \catO_X \oplus M)$. The goal of this section is to give an explicit description of $ \derD(X^M)_{\geq 0}$. In fact, we will construct another $\infty$-category $\mathcal{D}_{X, M}$ in terms of $\derD(X)_{\geq 0}$ and $M$, and show that it is equivalent to $\derD(X^M)_{\geq 0}$. 

To this end, let $\pi \colon X^M \to X$ be the morphism of spectral schemes induced by the morphism of sheaves of $\EE_\infty$-rings informally given by $(\id, 0) \colon \catO_X \to \catO_X \oplus M$, and let $\iota \colon X \to X^M$ be the morphism of spectrally ringed spaces induced by the projection $\catO_X \oplus M \to \catO_X$. We then have a fiber sequence
\[
\iota_* M \to  \catO_X \oplus M \to \iota_* \catO_X
\]
in $\derD(X^M)_{\geq 0}$, inducing a map 
\[
\alpha \colon \iota_*\catO_X \to \iota_* M[1]
\]
in $\derD(X^M)_{\geq 0}$. Note that $\pi_* \alpha \simeq 0$  in $\derD(X)_{\geq 0}$, since $\catO_X \oplus M$ is split as an $\catO_X$-module (but not as an $(\catO_X \oplus M)$-module). 

We define the $\infty$-category 
\[
\mathcal{D}_{X,M} := \Fun(\Delta^1, \derD(X)_{\geq 0}) \underset {\Fun(\{0, 1\}, \derD(X)_{\geq 0})} \times \derD(X)_{\geq 0}
\]
where the functor 
\[
\Fun(\Delta^1, \derD(X)_{\geq 0}) \to \Fun(\{0, 1\}, \derD(X)_{\geq 0})
\]
is induced by the inclusion of simplicial sets $\{0, 1\} \to \Delta^1$, and the functor 
\[
\derD(X)_{\geq 0} \to \Fun(\{0, 1\}, \derD(X)_{\geq 0}) \cong \derD(X)_{\geq 0} \times \derD(X)_{\geq 0}
\]
is given by $\(\id, (-) \otimes_{\catO_X} M[1]\)$. 

By \cite[Corollary 2.3.2.5, Corollary 2.4.6.5]{htt} the restriction map 
\[
\Fun(\Delta^1, \derD(X)_{\geq 0}) \to \Fun(\{0, 1\}, \derD(X)_{\geq 0})
\]
is a categorical fibration. It follows by \cite[Remark A.2.4.5]{htt} that the homotopy fiber product defining $\mathcal{D}_{X, M}$ can be computed as the fiber product of simplicial sets. In particular, an object in $\mathcal{D}_{X, M}$ can be described by an object $\mathcal{F} \in \derD(X)_{\geq 0}$ and a morphism $\eta \colon \mathcal{F} \to \mathcal{F} \otimes_{\catO_X} M[1]$ in $\derD(X)_{\geq 0}$.

We now construct an equivalence of categories $\derD(X^M)_{\geq 0} \xrightarrow{\sim} \mathcal{D}_{X, M}$. Denote with $\varphi$ the composition
\[
\derD(X^M)_{\geq 0} \xrightarrow{(-) \otimes \alpha} \Fun(\Delta^1, \derD(X^M)_{\geq 0}) \xrightarrow{\pi_*} \Fun(\Delta^1, \derD(X)_{\geq 0})
\]
By the projection formula  (\cite[Remark 3.4.2.6]{sag}) one may identify 
\begin{align*}
\pi_*(\mathcal{F} \otimes_{} \iota_* \catO_X) &\cong \iota^* \mathcal{F} \\
\pi_*(\mathcal{F} \otimes_{} \iota_* M[1]) &\cong \iota^* \mathcal{F} \otimes_{\catO_X} M[1]
\end{align*}
functorially in $\mathcal{F} \in \derD(X^M)_{\geq 0}$. One thus has a commutative diagram
\[
\begin{tikzcd}[column sep = 100pt]
\dar{\iota^*} \derD(X^M)_{\geq 0} \rar{\varphi}& \Fun(\Delta^1, \derD(X)_{\geq 0}) \dar \\
\derD(X)_{\geq 0} \rar{(\id, (-) \otimes_{\catO_X} M[1])} & \Fun(\{0, 1\}, \derD(X)_{\geq 0})
\end{tikzcd}
\]
inducing a functor $\Phi \colon \derD(X^M)_{\geq 0} \to \mathcal{D}_{X, M}$. 
\begin{lemma}\label{lem_cat_square_zero}
The functor $\Phi \colon \derD(X^M)_{\geq 0} \to \mathcal{D}_{X, M}$ is an equivalence of categories. 
\end{lemma}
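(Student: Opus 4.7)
The plan is to reduce to the affine case and then apply Lurie's description of modules over a pullback square of $\EE_\infty$-rings, realizing $A \oplus M$ as the trivial square-zero extension.

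First I would observe that the claim is Zariski-local on $X$: the functor $\derD((-)^M)_{\geq 0}$ is a sheaf by \cite[Corollary D.6.3.3]{sag}, and $\mathcal{D}_{-, M}$ is built as a finite limit of such sheaves and hence is itself a sheaf. Since $\Phi$ manifestly commutes with restriction to affine opens, we may assume $X = \Spec(A)$ is affine, so that $X^M \simeq \Spec(A \oplus M)$ as a spectral affine scheme and $\derD(X^M)_{\geq 0} \simeq \derD(A \oplus M)_{\geq 0}$.

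Next, by Lurie's theory of square-zero extensions (\cite[\S 25.3]{sag}), the trivial square-zero extension $A \oplus M$ fits into a pullback square of connective $\EE_\infty$-rings
\[
\begin{tikzcd}
A \oplus M \dar \rar & A \dar{d_0} \\
A \rar{d_0} & A \oplus M[1]
\end{tikzcd}
\]
where $d_0 \colon A \to A \oplus M[1]$ is the trivial derivation $a \mapsto (a, 0)$. Since $M$ is connective, $\pi_0 A \to \pi_0(A \oplus M[1])$ is an isomorphism, so the hypotheses of the descent theorem for module categories along a pullback of $\EE_\infty$-rings (\cite[Theorem 16.2.0.1]{sag}) are satisfied, giving
\[
\derD(A \oplus M) \simeq \derD(A) \underset{\derD(A \oplus M[1])}{\times} \derD(A),
\]
and the same equivalence holds after restricting to connective objects on both sides, since $M$ connective ensures connectivity is preserved by all functors involved.

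To match the right-hand side with $\mathcal{D}_{A, M}$, I would compute that induction along $d_0$ sends $N \in \derD(A)_{\geq 0}$ to $N \otimes_A (A \oplus M[1]) \simeq N \oplus (N \otimes_A M[1])$. By the induction–restriction adjunction, morphisms between two such induced modules are classified by pairs $(f \colon N_1 \to N_2, \eta \colon N_1 \to N_2 \otimes_A M[1])$ of $A$-module maps, and such a morphism is an equivalence if and only if $f$ is (since restriction of scalars to $\derD(A)$ is conservative). The full subcategory of the pullback spanned by triples with $N_1 = N_2$ and $f = \mathrm{id}$ is then precisely $\mathcal{D}_{A, M}$, and the inclusion is an equivalence since every object of the pullback is equivalent to one of this form. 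Finally I would verify this equivalence agrees with $\Phi$: the fiber sequence $\iota_* M \to \catO_X \oplus M \to \iota_* \catO_X$ and its connecting map $\alpha$ used in the definition of $\Phi$ are precisely the images of the defining pullback square under the pushforward $\pi_*$, so compatibility follows from the universal property of the pullback.

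The main obstacle is the identification carried out in the third paragraph. Translating the pullback of $\infty$-categories into $\mathcal{D}_{A, M}$ functorially (not just on isomorphism classes, since pullbacks of $\infty$-categories carry equivalence data rather than strict identities) and then checking that the resulting equivalence intertwines with the concrete functor $\Phi$ both require careful bookkeeping through Lurie's constructions. No new ideas are needed once the pullback square of rings is in place, but the details are where the real work lies.
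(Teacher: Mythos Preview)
Your approach is correct but takes a genuinely different route from the paper. The paper argues directly and globally: it shows $\Phi$ preserves colimits, proves full faithfulness by writing mapping spaces in $\mathcal{D}_{X,M}$ as fibers and reducing (via colimit-preservation) to the single generator $\mathcal{F}' = \catO_{X^M}$, and then deduces essential surjectivity by producing a right adjoint $\Psi$ through the adjoint functor theorem and showing the counit $\Phi\Psi \to \id$ has vanishing fiber via a connectivity argument. No reduction to the affine case and no external descent theorem are used.

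Your route realises $A \oplus M$ as the pullback $A \times_{A \oplus M[1]} A$ and invokes \cite[Theorem~16.2.0.1]{sag} (which is stated for connective modules, so no separate restriction step is needed; incidentally this is the same theorem the paper uses later in Proposition~\ref{prop_obstruction_class}). This is more conceptual and explains \emph{why} the lemma holds. The price is the final compatibility check you flag: the pullback square of rings carries a nontrivial filling $2$-cell (the tautological homotopy between the two equal composites $A \oplus M \to A \oplus M[1]$), and it is this $2$-cell---not the underlying maps, which coincide---that produces the class $\eta$ under Lurie's equivalence. Your one-line justification for matching it with $\pi_*(\mathcal{F}' \otimes \alpha)$ is too thin; making it precise (e.g.\ by checking both colimit-preserving functors agree on $\catO_{X^M}$ and its endomorphism space) amounts to much the same computation the paper does for full faithfulness, so the saving is structural rather than technical.
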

\begin{proof}
First note that $\varphi$ and $\iota^*$ commute with all colimits, hence $\Phi$ commutes with all colimits by \cite[Proposition 5.5.3.12]{htt}. 

We then show that $\Phi$ is fully faithful. Observe that for any two given objects $(\mathcal{F}, \eta_{\mathcal{F}}), (\mathcal{G}, \eta_{\mathcal{G}}) \in \mathcal{D}_{X, M}$, one has
\[
\Map_{\mathcal{D}_{X, M}}((\mathcal{F}, \eta_{\mathcal{F}}), (\mathcal{G}, \eta_{\mathcal{G}})) = \tau_{\geq 0}\fib(\derR\Hom_{X}(\mathcal{F}, \mathcal{G}) \to \derR\Hom_X(\mathcal{F}, \mathcal{G} \otimes_{\catO_X} M[1]))
\]
where the map 
\[
\derR\Hom_{X}(\mathcal{F}, \mathcal{G}) \to \derR\Hom_X(\mathcal{F}, \mathcal{G} \otimes_{\catO_X} M[1])
\]
is given by $f \mapsto (\eta_{\mathcal{G}} \circ f - (f \otimes {\id_{M[1]}}) \circ \eta_{\mathcal{F}})$. 

Now let $\mathcal{F}', \mathcal{G}' \in \derD(X^M)_{\geq 0}$, and let $(\mathcal{F}, \eta_\mathcal{F}) = \Phi(\mathcal{F}')$, $(\mathcal{G}, \eta_{\mathcal{G}}) = \Phi(\mathcal{G}')$. It suffices to show that the natural map
\[
\tau_{\geq 0} \derR\Hom_{X^M}(\mathcal{F}', \mathcal{G}') \to \tau_{\geq 0}\fib(\derR\Hom_{X}(\mathcal{F}, \mathcal{G}) \to \derR\Hom_X(\mathcal{F}, \mathcal{G} \otimes_{\catO_X} M[1]))
\]
is a weak equivalence. Since $\Phi$ commutes with colimits we may reduce to the case $\mathcal{F}' = \catO_X \oplus M$. Then $\eta_F = 0$, so it suffices to show
\[
\pi_* \mathcal{G}' \to \fib(\mathcal{G} \xrightarrow{\eta_\mathcal{G}} \mathcal{G} \otimes_{\catO_X} M[1])
\]
is an equivalence in $\derD(X)$, which is immediate by definition $\eta_G$ (since $\pi_*$ is exact).

Remains to show that $\Phi$ is essentially surjective. First note that by \cite[Proposition 5.5.3.6, Proposition 5.5.3.12]{htt} one may show $\mathcal{D}_{X, M}$ is presentable. Since $\Phi$ commutes with colimits it follows by the adjoint functor theorem (\cite[Corollary 5.5.2.9]{htt}) that $\Phi$ admits a right adjoint $\Psi$. Thus, to show that $\Phi$ is essentially surjective, it suffices to show that the canonical map $\Phi \circ \Psi(A) \to A$ is an equivalence for all $A \in \mathcal{D}_{X, M}$. Let $(P, p) = \fib(\Psi \circ \Phi(A) \to A)$ in $\mathcal{D}_{X, M}$, it suffices to show that $P \cong 0$. 

Note that $\Psi$ commutes with limits, so $\Psi(P) \cong 0$ (as $\Psi \circ \Phi \circ \Psi(A) \to \Psi(A)$ is an equivalence by general nonsense), and thus
\[
\Map_{\mathcal{D}_{X, M}}(\Phi(\catO_{X^M}), P) = \Map_{\QCoh(X^M)_{\geq 0}}(\catO_{X, M}, \Psi(P)) \cong 0
\]
But clearly 
\[
\pi_i(\Map_{\mathcal{D}_{X, M}}(\Phi(\catO_{X^M}), P)) = \Ext^{-i}_X(\catO_X, \fib(P \xrightarrow{p} P \otimes_{\catO_X} M[1]))
\]
An induction argument (using that $P$ is connective) shows that $\coh^{-i} \derR \Gamma(X, P) = 0$ for all $i \geq 0$, hence $P \cong 0$ as required.
\end{proof}

\begin{corollary}\label{lem_sheafs_triv_sqzero}
Let $X$ be a scheme, let $M \in \derD(X)_{\geq 0}$, and let $\mathcal{E} \in \derD(X)_{\geq 0}$. Let 
\[
\alpha \colon \iota_* \catO_X \to \iota_*  M[1]
\]
be the unique map in $\derD(X^M)$ whose fiber is isomorphic to $\catO_X \oplus M$. 

Then the functor $Q \mapsto \pi_*(Q \otimes \alpha)$ induces a bijection
\begin{equation} \label{bca}
\Phi \colon \(\derD(X^M)_{\geq 0} \underset {\derD(X)_{\geq 0}} \times \{\mathcal{E}\}\)^{\simeq} \simeq \Map_{\derD(X)}(\mathcal{E}, \mathcal{E} \otimes_{\catO_X} M[1])
\end{equation}
of pointed spaces.
\end{corollary}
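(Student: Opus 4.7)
The plan is to deduce this from Lemma \ref{lem_cat_square_zero} by unwinding the iterated fiber product of $\infty$-categories and taking cores. First I would observe that the functor $\Phi \colon \derD(X^M)_{\geq 0} \to \mathcal{D}_{X,M}$ constructed in the proof of Lemma \ref{lem_cat_square_zero} fits in a commutative triangle
\[
\begin{tikzcd}
\derD(X^M)_{\geq 0} \arrow{rr}{\Phi} \arrow{dr}[swap]{\iota^*} & & \mathcal{D}_{X,M} \arrow{dl}{\mathrm{pr}_0} \\
 & \derD(X)_{\geq 0}
\end{tikzcd}
\]
where $\mathrm{pr}_0$ evaluates the pair $(\mathcal{F},\eta_{\mathcal{F}})$ at the source $\mathcal{F}$. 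This compatibility follows from the projection formula identification $\pi_*(Q \otimes \iota_* \catO_X) \simeq \iota^* Q$ already used in constructing $\Phi$. Since $\Phi$ is an equivalence, it induces an equivalence on strict fibers over $\mathcal{E} \in \derD(X)_{\geq 0}$, and hence on the cores of these fibers.

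Next I would compute the core of the fiber of $\mathrm{pr}_0 \colon \mathcal{D}_{X,M} \to \derD(X)_{\geq 0}$ over $\mathcal{E}$. Recall
\[
\mathcal{D}_{X,M} = \Fun(\Delta^1,\derD(X)_{\geq 0}) \underset{\Fun(\{0,1\},\derD(X)_{\geq 0})}{\times} \derD(X)_{\geq 0}
\]
with the bottom functor given by $(\id,(-)\otimes_{\catO_X} M[1])$. Since the restriction map $\Fun(\Delta^1,-) \to \Fun(\{0,1\},-)$ is a categorical fibration, this fiber product can be computed strictly, and taking the further strict fiber over $\mathcal{E}$ on the right-hand factor replaces the bottom map by the inclusion of the single object $(\mathcal{E},\mathcal{E}\otimes_{\catO_X} M[1])$. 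The resulting fiber is then the strict fiber of $\Fun(\Delta^1,\derD(X)_{\geq 0}) \to \Fun(\{0,1\},\derD(X)_{\geq 0})$ over $(\mathcal{E},\mathcal{E}\otimes_{\catO_X} M[1])$, which is by definition the mapping space $\Map_{\derD(X)_{\geq 0}}(\mathcal{E},\mathcal{E}\otimes_{\catO_X} M[1])$; this space is already a Kan complex so coincides with its own core. Finally, since $\derD(X)_{\geq 0} \subseteq \derD(X)$ is a full subcategory and both source and target are connective, this mapping space agrees with $\Map_{\derD(X)}(\mathcal{E},\mathcal{E}\otimes_{\catO_X} M[1])$.

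The last step is to check that the resulting composite bijection agrees with the stated functor $Q \mapsto \pi_*(Q\otimes\alpha)$. By construction of $\Phi$ in Lemma \ref{lem_cat_square_zero}, the object $\Phi(Q)$ of $\mathcal{D}_{X,M}$ is the morphism $\pi_*(Q\otimes\alpha) \colon \pi_*(Q\otimes \iota_*\catO_X) \to \pi_*(Q\otimes \iota_* M[1])$, and under the projection formula isomorphisms these source and target are identified with $\iota^* Q$ and $\iota^* Q\otimes_{\catO_X} M[1]$ respectively. Restricting to the fiber over $\mathcal{E}$ and tracking through the chain of equivalences therefore gives precisely $\pi_*(Q\otimes\alpha)$ as an element of $\Map_{\derD(X)}(\mathcal{E},\mathcal{E}\otimes_{\catO_X} M[1])$, as required.

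The hard part has essentially already been done in Lemma \ref{lem_cat_square_zero}; what remains is purely bookkeeping on iterated fiber products. The main point requiring care is the compatibility of the triangle above, i.e., checking that taking the first coordinate of $\Phi(Q)$ recovers $\iota^* Q$ naturally and not merely up to unspecified equivalence, so that passing to strict fibers over $\mathcal{E}$ is legitimate.
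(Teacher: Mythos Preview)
Your proposal is correct and follows exactly the approach the paper intends: the corollary is stated without proof and is meant to be immediate from Lemma \ref{lem_cat_square_zero}, and your argument spells out precisely the bookkeeping needed---namely that the equivalence $\Phi$ is by construction a map into the pullback defining $\mathcal{D}_{X,M}$, hence strictly compatible with the projections, and that the strict fiber of $\mathcal{D}_{X,M}$ over $\mathcal{E}$ is the mapping space. One very minor omission: you do not verify the pointed structure, i.e.\ that the trivial lift on the left (say $\pi_0^*\mathcal{E}$, whose associated $\eta$ vanishes since $\pi_*\alpha \simeq 0$) goes to the zero map on the right, but this is immediate from the observation already recorded in the paper that $\pi_*\alpha \simeq 0$.
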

\begin{lemma}\label{bcf}
Let $R$ be a discrete ring, and let $M$ be a discrete $R$-module. Then the composition
\begin{align}
\Aut_{\derD(R \oplus M)}(R \oplus M) \times_{\Aut_{\derD(R)}(R)}  \{\id_R\} &\xrightarrow{\Phi} \pi_1 \Map_{\derD(R)}(R, M[1])  \nonumber \\ &\xrightarrow{\sim} \Hom_R(R, M)\label{bcd}
\end{align}
is given by $\varphi \mapsto \pi_M \circ \varphi \circ \iota_R$, where 
\begin{align*}
\iota_R &\colon R \to R \oplus M \\
\pi_M &\colon R \oplus M \to M
\end{align*}
are the inclusion and projection maps.
\end{lemma}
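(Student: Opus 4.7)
The plan is to unpack the equivalence of Corollary \ref{lem_sheafs_triv_sqzero} at the level of mapping spaces and then pass to automorphism groups. First, since $R \oplus M$ is free as a module over itself, evaluation at the unit identifies the mapping space $\Map_{\derD(R \oplus M)}(R \oplus M, R \oplus M)$ with the discrete module $R \oplus M$. An automorphism $\varphi$ is multiplication by a unit of $R \oplus M$, and the condition $\iota^* \varphi = \id_R$ forces this unit to be of the form $(1, v) \in (R \oplus M)^\times$ with $v \in M$ (using that $M^2 = 0$), yielding a canonical identification of the group on the left of (\ref{bcd}) with $M$, with $\varphi$ corresponding to $v = v_\varphi$.

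On the other side, the proof of Lemma \ref{lem_cat_square_zero} describes $\Map_{\mathcal{D}_{X,M}}((R, 0), (R, 0))$ as $\fib(\Map_R(R, R) \to \Map_R(R, M[1]))$, where the map is zero because both endpoints carry the trivial $\eta$ (using that $\eta_{R \oplus M} = \pi_* \alpha = 0$, which holds because $\Ext^1_R(R, M) = 0$). Hence this space is $R \oplus \Omega_0 \Map_R(R, M[1]) \simeq R \oplus M$, and the loop component matches $\pi_1 \Map_R(R, M[1]) \simeq \Hom_R(R, M) \simeq M$.

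The next step is to show that under these identifications, $\Phi$ restricts to the identity map $R \oplus M \to R \oplus M$. I would argue as follows: the proof of Lemma \ref{lem_cat_square_zero} exhibits the composite equivalence $\pi_*(R \oplus M) \to \fib(\eta_{R \oplus M})$ in $\derD(R)$ as arising from the split fiber sequence $\pi_* \iota_* M \to \pi_*(R \oplus M) \to \pi_* \iota_* R$, which under the projection formula is the standard sequence $M \to R \oplus M \to R$. Composing with the canonical identification $\fib(0\colon R \to M[1]) \simeq R \oplus \Omega M[1] \simeq R \oplus M$ gives the identity. Passing to $\pi_0$ of automorphism spaces, $\Phi(\varphi_v)$ therefore corresponds to $(1, v) \in R \oplus M$, with loop component $v$, which in turn corresponds to the element $r \mapsto rv$ of $\Hom_R(R, M)$.

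Finally, a direct check shows $\pi_M \circ \varphi_v \circ \iota_R(r) = \pi_M((1 + v) \cdot (r, 0)) = \pi_M(r, rv) = rv$, precisely the element of $\Hom_R(R, M)$ produced by $\Phi$, completing the proof. The main obstacle will be the third step: carefully tracking all the natural identifications (the projection formula, the splitting of the fiber sequence, and the identification $\Omega M[1] \simeq M$) to confirm that $\Phi$ is literally the identity on $R \oplus M$ rather than a nontrivial automorphism such as multiplication by a unit.
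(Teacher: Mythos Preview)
Your outline is essentially correct, but you are making the argument harder than necessary and, as you yourself flag, the ``main obstacle'' in step 3 is left as a sketch. The paper's proof avoids this obstacle entirely by a cleaner route.

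The key difference: rather than identifying both sides with $M$ separately and then checking that $\Phi$ becomes the identity under those identifications, the paper uses Lemma~\ref{bce} to compute the second arrow in (\ref{bcd}) abstractly. That lemma says the isomorphism $\pi_1 \Map_{\derD(R)}(R,M[1]) \xrightarrow{\sim} \Hom_R(R,M)$ is realized by first applying the fiber functor $\fib \colon \Fun(\Delta^1,\derD(R)) \to \derD(R)$ (landing in $\Aut_{\derD(R)}(R\oplus M)$) and then post- and pre-composing with $\pi_M$ and $\iota_R$. So the composite (\ref{bcd}) sends $\varphi$ to $\pi_M \circ \fib(\Phi(\varphi)) \circ \iota_R$, and one only needs to check that $\fib(\Phi(\varphi)) = \varphi$ as an $R$-linear automorphism of $R \oplus M$. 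But $\Phi$ is by definition $\pi_*((-)\otimes \alpha)$, and since $\pi_*$ and tensoring are exact, $\fib(\pi_*(\varphi \otimes \alpha)) = \pi_*(\varphi \otimes \fib(\alpha))$. Now $\fib(\alpha) = R\oplus M$ is the monoidal unit in $\derD(R\oplus M)$, so $\varphi \otimes \fib(\alpha) = \varphi$; and $\pi_*$ is just the forgetful functor along the zero section, so $\pi_*(\varphi) = \varphi$. Done.

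What this buys: your approach requires verifying that several canonical identifications (projection formula, the splitting $\fib(0) \simeq R\oplus M$, and $\Omega M[1]\simeq M$) compose to the literal identity rather than some nontrivial automorphism --- exactly the sign-and-unit bookkeeping you worry about at the end. The paper's argument packages all of that into the single observation $\fib(\alpha) = \mathbf{1}$, which makes the compatibility automatic and eliminates the ambiguity.
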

\begin{proof}
Write 
\[
A_R := \Aut_{\derD(R \oplus M)}(R \oplus M) \underset{\Aut_{\derD(R)}(R)} \times \{\id_R\}
\]
Note that $\pi_* \colon \derD(R \oplus M) \to \derD(R)$ is just forgetful functor induced by the zero section $R \to R \oplus M$. Recall that we have a fiber functor \cite[Definition 1.1.1.6, Remark 1.1.1.7]{ha} 
\[
\fib \colon \Fun(\Delta^1, \derD(R)) \to \derD(R)
\]
Hence by definition of $\Phi$ we see that for any $\varphi \in A_R$ one has
\[
\fib(\Phi(\varphi)) = \fib(\pi_*(\varphi \otimes \alpha))
\]
as maps $R \oplus M \to R \oplus M$. 

Thus by Lemma \ref{bce} the map (\ref{bcd}) is given by 
\[
\varphi \mapsto \pi_M \circ \fib(\pi_*(\varphi \otimes \alpha)) \circ \iota_R
\]
Since $\pi_*$ and $(-) \otimes \alpha$ are exact functors, we see that the map (\ref{bcd}) can also be described as 
\[
\varphi \mapsto \pi_M \circ \pi_* (\varphi \otimes \fib(\alpha)) \circ \iota_R
\]
But by definition, one has $\fib(\alpha) = R \oplus M$, so the functor 
\[
(-) \otimes \fib(\alpha) \colon \derD(R \oplus M) \to \derD(R \oplus M)
\]
is equivalent to the identity. The result follows.

\end{proof}
\subsection{Atiyah classes and Chern characters}
\label{sec_atiyah}
Let $k$ be a ring, and let $X$ be a scheme over $k$ which has the resolution property \stacksref{0F85}. The goal of this section is to construct, for any $\mathcal{E} \in \Perf(X)$, an Atiyah class
\[
\At_{X / k}(\mathcal{E}) \in \Ext^1_X(\mathcal{E}, \mathcal{E} \otimes \LL_{X / k}) 
\]
Moreover, we will introduce a notion of a trace map
\[
\Ext^p_X(\mathcal{E}, \mathcal{E} \otimes \LL^p_{X / k}) \xrightarrow{\tr} \coh^p(\LL^p_{X / k})
\]
and show that if $p!$ is invertible $k$ then
\begin{equation} \label{eq_trace_exp_at}
\tr\(\frac{(\At_{X / k}(\mathcal{E}))^p}{p!}\) \in \coh^p(\LL^p_{X / k})
\end{equation}
agrees with the image of $\ch_p(P)$ (see Definition \ref{def_chern_character}) under the natural map $\Fil^p \widehat{\dR}_{X / k} \to \LL^p_{X / k}[-p]$. The expression (\ref{eq_trace_exp_at}) was taken as the definition for Chern classes in the affine case by Illusie in \cite{ill71}. In the classical (smooth) case this result is well known, see for example \cite[Section 10.1]{huyleh}. However we are not aware of the result for derived de Rham cohomology appearing anywhere in the literature.

Before giving the definition of the Atiyah class, observe that for any scheme $X$ over a ring $k$ there exists a spectral scheme $X^\LL := (X, \catO_X \oplus \LL_{X / k})$, together with maps of spectral schemes
\begin{align*}
\iota &\colon (X, \catO_X) \to (X, \catO_X \oplus \LL_{X / k}) \\
\pi_0, \pi_\delta &\colon (X, \catO_X \oplus \LL_{X / k}) \to (X, \catO_X)
\end{align*}
which are given by the identity maps on topological spaces, and where the morphisms of sheaves of $\EE_\infty$-rings are as follows: 
\begin{align*}
\pi_0^\# &:= (\id, 0) \colon \catO_X \to \catO_X \oplus \LL_{X / k} \\
\pi_\delta^\# &:= \delta \colon \catO_X \to \catO_X \oplus \LL_{X / k} &(\text{see Definition \ref{def_univ_der}}) \\
\iota^\# &:= \pi_{\catO_X} \colon  \catO_X \oplus \LL_{X / k} \to \catO_X &(\text{projection to $\catO_X$})
\end{align*}
Thus clearly $\pi_0 \circ \iota = \pi_\delta \circ \iota = \id_X$. Moreover this construction is functorial in $X$, that is a map $f \colon X \to Y$ induces a map $f \colon X^\LL \to Y^\LL$ for which the natural squares with $\pi_0$, $\pi_\delta$ and $\iota$ commute.

We now wish to define the Atiyah class. Inspired by the philosophy of \cite{huto}, we first define the universal Atiyah class.
\begin{definition}\label{aea}
Let $k$ be a ring, and let $X$ be a scheme over $k$. We define the \emph{universal Atiyah class} to be the unique element
\[
\alpha_X \in \Ext^1_{X^\LL}(\iota_* \catO_{X / k}, \iota_* \LL_{X /k})
\]
as the boundary map induced by the fiber sequence
\[
\iota_* \LL_{X / k} \to \catO_{X} \oplus \LL_{X / k} \xrightarrow{\eta_\iota}  \iota_* \catO_X 
\]
in $\derD(X^\LL)$ (here $\eta_\iota$ is the unit of the adjunction $\iota^* \dashv \iota_*$).
\end{definition}

\begin{definition}[Atiyah class]\label{aeb}
Let $k$ be a ring, and let $X$ be a scheme over $k$. Let $\mathcal{E} \in \Perf(X)$. We define the \emph{Atiyah class} of $\mathcal{E}$ 
\[
\At_{X / k}(\mathcal{E}) \in \Ext^1_X(\mathcal{E}, \mathcal{E} \otimes \LL_{X / k})
\]
as
\[
\At_{X / k}(\mathcal{E}) := (\pi_0)_* (\pi_\delta^*(\mathcal{E}) \otimes \alpha_X)
\]
We denote with 
\[
\At_{X / k}(\mathcal{E})^p \in \Ext^p_X(\mathcal{E}, \mathcal{E} \otimes \LL^p_{X / k})
\]
the composition
\begin{align*}
\mathcal{E} &\xrightarrow{\At_{X / k}(\mathcal{E})} \mathcal{E} \otimes \LL_{X / k}[1] \\\
&\xrightarrow{\At_{X / k}(\mathcal{E}) \otimes \LL_{X / k}[1]}  \mathcal{E} \otimes \LL_{X / k}^{\otimes 2}[2] \\
&\qquad \qquad \vdots \\
& \xrightarrow{\At_{X / k}(\mathcal{E}) \otimes \LL_{X / k}^{\otimes(p - 1)}[p - 1]}  \mathcal{E} \otimes \LL_{X / k}^{\otimes p}[p] \\
&\to \mathcal{E} \otimes \LL^p_{X / k}[p]
\end{align*}
which we call the \emph{$p$-th power of the Atiyah class}.
\end{definition}
Note that the first power $\At_{X / k}(\mathcal{E})^1$ is simply $\At_{X / k}(\mathcal{E})$. 
\begin{lemma}\label{lem_atiyah_two}
Let $k$ be a ring, and let $f \colon X \to Y$ be a morphism of schemes over $k$. Let $n \in \ZZ$. Denote with 
\[
\phi \colon f^* \LL_{Y  / k} \to \LL_{X / k}
\]
the canonical map induced by $f$. 

Then the equality 
\[
\At_{X / k}(f^* \mathcal{E}) = (\id_{\mathcal{E}} \otimes \ \phi ) \circ f^* \At_Y(\mathcal{E})
\]
holds in $\Ext^1_Y(\mathcal{E}, \mathcal{E} \otimes \LL_{Y / k})$ for all $\mathcal{E} \in \Perf(Y)$.
\end{lemma}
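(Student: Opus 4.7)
The plan is to deduce this identity from naturality of the universal Atiyah class $\alpha_X$ (Definition \ref{aea}) with respect to an induced morphism of spectrally ringed spaces $f^\LL \colon X^\LL \to Y^\LL$. On underlying spaces $f^\LL$ is just $f$, while its structure map $\catO_Y \oplus \LL_{Y/k} \to f_*(\catO_X \oplus \LL_{X/k})$ is $f^\sharp$ on the first summand and the adjoint of $\phi$ on the second. By construction, $f^\LL$ fits in commutative squares with each of the three structure morphisms: $\iota^Y \circ f = f^\LL \circ \iota^X$, $\pi_0^Y \circ f^\LL = f \circ \pi_0^X$, and $\pi_\delta^Y \circ f^\LL = f \circ \pi_\delta^X$.

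The first step will be to establish naturality of the universal Atiyah class itself. Pulling back the fiber sequence on $Y^\LL$ defining $\alpha_Y$ along $f^\LL$, and mapping it into the fiber sequence on $X^\LL$ defining $\alpha_X$ via the natural comparison morphisms (encoding $f^\sharp$ and $\phi$ on the outer terms and the canonical identification $(f^\LL)^*\catO_{Y^\LL} \simeq \catO_{X^\LL}$ in the middle), we obtain a morphism of fiber sequences in $\derD(X^\LL)$. Passing to boundary maps yields a commutative square
\[
\begin{tikzcd}[column sep = huge]
(f^\LL)^*\iota^Y_*\catO_Y \dar \rar{(f^\LL)^*\alpha_Y} & (f^\LL)^*\iota^Y_*\LL_{Y/k}[1] \dar{\iota^X_*\phi[1]} \\
\iota^X_*\catO_X \rar{\alpha_X} & \iota^X_*\LL_{X/k}[1]
\end{tikzcd}
\]

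The second step will be to apply the functor $(\pi_0^X)_*\bigl(\pi_\delta^{X,*}(f^*\mathcal{E}) \otimes (-)\bigr) \colon \derD(X^\LL) \to \derD(X)$ to this square. For the bottom row, the projection formula together with $\pi_0^X \circ \iota^X = \pi_\delta^X \circ \iota^X = \id_X$ identifies the result with $\At_{X/k}(f^*\mathcal{E})$, while the right vertical map becomes $\id_{f^*\mathcal{E}} \otimes \phi[1]$. For the top row, I will first use the canonical identification $\pi_\delta^{X,*}f^* \simeq (f^\LL)^*\pi_\delta^{Y,*}$ together with the symmetric monoidality of $(f^\LL)^*$ to factor $(f^\LL)^*$ out of the tensor product, then apply the projection formula on $Y^\LL$ to reduce $\pi_\delta^{Y,*}\mathcal{E} \otimes \iota^Y_*\catO_Y$ to $\iota^Y_*\mathcal{E}$, and finally invoke a Beck--Chevalley identification of $(\pi_0^X)_*(f^\LL)^*\iota^Y_*\mathcal{E}$ with $f^*\mathcal{E}$. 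A parallel treatment of the right-hand column, combined with the commutative square above, delivers the desired equality.

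The main technical obstacle will be the last Beck--Chevalley step: verifying that the square $\pi_0^Y \circ f^\LL = f \circ \pi_0^X$, which fails to be Cartesian in general, nevertheless yields an equivalence after applying to objects of the form $\iota^Y_*(-)$. This reduces via the identities $\pi_0^Y \circ \iota^Y = \id_Y$ and $\pi_0^X \circ \iota^X = \id_X$ to a retraction argument, and can be checked directly using the explicit description of $\derD(X^\LL)_{\geq 0}$ provided by Lemma \ref{lem_cat_square_zero}.
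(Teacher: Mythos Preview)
Your proposal is correct and follows essentially the same route as the paper: establish naturality of the universal Atiyah class under $f^\LL$, tensor with $\pi_\delta^*(f^*\mathcal{E}) \simeq (f^\LL)^*\pi_\delta^*\mathcal{E}$, and push forward along $\pi_0^X$. The one noteworthy difference is in how the top row is handled. You aim to \emph{identify} $(\pi_0^X)_*(f^\LL)^*\iota^Y_*\mathcal{E}$ with $f^*\mathcal{E}$, which forces you to verify a Beck--Chevalley isomorphism for a non-cartesian square and leads you to invoke Lemma~\ref{lem_cat_square_zero}. The paper sidesteps this entirely: it writes a three-row diagram whose middle row is $(\pi_0^X)_*(f^\LL)^*\iota^Y_*(\mathcal{E}\otimes(-))$, with the top-to-middle arrows given by the base change transformation $f^*(\pi_0^Y)_* \to (\pi_0^X)_*(f^\LL)^*$ (not claimed to be an equivalence), and then simply observes that the \emph{composite} of the two vertical arrows on each side is the expected natural map. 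This avoids your technical obstacle at no cost, so you can drop the appeal to Lemma~\ref{lem_cat_square_zero}.
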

\begin{proof}
Denote both maps $X \to X^\LL$ and $Y \to Y^\LL$ by $\iota$, similarly for $\pi_\delta, \pi_0$. Tensoring the commutative diagram
\[
\begin{tikzcd}
\arrow[swap]{d}{\iota_* \phi \circ (\text{base change})} f^* \iota_* \LL_{Y / k} \rar{f^*\alpha_Y} & f^* \iota_* \catO_Y[1] \arrow{d}{\text{base change}} \\
\iota_* \LL_{X / k} \rar{\alpha_X} & \iota_* \catO_X[1]
\end{tikzcd}
\]
in $\derD(X^\LL)$ with $f^* \pi_\delta^* \mathcal{E}$, we obtain a commutative diagram
\[
\begin{tikzcd}[column sep = huge]
\dar f^* \iota_* (\mathcal{E} \otimes \LL_{Y / k}) \rar{f^*(\pi_\delta^* \mathcal{E} \otimes \alpha_Y)} & f^* \iota_* (\mathcal{E} \otimes \catO_Y[1]) \dar  \\
\iota_*(f^* \mathcal{E} \otimes \LL_{X /k}) \rar{ (\pi_\delta^*f^* \mathcal{E}) \otimes \alpha_X} & \iota_*(f^* \mathcal{E} \otimes \catO_X[1])
\end{tikzcd}
\]
in $\derD(X^\LL)$. Applying $(\pi_0)_*$ we obtain the lower square in the following commutative diagram
\[
\begin{tikzcd}[column sep = 3.5cm]
\dar f^*(\mathcal{E} \otimes \LL_{Y / k}) \rar{f^* \At_Y(\mathcal{E})} & f^* (\mathcal{E} \otimes \catO_Y[1]) \dar \\
\dar (\pi_0)_* f^* \iota_* (\mathcal{E} \otimes \LL_{Y / k}) \rar{(\pi_0)_*f^*(\pi_\delta^* \mathcal{E} \otimes \alpha_Y)} & (\pi_0)_*  f^* \iota_* (\mathcal{E} \otimes \catO_Y[1]) \dar  \\
f^* \mathcal{E} \otimes \LL_{X /k} \rar{ \At_{X / k}(f^* \mathcal{E})} & f^* \mathcal{E} \otimes \catO_X[1]
\end{tikzcd}
\]
where the upper square is obtained via the base change map $f^* (\pi_0)_* \to (\pi_0)_* f^*$. The composition down--down--right is equal to $\At_{X / k}(f^* \mathcal{E})$ and the composition right--down--down is equal to the composition $ (\id_{\mathcal{E}} \otimes \ \phi ) \circ f^* \At_Y(\mathcal{E})$, so the result follows. 
\end{proof}
\begin{definition}[Dualizable object]\label{haa}
Let $\mathcal{C}^\otimes$ be a symmetric monoidal $\infty$-category with unit object $\catO$, and let $\mathcal{E} \in \mathcal{C}$. We say that $\mathcal{E}$ is \emph{dualizable} if there exists an object $\mathcal{E}^\vee \in \mathcal{C}$ and maps
\begin{align*}
\ev \colon \mathcal{E} \otimes \mathcal{E}^\vee  \to \catO \\
\coev \colon \catO \to \mathcal{E}^\vee \otimes \mathcal{E}
\end{align*}
such that the compositions
\begin{align*}
\mathcal{E}^\vee \simeq \catO \otimes \mathcal{E}^\vee  \xrightarrow{\coev \otimes \id} \mathcal{E}^\vee \otimes \mathcal{E} \otimes \mathcal{E}^\vee \xrightarrow{\id \otimes \ev} \mathcal{E}^\vee \otimes \catO \simeq \mathcal{E}^\vee  \\
\mathcal{E} \simeq \mathcal{E} \otimes \catO \xrightarrow{\id \otimes \coev} \mathcal{E} \otimes \mathcal{E}^\vee \otimes \mathcal{E} \xrightarrow{\ev \otimes \id} \catO \otimes \mathcal{E}\simeq \mathcal{E} 
\end{align*}
are homotopic to the identity. 
\end{definition}
It is well known that for any scheme $X$, an object $\mathcal{E} \in \Perf(X)$ is dualizable. For any dualizable object, we may define a trace map. 

\begin{definition}[Trace map]\label{def_trace_map}
Let $\mathcal{C}^\otimes$ be a symmetric monoidal $\infty$-category, and let $\mathcal{E} \in \mathcal{C}$ be a dualizable object. For two objects $M, N \in \mathcal{C}$ and a map 
\[
\alpha \colon \mathcal{E} \otimes M \to \mathcal{E} \otimes N
\]
in $\mathcal{C}$, we define 
\[
\tr_\mathcal{E}(\alpha) \colon M \to N
\]
as the composition
\[
M \xrightarrow{\mathrm{coev} \otimes M} \mathcal{E}^\vee \otimes \mathcal{E} \otimes M \xrightarrow{{\mathcal{E}^\vee} \otimes \alpha} \mathcal{E}^\vee \otimes \mathcal{E} \otimes N \xrightarrow{\sigma_{12}} \mathcal{E} \otimes \mathcal{E}^\vee \otimes N \xrightarrow{\mathrm{ev} \otimes N} N
\]
in $\mathcal{C}$. 
\end{definition}
\begin{lemma}\label{lem_atiyah_one}
Let $k$ be a ring, and let $X$ be a scheme over $k$. Fix $n \in \ZZ$, and let
\[
\mathcal{E} \to \mathcal{F} \to \mathcal{G} \xrightarrow{+1} 
\]
be an exact triangle of objects in $\Perf(X)$. Then
\[
\tr\(\At_{X / k}(\mathcal{F})^p\) = \tr\(\At_{X / k}(\mathcal{E})^p\) + \tr\(\At_{X / k}(\mathcal{G})^p\)
\]
for all $p \geq 0$. 
\end{lemma}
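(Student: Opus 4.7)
The plan is to realize $\mathcal{H} \mapsto \At_{X/k}(\mathcal{H})^p$ as a natural transformation between exact endofunctors of $\Perf(X)$, producing a commutative diagram of exact triangles, and then to invoke additivity of trace along exact triangles for dualizable objects.

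First I would show that $\mathcal{H} \mapsto \At_{X/k}(\mathcal{H})$ is a natural transformation $\id \Rightarrow (-) \otimes_{\catO_X} \LL_{X/k}[1]$ between exact endofunctors of $\Perf(X)$. For any $f \colon \mathcal{H} \to \mathcal{H}'$, functoriality of $\pi_\delta^*$ gives a commutative square in $\derD(X^\LL)$
\[
\begin{tikzcd}
\pi_\delta^*\mathcal{H} \otimes \iota_* \catO_X \rar \dar{\pi_\delta^* f \otimes \id} & \pi_\delta^*\mathcal{H} \otimes \iota_* \LL_{X/k}[1] \dar{\pi_\delta^* f \otimes \id} \\
\pi_\delta^*\mathcal{H}' \otimes \iota_* \catO_X \rar & \pi_\delta^*\mathcal{H}' \otimes \iota_* \LL_{X/k}[1]
\end{tikzcd}
\]
(horizontal arrows being $\pi_\delta^*(-) \otimes \alpha_X$), and applying the exact functor $(\pi_0)_*$ yields the naturality square for $\At_{X/k}$ (this is the case of Lemma~\ref{lem_atiyah_two} for $X = Y$ and $f$ the identity on $X$ but a general morphism of sheaves).

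Second, for each $i \geq 1$ the naturality square for $\At_{X/k}$ tensored with $\LL_{X/k}^{\otimes(i-1)}[i-1]$ still commutes, and the antisymmetrization map $\LL_{X/k}^{\otimes p} \to \LL^p_{X/k}$ is natural in $\mathcal{H}$. Composing all these commutative squares shows that $\mathcal{H} \mapsto \At_{X/k}(\mathcal{H})^p$ is a natural transformation $\id \Rightarrow (-) \otimes_{\catO_X} \LL^p_{X/k}[p]$ of exact endofunctors of $\Perf(X)$. Applied to the given triangle $\mathcal{E} \to \mathcal{F} \to \mathcal{G} \xrightarrow{+1}$, this produces a commutative diagram
\[
\begin{tikzcd}
\mathcal{E} \dar{\At_{X/k}(\mathcal{E})^p} \rar& \mathcal{F} \dar{\At_{X/k}(\mathcal{F})^p} \rar& \mathcal{G} \dar{\At_{X/k}(\mathcal{G})^p} \\
\mathcal{E} \otimes \LL^p_{X/k}[p] \rar& \mathcal{F} \otimes \LL^p_{X/k}[p] \rar& \mathcal{G} \otimes \LL^p_{X/k}[p]
\end{tikzcd}
\]
in $\Perf(X)$ in which both rows are exact triangles.

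Finally I would invoke additivity of trace along exact triangles: for a fixed object $M \in \derD(X)$, given any morphism of exact triangles of dualizable objects as above with vertical maps of the form $\phi_\mathcal{H} \colon \mathcal{H} \to \mathcal{H} \otimes M$, one has $\tr(\phi_\mathcal{F}) = \tr(\phi_\mathcal{E}) + \tr(\phi_\mathcal{G})$ in $\pi_0 \Map_X(\catO_X, M)$. Specializing to $M = \LL^p_{X/k}[p]$ yields the claim. The main obstacle is this additivity statement itself; in the $1$-categorical triangulated setting it is May's theorem on additivity of traces, and in the stable $\infty$-categorical setting it can be obtained by a direct dualizability argument --- choose a dual exact triangle $\mathcal{G}^\vee \to \mathcal{F}^\vee \to \mathcal{E}^\vee$ together with compatible evaluation and coevaluation data, and unfold the diagram defining $\tr(\phi_\mathcal{F})$ using the octahedral axiom, producing a $3 \times 3$ diagram whose diagonal expresses $\tr(\phi_\mathcal{F})$ as the sum of $\tr(\phi_\mathcal{E})$ and $\tr(\phi_\mathcal{G})$. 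Some care is needed because the choice of dual triangle and compatibility data is not canonical, but different choices produce homotopic traces.
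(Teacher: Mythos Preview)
Your proposal is correct and follows essentially the same approach as the paper: construct the commutative diagram of exact triangles via naturality of the Atiyah class and then invoke additivity of traces. The paper's proof simply asserts the diagram and cites its Proposition~\ref{prop_add_traces} (additivity of traces in a symmetric monoidal stable $\infty$-category), whereas you spell out the naturality argument and sketch the additivity proof; the content is the same.
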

\begin{proof}
We have a commutative diagram
\[
\begin{tikzcd}
 \dar{\At_{X / k}(\mathcal{E})^p} \mathcal{E} \rar& \dar{\At_{X / k}(\mathcal{F})^p}\mathcal{F} \rar& \dar{\At_{X / k}(\mathcal{G})^p}\mathcal{G} \rar{+1} &  \ \\
\mathcal{E} \otimes \LL^p_{X / k}  \rar& \mathcal{F} \otimes \LL^p_{X / k} \rar&  \mathcal{G} \otimes \LL^p_{X / k} \rar{+1}  & \
\end{tikzcd}
\]
in $\derD(X)$, in which the rows are exact triangles. Thus this follows from a well-known result on traces, see Proposition \ref{prop_add_traces}. 
\end{proof}
Next, observe that any element $f \in \Ext^1_X(\mathcal{L}, \mathcal{L} \otimes \LL_{X /k })$ can be considered as a map
\[
f \colon \mathcal{L} \to  \mathcal{L} \otimes \LL_{X /k}[1]
\]
in $\derD(X)$. In particular, we may consider $\tr_\mathcal{L}(f)$, which is a map 
\[
\catO_X \to \LL_{X / k}[1]
\]
We thus get a natural trace map $\tr_{\mathcal{L}} \colon \Ext^1_X(\mathcal{L}, \mathcal{L} \otimes \LL_{X /k }) \to \coh^1(\LL_{X / k})$. Our goal now is to show this map sends the Atiyah class to the first Chern class (see Corollary \ref{corol_atiyah_threeb}). 
\begin{construction}
Let $k$ be a ring, and let $X$ be a scheme over $k$. Consider a line bundle $\mathcal{L} \in \Pic(X)$. Then we have canonical equivalences
\[
\iota^* (\pi_\delta^*\mathcal{L} \otimes \pi_0^* \mathcal{L}^\vee) \simeq \mathcal{L} \otimes \mathcal{L}^\vee \simeq \catO_X
\]
We thus obtain a map
\begin{align*}
v \colon \Pic(X) &\to \Pic(X^\LL) \times_{\Pic(X)} \{\catO_X\} \\
\mathcal{L} &\mapsto \pi_\delta^*\mathcal{L} \otimes \pi_0^* \mathcal{L}^\vee
\end{align*}
in $\derD(\ZZ)_{\geq 0}$. 
\end{construction}

\begin{lemma}\label{bcc}
The composition 
\[
\Pic(X) \xrightarrow{v} \Pic(X^\LL) \times_{\Pic(X)} \{\catO_X\} \xrightarrow{\Phi} \coh^1(X, \LL_{X / k})
\]
of maps of abelian groups sends $\mathcal{L} \in \Pic(X)$ to $\tr_{\mathcal{L}}(\At_{X / k}(\mathcal{L}))$, where $\Phi$ is the map (\ref{bca}). 
\end{lemma}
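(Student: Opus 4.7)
The plan is to directly unfold the definition of $\Phi$ on the element $v(\mathcal{L})$ and use the projection formula to recognize the result as the trace of the Atiyah class. First I would observe that the map $\alpha \colon \iota_* \catO_X \to \iota_* \LL_{X / k}[1]$ appearing in the definition of $\Phi$ in Corollary \ref{lem_sheafs_triv_sqzero} (specialized to $M = \LL_{X / k}$ and $\mathcal{E} = \catO_X$) agrees, by construction, with the universal Atiyah class $\alpha_X$ of Definition \ref{aea}: both are the boundary map of the fiber sequence $\iota_* \LL_{X/k} \to \catO_X \oplus \LL_{X/k} \to \iota_* \catO_X$ in $\derD(X^\LL)$. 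Consequently
\[
\Phi(v(\mathcal{L})) = (\pi_0)_*\bigl( \pi_\delta^*\mathcal{L} \otimes \pi_0^*\mathcal{L}^\vee \otimes \alpha_X\bigr),
\]
viewed as a map $\catO_X \to \LL_{X/k}[1]$ via the canonical equivalence $\iota^*(\pi_\delta^*\mathcal{L} \otimes \pi_0^*\mathcal{L}^\vee) \simeq \mathcal{L} \otimes \mathcal{L}^\vee \simeq \catO_X$ (which is precisely the base-point of the fiber product $\Pic(X^\LL) \times_{\Pic(X)} \{\catO_X\}$).

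Next I would pull the factor $\pi_0^*\mathcal{L}^\vee$ through $(\pi_0)_*$ using the projection formula for the morphism of spectral schemes $\pi_0 \colon X^\LL \to X$. This yields
\[
\Phi(v(\mathcal{L})) \simeq (\pi_0)_*\bigl( \pi_\delta^*\mathcal{L} \otimes \alpha_X\bigr) \otimes \mathcal{L}^\vee = \At_{X/k}(\mathcal{L}) \otimes \id_{\mathcal{L}^\vee}
\]
by the very definition of the Atiyah class (Definition \ref{aeb}). Regarded as a morphism $\mathcal{L} \otimes \mathcal{L}^\vee \to \mathcal{L} \otimes \LL_{X/k}[1] \otimes \mathcal{L}^\vee$ and identified with a map $\catO_X \to \LL_{X/k}[1]$ via the evaluation $\ev \colon \mathcal{L} \otimes \mathcal{L}^\vee \simeq \catO_X$, this agrees with $\tr_\mathcal{L}(\At_{X/k}(\mathcal{L}))$. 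Indeed, for a line bundle the maps $\coev$, $\ev$ and the swap $\sigma$ are all canonical isomorphisms, the categorical dimension $\ev \circ \sigma \circ \coev$ of $\mathcal{L}$ equals $\id_{\catO_X}$, and Definition \ref{def_trace_map} therefore collapses to conjugation by $\ev$ of $f \otimes \id_{\mathcal{L}^\vee}$ in the case $f = \At_{X/k}(\mathcal{L})$.

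The main point requiring care is verifying that the identification $\mathcal{L} \otimes \mathcal{L}^\vee \simeq \catO_X$ used to interpret $\Phi(v(\mathcal{L}))$ as a map out of $\catO_X$ really coincides with the evaluation map used in the trace formula. Both are the canonical invertibility isomorphism for the line bundle $\mathcal{L}$, so they match; modulo this coherence check, the argument reduces to the projection formula together with elementary manipulations of dualizable objects in a symmetric monoidal $\infty$-category.
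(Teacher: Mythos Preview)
Your proposal is correct and follows essentially the same argument as the paper: unfold $\Phi(v(\mathcal{L}))$ as $(\pi_0)_*(\pi_\delta^*\mathcal{L} \otimes \pi_0^*\mathcal{L}^\vee \otimes \alpha_X)$, apply the projection formula to extract $\mathcal{L}^\vee$, recognize $(\pi_0)_*(\pi_\delta^*\mathcal{L} \otimes \alpha_X)$ as $\At_{X/k}(\mathcal{L})$, and observe that for a line bundle the trace map is just $(-)\otimes\mathcal{L}^\vee$. Your version spells out a bit more explicitly why $\alpha$ agrees with $\alpha_X$ and why the trace collapses for invertible objects, but the route is the same.
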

\begin{proof}
By definition, for any $\mathcal{L} \in \Pic(X)$, we have
\begin{align*}
(\Phi \circ v)(\mathcal{L}) &= \pi_{0, *}(\pi_\delta^* \mathcal{L} \otimes \pi_0^* \mathcal{L}^\vee \otimes \alpha_X) \\
&= \pi_{0, *}(\pi_\delta^* \mathcal{L} \otimes \alpha_X) \otimes \mathcal{L}^\vee \\
&= \At_{X / k}(\mathcal{L}) \otimes \mathcal{L}^\vee \\
&= \tr_{\mathcal{L}}(\At_{X / k}(\mathcal{L}))
\end{align*}
Here the first equality follows by definition of $\Phi$, the second is the projection formula, the third equality follows by definition of the Atiyah class (see Definition \ref{aeb}), and the last equality follows by observing that the trace map
\[
\tr_{\mathcal{L}} \colon \Ext^1_X(\mathcal{L}, \mathcal{L} \otimes \LL_{X / k}) \to \coh^1(X, \LL_{X / k})
\]
is simply given by $(-) \otimes \mathcal{L}^\vee$ (since $\mathcal{L}$ is a line bundle).  
\end{proof}
\begin{lemma}\label{bcb}
Let $k$ be a ring, and let $X$ be a scheme over $k$. Then there exists a commutative diagram
\[
\begin{tikzcd}
\Pic(X) \dar{v} & \tau_{\geq 0} \derR \Gamma(X, \GG_m[1]) \arrow{l}{\sim} \arrow{dd}{\mathrm{d} \log} \\
\Pic(X^\LL) \times_{\Pic(X)} \{\catO_X\} \dar{\Phi}[swap]{\sim} & \\
  \Map_{\derD(X)}(\catO_X, \LL_{X / k}[1]) \rar[swap]{\sim} & \tau_{\geq 0} \derR \Gamma(X, \LL_{X /k}[1])
\end{tikzcd}
\]
in $\derD(\ZZ)_{\geq 0}$. 
\end{lemma}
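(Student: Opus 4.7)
The approach will be to eliminate $\Pic(X)$ via the top horizontal equivalence, reduce to affines via sheaf properties, further reduce to polynomial algebras via a Kan extension argument, and then verify commutativity by an explicit computation on $\pi_1$. First, I will note that all four corners of the (rewritten) diagram, viewed as functors $\St_k^{\op} \to \derD(\ZZ)_{\geq 0}$, are fppf sheaves by Lemma \ref{vect_sheaf2} and Proposition \ref{prop_dr_sheaf}, while all edges are natural in $X$. This reduces the problem to exhibiting a commutative diagram of functors $\Alg_k \to \derD(\ZZ)_{\geq 0}$. I will then argue that each functor involved — $\GG_m$ (Lemma \ref{gm_extend}), $\LL_{-/k}$, the universal derivation $\delta$, and consequently both $v$ (built from $\pi_\delta$ and $\pi_0$) and $\mathrm{d}\log$ — is left Kan-extended from $\Poly_k$, so it will suffice to verify commutativity for $R \in \Poly_k$.

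For such $R$, Quillen--Suslin gives $\Pic(\Spec R) = 0$, and the freeness of $\Omega_{R/k}$ gives $\coh^i(\LL_{R/k}) = 0$ for $i \geq 1$. Each corner will then be a connected $1$-truncated pointed space of the form $\mathrm{B}A$, and commutativity reduces to equality on $\pi_1$. The key computation is as follows. For $u \in R^\times = \pi_1 \Pic(\Spec R)$, the map $v$ sends $u$ to the automorphism of $v(\catO_X) = \pi_\delta^*\catO_X \otimes \pi_0^*\catO_X^\vee \simeq \catO_{X^\LL}$ given by $\pi_\delta^\#(u)\cdot\pi_0^\#(u^{-1})$, with the inverse arising from the natural trivialization of $\mathcal{L}\otimes\mathcal{L}^\vee$. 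Using $\delta(u) = (u, \mathrm{d}u)$, $\pi_0^\#(u^{-1}) = (u^{-1}, 0)$ and the square-zero multiplication of Definition \ref{def_univ_der}, this automorphism will simplify to multiplication by $(1,\mathrm{d}\log u)$. Lemma \ref{bcf} then identifies its image under $\Phi$ with the $R$-linear map $1 \mapsto \mathrm{d}\log u$, namely the element $\mathrm{d}\log u \in \Omega_{R/k} = \pi_1\tau_{\geq 0}\derR\Gamma(\Spec R, \LL_{R/k}[1])$, which matches the image of $u$ under $\mathrm{d}\log$.

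The main obstacle I anticipate is making the Kan extension reduction rigorous in the $\infty$-categorical setting: the entire diagram of natural transformations (viewed as a functor $\Poly_k \to \Fun(\Delta^1\times\Delta^1, \derD(\ZZ)_{\geq 0})$) must be compatible with left Kan extension, not merely the underlying maps on each corner. This will likely require assembling the diagram as a single functor to the $\infty$-category of commutative squares before extending, and verifying that the constructions of $v$, $\Phi$, and $\mathrm{d}\log$ all preserve the requisite homotopy coherence. Once this is handled, the $\pi_1$-computation above, combined with the vanishing of $\pi_0$ on polynomial algebras, completes the argument.
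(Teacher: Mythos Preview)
Your overall strategy---reduce to affines by the sheaf property, reduce to a subcategory of nice algebras via a universal property, then verify commutativity on $\pi_1$ using the computation $(u,\mathrm{d}u)\cdot(u^{-1},0)=(1,\mathrm{d}\log u)$ and Lemma~\ref{bcf}---matches the paper, and the final explicit computation is identical. However, the Kan extension step as you describe it does not go through.

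First, Lemma~\ref{gm_extend} shows that $\GG_m$ is left Kan extended from \emph{smooth} $k$-algebras, not from $\Poly_k$: the corepresenting object $k[x,x^{-1}]$ is smooth but not polynomial, and indeed for reduced $k$ the units of $k[x_1,\dots,x_n]$ are just $k^\times$, so the Kan extension from $\Poly_k$ would lose almost everything. Relatedly, your appeal to Quillen--Suslin fails for a general base ring $k$, since $\Pic(k[x_1,\dots,x_n])$ need not vanish.

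Second, and more fundamentally, even after replacing $\Poly_k$ by smooth algebras the argument still breaks: the corner functors $R\mapsto\Pic(\Spec R)$ and $R\mapsto\tau_{\geq 0}\derR\Gamma(\Spec R,\GG_m[1])$ are \emph{not} left Kan extended from smooth algebras (their $\pi_0$ can be nontrivial on non-smooth $R$, whereas any Kan extension of $R\mapsto\B R^\times$ would have $\pi_0=0$). The paper handles this with an extra step you are missing: since $\tau_{\geq 0}\derR\Gamma(-,\GG_m[1])$ is the sheafification of the presheaf $R\mapsto\B\GG_m(R)=\B R^\times$ and the target of the diagram is already a sheaf, the universal property of sheafification lets one replace the top-right corner by $\B\GG_m(R)$. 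This presheaf \emph{is} Kan extended from smooth algebras (by Lemma~\ref{gm_extend} and the fact that $[1]$ commutes with colimits), and moreover $\B R^\times$ is always connected, so after this replacement the reduction to a $\pi_1$ computation is automatic---no Quillen--Suslin needed. The obstacle you anticipated (coherence of the square under Kan extension) is not the real issue; the real issue is that the Kan extension universal property only applies once the source has been replaced by the presheaf via sheafification.
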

\begin{proof}
Since all terms appearing in the diagram are fppf-sheaves, it suffices to construct a diagram
\[
\begin{tikzcd}
\Pic(R) \dar{v} &  \tau_{\geq 0} \derR \Gamma(\Spec(R), \GG_m[1]) \arrow{l}{\sim} \arrow{dd}{\mathrm{d} \log} \\
\Pic(R \oplus \LL_{R / k}) \times_{\Pic(R)} \{R\} \dar{\Phi}[swap]{\sim} &  \\
 \Map_{\derD(R)}(R, \LL_{R / k}[1]) \rar[swap]{\sim} & \tau_{\geq 0} \derR \Hom_{\derD(R)}(R, \LL_{R /k}[1])
\end{tikzcd}
\]
functorial in $R \in \Alg_k$. Recall that we denote with $\GG_m(-)$ the functor
\begin{align*}
\Alg_k &\to \derD(\ZZ)^\heart \\
R &\mapsto R^\times
\end{align*}
Thus if we denote with $\B \GG_m(R)$ the associated functor taking values in $1$-groupoids (with a $\ZZ$--action), we see that the functor
\begin{align*}
\Alg_k &\to \derD(\ZZ)_{\geq 0} \\
R &\mapsto \derR \Gamma(\Spec(R), \GG_m[1]) 
\end{align*}
can be identified with the sheafification of the functor $R \mapsto \B \GG_m(R)$. By the universal property of sheafification it thus suffices to construct a commutative diagram
\[
\begin{tikzcd}
\Pic(R) \dar{v} & \B \GG_m(R) \arrow{l} \arrow{dd}{\mathrm{d} \log} \\
\Pic(R \oplus \LL_{R / k}) \times_{\Pic(R)} \{R\} \dar{\Phi}[swap]{\sim} & \\
\Map_{\derD(R)}(R, \LL_{R / k}[1]) \rar[swap]{\sim} & \tau_{\geq 0} \derR \Hom(R, \LL_{R /k}[1])
\end{tikzcd}
\]
in $\derD(\ZZ)_{\geq 0}$, functorial in $R \in \Alg_k$. By Lemma \ref{gm_extend} and the universal property of left Kan extension \cite[Proposition 4.3.2.17]{htt} we may assume $R$ is smooth over $k$, so that $\LL_{R / k} \simeq \Omega_{R / k}$ is an abelian group.

Since the unique point in $\B \GG_m(R)$ maps to zero under both compositions, it suffices to show the diagram of abelian groups
\[
\begin{tikzcd}
\Aut_{\derD(R)}(R) \dar{v} &  \GG_m(R) \arrow{l} \arrow{dd}{\mathrm{d} \log} \\
\Aut_{\derD(R \oplus \Omega_{R / k})}(R \oplus \Omega_{R / k}) \times_{\Aut_{\derD(R)}(R)} \{\id_R\} \dar{\Phi}[swap]{\sim} \\
\pi_1 \Map_{\derD(R)}(R, \Omega_{R / k}[1])  \rar[swap]{\sim} & \Hom(R, \Omega_{R /k})
\end{tikzcd}
\]
 commutes. By definition of $v$, the composition left--down sends $r \in R^\times$ to the automorphism given by multiplication with the element
\[
(r,  \mathrm{d} r) \cdot (r^{-1}, 0) = (1, \frac{1}{r} \mathrm{d} r)
\]
It thus follows from Lemma \ref{bcf} that the composition left--down--down sends an element $r \in R^\times$ to the map $s \mapsto s \cdot \frac{1}{r} \mathrm{d}r$, which completes the proof.
\end{proof}

\begin{corollary}\label{corol_atiyah_threeb}
Let $k$ be a ring, and let $X$ be a scheme over $k$. Let $\mathcal{L}$ be a line bundle on $X$. Then the trace map
\[
\tr_{\mathcal{L}} \colon \Ext^1_X(\mathcal{L}, \mathcal{L} \otimes \LL_{X /k }) \to \coh^1(\LL_{X / k})
\]
sends $\At_{X / k}(\mathcal{L})$ to the image of $\csmall^{\widehat{\dR}}_{1}(\mathcal{L})$ under the natural map
\[
\coh^2(\Fil^1 \widehat{\dR}_{X / k}) \to \coh^1(\LL_{X  /k})
\]
\end{corollary}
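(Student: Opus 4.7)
The plan is to combine the two preceding lemmas. By Lemma \ref{bcc}, the composition
\[
\Pic(X) \xrightarrow{v} \Pic(X^\LL) \times_{\Pic(X)} \{\catO_X\} \xrightarrow{\Phi} \coh^1(X, \LL_{X/k})
\]
sends $[\mathcal{L}]$ to $\tr_{\mathcal{L}}(\At_{X/k}(\mathcal{L}))$. By Lemma \ref{bcb}, after passing to $\pi_0$ and identifying $\Pic(X) \simeq \coh^1(X, \GG_m)$, this composition agrees with the $\mathrm{d}\log$ map $\coh^1(X, \GG_m) \to \coh^1(X, \LL_{X/k})$. Thus it suffices to verify that the composition
\[
\coh^1(X, \GG_m) \xrightarrow{c_1^{\widehat{\dR}}} \coh^2(\Fil^1 \widehat{\dR}_{X/k}) \to \coh^1(\LL_{X/k})
\]
is also the $\mathrm{d}\log$ map.

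For this last step, I would appeal directly to the construction of $c_1^{\widehat{\dR}}$. By definition, for smooth $R \in \Alg_k$, the map $\GG_m(R)[-1] \to \Fil^1 \widehat{\dR}_{R/k}$ is induced by the classical $\mathrm{d}\log$ map $R^\times \to \Omega_{R/k} = \Fil^1 \Omega^\bullet_{R/k}$ followed by the comparison $\Omega^\bullet_{R/k} \simeq \widehat{\dR}_{R/k}$ from Construction \ref{dr_iso_z}. Composing with the projection $\Fil^1 \widehat{\dR}_{R/k} \to \gr^1 \widehat{\dR}_{R/k} \simeq \Omega_{R/k}[-1]$ recovers $\mathrm{d}\log$ on the nose. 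By left Kan extension (Lemma \ref{gm_extend}) and sheafification we conclude that the composition in question is the $\mathrm{d}\log$ map for all stacks $X$.

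Combining these two steps proves the corollary: both $\tr_{\mathcal{L}}(\At_{X/k}(\mathcal{L}))$ and the image of $c_1^{\widehat{\dR}}(\mathcal{L})$ in $\coh^1(\LL_{X/k})$ are equal to $\mathrm{d}\log[\mathcal{L}]$. No step presents a real obstacle at this point — the essential identification has been packaged into Lemma \ref{bcb}, whose proof (reducing via sheafification and left Kan extension to an explicit calculation over smooth $k$-algebras and invoking Lemma \ref{bcf}) already did the work of matching the automorphism of $R \oplus \Omega_{R/k}$ coming from $v$ with multiplication by $(1, \frac{1}{r}\mathrm{d}r)$.
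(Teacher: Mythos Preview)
Your proposal is correct and follows essentially the same approach as the paper: combine Lemma~\ref{bcc} (identifying $\Phi\circ v$ with $\tr_{\mathcal L}(\At_{X/k}(\mathcal L))$) with Lemma~\ref{bcb} (identifying $\Phi\circ v$ with $\mathrm{d}\log$), and then observe that the composition $\coh^1(X,\GG_m)\xrightarrow{c_1^{\widehat\dR}}\coh^2(\Fil^1\widehat\dR_{X/k})\to\coh^1(\LL_{X/k})$ is $\mathrm{d}\log$ by construction. The paper compresses your last verification into the phrase ``by definition of $c_1^{\widehat\dR}$'', but the content is the same.
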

\begin{proof}
By Lemma \ref{bcb} we obtain a commutative diagram
\[
\begin{tikzcd}
\Pic(X) \dar{v} \rar{\sim} & \coh^1(X, \GG_m)  \dar{\mathrm{d} \log} \\
\Pic(X^\LL) \times_{\Pic(X)} \{\catO_X\}  \rar{\Phi} & \coh^1(X, \LL_{X / k})
\end{tikzcd}
\]
of abelian groups. By definition of $\csmall^{\widehat{\dR}}_{1}$ (see Definition \ref{defc1dr}) the composition right--down sends $\mathcal{L} \mapsto \csmall^{\widehat{\dR}}_{1}(\mathcal{L})$. Hence the result follows from Lemma \ref{bcc}.
\end{proof}
\begin{prop}\label{prop_compare_chern}
Let $i \in \NN$, and let $k$ be a ring such that $i!$ is invertible in $k$. Let $X$ be a quasi-compact and quasi-separated scheme over $k$ which has the resolution property. Then for all $\mathcal{E} \in \Perf(X)$, the equality
\[
\ch_i(\mathcal{E}) = \frac{\tr_{\mathcal{E}}(\At_{X / k}(\mathcal{E})^i)}{i!}
\]
holds in $\coh^{i} \(\LL^i_{X / k}\)$.
\end{prop}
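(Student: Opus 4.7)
I would apply the uniqueness result Proposition \ref{prop_chern_unique} with $A^i = \LL^i_{X/k}[-i]$ and candidate
\[
\widetilde{\ch}_i(\mathcal{E}) := \frac{\tr_\mathcal{E}(\At_{X/k}(\mathcal{E})^i)}{i!}.
\]
It then suffices to verify the three properties of Proposition \ref{prop_chern}.

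Property (1), additivity on exact triangles, is immediate from Lemma \ref{lem_atiyah_one}. Property (2), compatibility with pullback, follows by combining Lemma \ref{lem_atiyah_two} with the formal fact that the trace of Definition \ref{def_trace_map} commutes with any symmetric monoidal functor that preserves duals, in particular with $f^*\colon \derD(Y)\to\derD(X)$; this is a direct unwinding of the definitions of $\ev$, $\coev$ and $\tr$.

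The real content is property (3): one must show that for every line bundle $\mathcal{L}$ on $X$,
\[
\frac{\tr_\mathcal{L}(\At_{X/k}(\mathcal{L})^i)}{i!} \;=\; \frac{c_1^{\widehat{\dR}}(\mathcal{L})^i}{i!}
\]
after projecting to $\coh^i(\LL^i_{X/k})$. The case $i=1$ is exactly Corollary \ref{corol_atiyah_threeb}. For general $i$, my plan is to use the canonical identification $\underline{\mathrm{End}}(\mathcal{L}) \simeq \catO_X$, under which the trace map becomes the identity on cohomology and the Atiyah class $\At(\mathcal{L})$ corresponds to $c_1^{\widehat{\dR}}(\mathcal{L})$ viewed as an element of $\coh^1(\LL_{X/k})$. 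In this identification, the $i$-fold iterated composition $\At(\mathcal{L})^{\circ i} \in \Ext^i(\mathcal{L}, \mathcal{L}\otimes\LL^{\otimes i}_{X/k})$ becomes the $i$-fold Yoneda product of $c_1(\mathcal{L})$ with itself in $\coh^i(\LL^{\otimes i}_{X/k})$, and Yoneda product agrees with cup product in the derived category of $X$. Applying the canonical projection $\LL^{\otimes i}_{X/k}\to\LL^i_{X/k}$ and using that this projection is a morphism of graded algebras compatible with the multiplication on $\bigoplus_p \coh^p(\LL^p_{X/k})$ coming from the filtered $\EE_\infty$-structure on $\widehat{\dR}_{X/k}$, the image is precisely the $i$-fold product $c_1^{\widehat{\dR}}(\mathcal{L})^i$.

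The hard part will be to make the last paragraph rigorous in the derived setting: carefully identifying Yoneda composition of iterated Atiyah classes with the cup product in $\coh^*(\LL^{\otimes *}_{X/k})$, matching the antisymmetrization $\LL^{\otimes i}\to\LL^i$ with the graded-commutative multiplication, and checking signs. A clean way to sidestep sign and multiplicity bookkeeping is to reduce, by the functoriality already established in (2) and by the fact that every line bundle is a pullback of the tautological line bundle on $\B\GG_m$, to the universal case on $\B\GG_m$. There the cohomology groups $\coh^i(\LL^i_{\B\GG_m/k})$ are a free $k$-module of rank one generated by $c_1^i$ for the tautological class, and the identity of both sides can be verified by a direct computation (or by observing that both sides are $k$-linear multiplicative in $c_1$ and agree in degree $1$ by Corollary \ref{corol_atiyah_threeb}).
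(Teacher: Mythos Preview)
Your overall strategy is exactly the paper's: invoke Proposition~\ref{prop_chern_unique} and verify properties (1)--(3), with (1) from Lemma~\ref{lem_atiyah_one} and (2) from Lemma~\ref{lem_atiyah_two}. For property~(3), however, the paper is considerably more direct than your outline: it simply cites Lemma~\ref{lem_atiyah_threea}, which says that for an \emph{invertible} object $\mathcal{L}$ the trace is multiplicative, $\tr_{\mathcal{L}}(\beta\circ\alpha)=\tr_{\mathcal{L}}(\beta)\circ\tr_{\mathcal{L}}(\alpha)$. Applied to the composition defining $\At(\mathcal{L})^i$, this yields $\tr_{\mathcal{L}}(\At(\mathcal{L})^i)=\bigl(\tr_{\mathcal{L}}\At(\mathcal{L})\bigr)^i$, and the $i=1$ case is Corollary~\ref{corol_atiyah_threeb}. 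Your identification $\underline{\mathrm{End}}(\mathcal{L})\simeq\catO_X$ and ``trace becomes the identity'' is precisely the content of that lemma, so your Yoneda/cup-product discussion and the proposed reduction to $\mathrm{B}\GG_m$ are unnecessary detours: Lemma~\ref{lem_atiyah_threea} already handles the sign and multiplicity bookkeeping at the categorical level.
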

\begin{proof}
Write 
\[
\widetilde \ch_i(\mathcal{E})  := \frac{\tr_{\mathcal{E}}(\At_{X / k}(\mathcal{E})^i)}{i!}
\]
we verify axioms (1) -- (3) of Proposition \ref{prop_chern_unique}, this will imply the result. Clearly (1) is a direct consequence of Lemma \ref{lem_atiyah_one}, and (2) is a direct consequence of Lemma \ref{lem_atiyah_two}. Finally (3) is implied by the combination of Lemma \ref{lem_atiyah_threea} and Corollary \ref{corol_atiyah_threeb} which completes the proof. 
\end{proof}
\subsection{Obstruction classes for complexes}
\label{sec_ob_complex}
In this section we provide a result constructing an obstruction class to deforming complexes, and show that it can be written as the product of the Atiyah class with the Kodaira--Spencer class. 
This result is quite well-known, and already goes back to \cite{ill71}.  Our approach is to bootstrap from \cite[Theorem 16.2.0.1]{sag}. A direct construction of the obstruction class (originally due to Gabber) in a similar setting can be found in \cite{liebols}. For a proof of the same result in a different language using the truncated cotangent complex, see \cite{huto}. 

The precise result we will need is the following.
\begin{prop}\label{prop_obstruction_class}
Let $k$ be a ring, and let $R' \to R$ be a surjective ring map with kernel $I$ such that $I^2 = 0$. Let $X'$ be a smooth and proper scheme over $R'$, and let $X = X' \times_{\Spec(R')} \Spec(R)$ be the base change, and let $\mathcal{I} = I \otimes_{R} \catO_X$. Let $\mathcal{E} \in \Perf(X)$. Then there exists a complex $\mathcal{E}' \in \Perf(X')$ such that $\mathcal{E}' \rvert_X \cong \mathcal{E}$ if and only if the obstruction class
\[
\ob(\mathcal{E}, X, X') := (\mathcal{E} \otimes \kappa_{X / X' / k}) \circ \At_{X / k}(\mathcal{E}) \in \Ext^2_X(\mathcal{E}, \mathcal{E} \otimes_{\catO_X} \mathcal{I})
\]
is equal to zero.
\end{prop}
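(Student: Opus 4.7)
The plan is to globalize Proposition \ref{prop_pb_ani_ring} and reduce the existence of a lift to a descent/obstruction problem along a split square-zero extension, which is already parametrized by Corollary \ref{lem_sheafs_triv_sqzero}. By applying Proposition \ref{prop_pb_ani_ring} Zariski-locally on $X'$ and gluing (using that the entire construction of $\eta$ is functorial), one obtains a pushout square of spectral schemes
\[
X' \simeq X \amalg_{X^{\mathcal{I}[1]}} X,
\]
where the structure sheaf of the spectral scheme $X^{\mathcal{I}[1]}$ is $\catO_X \oplus \mathcal{I}[1]$, and where the two projections $X^{\mathcal{I}[1]} \to X$ are induced, respectively, by the trivial map $(\id,0)$ and by the map $\eta$ whose projection to $\mathcal{I}[1]$ is the Kodaira--Spencer class $\kappa_{X/X'/k}$.

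Once this pushout is in place, one invokes a descent/gluing statement for perfect complexes on pushouts of spectral schemes (as in \cite[Theorem 16.2.0.1]{sag}) to translate: giving $\mathcal{E}' \in \Perf(X')$ with $\mathcal{E}'\rvert_X \simeq \mathcal{E}$ amounts to producing two lifts $\widetilde{\mathcal{E}}_0, \widetilde{\mathcal{E}}_\eta \in \Perf(X^{\mathcal{I}[1]})$ of $\mathcal{E}$ along the two projections, together with an equivalence between them. Taking both lifts to be the \emph{trivial} lifts of $\mathcal{E}$ reduces the problem to deciding whether the two pullbacks $\pi_0^* \mathcal{E}$ and $\pi_\eta^* \mathcal{E}$ are equivalent as objects of $\Perf(X^{\mathcal{I}[1]})$ lying over $\mathcal{E}$, and by Corollary \ref{lem_sheafs_triv_sqzero} the set of equivalence classes of such lifts is naturally a torsor under $\Ext^2_X(\mathcal{E}, \mathcal{E} \otimes \mathcal{I})$. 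Thus the obstruction is the single class computing the difference $[\pi_\eta^*\mathcal{E}] - [\pi_0^*\mathcal{E}]$.

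The main task, and what I expect to be the principal obstacle, is identifying this difference explicitly with $(\mathcal{E} \otimes \kappa_{X/X'/k}) \circ \At_{X/k}(\mathcal{E})$. The strategy is to factor $\pi_\eta$ through $X^\LL$: by construction of $\eta$ (the universal derivation followed by Kodaira--Spencer, see Lemma \ref{lem_com_square_der} and Proposition \ref{prop_pb_ani_ring}), we have a factorization $X^{\mathcal{I}[1]} \to X^\LL \to X$ whose second map is $\pi_\delta$ and whose first map is induced by $\id_{\catO_X} \oplus \kappa_{X/X'/k}$. Under the bijection $\Phi$ of Corollary \ref{lem_sheafs_triv_sqzero}, the trivial pullback $\pi_0^* \mathcal{E}$ corresponds to $0$, and the pullback $\pi_\delta^*\mathcal{E}$ on $X^\LL$ tensored with the universal sequence defining the class $\alpha_X$ (Definition \ref{aea}) recovers exactly the Atiyah class $\At_{X/k}(\mathcal{E})$ by Definition \ref{aeb}. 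Applying the map induced by $\kappa_{X/X'/k}$ on the $\LL_{X/k}$ factor then changes $\At_{X/k}(\mathcal{E}) \colon \mathcal{E} \to \mathcal{E} \otimes \LL_{X/k}[1]$ into $(\mathcal{E} \otimes \kappa_{X/X'/k}) \circ \At_{X/k}(\mathcal{E}) \colon \mathcal{E} \to \mathcal{E} \otimes \mathcal{I}[2]$.

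Finally, putting the pieces together, the lift $\mathcal{E}'$ exists if and only if the difference class is zero, i.e.\ if and only if $\ob(\mathcal{E}, X, X') = 0$ in $\Ext^2_X(\mathcal{E}, \mathcal{E} \otimes \mathcal{I})$. The hardest technical point will be verifying the naturality square needed to match the torsor identification of Corollary \ref{lem_sheafs_triv_sqzero} with the universal-Atiyah-class construction on $X^\LL$; this amounts to checking that the functor $\Phi$ intertwines the base-change of $\alpha_X$ along $\id \oplus \kappa$ with the product $(\mathcal{E} \otimes \kappa) \circ \At_{X/k}(\mathcal{E})$, which is a formal but slightly delicate chase through the definitions in Sections \ref{sec_mod_sq_zero} and \ref{sec_atiyah}.
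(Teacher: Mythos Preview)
Your proposal is essentially the same approach as the paper's proof: express $X'$ as a pushout $X \amalg_{X^{\mathcal{I}[1]}} X$ coming from Proposition \ref{prop_pb_ani_ring}, invoke the gluing theorem \cite[Theorem 16.2.0.1]{sag} to reduce to comparing $\eta_0^*\mathcal{E}$ with $\eta^*\mathcal{E}$, apply Corollary \ref{lem_sheafs_triv_sqzero}, and then factor $\eta$ through the universal derivation to recognize the class as $(\mathcal{E}\otimes\kappa_{X/X'/k})\circ\At_{X/k}(\mathcal{E})$. The only substantive difference is in how the global pushout is obtained: rather than gluing Zariski-locally, the paper takes the affine pushout $\spet(R') \simeq \spet(R)\amalg_{\spet(R\oplus I[1])}\spet(R)$ in $\spdm$ and then base-changes along $\mathfrak{X}'\to\spet(R')$ using \cite[Proposition 16.3.1.1]{sag}, checking via smoothness of $X'/R'$ that the derived fiber $\mathfrak{X}'\times_{\spet(R')}\spet(R)$ agrees with $\mathfrak{X}$; this sidesteps any gluing argument. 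Two small points to add to your write-up: the cited gluing result is stated for connective quasi-coherent sheaves on spectral Deligne--Mumford stacks (not perfect complexes on spectral schemes), so one first shifts $\mathcal{E}$ to be connective and passes to $\spdm$; and the factorization of $\eta^\#$ as $(\id,\kappa_{X/X'/k})\circ\delta$ on the level of structure sheaves is exactly what Lemma \ref{lem_kodaira_base_change} supplies.
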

\begin{proof}
By shifting we may assume $\mathcal{E}$ is connective. Let $\mathcal{X}, \mathcal{X}'$ be the associated spectral schemes (see \cite[Remark 1.1.8.5]{sag}), and let $\mathfrak{X}, \mathfrak{X}'$ be the associated spectral Deligne-Mumford stacks (see \cite[1.6.6, Remark 1.6.6.5]{sag}). 
By Proposition \ref{prop_pb_ani_ring} and \cite[Prop 16.1.3.1]{sag}, we get a pushout diagram
\[
\begin{tikzcd}
\spet(R \oplus I[1]) \dar{\eta} \rar{\eta_0}& \spet(R) \dar \\
\spet(R) \rar& \spet(R')
\end{tikzcd}
\]
in the $\infty$-category $\spdm$ of spectral Deligne-Mumford stacks. Write
\[
\mathfrak{X}^I := \mathfrak{X}'  \times_{\spet(R')} \spet(R \oplus I[1])
\]
so that by \cite[Prop 16.3.1.1]{sag}, we get a pushout diagram
\[
\begin{tikzcd}
\mathfrak{X}^I  \rar{\eta_0} \dar{\eta}&  \mathfrak{X}'  \times_{\spet(R')} \spet(R) \dar \\
 \mathfrak{X}' \times_{\spet(R')} \spet(R) \rar&  \mathfrak{X}'
\end{tikzcd}
\]
in the $\infty$-category $\spdm$ of spectral Deligne-Mumford stacks. By \cite[Corollary 1.6.7.5]{sag}, the fiber product $\mathfrak{X'} \times_{\spet(R')} \spet(R)$ may be computed as the fiber product $\mathcal{X}' \times_{\Spec(R')} \Spec(R)$ in the $\infty$-category $\spsch$ of (connective) spectral schemes. By (the proof of) \cite[Corollary 4.3.1.11]{htt} one may show that this fiber product is the spectrally ringed space
\[
(X', \catO_{X'} \otimes_{R'} R)
\]
where the tensor product is computed in the $\infty$-category $\Shv_{\CAlg(\derD(k))}(X')$. Now since $X'$ is smooth over $R'$, the map $R'  \to \catO_{X'}$ is flat, so we conclude the fiber product $\mathfrak{X}' \times_{\spet(R')} \spet(R)$ is simply $\mathfrak{X}$. We thus get a pushout diagram
\[
\begin{tikzcd}
\mathfrak{X}^I  \rar{\eta_0} \dar{\eta}& \mathfrak{X} \dar \\
\mathfrak{X} \rar& \mathfrak{X}'
\end{tikzcd}
\]
in $\spdm$. By \cite[Theorem 16.2.0.1]{sag}, we get a pullback diagram
\[
\begin{tikzcd}
\QCoh(\mathfrak{X}')^{\mathrm{cn}} \rar \dar & \QCoh(\mathfrak{X})^{\mathrm{cn}} \dar{\eta^*} \\
\QCoh(\mathfrak{X})^{\mathrm{cn}} \rar{\eta_0^*}& \QCoh\(\mathfrak{X}_I  \)^{\mathrm{cn}} 
\end{tikzcd}
\]
of $\infty$-categories. By \cite[Corollary 2.2.6.2]{sag} we may identify $\QCoh(\mathfrak{X})$ with $\derD(X)$ (and similarly for $X'$). We conclude that $\mathcal{E}'$ exists if and only if there exists an equivalence $\eta^* \mathcal{E} \simeq \eta_0^*\mathcal{E}$ in $\QCoh\(\mathfrak{X}^I \)$. By Corollary \ref{lem_sheafs_triv_sqzero}, this is the case if and only if the class
\[
[\eta^*\mathcal{E}] \in \Ext^1_{\catO_X}(\mathcal{E}, \mathcal{E} \oplus \mathcal{I}[1])
\]
is equal to $0$ (the zero object is the class of $[\eta_0^*(\mathcal{E})]$). The result now follows directly by noting that the map $\eta^\#$ factors as
\[
\catO_X \xrightarrow{\delta} \catO_X \oplus \LL_{X / k} \xrightarrow{(\id, \kappa_{X / X'  /k})} \catO_X \oplus (I \otimes_R \catO_X)
\]
(which follows essentially from Corollary \ref{lem_kodaira_base_change}).  
\end{proof}
\subsection{Computing signs for (shifted) permutation actions}
In this section we gather some technical computations in order to streamline the proofs of Section \ref{sec_relate}. 

We start by introducing some notation. For any $n \geq 0$, we denote with $\mathfrak{S}_n$ the symmetric group on $n$ elements. For any symmetric monoidal $\infty$-category $\mathcal{C}$, any object $X \in \mathcal{C}$ and any $\sigma \in \mathfrak{S}_n$, the symmetric monoidal structure provides a map
\[
\sigma(X)\colon X^{\otimes n} \to X^{\otimes n}
\]
If $\mathcal{C}$ is also stable, then $\mathcal{C}$ comes with a shift (suspension) functor $[1] \colon \mathcal{C} \to \mathcal{C}$. These constructions are related in the following way.
\begin{lemma}\label{sign1}-
Let $\mathcal{C}$ be a presentable stable symmetric monoidal $\infty$-category  for which the tensor product preserves finite limits in each variable. For any $\sigma \in \mathfrak{S}_n$ and $X \in \mathcal{C}$, there exists a commutative diagram
\begin{equation}\label{tensor_sign}
\begin{tikzcd}[column sep = huge]
X^{\otimes n}[n] \dar{\sim} \rar{\sigma(X)[n]} & X^{\otimes n}[n] \dar{\sim} \\
(X[1])^{\otimes n} \rar{s} & (X[1])^{\otimes n}
\end{tikzcd}
\end{equation}
in $\mathcal{C}$, where $s = \mathrm{sgn}(\sigma) \cdot \sigma(X[1])$. 
\end{lemma}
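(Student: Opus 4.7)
The plan is to reduce the claim to the case $n=2$ with $\sigma$ the transposition, and then to invoke the standard Koszul sign identity in stable homotopy theory.

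First I would reduce to $\sigma$ an adjacent transposition: since adjacent transpositions generate $\mathfrak{S}_n$ and both $\sigma \mapsto \sigma(X)$ (coming from the symmetric monoidal structure) and $\sigma \mapsto \mathrm{sgn}(\sigma)\cdot \sigma(X[1])$ are multiplicative in $\sigma$, the statement for general $\sigma$ follows from the case of a single adjacent transposition by composing the corresponding commutative squares. Tensoring with identity maps on the remaining $n-2$ tensor factors, this further reduces the problem to the case $n=2$ with $\sigma$ the swap.

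Next I would use that in any presentable stable symmetric monoidal $\infty$-category whose tensor product preserves finite limits in each variable (hence also finite colimits), the suspension functor $[1]$ is naturally equivalent to $(-)\otimes \mathbf{1}[1]$, where $\mathbf{1}$ denotes the monoidal unit: this follows by tensoring the cofiber sequence $\mathbf{1}\to 0\to \mathbf{1}[1]$ with $X$. Writing
\[
X[1]\otimes X[1]\simeq X\otimes X\otimes \mathbf{1}[1]\otimes \mathbf{1}[1]\simeq (X\otimes X)[2]
\]
via this identification, the naturality and symmetric monoidal functoriality of the symmetry imply that the swap on $X[1]\otimes X[1]$ corresponds to the tensor product of the swap on $X\otimes X$ with the swap on $\mathbf{1}[1]\otimes\mathbf{1}[1]$. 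Thus the lemma is reduced to verifying that the swap on $\mathbf{1}[1]\otimes\mathbf{1}[1]$, viewed as an endomorphism of $\mathbf{1}[2]$, is $-\mathrm{id}$.

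This final identity is the main obstacle, and is the stable-$\infty$-categorical incarnation of the Koszul sign rule. To prove it in a general presentable stable symmetric monoidal $\infty$-category $\mathcal{C}$, I would invoke the universal property of the $\infty$-category of spectra $\mathrm{Sp}$ as the unit of presentable stable symmetric monoidal $\infty$-categories: there is an essentially unique symmetric monoidal colimit-preserving functor $\mathrm{Sp}\to \mathcal{C}$ sending the sphere spectrum $\mathbb{S}$ to $\mathbf{1}$, and hence $\mathbb{S}[1]$ to $\mathbf{1}[1]$. Since symmetric monoidal functors preserve the swap maps, the identity reduces to the corresponding claim in $\mathrm{Sp}$, namely that the swap on $S^1\wedge S^1\simeq S^2$ is $-\mathrm{id}$ in the stable homotopy category. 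This is classical: it is the stabilization of the fact that the coordinate-swapping involution on $S^2$ has degree $-1$, or equivalently that $\pi_0\mathrm{End}(\mathbb{S})=\mathbb{Z}$ and the swap on $S^1\wedge S^1$ represents $-1$.
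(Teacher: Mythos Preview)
Your proposal is correct and follows essentially the same strategy as the paper: reduce to $n=2$ with $\sigma$ the transposition, use the unique symmetric monoidal functor $\mathrm{Sp}\to\mathcal{C}$ to import the Koszul sign from spectra to the unit $\mathbf{1}$, and then pass from $\mathbf{1}$ to general $X$ via the identification $X[1]\simeq X\otimes\mathbf{1}[1]$. The paper carries out the last step by an explicit diagram chase with the permutations $\sigma_{ij}$ on $X\otimes\mathcal{O}[1]\otimes\mathcal{O}[1]\otimes X$, which is just a concrete unpacking of your sentence about the swap on $X[1]\otimes X[1]$ decomposing as the swap on $X\otimes X$ tensored with the swap on $\mathbf{1}[1]\otimes\mathbf{1}[1]$.
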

\begin{proof}
By decomposing $\sigma$ into cycles of length $2$, we may reduce to the case where $n = 2$ and $\sigma$ is the only nontrivial element of $\mathfrak{S}_2$. Denote with $\mathbb{S}$ the sphere spectrum in the stable $\infty$-category of spectra $\mathrm{Sp}$. By an explicit computation, one may verify that the composition
\[
\mathbb{S}[2] \xrightarrow{\sim} \mathbb{S}[1] \underset{\mathbb{S}}{\otimes} \mathbb{S}[1] \xrightarrow{\sigma(\mathbb{S}[1])} \mathbb{S}[1] \underset{\mathbb{S}}{\otimes} \mathbb{S}[1] \xrightarrow{\sim} \mathbb{S}[2]
\]
is equivalent to $-\id_{\mathbb{S}}[2]$. Let $\catO \in \mathcal{C}$ be the unit for the symmetric monoidal structure, so that we have an essentially unique symmetric monoidal functor $\mathrm{Sp} \to \mathcal{C}$ sending $\mathbb{S} \mapsto \catO$ (see \cite[Corollary 4.8.2.19]{ha}). Then by functoriality we see that the composition
\[
\catO[2] \xrightarrow{\sim} \catO[1] \underset{\catO}{\otimes} \catO[1] \xrightarrow{\sigma(\catO[1])} \catO[1] \underset{\catO}{\otimes} \catO[1] \xrightarrow{\sim} \catO[2]
\]
is equivalent to $-\id_{\catO}[2]$. For arbitrary $X \in \mathcal{C}$, we get a commutative diagram
\[
\begin{tikzcd}[column sep = tiny]
(X \otimes X)[2] \dar \rar{-\id} & (X \otimes X)[2] \dar \rar{\sigma_{12}}  & (X \otimes X)[2] \dar \\
\dar X  \otimes (\catO[1]  \otimes \catO[1])  \otimes X \rar[swap]{\sigma_{23}} & X  \otimes (\catO[1]  \otimes \catO[1]) \otimes X \rar[swap]{\sigma_{14}}  & X  \otimes (\catO[1]  \otimes \catO[1])  \otimes X \dar \\
X[1] \otimes X[1] \arrow{rr}{\sigma_{12}} & & X[1] \otimes X[1]
\end{tikzcd}
\]
where we denote with $\sigma_{ij}$ the morphism given by the symmetric monoidal structure on $\mathcal{C}$ swapping factors $i$ and $j$ in a tensor product. The outer square now gives the desired diagram.
\end{proof}
For any $n \in \NN$, we shall denote by $\sigma^{i, n}$ the unique permutation sending $i \mapsto n$ and keeping all other elements in order. Explicitly, $\sigma^{i, n}$ is given by
\[
\sigma^{i, n}(j) := \begin{cases} 
j & j < i \\
n & j = i\\
j - 1 & j > i
\end{cases}
\]
for $j \in \{1, \dots, n\}$.

 For $\mathcal{C}$ any stable symmetric monoidal $\infty$-category and $X \in \mathcal{C}$ we define
\[
\Sigma^{+, n}_X := \sum_{i = 1}^n \sigma^{i, n}_X \qquad \Sigma^{-, n}_X := \sum_{i = 1}^n (-1)^{n - i} \sigma^{i, n}_X
\]
as maps $X^{\otimes n} \to X^{\otimes n}$. 
\begin{lemma}\label{sign2}
Let $k$ be a ring and let $X$ be a stack over $k$. Write 
\[
\Sigma^{-} := \Sigma^{-,i}_{\LL_{X / k}}
\]
Then there exists a commutative diagram
\[
\begin{tikzcd}
\LL_{X / k}^{\otimes i} \dar{\Sigma^{-}} \rar & \LL_{X / k}^i \dar{\Delta_{i - 1}} \\
\LL_{X / k}^{\otimes i} \rar & \LL_{X / k}^{i - 1} \underset{\catO_X}{\otimes} \LL_{X / k}
\end{tikzcd}
\]
in $\derD(X)$, where $\Delta_{i - 1}$ is the map from Definition \ref{def_delta}. 
\end{lemma}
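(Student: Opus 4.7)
My plan is to reduce the identity to a concrete computation in the abelian category of modules over a polynomial $k$-algebra, where both compositions in the square can be written out on basis tensors and compared directly.

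First I would observe that every object and morphism appearing in the diagram is either defined by left Kan extension from the full subcategory of $\AlgMod_k^\heart$ spanned by pairs $(A, M)$ with $A$ a finitely generated polynomial $k$-algebra and $M$ a finite free $A$-module (this is the case for the exterior powers $\bigwedge^i$, for the two horizontal wedge projections $M^{\otimes i} \to \bigwedge^i M$ and $M^{\otimes i} \to \bigwedge^{i-1} M \otimes M$, and for $\Delta_{i-1}$ by Definition \ref{def_delta}), or arises directly from the symmetric monoidal structure (the permutations $\sigma^{j,i}$ and hence $\Sigma^-$). All four constructions preserve sifted colimits in $M$, and $\LL_{X/k}$ on an affine open $\Spec(A) \subseteq X$ is computed from a simplicial polynomial resolution of $A$ over $k$; by the universal property of left Kan extension (Proposition \ref{left_kan}) and naturality in $\Spec(A) \to X$, it therefore suffices to prove commutativity of the square in the case $X = \Spec(A)$ with $A$ a polynomial $k$-algebra and $M = \Omega_{A/k}$ a finite free $A$-module.

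In that situation all four corners live in $\derD(A)^\heart$, and one may evaluate both compositions on a basis tensor $m_1 \otimes \cdots \otimes m_i$. The top-right path first projects to $m_1 \wedge \cdots \wedge m_i$ and then applies $\Delta_{i-1}$ to produce
\[
\sum_{k=1}^i (-1)^k (m_1 \wedge \cdots \wedge \widehat{m_k} \wedge \cdots \wedge m_i) \otimes m_k
\]
in $\bigwedge^{i-1}M \otimes M$. The bottom-left path, using the convention that $\sigma \in \mathfrak{S}_i$ acts on $M^{\otimes i}$ by sending $m_1 \otimes \cdots \otimes m_i$ to $m_{\sigma^{-1}(1)} \otimes \cdots \otimes m_{\sigma^{-1}(i)}$, sends $m_1 \otimes \cdots \otimes m_i$ via $\sigma^{j,i}$ to the tensor obtained by removing the $j$th factor and reinserting it at position $i$; weighting by $(-1)^{i-j}$, summing over $j$, and projecting the first $i-1$ factors to the wedge yields a sum of essentially the same terms.

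The main obstacle, and the only nontrivial step, is then to verify that the signs match, i.e.\ that after carrying out the projection to $\bigwedge^{i-1} M \otimes M$ the weights $(-1)^{i-j}$ prescribed by $\Sigma^-$ reindex to the weights $(-1)^k$ prescribed by $\Delta_{i-1}$. I would carry this out by fixing the conventions for the $\mathfrak{S}_i$-action and the boundary signs in $\Delta_{i-1}$ from Definition \ref{def_delta}, and then comparing the two expressions term by term; once this bookkeeping is done the two sides agree as elements of $\bigwedge^{i-1}M \otimes M$, and commutativity of the square follows.
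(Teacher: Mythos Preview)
Your proposal is correct and follows exactly the paper's approach: reduce to the case $X = \Spec(A)$ for $A$ a finitely generated polynomial $k$-algebra, where the claim becomes an explicit comparison of two maps of discrete $A$-modules computable on basis tensors. The paper's proof is the one-line version of what you wrote (``Unwinding the definitions, we may reduce to the case where $X$ is the spectrum of a finitely generated polynomial algebra over $k$, and the result follows by definition of $\Delta_{i-1}$''); your expansion of the reduction step via left Kan extension and sifted colimits, and your identification of the sign bookkeeping as the only substantive content, are exactly the details one would fill in.
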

\begin{proof}
Unwinding the definitions, we may reduce to the case where $X$ is the spectrum of a finitely generated polynomial algebra over $k$, and the result follows  by definition of $\Delta_{i - 1}$. 
\end{proof}
\begin{corollary}\label{corolsigma}
Let $k$ be a ring and let $X$ be a stack over $k$. Write 
\[
\Sigma^{+} := \Sigma^{+,i}_{\LL_{X / k}[1]}
\]
Then there exists a commutative diagram
\[
\begin{tikzcd}
\LL_{X / k}^{\otimes i}[i] \dar{\Sigma^{+}} \rar & \LL_{X / k}^i[i] \dar{\Delta_{i - 1}} \\
\LL_{X / k}^{\otimes i}[i] \rar & \LL_{X / k}^{i - 1}[i - 1] \underset{\catO_X}{\otimes} \LL_{X / k}[1]
\end{tikzcd}
\]
in $\derD(X)$.
\end{corollary}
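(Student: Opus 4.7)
The plan is to derive this corollary directly from Lemma \ref{sign2} by applying the suspension functor $[i]$ and then using Lemma \ref{sign1} to translate between the unshifted permutation action $\sigma(\LL_{X/k})[i]$ and the shifted permutation action $\sigma(\LL_{X/k}[1])$.

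First I would apply $[i]$ to the entire commutative square of Lemma \ref{sign2} to obtain a commutative diagram
\[
\begin{tikzcd}
\LL_{X / k}^{\otimes i}[i] \dar{\Sigma^{-}[i]} \rar & \LL_{X / k}^i[i] \dar{\Delta_{i - 1}[i]} \\
\LL_{X / k}^{\otimes i}[i] \rar & \LL_{X / k}^{i - 1}[i] \underset{\catO_X}{\otimes} \LL_{X / k}
\end{tikzcd}
\]
in $\derD(X)$. Next I would use the canonical identifications $\LL_{X/k}^{\otimes i}[i] \simeq (\LL_{X/k}[1])^{\otimes i}$ on the left and $\LL_{X/k}^{i-1}[i] \otimes \LL_{X/k} \simeq \LL_{X/k}^{i-1}[i-1] \otimes \LL_{X/k}[1]$ on the right to rewrite the lower-right corner as in the statement; the map along the bottom and the right vertical map $\Delta_{i-1}[i]$ transport over with no extra sign.

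The key step is to reinterpret $\Sigma^{-}[i]$ under the identification $\LL_{X/k}^{\otimes i}[i] \simeq (\LL_{X/k}[1])^{\otimes i}$. By Lemma \ref{sign1}, for any $\sigma \in \mathfrak{S}_i$ the shifted permutation $\sigma(\LL_{X/k})[i]$ corresponds under this identification to $\operatorname{sgn}(\sigma)\cdot\sigma(\LL_{X/k}[1])$. The permutation $\sigma^{j,i}$ used in the definition of $\Sigma^{\pm,i}$ is the product of $i-j$ adjacent transpositions, hence $\operatorname{sgn}(\sigma^{j,i}) = (-1)^{i-j}$. Therefore, under the identification,
\[
\Sigma^{-}[i] = \sum_{j=1}^{i} (-1)^{i-j} \sigma^{j,i}(\LL_{X/k})[i] \ \longleftrightarrow\  \sum_{j=1}^{i} (-1)^{i-j}(-1)^{i-j} \sigma^{j,i}(\LL_{X/k}[1]) = \Sigma^{+}.
\]
The two signs cancel, so the left vertical arrow becomes precisely $\Sigma^{+}$, yielding the desired diagram.

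There is no real obstacle here beyond careful bookkeeping of signs; the content is entirely contained in Lemma \ref{sign2} and the shift-vs-permutation sign computation of Lemma \ref{sign1}. The only minor subtlety is ensuring that the identifications $\LL_{X/k}^{\otimes i}[i] \simeq (\LL_{X/k}[1])^{\otimes i}$ are consistently chosen on both the source and target so that the map along the top and along the bottom of the transported square is unchanged (equivalently, that no additional sign appears on the map $\Delta_{i-1}$ itself under the shift).
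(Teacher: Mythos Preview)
Your proposal is correct and is exactly the approach the paper takes: its proof reads simply ``Combine Lemma \ref{sign1} and Lemma \ref{sign2},'' and you have spelled out that combination with the right sign bookkeeping (in particular, $\operatorname{sgn}(\sigma^{j,i}) = (-1)^{i-j}$, so the signs cancel to turn $\Sigma^{-}[i]$ into $\Sigma^{+}$).
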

\begin{proof}
Combine Lemma \ref{sign1} and Lemma \ref{sign2}.
\end{proof}

\subsection{Relating the obstruction classes}
\label{sec_relate}
In this section we relate the obstruction class to deforming a perfect complex with the Hodge-theoretic obstruction classes of its Chern character staying in the Hodge filtration along the deformation. The short slogan is that there exists a semiregularity map between the two obstruction spaces, mapping the former to the latter. This is essentially a result from Buchweitz and Flenner, see \cite[Proposition 4.2]{buchflen}, however they only defined the Hodge-theoretic obstruction in a restricted characteristic zero setting, using Bloch's technique. 

We start by defining the map that will relate the obstruction classes.
\begin{definition}\label{def_semiregularity}
Let $i, j \in \ZZ_{\geq 0}$. Let $k$ be a ring such that $i!$ is invertible in $k$. Let $X$ be a scheme over $k$, let $\mathcal{E} \in \Perf(X)$. We define the \emph{semiregularity map}
\[
\sigma_{X, i} \colon \Ext^j_X(\mathcal{E}, \mathcal{E}) \to \coh^{i + j}(\LL^i_{X / k})
\]
as the composition
\[
\Ext^j_X(\mathcal{E}, \mathcal{E}) \xrightarrow{\frac{\At_{X / k}^i(\mathcal{E})}{i!}} \Ext_X^{i + j}(\mathcal{E}, \mathcal{E} \otimes \LL^i_{X / k}) \xrightarrow{\tr_{\mathcal{E}}} \coh^{i + j}(\LL^i_{X / k})
\]
where the first map is induced by postcomposing with $\frac{\At_{X / k}^i(\mathcal{E})}{i!}$.  

If $X$ is smooth over $k$ of dimension $d$ such that $d!$ is invertible in $k$, we write
\[
\sigma_{X} \colon \Ext^j_X(\mathcal{E}, \mathcal{E}) \to \bigoplus_{i = 0}^d \coh^{i + j}(\Omega^i_{X / k})
\]
for the \emph{total semiregularity map} given componentwise by $\sigma_{X, i}$. 
\end{definition}
The following result contains the essential computation, expressing the image of the obstruction class under the semiregularity map in terms of the Atiyah class. 
\begin{lemma}\label{tr_e}
Let $k$ be a ring such that $i!$ is invertible in $k$, and let $R' \to R$ be a surjective ring map with kernel $I$ such that $I^2 = 0$. Let $X'$ be a smooth and proper scheme over $R'$, let $X = X' \times_{\Spec(R')} \Spec(R)$  and let $\mathcal{I} = I \otimes_{R} \catO_X$. Let $\mathcal{E} \in \Perf(X)$ and let $\mathcal{E}_0 := \mathcal{E} \rvert_{X_0}$. 

If $\alpha$ denotes the composition
\[
\begin{tikzcd}[column sep =3cm,row sep=0cm]
\mathcal{E} \rar{\At_{X / k}(\mathcal{E})^{i}} &\mathcal{E} \otimes \LL^{i}_{X / k}[i] \\ 
\ \rar{\mathcal{E} \otimes \Delta_{i - 1}[i - 1]}& \mathcal{E} \otimes  \LL_{X / k}^{i - 1}[i - 1] \otimes \LL_{X  / k} [1] \\ 
\ \rar{\mathcal{E} \otimes \LL^{i - 1}_{X / k}[i - 1] \otimes \kappa_{X / X' / k}[1]} & \mathcal{E} \otimes \LL^{i - 1}_{X / k}[i - 1] \otimes \mathcal{I}[2]
\end{tikzcd}
\]
then 
\[
\tr_{\mathcal{E}}(\alpha) = i! \cdot \sigma_{X, i - 1}(\ob(\mathcal{E}, X, X'))
\]
in $\coh^{i + 1}(\LL^{i - 1}_{X / k} \otimes \mathcal{I})$
\end{lemma}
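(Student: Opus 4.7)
The plan is to decompose $\alpha$ into $i$ equal summands by rewriting the antisymmetrization $\Delta_{i-1}$ in terms of an explicit sum of permutations, and then identify each summand with the composition $(\At^{i-1}(\mathcal{E}) \otimes \mathcal{I}) \circ (\mathcal{E} \otimes \kappa[1]) \circ \At(\mathcal{E})$, whose trace equals $(i-1)! \cdot \sigma_{X,i-1}(\ob(\mathcal{E}, X, X'))$ by Definition \ref{def_semiregularity}.

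Let $\tilde{\At}^i(\mathcal{E}) \colon \mathcal{E} \to \mathcal{E} \otimes \LL_{X/k}[1]^{\otimes i}$ denote the $i$-fold iterated Atiyah class before projecting to $\LL^i[i]$, and write $\pi_i \colon \LL[1]^{\otimes i} \to \LL^i[i]$ for the antisymmetric projection, so that $\At^i(\mathcal{E}) = (\mathcal{E} \otimes \pi_i) \circ \tilde{\At}^i(\mathcal{E})$. By Corollary \ref{corolsigma}, the composition $\Delta_{i-1} \circ \pi_i$ factors as
$$\LL_{X/k}[1]^{\otimes i} \xrightarrow{\Sigma^+} \LL_{X/k}[1]^{\otimes i} \xrightarrow{\pi_{i-1} \otimes \id} \LL^{i-1}_{X/k}[i-1] \otimes \LL_{X/k}[1],$$
where $\Sigma^+ = \sum_{k=1}^i \sigma^{k,i}_{\LL[1]}$ and $\pi_{i-1}$ is the analogous antisymmetric projection on the first $i-1$ factors. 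Consequently $\alpha = \sum_{k=1}^i \alpha_k$ with
$$\alpha_k := (\mathcal{E} \otimes \pi_{i-1} \otimes \kappa[1]) \circ (\mathcal{E} \otimes \sigma^{k,i}_{\LL[1]}) \circ \tilde{\At}^i(\mathcal{E}).$$

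The crux is that $\tilde{\At}^i(\mathcal{E})$ is $\mathfrak{S}_i$-invariant with respect to the natural symmetric action on $\LL_{X/k}[1]^{\otimes i}$: swapping two adjacent Atiyah applications produces two canceling signs, one from the graded commutativity of the iterated Yoneda pairing of the degree-one class $\At(\mathcal{E})$, and one from the shift-swap sign of Lemma \ref{sign1}. Granting this symmetry, $\sigma^{k,i}_{\LL[1]}$ acts trivially on $\tilde{\At}^i(\mathcal{E})$ for every $k$, so each $\alpha_k$ coincides with $\alpha_i$. For $k = i$, $\sigma^{i,i}$ is the identity, and using the associativity $\tilde{\At}^i(\mathcal{E}) = (\tilde{\At}^{i-1}(\mathcal{E}) \otimes \LL[1]) \circ \At(\mathcal{E})$ together with $\pi_{i-1} \circ \tilde{\At}^{i-1}(\mathcal{E}) = \At^{i-1}(\mathcal{E})$ and functoriality of the tensor product, one directly identifies
$$\alpha_i = (\At^{i-1}(\mathcal{E}) \otimes \mathcal{I}) \circ (\mathcal{E} \otimes \kappa[1]) \circ \At(\mathcal{E}).$$

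Taking traces and summing the $i$ equal contributions yields
$$\tr_{\mathcal{E}}(\alpha) = i \cdot \tr_{\mathcal{E}}(\alpha_i) = i \cdot (i-1)! \cdot \sigma_{X, i-1}(\ob(\mathcal{E}, X, X')) = i! \cdot \sigma_{X, i-1}(\ob(\mathcal{E}, X, X')),$$
as claimed. The main obstacle is a rigorous justification of the $\mathfrak{S}_i$-invariance of $\tilde{\At}^i(\mathcal{E})$; the cleanest route is probably to realize $\tilde{\At}^i(\mathcal{E})$ as a pullback-pushforward along the square-zero thickening $X^\LL$ of Definition \ref{aea} iterated $i$ times, where the symmetric monoidal structure on $\derD(X^\LL)$ makes the invariance of the iterated universal Atiyah class manifest; Lemma \ref{sign1} then translates this symmetric invariance into invariance on $\LL_{X/k}[1]^{\otimes i}$ after accounting for shifts.
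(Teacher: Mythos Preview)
Your argument tracks the paper's proof closely: both invoke Corollary~\ref{corolsigma} to express $(\mathcal{E}\otimes\Delta_{i-1})\circ\At_{X/k}(\mathcal{E})^i$ as a sum of $i$ terms indexed by the permutations $\sigma^{k,i}$, argue that the summands contribute equally, and identify the $k=i$ term with $(i-1)!\cdot\sigma_{X,i-1}(\ob(\mathcal{E},X,X'))$ via Proposition~\ref{prop_obstruction_class}. The one substantive difference is how the equal contributions are obtained. You assert $\mathfrak{S}_i$-invariance of $\tilde{\At}^i(\mathcal{E})$ in $\Ext^i_X(\mathcal{E},\mathcal{E}\otimes\LL_{X/k}[1]^{\otimes i})$ \emph{before} taking the trace; this is true, but as you acknowledge it needs its own argument. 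The paper sidesteps this by applying $\tr_{\mathcal{E}}$ first and appealing to ``symmetry of the trace map'': after tracing, one only needs that $(\pi_{i-1}\otimes\id)\circ\sigma^{k,i}_{\LL[1]}\circ\tr_{\mathcal{E}}(\tilde{\At}^i)$ is independent of $k$. This follows from two easy facts: cyclicity of $\tr_{\mathcal{E}}$ applied to the splitting $\tilde{\At}^i=(\tilde{\At}^{i-j}\otimes\LL[1]^{\otimes j})\circ\tilde{\At}^j$ gives $C_i$-invariance of $\tr_{\mathcal{E}}(\tilde{\At}^i)$, and every $\sigma^{k,i}$ differs from a cyclic shift only by a permutation of the first $i-1$ positions, which $\pi_{i-1}$ absorbs without sign by Lemma~\ref{sign1}. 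So your detour through iterated copies of $X^\LL$ is unnecessary; just take the trace earlier and the obstacle disappears.
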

\begin{proof}
By Corollary \ref{corolsigma}, we obtain a commutative diagram
\[
\begin{tikzcd}
\mathcal{E} \dar{\At_{X / k}(\mathcal{E})^i} \rar{\At_{X / k}(\mathcal{E})^i} & \mathcal{E} \otimes \LL_{X / k}^i \dar{\Delta_{i - 1}} \\
\mathcal{E} \otimes (\LL_{X / k}[1])^{\otimes i} \rar{\Sigma^+} & \mathcal{E} \otimes \LL_{X / k}^{i - 1} \otimes \LL_{X / k}
\end{tikzcd}
\]
By symmetry of the trace map, it follows that
\begin{align*}
\tr_{\mathcal{E}}((\mathcal{E} \otimes \Delta_{i - 1}) \circ \At_{X / k}(\mathcal{E})^i) &= \tr_{\mathcal{E}}\((\mathcal{E} \otimes \Sigma^+) \circ \At_{X / k}(\mathcal{E})^{i}\) \\
&= \sum_{j = 1}^i \sigma^{j, i}_{\LL_{X / k}} \circ \tr_{\mathcal{E}}(\At_{X / k}(\mathcal{E})^{i})\\
&= i \cdot \tr_{\mathcal{E}}(\At_{X / k}(\mathcal{E})^{i})
\end{align*}
in $\coh^i(\LL_{X / k}^{i - 1} \otimes \LL_{X / k})$. Thus
\[
\tr_{\mathcal{E}}(\alpha) = i \cdot \tr_{\mathcal{E}}((\id \otimes \kappa_{X / X'/ k}[1]) \circ \At_{X / k}(\mathcal{E})^{i})
\]
in $\coh^i(\LL_{X / k}^{i - 1}) \otimes_{R} I$. By Proposition \ref{prop_obstruction_class}, we obtain
\[
\tr_{\mathcal{E}}(\alpha) = i \cdot \tr_{\mathcal{E}}(\At_{X / k}(\mathcal{E})^{i - 1} \circ \ob(\mathcal{E}, X, X'))
\]
which proves the result.
\end{proof}
The following result compares the obstruction classes of a complex and it's Chern character in characteristic zero.
\begin{theorem}\label{thm_compare_obs_char0}
Let $k$ be a ring with $\QQ \subseteq k$, and suppose $R' \to R$ is a square zero morphism of nilpotent thickenings with $I = \ker(R' \to R)$. Let $X'  \xrightarrow{f} \Spec(R')$ be a smooth and proper morphism, and set
\begin{align*}
X &:= X'  \times_{\Spec(R' )} \Spec(R) \\
X_0 &:= X'  \times_{\Spec(R' )} \Spec(k)
\end{align*}
Write $\mathcal{I} := I \otimes_{R} \catO_{X}$. Let $\mathcal{E} \in \Perf(X)$ and let $\mathcal{E}_0 := \mathcal{E} \rvert_{X_0}$. 

Then for all $i \geq 1$, the semiregularity map
\[
\sigma_{X, p} \colon \Ext_X^2(\mathcal{E}, \mathcal{E} \otimes \mathcal{I}) \to \coh^{i + 1}(\Omega^{i  -1}_{X / R}\otimes \mathcal{I})
\]
sends $\ob(\mathcal{E}, X, X')$ to $\ob^{\widehat{\dR}}_{X' / R'}(\ch_{i}(\mathcal{E}_0))$. 
\end{theorem}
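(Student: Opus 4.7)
The plan is to directly combine three results that have already been established: the formula for the Hodge-theoretic obstruction class as a cup product with the Kodaira--Spencer class (Proposition \ref{label_chern_result}), the trace formula for the Chern character (Proposition \ref{prop_compare_chern}), and the computation of Lemma \ref{tr_e}, which is precisely tailored to match these together.

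First I would apply Proposition \ref{label_chern_result}, which expresses $\ob^{\widehat{\dR}}_{X'/R'}(\ch_i(\mathcal{E}_0))$ as the image of $\ch_i(\mathcal{E}) \in \coh^{2i}(\Fil^i \widehat{\dR}_{X/k})$ under an explicit composition whose essential ingredients are the projection to $\coh^i(\LL^i_{X/k})$, base change to $\LL^i_{X/R'}$, the antisymmetrization map $\Delta_{i-1}$, and $\id \otimes \kappa_{X/X'/R'}$. By Proposition \ref{prop_compare_chern}, the image of $\ch_i(\mathcal{E})$ in $\coh^i(\LL^i_{X/k})$ equals $\tfrac{1}{i!}\tr_\mathcal{E}\bigl(\At_{X/k}(\mathcal{E})^i\bigr)$.

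Next I would push the trace past the subsequent arrows. This is a direct application of the functoriality of the trace: for any map $f$ acting only on the $\LL$-factor, one has $\tr_\mathcal{E}\bigl((\id_\mathcal{E} \otimes f) \circ \At_{X/k}(\mathcal{E})^i\bigr) = f \circ \tr_\mathcal{E}(\At_{X/k}(\mathcal{E})^i)$, as follows immediately from Definition \ref{def_trace_map}. Applying this iteratively to the base-change map $\LL^i_{X/k} \to \LL^i_{X/R'}$, to $\Delta_{i-1}$, to the projection $\LL_{X/R'} \to \LL_{X/X'}$, and to $\id \otimes \kappa_{X/X'/R'}$ (where the factorization of $\kappa_{X/X'/k}$ through $\kappa_{X/X'/R'}$ coming from Construction \ref{cons_ks} is absorbed into the base-change step) shows that $\ob^{\widehat{\dR}}_{X'/R'}(\ch_i(\mathcal{E}_0))$ equals $\tfrac{1}{i!}\tr_\mathcal{E}(\alpha)$, where $\alpha$ is exactly the composition defined in Lemma \ref{tr_e}.

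Finally, Lemma \ref{tr_e} gives $\tr_\mathcal{E}(\alpha) = i! \cdot \sigma_{X, i-1}(\ob(\mathcal{E}, X, X'))$, and since $i!$ is invertible in $k$ (hence in $R$), we conclude that
\[
\ob^{\widehat{\dR}}_{X'/R'}(\ch_i(\mathcal{E}_0)) = \sigma_{X, i-1}(\ob(\mathcal{E}, X, X'))
\]
as required. The only real bookkeeping obstacle is to confirm that the Kodaira--Spencer classes appearing in Lemma \ref{tr_e} (over $k$) and in Proposition \ref{label_chern_result} (over $R'$) are compatible; this is handled by the base-change compatibility in Lemma \ref{lem_kodaira_base_change} together with the fact that $\LL_{X/k} \to \LL_{X/R'}$ fits into the composition naturally. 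With all pieces in place, the argument is essentially a single diagram chase whose heavy lifting is entirely contained in the preceding lemmas.
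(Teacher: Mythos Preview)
Your proposal is correct and follows essentially the same approach as the paper's proof: both combine Proposition \ref{label_chern_result}, Proposition \ref{prop_compare_chern}, and Lemma \ref{tr_e}, with the only difference being the direction of the argument (you start from the Hodge-theoretic side and arrive at the semiregularity side, whereas the paper does the reverse). The one place where the paper is more explicit is the $k$ vs $R'$ compatibility: it writes out the commutative square comparing $\Delta_{i-1}$ and $\kappa$ over the two bases, while you handle it verbally; note that the relevant compatibility is simply that $\kappa_{X/X'/k}$ factors through $\kappa_{X/X'/R'}$ via $\LL_{X/k} \to \LL_{X/R'}$ (immediate from Construction \ref{cons_ks}), so your citation of Lemma \ref{lem_kodaira_base_change} is not quite the right reference, but your accompanying remark captures what is actually needed.
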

\begin{proof}
By Proposition \ref{prop_compare_chern} we have $\tr_{\mathcal{E}}(\At_{X / k}(\mathcal{E})^{i}) = i!  \cdot \ch_{i}(\mathcal{E})$. Thus by Lemma \ref{tr_e}
\begin{align*}
\sigma_{X, i - 1}(\ob(\mathcal{E}, X, X')) = \frac{1}{i!} \tr_{\mathcal{E}}(\alpha) = (\kappa_{X / X' / k} \circ \Delta_{i - 1})(\ch_{i}(\mathcal{E}))
\end{align*}
in $\coh^{i + 1}(\LL^{i - 1}_{X / k})$. Since the diagram
\[
\begin{tikzcd}
\coh^{i}( \LL^{{i}}_{X / k}) \dar \rar{\Delta_{{i - 1}}} & \dar \coh^i( \LL^{i  - 1}_{X / k} \otimes \LL_{X / k}) \rar{\kappa_{X / X' / k}} \rar& \coh^{i + 1}(\LL^{i - 1}_{X / k}) \rar \dar & 
\coh^{i + 1}( \LL^{i - 1}_{X / R}) \arrow[equal]{d} \\
\coh^i( \LL^{i}_{X / R' }) \rar{\Delta_{i - 1}} & \coh^i( \LL^{i - 1}_{X / R' } \otimes \LL_{X / R' }) \rar{\kappa_{X / X' / R' }}& 
\coh^{i + 1}( \LL^{i - 1}_{X / R' }) \rar & \coh^{i + 1}(\LL^{i - 1}_{X / R}) 
\end{tikzcd}
\]
commutes, the result follows from Lemma \ref{lem_chern_horizontal} and Proposition \ref{label_chern_result}. 
\end{proof}
And we have the following result in mixed characteristic.
\begin{theorem}\label{thm_compare_obs_p}
Let $k$ be a ring over $\ZZ / p^n \ZZ$ for some $n \geq 1$, and let $R_0$ be a $k$-algebra. Let 
\[
(R' \to R_0, \gamma' ) \to (R \to R_0, \gamma)
\]
 be a morphism in $\PDPair_k$ such that $R' \to R$ is a surjection with kernel $I$ and $I^{[2]} = 0$. 

Let $X'  \xrightarrow{f} \Spec(R')$ be a smooth and proper morphism, and set
\begin{align*}
X &:= X'  \times_{\Spec(R' )} \Spec(R) \\
X_0 &:= X'  \times_{\Spec(R' )} \Spec(k)
\end{align*}
Write $\mathcal{I} := I \otimes_{R} \catO_{X}$.  Suppose that $i!$ is invertible in $k$.  Let $\mathcal{E} \in \Perf(X)$   and let $\mathcal{E}_0 := \mathcal{E} \rvert_{X_0}$. 

Then the semiregularity map
\[
\sigma_{X, i - 1} \colon \Ext_X^2(\mathcal{E}, \mathcal{E} \otimes \mathcal{I}) \to \coh^{i + 1}(\Omega^{i - 1}_{X / R}\otimes \mathcal{I})
\]
sends $\ob(\mathcal{E}, X, X')$ to $\ob^{{\Crys}}_{X' / R'}(\ch_{i}(\mathcal{E}_0))$. 
\end{theorem}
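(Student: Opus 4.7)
The plan is to mirror the proof of Theorem \ref{thm_compare_obs_char0}, substituting each characteristic zero ingredient with its $p$-adic counterpart already developed in the paper. The three key replacements are: Proposition \ref{label_chern_result_pd} in place of Proposition \ref{label_chern_result}, the crystalline stratification map $\varphi_{\Crys, X}$ in place of $\varphi_{\widehat{\dR}, X}$, and a crystalline version of the horizontality statement (Lemma \ref{lem_chern_horizontal}) in place of the de Rham one.

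First I would apply the formal reduction that is independent of the cohomology theory. By Proposition \ref{prop_compare_chern} one has $\tr_{\mathcal{E}}(\At_{X/k}(\mathcal{E})^i) = i! \cdot \ch_i(\mathcal{E})$, and combining this with Lemma \ref{tr_e} yields
\[
\sigma_{X, i-1}(\ob(\mathcal{E}, X, X')) = (\kappa_{X/X'/k} \circ \Delta_{i-1})(\ch_i(\mathcal{E}))
\]
in $\coh^{i+1}(\LL^{i-1}_{X/k} \otimes \mathcal{I})$. This part is verbatim the same as in the characteristic zero argument, since neither step depends on whether one works with $\widehat{\dR}$ or with $\Crys$.

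Next, I would establish the crystalline analogue of Lemma \ref{lem_chern_horizontal}: for $\mathcal{E}_0 = \mathcal{E}\rvert_{X_0}$, one has $\varphi_{\Crys, X}(\ch_i(\mathcal{E}_0)) = \ch_i(\mathcal{E})$ under the natural identifications. This should be a direct consequence of the functoriality of the zigzag of equivalences defining $\varphi_{\Crys, X}$ in Definition \ref{abk}, together with the fact that Chern classes commute with pullback (the analogue of the commutative square of $\mathrm{K}_0$-groups used in the proof of Lemma \ref{lem_chern_horizontal}, now with $\Crys$ in the middle column).

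Finally, I would conclude by chasing through the commutative diagram relating $\LL^i_{X/k}$, $\LL^i_{X/R'}$, $\LL^{i-1}_{X/R} \otimes \LL_{X/X'}$ and $\LL^{i-1}_{X/R} \otimes \mathcal{I}$ exactly as in the proof of Theorem \ref{thm_compare_obs_char0}, and then apply Proposition \ref{label_chern_result_pd} to identify the resulting image of $\ch_i(\mathcal{E})$ with $\ob^{\Crys}_{X'/R'}(\ch_i(\mathcal{E}_0))$. The only technical subtlety, and what I expect to be the main obstacle to verify carefully, is that the crystalline setup requires $I^{[2]} = 0$ rather than $I^2 = 0$; however, since we are comparing with the obstruction class $\ob(\mathcal{E}, X, X')$ defined via the ordinary square-zero extension $R' \to R$, we need to ensure the hypothesis $I^{[2]} = 0$ is compatible with applying Proposition \ref{prop_obstruction_class} (which assumes $I^2 = 0$). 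Under the conditions $i! \in k^\times$ and the PD-structure with $I^{[2]} = 0$ we have $I^2 \subseteq 2 \cdot I^{[2]} = 0$, so this is automatic, and the proof concludes as in the characteristic zero case.
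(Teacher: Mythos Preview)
Your proposal is correct and follows essentially the same route as the paper's proof: reduce via Proposition~\ref{prop_compare_chern} and Lemma~\ref{tr_e} to $(\kappa_{X/X'/k}\circ\Delta_{i-1})(\ch_i(\mathcal{E}))$, then invoke Proposition~\ref{label_chern_result_pd} together with horizontality of Chern classes. One small correction: your claim $I^2\subseteq 2\cdot I^{[2]}$ is not quite right (only $x^2=2\gamma_2(x)$ has that form), but the correct containment $I^2\subseteq I^{[2]}$ follows from $xy=\gamma_2(x+y)-\gamma_2(x)-\gamma_2(y)$, so your conclusion $I^2=0$ stands and Proposition~\ref{prop_obstruction_class} applies as you say.
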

\begin{proof}
Again, using that $i!$ is invertible, by Proposition \ref{prop_compare_chern} and Lemma \ref{tr_e} we have
\[
\sigma_{X, i - 1}(\ob(\mathcal{E}, X, X'))  = (\kappa_{X / X' / k} \circ \Delta_{i - 1})(\ch_{i}(\mathcal{E}))
\]
in $\coh^{i + 1}(\LL^{i - 1}_{X / k})$. Thus the result follows from Lemma \ref{lem_chern_horizontal} and Proposition \ref{label_chern_result_pd}. 
\end{proof}

%!TEX root = main.tex
\section{Hochschild (co)homology and the semiregularity map}
In this section, we relate the semiregularity map from Definition \ref{def_semiregularity} with Hochschild--theoretic constructions. There are two main results we need. The first is in Section \ref{hochschild_semi}, where we  relate the semiregularity map with a Hochschild--theoretic semiregularity map (Definition \ref{def_hocshchild_semiregular} and Proposition \ref{prop_semireg_compare}). The second is Corollary \ref{corol_actions}, relating the Hochschild--theoretic semiregularity map with the action from Hochschild cohomology on Hochschild homology. Later, in Section \ref{sec_injective} these results will combine to show the semiregularity map is injective in specific cases.
\label{sec_hochschild_semiregular}
\subsection{Fourier--Mukai transforms, duality and Hochschild (co)homology}
If $X$ and $Y$ are smooth and proper schemes over a ring $k$, then any perfect complex $\mathcal{E} \in \Perf(X \times Y)$ induces a functor
\begin{align*}
\Phi_{\mathcal{E}} \colon \Perf(X) &\to \Perf(Y) \\
\mathcal{F} &\mapsto \pi_{Y, *}(\pi_X^*(\mathcal{F}) \otimes \mathcal{E})
\end{align*}
We shall refer to $\Phi_{\mathcal{E}}$ as the \emph{Fourier-Mukai transform} associated to $\mathcal{E}$, and to $\mathcal{E}$ as the \emph{kernel} associated to $\Phi_{\mathcal{E}}$. 

For any three smooth and proper schemes $X, Y$ and $Z$ over a field $k$ we have a projection map
\[
\pi_{XY} \colon X \times Y \times Z \to X \times Y
\]
Similarly we have projections $\pi_{XZ}$ and $\pi_{YZ}$. For any two objects $\mathcal{E} \in \Perf(X \times Y)$ and $\mathcal{F} \in \Perf(Y \times Z)$, we shall write
\begin{equation} \label{faa}
\mathcal{F} \star \mathcal{E} := \pi_{XZ, *}(\pi_{XY}^* \mathcal{E} \otimes \pi_{YZ}^* \mathcal{F}) \in \Perf(X \times Z)
\end{equation}
One may show that $\Phi_{\mathcal{F}} \circ \Phi_{\mathcal{E}} \simeq \Phi_{\mathcal{F} \star \mathcal{E}}$ as functors $\Perf(X) \to \Perf(Z)$, see e.g. \cite[Proposition 5.10]{huybrechts}.

From now on, we restrict our attention to the case where $k$ is a field. Since we will use many techniques from \cite{caldararu1, caldararu2}, in this case we adapt to match Căldăraru's notation. In particular, for any smooth and proper scheme $X$ over a field $k$ we shall write $\derD^b(X) := \Perf(X)$ to match the notation of Căldăraru.
For $X$ a smooth and proper scheme over a field $k$ of dimension $d$ we denote with $\Delta \colon X \to X \times X$ the diagonal embedding. We will write $\catO_{\Delta_X} := \Delta_* \catO_X \in \derD^b(X \times X)$.  We will write $S_X = \Omega_X^d[d] \in \derD^b(X)$, and we will sometimes denote with $S_X(-)$ the functor 
\[
S_X  \otimes(-) \colon \derD^b(X) \to \derD^b(X)
\]
The starting point for most of the constructions is the following classical theorem.
\begin{theorem}[Grothendieck--Verdier--Serre duality]\label{gvs}
Let $X$ be a smooth and proper scheme over a field $k$. There exists a map
\[
\tr_X \colon \Hom_X(\catO_X, S_X) \to k
\]
such that for any $\mathcal{E}, \mathcal{F} \in \Perf(X)$ the pairing
\begin{align*}
\Ext^{-*}(\mathcal{E}, S_X \mathcal{F}) \otimes_k \Ext^*(\mathcal{F}, \mathcal{E}) &\to k\\
f \otimes g \mapsto \langle f, g \rangle := \tr_X(\tr_{\mathcal{F}}(f \circ g))
\end{align*}
is a perfect pairing (see Definition \ref{def_trace_map} for the definition of $\tr_{\mathcal{F}}$).
\end{theorem}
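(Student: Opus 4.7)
The plan is to deduce this classical result from Grothendieck--Verdier duality applied to the structure morphism $\pi \colon X \to \Spec(k)$. Since $X$ is smooth and proper of dimension $d$, the pushforward $\pi_* \colon \Perf(X) \to \Perf(k)$ admits a right adjoint $\pi^!$ satisfying $\pi^!(V) \simeq V \otimes_k S_X$, where $S_X = \Omega_X^d[d]$ is the dualizing complex shifted to be central in the adjunction. I would define $\tr_X \colon \Hom_X(\catO_X, S_X) \to k$ as the degree zero part of the counit of this adjunction evaluated at $\catO_k$, which produces a canonical map $\pi_* S_X \to k$ in $\derD(k)$.

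Next, I would reduce the perfectness of the pairing to the classical duality for perfect complexes of $k$-vector spaces. Using dualizability of $\mathcal{E}$ together with the adjunction $\pi_* \dashv \pi^!$ and the projection formula, one obtains natural equivalences
\[
\derR\Map_X(\mathcal{E}, S_X \otimes \mathcal{F}) \simeq \derR\Map_X(\catO_X, \mathcal{E}^\vee \otimes \mathcal{F} \otimes S_X) \simeq \derR\Map_k(\pi_*(\mathcal{E}^\vee \otimes \mathcal{F}), k)
\]
in $\derD(k)$, and symmetrically $\derR\Map_X(\mathcal{F}, \mathcal{E}) \simeq \pi_*(\mathcal{F}^\vee \otimes \mathcal{E})$. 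Properness of $X$ combined with $\mathcal{E}, \mathcal{F} \in \Perf(X)$ ensures that $V := \pi_*(\mathcal{F}^\vee \otimes \mathcal{E})$ is a perfect complex of $k$-vector spaces, hence has finite-dimensional cohomology concentrated in finitely many degrees. Taking $\coh^i$ of the equivalences above identifies the two sides of the pairing with $\coh^i(V)$ and $\coh^i(V)^\vee$, whence the pairing is perfect.

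The main obstacle will be verifying that this adjunction-induced pairing coincides with the explicit trace pairing $\langle f, g \rangle = \tr_X(\tr_\mathcal{F}(f \circ g))$ as stated. This is essentially a string-diagram computation: by Definition \ref{def_trace_map}, $\tr_\mathcal{F}(f \circ g)$ is assembled from $\coev_\mathcal{F}$ and $\ev_\mathcal{F}$, which are precisely the structural data implicit in the first equivalence above when identifying $\mathcal{E}^\vee \otimes \mathcal{F}$ with the internal hom. Unwinding the two identifications shows the two pairings agree, after which applying $\tr_X$ corresponds to the pushforward--counit step in the second equivalence. For the existence of $\pi^!$ and the identification $\pi^!(\catO_k) \simeq S_X$ for a smooth proper morphism, I would appeal to classical Grothendieck duality.
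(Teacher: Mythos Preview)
Your sketch is correct and is essentially the standard derivation of this statement from Grothendieck duality for the structure morphism. The paper, by contrast, gives no argument: its proof consists entirely of citations, pointing to Huybrechts for the existence of the perfect pairing, to C\u{a}ld\u{a}raru for the explicit trace description, and to a further paper of C\u{a}ld\u{a}raru for the identification of the two. Your approach is therefore strictly more informative; what you have written is roughly what one would find upon chasing those references.

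One small bookkeeping point: in your chain of equivalences, after moving $\mathcal{E}$ across as $\mathcal{E}^\vee$ and then applying the adjunction $\pi_* \dashv \pi^!$, you must move the remaining factor across once more before invoking the adjunction, so the object that appears is $\pi_*\bigl((\mathcal{E}^\vee \otimes \mathcal{F})^\vee\bigr) \simeq \pi_*(\mathcal{F}^\vee \otimes \mathcal{E})$, not $\pi_*(\mathcal{E}^\vee \otimes \mathcal{F})$. This is exactly the complex $V$ you introduce on the other side, so the perfectness argument goes through cleanly once this is corrected. The verification that the adjunction-induced pairing agrees with $\tr_X(\tr_{\mathcal{F}}(f \circ g))$ is indeed the one nontrivial step; your plan to unwind it via the coevaluation/evaluation data of $\mathcal{F}$ is the right one, and is precisely what C\u{a}ld\u{a}raru carries out in the reference the paper cites.
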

\begin{proof}
The existence of a perfect pairing is well known, see for example \cite[Theorem 3.12]{huybrechts}. This explicit description of the pairing is given by Căldăraru, see \cite[\S 2.2]{caldararu1}. The proof identifying Căldăraru's construction with more classical constructions can be found in  \cite[Theorem 17]{caldararu1b}.
\end{proof}

We will often write $\Tr_X(f) := \tr_X \tr_{\mathcal{F}}(f)$ for $f \in \Hom_X(\mathcal{F}, \mathcal{F} \otimes S_X)$. 
\begin{definition}\label{beb}
Let $k$ be a field and let $X, Y$ be smooth and proper schemes over $k$. Let $\mathcal{E}, \mathcal{F} \in \Perf(X)$ and let $\mathcal{E}', \mathcal{F'} \in \Perf(Y)$. Then any map
\[
\Phi \colon \Ext^*_X(\mathcal{F}, \mathcal{E}) \to \Ext^*_Y(\mathcal{F}', \mathcal{E}')
\]
has a unique left adjoint for the pairing from Theorem \ref{gvs}. That is, there exists a unique map
\[
\Psi \colon \Ext^*_Y(\mathcal{F}', \mathcal{E}')^\vee \to \Ext^*_X(\mathcal{F}, \mathcal{E})^\vee
\]
such that for any $f \in \Ext^*_Y(\mathcal{F}', \mathcal{E}')^\vee \cong \Ext^{-*}_Y(\mathcal{E}', S_Y \mathcal{F}')$ and $g \in \Ext_X^*(\mathcal{F}, \mathcal{E})$ one has $\langle f, \Phi(g) \rangle = \langle \Psi(f), g \rangle$. We will refer to $\Psi$ as the \emph{Serre left adjoint} of $\Phi$, and to $\Phi$ as the \emph{Serre right adjoint} of $\Psi$. 
\end{definition}
For $X$ a smooth and proper scheme over a ring $k$, we write $\Delta_! \colon \derD^b(X) \to \derD^b(X \times X)$ for the left adjoint of $\Delta^*$. Explicitly, $\Delta_!$ is given by
\begin{equation} \label{bga}
\mathcal{F} \mapsto S_{X \times X}^{-1} \otimes \Delta_* (S_X \otimes \mathcal{F})
\end{equation}
for $\mathcal{F} \in \derD^b(X)$. 
\begin{definition}
Let $k$ be a field, and let $X$ be a smooth and projective scheme over $k$. We define the \emph{Hochschild homology} of $X$ as
\[
\HH_*(X) := \Ext^{- *}_{X \times X}(\Delta_! \catO_X, \Delta_* \catO_X)
\]
where $\Delta_!$ is the left adjoint of 	$\Delta^*$.
\end{definition}

\begin{definition}
Let $k$ be a field, and let $X$ be a smooth and projective scheme over $k$. We define the \emph{Hochschild cohomology} of $X$ as
\[
\HH^*(X) := \Ext^*_{X \times X}(\Delta_* \catO_X, \Delta_* \catO_X)
\]

\end{definition}
Note that by Serre duality one has
\begin{align*}
\HH_*(X)^\vee &= \Ext_{X \times X}^*(\Delta_* \catO_X, \Delta_* S_X) \\
\HH^*(X)^\vee &= \Ext_{X \times X}^*(\Delta_* \catO_X, \Delta^* \catO_X \otimes S_{X \times X})
\end{align*}
since $\Delta_! \catO_X = S_{X \times X}^{-1} \Delta_* S_X$. 
\subsection{The semiregularity map for Hochschild homology}
\label{hochschild_semi}
Let $X$ be a smooth and proper scheme over a field $k$. In this section, we define the Hochschild--theoretic semiregularity map 
\[
\sigma^{\HH_*}_{\mathcal{E}} \colon \Ext^*(\mathcal{E}, \mathcal{E}) \to \HH_{-*}(X)
\]
for any $\mathcal{E} \in \derD^b(X)$, which is essentially due to \cite{buchflen2}. Moreover, we show the this map corresponds to the semiregularity map(s) defined in Definition \ref{def_semiregularity} under the Hochschild-Kostant-Rosenberg isomorphism. 
\begin{definition}
We define the \emph{universal Hochschild--Atiyah character}
\[
\At_X^{\HH} \in \Hom_{\derD(X \times X)}(\catO_{\Delta_X}, \Delta_* \Delta^* \catO_{\Delta_X})
\]
to be the unit of the adjunction $\Delta^* \dashv \Delta_*$. 
\end{definition}
\begin{construction} \label{agb} 
Let $k$ be a ring, and let $X, Y$ be smooth and proper schemes over $k$. Let
\[
\alpha \colon \mathcal{F} \to \mathcal{G}
\]
be a morphism in $\derD(X \times Y)$. Then $\alpha$ induces a natural transformation 
\begin{align*}
\Phi_{\alpha}(-) \colon \Phi_{\mathcal{F}}(-) \to \Phi_{\mathcal{G}}(-)
\end{align*}
of functors $\derD(X) \to \derD(Y)$, sending $\mathcal{E} \in \derD(X)$ to the morphism
\[
\Phi_{\alpha}(\mathcal{E}) := \pi_{2*}(\pi_1^* \mathcal{E} \otimes \alpha)
\]
in $\derD(Y)$. 
\end{construction}
In particular for any $\mathcal{E} \in \derD(X)$, we get a map
\begin{equation} \label{bed}
\At_X^{\HH}(\mathcal{E}) := \Phi_{\At_X^{\HH}}(\mathcal{E}) \colon \mathcal{E} \to \mathcal{E} \otimes \Delta^* \catO_{\Delta_X}
\end{equation}
in $\derD(X)$.

We now wish to relate $\At_X^{\HH}$ with the universal Atiyah class (see Definition \ref{aea}), by means of the Hochschild--Kostant--Rosenberg isomorphism. For this, we need some setup first. 

Let $k$ be a ring and let $X$ be a smooth and proper scheme over $k$. If one writes $J_\Delta \subseteq \catO_{X \times X}$ for the ideal sheaf of the diagonal, we let $\Delta^{(2)}_{X} \subseteq X \times X$ be the nilpotent thickening of $\Delta_X$ corresponding to the ideal $J_\Delta^2$. 
\begin{lemma}
There exists an an isomorphism
\begin{equation} \label{baa}
\varphi_\Delta \colon \catO_X \oplus \Omega_{X / k} \xrightarrow{\sim} \catO_{\Delta^{(2)}_{X}}
\end{equation}
 of sheaves of rings on $X$, given locally by $(f, g \mathrm{d}x) \mapsto 1 \otimes f+ x \otimes g - 1 \otimes gx$.
\end{lemma}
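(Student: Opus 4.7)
My plan is to construct $\varphi_\Delta$ in a coordinate-free way by exploiting the splitting induced by the second projection, and then verify the claimed local formula and ring-homomorphism property.

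First, consider the second projection $p_2 \colon X \times X \to X$. Its restriction to the first-order neighborhood of the diagonal gives a map of sheaves of $k$-algebras
\[
p_2^\# \colon \catO_X \to \catO_{\Delta^{(2)}_X},
\]
which splits the surjection $\catO_{\Delta^{(2)}_X} \to \catO_{\Delta_X} = \catO_X$ (since $p_2 \circ \Delta = \id_X$). Next I would recall the standard identification $J_\Delta / J_\Delta^2 \cong \Omega_{X/k}$, sending $\mathrm{d}x$ to the class of $x \otimes 1 - 1 \otimes x$ (see e.g.\ \stacksref{08S2}). Combined with the splitting this yields a map
\[
\varphi_\Delta \colon \catO_X \oplus \Omega_{X/k} \to \catO_{\Delta^{(2)}_X}, \qquad (f, g\,\mathrm{d}x) \mapsto 1 \otimes f + (x \otimes 1 - 1 \otimes x)(1 \otimes g),
\]
which expands to the formula in the statement.

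To see $\varphi_\Delta$ is an isomorphism of $\catO_X$-modules, note that by the short exact sequence
\[
0 \to J_\Delta / J_\Delta^2 \to \catO_{\Delta^{(2)}_X} \to \catO_{\Delta_X} \to 0,
\]
which is split by $p_2^\#$, both sides are locally free $\catO_X$-modules of the same rank and $\varphi_\Delta$ induces an isomorphism on each graded piece; alternatively I would unwind the local formula in étale coordinates to see bijectivity directly.

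The key verification is that $\varphi_\Delta$ is a ring map (not just $\catO_X$-linear), since the target is \emph{not} a priori the square-zero extension on the nose. Writing $\theta := x \otimes 1 - 1 \otimes x \in J_\Delta$ (so that $\theta \cdot J_\Delta = 0$ in $\catO_{\Delta^{(2)}_X}$), I compute
\begin{align*}
\varphi_\Delta(f_1, g_1 \mathrm{d}x) \cdot \varphi_\Delta(f_2, g_2 \mathrm{d}x)
&= \bigl(1 \otimes f_1 + \theta (1 \otimes g_1)\bigr)\bigl(1 \otimes f_2 + \theta (1 \otimes g_2)\bigr) \\
&= 1 \otimes f_1 f_2 + \theta \bigl(1 \otimes (f_1 g_2 + f_2 g_1)\bigr) + \theta^2 (1 \otimes g_1 g_2) \\
&= \varphi_\Delta\bigl(f_1 f_2,\ (f_1 g_2 + f_2 g_1)\,\mathrm{d}x\bigr),
\end{align*}
where the last $\theta^2$ term vanishes because $\theta^2 \in J_\Delta^2$. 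This is exactly the multiplication on the square-zero extension $\catO_X \oplus \Omega_{X/k}$. The main subtlety (and where I would take the most care) is ensuring that the formula glues across overlapping charts in coordinates $x_1,\ldots,x_d$; this follows from the invariant description above, so the local formula is independent of the chosen coordinates, and the construction sheafifies.
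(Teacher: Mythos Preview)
Your proof is correct and close in spirit to the paper's, but with a cleaner framing. The paper defines $\varphi_\Delta$ directly by the local formula and then spends its effort verifying well-definedness, i.e.\ checking compatibility with the Leibniz rule by computing that $\varphi_\Delta(0,\mathrm{d}(xy)-x\,\mathrm{d}y-y\,\mathrm{d}x)$ lies in $J_\Delta^2$; it then performs essentially the same multiplicativity computation you give (with two possibly different differentials $\mathrm{d}x_1,\mathrm{d}x_2$), and leaves the bijectivity to the reader. By contrast, you assemble $\varphi_\Delta$ from the ring splitting $p_2^\#$ and the standard identification $\Omega_{X/k}\cong J_\Delta/J_\Delta^2$, which makes well-definedness and gluing automatic, and yields bijectivity immediately from the split short exact sequence. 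Your route buys a shorter, coordinate-free argument; the paper's buys complete self-containment (no external citation for $\Omega_{X/k}\cong J_\Delta/J_\Delta^2$). One cosmetic point: your multiplicativity check uses a single $\theta=x\otimes 1-1\otimes x$, whereas in several coordinates one needs $\theta_i\theta_j\in J_\Delta^2$ for $i\neq j$ as well --- but this is immediate, and in any case your invariant description already shows $\varphi_\Delta$ is a ring map without any local computation.
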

\begin{proof}
We first verify that $\varphi_\Delta$ is a well--defined map of sheaves of abelian groups. Since it is clearly linear, we only need to check compatibility with the Leibniz rule. Indeed, one sees that
\begin{align*}
\varphi((0,  \mathrm{d} xy) &- (0, x \mathrm{d}y) - (0, y \mathrm{d}x))  \\
&= xy \otimes 1 - 1 \otimes xy - (y \otimes x - 1 \otimes xy) - (x \otimes y - 1 \otimes xy) \\
&= xy \otimes 1 + 1  \otimes xy - y \otimes x - x \otimes y \\
&= (x \otimes 1 - 1 \otimes x) \cdot (y \otimes 1 - 1 \otimes y)
\end{align*}
lies in $J_\Delta^2$. To check that it is a ring map, we compute
\begin{align*}
\varphi((f_1, &g_1 dx_1)) \cdot  \varphi((f_2, g_2 dx_2)) \\
& = (1 \otimes f_1 + (1 \otimes g_1)  (x_1 \otimes 1 - 1 \otimes x_1)) \\
& \qquad \cdot (1 \otimes f_2 + (1 \otimes g_2)  (x_2 \otimes 1 - 1 \otimes x_2))  \\
& \equiv1 \otimes  f_1 f_2 + (1 \otimes f_1 g_2)  (x_2 \otimes 1 - 1 \otimes x_2) \\
&\qquad + (1 \otimes f_2 g_1)  (x_1 \otimes 1 - 1 \otimes x_1) & (\mathrm{mod} \ J_\Delta^2) \\ 
&= \varphi((f_1 f_2, f_1 g_2 dx_2 + f_2 g_1 dx_1)) \\
&= \varphi((f_1, g_1 dx_1) \cdot (f_2, g_2 dx_2))
\end{align*}
as desired. We leave it to the reader to verify that the map is an isomorphism.
\end{proof}

Write $X^\Omega = \Spec_X(\catO_X \oplus \Omega_{X / k})$, so that $\varphi_\Delta$ induces an isomorphism
\begin{equation} \label{bab}
\Delta^{(2)}_X \xrightarrow{\sim} X^\Omega
\end{equation}
of schemes over $k$.
\begin{lemma}\label{bac}
The map (\ref{bab}) fits in a commutative diagram
\[
\begin{tikzcd}
X \\
X^\Omega \uar{\pi_\delta} \dar[swap]{\pi_0}  & \lar{\sim} \Delta^{(2)}_X \arrow{ul}[swap]{\pi_1} \arrow{dl}{\pi_2} \\
X 
\end{tikzcd}
\]
\begin{proof}
By the definition of $\pi_\delta$ and $\pi_0$ (Section \ref{sec_atiyah}) it suffices to show the diagram
\[
\begin{tikzcd}
\catO_X \dar[swap]{(\id, \mathrm{d})} \arrow{dr}{\pi_1^\#} \\
\catO_X \oplus \Omega_{X /k}   \rar{\varphi_\Delta} & \Delta^{(2)}_X  \\
X  \uar{(\id, 0)} \arrow{ur}[swap]{\pi_2^\#}
\end{tikzcd}
\]
commutes. For $f$ a local section of $\catO_X$, we compute
\begin{align*}
\varphi_\Delta \circ (\id, d) (f) &= \varphi_\Delta ((f, \mathrm{d}f)) = 1 \otimes f + f \otimes 1 - 1 \otimes f = f \otimes 1 = \pi_1^\#(f) \\
\varphi_\Delta \circ (\id, 0) (f) &= \varphi_\Delta((f, 0)) = 1 \otimes f = \pi_2^\#(f)
\end{align*}
as required.
\end{proof}
\end{lemma}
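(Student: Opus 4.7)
The plan is to reduce the commutativity of the diagram to a statement about the induced maps on structure sheaves, and then verify this by a direct local computation using the explicit formula for $\varphi_\Delta$ given in the preceding lemma.

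More precisely, since all four maps $\pi_\delta, \pi_0, \pi_1, \pi_2$ are identities on the underlying topological space $X$, the claim reduces to showing two equalities of maps of sheaves of rings on $X$, namely $\varphi_\Delta \circ \pi_\delta^{\#} = \pi_1^{\#}$ and $\varphi_\Delta \circ \pi_0^{\#} = \pi_2^{\#}$. By construction (Section \ref{sec_atiyah}) the map $\pi_\delta^{\#}$ is the universal derivation $(\mathrm{id}, \mathrm{d}): \catO_X \to \catO_X \oplus \Omega_{X/k}$, while $\pi_0^{\#}$ is the obvious inclusion $(\mathrm{id}, 0)$. On the other side, $\pi_1^{\#}$ and $\pi_2^{\#}$ are the two natural maps $\catO_X \to \catO_{\Delta^{(2)}_X}$ induced by the first and second factor of $X \times X$, i.e.\ locally $f \mapsto f \otimes 1$ and $f \mapsto 1 \otimes f$ respectively.

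With this reduction in hand the verification is a direct computation using the formula $\varphi_\Delta((f, g\,\mathrm{d}x)) = 1 \otimes f + x \otimes g - 1 \otimes (gx)$ from the previous lemma: plugging in $(f, \mathrm{d}f)$ gives $1 \otimes f + f \otimes 1 - 1 \otimes f = f \otimes 1$, which matches $\pi_1^{\#}(f)$, and plugging in $(f, 0)$ gives $1 \otimes f = \pi_2^{\#}(f)$. So both composites agree on local sections of $\catO_X$, hence as maps of sheaves of rings, which is all that is needed.

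The only mildly subtle point is justifying that checking the identities on local sections of the form $(f, g\,\mathrm{d}x)$ suffices — this is immediate because $\varphi_\Delta$ was already shown to be a well-defined map of sheaves of rings, and both $(\mathrm{id}, \mathrm{d})$ and $(\mathrm{id}, 0)$ land in the subsheaf generated by such sections. There is no real obstacle here; the content of the lemma is essentially packaged into the explicit formula for $\varphi_\Delta$, and this lemma is simply the bookkeeping that identifies the two projections on each side.
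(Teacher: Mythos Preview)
Your proposal is correct and follows essentially the same approach as the paper's own proof: reduce to the induced maps on structure sheaves, identify $\pi_\delta^\#=(\id,\mathrm{d})$ and $\pi_0^\#=(\id,0)$, and then verify the two identities by plugging $(f,\mathrm{d}f)$ and $(f,0)$ into the explicit formula for $\varphi_\Delta$. The computations you give are exactly those in the paper.
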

\begin{definition}
The universal Atiyah class $\alpha_X$ (see Definition \ref{aea}) can be represented by a map
\[
\catO_X \to \Omega_{X / k}[1]
\]
in $\derD(X^\Omega)$. Under the isomorphism (\ref{bab}), this induces a map
\[
\catO_{\Delta_X} \to \Delta_* \Omega_{X / k}[1]
\]
in $\derD(\Delta^{(2)}_X)$. Pushing forward along the closed immersion $\Delta^{(2)}_X \subseteq X \times X$, we get a map
\[
\catO_{\Delta_X} \to \Delta_* \Omega_{X / k}[1]
\]
in $\derD(X \times X)$. We define $\tilde{\alpha}_X \in \Ext^1_{X \times X}(\catO_{\Delta_X}, \Delta_* \Omega_{X / k})$ to be the element corresponding to this map.
\end{definition}
Explicitly, $\tilde{\alpha}_X$ is given by the extension
\[
0 \to \Delta_* \Omega^1_{X / k} \to \catO_{\Delta^{(2)}_X} \to \catO_{\Delta_X} \to 0
\]
where the first map is locally given by sending $g \mathrm{d} x \mapsto x \otimes g - 1 \otimes gx$. We denote with
\[
\exp(\tilde{\alpha}_X) \colon \catO_{\Delta_X} \to \bigoplus_{i = 0}^d \Delta_*   \Omega^i_{X}[i]
\]
the map whose $i$-th component is the composition of the maps
\begin{align*}
\catO_{\Delta_X} &\xrightarrow{\tilde{\alpha}_X} \Delta_*\Omega_{X / k}[1] \\
&\xrightarrow{\tilde{\alpha}_X  \otimes \Delta_* \Omega_{X / k}[1]}  \Delta_* \Omega_{X / k}^{\otimes 2}[2]  \\
&\qquad \qquad \vdots \\
&\xrightarrow{\tilde{\alpha}_X \otimes \Delta_* \Omega_{X / k}^{\otimes(i - 1)}[i - 1]}   \Delta_* \Omega_{X / k}^{\otimes i}[i] \\ &\xrightarrow{\Delta_* \epsilon}  \Delta_* \Omega^i_{X / k}[i]
\end{align*}
where $\epsilon$ is locally given by $v_1 \otimes \dots \otimes v_i \mapsto \frac{1}{i!} v_{1} \wedge \dots \wedge v_{i}$. 
\begin{lemma}\label{afd}
Let $k$ be a field and let $X$ be a smooth and proper scheme over $k$ of dimension $d$, such that $d!$ is invertible in $k$.  Let $\mathcal{E} \in \derD^b(X)$. Then the $i$-th component of the map (see Construction \ref{agb})
\[
\Phi_{\exp(\tilde{\alpha}_X)}(\mathcal{E}) \colon \mathcal{E} \to \bigoplus_{i = 0}^d \mathcal{E} \otimes \Omega^i_X[i]
\]
is given by $\frac{1}{i!} \At_X^i(\mathcal{E})$. 
\end{lemma}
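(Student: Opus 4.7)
The plan is to prove the lemma by reducing to the base case $i=1$, and then to propagate through the iterated composition defining $\exp(\tilde{\alpha}_X)$.

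First I would establish the case $i=1$, namely the identity $\Phi_{\tilde{\alpha}_X}(\mathcal{E}) = \At_{X/k}(\mathcal{E})$. By construction, $\tilde{\alpha}_X$ is the pushforward along $\iota \colon \Delta^{(2)}_X \hookrightarrow X \times X$ of the class representing $\alpha_X$ on $\Delta^{(2)}_X$ under the identification $\varphi_\Delta$ of (\ref{bab}). Lemma~\ref{bac} gives $\pi_2 \circ \iota = \pi_0$ and $\pi_1 \circ \iota = \pi_\delta$, and the projection formula for $\iota$ yields
\[
\Phi_{\tilde{\alpha}_X}(\mathcal{E}) = \pi_{2*}\iota_*\bigl(\iota^*\pi_1^*\mathcal{E} \otimes \alpha_X\bigr) = \pi_{0*}\bigl(\pi_\delta^*\mathcal{E} \otimes \alpha_X\bigr) = \At_{X/k}(\mathcal{E}),
\]
exactly matching Definition~\ref{aeb}.

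Next I would prove by induction on $j$ that the $j$-fold composition
\[
\catO_{\Delta_X} \xrightarrow{\tilde{\alpha}_X} \Delta_*\Omega_X[1] \to \cdots \to \Delta_*\Omega_X^{\otimes j}[j]
\]
is sent by $\Phi_{(-)}(\mathcal{E})$ to the $j$-fold Atiyah composition of Definition~\ref{aeb}. The key ingredient is the canonical identification $\Phi_{\Delta_* M}(\mathcal{E}) \simeq \mathcal{E} \otimes M$ coming from the projection formula for $\Delta$ and the identity $\pi_2 \circ \Delta = \id_X$; the induction step then reduces to the base case once one checks that tensoring a kernel morphism by $\Delta_* N$ on the $X \times X$ side corresponds, after applying $\Phi_{(-)}(\mathcal{E})$, to tensoring the induced morphism by $\id_N$ on the $X$ side.

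Finally, the last map in $\exp(\tilde{\alpha}_X)_i$ is $\Delta_*\epsilon$, where $\epsilon(v_1 \otimes \cdots \otimes v_i) = \tfrac{1}{i!}\,v_1 \wedge \cdots \wedge v_i$, whereas the last map in Definition~\ref{aeb} defining $\At_X^i(\mathcal{E})$ is the natural quotient $\Omega_X^{\otimes i} \to \Omega_X^i$ without the factor $\tfrac{1}{i!}$. Combining this observation with the previous two steps gives the desired identity $(\Phi_{\exp(\tilde{\alpha}_X)}(\mathcal{E}))_i = \tfrac{1}{i!}\,\At_{X/k}^i(\mathcal{E})$. The main technical obstacle is the compatibility claim in the induction step: the tensor products $\Delta_*\Omega_X \otimes \Delta_*\Omega_X$ computed in $\derD(X \times X)$ are not literally $\Delta_*(\Omega_X^{\otimes 2})$, so one must interpret the implicit comparison map (arising from the comultiplication $\Omega_X \to \Delta^*\Delta_*\Omega_X$) and verify that it is compatible with the projection-formula identifications across successive applications of $\tilde{\alpha}_X$.
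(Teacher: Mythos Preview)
Your approach is correct and follows the same underlying idea as the paper: use the isomorphism $\Delta_X^{(2)} \cong X^\Omega$ of Lemma~\ref{bac} and the projection formula for the closed immersion $\iota \colon \Delta_X^{(2)} \hookrightarrow X \times X$ to identify $\Phi_{(-)}(\mathcal{E}) = \pi_{2,*}(\pi_1^*\mathcal{E} \otimes -)$ with $\pi_{0,*}(\pi_\delta^*\mathcal{E} \otimes -)$.

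The difference is organizational, and it matters for exactly the reason you flag at the end. You set up the argument as an induction in $\derD(X \times X)$, which forces you to confront the discrepancy $\Delta_*\Omega_X \otimes_{X \times X} \Delta_*\Omega_X \not\simeq \Delta_*(\Omega_X^{\otimes 2})$ and to check a compatibility across successive steps. The paper sidesteps this entirely: it regards the whole iterated composition $\tilde{\alpha}_X^i$ as the pushforward along $\iota$ of a single map $\alpha_X^i$ built in $\derD(X^\Omega)$ (where the successive tensoring is by $\pi_0^*\Omega_{X/k}$, hence unambiguous). One application of Lemma~\ref{bac} and one projection formula then give
\[
\Phi_{\tilde{\alpha}_X^i}(\mathcal{E}) \;=\; \pi_{0,*}\bigl(\pi_\delta^*\mathcal{E} \otimes \alpha_X^i\bigr),
\]
and the right-hand side equals $\tfrac{1}{i!}\At_{X/k}^i(\mathcal{E})$ by the projection formula for $\pi_0$ applied to each factor $\pi_0^*\Omega_{X/k}$ in the iterated composition (together with the definition of $\At_{X/k}(\mathcal{E})$). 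In other words, the paper does the $i$-fold composition on the small scheme first, then transports once; you transport once and then try to iterate on $X \times X$. Both work, but the paper's order of operations makes your ``main technical obstacle'' disappear.
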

\begin{proof}
Denote with $\tilde{\alpha}_X^i \in \Ext^i_{X \times X}(\catO_{\Delta_X}, \Delta_* \Omega^p_{X / k})$ the composition 
\begin{multline*}
\catO_{\Delta_X} \xrightarrow{\tilde{\alpha}_X} \Delta_*\Omega_{X / k}[1] \xrightarrow{\tilde{\alpha}_X  \otimes \Delta_* \Omega_{X / k}[1]}  \dots \\ 
\xrightarrow{\tilde{\alpha}_X \otimes \Delta_* \Omega_{X / k}^{\otimes(i - 1)}[i - 1]}   \Delta_* \Omega_{X / k}^{\otimes i}[i] \xrightarrow{\Delta_* \epsilon}  \Delta_* \Omega^i_{X / k}[i]
\end{multline*}
and with $\alpha_X^i \in \Ext^i_{X^\Omega}(\catO_X, \Omega_{X / k}^i)$ the composition
\[
\catO_{X} \xrightarrow{{\alpha}_X} \Omega_{X / k}[1] \xrightarrow{{\alpha}_X  \otimes \Omega_{X / k}[1]}  \dots \xrightarrow{{\alpha}_X \otimes \Omega_{X / k}^{\otimes(i - 1)}[i - 1]}   \Omega_{X / k}^{\otimes i}[i] \xrightarrow{ \epsilon} \Omega^i_{X / k}[i]
\]
Lemma \ref{bac} then implies that
\begin{equation}\label{eaa}
\Phi_{\exp(\tilde{\alpha}^i_X)}(\mathcal{E}) = \pi_{0, *}(\pi_{\delta}^*(\mathcal{E}) \otimes \alpha_X^i)
\end{equation}
in $\Ext^i_X(\mathcal{E}, \mathcal{E} \otimes \Omega_{X / k}^i)$. The result follows as by definition of the Atiyah class (Definition \ref{aeb}), the right hand side of (\ref{eaa}) is equal to $\frac{1}{i!} \At_{X / k}^i(\mathcal{E})$.

\end{proof}
\begin{theorem}[Hochshild--Kostant--Rosenberg, \cite{hkr}, \cite{caldararu2}]\label{thm_hkr}
Let $k$ be a field and let $X$ be a smooth and proper scheme over $k$ of dimension $d$, such that $d!$ is invertible in $k$. Then there exists an isomorphism
\begin{equation} \label{afc}
I \colon \Delta^* \catO_{\Delta_X} \xrightarrow{\sim} \bigoplus_{i = 0}^d \Omega^i_X[i]
\end{equation}
in $\derD(X)$, such that there exists a commutative diagram
\begin{equation} \label{bea}
\begin{tikzcd}
& \catO_{\Delta_X} \arrow[swap]{dl}{\At_X^{\HH}} \arrow{d}{\exp(\tilde{\alpha}_X)} \\
\Delta_* \Delta^* \catO_{\Delta_X} \rar{\Delta_*(I)} & \bigoplus_{i = 0}^d \Delta_*   \Omega^i_{X}[i]
\end{tikzcd}
\end{equation}
in $\derD(X \times X)$. 
\end{theorem}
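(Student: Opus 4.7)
The plan is to construct $I$ as the image of $\exp(\tilde{\alpha}_X)$ under the adjunction $\Delta^* \dashv \Delta_*$. Explicitly, the counit of this adjunction gives a natural isomorphism
\[
\Hom_{\derD(X \times X)}\(\catO_{\Delta_X}, \Delta_*(-)\) \cong \Hom_{\derD(X)}\(\Delta^* \catO_{\Delta_X}, -\),
\]
and we let $I$ be the map corresponding to $\exp(\tilde{\alpha}_X)$ under this bijection (with target $\bigoplus_{i = 0}^{d} \Omega^i_X[i]$). Once $I$ is defined this way, the commutativity of diagram (\ref{bea}) is tautological: by the very definition of the unit/counit adjunction, $\exp(\tilde{\alpha}_X)$ factors as the unit $\At^{\HH}_X$ followed by $\Delta_*(I)$.

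The substantive content is therefore to show $I$ is an equivalence, which is the classical Hochschild--Kostant--Rosenberg theorem in disguise. Both $\Delta^* \catO_{\Delta_X}$ and $\bigoplus_{i = 0}^d \Omega^i_X[i]$ are compatible with étale pullback, and the map $I$ is constructed naturally from $\tilde{\alpha}_X$, which is itself étale-local. Hence we may reduce to the case $X = \Spec(A)$ with $A$ a smooth $k$-algebra that admits étale coordinates $x_1, \dots, x_d$. In that case, writing $J_\Delta = \ker(A \otimes_k A \to A)$, smoothness implies $J_\Delta$ is generated by the regular sequence $x_i \otimes 1 - 1 \otimes x_i$, so the Koszul resolution computes
\[
\Delta^* \catO_{\Delta_X} = A \underset{A \otimes_k A}{\otimes^\derL} A \simeq \bigoplus_{i = 0}^d \bigwedge{}^i_A \(J_\Delta / J_\Delta^2\)[i] \simeq \bigoplus_{i = 0}^d \Omega^i_{A / k}[i].
\]

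The main obstacle is to check that the map $I$ agrees with this Koszul identification, which requires tracking the normalization $\epsilon$ built into $\exp(\tilde{\alpha}_X)$. For $i = 1$, this is essentially by construction: the class $\tilde{\alpha}_X$ represents the extension $0 \to \Delta_* \Omega^1_{X/k} \to \catO_{\Delta^{(2)}_X} \to \catO_{\Delta_X} \to 0$, which is exactly the first stage of the Koszul resolution under the identification (\ref{baa}). For $i \geq 2$, the iterated composite $\tilde{\alpha}_X^{\otimes i}$ lands in $\Delta_*(\Omega^1_{X/k})^{\otimes i}[i]$, and the antisymmetrization $\epsilon = \frac{1}{i!} \mathrm{Alt}$ exactly matches the identification of the higher Koszul pieces with $\Omega^i_{X/k}$; the hypothesis that $d!$ is invertible in $k$ ensures $\epsilon$ is an equivalence onto the direct summand $\Omega^i_{X/k}$ of $(\Omega^1_{X/k})^{\otimes i}$ for all $i \leq d$. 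The multiplicativity needed to propagate the $i = 1$ case to all $i$ follows from the fact that both $\exp(\tilde{\alpha}_X)$ and the Koszul identification are compatible with the natural algebra structures on the graded pieces; this is precisely the calculation carried out in \cite{caldararu2}.
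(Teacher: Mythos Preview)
Your approach is correct and is precisely the one taken in \cite[\S 4]{caldararu2}: define $I$ as the adjunct of $\exp(\tilde{\alpha}_X)$ under $\Delta^* \dashv \Delta_*$, so that the triangle (\ref{bea}) commutes by construction, and then verify that $I$ is an isomorphism by an \'etale-local Koszul computation matched against the normalization $\epsilon$. The paper itself does not reproduce this argument but simply cites Yekutieli \cite{hkr} for the existence of an isomorphism and C\u{a}ld\u{a}raru \cite[\S 4]{caldararu2} for the compatibility with $\exp(\tilde{\alpha}_X)$, noting that C\u{a}ld\u{a}raru's argument, though stated over $\CC$, only uses invertibility of $d!$; your sketch is thus a faithful expansion of what the paper leaves to the references.
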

\begin{proof}
Yekutili \cite{hkr} originally showed the existence of an isomorphism $I$. It was shown by Căldăraru \cite[\S 4]{caldararu2} that $I$ can be chosen such that a diagram (\ref{bea}) exists (technically speaking Căldăraru assumes $k = \CC$, but if one reads \S 4 of \cite{caldararu2} carefully one sees that he only uses that $d!$ is invertible in $k$).
\end{proof}
\begin{construction}[Of the map $I_{\HKR}^{\HH_j}$]
Let $j \in \ZZ_{\geq 0}$. Let $k$ be a field and let $X$ be a smooth and proper scheme over $k$ of dimension $d$, such that $d!$ is invertible in $k$. Then the composition
\begin{align*}
\HH_j(X)  &= \Ext^{ - j}_{X \times X}(\Delta_! \catO_X, \Delta_* \catO_X) \\
&\cong  \Ext^{ - j}_X(\catO_X, \Delta^*  \Delta_* \catO_X) & (\Delta_! \dashv \Delta^*) \\
&\cong  \bigoplus_{i = 0}^d \Ext^{- j}_X(\catO_X, \Omega_X^{i}[i]) & (\ref{afc})
\end{align*}
defines an isomorphism 
\[
I_{\HKR}^{\HH_j} \colon \HH_j(X) \to \bigoplus _{i = 0}^d \coh^{i - j}(X, \Omega^{i}_{X / k}) 
\]
\end{construction} 

\begin{definition}\label{def_hocshchild_semiregular}
Let $k$ be a field, and let $X$ be a smooth and proper scheme over $k$. For $P \in \derD^b(X)$, define the \emph{Hochschild--theoretic semiregularity map} 
\[
\sigma^{\HH_*}_P \colon \Ext_X^*(P, P) \to \HH_{-*}(X)
\]
as the composition
\begin{align*}
\Ext_X^*(P, P) &\xrightarrow{\At_X^{\HH}(P) \circ (-)}  \Ext_X^*(P, P \otimes \Delta^* \Delta_* \catO_X) \\
&\xrightarrow{\tr_P} \Ext_X^*(\catO_X, \Delta^* \Delta_* \catO_X) \xrightarrow{\sim} \HH_{-*}(X)
\end{align*}
where the last isomorphism is given by the adjunction $\Delta_! \dashv \Delta^*$.
\end{definition}
\begin{remark}
Although useful for relating it to the classical Chern character, in practice the above definition is rather hard to work with. Instead we will often work with its Serre left adjoint, which can be shown to be ``evaluation at $P$'', see Lemma \ref{lem_semiregular_adjoint} for a precise statement.
\end{remark}
\begin{prop}\label{prop_semireg_compare}
Let $k$ be a field and let $X$ be a smooth and proper scheme over $k$ of dimension $d$, such that $d!$ is invertible in $k$. Let $i \in \ZZ$. The diagram
\[
\begin{tikzcd}
& \Ext_X^i(P, P) \arrow{dl}[swap]{\sigma_P^{\HH_{i}}} \arrow{d}{\sigma_X}\\
\HH_{-i}(X) \arrow{r}{\sim} [swap]{I^{\HH_{-i}}_\HKR}& \bigoplus _{j = 0}^d  \coh^{i + j}(X, \Omega^{j}_{X / k})
\end{tikzcd}
\]
commutes, where $\sigma_X$ is the map given componentwise by the semiregularity maps defined in Definition \ref{def_semiregularity}.
\end{prop}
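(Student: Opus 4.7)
The plan is to unwind both sides of the diagram and verify that the two adjunction isomorphisms appearing in the definitions of $\sigma_P^{\HH_i}$ and $I^{\HH_{-i}}_{\HKR}$ cancel, so that the content of the proposition reduces to comparing two concrete maps $P \to P \otimes \bigoplus_j \Omega^j_{X/k}[j]$ in $\derD(X)$.

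First, unwinding Definition~\ref{def_hocshchild_semiregular}, for $f \in \Ext^i_X(P,P)$ the element $\sigma_P^{\HH_i}(f) \in \HH_{-i}(X) = \Ext^i_{X\times X}(\Delta_!\catO_X, \Delta_*\catO_X)$ is by definition obtained by applying the adjunction $\Delta_! \dashv \Delta^*$ to $\tr_P(\At_X^{\HH}(P) \circ f) \in \Ext^i_X(\catO_X, \Delta^*\Delta_*\catO_X)$. The first step in $I^{\HH_{-i}}_{\HKR}$ is to apply the \emph{inverse} of this adjunction, so the composition $I^{\HH_{-i}}_{\HKR} \circ \sigma_P^{\HH_i}$ equals the composition
\[
\Ext^i_X(P,P) \xrightarrow{\At_X^{\HH}(P) \circ (-)} \Ext^i_X(P, P \otimes \Delta^*\Delta_*\catO_X) \xrightarrow{\tr_P} \Ext^i_X(\catO_X, \Delta^*\Delta_*\catO_X) \xrightarrow{I_*} \bigoplus_{j = 0}^d \coh^{i+j}(X, \Omega^j_{X/k}).
\]

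Next, by naturality of the trace in the second variable (Definition~\ref{def_trace_map}), one has $I_* \circ \tr_P = \tr_P \circ (\id_P \otimes I)_*$. Hence the composition above sends $f$ to $\tr_P\bigl((\id_P \otimes I) \circ \At_X^{\HH}(P) \circ f\bigr)$, and it suffices to prove the identity
\begin{equation}\label{eq:plan_key}
(\id_P \otimes I) \circ \At_X^{\HH}(P) \;=\; \sum_{j=0}^d \frac{1}{j!} \At_{X/k}^j(P)
\end{equation}
in $\Hom_{\derD(X)}(P, P \otimes \bigoplus_j \Omega^j_{X/k}[j])$; then applying $\tr_P((-) \circ f)$ componentwise yields $\sigma_{X,j}(f)$ in each summand, as required.

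To establish \eqref{eq:plan_key}, apply the Fourier--Mukai functor $\Phi_{(-)}(P)$ of Construction~\ref{agb} to the commutative triangle \eqref{bea} of Theorem~\ref{thm_hkr}. For any $\mathcal{F} \in \derD(X)$, the projection formula identifies $\Phi_{\Delta_* \mathcal{F}}(P)$ with $P \otimes \mathcal{F}$ (since $\pi_1 \circ \Delta = \pi_2 \circ \Delta = \id_X$), and under this identification $\Phi_{\Delta_*(I)}(P)$ corresponds to $\id_P \otimes I$. The left-hand side of the triangle becomes $\At_X^{\HH}(P)$ by \eqref{bed}, while the bottom-right becomes $\Phi_{\exp(\tilde{\alpha}_X)}(P)$, which by Lemma~\ref{afd} is exactly $\sum_j \frac{1}{j!}\At_{X/k}^j(P)$. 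This gives \eqref{eq:plan_key} and completes the proof.

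The main technical point to verify carefully is the compatibility $\Phi_{\Delta_*(I)}(P) = \id_P \otimes I$ and, more generally, that $\Phi_{(-)}(P)$ commutes with the relevant compositions; this is a bookkeeping matter using the projection formula and base change along the diagonal, and requires no new ideas beyond those already assembled in Sections~\ref{sec_atiyah} and \ref{hochschild_semi}.
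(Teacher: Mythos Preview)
Your proposal is correct and follows essentially the same approach as the paper's proof: both arguments rest on applying $\Phi_{(-)}(P)$ to the HKR triangle \eqref{bea}, invoking Lemma~\ref{afd} to identify $\Phi_{\exp(\tilde\alpha_X)}(P)$ with $\sum_j \tfrac{1}{j!}\At_{X/k}^j(P)$, and using the projection formula to identify $\Phi_{\Delta_* I}(P)$ with $\id_P \otimes I$. The only cosmetic difference is that the paper starts from $\sigma_X(f)$ and rewrites it as $I \circ \sigma_P^{\HH_i}(f)$, whereas you start from $I_{\HKR}^{\HH_{-i}} \circ \sigma_P^{\HH_i}$ and rewrite it as $\sigma_X$; the substance is identical.
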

\begin{proof}
Let $f \in \Ext^i(P, P)$. By definition one has
\[
\sigma^j_X(f) = \tr_{P}(\At^j_X(P) \circ f)
\]
so that using Lemma \ref{afd} we obtain
\[
\sigma_X(f) = \tr_{P}(\pi_{2, *}(\pi_1^*P \otimes \exp(\tilde{\alpha}_X)) \circ f)
\]
Using the diagram (\ref{bea}) we may rewrite this as
\begin{align*}
\sigma_X(f) &= \tr_{P}(\pi_{2, *}(\pi_1^*P \otimes (\Delta_* I \circ \At_X^{\HH})) \circ f) \\
&= I \circ \tr_{P}(\pi_{2, *}(\pi_1^*P \otimes \At_X^{\HH}) \circ f) \\
&= I \circ \tr_{P}(\At_X^{\HH}(P) \circ f) \\
&= I \circ \sigma^{\HH_{i}}_P(f)
\end{align*}
where the second equality follows from the projection formula, the third by definition of $\At_X^{\HH}$ and the last by definition of $\sigma^{\HH_{i}}$. The result follows.
\end{proof}

\subsection{Functoriality for Fourier--Mukai transforms}
Let $X$ and $Y$ be smooth and proper schemes over a field $k$. In this section, we define for any Fourier--Mukai transform $\Phi_P \colon \derD^b(X) \to \derD^b(Y)$ a map 
\[
\Phi^{\HH_*}_P \colon \HH_*(X) \to \HH_*(Y)
\]
Moreover, we show that this map is compatible with the Hochschild--theoretic semiregularity maps defined in Definition \ref{def_hocshchild_semiregular} (see Proposition \ref{prop_functorial_semiregular} below for a precise statement).

Although it is possible to define the map $\Phi^{\HH_*}_P$ without referring to Serre duality explicitly (see \cite[\S 6.2]{addingtonthomas14}), for some reason it appears to be quite difficult to prove Proposition \ref{prop_functorial_semiregular} in this way directly. Instead, we follow the proof of \cite[Theorem 7.1]{caldararu1}, which is basically a slightly less general statement then Proposition \ref{prop_functorial_semiregular}.
\begin{definition}
Let $X$ and $Y$ be smooth and proper schemes over a field $k$. For $P \in \derD^b(X \times Y)$ we define
\begin{align*}
P_L := P^\vee \otimes \pi_Y^* S_Y \\
P_R := P^\vee \otimes \pi_X^* S_X
\end{align*}
in $\derD^b(X \times Y)$. 
\end{definition}
It is well known that $\Phi_{P_L} = \Phi_{P^\vee} \circ S_Y$ and $\Phi_{P_R} = S_X \circ \Phi_{P^\vee}$ are left, resp. right adjoint to $\Phi_P$. By abstract nonsense (see \cite[Proposition 5.1]{caldararu1}) there exist maps
\begin{align*}
\eta_P \colon \catO_{\Delta_X} \to P_R \star P \\
\epsilon_P \colon P_L \star P\to \catO_{\Delta_Y}
\end{align*}
in $\derD(X \times X)$ corresponding to the unit, resp. counit of the adjunctions $\Phi_P \dashv \Phi_{P_R}$ and $\Phi_{P_L} \dashv \Phi_P$. 

\begin{definition}[Functoriality for Hochschild homology]\label{def_hochschild_homology_functor}
Let $X$ and $Y$ be smooth and proper schemes over a field $k$. For $P \in \derD^b(X \times Y)$ we define
\[
\tilde{\Phi}_P \colon \Ext^*_{Y \times Y}(\Delta_* \catO_Y, \Delta_* S_Y) \to \Ext^*_{X \times X}(\Delta_* \catO_X, \Delta_* S_X)
\]
by sending a map
$\nu \colon \Delta_* \catO_Y \to \Delta_* S_Y[i]$ in $\derD(Y \times Y)$ to the composition
\begin{multline*}
\Delta_* \catO_X \xrightarrow{\eta_P} P_R \star P \simeq P_R \star \catO_{\Delta_Y} \star P \\
\xrightarrow{P_R \star \nu \star P} P_R \star S_Y \star P \simeq S_X \star P_L \star P \xrightarrow{\eta_P} \catO_{\Delta_X}
\end{multline*}
in $\derD^b(X \times X)$. 
We define
\[
\Phi^{\HH_*}_P \colon \HH_*(X) \to \HH_*(Y)
\]
as the Serre left adjoint of the map $\tilde{\Phi}_P$ (see Definition \ref{beb}). 
\end{definition}

\begin{definition}\label{bec}
Let $k$ be a field, and let $X$ be a smooth and proper scheme over $k$. For $\mathcal{E} \in \derD^b(X)$, define the \emph{evaluation map}
\begin{align*}
\ev_{\mathcal{E}} \colon \Ext^i_{X \times X}(\Delta_* \catO_X, \Delta_* S_X) &\to \Ext^i_X(\mathcal{E}, \mathcal{E} \otimes S_X) \\
\eta &\mapsto \Phi_\eta(\mathcal{E})
\end{align*}
see Construction \ref{agb}.
\end{definition}
\begin{lemma}\label{lem_semiregular_adjoint}
The map $\ev_{\mathcal{E}}$ is the Serre left adjoint of $\sigma^{\HH_*}_{\mathcal{E}}$, that is for any $\mu \in \Ext^i_{X \times X}(\Delta_* \catO_X, \Delta_* S_X)$ and $\alpha \in \Ext^{-i}_X(\mathcal{E}, \mathcal{E})$ one has
\[
\langle \ev_{\mathcal{E}}(\mu), \alpha \rangle = \langle \mu, \sigma^{\HH_*}_{\mathcal{E}}(\alpha) \rangle
\]
in $k$ (see Theorem \ref{gvs} for the definition of the pairing). 
\end{lemma}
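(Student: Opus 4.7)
The strategy is to transpose $\mu$ across the adjunction $\Delta^*\dashv\Delta_*$, reducing both sides of the claimed identity to the same trace on $X$. Let $\tilde\mu\colon \Delta^*\Delta_*\catO_X\to S_X[i]$ denote the map corresponding to $\mu$ under this adjunction; the triangle identity gives the factorization $\mu = \Delta_*(\tilde\mu)\circ\At^{\HH}_X$. The first step is to establish
\[
\ev_{\mathcal{E}}(\mu) \simeq (\id_\mathcal{E}\otimes\tilde\mu)\circ \At^{\HH}_X(\mathcal{E})
\]
in $\Ext^i_X(\mathcal{E},\mathcal{E}\otimes S_X)$. This is a direct unwinding of $\Phi_\mu(\mathcal{E})=\pi_{2,*}(\pi_1^*\mathcal{E}\otimes\mu)$ applied to the factorization above, using the projection formula together with the identifications $\Phi_{\Delta_*\catO_X}\simeq\id_{\derD(X)}$ and $\Phi_{\Delta_*\Delta^*\Delta_*\catO_X}(\mathcal{E})\simeq\mathcal{E}\otimes\Delta^*\Delta_*\catO_X$, plus the functoriality of $\Phi_{(-)}$ in its kernel.

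Using this factorization and the naturality of the trace $\tr_\mathcal{E}$ with respect to tensoring by a morphism (a variant of Proposition~\ref{prop_add_traces}), the left-hand side collapses to
\[
\langle \ev_{\mathcal{E}}(\mu),\alpha\rangle \;=\; \tr_X\tr_\mathcal{E}\bigl((\id\otimes\tilde\mu)\circ \At^{\HH}_X(\mathcal{E})\circ\alpha\bigr) \;=\; \tr_X(\tilde\mu\circ\gamma),
\]
where $\gamma := \tr_\mathcal{E}(\At^{\HH}_X(\mathcal{E})\circ\alpha)\in\Ext^{-i}(\catO_X,\Delta^*\Delta_*\catO_X)$ is precisely the transpose of $\sigma^{\HH_*}_{\mathcal{E}}(\alpha)$ under $\Delta_!\dashv\Delta^*$, by the very construction of $\sigma^{\HH_*}$ in Definition~\ref{def_hocshchild_semiregular}.

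The final and hardest step is to verify
\[
\tr_X(\tilde\mu\circ\gamma) \;=\; \tr_{X\times X}\bigl(\tr_{\Delta_!\catO_X}(\mu\circ\sigma^{\HH_*}_{\mathcal{E}}(\alpha))\bigr),
\]
which is the definition of the right-hand side. This is an abstract compatibility between the Grothendieck--Serre pairings on $X$ and on $X\times X$ mediated by the adjoint triple $\Delta_!\dashv\Delta^*\dashv\Delta_*$. Its content is the proper-pushforward compatibility $\tr_{X\times X}\circ\Delta_*=\tr_X$ combined with the explicit formula $\Delta_!\catO_X\otimes S_{X\times X}\simeq\Delta_*S_X$ from (\ref{bga}) and the projection formula; unwinding the evaluation and coevaluation maps underlying $\tr_{\Delta_!\catO_X}$ converts the $X\times X$-trace into the $X$-trace already computed. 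The book-keeping required to match all units, counits, shifts, and the Serre twist in this reduction is where essentially all of the substance of the lemma lives.
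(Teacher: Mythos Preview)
Your approach is essentially identical to the paper's: you transpose $\mu$ through $\Delta^*\dashv\Delta_*$ to get $\tilde\mu$ (the paper calls it $\mu'$), factor $\ev_{\mathcal{E}}(\mu)$ as $(\id\otimes\tilde\mu)\circ\At_X^{\HH}(\mathcal{E})$, reduce the left-hand side to $\tr_X(\tilde\mu\circ\gamma)$ with $\gamma=\tr_{\mathcal{E}}(\At_X^{\HH}(\mathcal{E})\circ\alpha)$, and finish by invoking compatibility of the Serre traces on $X$ and $X\times X$ under the adjunctions $\Delta_!\dashv\Delta^*$ and $\Delta^*\dashv\Delta_*$. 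The paper packages the last step as the identity $\Tr_X(g\circ f)=\Tr_{X\times X}(\varphi_2(g)\circ\varphi_1(f))$, which is exactly the compatibility you describe.

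One small remark: the naturality property $\tr_\mathcal{E}((\id\otimes f)\circ g)=f\circ\tr_\mathcal{E}(g)$ you use is not really a variant of Proposition~\ref{prop_add_traces} (which concerns additivity along exact triangles); it is a direct consequence of the definition of the trace and holds in any symmetric monoidal category. The citation is misleading but the step itself is fine.
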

\begin{proof}
Essentially the same argument as in \cite[Theorem 4.5]{caldararu2}. Suppose that  $\mu' \in \Ext^i_X (\Delta^* \Delta_* \catO_X, S_X)$ is the image of $\mu$ under the adjunction $\Delta^* \dashv \Delta_*$, so that $\mu = \Delta_* \mu' \circ \At_X^{\HH}$. 

We have 
\begin{align*}
\langle \ev_{\mathcal{E}}(\mu), \alpha \rangle &= \Tr_X\(\ev_{\mathcal{E}}(\mu) \circ \alpha\) \\
&= \Tr_X\(\Phi_{\mu}(\mathcal{E}) \circ \alpha\) & (\text{Definition \ref{bec}} )\\
&= \Tr_X\(\pi_{2, *}(\pi_1^*\mathcal{E} \otimes \mu) \circ \alpha\) & (\text{Construction \ref{agb}} ) \\
&= \Tr_X\(\pi_{2, *}(\pi_1^*\mathcal{E} \otimes (\Delta_* \mu' \circ \At_X^{\HH})) \circ \alpha\) & (\mu = \Delta_* \mu' \circ \At_X^{\HH}) \\
&= \Tr_X\(\pi_{2, *}(\Delta_*(\mathcal{E} \otimes \mu') \circ \pi_1^*\mathcal{E} \otimes  \At_X^{\HH} ) \circ \alpha \) & (\text{projection formula}) \\
&= \Tr_X\((\mathcal{E} \otimes  \mu') \circ \pi_{2, *}(\pi_1^* \mathcal{E} \otimes  \At_X^{\HH} ) \circ \alpha \) \\
&= \tr_X\(\mu' \circ \tr_{\mathcal{E}}(\At_X^{\HH}(\mathcal{E}) \circ \alpha)\) & (\ref{bed})
\end{align*}
The adjunctions $\Delta_! \dashv \Delta^*$ and $\Delta^* \dashv \Delta_*$ give equivalences
\begin{align*}
\varphi_1 \colon \Ext_X^*(\catO_X, \Delta^* \Delta_* \catO_X) &\xrightarrow{\sim} \Ext_{X \times X}^*(\Delta_! \catO_X, \Delta_* \catO_X) \\
\varphi_2 \colon \Ext_X^*(\Delta^* \Delta_* \catO_X, S_X) &\xrightarrow{\sim} \Ext_{X \times X}^*(\Delta_* \catO_X, \Delta_! \catO_X \otimes S_{X \times X})
\end{align*}
Here for the second equivalence, we used that $\Delta_! \catO_X \otimes S_{X \times X} \simeq \Delta_* S_X$, see (\ref{bga}). One may show these are compatible with the Serre trace, that is
\begin{align*}
\Tr_X(g \circ f) = \Tr_{X \times X}(\varphi_2(g) \circ \varphi_1(f))
\end{align*}
for $f \in \Ext_X^*(\catO_X, \Delta^* \Delta_* \catO_X)$ and $g \in\Ext_X^*(\Delta^* \Delta_* \catO_X, S_X)$. It follows that  
\begin{align*}
\langle \ev_{\mathcal{E}}(\mu), \alpha \rangle &= \tr_X\(\mu' \circ \tr_{\mathcal{E}}(\At_X^{\HH}(\mathcal{E}) \circ \alpha)\) \\
&= \tr_{X \times X}\(\tr_{\Delta_! \catO_X }(\mu \circ \sigma^{\HH_*}_{\mathcal{E}}(\alpha))\) \\
&= \langle \mu, \sigma^{\HH_*}_{\mathcal{E}}(\alpha) \rangle
\end{align*}
as required.
\end{proof}

\begin{prop}\label{prop_functorial_semiregular}
Let $X$ and $Y$ be smooth and proper schemes over a field $k$. For $P \in \derD^b(X \times Y)$ and $\mathcal{E} \in \derD^b(X)$, the diagram
\[
\begin{tikzcd}
\Ext^i_X(\mathcal{E}, \mathcal{E}) \dar {\sigma_{\mathcal{E}}^{\HH_{i}}} \rar{\Phi_P}& \Ext^i_Y(\Phi_P(\mathcal{E}), \Phi_P(\mathcal{E})) \dar{\sigma_{\Phi_P(\mathcal{E})}^{\HH_{i}}} \\
\HH_{i}(X) \rar{\Phi^{\HH_{-i}}_P} & \HH_{-i}(Y)
\end{tikzcd}
\]
commutes.
\end{prop}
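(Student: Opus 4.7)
The plan is to prove the commutativity by passing to Serre-dual diagrams. By Theorem~\ref{gvs} the Serre pairing on Ext is perfect on both sides, so checking that two linear maps into $\HH_{-i}(Y)$ agree is equivalent to checking that their Serre left adjoints (valued in the Serre-dual spaces) agree. By Lemma~\ref{lem_semiregular_adjoint}, the Serre left adjoint of $\sigma_{\mathcal{E}}^{\HH_{-i}}$ is the evaluation map $\ev_{\mathcal{E}}$, and the same holds for $\Phi_P(\mathcal{E})$. By Definition~\ref{def_hochschild_homology_functor}, the Serre left adjoint of $\Phi^{\HH_{-i}}_P$ is the map $\tilde{\Phi}_P$. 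Thus the proposition is equivalent to the identity
\[
\langle \ev_{\mathcal{E}}(\tilde{\Phi}_P(\mu)), \alpha \rangle_X \;=\; \langle \ev_{\Phi_P(\mathcal{E})}(\mu), \Phi_P(\alpha) \rangle_Y
\]
for all $\mu \in \Ext^i_{Y \times Y}(\Delta_* \catO_Y, \Delta_* S_Y)$ and $\alpha \in \Ext^{-i}_X(\mathcal{E}, \mathcal{E})$.

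First I would unwind both sides as Serre traces $\Tr_X$ and $\Tr_Y$ of compositions, and then rewrite the compositions at the kernel level. Using that $\Phi_P(\alpha) = \Phi_{\pi_1^* \alpha}(P) $ and that composition in $\Ext$ translates into convolution via the isomorphism $\Phi_F \circ \Phi_G \simeq \Phi_{F \star G}$, both traces become Serre traces of endomorphisms of kernels on $X\times X$ or $Y\times Y$. Specifically, the right-hand side can be transported to the $X$-side via the unit $\eta_P : \catO_{\Delta_X} \to P_R \star P$ and the counit $\epsilon_P : P_L \star P \to \catO_{\Delta_X}$; the triangle identities for the adjunctions $\Phi_P \dashv \Phi_{P_R}$ and $\Phi_{P_L} \dashv \Phi_P$ allow one to recognise the transported trace as the trace of the composition appearing on the left-hand side.

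The essential step — and the main obstacle — is the kernel-level identification $P_R \star S_Y \star P \simeq S_X \star P_L \star P$, which encodes the compatibility of Serre duality with Fourier--Mukai transforms. Under this identification, the map defining $\tilde\Phi_P(\mu)$, namely
\[
\Delta_* \catO_X \xrightarrow{\eta_P} P_R \star P \xrightarrow{P_R \star \mu \star P} P_R \star S_Y \star P \simeq S_X \star P_L \star P \xrightarrow{S_X \star \epsilon_P} \Delta_* S_X,
\]
is constructed precisely so that after convolving with $\pi_1^* \mathcal{E}$ and pushing forward to $X$ it becomes the ``$\mathcal{E}$-evaluation'' of the $Y$-side trace. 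A careful bookkeeping, separating the part of the composition coming from $\alpha$ (on $X$) from the part coming from $\mu$ (transported from $Y$ to $X$ via the adjunction), then yields the desired equality.

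This argument essentially extends \cite[Theorem~7.1]{caldararu1}, which treats the special case where $\alpha$ is the identity (so that the pairing produces the Chern character): our generalisation merely allows $\alpha$ to be an arbitrary Ext-class, and the proof reduces to the same interplay between the triangle identities and the Serre-duality comparison $P_R \star S_Y \simeq S_X \star P_L$. In particular one can avoid re-proving the technical compatibilities by invoking Căldăraru's construction of $\Phi^{\HH_*}_P$ directly; I would proceed by first verifying the identity under the assumption $\mathcal{E} = \catO_X$ and $\alpha = \id$ (which is exactly Căldăraru's case), and then deducing the general case formally by inserting an $\alpha$ into the trace computation and using that $\Tr_X$ is cyclic.
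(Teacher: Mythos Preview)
Your strategy of passing to Serre duals and using Lemma~\ref{lem_semiregular_adjoint} and Definition~\ref{def_hochschild_homology_functor} to identify the adjoints of $\sigma^{\HH}$ and $\Phi^{\HH}_P$ is exactly what the paper does. The difference is that the paper also names the Serre left adjoint of the remaining arrow $\Phi_P \colon \Ext^i_X(\mathcal{E},\mathcal{E}) \to \Ext^i_Y(\Phi_P(\mathcal{E}),\Phi_P(\mathcal{E}))$: by \cite[Proposition~3.1]{caldararu1} it is the map $\Phi^\dagger_P$ sending $\nu$ to the composite
\[
\mathcal{E} \xrightarrow{\eta_P} \Phi_{P_R}\Phi_P(\mathcal{E}) \xrightarrow{\Phi_{P_R}(\nu)} \Phi_{P_R}S_Y\Phi_P(\mathcal{E}) \simeq S_X\Phi_{P_L}\Phi_P(\mathcal{E}) \xrightarrow{S_X(\epsilon_P)} S_X\mathcal{E}.
\]
With all four adjoints named, the proposition reduces to the commutativity of the dual square $\ev_{\mathcal{E}} \circ \tilde\Phi_P = \Phi^\dagger_P \circ \ev_{\Phi_P(\mathcal{E})}$, and this is immediate from the definitions: $\tilde\Phi_P$ and $\Phi^\dagger_P$ are built from the same unit/counit pattern at the kernel level, and $\ev$ just evaluates a kernel-level morphism at an object. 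Your direct attack on the pairing identity would eventually arrive at the same equality, but the ``careful bookkeeping'' you flag as the main obstacle is precisely what the introduction of $\Phi^\dagger_P$ bypasses. Your alternative of first doing $\alpha=\id$ (C\u ald\u araru's case) and then inserting a general $\alpha$ ``by cyclicity'' is not obviously sound: cyclicity of $\Tr_X$ lets you move $\alpha$ around inside a single trace, but it does not by itself relate $\sigma^{\HH}_{\mathcal{E}}(\alpha)$ to the $\alpha=\id$ computation across the functor $\Phi_P$.
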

\begin{proof}
Let 
\[
\Phi^\dagger_P \colon \Ext^i_Y(\Phi_P(\mathcal{E}), \Phi_P(\mathcal{E}) \otimes S_Y) \to \Ext^i_X(\mathcal{E}, \mathcal{E} \otimes S_X)
\]
be the map sending $\nu \colon \Phi_P(\mathcal{E}) \to \Phi_P(\mathcal{E}) \otimes S_Y$ to the composition
\begin{multline*}
\mathcal{E} \xrightarrow{\eta_P} \Phi_{P_R} \circ \Phi_P(\mathcal{E}) \xrightarrow{\Phi_{P_R}(\nu)} \Phi_{P_R} \circ S_Y \circ \Phi_P(\mathcal{E})  \\ \cong S_X \circ \Phi_{P_L} \circ \Phi_P(\mathcal{E}) \xrightarrow{S_X(\epsilon_P)} S_X \mathcal{E}
\end{multline*}
By \cite[Proposition 3.1]{caldararu1}, the map $\Phi^\dagger_P$ is a Serre left adjoint for $\Phi_P$. Using Lemma \ref{lem_semiregular_adjoint} and Definition \ref{def_hochschild_homology_functor} to identify the other Serre left adjoints of the diagram, we see it suffices to show the diagram
\[
\begin{tikzcd}
 \Ext^*_{Y \times Y}(\Delta_* \catO_Y, \Delta_* S_Y) \dar{\ev_{\Phi_P(\mathcal{E})}} \rar{\tilde{\Phi}_P} & \Ext^*_{X \times X}(\Delta_* \catO_X, \Delta_* S_X) \dar{\ev_{\mathcal{E}}} \\
 \Ext^i_Y(\Phi_P(\mathcal{E}), \Phi_P(\mathcal{E}) \otimes S_Y) \rar{\Phi^\dagger_P} &  \Ext^i_X(\mathcal{E}, \mathcal{E} \otimes S_X)
\end{tikzcd}
\]
commutes (by uniqueness of adjoints for a perfect pairing), which is obvious from the definitions. 
\end{proof}
\subsection{The action of Hochschild cohomology and the semiregularity map}
Let $X$ be a smooth and proper scheme over a field $k$. The Hochschild-theoretic semiregularity map (Definition \ref{def_hocshchild_semiregular}) defines a map
\[
\sigma^{\HH_*}_{\catO_\Delta}\colon \HH^*(X) \to \HH_{-*}(X \times X)
\]
(see Definition \ref{def_hocshchild_semiregular}). In this section, we will construct a K\"unneth isomorphism
\[
K \colon \HH_{-*}(X \times X) \to \bigoplus_i \HH_{-i}(X) \otimes \HH_{i - *}(X)
\]
and an isomorphism $\psi \colon \HH_{-i}(X) \simeq \HH_{i}(X)^\vee$. Moreover, we will show that the composition
\begin{align*}
\HH^*(X) &\xrightarrow{\sigma^{\HH_*}_{\catO_\Delta}} \HH_{-*}(X \times X) \\ 
&\xrightarrow{(\psi \otimes \id) \circ K} \bigoplus_i \HH_{i}(X)^\vee \otimes \HH_{i - *}(X) \\
&\simeq \Hom_k(\HH_{i}(X), \HH_{i - *}(X))
\end{align*}
can be identified with the natural action of Hochschild cohomology on Hochschild homology (see Corollary \ref{corol_actions} below for a precise statement).  Again, for some reason this seems to be difficult to prove directly, but by passing to the Serre duals it is possible to establish a comparison result. 

We start off by introducing the natural action.
\begin{definition}
Let $k$ be a field, and let $X$ be a smooth and proper scheme over $k$. We define the \emph{action map}
\begin{align*}
\HH^*(X) &\xrightarrow{a} \bigoplus_i \Hom(\HH_i(X), \HH_{i - *}(X))  \\
f &\mapsto ((x_i)_i \mapsto (f \circ x_i)_i)
\end{align*}
where $x_i \in \HH_i(X) = \Ext^{-i}(\Delta_! \catO_X, \Delta_* \catO_X)$. 
\end{definition}
\begin{remark}
Since for any two finite dimensional vector spaces $V, W$ we have a canonical isomorphism $\Hom(V, W) \cong V^\vee \otimes W$, we may also think of the action as a map
\[
\HH^*(X) \to \bigoplus_i \HH_i(X)^\vee \otimes \HH_{i - *}(X)
\]
which we will also denote with $a$. 
\end{remark}
\begin{lemma}\label{lem_dual_action}
Let $k$ be a field, and let $X$ be a smooth and proper scheme over $k$. The Serre left adjoint of the action map is given by the map
\begin{align*}
a^\dagger \colon \bigoplus_i \HH_i(X) \otimes \HH_{i - *}(X)^\vee  &\to \Ext^{-*}_{X \times X}(\Delta_* \catO_X, \Delta_* \catO_X \otimes S_{X \times X}) \\
\alpha_i \otimes \beta_i &\mapsto (S_{X \times X}(\alpha_i) \circ \beta_i)
\end{align*}
for $\alpha_i \in \Ext^{-i}_{X \times X}(\Delta_! \catO_X, \Delta_* \catO_X)$ and $\beta_i \in \Ext^{i - *}_{X \times X}(\Delta_* \catO_X, \Delta_* S_X)$. 
\end{lemma}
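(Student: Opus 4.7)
The plan is to verify directly that the map $a^\dagger$ satisfies the defining property of a Serre left adjoint (Definition \ref{beb}), namely that $\langle a^\dagger(x), f \rangle = \langle x, a(f) \rangle$ for all $f \in \HH^*(X)$ and all simple tensors $x = \alpha_i \otimes \beta_i$, where
\[
\alpha_i \in \HH_i(X) = \Ext^{-i}_{X\times X}(\Delta_! \catO_X, \Delta_* \catO_X), \quad \beta_i \in \HH_{i-*}(X)^\vee \cong \Ext^{i-*}_{X\times X}(\Delta_* \catO_X, \Delta_* S_X).
\]
Uniqueness of Serre left adjoints (which is a consequence of the nondegeneracy of the pairing in Theorem \ref{gvs}) then forces this $a^\dagger$ to be \emph{the} Serre left adjoint of $a$.

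First I would unwind both pairings using Theorem \ref{gvs} and the identifications $\HH_j(X)^\vee \cong \Ext^j_{X \times X}(\Delta_* \catO_X, \Delta_* S_X)$ coming from Serre duality on $X \times X$ together with the isomorphism $\Delta_! \catO_X \otimes S_{X \times X} \simeq \Delta_* S_X$ from (\ref{bga}). On the left, we obtain
\[
\langle a^\dagger(\alpha_i \otimes \beta_i), f \rangle = \Tr_{X\times X}\bigl((S_{X\times X}(\alpha_i) \circ \beta_i) \circ f\bigr) = \tr_{X\times X} \tr_{\Delta_* \catO_X}\bigl(S_{X\times X}(\alpha_i) \circ \beta_i \circ f\bigr).
\]
On the right, $a(f)$ sends $\alpha_i \in \HH_i(X)$ to $f \circ \alpha_i \in \HH_{i-*}(X)$; unwinding the canonical pairing between $V \otimes W^\vee$ and $V^\vee \otimes W$, I obtain
\[
\langle \alpha_i \otimes \beta_i, a(f) \rangle = \langle f \circ \alpha_i, \beta_i \rangle = \tr_{X \times X} \tr_{\Delta_! \catO_X}\bigl(\beta_i \circ f \circ \alpha_i\bigr).
\]

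The final step is then to reduce the equality of these two expressions to the cyclic property of the trace map (Definition \ref{def_trace_map}): for dualizable $\mathcal{E}, \mathcal{F}$ and morphisms $\alpha \colon \mathcal{E} \to \mathcal{F}$, $\gamma \colon \mathcal{F} \to \mathcal{E} \otimes M$, one has $\tr_{\mathcal{F}}((\alpha \otimes \id_M) \circ \gamma) = \tr_{\mathcal{E}}(\gamma \circ \alpha)$. Applying this with $\mathcal{E} = \Delta_! \catO_X$, $\mathcal{F} = \Delta_* \catO_X$, $M = S_{X \times X}$, $\alpha = \alpha_i$ and $\gamma = \beta_i \circ f$ (using the identification $\Delta_* S_X = \Delta_! \catO_X \otimes S_{X \times X}$) gives exactly the required identity. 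I do not expect any real obstacle here; the only point requiring some care is keeping track of the Serre duality identifications and confirming that the cyclic trace identity holds in the stable symmetric monoidal $\infty$-category $\derD^b(X \times X)$, which is standard.
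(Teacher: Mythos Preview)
Your proposal is correct and follows essentially the same route as the paper's proof: both verify the adjunction identity $\langle a^\dagger(\alpha_i \otimes \beta_i), f \rangle = \langle \alpha_i \otimes \beta_i, a(f) \rangle$ by unwinding the Serre pairings and then invoking cyclicity of the trace (the paper cites this as \cite[Lemma 2.2]{caldararu1}, which is exactly your identity $\tr_{\mathcal{F}}((\alpha \otimes \id_M) \circ \gamma) = \tr_{\mathcal{E}}(\gamma \circ \alpha)$). The only cosmetic difference is that the paper makes the step ``unwinding the canonical pairing between $V \otimes W^\vee$ and $V^\vee \otimes W$'' explicit by choosing a basis $\{v_j\}$ of $\HH_i(X)$ and writing $a(f) = \sum_j v_j^\vee \otimes (f \circ v_j)$, arriving at $\langle \beta_i, f \circ \alpha_i \rangle$ before applying cyclicity.
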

\begin{proof}
Fix 
\begin{align*}
\alpha_i \in \HH_i(X) = \Ext^{-i}_{X \times X}(\Delta_! \catO_X, \Delta_* \catO_X) \\
\beta_i \in \HH_{i - *}(X)^\vee = \Ext^{i - *}_{X \times X}(\Delta_* \catO_X, \Delta_* S_X)
\end{align*}
Let $\{v_j\}_{j \in J}$ be a basis of $\HH_i(X)$, and let $\{v_j^\vee\}_{j \in J}$ be the dual basis of $\Ext^i_{X \times X}(\Delta_* \catO_X, \Delta_* S_X)$. Then by definition the action map is given by 
\[
a(f) = \sum_{j \in J} v_j^\vee \otimes (f \circ v_j)
\]
It follows that
\begin{align*}
\langle \alpha_i \otimes \beta_i, a(f) \rangle &= \sum_{j \in J} \langle \alpha_i \otimes \beta_i, v_j^\vee \otimes (f \circ v_j) \rangle \\
&=  \sum_{j \in J}  \langle \alpha_i, v_j^\vee \rangle \cdot \langle \beta_i, f \circ v_j \rangle \\
&= \langle \beta_i, f \circ \(\sum_{j \in J} \langle v_j^\vee, \alpha_i \rangle \cdot v_j\) \rangle \\
&= \langle \beta_i, f \circ \alpha_i \rangle 
\end{align*}
By \cite[Lemma 2.2]{caldararu1} it follows that
\begin{align*}
\langle \alpha_i \otimes \beta_i, a(f) \rangle &= \Tr_{X \times X}(\beta_i \circ f \circ \alpha_i) \\
&= \Tr_{X \times X}(S_{X \times X}(\alpha_i) \circ \beta_i \circ f) \\
&= \langle S_{X \times X}(\alpha_i) \circ \beta_i, f \rangle
\end{align*}
as required. 
\end{proof}
Note that if $X$ and $Y$ are two smooth and  proper schemes over a field $k$, one has a K\"unneth isomorphism 
\[
K_{X, Y} \colon \bigoplus_{i} \HH_i(X) \otimes_k \HH_{* - i}(Y) \xrightarrow{\sim} \HH_{*}(X \times Y)
\]
given by sending $\alpha \in \Ext^{-i}_{X \times X}(\Delta_! \catO_X, \Delta_* \catO_X)$ and $\beta \in \Ext^{i - *}(\Delta_! \catO_Y, \Delta_* \catO_Y)$ to the element
\[
\pi_{13}^*(\alpha) \otimes \pi_{24}^*(\beta) \in \Ext^{-*}_{X \times Y \times X \times Y}(\Delta_! \catO_{X \times Y}, \Delta_* \catO_{X \times Y})
\]
(note $\Delta_* \catO_{X \times Y} = \pi_{13}^* \Delta_* \catO_X \otimes \pi_{24}^* \Delta_* \catO_Y$, and similarly for $\Delta_!$). 

\begin{definition}
Let $X$ be a smooth and proper scheme over a field $k$. Define the map
\begin{align*} 
\varphi \colon \Ext_{X \times X}^i(\Delta_* \catO_X, \Delta_* S_X) &\to \Ext_{X \times X}^{-i}(\Delta_! \catO_X, \Delta_* O_X) \\
\eta &\mapsto (\sigma_X)_*(\eta \otimes \pi_2^* S_X^{-1})
\end{align*}
where $\sigma_X$ is the map $X \times X \to X \times X$ swapping the two factors
\end{definition}

\begin{prop}\label{prop_compare_action}
Let $X$ be a smooth and proper scheme over a field $k$. Then the diagram
\[
\begin{tikzcd}
\Ext^{-*}_{X \times X}(\Delta_* \catO_X, \Delta_* \catO_X \otimes S_{X \times X}) & \lar{\ev_{\catO_{\Delta}}} \Ext^{-*}_{X \times X \times X \times X}(\Delta_* \catO_{X \times X}, \Delta_* S_{X \times X}) \\
\uar{a^\dagger} 
\bigoplus_i \HH_i(X) \otimes \HH_{i - *}(X)^\vee & \bigoplus_i \lar{\varphi \otimes \id }  \uar{(K_{X, X}^{-1})^\vee}\HH_{-i}(X)^\vee \otimes \HH_{i - *}(X)^\vee
\end{tikzcd}
\]
commutes.
\end{prop}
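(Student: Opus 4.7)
The strategy is to verify commutativity by tracking an arbitrary element $\alpha \otimes \beta$ of the bottom--right space around both paths. Using the Serre duality identifications of $\HH_{-i}(X)^\vee$ and $\HH_{i-*}(X)^\vee$, view $\alpha, \beta$ as classes in $\Ext^*_{X \times X}(\Delta_*\catO_X, \Delta_*S_X)$. The composition through the bottom--left corner is immediate from Lemma~\ref{lem_dual_action}: $(\varphi \otimes \id)(\alpha \otimes \beta) = \varphi(\alpha) \otimes \beta$, and $a^\dagger$ then yields $S_{X \times X}(\varphi(\alpha)) \circ \beta$, where $\varphi(\alpha) = (\sigma_X)_*(\alpha \otimes \pi_2^*S_X^{-1})$.

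The composition through the top--right corner requires more care. First, I would identify $(K_{X,X}^{-1})^\vee(\alpha \otimes \beta)$ explicitly via Künneth. Consider the permutation $\tau \colon X^4 \to X^4$ swapping the middle two factors, which identifies $X^4$ with $(X \times X)_{(1,3)} \times (X \times X)_{(2,4)}$ compatibly with the inclusion $\Delta \colon X^2 \hookrightarrow X^4$ used to define $\HH_*(X \times X)$. Under this identification $\Delta_*\catO_{X^2}$ and $\Delta_*S_{X^2}$ factorize as external tensor products, and the Serre--dual Künneth isomorphism realizes $(K_{X,X}^{-1})^\vee(\alpha \otimes \beta)$ as $\tau_*(\alpha \boxtimes \beta)$ in $\Ext^*_{X^4}(\Delta_*\catO_{X^2}, \Delta_*S_{X^2})$. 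Applying $\ev_{\catO_\Delta}$ then reduces to computing the Fourier--Mukai transform $\Phi_{\tau_*(\alpha \boxtimes \beta)}(\catO_\Delta) = \pi_{34,*}(\pi_{12}^*\catO_\Delta \otimes \tau_*(\alpha \boxtimes \beta))$ on $X^4$.

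The central reduction is this push--pull computation. Using base change through $\tau$, the projection formula, and the support of $\catO_\Delta$ on the small diagonal, the $X^4$--integral collapses onto $X \times X$ and decomposes into a composition of the two kernels. The swap $\sigma_X$ and the twist by $\pi_2^*S_X^{-1}$ in the definition of $\varphi$ appear precisely here: $\sigma_X$ from the off--diagonal identification between factors $(1,3)$ and $(2,4)$ forced by $\tau$, and the Serre twist $S_X^{-1}$ from the discrepancy between $\Delta_!$ and $\Delta_*$ appearing in one of the Künneth factors. After matching, the resulting class in $\Ext^{-*}_{X \times X}(\Delta_*\catO_X, \Delta_*\catO_X \otimes S_{X \times X})$ is $S_{X \times X}(\varphi(\alpha)) \circ \beta$, matching the other path.

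The main obstacle is this final identification: the individual steps (Künneth, projection formula, base change) are standard, but correctly bookkeeping supports, Serre twists, and the swap of the $X$--factors through the chain of isomorphisms, and recognizing the combined effect as precisely $\varphi$ followed by composition with $\beta$, is where the work lies. To keep the calculation tractable I would first verify the statement when $\alpha$ and $\beta$ are pulled back from simpler classes on each factor of $X \times X$, and then extend by the multilinearity of all the operations involved.
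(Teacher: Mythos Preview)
Your approach is correct and essentially the same as the paper's: both track an element $\alpha \otimes \beta$ around the two paths, with the nontrivial path computed by writing $(K_{X,X}^{-1})^\vee(\alpha \otimes \beta)$ as the K\"unneth class $\pi_{13}^*\alpha \otimes \pi_{24}^*\beta$ on $X^4$ and then evaluating $\ev_{\catO_\Delta}$ via projection formula and base change. The paper carries out this push--pull computation directly and uniformly, reducing from $X^4$ to $X^3$ (via $\pi_{12}^*\catO_\Delta = (\Delta_X \times \id \times \id)_*\catO_{X^3}$) and then to $X^2$ (via the partial diagonal $\tilde\Delta \colon (x,y) \mapsto (x,y,x)$); your proposed preliminary reduction to ``simpler classes'' is unnecessary.
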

\begin{proof}
Let 
\begin{align*}
\alpha \in \HH_i(X)^\vee = \Ext^i_{X \times X}(\Delta_* \catO_X, \Delta_* S_X) \\
\beta \in \HH_{* - i}(X)^\vee = \Ext^{* - i}_{X \times X}(\Delta_* \catO_X, \Delta_* S_X) 
\end{align*}
and consider $\ev_{\catO_\Delta} \circ (K_{X, X}^{-1})^\vee (\alpha \otimes \beta)$. Then 
\begin{align*}
\ev_{\catO_\Delta} \circ (K_{X, X}^{-1})^\vee (\alpha \otimes \beta) &= \pi_{34*}(\pi_{12}^*(\catO_\Delta) \otimes \pi_{13}^* \alpha \otimes \pi_{24}^* \beta ) \\
&= \pi_{34*}((\Delta_X \times \id_X \times \id_X)_*( \catO_{X \times X \times X}) \otimes \pi_{13}^* \alpha \otimes \pi_{24}^* \beta) \\
& =\pi_{34*}(\Delta_X \times \id_X \times \id_X)_*(\pi_{12}^* \alpha \otimes \pi_{13}^* \beta) \\
&= \pi_{23*}(\pi_{12}^* \alpha \otimes \pi_{13}^* \beta)
\end{align*}
where in the second equality we used 
\[
\pi_{12}^*(\catO_\Delta) = (\Delta_X \times \id_X \times \id_X)_*( \catO_{X \times X \times X})
\]
(which follows by base change), and the rest follows from the projection formula. Now let
\[
\tilde{\Delta} \colon X \times X \to X \times X \times X
\]
be the map informally given by $(x, y) \to (x, y, x)$, so that we have the equality $\pi_{13}^* \Delta_* = \tilde{\Delta}_* \pi_1^*$ induced by the pullback square
\[
\begin{tikzcd}
X \times X \dar{\tilde{\Delta}}\rar{\pi_1}&  X \dar{\Delta} \\
X \times X \times X \rar{\pi_{13}} & X \times X
\end{tikzcd}
\]
Similarly, we have $\pi_{12}^* \Delta_* = (\Delta_X \times \id_X)_* \pi_1^*$. Thus 
\begin{align*}
 \pi_{23*}(\pi_{12}^* \alpha \otimes \pi_{13}^* \beta) &= \pi_{23*}\((\pi_{12}^* \alpha \circ \id_{(\Delta_X \times \id_X)_* \catO_{X \times X}}) \otimes (\id_{\tilde{\Delta}_* \pi_1^* S_X} \circ \pi_{13}^* \beta)\) \\
 &= \pi_{23*}(\pi_{12}^* \alpha \otimes \tilde{\Delta}_* \pi_1^* S_X) \circ \pi_{23*}((\Delta_X \times \id_X)_* \catO_{X \times X} \otimes \pi_{13}^* \beta) \\
&= (\pi_{23} \circ \tilde{\Delta})_*(\alpha \otimes \pi_1^* S_X) \circ (\pi_{23} \circ \Delta_X \times \id_X)_*(\beta) \\
&= \sigma_{X*}(\alpha \otimes \pi_1^* S_X) \circ \beta
\end{align*} 
On the other hand one has 
\[
(\alpha^\dagger \circ (\varphi \otimes \id))(\alpha \otimes \beta) = S_{X \times X}(\sigma_{X*}(\alpha \otimes \pi_2^* S_X^{-1})) \circ \beta = \sigma_{X*}(\alpha \otimes \pi_1^* S_X) \circ \beta
\]
which completes the proof.
\end{proof}
\begin{corollary}\label{corol_actions}
Let $k$ be a field, and let $X$ be a smooth and proper scheme over $k$. Then the diagram
\[
\begin{tikzcd}[column sep = huge]
\HH^*(X) \dar{a} \rar{\sigma^{\HH_*}_{\catO_\Delta}} & \HH_{-*}(X \times X) \dar{K_{X, X}^{-1}} \\
\bigoplus_i \HH_i(X)^\vee \otimes \HH_{i - *}(X) & \lar{(\varphi^\vee)^{-1} \otimes \id} \bigoplus_i \HH_{-i}(X) \otimes \HH_{i - *}(X)
\end{tikzcd}
\]
commutes.
\end{corollary}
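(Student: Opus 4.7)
The plan is to reduce the commutativity of this diagram to the commutativity of the diagram in Proposition \ref{prop_compare_action} by taking Serre left adjoints of every arrow. Since every object appearing in both diagrams is a finite-dimensional $k$-vector space carrying a perfect Serre pairing (either directly or as a tensor product of such), two parallel maps between them are equal if and only if their Serre left adjoints (in the sense of Definition \ref{beb}) agree. So it suffices to check that dualizing the corollary's square produces exactly the square of Proposition \ref{prop_compare_action}.

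First I will identify the Serre left adjoint of each map. For $\sigma^{\HH_*}_{\catO_\Delta}$, applying Lemma \ref{lem_semiregular_adjoint} with $\mathcal{E} = \catO_{\Delta_X}$ viewed as an object of $\derD^b(X \times X)$ gives $\ev_{\catO_\Delta}$ as its Serre left adjoint. Lemma \ref{lem_dual_action} identifies $a^\dagger$ as the Serre left adjoint of the action map $a$. The map $(\varphi^\vee)^{-1} \otimes \id \colon \HH_{-i}(X) \otimes \HH_{i-*}(X) \to \HH_i(X)^\vee \otimes \HH_{i-*}(X)$ is, under the canonical identifications $\HH_i(X)^\vee = \Ext^i_{X \times X}(\Delta_* \catO_X, \Delta_* S_X)$, transposed to $\varphi \otimes \id$ on the dual side. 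This leaves the Künneth isomorphism $K_{X,X}^{-1}$, whose Serre left adjoint I claim is the dual $(K_{X,X}^{-1})^\vee$ appearing in Proposition \ref{prop_compare_action}; granted this identification, assembling the four Serre left adjoints yields precisely the square of that proposition, which commutes, so the original square commutes as well.

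The main technical point, and the step I expect to require the most care, is the verification that the Serre left adjoint of the Künneth isomorphism is its linear-algebraic transpose. Concretely, this amounts to the statement that for $\alpha \in \HH_i(X)$, $\beta \in \HH_{*-i}(X)$ and a suitable dual class $\gamma \in \HH_{-*}(X \times X)^\vee = \Ext^{-*}_{(X \times X)^2}(\Delta_* \catO_{X \times X}, \Delta_* S_{X \times X})$, one has
\[
\langle K_{X,X}(\alpha \otimes \beta), \gamma \rangle_{X \times X} = \langle \alpha \otimes \beta, (K_{X,X})^\vee(\gamma) \rangle_X \otimes \langle -, - \rangle_X
\]
under the Serre pairings. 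This is essentially the multiplicativity of the Serre trace under external products: since $S_{X \times Y} = \pi_X^* S_X \otimes \pi_Y^* S_Y$ and the Serre trace on $X \times Y$ factors as $\tr_X \circ \tr_Y$ applied to traces of external products, the Künneth isomorphism on Hochschild (co)homology intertwines the pairings. I will prove this compatibility by reducing, using the projection formula and the definition of the K\"unneth map $K_{X,X}$ via pullback of classes along $\pi_{13}$ and $\pi_{24}$, to the multiplicativity of $\tr_{X \times X}$, which in turn follows from the standard behavior of Grothendieck--Verdier duality under external products.

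Once that Künneth compatibility is in place, the proof becomes formal: the Serre left adjoint construction is a contravariant involution (as we are working with perfect pairings between finite-dimensional vector spaces), so it turns the corollary's square into the square of Proposition \ref{prop_compare_action}; the latter commutes by that proposition, and hence so does the former.
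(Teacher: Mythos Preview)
Your approach is the same as the paper's: dualize the square via Serre adjoints and invoke Proposition \ref{prop_compare_action}, using Lemma \ref{lem_semiregular_adjoint} for $\sigma^{\HH_*}_{\catO_\Delta}$ and Lemma \ref{lem_dual_action} for $a$. The paper's proof is a single sentence that does exactly this and treats the K\"unneth/Serre compatibility you flag as routine; your more explicit discussion of that point is extra care rather than a different method.
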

\begin{proof}
This follows directly from Proposition \ref{prop_compare_action} after identifying $a$ with the Serre right adjoint of $a^\dagger$ and $\sigma^{\HH_*}_{\catO_\Delta}$ with the Serre right adjoint of $\ev_{\catO_\Delta}$ using Lemma \ref{lem_dual_action} and Lemma \ref{lem_semiregular_adjoint}.
\end{proof}

%!TEX root = main.tex
\section{Deforming Fourier--Mukai transforms between Calabi--Yau varieties}\label{sec_deforming_transforms}
In this section we restrict our attention to so-called Calabi--Yau varieties.

\begin{definition}
Let $X$ be a smooth and proper scheme over a field $k$. We say that $X$ is \emph{Calabi--Yau} if $X$ is equidimensional and $\Omega_{X / k}^{\dim(X)} \cong \catO_X$. 
\end{definition}
We combine everything to prove Theorem \ref{thm_zero} and Theorem \ref{thm_mixed}. In Section \ref{sec_injective} we show the semiregularity map is injective for Calabi--Yau varieties. Then in the following sections we prove our main results. 

\subsection{Injectivity of the semiregularity map}
\label{sec_injective}
Recall (Definition \ref{def_semiregularity}) that for any scheme $X$ and $\mathcal{E} \in \Perf(X)$, we have a semiregularity map 
\[
\sigma_{X} \colon \Ext^i_X(\mathcal{E}, \mathcal{E}) \to \bigoplus_p \coh^{p + i}(\LL^p_{X / k})
\]
By the results in Section \ref{sec_relate}, the $p$-th component $\sigma_{X, p}$ maps obstructions to deformations of $\mathcal{E}$ to obstruction classes to $\ch_p(\mathcal{E})$ staying within the $p$-th part of the Hodge filtration. 

In this section we show that if $X = Y \times Z$ where $Y$ is Calabi--Yau and $\mathcal{E}$ is the kernel of a fully faithful Fourier--Mukai transform $\derD(Y) \to \derD(Z)$, then the total map $\sigma_X$ map is always injective. Informally speaking, this says that one can read of whether or not $\mathcal{E}$ will deform by checking whether or not $\ch_p(\mathcal{E})$ remains within the $p$-th part of the Hodge filtration for all $p \geq 0$. 

Our strategy is essentially due to \cite{toda}: The fact that $\mathcal{E}$ is fully faithful implies that the transform $\mathcal{E} \star (-) \colon \derD(X \times X) \to \derD(X \times Y)$ has a left inverse, which will allow us to reduce to the case where $X = Y$ and $\mathcal{E} = \Delta_* \catO_X$. Thus the following lemma is all we will need.
\begin{lemma}\label{lem_diagonal_inj}
Let $k$ be a field and let $X$ be a smooth and proper scheme over $k$. If $X$ is Calabi--Yau, then the semiregularity map
\[
\sigma^{\HH_i}_{\catO_{\Delta_X}} \colon \HH^i(X) \to \HH_{-i}(X \times X)
\]
is injective for all $i$. 
\end{lemma}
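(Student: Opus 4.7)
My plan is to invoke Corollary \ref{corol_actions}, which fits $\sigma^{\HH_i}_{\catO_{\Delta_X}}$ into a commutative diagram together with the action map
\[
a \colon \HH^i(X) \to \bigoplus_j \Hom_k(\HH_j(X), \HH_{j-i}(X))
\]
in such a way that the other two edges of the diagram are isomorphisms (the Künneth map $K_{X, X}^{-1}$ is always an isomorphism for smooth proper schemes over a field, and $\varphi^\vee$ is an isomorphism, as already reflected by the notation $(\varphi^\vee)^{-1}$ in the corollary). Consequently, injectivity of $\sigma^{\HH_i}_{\catO_{\Delta_X}}$ reduces to injectivity of $a$.

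To establish injectivity of $a$, I would exploit the Calabi--Yau hypothesis as follows. Write $d = \dim X$ and fix a trivialization $\catO_X \xrightarrow{\sim} \Omega^d_{X/k}$. This induces $S_X \simeq \catO_X[d]$ and $S_{X \times X} \simeq \catO_{X \times X}[2d]$. Combining these with the formula $\Delta_! \catO_X = S_{X \times X}^{-1} \otimes \Delta_* S_X$ via the projection formula yields an isomorphism $\psi \colon \Delta_! \catO_X \xrightarrow{\sim} \Delta_* \catO_X[-d]$ in $\derD^b(X \times X)$. Post-composition with $\psi^{-1}$ then induces canonical isomorphisms
\[
\HH_j(X) = \Ext^{-j}_{X \times X}(\Delta_! \catO_X, \Delta_* \catO_X) \xrightarrow{\sim} \Ext^{d - j}_{X \times X}(\Delta_* \catO_X, \Delta_* \catO_X) = \HH^{d - j}(X)
\]
for every $j$, and a direct verification shows that under these, the left action of $\HH^*(X)$ on $\HH_*(X)$ by composition corresponds to the composition product on $\HH^*(X)$.

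The key observation is now that, taking $j = d$, the identity $\id \in \HH^0(X)$ corresponds to the class $[\psi] \in \HH_d(X)$, and for any $f \in \HH^i(X)$ the value $a(f)([\psi]) \in \HH_{d - i}(X) \simeq \HH^i(X)$ corresponds to $f \circ \id = f$. In particular, $a(f) = 0$ forces $f = 0$, establishing injectivity of $a$, and hence of $\sigma^{\HH_i}_{\catO_{\Delta_X}}$.

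The main obstacle, as I see it, is essentially bookkeeping: one needs to verify that $\varphi$ in Corollary \ref{corol_actions} really is an isomorphism in general (which should follow from Serre duality for $X \times X$, independent of the Calabi--Yau hypothesis), and to confirm the compatibility between the $\HH^*$-action on $\HH_*$ and the composition product under $\psi$. Both amount to formal diagram chases in $\derD^b(X \times X)$ using the definitions of $a$ and $\Delta_!$ together with the projection formula, so I expect no conceptual difficulty beyond the unwinding.
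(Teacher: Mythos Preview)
Your proposal is correct and follows essentially the same route as the paper: reduce via Corollary \ref{corol_actions} to injectivity of the action map $a$, then use the Calabi--Yau identification $\HH_j(X)\simeq\HH^{d-j}(X)$ to see that $a$ restricted to the component $j=d$ becomes composition with elements of $\HH^0(X)$, so evaluating at the identity gives a left inverse. The bookkeeping you flag (that $\varphi$ is an isomorphism, and the compatibility of actions) is exactly what the paper leaves implicit; your extra care there is justified but not a point of divergence.
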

\begin{proof}
By Corollary \ref{corol_actions}, it suffices to show the map 
\begin{equation}\label{eq_need_inj}
a \colon \bigoplus_j \HH^j(X) \to \bigoplus_{i, j} \Hom(\HH_i(X), \HH_{i - j}(X))
\end{equation}
is injective. Since $X$ is Calabi--Yau, we have $S_X = \catO_X[d]$ where $d = \dim(X)$. Thus
\[
\HH_i(X) = \Ext^{-i}_{X \times X}(\catO_{\Delta_X}[d], \catO_{\Delta_X}) = \Ext^{d - i}_{X \times X}(\catO_{\Delta_X}, \catO_{\Delta_X}) = \HH^{d - i}(X)
\]
Under this identification, the map
\[
a \colon \HH^j(X) \to \Hom(\HH_d(X), \HH_{d - j}(X))
\]
corresponds to the map
\[
\HH^j(X) \to \Hom(\HH^0(X), \HH^j(X))
\]
given by composition (i.e. the natural ring multiplication on Hochschild cohomology). But this last map is clearly injective (since we can evaluate at $\id_{\catO_{\Delta_X}}$). We conclude that (\ref{eq_need_inj}) is injective as desired.
\end{proof}
The following observation is probably well known and used in the footnote in \cite[page 19, footnote 6]{addingtonthomas14} to construct a left inverse to $\mathcal{E} \star (-)$, however we could not find it anywhere in the literature so we give a direct proof.
\begin{lemma}\label{lem_right_adjoint_fourier}
Let $k$ be a field, and let $X$ and $Y$ be smooth and proper schemes over $k$. Let $\mathcal{E} \in \derD(X \times Y)$ be the kernel of a Fourier--Mukai transform $$\Phi_{\mathcal{E}} \colon \derD(X) \to \derD(Y)$$ Let 
\[
\mathcal{E}_R = \mathcal{E}^\vee \otimes \pi_X^* S_X
\]
be the kernel of the right adjoint. Then the convolution functor  $$\mathcal{E}_R \star (-) \colon \derD(X \times Y) \to \derD(X \times X)$$ (see (\ref{faa})) is right adjoint to $$\mathcal{E} \star (-) \colon \derD(X \times X) \to \derD(X \times Y)$$
\end{lemma}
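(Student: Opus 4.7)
The strategy is to write out both convolutions explicitly as compositions of pullbacks, tensor products and pushforwards along projections from the triple product $X \times X \times Y$, and then deduce the adjunction from projection formula and Grothendieck--Verdier duality applied to the smooth proper projection that integrates out the middle factor. Denote by $\pi_{12},\pi_{13},\pi_{23}$ the three projections from $X \times X \times Y$, where I label coordinates as $(x_1, x_2, y)$. Then for $\mathcal{F} \in \derD(X \times X)$ and $\mathcal{G} \in \derD(X \times Y)$ one has by definition
\[
\mathcal{E} \star \mathcal{F} \;=\; \pi_{13,*}(\pi_{12}^* \mathcal{F} \otimes \pi_{23}^* \mathcal{E}), \qquad \mathcal{E}_R \star \mathcal{G} \;=\; \pi_{12,*}(\pi_{13}^* \mathcal{G} \otimes \pi_{23}^* \mathcal{E}_R),
\]
after identifying $\derD(X \times Y) \simeq \derD(Y \times X)$ by the symmetry isomorphism.

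The plan is to compute $\Map_{X \times Y}(\mathcal{E} \star \mathcal{F}, \mathcal{G})$ by a chain of standard equivalences. First apply Grothendieck--Verdier duality for the smooth and proper map $\pi_{13}$ to move $\pi_{13,*}$ to its right adjoint $\pi_{13}^{!}$; then dualize $\pi_{23}^{*} \mathcal{E}$ and use the pullback--pushforward adjunction $\pi_{12}^* \dashv \pi_{12,*}$. Explicitly, this gives
\[
\Map_{X \times Y}(\mathcal{E} \star \mathcal{F}, \mathcal{G}) \;\simeq\; \Map_{X \times X}\!\bigl(\mathcal{F},\, \pi_{12,*}(\pi_{23}^*\mathcal{E}^{\vee} \otimes \pi_{13}^{!}\mathcal{G})\bigr).
\]

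Finally I need to identify $\pi_{23}^{*} \mathcal{E}^{\vee} \otimes \pi_{13}^{!}\mathcal{G}$ with $\pi_{23}^{*} \mathcal{E}_R \otimes \pi_{13}^{*}\mathcal{G}$. The projection $\pi_{13}$ is the base change along $X \times Y \to \Spec(k)$ of the structure map $X \to \Spec(k)$ (for the $x_2$ factor), so its relative dualizing complex is the pullback of $S_X$ from the $x_2$ factor. Chasing the projections shows that this pullback coincides with $\pi_{23}^* \pi_X^* S_X$, where $\pi_X \colon X \times Y \to X$ is projection to the first factor. By definition of $\mathcal{E}_R = \mathcal{E}^{\vee} \otimes \pi_X^* S_X$ this yields
\[
\pi_{23}^* \mathcal{E}^{\vee} \otimes \pi_{13}^{!} \mathcal{G} \;\simeq\; \pi_{23}^* \mathcal{E}^{\vee} \otimes \pi_{13}^* \mathcal{G} \otimes \omega_{\pi_{13}} \;\simeq\; \pi_{23}^* \mathcal{E}_R \otimes \pi_{13}^* \mathcal{G},
\]
and substituting back gives the desired adjunction isomorphism $\Map(\mathcal{E}\star \mathcal{F}, \mathcal{G}) \simeq \Map(\mathcal{F}, \mathcal{E}_R \star \mathcal{G})$, naturally in $\mathcal{F}$ and $\mathcal{G}$.

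The only delicate point is bookkeeping: verifying that the shifts and twists from $S_X = \Omega_X^{\dim X}[\dim X]$ line up correctly with $\omega_{\pi_{13}}$ and that the index swap $\derD(X \times Y) \simeq \derD(Y \times X)$ implicit in the definition of $\mathcal{E}_R \star (-)$ agrees with the one in Definition~(\ref{faa}). This is purely a matter of tracking which factor plays which role, and does not involve any further use of the Calabi--Yau hypothesis (which is not needed for this lemma).
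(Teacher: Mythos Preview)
Your argument is correct and is actually the more direct route: you work on the triple product $X\times X\times Y$, use the adjunction $\pi_{13,*}\dashv\pi_{13}^{!}$ together with dualizability of $\pi_{23}^*\mathcal{E}$, and then identify $\omega_{\pi_{13}}\simeq\pi_{23}^*\pi_X^*S_X$ so that the twist by the relative dualizing complex is absorbed into $\mathcal{E}_R$. The paper instead lifts everything to the quadruple product $X\times X\times X\times Y$: it first identifies the Fourier--Mukai kernel of the functor $\mathcal{E}\star(-)$ as $\pi_{24}^*\mathcal{E}\otimes\pi_{13}^*\Delta_*\catO_X$, then applies the standard formula for the kernel of a right adjoint ($K\mapsto K^\vee\otimes\pi_{\mathrm{source}}^*S_{\mathrm{source}}$), and simplifies using the self-duality $(\Delta_*\catO_X)^\vee\otimes\pi_1^*S_X\simeq\Delta_*\catO_X$. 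Your approach avoids the quadruple product and the computation of the external kernel, at the cost of doing the Grothendieck duality step by hand; the paper's approach is more structural in that it reduces to a general fact about FM kernels, but requires the extra identity for the dual of $\Delta_*\catO_X$. Both are standard, and your bookkeeping remark about the swap $\derD(X\times Y)\simeq\derD(Y\times X)$ is exactly the point that needs care.
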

\begin{proof}
Denote with $\tilde{\Delta} \colon X \times X \to X \times X \times X$ the map sending $(x, y) \mapsto (x, y, x)$. Let $A \in \derD(X \times X)$. Then 
\begin{align*}
\mathcal{E} \star A :&= \pi_{13*}(\pi_{12}^* A \otimes \pi_{23}^* \mathcal{E}) \\
&= \pi_{34*}((\tilde{\Delta} \times \id_Y)_*(\pi_{12}^* A \otimes \pi_{23}^* \mathcal{E}) ) \\
&=  \pi_{34*}((\tilde{\Delta} \times \id_Y)_*(\catO_{X \times X \times Y}) \otimes \pi_{12}^* A \otimes \pi_{24}^* \mathcal{E}) \\
&=  \pi_{34*}(\pi_{12}^* A \otimes \(\pi_{24}^* \mathcal{E} \otimes \pi_{13}^* \Delta_* \catO_X \))
\end{align*}
Thus $\pi_{24}^* \mathcal{E} \otimes \pi_{13}^* \Delta_* \catO_X \in \derD(X \times X \times X \times Y)$ is the kernel corresponding to $\mathcal{E} \star (-)$. It follows that the kernel corresponding to the right adjoint of $\mathcal{E} \star (-)$ is given by
\begin{align*}
\(\pi_{24}^* \mathcal{E} \otimes \pi_{13}^* \Delta_* \catO_X \)^\vee \otimes \pi_{12}^* S_{X \times X} &= (\pi_{24}^* \mathcal{E}^\vee \pi_2^* \otimes S_X) \otimes \pi_{13}^* ((\Delta_* \catO_X)^\vee  \otimes \pi_1^* S_X) \\
&= (\pi_{24}^*(\mathcal{E}^\vee \otimes\pi_1^* S_X)) \otimes \pi_{13}^* (\Delta_* \catO_X) \\
&= (\pi_{24}^*(\mathcal{E}_R) \otimes \pi_{13}^* (\Delta_* \catO_X) 
\end{align*}
where in the second equality we have used $(\Delta_* \catO_X)^\vee  \otimes \pi_1^* S_X = \Delta_* \catO_X$ (this follows from the fact that $\catO_{\Delta_X}$ is the kernel of the identity functor, hence the kernel of its right adjoint is equal to itself). 

By the exact same argument as above, this last expression is the kernel corresponding to $\mathcal{E}_R \star (-)$, which completes the proof.
\end{proof}
We now have everything we need to conclude the injectivity we need.
\begin{theorem}\label{thm_semireg_inj}
Let $k$ be a field, and let $X$ and $Y$ be smooth and proper schemes over $k$. Let $\mathcal{E} \in \derD(X \times Y)$ be the kernel of a Fourier--Mukai transform 
\[
\Phi_{\mathcal{E}} \colon \derD(X) \to \derD(Y)
\]
If $\Phi_{\mathcal{E}}$ is fully faithful and $X$ is Calabi--Yau, then the semiregularity map
\[
\sigma^{\HH_i}_{\mathcal{E}} \colon \Ext^i_{X \times Y}(\mathcal{E}, \mathcal{E}) \to \HH_{-i}(X \times Y)
\]
is injective for all $i$.
\end{theorem}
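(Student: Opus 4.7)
My strategy is to reduce to the injectivity of the semiregularity map at the diagonal kernel $\catO_{\Delta_X}$, which is already provided by Lemma \ref{lem_diagonal_inj}. The mechanism for this reduction is that $\Phi_{\mathcal{E}}$ being fully faithful produces a retraction between the convolution functors on $\derD(X \times X)$ and $\derD(X \times Y)$. Concretely, I introduce $F := \mathcal{E} \star (-) \colon \derD(X \times X) \to \derD(X \times Y)$ and its right adjoint $G := \mathcal{E}_R \star (-)$, provided by Lemma \ref{lem_right_adjoint_fourier}. Fully faithfulness of $\Phi_{\mathcal{E}}$ translates (via uniqueness of Fourier--Mukai kernels) to $\mathcal{E}_R \star \mathcal{E} \cong \catO_{\Delta_X}$, and convolving with an arbitrary $A \in \derD(X \times X)$ then yields $G \circ F \cong \id_{\derD(X \times X)}$ by associativity of the convolution product. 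In particular, $F$ is fully faithful, and since $F(\catO_{\Delta_X}) = \mathcal{E}$, the induced map $F \colon \Ext^i_{X \times X}(\catO_{\Delta_X}, \catO_{\Delta_X}) \to \Ext^i_{X \times Y}(\mathcal{E}, \mathcal{E})$ is an isomorphism, with left inverse given by $G$.

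Next I would apply Proposition \ref{prop_functorial_semiregular} to the Fourier--Mukai transform $G \colon \derD(X \times Y) \to \derD(X \times X)$ at the object $\mathcal{E}$; the kernel of $G$ is computed explicitly in the proof of Lemma \ref{lem_right_adjoint_fourier}, so the proposition directly applies. Using $G(\mathcal{E}) \cong \catO_{\Delta_X}$, this yields a commutative square
\[
\begin{tikzcd}
\Ext^i_{X \times Y}(\mathcal{E}, \mathcal{E}) \dar{\sigma^{\HH_i}_{\mathcal{E}}} \rar{G} & \Ext^i_{X \times X}(\catO_{\Delta_X}, \catO_{\Delta_X}) \dar{\sigma^{\HH_i}_{\catO_{\Delta_X}}} \\
\HH_{-i}(X \times Y) \rar{G^{\HH_{-i}}} & \HH_{-i}(X \times X)
\end{tikzcd}
\]

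Finally, I chase the diagram. If $\alpha \in \Ext^i_{X \times Y}(\mathcal{E}, \mathcal{E})$ satisfies $\sigma^{\HH_i}_{\mathcal{E}}(\alpha) = 0$, then commutativity gives $\sigma^{\HH_i}_{\catO_{\Delta_X}}(G(\alpha)) = 0$; since $X$ is Calabi--Yau, Lemma \ref{lem_diagonal_inj} forces $G(\alpha) = 0$. Setting $\beta := F^{-1}(\alpha)$ using the isomorphism established in the first paragraph, the identity $GF = \id$ on morphisms gives $\beta = GF(\beta) = G(\alpha) = 0$, whence $\alpha = F(\beta) = 0$. The main bookkeeping obstacle is making sure that fully faithfulness of $\Phi_{\mathcal{E}}$ really does lift to $G \circ F \cong \id$ as functors between the ambient derived categories and hence $GF = \id$ on $\Ext$ groups; once this and the applicability of Proposition \ref{prop_functorial_semiregular} to the larger kernel of $G$ are in place, everything else is a formal diagram chase on top of Lemma \ref{lem_diagonal_inj}.
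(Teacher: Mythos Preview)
Your proposal is correct and follows essentially the same approach as the paper: reduce to Lemma \ref{lem_diagonal_inj} via the convolution functors $\mathcal{E}\star(-)$ and $\mathcal{E}_R\star(-)$, using Lemma \ref{lem_right_adjoint_fourier} and Proposition \ref{prop_functorial_semiregular}. The only cosmetic difference is orientation: the paper applies Proposition \ref{prop_functorial_semiregular} to the forward functor $\Phi_{\widehat{\mathcal{E}}} = \mathcal{E}\star(-)$ and then separately argues the induced map on $\HH_*$ is injective (via the left inverse $\Phi^{\HH_*}_{\widehat{\mathcal{E}}_R}$), whereas you apply it to $G = \mathcal{E}_R\star(-)$ directly and only need that $G$ is bijective on the $\Ext$ level; both diagram chases are equivalent.
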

\begin{proof}
Let $\mathcal{E}_R := \mathcal{E}^\vee \otimes \pi_X^* S_X \in \derD(Y \times X)$ be the kernel of the right adjoint
\[
\Phi_{\mathcal{E}_R} \colon \derD(Y) \to \derD(X) 
\]
to $\Phi_{\mathcal{E}}$.  Since $\Phi_{\mathcal{E}}$ is fully faithful, its right adjoint $\Phi_{\mathcal{E}_R}$ is a left inverse by abstract nonsense. We thus have $\mathcal{E}_R \star \mathcal{E} \simeq \catO_{\Delta_X}$. Write $\widehat{\mathcal{E}} \in \derD(X \times X \times X \times Y)$ and $\widehat{\mathcal{E}}_R  \in \derD(X \times Y \times X \times X)$ for the kernels of the functors $\mathcal{E} \star (-)$ and $\mathcal{E}_R \star (-)$ respectively.

By Proposition \ref{prop_functorial_semiregular} we have a commutative diagram
\[
\begin{tikzcd}
\Ext^*_{X \times X}(\catO_{\Delta_X}, \catO_{\Delta_X}) \dar {\sigma_{\catO_{\Delta_X}}^{\HH_{*}}} \rar{\Phi_{\widehat{\mathcal{E}}}}& \Ext^*_{X \times Y}(\mathcal{E}, \mathcal{E}) \dar{\sigma_{\mathcal{E}}^{\HH_{*}}} \\
\HH_{-*}(X \times X) \rar{\Phi^{\HH_{-*}}_{\widehat{\mathcal{E}}}} & \HH_{-*}(X \times Y)
\end{tikzcd}
\]
Since $\mathcal{E}_R \star \mathcal{E}  \simeq \catO_{\Delta_X}$, it follows from Lemma \ref{lem_right_adjoint_fourier} that the map
\[
\Ext^*_{X \times X}(\catO_{\Delta_X}, \catO_{\Delta_X}) \xrightarrow{\Phi_{\widehat{\mathcal{E}}}} \Ext^*_{X \times Y}(\mathcal{E}, \mathcal{E})
\]
is an isomorphism. Since $\Phi_{\widehat{\mathcal{E}}_R}$ is a right inverse to $\Phi_{\widehat{\mathcal{E}}}$, by Proposition \ref{prop_functorial_semiregular} the map 
\[
\Phi^{\HH_*}_{\widehat{\mathcal{E}}} \colon \HH_*(X \times X) \to \HH_*(X \times Y)
\]
admits a left inverse (given by $\Phi^{\HH_*}_{\widehat{\mathcal{E}}_R}$), thus in particular is injective. Finally by Lemma \ref{lem_diagonal_inj} the map $\sigma^{\HH_*}_{\catO_{\Delta_X}}$ is injective, the result follows.
\end{proof}
Finally, we obtain the following result for Hodge cohomology by transferring the previous result along the Hochschild--Kostant--Rosenberg isomorphism.
\begin{corollary}\label{corol_semireg_inj}
Let $k$ be a field and let $X$ and $Y$ be smooth and proper equidimensional schemes over $k$. Suppose that $d = \dim(X \times Y)$ is such that $d!$ is invertible in $k$.  Let $\mathcal{E} \in \derD(X \times Y)$ be the kernel of a Fourier--Mukai transform 
\[
\Phi_{\mathcal{E}} \colon \derD(X) \to \derD(Y)
\]
If $\Phi_{\mathcal{E}}$ is fully faithful and $X$ is Calabi--Yau, then for all $j$ the (total) semiregularity map
\[
\sigma_{X} \colon \Ext^j_{X \times Y}(\mathcal{E}, \mathcal{E}) \to \bigoplus_{i = 0}^{\dim(X) + \dim(Y)} \coh^{i + j}(X \times Y, \Omega^i_{X \times Y / k})
\]
is injective.
\end{corollary}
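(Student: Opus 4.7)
The plan is to reduce this statement to the Hochschild--theoretic version of the semiregularity map established in Theorem \ref{thm_semireg_inj}. Specifically, I would apply Proposition \ref{prop_semireg_compare} to the smooth and proper scheme $X \times Y$ of dimension $d = \dim(X) + \dim(Y)$; this is permitted precisely because $d!$ is invertible in $k$ by hypothesis, which is exactly the condition needed for the Hochschild--Kostant--Rosenberg isomorphism (Theorem \ref{thm_hkr}) to hold. The commutative triangle provided by Proposition \ref{prop_semireg_compare} gives a factorization
\[
\sigma_{X \times Y} = I^{\HH_{-j}}_{\HKR} \circ \sigma^{\HH_j}_{\mathcal{E}}
\]
on $\Ext^j_{X\times Y}(\mathcal{E}, \mathcal{E})$, where $I^{\HH_{-j}}_{\HKR}$ is an isomorphism.

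The next step is simply to invoke Theorem \ref{thm_semireg_inj}, whose hypotheses (namely that $\Phi_{\mathcal{E}}$ is fully faithful and $X$ is Calabi--Yau) match those of the corollary. That theorem gives injectivity of $\sigma^{\HH_j}_{\mathcal{E}}$ for all $j$, and composing an injection with an isomorphism yields an injection. This completes the argument; there is no serious obstacle, since all the hard work has already been done --- Theorem \ref{thm_semireg_inj} used the structure of fully faithful Fourier--Mukai transforms together with Lemma \ref{lem_diagonal_inj} (the Calabi--Yau input), while Proposition \ref{prop_semireg_compare} handles the translation between the Hochschild picture and the Hodge--theoretic picture via HKR. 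The only minor bookkeeping point is to verify that the equidimensionality assumption ensures $X \times Y$ is equidimensional so that the notion of dimension appearing in the HKR theorem is unambiguous.
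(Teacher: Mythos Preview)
Your proposal is correct and matches the paper's own proof, which simply reads ``Combine Proposition \ref{prop_semireg_compare} and Theorem \ref{thm_semireg_inj}.'' You have spelled out exactly this combination: the commutative triangle from Proposition \ref{prop_semireg_compare} (applied to $X\times Y$, legitimate since $d!$ is invertible) identifies $\sigma_{X\times Y}$ with $I^{\HH_{-j}}_{\HKR}\circ\sigma^{\HH_j}_{\mathcal{E}}$, and Theorem \ref{thm_semireg_inj} gives the injectivity of the Hochschild factor.
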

\begin{proof}
Combine Proposition \ref{prop_semireg_compare} and Theorem \ref{thm_semireg_inj}.
\end{proof}
\subsection{Deformations in the characteristic zero case}
We combine all the previous results in the characteristic zero case to prove Theorem \ref{thm_zero} at the end of this section. We isolate the statement of the inductive step in the following proposition.
\begin{prop}\label{prop_induction_char0}
Let $k$ be a field with $\QQ \subseteq k$, and let $R' \to R$ be a surjective ring map of local Artinian $k$-algebras with kernel $I$ such that $I^2 = 0$ and $\mathfrak{m}_{R'} \cdot I = 0$. Let $X', Y'$ be smooth and projective schemes over $R'$, and let 
\begin{align*}
X &:= X' \times_{\Spec(R')} \Spec(R) \\
X_0 &:= X'  \times_{\Spec(R')} \Spec(k)
\end{align*}
and similarly for $Y, Y_0$. Let $\mathcal{E} \in \Perf(X \times Y)$ be the kernel of a fully faithful Fourier--Mukai transform $\Phi_{\mathcal{E}} \colon \Perf(X) \to \Perf(Y)$, and assume $\mathcal{E}$ can be represented by a bounded complex of vector bundles. Let $\mathcal{E}_0 := \mathcal{E} \rvert_{X_0 \times Y_0}$. If $X_0$ is Calabi--Yau, then the following are equivalent. 
\begin{enumerate}
\item There exists a kernel $\mathcal{E}' \in \Perf(X' \times Y')$ of a fully faithful Fourier--Mukai transform $\Phi_{\mathcal{E}'} \colon \Perf(X') \to \Perf(Y')$ such that $\mathcal{E}' \rvert_{X \times Y} \simeq \mathcal{E}$.
 \item The image of $\ch_i(\mathcal{E}_0) \otimes 1$ under the stratifying map (Definition \ref{aal})
\[
\varphi_{\widehat{\dR},X' \times Y'} \colon \coh^{2i}(\widehat{\dR}_{X_0  \times Y_0 / k}) \otimes_k R'  \to \coh^{2i}(\widehat{\dR}_{X' \times Y'  / R'})
\]
lands in $\Fil^i \coh^{2i} ( \widehat{\dR}_{X' \times Y'  / R'})$ for all $i \geq 0$. 
\end{enumerate} 
If these hold then $\Phi_{\mathcal{E}'}$ is an equivalence if and only if $\Phi_{\mathcal{E}}$ is. 
\end{prop}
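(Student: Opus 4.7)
The plan is to verify each direction of the equivalence, then address the assertion on full faithfulness and equivalence. For $(1) \Rightarrow (2)$, once $\mathcal{E}'$ exists, its Chern character $\ch_i(\mathcal{E}')$ lies in $\Fil^i \coh^{2i}(\widehat{\dR}_{X' \times Y'/R'})$ by construction of Chern classes in Hodge-completed derived de Rham cohomology. Horizontality along the stratifying map (Lemma \ref{lem_chern_horizontal}) then gives $\varphi_{\widehat{\dR}, X' \times Y'}(\ch_i(\mathcal{E}_0) \otimes 1) = \ch_i(\mathcal{E}')$, establishing (2).

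For the non-trivial direction $(2) \Rightarrow (1)$, I would invoke the obstruction theory of Section \ref{sec_ob_complex}: by Proposition \ref{prop_obstruction_class}, a lift $\mathcal{E}'$ exists if and only if the obstruction class $\ob(\mathcal{E}, X \times Y, X' \times Y') \in \Ext^2_{X \times Y}(\mathcal{E}, \mathcal{E} \otimes \mathcal{I})$ vanishes. The hypothesis $\mathfrak{m}_{R'} \cdot I = 0$ makes $I$ a $k$-vector space and identifies $\mathcal{I} \cong I \otimes_k \catO_{X_0 \times Y_0}$ (as a sheaf on $X \times Y$ supported on the closed fiber), so that by the projection formula
\[
\Ext^2_{X \times Y}(\mathcal{E}, \mathcal{E} \otimes \mathcal{I}) \cong \Ext^2_{X_0 \times Y_0}(\mathcal{E}_0, \mathcal{E}_0) \otimes_k I.
\]
Since $X_0$ is Calabi--Yau and $\mathcal{E}_0$ remains the kernel of a fully faithful transform (the relation $\mathcal{E}_R \star \mathcal{E} \cong \catO_{\Delta_X}$ base-changes to the closed fiber), Corollary \ref{corol_semireg_inj} yields injectivity of the total semiregularity map $\sigma_{X_0 \times Y_0}$ on $\Ext^2_{X_0 \times Y_0}(\mathcal{E}_0, \mathcal{E}_0)$. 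By Theorem \ref{thm_compare_obs_char0}, the $(i-1)$-th component of the semiregularity map applied to the obstruction equals $\ob^{\widehat{\dR}}_{X' \times Y' / R'}(\ch_i(\mathcal{E}_0))$, which vanishes by (2) and the definition of the Hodge-theoretic obstruction (Definition \ref{our_obs_class}). Injectivity of the semiregularity map then forces the obstruction class itself to vanish, producing the desired lift.

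For the concluding assertion, once $\mathcal{E}' \in \Perf(X' \times Y')$ exists I would not argue via obstruction counting (which is inadequate since $\Ext^1_{X \times X}(\catO_{\Delta_X}, \catO_{\Delta_X} \otimes I)$ need not vanish) but rather via Nakayama. The adjunction $\Phi_{\mathcal{E}'} \dashv \Phi_{\mathcal{E}'_R}$ supplies a canonical kernel-level unit morphism $\catO_{\Delta_{X'}} \to \mathcal{E}'_R \star \mathcal{E}'$, and $\Phi_{\mathcal{E}'}$ is fully faithful precisely when this map is an equivalence. Since $\star$ and the formation of $(-)_R$ commute with flat base change along $\Spec(R) \to \Spec(R')$, this unit restricts to the corresponding unit for $\mathcal{E}$, which is an equivalence by the full faithfulness of $\Phi_\mathcal{E}$; a coherent-sheaves Nakayama lemma over the Artinian base $R'$ then promotes this to an equivalence on $X' \times X'$. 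The ``equivalence iff $\Phi_\mathcal{E}$ is an equivalence'' claim follows by the symmetric argument applied to the counit $\mathcal{E}' \star \mathcal{E}'_L \to \catO_{\Delta_{Y'}}$. The main obstacle in the overall argument is the interplay in the hard direction: one must set up the identifications carefully so that the semiregularity map on $X_0 \times Y_0$, the obstruction class naturally living on $X \times Y$, and the Hodge-theoretic obstruction in the quotient $\widehat{\dR}/\Fil^i$ are all compatibly identified — this is precisely what Theorem \ref{thm_compare_obs_char0} and the square-zero reduction accomplish.
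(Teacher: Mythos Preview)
Your argument is correct and follows essentially the same route as the paper: the hard direction $(2)\Rightarrow(1)$ via the identification $\Ext^2_{X\times Y}(\mathcal{E},\mathcal{E}\otimes\mathcal{I})\cong\Ext^2_{X_0\times Y_0}(\mathcal{E}_0,\mathcal{E}_0)\otimes_k I$, injectivity of the semiregularity map from Corollary~\ref{corol_semireg_inj}, and the comparison Theorem~\ref{thm_compare_obs_char0} is exactly what the paper does. The only difference is that for the final assertion on full faithfulness and equivalences the paper simply cites \cite[Proposition~2.15]{rumasa}, whereas you spell out the underlying Nakayama argument on the unit and counit kernels; your version is a correct expansion of what that reference contains.
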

\begin{proof}
Note that by definition of the obstruction class, we have 
\[
\varphi_{\widehat{\dR},X' \times Y'}(\ch_i(\mathcal{E}_0) \otimes 1) \in \Fil^i \coh^{2i} (\widehat{\dR}_{X' \times Y'  / R'})
\]
if and only if $\ob^{\widehat{\dR}}_{X' \times Y' / R'}(\ch_i(\mathcal{E}_0)) = 0$. Since $\mathfrak{m}_{R'} \cdot I = 0$, the semiregularity map
\[
\sigma_{X \times Y, i} \colon \Ext_{X \times Y}^2(\mathcal{E}, \mathcal{E} \otimes \mathcal{I}) \to \bigoplus_i  \coh^{i + 2}(\LL^i_{X \times Y / R}\otimes \mathcal{I})
\]
is just the map
\[
\sigma_{X_0 \times Y_0} \colon \Ext_{X_0 \times Y_0}^2(X_0 \times Y_0, \mathcal{E}_0, \mathcal{E}_0) \otimes_k I \to \bigoplus_i  \coh^{i + 2}(X_0 \times Y_0, \Omega^i_{X_0 \times Y_0 / k}) \otimes_k I
\]
hence injective by Corollary \ref{corol_semireg_inj}.  Thus $\ob^{\widehat{\dR}}_{X' \times Y' / R'}(\ch_i(\mathcal{E}_0)) = 0$ for all $p$ if and only if $\ob(\mathcal{E}, X \times Y, X' \times Y') = 0$ by Theorem \ref{thm_compare_obs_char0}. 

By Proposition \ref{prop_obstruction_class} it follows that there exists $\mathcal{E'} \in \Perf(X' \times Y')$ deforming $\mathcal{E}$ if and only if we have $\ob^{\widehat{\dR}}_{X' \times Y' / R'}(\ch_i(\mathcal{E}_0)) = 0$ for all $i$. By \cite[Proposition 2.15]{rumasa} we see that $\Phi_{\mathcal{E}'}$ is always fully faithful, and an equivalence if and only if $\Phi_{\mathcal{E}}$ is an equivalence, which proves the result. 
\end{proof}

\begin{proof}[Proof of Theorem \ref{thm_zero}]
By induction on the size of $A$, using Proposition \ref{prop_induction_char0}.
\end{proof}
\subsection{Deformations in mixed characteristic}
In this section we combine all the previous results in the $p$-adic case, to prove Theorem \ref{thm_mixed}. The following proposition gives the essential ingredient, allowing us to lift the transform to a slightly smaller extension. The proposition is very general, we urge the reader to keep in mind the following example: $k$ is a field of characteristic $p > 2$, $W = W(k)$ is the ring of Witt vectors of $k$, $R' = W / p^{m + 1}$, $R = W / p^{m}$ and $I = (p^m) \subseteq W / p^{m + 1}$.
\begin{prop}\label{prop_mixed_def}
Let $W$ be a ring. Let $k$ be a field with a map $W \to k$. Let
\[
(R' \to k, \gamma') \to (R \to k, \gamma)
\]
be a morphism of divided power $W$-algebras such that $R' \to R$ is a surjection with kernel $I$, such that $\gamma'_n(x) = 0$ for all $x \in I$ and all $n \geq 2$, and such that $\mathfrak{m}_{R'} \cdot I = 0$. Let $X', Y'$ be smooth and projective schemes over $R'$, and let 
\begin{align*}
X &:= X' \times_{\Spec(R')} \Spec(R) \\
X_0 &:= X'  \times_{\Spec(R')} \Spec(k)
\end{align*}
and similarly for $Y, Y_0$. Let $\mathcal{E} \in \Perf(X \times Y)$ be the kernel of a fully-faithful Fourier--Mukai transform $\Phi_{\mathcal{E}} \colon \Perf(X) \to \Perf(Y)$. Let $\mathcal{E}_0 := \mathcal{E} \rvert_{X_0 \times Y_0}$. If $X_0$ is Calabi--Yau, $Y_0$ is equidimensional and $d := \dim(X_0) + \dim(Y_0)$ is such that $d!$ is invertible in $W$, then the following are equivalent. 

\begin{enumerate}
\item There exists a kernel $\mathcal{E}' \in \Perf(X' \times Y')$ of a fully faithful Fourier--Mukai transform $\Phi_{\mathcal{E}'} \colon \Perf(X') \to \Perf(Y')$ such that $\mathcal{E}' \rvert_{X \times Y} \simeq \mathcal{E}$.
\item The image of $\ch_i(\mathcal{E}_0) \otimes 1$ under the stratifying map (Definition \ref{abk})
\[
\varphi_{\Crys,X' \times Y'} \colon \coh^{2i} ({\dR}_{X_0  \times Y_0 / k}) \otimes_k R'  \to \coh^{2i} ({\dR}_{X' \times Y'  / R'})
\]
lands in $\Fil^i \coh^{2i} ({\dR}_{X' \times Y'  / R'})$ for all $i \geq 0$. 
\end{enumerate}
If either of the equivalent conditions holds then $\Phi_{\mathcal{E}'}$ is an equivalence if and only if $\Phi_{\mathcal{E}}$ is. 
\end{prop}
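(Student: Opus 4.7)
The plan is to run the same argument as in the proof of Proposition \ref{prop_induction_char0}, but using the crystalline obstruction $\ob^{\Crys}$ and Theorem \ref{thm_compare_obs_p} in place of their Hodge-completed de Rham counterparts. First I would verify that the divided-power hypotheses force $I^{[2]} = 0$: from $I \subseteq \mathfrak{m}_{R'}$ and $\mathfrak{m}_{R'} \cdot I = 0$ one has $I^2 = 0$, while the assumption $\gamma'_n(x) = 0$ for all $x \in I$, $n \geq 2$ kills the divided-power generators $\gamma'_n(x)$. Since $I^{[2]}$ is generated by these elements together with products $xy$ for $x,y\in I$, it vanishes, so the hypotheses of Theorem \ref{thm_compare_obs_p} are met.

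Next, by definition of the crystalline Hodge-theoretic obstruction class, condition (ii) is equivalent to the vanishing of $\ob^{\Crys}_{X' \times Y' / R'}(\ch_i(\mathcal{E}_0))$ for all $i \geq 0$. The Chern characters $\ch_i(\mathcal{E}_0)$ vanish for $i > d = \dim(X_0 \times Y_0)$, and for $i \leq d$ the integer $i!$ divides $d!$ which is invertible in $W$ and hence in $k$. Therefore Theorem \ref{thm_compare_obs_p} applies componentwise and identifies
\[
\sigma_{X \times Y, i-1}\bigl(\ob(\mathcal{E}, X\times Y, X'\times Y')\bigr) = \ob^{\Crys}_{X'\times Y'/R'}(\ch_i(\mathcal{E}_0)),
\]
where $\ob(\mathcal{E}, X\times Y, X'\times Y') \in \Ext^2_{X \times Y}(\mathcal{E}, \mathcal{E} \otimes \mathcal{I})$ is the obstruction class of Proposition \ref{prop_obstruction_class}. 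The condition $\mathfrak{m}_{R'} \cdot I = 0$ ensures that $\mathcal{I}$ is annihilated by $\mathfrak{m}_R$, so that $\mathcal{I} \cong I \otimes_k \catO_{X_0 \times Y_0}$ as $\catO_{X \times Y}$-modules and the total semiregularity map reduces to $\sigma_{X_0 \times Y_0} \otimes_k I$. By Corollary \ref{corol_semireg_inj}, applicable since $X_0$ is Calabi--Yau and $d!$ is invertible in $k$, the map $\sigma_{X_0 \times Y_0}$ is injective, and hence condition (ii) is equivalent to $\ob(\mathcal{E}, X\times Y, X'\times Y') = 0$. By Proposition \ref{prop_obstruction_class}, this is in turn equivalent to the existence of a lift $\mathcal{E}' \in \Perf(X' \times Y')$ of $\mathcal{E}$.

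Finally, once the lift $\mathcal{E}'$ exists, the statement that $\Phi_{\mathcal{E}'}$ is automatically fully faithful and is an equivalence precisely when $\Phi_{\mathcal{E}}$ is one follows from \cite[Proposition 2.15]{rumasa}, exactly as in the characteristic zero case. The main obstacle in this argument is really packaged inside earlier sections: establishing injectivity of the semiregularity map for fully faithful Fourier--Mukai transforms with Calabi--Yau source (Corollary \ref{corol_semireg_inj}) and proving the crystalline obstruction-cup-product identity of Theorem \ref{thm_compare_obs_p}. With those in hand, the inductive step is essentially formal, the only delicate point being the verification that $I^{[2]} = 0$ and that $i!$ is invertible in $k$ for the range of $i$ in which $\ch_i(\mathcal{E}_0)$ is nonzero.
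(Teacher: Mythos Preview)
Your proposal is correct and follows essentially the same approach as the paper's proof, which explicitly says it is ``similar to the proof of Proposition \ref{prop_induction_char0}'' and then invokes Corollary \ref{corol_semireg_inj}, Theorem \ref{thm_compare_obs_p}, Proposition \ref{prop_obstruction_class}, and \cite[Proposition 2.15]{rumasa} in the same order you do. You actually add two useful details the paper leaves implicit: the verification that $I^{[2]} = 0$ from the hypotheses $\gamma'_n(x) = 0$ and $\mathfrak{m}_{R'} \cdot I = 0$, and the observation that $i!$ is invertible for all $i$ in the relevant range since $\ch_i(\mathcal{E}_0) = 0$ for $i > d$.
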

\begin{proof}
Similar to the proof of Proposition \ref{prop_induction_char0}, we only give details where the proof differs. Note that by definition of the obstruction class, we have 
\[
\varphi_{\Crys,X' \times Y'}(\ch_i(\mathcal{E}_0) \otimes 1) \in \Fil^i \coh^{2i} ({\dR}_{X' \times Y'  / R'})
\]
if and only if $\ob^{\Crys}_{X' \times Y' / R'}(\ch_i(\mathcal{E}_0)) = 0$. Again, by Corollary \ref{corol_semireg_inj} the semiregularity map is injective, thus $\ob^{\Crys}_{X' \times Y' / R'}(\ch_i(\mathcal{E}_0)) = 0$ for all $i$ if and only if one has $\ob(\mathcal{E}, X \times Y, X' \times Y') = 0$ by Theorem \ref{thm_compare_obs_p}. We may again conclude by Proposition \ref{prop_obstruction_class} and \cite[Proposition 2.15]{rumasa}.
\end{proof}
\begin{definition}\label{gae}
Let $(A, I, \gamma)$ be a divided power ring, and let $n \in \NN$. Define $\gamma_n^0(I) := I$ and inductively define the ideals
\[
\gamma_n^k(I) := \langle \gamma_n(x) \mid x \in \gamma_n^{k - 1}(I) \rangle
\]
for $k \geq 1$. We say that $\gamma_n$ \emph{acts nilpotently on I} if $\gamma_n^k(I) = 0$ for some $k \in \NN$. 
\end{definition}
We now wish to show that this condition on $\gamma_p$ implies that we can find a suitable sequence of ideals to apply Proposition \ref{prop_mixed_def} to.
\begin{lemma}\label{gaa}
Let $p$ be a prime, let $A$ be a $\ZZ_{(p)}$-algebra and let $I \subseteq A$ be an ideal with a divided power structure $\gamma$. Then any $x \in I^{[2]} \setminus I^2$ can be written as 
\[
x = a + c_1 \cdot \gamma_p(b_1) + \dots + c_m \gamma_p(b_m)
\]
for some $a \in I^2$, $m \in \NN$, $c_1, \dots, c_m \in A$ and $b_1, \dots, b_m \in I \setminus I^{2}$.
\end{lemma}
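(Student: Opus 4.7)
The plan is to use the explicit ideal structure of $I^{[2]}$, analyze $\gamma_n(y)$ modulo $I^2$ for each $n \geq 2$, and then clean up to ensure each $b_i$ lies outside $I^2$. By definition $I^{[2]}$ is the $A$-ideal generated by products $\gamma_{e_1}(y_1)\cdots\gamma_{e_t}(y_t)$ with $e_i \geq 1$, $y_i \in I$, and $\sum e_i \geq 2$. Any such product with $t \geq 2$ lies in $I^t \subseteq I^2$, so up to $I^2$ the generators reduce to single divided powers $\gamma_n(y)$ with $n \geq 2$ and $y \in I$. Consequently every $x \in I^{[2]}$ admits a decomposition $x = a + \sum_j c_j\gamma_{n_j}(y_j)$ with $a \in I^2$, $c_j \in A$, $n_j \geq 2$, $y_j \in I$, and it suffices to analyze each term $\gamma_n(y)$.

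Next I would reduce each $\gamma_n(y)$ modulo $I^2$ to an $A$-multiple of $\gamma_p(b)$ by case analysis on $n$. For $2 \leq n < p$ one has $\gamma_n(y) = y^n/n! \in I^n \subseteq I^2$ since $n!$ is a unit in $\ZZ_{(p)}$. For $n > p$ not a power of $p$, Kummer's theorem supplies an integer $a$ with $1 \leq a < n$ and $\binom{n}{a}$ a unit in $\ZZ_{(p)}$: take $a$ to be a proper digit-wise sub-decomposition of $n$ in base $p$ (either an isolated nonzero digit when $n$ has two or more, or $a = p^k$ when $n = n_k p^k$ with $n_k \geq 2$), which exists precisely because $n$ is not a prime power. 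The multiplication axiom $\gamma_a(y)\gamma_{n-a}(y) = \binom{n}{a}\gamma_n(y)$ then yields $\gamma_n(y) \in I \cdot I = I^2$. For $n = p$ there is nothing to do. For $n = p^k$ with $k \geq 2$, the composition axiom gives $\gamma_p(\gamma_{p^{k-1}}(y)) = C\cdot \gamma_{p^k}(y)$ with $C = (p^k)!/(p!\cdot((p^{k-1})!)^p)$; Legendre's formula $v_p(n!) = (n-1)/(p-1)$ at prime powers yields
\[
v_p(C) = \tfrac{p^k - 1}{p - 1} - 1 - \tfrac{p(p^{k-1} - 1)}{p - 1} = 0,
\]
so $C$ is a unit and $\gamma_{p^k}(y) = C^{-1}\gamma_p(b)$ with $b := \gamma_{p^{k-1}}(y) \in I$.

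Combining the above yields $x = a + \sum_i c_i\gamma_p(b_i)$ with $a \in I^2$, $c_i \in A$, $b_i \in I$. To refine to $b_i \in I \setminus I^2$, I need the sublemma that $\gamma_p$ sends $I^2$ into $I^2$. Indeed for $b = cd$ with $c, d \in I$, the axiom $\gamma_p(cd) = c^p \gamma_p(d)$ puts $\gamma_p(cd) \in I^p \subseteq I^2$; the general case $b = \sum_k c_k d_k$ follows by iterating the sum axiom $\gamma_n(u+v) = \sum_{i+j=n}\gamma_i(u)\gamma_j(v)$, since each resulting factor $\gamma_{n_k}(c_k d_k) = c_k^{n_k}\gamma_{n_k}(d_k)$ lies in $I^{n_k + 1}$. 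Any term $c_i\gamma_p(b_i)$ with $b_i \in I^2$ is thereby absorbed into $a$; since $x \notin I^2$ the remaining sum is nontrivial, giving $m \geq 1$.

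The main technical hurdle is the prime-power case $n = p^k$ with $k \geq 2$: here the binomial reduction fails because no proper $a$ has $\binom{p^k}{a}$ a unit, so one must invoke the composition axiom together with the $p$-adic valuation computation to extract a $\gamma_p$-factor. Everything else is a direct application of the DP axioms.
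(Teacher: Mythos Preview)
Your proof is correct, and it reaches the same conclusion via a somewhat different route than the paper.

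The paper argues by minimality: write $x = a + \sum_i c_i\gamma_{n_i}(b_i)$ with $N = \sum_i n_i$ minimal, then show that any $n_i$ not equal to $p$ (or any $b_i \in I^2$) would allow one to reduce $N$. Concretely, if $p \nmid n_i$ it writes $n_i = pk + \ell$ with $1 \le \ell \le p-1$ and uses that $\binom{pk+\ell}{\ell}$ is a $p$-unit to exhibit $\gamma_{n_i}(b_i) \in I^2$; if $n_i = pb$ with $b \ge 2$ it uses the composition identity $\gamma_b(\gamma_p(-)) = C_{p,b}\,\gamma_{pb}(-)$ with $C_{p,b} = (pb)!/((p!)^b b!)$ a $p$-unit, replacing the exponent $pb$ by $b$. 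You instead perform a direct case analysis on each $\gamma_n(y)$: the cases $n<p$ and ``$n$ not a power of $p$'' go straight into $I^2$ via binomial identities (invoking Kummer/Lucas rather than the specific split $n = pk+\ell$), while for $n = p^k$ you apply the composition identity in the \emph{other} direction, $\gamma_p(\gamma_{p^{k-1}}(-)) = C\,\gamma_{p^k}(-)$, landing immediately at exponent $p$.

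Both approaches rest on the same arithmetic input---that the relevant multinomial coefficients have $p$-adic valuation zero---so neither is materially harder. The paper's minimality device is a bit slicker in that it needs no separate case split on whether $n$ is a prime power, whereas your argument is more explicitly constructive and makes the structure of the reduction transparent. One small cosmetic point: in your final sublemma, the claim ``each resulting factor $\gamma_{n_k}(c_k d_k)$ lies in $I^{n_k+1}$'' is literally false for $n_k = 0$, but the product still lands in $I^2$ since at least one $n_k \ge 1$; you might phrase this more carefully.
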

\begin{proof}
Since $x \in I^{[2]}$, there exists $m \in \NN$, $a \in I^2$ and elements $c_1, \dots, c_m \in A$, elements $b_1, \dots, b_m \in I$ and $n_1, \dots, n_m \in \NN_{\geq 2}$ such that
\[
x = a + \sum_{i = 1}^m c_i \gamma_{n_i}(b_i)
\]
Choose such a representation such that 
\[
N = \sum_{i = 1}^{m} n_i
\]
is minimal. Note $N \geq 1$ since $x \not \in I^2$. 

Note that if some $n_i$ were not divisible by $p$, we could write $n_i = pk + \ell$ with $\ell \in \{1, \dots, p - 1\}$. Hence 
\[
C_{n_i} := \frac{(pk + \ell)!}{(pk)! \ell!}
\]
is not divisible by $p$, and thus invertible in $A$. It follows that 
\[
\gamma_{n_i}(x) = C_{n_i}^{-1} \cdot \gamma_{pk}(x)\cdot \gamma_{\ell}(x)
\]
lies in $I^2$, contradicting minimality of $N$ (note $k = 0$ implies $\ell \geq 2$). Thus all $n_i$ are divisible by $p$. 

Next, suppose there exists $i$ such that $n_i = pb$ for some $b \geq 2$. A calculation with valuations shows that the integer 
\[
C_{p ,b} := \frac{(pb)!}{(p!)^b b!}
\]
is not divisible by $p$, and hence a unit in $A$. It follows that $\gamma_{n_i}(x) = C_{p, b}^{-1} \cdot \gamma_b(\gamma_p(x))$. Since $b \geq 2$, this again contradicts minimality of $N$. We conclude that $n_i = p$ for all $i$. 

Finally if $b_i \in I^2$ for some $i$, then $\gamma_p(b_i) \in I^2$, which again contradicts minimality of $N$. The result follows.
\end{proof}

\begin{corollary}\label{gad}
Let $(A, I, \gamma)$ be a divided power ring, and let $p$ be a prime number. If $A$ is a local Artinian $\ZZ_{(p)}$-algebra, $0 \subsetneq I \subseteq \mathfrak{m}_A$ and $\gamma_p$ acts nilpotently on $I$, then $I^{[2]} \subsetneq I$. 
\end{corollary}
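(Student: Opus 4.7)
The plan is to argue by contradiction: assume $I^{[2]} = I$ and derive a contradiction from the nilpotence hypothesis on $\gamma_p$. The starting point is Lemma \ref{gaa}: every element of $I^{[2]}$ decomposes as $a + \sum_i c_i \gamma_p(b_i)$ with $a \in I^2$ and $b_i \in I$ (the case $x \in I^2$ being trivial with $m = 0$). Under the assumption $I = I^{[2]}$, this decomposition shows $I \subseteq I^2 + \gamma_p^1(I)$, where $\gamma_p^1(I) \subseteq I$ is the ideal generated by $\{\gamma_p(b) : b \in I\}$ as in Definition \ref{gae}. Since $\gamma_p^1(I) \subseteq I$, we get the equality $I = I^2 + \gamma_p^1(I)$.

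Next, I would strip off the $I^2$ term using Nakayama's lemma. Because $A$ is Artinian it is Noetherian, so $I$ is a finitely generated $A$-module. The assumption $I \subseteq \mathfrak{m}_A$ gives $I^2 \subseteq \mathfrak{m}_A \cdot I$, so the equality above becomes $I = \gamma_p^1(I) + \mathfrak{m}_A \cdot I$. Applying Nakayama's lemma to the inclusion $\gamma_p^1(I) \subseteq I$ then yields $I = \gamma_p^1(I)$.

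Finally, I would iterate this identity. Substituting $I = \gamma_p^1(I)$ into the definition of $\gamma_p^2(I) = \langle \gamma_p(x) : x \in \gamma_p^1(I) \rangle$ gives $\gamma_p^2(I) = \langle \gamma_p(x) : x \in I \rangle = \gamma_p^1(I) = I$, and by induction $\gamma_p^k(I) = I$ for all $k \geq 0$. Since $I \neq 0$ by hypothesis, this contradicts the assumption that $\gamma_p$ acts nilpotently on $I$, completing the proof.

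The argument is short and essentially contains no genuine obstacle; the only step worth a careful sanity check is the initial application of Lemma \ref{gaa}, ensuring that the conclusion is applied correctly to every element of $I^{[2]}$ (including those in $I^2$) so that the bound $I \subseteq I^2 + \gamma_p^1(I)$ is genuinely obtained. After that, the only tools used are the standard consequence of Nakayama's lemma and an elementary induction on the iterated ideals $\gamma_p^k(I)$.
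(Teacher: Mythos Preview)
Your proof is correct and follows essentially the same strategy as the paper: assume $I^{[2]} = I$, use Lemma~\ref{gaa} to exhibit $I \subseteq I^2 + \gamma_p^1(I)$, eliminate the $I^2$ contribution, and then iterate to contradict the nilpotence of $\gamma_p$. The only cosmetic difference is in how the $I^2$ term is disposed of: the paper first passes to $A/I^2$ (using that $I$ is nilpotent, so $I^2 \subsetneq I$) and then traces a single nonzero element through the iterated ideals, whereas you stay in $A$ and invoke Nakayama's lemma to obtain the global equality $I = \gamma_p^1(I)$ directly. Both maneuvers accomplish the same reduction, and neither is materially simpler than the other.
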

\begin{proof}
Since $A$ is Artinian we see that $\mathfrak{m}_A$ is nilpotent, therefore $I$ is nilpotent. In particular $I^2 \neq I$ since $I \neq 0$. Thus after replacing $A$ by $A / I^2$ one still has $I \neq 0$. We may thus assume without loss of generality that $I^2 = 0$. 

Suppose that $I^{[2]} = I$, we show this leads to a contradiction. Since $I \neq 0$ there exists $x \in I$ such that $x \neq 0$. Since we assumed $I^{[2]} = I$, we have $x \in I^{[2]} \setminus I$, so that by Lemma \ref{gaa} we may write
\begin{equation} \label{gab}
x = c_1 \cdot \gamma_p(b_1) + \dots + c_m \gamma_p(b_m)
\end{equation}
for some $m \in \NN$, $c_i \in A$ and $b_i \in I$ nonzero. Hence $x \in \gamma_p^1(I)$. Again using that $I^{[2]} = I$, we may represent each $b_i$ as 
\begin{equation}\label{gac}
b_i = c_{i1} \gamma_p(b_{i1}) + \dots + c_{im} \gamma_p(b_{im_i})
\end{equation}
for some $m_i \in \NN$, $c_{ij} \in A$ and $b_{ij} \in I$ nonzero. Combining (\ref{gab}) and (\ref{gac}) it follows that $x \in \gamma_p^2(I)$. Continuing like this, we may show that $x \in \gamma_p^k(I)$ for all $k \geq 0$. Thus $x = 0$ since $\gamma_p$ acts nilpotently on $I$, which is a contradiction. 
\end{proof}
\begin{prop}\label{gag}
Let $(A, I, \gamma)$ be a divided power ring, and let $p$ be a prime number. If $A$ is a local Artinian $\ZZ_{(p)}$-algebra and $\gamma_p$ acts nilpotently on $\mathfrak{m}_A$, then there exists a finite chain of ideals
\[
\mathfrak{m}_A = I_1 \supsetneq I_2 \supsetneq \dots \supsetneq I_k = 0
\]
such that $I_i^{[2]} \subseteq I_{i + 1}$. 
\end{prop}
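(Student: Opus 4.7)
The plan is to build the chain by iterating the operation $J \mapsto J^{[2]}$ starting from $\mathfrak{m}_A$. Concretely, I would set $I_1 := \mathfrak{m}_A$ and inductively define $I_{i+1} := I_i^{[2]}$. The two things that need to be checked at each step are: (a) $I_{i+1}$ inherits a divided power structure so that the operation can be iterated and $\gamma_p$ continues to make sense on it, and (b) $I_i \supsetneq I_{i+1}$ as long as $I_i \neq 0$, so that the chain strictly descends. Termination will then follow from the Artinian hypothesis.

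For (a), I would invoke the standard fact (see e.g.\ \stacksref{07H1}) that $I^{[n]}$ is a sub-PD-ideal of $I$ whenever $I$ carries a PD structure, so that each $I_{i+1}$ inherits a PD structure from the PD structure on $I_i$, and inductively from that on $\mathfrak{m}_A$. Moreover, since $I_i \subseteq \mathfrak{m}_A$ is a sub-PD-ideal, one has $\gamma_p^k(I_i) \subseteq \gamma_p^k(\mathfrak{m}_A)$ for all $k \geq 0$ by an easy induction on $k$ using $\gamma_p^0(I_i) = I_i \subseteq \mathfrak{m}_A = \gamma_p^0(\mathfrak{m}_A)$ and the definition of $\gamma_p^k$. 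Hence nilpotence of $\gamma_p$ on $\mathfrak{m}_A$ immediately implies nilpotence of $\gamma_p$ on each $I_i$.

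For (b), I would apply Corollary \ref{gad} directly to the PD ideal $I_i \subseteq \mathfrak{m}_A$: whenever $I_i \neq 0$, Corollary \ref{gad} gives $I_{i+1} = I_i^{[2]} \subsetneq I_i$. Since $A$ is Artinian, any strictly descending chain of ideals must terminate, so there exists $k$ with $I_k = 0$, which yields the desired chain.

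The only mild obstacle is the bookkeeping needed to verify that $I_i$ is genuinely a sub-PD-ideal so that Corollary \ref{gad} applies — but this is a standard property of the filtration by divided power subideals and requires no new input beyond what is already cited in the preceding lemmas.
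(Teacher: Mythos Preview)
Your proposal is correct and follows essentially the same approach as the paper: define $I_1 := \mathfrak{m}_A$, $I_{i+1} := I_i^{[2]}$, invoke Corollary \ref{gad} for strict descent, and terminate by the Artinian condition. The paper's proof is a one-liner that leaves implicit the bookkeeping you spell out (that each $I_i$ is a sub-PD-ideal and that nilpotence of $\gamma_p$ is inherited), but the argument is identical.
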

\begin{proof}
One simply defines $I_{1} := \mathfrak{m}_A$ and $I_{i + 1} := I_i^{[2]}$. Then $I_{i + 1} \subsetneq I_i$ as long as $I_i \neq 0$ by Corollary \ref{gad}, and this sequence terminates since $A$ is Artinian.
\end{proof}
\begin{proof}[Proof of Theorem \ref{thm_mixed}]
Combine Proposition \ref{gag} and Proposition \ref{prop_mixed_def}.
\end{proof}
\begin{proof}[Proof of Corollary \ref{corol_mixed}]
For $n > 0$, let $\mathcal{X}_n := \mathcal{X} \times_{\Spec(W)} \Spec(W_n)$ and similarly for $\mathcal{Y}_n$. By induction on $n$ and Theorem \ref{thm_mixed}, we may find a compatible system of lifts $\mathcal{E}_n \in \derD(\mathcal{X}_n \times \mathcal{Y}_n)$. Thus there exists a lift $\tilde{\mathcal{E}} \in \derD^b(\mathcal{X} \times \mathcal{Y})$ by \cite[Proposition 3.6.1]{lieblich05}. The induced transform is fully faithful (or an equivalence) by  \cite[Proposition 2.15]{rumasa}.
\end{proof}

%!TEX root = main.tex
\appendix
\section{Appendix}
\subsection{Compact projective generators}
Let $R$ be a ring and $k \in \NN$. The main goal of this section is to construct the compact projectively generated categories $\Fun(\Delta^k, \CAlg_R^\an)_\surj$. The case $k = 0$ and $k = 1$ were done by Mao, see \cite[Theorem 3.23]{mao}. We try to clean up the argument slightly in the process.

Throughout this section, $n$ can be any integer greater than or equal to $1$, or the symbol $\infty$. The key ingredient to finding sets of compact projective generators is the following result from Lurie.
\begin{prop}\label{prop_generators_from_adjunction}
Suppose given a pair of adjoint functors $
\begin{tikzcd} 
\mathcal{C}  \arrow[shift left=1ex]{r}{F} & \mathcal{D} \lar{G}
\end{tikzcd}$ between $n$-categories. Assume that:
\begin{enumerate}
\item The $n$-category $\mathcal{D}$ admits filtered colimits and geometric realizations, and $G$ preserves filtered colimits and geometric realizations. 
\item The $n$-category $\mathcal{C}$ is compact $n$-projectively generated.
\item The functor $G$ is conservative. 
\end{enumerate}
Then:
\begin{enumerate}
\item The $n$-category $\mathcal{D}$ is compact $n$-projectively generated.
\item An object $D \in \mathcal{D}$ is compact and $n$-projective if and only if there exists a compact $n$-projective object $C \in \mathcal{C}$ such that $D$ is a retract of $F(C)$. 
\item The functor $G$ preserves all sifted colimits. 
\item If $S$ is a set of compact $n$-projective generators for $\mathcal{C}$, then $F(S)$ is a set of compact $n$-projective generators for $\mathcal{D}$. 
\end{enumerate}
\end{prop}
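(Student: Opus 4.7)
The plan is to combine three standard facts: (i) $F$ transports compact $n$-projective objects to compact $n$-projective objects (via the adjunction formula together with the hypothesis on $G$), (ii) the adjunction $F \dashv G$ is monadic, so that every $D \in \mathcal{D}$ is canonically a geometric realization of free objects, and (iii) sifted colimits are generated by filtered colimits together with geometric realizations, so that any functor preserving the two classes separately preserves all sifted colimits. The proof of (4), (1), (3) then drops out formally, and (2) will require a mild extra compactness manoeuvre.

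First I would verify (3): by \cite[Corollary 5.5.8.17]{htt} the smallest class of colimits containing filtered colimits and geometric realizations is that of sifted colimits, so hypothesis (1) immediately implies that $G$ preserves sifted colimits. Next, I would observe that for any compact $n$-projective $C \in \mathcal{C}$, the adjunction gives
\[
\Map_{\mathcal{D}}(F(C), -) \simeq \Map_{\mathcal{C}}(C, G(-))
\]
and the right hand side preserves filtered colimits and geometric realizations by combining the compact $n$-projectivity of $C$ with what we just established for $G$. Hence $F(C)$ is compact $n$-projective in $\mathcal{D}$, so in particular $F(S)$ consists of compact $n$-projective objects for any set $S$ of compact $n$-projective generators of $\mathcal{C}$.

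To prove generation and the monadicity-type statement, I would invoke the $\infty$-categorical Barr-Beck theorem \cite[Theorem 4.7.3.5]{ha}: since $G$ is conservative and preserves geometric realizations of $G$-split simplicial objects (which follows from preservation of all geometric realizations), the adjunction is monadic. In particular for every $D \in \mathcal{D}$ the canonical bar resolution exhibits $D$ as a geometric realization
\[
D \simeq \bigl| F (GF)^{\bullet} G D \bigr|
\]
of objects of the form $F(X_n)$ with $X_n \in \mathcal{C}$. Writing each $X_n$ as a small colimit of objects in $S$ and using that $F$, as a left adjoint, preserves all small colimits, we express $D$ as an iterated colimit of objects in $F(S)$. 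This proves (4), and together with (1) establishes that $\mathcal{D}$ is compact $n$-projectively generated.

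The main obstacle is claim (2), identifying compact $n$-projectives in $\mathcal{D}$. If $D \in \mathcal{D}$ is compact $n$-projective, then applying $\Map_{\mathcal{D}}(D, -)$ to the bar resolution above exchanges it with a geometric realization, so the identity $\id_D$ factors through some $F((GF)^n GD)$, exhibiting $D$ as a retract thereof. This is not quite what we want, since $(GF)^n GD$ need not be compact $n$-projective; but since $\mathcal{C}$ is compact $n$-projectively generated we may write $(GF)^n GD$ as a filtered colimit of compact $n$-projectives $C_\alpha$, and then $F((GF)^n GD) \simeq \colim_\alpha F(C_\alpha)$. Compactness of $D$ and preservation of filtered colimits by $\Map_{\mathcal{D}}(D,-)$ forces the retraction map $D \to F((GF)^n G D)$ to factor through some $F(C_\alpha)$; composing with the section gives the desired retract presentation of $D$. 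The converse direction — that a retract of $F(C)$ is compact $n$-projective whenever $C$ is — is formal, since compact $n$-projectivity is stable under retracts.
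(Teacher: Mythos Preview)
The paper does not give a proof at all: it simply cites \cite[Corollary 4.7.3.18]{ha} and remarks that (4) follows from Lurie's argument. Your proposal is a reasonable unpacking of that citation, and the overall strategy (Barr--Beck plus bar resolution plus a compactness/projectivity manoeuvre) is the standard one.

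There is, however, a genuine slip in your argument for (2). You assert that $(GF)^n GD$ can be written as a \emph{filtered} colimit of compact $n$-projectives $C_\alpha$. This is false: take $\mathcal{C} = \derD(k)_{\geq 0}$ for a field $k$, where the compact projectives are finite free modules concentrated in degree $0$; a filtered colimit of these remains in degree $0$, so no object with higher homotopy (e.g.\ $k[1]$) arises this way. What is true is that every object of a compact $n$-projectively generated category is a \emph{sifted} colimit of compact $n$-projectives. Your argument is then rescued by observing that since $D$ is both compact and $n$-projective, $\Map_{\mathcal{D}}(D,-)$ commutes with all sifted colimits (by exactly the reasoning you gave for (3)); and for a sifted colimit of spaces, every class in $\pi_0$ of the colimit lifts to some stage, because $\pi_0$ commutes with colimits and colimits in $\Set$ are quotients of coproducts. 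So the section $D \to F(GD)$ still factors through some $F(C_\alpha)$. A smaller omission: you do not verify that $\mathcal{D}$ is cocomplete, which is part of the paper's definition of ``compact $n$-projectively generated''; this follows from monadicity once you know the monad $GF$ preserves sifted colimits.
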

\begin{proof}
See \cite[Corollary 4.7.3.18]{ha}. Note that 4. isn't stated but follows from the proof as well.
\end{proof}
The following lemma will be very useful along the way.
\begin{lemma}\label{fun_adjoint}
Suppose given a pair of adjoint functors $
\begin{tikzcd} 
\mathcal{C}  \arrow[shift left=1ex]{r}{F} & \mathcal{D} \lar{G}
\end{tikzcd}$ 
 between $n$-categories. Let $K$ be a simplicial set. Then there exists an induced pair
\[
\begin{tikzcd} 
\Fun(K, \mathcal{C})  \arrow[shift left=1ex]{r}{F \circ -} & \Fun(K, \mathcal{D}) \lar{G \circ -}
\end{tikzcd}
\]
of adjoint functors. 
\end{lemma}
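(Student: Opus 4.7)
The plan is to propagate the unit and counit of the original adjunction pointwise through $K$. Let $\eta\colon \id_{\mathcal{C}} \to G \circ F$ and $\epsilon\colon F \circ G \to \id_{\mathcal{D}}$ be the unit and counit witnessing $F \dashv G$, as in \cite[Proposition 5.2.2.8]{htt}. First, I would note that postcomposition gives well-defined functors
\[
F_* := F \circ (-) \colon \Fun(K, \mathcal{C}) \to \Fun(K, \mathcal{D}), \qquad G_* := G \circ (-) \colon \Fun(K, \mathcal{D}) \to \Fun(K, \mathcal{C}),
\]
together with canonical equivalences $G_* \circ F_* \simeq (G \circ F)_*$ and $F_* \circ G_* \simeq (F \circ G)_*$ given by associativity of composition.

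Next, whiskering $\eta$ and $\epsilon$ with arbitrary diagrams $K \to \mathcal{C}$ and $K \to \mathcal{D}$ yields natural transformations
\[
\eta_* \colon \id_{\Fun(K, \mathcal{C})} \to G_* \circ F_*, \qquad \epsilon_* \colon F_* \circ G_* \to \id_{\Fun(K, \mathcal{D})}
\]
in the respective functor categories.

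Finally, I would verify the triangle identities for the pair $(\eta_*, \epsilon_*)$. Since the two compositions appearing in each triangle identity are natural transformations between functors of the form $H_* \colon \Fun(K, \mathcal{A}) \to \Fun(K, \mathcal{B})$, and natural transformations in functor categories can be compared on each vertex of $K$, this reduces to the corresponding triangle identities for $(\eta, \epsilon)$ evaluated at the image of that vertex, which hold by hypothesis. Invoking \cite[Proposition 5.2.2.8]{htt} in the other direction then yields the desired adjunction $F_* \dashv G_*$.

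The argument is entirely formal, so there is no genuine obstacle — the only minor subtlety is keeping straight that whiskering and pointwise checking of natural transformations in functor $\infty$-categories behave as expected, which is standard in the setting of \cite{htt}.
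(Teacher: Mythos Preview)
Your proposal is correct and matches the paper's proof, which simply reads ``Apply \cite[Proposition 5.2.2.8]{htt} twice''; you have unpacked this by extracting the unit and counit, whiskering them through $\Fun(K,-)$, and reassembling the adjunction. One small remark: in the $\infty$-categorical setting the triangle identities are homotopies rather than equalities, so the cleanest way to transport them is via the 2-functoriality of $\Fun(K,-)$ (which carries the witnessing homotopies along) rather than a literal vertex-by-vertex comparison --- but as you yourself flag, this is standard bookkeeping.
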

\begin{proof}
Apply \cite[Proposition 5.2.2.8]{htt} twice.
\end{proof}
We now wish to study $\Fun(\Delta^k, \Ani(\mathcal{C}))$.
\begin{lemma}\label{lem_fun_proj_gen}
Let $\mathcal{C}$ be a compact $n$-projectively generated $n$-category. Then $\Fun(\Delta^k, \mathcal{C})$ is compact $n$-projectively generated. Moreover, if $S$ is a set of compact $n$-projective generators for $\mathcal{C}$, then the set
\begin{align*}
\{\ins_i(X) \mid i \in \{0, \dots, k\}, X \in S \} 
\end{align*}
where 
\[
\ins_i(X) := \underbrace{0 \to \dots \to 0}_{i \text{ times}} \to  \underbrace{X \to \dots \to X}_{(k- i + 1) \text{ times}}
\]
is a set of compact $n$-projective generators for $\Fun(\Delta^k, \mathcal{C})$. 
\end{lemma}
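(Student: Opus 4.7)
\textbf{Proof plan for Lemma \ref{lem_fun_proj_gen}.} The plan is to apply Proposition \ref{prop_generators_from_adjunction} to a very explicit adjunction. For each $i \in \{0,\dots,k\}$, evaluation at $i$ gives a functor $\ev_i \colon \Fun(\Delta^k, \mathcal{C}) \to \mathcal{C}$. Since $\mathcal{C}$ is cocomplete, $\ev_i$ admits a left adjoint, namely the left Kan extension along the inclusion $\{i\} \hookrightarrow \Delta^k$. First I would compute this left Kan extension explicitly using the pointwise formula: the overcategory of $j$ with respect to $\{i\} \hookrightarrow \Delta^k$ is a point if $i \leq j$ and empty otherwise, so the Kan extension of $X \in \mathcal{C}$ takes the value $X$ at positions $\geq i$ and the initial object $0 \in \mathcal{C}$ at positions $< i$. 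Transition maps are either identities or the unique map $0 \to X$. This is exactly $\ins_i(X)$, so $\ins_i$ is left adjoint to $\ev_i$.

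Next, assembling over all $i$, I would consider the pair of adjoint functors
\[
\begin{tikzcd}
\mathcal{C}^{k+1} \arrow[shift left=1ex]{r}{F} & \Fun(\Delta^k, \mathcal{C}) \lar{G}
\end{tikzcd}
\]
where $G := (\ev_0, \dots, \ev_k)$ and $F := \coprod_{i=0}^k \ins_i \circ \pi_i$, with $\pi_i$ the $i$-th projection. The adjunction $F \dashv G$ follows formally from the componentwise adjunctions $\ins_i \dashv \ev_i$ (and Lemma \ref{fun_adjoint} applied to the diagonal embedding), noting that $F$ of an object $(X_0,\dots,X_k)$ sends it to the coproduct $\bigsqcup_{i=0}^k \ins_i(X_i)$.

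I would then verify the three hypotheses of Proposition \ref{prop_generators_from_adjunction}. Conservativity of $G$ is standard: a morphism in $\Fun(\Delta^k, \mathcal{C})$ is an equivalence iff it is so at every object of $\Delta^k$. Preservation of filtered colimits and geometric realizations by $G$ is immediate since colimits in $\Fun(\Delta^k, \mathcal{C})$ are computed pointwise. Finally, $\mathcal{C}^{k+1}$ is compact $n$-projectively generated whenever $\mathcal{C}$ is, with an explicit set of compact $n$-projective generators given by tuples $(0,\dots,0,X,0,\dots,0)$ where $X \in S$ sits in the $i$-th coordinate — this is a direct verification using that (co)limits and mapping spaces in $\mathcal{C}^{k+1}$ are computed coordinatewise.

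Applying Proposition \ref{prop_generators_from_adjunction} then yields that $\Fun(\Delta^k, \mathcal{C})$ is compact $n$-projectively generated, and that a set of compact $n$-projective generators is the image under $F$ of the generators of $\mathcal{C}^{k+1}$, which is precisely $\{\ins_i(X) \mid i \in \{0,\dots,k\},\ X \in S\}$. No step looks like a serious obstacle; the only small technical point worth spelling out is the identification of $\ins_i$ as the left Kan extension functor, everything else is formal consequence of Proposition \ref{prop_generators_from_adjunction} and Lemma \ref{fun_adjoint}.
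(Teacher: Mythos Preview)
Your approach is essentially the same as the paper's: both restrict along the inclusion of the $0$-skeleton $\{0,\dots,k\} \hookrightarrow \Delta^k$ to get the conservative functor $G \colon \Fun(\Delta^k,\mathcal{C}) \to \mathcal{C}^{k+1}$, identify its left adjoint as left Kan extension (which the paper writes as $(X_0,\dots,X_k) \mapsto (X_0 \to X_0 \amalg X_1 \to \cdots)$, and you write equivalently as $\bigsqcup_i \ins_i(X_i)$), and then apply Proposition~\ref{prop_generators_from_adjunction}. The only cosmetic difference is that the paper cites \cite[Lemma~2.7]{mao} for the generators of $\mathcal{C}^{k+1}$ rather than verifying them by hand, and your aside about Lemma~\ref{fun_adjoint} is unnecessary since the adjunction $F \dashv G$ is just the Kan extension adjunction.
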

\begin{proof}
Since $\mathcal{C}$ is cocomplete, it follows that $\Fun(\Delta^k, \mathcal{C})$ is cocomplete by \cite[Corollary 5.1.2.3]{htt}. The forgetful functor
\begin{equation} \label{eq_forgetfull_adjoint}
\Fun(\Delta^k, \mathcal{C}) \to \Fun(\sk_0(\Delta^k), \mathcal{C})
\end{equation}
is conservative, commutes with colimits by \cite[Corollary 5.1.2.3]{htt} and admits a left adjoint explicitly given by
\[
\(X_0, \dots, X_k\) \mapsto \(X_0 \to X_0 \amalg X_1 \to \dots \to X_0 \amalg \dots \amalg X_k\)
\]
The result now follows by applying Proposition \ref{prop_generators_from_adjunction} to the set of compact $n$-projective generators for $\Fun(\sk_0(\Delta^k), \mathcal{C})$ given in \cite[Lemma 2.7]{mao}.
\end{proof}
\begin{definition}\label{def_gen_fun}
Let $\mathcal{C}$ be a compact $n$-projectively generated $n$-category and let $S$ be a set of compact $n$-projective generators.
We write $\Fun(\Delta^k, \mathcal{C})_\gen$ for the full subcategory of $\Fun(\Delta^k, \mathcal{C})$ spanned by coproducts of objects in the set $\{\ins_i(X) \mid X \in S, i \in \{0, \dots, k\}\}$. 
\end{definition}
\begin{corollary}\label{corol_ani_commutes_fun}
Let $\mathcal{C}$ be a compact $n$-projectively generated $n$-category. Then the map
\[
\Ani(\Fun(\Delta^k, \mathcal{C})) \xrightarrow{\sim} \Fun(\Delta^k, \Ani(\mathcal{C})) 
\]
is an equivalence of categories.
\end{corollary}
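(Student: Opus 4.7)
The strategy is to identify both sides with $\mathcal{P}_{\Sigma,\infty}$ of the same small subcategory. Fix a set $S$ of compact $n$-projective generators for $\mathcal{C}$ and let $\mathcal{C}_0 \subseteq \mathcal{C}$ be the full subcategory spanned by finite coproducts of objects in $S$, so $\Ani(\mathcal{C}) = \mathcal{P}_{\Sigma,\infty}(\mathcal{C}_0)$ by definition. Set $S' := \{\ins_i(X) \mid i \in \{0,\dots,k\},\ X \in S\}$. By Lemma \ref{lem_fun_proj_gen} applied to $\mathcal{C}$, $S'$ is a set of compact $n$-projective generators for $\Fun(\Delta^k, \mathcal{C})$, so by definition $\Ani(\Fun(\Delta^k, \mathcal{C})) = \mathcal{P}_{\Sigma,\infty}(\Fun(\Delta^k, \mathcal{C})_\gen)$. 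On the other hand, by Lemma \ref{ani_gen}, $\Ani(\mathcal{C})$ is compact projectively generated by $S$, and Lemma \ref{lem_fun_proj_gen} applied to $\Ani(\mathcal{C})$ (with $n = \infty$) shows that the image of $S'$ is a set of compact projective generators for $\Fun(\Delta^k, \Ani(\mathcal{C}))$. The proposition stated immediately after Definition \ref{def_gen}, applied to this $\infty$-category, then yields $\Fun(\Delta^k, \Ani(\mathcal{C})) \simeq \mathcal{P}_{\Sigma,\infty}(\Fun(\Delta^k, \Ani(\mathcal{C}))_\gen)$.

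It therefore suffices to prove that the natural functor $\Phi \colon \Fun(\Delta^k, \mathcal{C})_\gen \to \Fun(\Delta^k, \Ani(\mathcal{C}))_\gen$ induced by the Yoneda embedding $\mathcal{C} \to \Ani(\mathcal{C})$ is an equivalence. Inspecting the formula for $\ins_i$, one sees that a coproduct $\ins_{i_1}(X_1) \amalg \cdots \amalg \ins_{i_m}(X_m)$ (with $X_\ell \in S$ and $i_\ell \in \{0,\dots,k\}$) is precisely the diagram whose value at vertex $j$ is the finite coproduct $\coprod_{\ell\,:\,i_\ell \leq j} X_\ell$ (computed in the ambient category), with transition maps the canonical inclusions. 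Thus every object of $\Fun(\Delta^k, \mathcal{C})_\gen$ takes values vertex-wise in $\mathcal{C}_0$, and every object of $\Fun(\Delta^k, \Ani(\mathcal{C}))_\gen$ takes values vertex-wise in the essential image of $\mathcal{C}_0$ in $\Ani(\mathcal{C})$.

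The main obstacle is then to check that the Yoneda embedding $\mathcal{C}_0 \hookrightarrow \Ani(\mathcal{C}) = \mathcal{P}_{\Sigma,\infty}(\mathcal{C}_0)$ is fully faithful and preserves finite coproducts. Fully faithfulness is the classical Yoneda lemma; preservation of finite coproducts of objects in $\mathcal{C}_0$ follows because for any $F \in \mathcal{P}_{\Sigma,\infty}(\mathcal{C}_0)$ one has $F(X \amalg X') = F(X) \times F(X')$ (as $F$ preserves finite products), and hence both $\Map(Y(X \amalg X'),F)$ and $\Map(Y(X) \amalg Y(X'),F)$ compute the same space by the Yoneda lemma. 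Given these two properties, the combinatorial description above shows that $\Phi$ is essentially surjective (the same indexing data $\{(X_\ell, i_\ell)\}$ describes objects on both sides), and fully faithfulness of $\Phi$ follows by a vertex-wise application of fully faithfulness on $\mathcal{C}_0$.
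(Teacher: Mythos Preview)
Your proof is correct and follows essentially the same strategy as the paper: both identify each side with $\mathcal{P}_{\Sigma,\infty}$ of the full subcategory $\Fun(\Delta^k,\mathcal{C})_\gen$, using Lemma~\ref{lem_fun_proj_gen} twice (once for $\mathcal{C}$, once for $\Ani(\mathcal{C})$) together with Lemma~\ref{ani_gen}. You are somewhat more explicit than the paper in the final step---the paper compresses your entire second and third paragraphs into the single clause ``since $j$ is fully faithful''---and in particular you spell out why the Yoneda embedding $\mathcal{C}_0\hookrightarrow\Ani(\mathcal{C})$ preserves finite coproducts, which is needed for essential surjectivity of $\Phi$ but is not stated in the paper's proof.
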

\begin{proof}
Let $S$ be a set of compact $n$-projective generators for $\mathcal{C}$, and denote with $j \colon \mathcal{C} \to \Ani(\mathcal{C})$ the Yoneda embedding. Then $j(S)$ is a set of compact projective generators for $\Ani(\mathcal{C})$, so by Lemma \ref{lem_fun_proj_gen} we see that 
\[
\{ \ins_i(j(X)) \mid i \in \{0, \dots, k \}, X \in S\}
\]
gives a set of compact projective generators for $\Fun(\Delta^k, \Ani(\mathcal{C}))$. Denote with $\mathcal{C}_0 \subseteq \Fun(\Delta^k, \mathcal{C})$ the full subcategory spanned by finite coproducts of objects in the set
\[
\{ \ins_i(X) \mid i \in \{0, \dots, k \}, X \in S\}
\]
Since $j$ is fully faithful one then has $\Ani(\mathcal{C}_0) = \Fun(\Delta^k, \Ani(\mathcal{C}))$. However, applying Lemma \ref{lem_fun_proj_gen} again we see that $\mathcal{C}_0$ is a set of compact $n$-projective generators for the $n$-category $\Fun(\Delta^k, \mathcal{C})$. The result follows.
\end{proof}
By Lemma \ref{fun_adjoint}, we see that the (pointwise) Yoneda embedding $$\Fun(\Delta^k, \mathcal{C}) \to \Fun(\Delta^k, \Ani(\mathcal{C}))$$ admits a left adjoint $$\pi_0 \colon \Fun(\Delta^k, \Ani(\mathcal{C})) \to \Fun(\Delta^k, \mathcal{C})$$ given by applying the left adjoint $\Ani(\mathcal{C}) \to \mathcal{C}$ pointwise. 

For the rest of this section, we restrict our attention to the $\infty$-categories $\derD(R)_{\geq 0}$ and $\CAlg_R^\an$, where $R$ is a discrete commutative ring.
\begin{lemma}\label{gens_mod}Let $R$ be a ring and $k \in \NN$. The $\infty$-category $\Fun(\Delta^k, \derD(R)^\heart)$ is compact $1$-projectively generated. A set of generators is given by the set $$\{\ins_i(R) \mid i \in \{0, \dots, k\}\}$$
\end{lemma}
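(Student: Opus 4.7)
The plan is to deduce this directly from Lemma \ref{lem_fun_proj_gen} applied to the $1$-category $\mathcal{C} = \derD(R)^\heart$ with $n = 1$. First I would recall that $\derD(R)^\heart$ is precisely the abelian category of discrete $R$-modules, which is a $1$-category since all its mapping spaces are discrete.

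Next, I would invoke the example earlier in the text which states that $\{R\}$ is a set of compact $1$-projective generators for $\derD(R)^\heart$: indeed $R$ is finitely presented as a module over itself and $\Hom_R(R, -)$ is exact (so preserves geometric realizations in the $1$-categorical sense), while every $R$-module is a colimit of copies of $R$. Hence the hypotheses of Lemma \ref{lem_fun_proj_gen} are satisfied with $n=1$ and $S=\{R\}$.

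Applying Lemma \ref{lem_fun_proj_gen} then yields that $\Fun(\Delta^k, \derD(R)^\heart)$ is compact $1$-projectively generated and that $\{\ins_i(X) \mid i \in \{0, \dots, k\}, X \in S\} = \{\ins_i(R) \mid i \in \{0, \dots, k\}\}$ is an explicit set of compact $1$-projective generators, which is exactly the claim. There is no real obstacle here; the lemma is essentially a repackaging of Lemma \ref{lem_fun_proj_gen} in the specific case we will need later to handle surjective functors $\Delta^k \to \CAlg_R^\an$.
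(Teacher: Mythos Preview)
Your proposal is correct and follows essentially the same route as the paper: both reduce via Lemma~\ref{lem_fun_proj_gen} to the base case $k=0$, i.e.\ to the claim that $\{R\}$ is a set of compact $1$-projective generators for $\derD(R)^\heart$. The only cosmetic difference is that you cite the earlier Example in the text for this base case, whereas the paper's proof re-sketches the argument directly (identifying $\Hom_R(R,-)$ with the forgetful functor and checking it preserves filtered colimits and reflexive coequalizers).
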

\begin{proof}
By Lemma \ref{lem_fun_proj_gen} it suffices to give a proof for $k = 0$. Then this is a classical result about the category of discrete $R$-modules, we give a short sketch. Since any $R$-module can be written as a colimit of free modules, it suffices to show the functor $\Hom_{\derD(R)^\heart}(R, -) \colon \derD(R)^\heart \to \Set$ commutes with filtered colimits and geometric realizations.

Note that $\Hom_{\derD(R)^\heart}(R, -)$ is just the forgetful functor $\derD(R)^\heart \to \Set$, hence it commutes with filtered colimits (for example by \cite[Proposition 2.13.5]{alma}). By \cite[Remark A.21]{mao}, to show it commutes with geometric realizations it suffices to show it commutes with colimits over $\Delta^\op_{\leq 1}$, which we leave for the reader to verify.
\end{proof}
\begin{corollary}\label{gens_animod}
Let $R$ be a ring and $k \in \NN$. The $\infty$-category $\Fun(\Delta^k, \derD(R)_{\geq 0})$ is compact projectively generated. Moreover, a set of generators is given by $\{\ins_i(R) \mid i \in \{0, \dots, k\}\}$.
\end{corollary}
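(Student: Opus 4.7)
The plan is to assemble this from three already-established facts in the excerpt, no new computation required. First, I would invoke Lemma \ref{gens_mod}, which says that $\Fun(\Delta^k, \derD(R)^\heart)$ is compact $1$-projectively generated with the desired set of generators $\{\ins_i(R) \mid i \in \{0, \dots, k\}\}$.

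Next, I would apply Corollary \ref{corol_ani_commutes_fun} with $\mathcal{C} = \derD(R)^\heart$ (which is itself compact $1$-projectively generated by the single object $R$), yielding an equivalence
\[
\Ani\bigl(\Fun(\Delta^k, \derD(R)^\heart)\bigr) \xrightarrow{\sim} \Fun\bigl(\Delta^k, \Ani(\derD(R)^\heart)\bigr) = \Fun(\Delta^k, \derD(R)_{\geq 0}),
\]
using the identification $\Ani(\derD(R)^\heart) \cong \derD(R)_{\geq 0}$ from the example preceding Section~2.4's end.

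Finally, I would apply Lemma \ref{ani_gen}, which says that animating a compact $n$-projectively generated $n$-category yields a compact projectively generated $\infty$-category, and that any set of compact $n$-projective generators becomes a set of compact projective generators in the animation. Combining these three steps gives both the generation statement and the explicit description of the generators. There is no real obstacle here; the only subtlety worth double-checking is that the images of $\ins_i(R)$ under the Yoneda embedding into $\Ani(\Fun(\Delta^k, \derD(R)^\heart))$ correspond, under the equivalence of Corollary \ref{corol_ani_commutes_fun}, to the objects $\ins_i(R)$ viewed in $\Fun(\Delta^k, \derD(R)_{\geq 0})$, which is immediate from the pointwise nature of both the Yoneda embedding and the functor $\Ani(\mathcal{C}) \to \mathcal{D}$ appearing in the construction.
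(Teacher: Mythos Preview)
Your proposal is correct and follows essentially the same approach as the paper's proof: both combine the equivalence $\Ani(\derD(R)^\heart) \cong \derD(R)_{\geq 0}$, Corollary~\ref{corol_ani_commutes_fun}, Lemma~\ref{gens_mod}, and Lemma~\ref{ani_gen} in the same way. The paper cites \cite[Corollary 7.1.4.15, Theorem 7.1.2.13]{ha} for the identification $\Ani(\derD(R)^\heart) \cong \derD(R)_{\geq 0}$ rather than the earlier example, but this is the same fact.
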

\begin{proof}
By \cite[Corollary 7.1.4.15]{ha} and \cite[Theorem 7.1.2.13]{ha} we have a canonical equivalence $\Ani(\derD(R)^\heart) \cong \derD(R)_{\geq 0}$. By Corollary \ref{corol_ani_commutes_fun}, it thus suffices to show $\Ani(\Fun(\Delta^k, \derD(R)^\heart))$ is compact projectively generated by the mentioned set of generators. This follows by combining Lemma \ref{gens_mod} and Lemma \ref{ani_gen}.
\end{proof}

\begin{lemma}\label{ani_adjoint_symmod}
Let $R$ be a ring. There exists an adjunction 
\[
\begin{tikzcd}
\Fun(\Delta^k, \derD(R)_{\geq 0})
\arrow[r, "\Sym_R", shift left=1.5] &
\Fun(\Delta^k, \CAlg_R^\an)
\arrow[l, "\mathrm{forget}"]
\end{tikzcd}
\]
Moreover, $\mathrm{forget}$ is conservative, $\mathrm{forget}$ preserves sifted colimits, and the canonical map $\pi_0 \circ \mathrm{forget} \to \mathrm{forget} \circ \pi_0$ is an equivalence. 
\end{lemma}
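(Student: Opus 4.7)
The plan is to reduce everything to the case $k = 0$ and then promote the statements to functor categories using the fact that all relevant structure in $\Fun(\Delta^k, -)$ is computed pointwise. First I would construct the adjunction in the case $k = 0$: the classical free--forgetful adjunction between $\Alg_R$ and $\derD(R)^\heart$ (sending a finite free module $R^n$ to the polynomial algebra $R[x_1, \dots, x_n]$) restricts to a functor on compact $1$-projective generators, and animating it produces the adjunction $\Sym_R \dashv \forget$ between $\derD(R)_{\geq 0}$ and $\CAlg_R^\an$ (via Proposition \ref{left_kan}, together with the universal property of animation). Then I would apply Lemma \ref{fun_adjoint} directly to obtain the adjunction on $\Fun(\Delta^k, -)$.

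Next I would verify the three additional claims for $k = 0$. Conservativity of $\forget \colon \CAlg_R^\an \to \derD(R)_{\geq 0}$ holds because $\CAlg_R^\an$ is presented by simplicial commutative $R$-algebras, and a map of simplicial commutative rings is a weak equivalence if and only if the underlying map of simplicial abelian groups is; equivalently, this can be deduced from the fact that $\forget$ detects isomorphisms on all homotopy groups. Preservation of sifted colimits follows from Proposition \ref{left_kan} combined with the observation that on the subcategory $\Poly_R \subseteq \CAlg_R^\an$ the forgetful functor takes values in free modules, so the animated extension preserves sifted colimits (see also \cite[Proposition 3.2.3.1]{ha}). Finally, the map $\pi_0 \circ \forget \to \forget \circ \pi_0$ is an equivalence because both functors compute the underlying discrete abelian group of the ring $\pi_0(A) \in \Alg_R$; on compact projective generators (polynomial algebras) this is immediate, and both functors commute with the sifted colimits used to extend from generators.

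For general $k$, all three properties lift to $\Fun(\Delta^k, -)$ pointwise. Conservativity passes to diagram categories because a morphism in $\Fun(\Delta^k, \mathcal{D})$ is an equivalence if and only if each of its values is. Sifted colimits in $\Fun(\Delta^k, \mathcal{C})$ and $\Fun(\Delta^k, \mathcal{D})$ are computed pointwise by \cite[Proposition 5.1.2.3]{htt}, so $\forget \circ -$ preserves them since $\forget$ does. The compatibility $\pi_0 \circ \forget \simeq \forget \circ \pi_0$ on $\Fun(\Delta^k, \CAlg_R^\an)$ follows from the $k = 0$ case because, in view of Corollary \ref{corol_ani_commutes_fun}, $\pi_0$ on $\Fun(\Delta^k, \Ani(\mathcal{C}))$ is computed pointwise (it is the left adjoint to the pointwise Yoneda embedding, which factors through the pointwise application of $\pi_0$).

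No step poses a serious obstacle; the main thing to be careful about is verifying that $\pi_0$ really is computed pointwise in functor categories, so that the $k = 0$ identity $\pi_0 \circ \forget \simeq \forget \circ \pi_0$ propagates. This is where Corollary \ref{corol_ani_commutes_fun} (and the fact that it identifies the adjoint pair $\pi_0 \dashv j$ on diagram categories with the pointwise application of $\pi_0 \dashv j$ on the target) is used essentially.
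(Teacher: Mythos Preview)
Your proposal is correct and follows essentially the same strategy as the paper: establish the $k=0$ case and then promote to $\Fun(\Delta^k,-)$ pointwise via Lemma~\ref{fun_adjoint} and \cite[Proposition 5.1.2.3]{htt}. The only difference is packaging: where you verify conservativity, sifted-colimit preservation, and $\pi_0$-compatibility for the animated $\Sym_R \dashv \forget$ by hand, the paper simply invokes \cite[Corollary 2.3]{mao}, which takes as input the $1$-categorical adjunction $\Sym_R \dashv \forget$ between $\derD(R)^\heart$ and $\Alg_R$ (together with the observation that $\forget$ is conservative and preserves filtered colimits and geometric realizations) and outputs the animated adjunction with all three properties at once.
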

\begin{proof}
We have an adjunction
\[
\begin{tikzcd}
\derD(R)^\heart
\arrow[r, "\Sym_R", shift left=1.5] &
\Alg_R
\arrow[l, "\mathrm{forget}"]
\end{tikzcd}
\]
Since $\Alg_R$ is cocomplete, the forgetful functor commutes with filtered colimits and geometric realizations, $\derD(R)^\heart$ is $1$-projectively generated (by Lemma \ref{gens_mod}) and $\mathrm{forget}$ is conservative, by \cite[Corollary 2.3]{mao} there exists an adjunction
\[
\begin{tikzcd}
\derD(R)_{\geq 0}
\arrow[r, "\Sym_R", shift left=1.5] &
\CAlg_R^\an
\arrow[l, "\mathrm{forget}"]
\end{tikzcd}
\]
for which $\mathrm{forget}$ is conservative, $\mathrm{forget}$ preserves filtered colimits and geometric realizations, and the canonical map $\pi_0 \circ \mathrm{forget} \to \mathrm{forget} \circ \pi_0$ is an equivalence. By Lemma \ref{fun_adjoint} we obtain the desired adjunction (use \cite[Corollary 5.1.2.3]{htt} to show that the induced map $\mathrm{forget}$ again preserves sifted colimits). 
\end{proof}
\begin{corollary}\label{corol_comp_proj_gen_ani_ring} \label{corol_comp_proj_gen_ani_algr}
Let $R$ be a ring and $k \in \NN$. The $\infty$-category $\Fun(\Delta^k, \CAlg_R^\an)$ is compact projectively generated. A set of compact projective generators is given by $\{\ins_i(R[x]) \mid i \in \{0, \dots, k\}\}$. 
\end{corollary}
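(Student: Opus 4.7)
The proof will be essentially immediate from the machinery already assembled: apply Proposition \ref{prop_generators_from_adjunction} to the adjunction
\[
\begin{tikzcd}
\Fun(\Delta^k, \derD(R)_{\geq 0})
\arrow[r, "\Sym_R", shift left=1.5] &
\Fun(\Delta^k, \CAlg_R^\an)
\arrow[l, "\mathrm{forget}"]
\end{tikzcd}
\]
supplied by Lemma \ref{ani_adjoint_symmod}, taking $n = \infty$. The three hypotheses of that proposition are all immediately verified: $\Fun(\Delta^k, \CAlg_R^\an)$ admits filtered colimits and geometric realizations (since $\CAlg_R^\an$ does, and such colimits in functor categories are computed pointwise via \cite[Corollary 5.1.2.3]{htt}); the forgetful functor preserves these by Lemma \ref{ani_adjoint_symmod}; the source $\Fun(\Delta^k, \derD(R)_{\geq 0})$ is compact projectively generated by $\{\ins_i(R) \mid i \in \{0, \dots, k\}\}$ by Corollary \ref{gens_animod}; and the forgetful functor is conservative by Lemma \ref{ani_adjoint_symmod}.

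Conclusion (4) of Proposition \ref{prop_generators_from_adjunction} then yields that $\Fun(\Delta^k, \CAlg_R^\an)$ is compact projectively generated, with the set $\{\Sym_R(\ins_i(R)) \mid i \in \{0, \dots, k\}\}$ serving as a set of compact projective generators. The only remaining task is to identify these images with the claimed generators $\ins_i(R[x])$. Since the adjunction in Lemma \ref{ani_adjoint_symmod} is obtained by applying Lemma \ref{fun_adjoint} to the pointwise $\Sym_R \dashv \mathrm{forget}$, the left adjoint $\Sym_R$ on functor categories acts pointwise. Because $\Sym_R$ itself is a left adjoint, it preserves initial objects, so $\Sym_R(0) = R$ (the initial object of $\CAlg_R^\an$), and of course $\Sym_R(R) = R[x]$. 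Hence $\Sym_R(\ins_i(R))$ is the diagram
\[
\underbrace{R \to \cdots \to R}_{i \text{ times}} \to \underbrace{R[x] \to \cdots \to R[x]}_{(k - i + 1) \text{ times}},
\]
which is precisely $\ins_i(R[x])$ when $\ins_i$ is interpreted in $\Fun(\Delta^k, \CAlg_R^\an)$ using its initial object $R$ as left padding.

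There is no real obstacle: the proof is an entirely formal transfer across a left adjoint along the lines of the earlier Corollary \ref{gens_animod}, and the only minor subtlety worth flagging explicitly is the shift in meaning of $\ins_i$ between the module category (where the padding is the zero object $0$) and the algebra category (where the padding is the initial object $R$); preservation of the initial object by $\Sym_R$ is what reconciles the two.
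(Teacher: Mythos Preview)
Your proof is correct and follows essentially the same approach as the paper: both apply Proposition \ref{prop_generators_from_adjunction} to the adjunction from Lemma \ref{ani_adjoint_symmod}, verifying its hypotheses via Corollary \ref{gens_animod} and Lemma \ref{ani_adjoint_symmod}. You are slightly more explicit than the paper in identifying $\Sym_R(\ins_i(R))$ with $\ins_i(R[x])$ and in flagging the shift in meaning of $\ins_i$ (padding by $0$ versus by the initial object $R$), which is a helpful clarification the paper leaves implicit.
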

\begin{proof}
We verify the conditions of Proposition \ref{prop_generators_from_adjunction} for the adjunction given by Lemma \ref{ani_adjoint_symmod} Note that $\CAlg_R^\an$ is cocomplete by definition, hence $\Fun(\Delta^k, \CAlg_R^\an)$ is cocomplete by \cite[Corollary 5.1.2.3]{htt}. Moreover, $\mathrm{forget}$ preserves filtered colimits and geometric realizations by Lemma \ref{ani_adjoint_symmod}, so condition 1 holds. By Corollary \ref{gens_animod} condition 2 holds, and by Lemma \ref{ani_adjoint_symmod} condition 3 holds. Thus all conditions are satisfied and we may conclude by applying Proposition \ref{prop_generators_from_adjunction}.
\end{proof}
\begin{definition}\label{def_anipair}
Let $R$ be a ring and $k \in \NN$.  Let $\mathcal{C} \in \{\derD(R)^\heart, \Alg_R\}$. Define $\Fun(\Delta^k, \mathcal{C})_\surj \subseteq \Fun(\Delta^k, \mathcal{C})$ to be the full subcategory of all objects 
\[
(X_0 \to \dots \to X_k) \in \Fun(\Delta^k, \mathcal{C})
\]
for which the composition $X_0 \to X_i$ is a surjective map in $\mathcal{C}$ for all $i \in \{0, \dots, k\}$.  We set 
\[
\Fun(\Delta^k, \Ani(\mathcal{C}))_\surj := \Fun(\Delta^k, \Ani(\mathcal{C})) \times_{\Fun(\Delta^k, \mathcal{C})} \Fun(\Delta^k, \mathcal{C})_\surj
\]
Following \cite{mao}, we write $\AniPair_R := \Fun(\Delta^1, \Ani(\Alg_R))_\surj$. 
\end{definition}
Thus, an object of $\Fun(\Delta^k, \Ani(\mathcal{C}))_\surj$ is specified by a diagram
\[
X_0 \to X_1 \to \dots \to X_k
\]
of objects in $\Ani(\mathcal{C})$ such that $\pi_0(X_0) \to \pi_0(X_i)$ is surjective for all $i$. In particular, an element of $\AniPair_R$ is a morphism of animated rings $A \to B$ such that $\pi_0(A) \to \pi_0(B)$ is a surjective ring map.

We now want to find a set of compact projective generators for the $\infty$-category $\Fun(\Delta^k, \CAlg_R^\an)_\surj$. 
\begin{definition}
Let $\mathcal{C}$ be an $n$-category, and let $k \in \NN$ and $i \in \{0, \dots, k\}$. Define the functor 
\begin{align*}
\coins_i \colon \Fun(\Delta^1, \mathcal{C}) &\to \Fun(\Delta^k, \mathcal{C}) \\
(X \to Y) &\mapsto \(\underbrace{X \to \dots \to X}_{i + 1 \text{ times}} \to  \underbrace{Y \to \dots \to Y}_{(k - i - 1) \text{ times}}\)
\end{align*}
\end{definition}
\begin{lemma}\label{lem_comp_proj_gen_ab_surj}
Let $R$ be a ring and $k \in \NN$. The $\infty$-category $\Fun(\Delta^k, \derD(R)_{\geq 0})_\surj$ is compact projectively generated. Moreover, a set of compact projective generators is given by 
\begin{align*}
\{\coins_i(R \to 0) \mid i \in \{0, \dots, k\} \} 
\end{align*}
\end{lemma}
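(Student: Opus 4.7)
My strategy is to apply Proposition~\ref{prop_generators_from_adjunction} to a forgetful-type functor
\[
G \colon \mathcal{E} := \Fun(\Delta^k, \derD(R)_{\geq 0})_\surj \to (\derD(R)_{\geq 0})^{k+1},
\]
where the target is compact projectively generated by the standard ``basis'' objects $e_i := (0,\ldots,0,R,0,\ldots,0)$ with $R$ in position $i$. I would first verify that $\mathcal{E}$ is cocomplete by noting that $\pi_0 \colon \derD(R)_{\geq 0} \to \derD(R)^\heart$ is left adjoint to the inclusion and so preserves colimits, while surjections in $\derD(R)^\heart$ are closed under colimits; hence the surjectivity condition defining $\mathcal{E}$ is preserved by colimits computed in $\Fun(\Delta^k, \derD(R)_{\geq 0})$.

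I would define
\[
G(Z_0 \to \cdots \to Z_k) := (Z_0,\ \fib(Z_0 \to Z_1),\ \ldots,\ \fib(Z_0 \to Z_k)),
\]
with each fiber connective because $\pi_0(Z_0) \twoheadrightarrow \pi_0(Z_j)$ is surjective. Conservativity is immediate: if $G(Z_\bullet) \simeq 0$ then $Z_0 \simeq 0$ and $\fib(0 \to Z_j) \simeq Z_j[-1] \simeq 0$, forcing $Z_j \simeq 0$. For preservation of filtered colimits and geometric realizations, evaluation at $0$ preserves all colimits, and for surjective maps in $\derD(R)_{\geq 0}$ the fiber coincides with the fiber in the stable $\infty$-category $\derD(R)$, which commutes with all colimits via its identification with a shifted cofiber.

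Next I would construct an explicit left adjoint $F$ by
\[
F(M_0, M_1, \dots, M_k) := (Y_0 \to Y_1 \to \cdots \to Y_k), \qquad Y_j := M_0 \oplus \bigoplus_{i > j} M_i,
\]
where each transition $Y_{j-1} \to Y_j$ is the projection killing the $M_j$-summand, and is therefore surjective on $\pi_0$. The universal property is verified by decomposing a morphism $F(M_\bullet) \to Z_\bullet$ into its components $M_i \to Z_j$ indexed by the summands: successive compatibility with the transition maps forces every such component to be determined by $\phi_0^{(i)} \colon M_i \to Z_0$ (via composition with $Z_0 \to Z_j$), while the fact that $M_i$ is killed in $Y_{i-1} \to Y_i$ produces a null-homotopy of $M_i \to Z_0 \to Z_i$, identifying
\[
\Map(F(M_\bullet), Z_\bullet) \simeq \Map(M_0, Z_0) \times \prod_{i \geq 1} \Map(M_i, \fib(Z_0 \to Z_i)) = \prod_{i=0}^k \Map(M_i, G(Z_\bullet)_i).
\]

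Proposition~\ref{prop_generators_from_adjunction} then gives that $\mathcal{E}$ is compact projectively generated by $\{F(e_i)\}_{i=0}^k$, and a direct computation yields $F(e_0) = (R \to R \to \cdots \to R) = \coins_k(R \to 0)$ and, for $1 \leq i \leq k$, $F(e_i) = (R \to \cdots \to R \to 0 \to \cdots \to 0) = \coins_{i-1}(R \to 0)$ (with $i$ copies of $R$ followed by zeros), recovering the claimed generating set. The main technical obstacle is the $\infty$-categorical verification of the universal property of $F$: the component-wise analysis above really takes place at the level of $1$-cells and must be upgraded to a genuine mapping-space computation in $\Fun(\Delta^k, \derD(R)_{\geq 0})$, involving higher coherences. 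A cleaner route that sidesteps this bookkeeping is to check the universal property only on the generators $e_i$, where $\Map(\coins_{i-1}(R \to 0), Z_\bullet)$ can be identified with $\Map(R, \fib(Z_0 \to Z_i))$ by an elementary limit argument, and then to extend to general $(M_0, \dots, M_k)$ using that both sides commute with colimits.
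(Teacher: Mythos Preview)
Your argument is correct, but the paper's proof is considerably slicker. Rather than applying Proposition~\ref{prop_generators_from_adjunction} to a conservative right adjoint, the paper observes that your functor $G$ can be upgraded to an \emph{equivalence} of $\infty$-categories
\[
\cofib_k \colon \Fun(\Delta^k, \derD(R)_{\geq 0})_\surj \xrightarrow{\ \sim\ } \Fun(\Delta^k, \derD(R)_{\geq 0}),
\]
sending $(Y_0 \to \cdots \to Y_k)$ to the diagram $(\fib(Y_0 \to Y_1) \to \cdots \to \fib(Y_0 \to Y_k) \to Y_0)$, with explicit inverse $\fib_k$ sending $(X_0 \to \cdots \to X_k)$ to $(X_k \to X_k/X_0 \to \cdots \to X_k/X_{k-1})$. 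One then simply transports the generators $\ins_i(R)$ already produced in Corollary~\ref{gens_animod} through $\fib_k$ and checks $\fib_k(\ins_i(R)) = \coins_i(R \to 0)$. This bypasses entirely the adjoint-functor hypotheses and the $\infty$-categorical coherence issue you flag: once one knows the two formulas are mutually inverse on objects and morphisms (a routine stable-category check), the equivalence is established and nothing further needs verification.

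Your route recovers the same answer because your $G$ is essentially $\cofib_k$ composed with the forgetful functor to $\Fun(\sk_0(\Delta^k), \derD(R)_{\geq 0}) \simeq (\derD(R)_{\geq 0})^{k+1}$; you are thus redoing part of the proof of Lemma~\ref{lem_fun_proj_gen} simultaneously. The advantage of your approach is that it never requires naming the maps in the target diagram, which makes the functoriality of $G$ transparent; the cost is the adjunction verification, which the paper avoids completely.
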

\begin{proof}
One may construct an equivalence of categories
\[
\begin{tikzcd}[row sep=tiny]
\Fun(\Delta^k, \derD(R)_{\geq 0})
\arrow[r, "\fib_k", shift left=1.5] &
\Fun(\Delta^k, \derD(R)_{\geq 0})_\surj
\arrow[l, "\mathrm{cofib}_k"]
\end{tikzcd}
\]
sending
\begin{align*}
\(X_0 \to X_1 \to \dots \to X_k\) \mapsto \(X_k \to (X_k / X_0) \to \dots (X_k / X_{k - 1})\) \\
\(\fib(Y_0 \to Y_1) \to \dots \to \fib(Y_0 \to Y_k)  \to Y_0\) \mapsfrom \(Y_0 \to \dots \to Y_k\)
\end{align*}
It follows that $\{\fib_k(\ins_i(0 \to R)) \mid i \in \{0, \dots, k\}\}$ is a set of compact projective generators for the category $\Fun(\Delta^k, \derD(R)_{\geq 0})_\surj$. The result follows as $\fib_k(\ins_i(0 \to R)) = \coins_i(R \to 0)$.  
\end{proof}
\begin{corollary}\label{corol_gen_surj}
Let $R$ be a ring, $k \in \NN$. The $\infty$-category $\Fun(\Delta^k, \CAlg_R^\an)_\surj$ is compact projectively generated. Moreover, a set of compact projective generators is given by 
\begin{align*}
\{\coins_i(R[x] \to R) \mid i \in \{0, \dots, k\} \} 
\end{align*}
\end{corollary}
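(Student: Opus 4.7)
The plan is to bootstrap from Lemma \ref{lem_comp_proj_gen_ab_surj} by restricting the adjunction $\Sym_R \dashv \mathrm{forget}$ of Lemma \ref{ani_adjoint_symmod} to the surj subcategories and applying Proposition \ref{prop_generators_from_adjunction}, just as Corollary \ref{corol_comp_proj_gen_ani_algr} was deduced from Corollary \ref{gens_animod}.

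First I would verify that the adjunction of Lemma \ref{ani_adjoint_symmod} restricts to an adjunction
\[
\begin{tikzcd}
\Fun(\Delta^k, \derD(R)_{\geq 0})_\surj
\arrow[r, "\Sym_R", shift left=1.5] &
\Fun(\Delta^k, \CAlg_R^\an)_\surj \rlap{\,.}
\arrow[l, "\mathrm{forget}"]
\end{tikzcd}
\]
For $\mathrm{forget}$, this is immediate from the equivalence $\pi_0 \circ \mathrm{forget} \simeq \mathrm{forget} \circ \pi_0$, which shows that the surjectivity condition is the same on both sides. For $\Sym_R$, the universal property yields a natural isomorphism $\pi_0(\Sym_R(M)) \cong \Sym_{\pi_0 R}(\pi_0 M)$, and the classical symmetric algebra functor preserves surjections of discrete modules, so $\Sym_R$ sends surj diagrams to surj diagrams.

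Next I would check the hypotheses of Proposition \ref{prop_generators_from_adjunction} for this restricted adjunction. Condition (3), that the restricted $\mathrm{forget}$ is conservative, is inherited directly from Lemma \ref{ani_adjoint_symmod}. For condition (1), one observes that $\Fun(\Delta^k, \CAlg_R^\an)_\surj$ is closed under sifted colimits inside $\Fun(\Delta^k, \CAlg_R^\an)$: surjectivity is detected on $\pi_0$, and $\pi_0$ commutes with sifted colimits while surjections in $\derD(R)^\heart$ are stable under filtered colimits and coequalizers of simplicial objects. Consequently $\Fun(\Delta^k, \CAlg_R^\an)_\surj$ admits filtered colimits and geometric realizations and the restricted forgetful functor preserves them (since the unrestricted one does by Lemma \ref{ani_adjoint_symmod}). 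Finally, condition (2) is exactly Lemma \ref{lem_comp_proj_gen_ab_surj}.

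Applying Proposition \ref{prop_generators_from_adjunction} now shows that $\Fun(\Delta^k, \CAlg_R^\an)_\surj$ is compact projectively generated, and that the image under $\Sym_R$ of the generators $\{\coins_i(R \to 0) \mid i \in \{0, \dots, k\}\}$ provided by Lemma \ref{lem_comp_proj_gen_ab_surj} is a set of compact projective generators. Since $\Sym_R$ is applied pointwise and $\Sym_R(R) \simeq R[x]$, $\Sym_R(0) \simeq R$, one computes
\[
\Sym_R(\coins_i(R \to 0)) \simeq \coins_i(R[x] \to R),
\]
which yields the claimed set of generators. The only nonroutine point is the bookkeeping around restricting the adjunction and the closure under sifted colimits; once those are in place, the rest is formal.
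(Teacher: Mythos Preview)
Your proposal is correct and follows essentially the same approach as the paper: restrict the $\Sym_R \dashv \mathrm{forget}$ adjunction to the surj subcategories, verify the hypotheses of Proposition \ref{prop_generators_from_adjunction} using Lemma \ref{lem_comp_proj_gen_ab_surj}, and then identify $\Sym_R(\coins_i(R\to 0))$ with $\coins_i(R[x]\to R)$. The paper is terser (it asserts cocompleteness of the surj subcategory directly from $\pi_0$ preserving colimits, whereas you check only closure under sifted colimits, which is all that is needed), but the argument is the same.
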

\begin{proof}
Restricting the adjunction in Lemma \ref{ani_adjoint_symmod}, we obtain an adjunction
\[
\begin{tikzcd}
\Fun(\Delta^k, \derD(R)_{\geq 0})_\surj
\arrow[r, "\Sym_R", shift left=1.5] &
\Fun(\Delta^k, \CAlg_R^\an)_\surj
\arrow[l, "\mathrm{forget}"]
\end{tikzcd}
\]
where $\mathrm{forget}$ is conservative and preserves filtered colimits and geometric realizations. Moreover, the subcategory $\Fun(\Delta^k, \CAlg_R^\an)_\surj \subseteq \Fun(\Delta^k, \CAlg_R^\an)$ is closed under colimits (since $\pi_0$ preserves colimits), hence $\Fun(\Delta^k, \CAlg_R^\an)_\surj$ is cocomplete. Finally  $\Fun(\Delta^k, \derD(R)_{\geq 0})_\surj$ is compact projectively generated by Lemma \ref{lem_comp_proj_gen_ab_surj}. The result follows by applying Proposition \ref{prop_generators_from_adjunction}. 
\end{proof}
\begin{definition}\label{def_gen_surj}
For $R$ be a ring, we let $\Fun(\Delta^k, \Poly_R)_{\surj, \gen}$ be spanned by coproducts of objects in the set 
\[
\{\coins_i(R[x] \to R) \mid i \in \{0, \dots, k\}\}
\]
as a full subcategory of the $1$-category $\Fun(\Delta^k, \Alg_R)_\surj$.
\end{definition}
\begin{lemma}\label{forget_colim}
Let $R$ be a ring. The natural map
\[
\Fun(\Delta^k, \CAlg_R^\an)_\surj \to \Fun(\Delta^k, \CAlg_R^\an)
\]
commutes with colimits. 
\end{lemma}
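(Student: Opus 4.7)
The plan is to reduce to showing that the full subcategory $\Fun(\Delta^k, \CAlg_R^\an)_\surj$ is closed under colimits computed in the ambient category $\Fun(\Delta^k, \CAlg_R^\an)$; the assertion that the forgetful functor preserves colimits is then automatic.

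First, I would invoke \cite[Corollary 5.1.2.3]{htt} to reduce to the pointwise situation: colimits in $\Fun(\Delta^k, \CAlg_R^\an)$ are computed vertex-by-vertex, so for any diagram $D \colon I \to \Fun(\Delta^k, \CAlg_R^\an)_\surj$ with pointwise colimit $L$, checking that $L$ lies in the surjective subcategory amounts to checking that each map $\pi_0(L(0)) \to \pi_0(L(j))$ is surjective for $j \in \{0, \dots, k\}$.

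Next, I would use that $\pi_0 \colon \CAlg_R^\an \to \Alg_R$ is a left adjoint to the inclusion of discrete commutative rings, so it commutes with all small colimits. Each of the maps $\pi_0(L(0)) \to \pi_0(L(j))$ is thus the colimit, computed in the $1$-category $\Alg_R$, of the surjective ring maps $\pi_0(D_i(0)) \to \pi_0(D_i(j))$ indexed by $i \in I$.

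The only nontrivial point (and the main obstacle) is therefore the purely $1$-categorical statement that colimits of surjective maps in $\Alg_R$ are surjective. I would handle this by decomposing an arbitrary colimit as a coequalizer of coproducts: coproducts in $\Alg_R$ are relative tensor products, which preserve surjections by right-exactness of the tensor product of modules, while coequalizers of surjections are manifestly surjective. Combining these observations shows that $L$ lies in $\Fun(\Delta^k, \CAlg_R^\an)_\surj$, which finishes the argument.
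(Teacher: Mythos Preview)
Your proof is correct and follows essentially the same outline as the paper's: reduce to closure of the full subcategory under colimits, use that $\pi_0$ preserves colimits to pass to the $1$-category $\Alg_R$, and then handle coproducts via right-exactness of the tensor product. The only difference is in the remaining piece: the paper treats sifted colimits by passing through the forgetful functor to $\derD(R)^\heart$ and arguing with cofibers, whereas you handle coequalizers directly by observing they are quotients in $\Alg_R$---your route is slightly more elementary here.
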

\begin{proof}
Since the left hand side is a full subcategory of the right hand side, it suffices to show it is closed under colimits. Since the functor $$\pi_0 \colon \Fun(\Delta^k, \CAlg_R^\an) \to \Fun(\Delta^k, \Alg_R)$$ preserves colimits, it suffices to show $$\Fun(\Delta^k, \Alg_R)_{\surj} \subseteq \Fun(\Delta^k, \Alg_R)$$ is closed under colimits. 

To see that it is closed under sifted colimits, note that the natural map $\Alg_R \to \derD(R)^\heart$ commutes with sifted colimits, hence it suffices to show that $$\Fun(\Delta^k, \derD(R)^\heart)_{\surj} \subseteq \Fun(\Delta^k, \derD(R)^\heart)$$ is closed under sifted colimits. This follows since it is closed under all colimits, as the cofiber functor is a colimit, and hence commutes with colimits. 

Thus remains to show $\Fun(\Delta^k, \Alg_R)_{\surj} \subseteq \Fun(\Delta^k, \Alg_R)$ is closed under coproducts. To this end, we need to show that if $A' \to A$ and $B' \to B$ are surjective maps, then the map $A' \otimes_R B' \to A \otimes_R B$ is surjective. This follows since the tensor product is right exact by \stacksref{00DF}.
\end{proof}

\subsection{Homological algebra in stable $\infty$-categories}

\begin{construction}
Let $\mathcal{C}$ be a stable $\infty$-category. Let $A, B \in \mathcal{C}$. Then the fiber functor \cite[Definition 1.1.1.6, Remark 1.1.1.7]{ha} 
\[
\fib \colon \Fun(\Delta^1, \mathcal{C}) \to \mathcal{C} 
\]
sends 
\[
(A \xrightarrow{0} B[1]) \mapsto A \oplus B
\]
We thus get a map
\[
\fib \colon \pi_1(\Map_\mathcal{C}(A, B[1]), 0) \to \pi_0 \Map_{\mathcal{C}}(A, B)
\]
We will denote with $\theta_{A, B}$ the composition
\[
\pi_1(\Map_\mathcal{C}(A, B[1]), 0) \xrightarrow{\fib} \pi_0 \Map_{\mathcal{C}}(A \oplus B, A \oplus B) \xrightarrow{\pi_B \circ - \circ \iota_A} \pi_0 \Map_{\mathcal{C}}(A \oplus B)
\]
where $\iota_A \colon A \to A \oplus B$ and $\pi_B \colon A \oplus B \to B$ are the canonical inclusion and projection maps.
\begin{lemma}\label{bce}
Let $\mathcal{C}$ be a stable $\infty$-category, and let $A, B \in \mathcal{C}$. Then the map
\[
\pi_1(\Map_\mathcal{C}(A, B[1]), 0) \to \pi_0 \Map_{\mathcal{C}}(A \oplus B)
\]
induced by the equivalences
\[
\Omega \Map_{\mathcal{C}}(A, B[1]), 0)  \simeq  \Map_{\mathcal{C}}(A, \Omega B[1])) \simeq \Map_{\mathcal{C}}(A \oplus B)
\]
agrees with the map $\theta_{A, B}$. 
\end{lemma}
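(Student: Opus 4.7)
The plan is to unwind both maps on a generic loop $\gamma \in \pi_1(\Map_\mathcal{C}(A, B[1]), 0)$ and verify they produce the same class in $\pi_0 \Map_\mathcal{C}(A, B)$. First, I would interpret $\gamma$ as a self-equivalence $\tilde\gamma$ of the object $(A \xrightarrow{0} B[1]) \in \Fun(\Delta^1, \mathcal{C})$ whose source- and target-components are $\id_A$ and $\id_{B[1]}$, with $\gamma$ the witnessing 2-cell between $0 \circ \id_A$ and $\id_{B[1]} \circ 0$. Applying the fiber functor then produces an automorphism $\Phi(\gamma)$ of $\fib(0) \simeq A \oplus B$, and naturality of the fiber construction yields a commutative diagram
\[
\begin{tikzcd}
B \rar{\iota_B} \dar{\id_B} & A \oplus B \rar{\pi_A} \dar{\Phi(\gamma)} & A \dar{\id_A} \\
B \rar{\iota_B} & A \oplus B \rar{\pi_A} & A
\end{tikzcd}
\]
in $\mathcal{C}$. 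The outer vertical identities, combined with the semiadditive structure $\id_{A \oplus B} = \iota_A \pi_A + \iota_B \pi_B$, force the matrix of $\Phi(\gamma)$ to take the form $\begin{pmatrix} \id_A & 0 \\ f(\gamma) & \id_B \end{pmatrix}$, so that $\theta_{A, B}(\gamma) = \pi_B \Phi(\gamma) \iota_A = f(\gamma)$.

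Next, I would identify $f(\gamma)$ with the image of $\gamma$ under the shift equivalence $\pi_1 \Map_\mathcal{C}(A, B[1]) \simeq \pi_0 \Map_\mathcal{C}(A, B)$. Since the fiber functor is exact, it preserves loop spaces; the loop $\gamma$ at $0$, regarded as an element of $\pi_0 \Map_\mathcal{C}(A, \Omega(B[1])) \simeq \pi_0 \Map_\mathcal{C}(A, B)$ via the standard loop space adjunction, corresponds under $\fib$ to the canonical splitting of the fiber sequence for the zero map, and unwinding the equivalences recovers $f(\gamma)$.

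The main obstacle will be making the identification of the shift in the second step fully precise, rather than leaving it as a string of unverified canonical equivalences. A clean way around this is to exploit naturality in $\mathcal{C}$: the entire statement is natural with respect to exact functors of stable $\infty$-categories, so by embedding $\mathcal{C}$ into the presentable stable $\infty$-category $\mathrm{Ind}(\mathcal{C})$ and applying the enriched Yoneda lemma, the verification reduces to the universal case $\mathcal{C} = \mathrm{Sp}$ with $A = B = \mathbb{S}$, where both maps are simply the identity on $\pi_1(\mathbb{S}[1]) \simeq \pi_0(\mathbb{S}) \simeq \ZZ$.
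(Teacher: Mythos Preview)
Your first paragraph is essentially the same opening move as the paper's proof: a loop at $0$ in $\Map_\mathcal{C}(A,B[1])$ is the same data as a self-map of $x=(A\xrightarrow{0}B[1])$ in $\mathcal{D}=\Fun(\Delta^1,\mathcal{C})$ restricting to the identity on endpoints, and applying $\fib$ gives an automorphism of $A\oplus B$ whose $(B,A)$-entry is $\theta_{A,B}(\gamma)$. The difficulty, as you recognise, is the second step, and your proposed workaround has a genuine gap.

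The reduction to $\mathcal{C}=\mathrm{Sp}$ and $A=B=\mathbb{S}$ is not justified. Naturality in $A$, $B$ and in exact functors tells you that the two maps differ by a natural automorphism of the bifunctor $(A,B)\mapsto \pi_0\Map_\mathcal{C}(A,B)$, but such automorphisms are governed by the centre of $\mathcal{C}$, not by a single object. Even in $\mathrm{Sp}$ this centre is $\pi_0\mathbb{S}^\times=\{\pm 1\}$, so a global sign is not excluded by naturality alone. Your final sentence then simply asserts that both maps are $+1$ on $\mathbb{Z}$; but computing $\theta_{\mathbb{S},\mathbb{S}}$ concretely requires exactly the unwinding of the fiber construction you were trying to avoid, so the reduction buys nothing.

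The paper closes this gap without any reduction, by factoring \emph{both} maps through the same intermediate. One introduces the objects $a=(A\to 0)$ and $b=(0\to B[1])$ in $\mathcal{D}$, together with the evident maps $f\colon a\to x$ and $g\colon x\to b$. Pre- and post-composition gives $\Map_\mathcal{D}(x,x)\to\Map_\mathcal{D}(a,b)$, and the universal property of the pullback square defining $B\simeq\Omega(B[1])$ identifies $\Map_\mathcal{D}(a,b)\simeq\Map_\mathcal{C}(A,B)$; one checks that this composite $\pi_1\Map_\mathcal{C}(A,B[1])\to\Map_\mathcal{D}(x,x)\to\Map_\mathcal{C}(A,B)$ is the shift equivalence. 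On the other hand, $\fib$ sends $f\mapsto\iota_A$ and $g\mapsto\pi_B$, so the same composite computed after applying $\fib$ is literally $\pi_B\circ(-)\circ\iota_A$ applied to $\fib(\gamma)$, i.e.\ $\theta_{A,B}$. This pins down the sign directly and uniformly, with no case analysis.
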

\begin{proof}
Denote with $\mathcal{D}$ the $\infty$-category $\Fun(\Delta^1, \mathcal{C})$. Let $x \in \mathcal{D}$ be the element corresponding to the map $0 \colon A \to B[1]$ in $\mathcal{C}$. The natural morphism of simplicial sets $\Delta^1 \to S^1$ induces a canonical map 
\[
s \colon \Omega \Map_{\mathcal{C}}(A, B[1]) \to \Map_{\mathcal{D}}(x, x)
\]
 Let $a \in \mathcal{D}$ be the element $(A \to 0)$, and denote with $b \in \mathcal{D}$ the element $(0 \to B[1])$. Denote with $f \in \Map_{\mathcal{D}}(a, x)$ the element corresponding to the commutative square
\[
\begin{tikzcd}
A \dar{\id} \rar &  0 \dar \\
A \rar{0} & B[1]
\end{tikzcd}
\]
and with $g \in \Map_{\mathcal{D}}(x, b)$ the element corresponding to the commutative square
\[
\begin{tikzcd}
A \dar \rar{0} & B[1] \dar{\id} \\
0 \rar& B[1]
\end{tikzcd}
\]
We then get a natural map $\Map_{\mathcal{D}}(x, x) \to \Map_{\mathcal{D}}(a, b)$, given by precomposing with $f$ and postcomposing with $g$. The universal property of the pullback square
\[
\begin{tikzcd}
B \dar \rar & 0 \dar \\
0 \rar& B[1]
\end{tikzcd}
\]
in $\mathcal{C}$ induces a canonical equivalence $\Map_{\mathcal{D}}(a, b) \simeq \Map_{\mathcal{C}}(A, B)$. It is not too hard to see that the composition
\[
\pi_1 \Map_{\mathcal{C}}(A, B[1]) \xrightarrow{s} \pi_0 \Map_{\mathcal{D}}(x, x) \xrightarrow{g \circ - \circ f} \pi_0  \Map_{\mathcal{D}}(a, b) \simeq \Map_{\mathcal{C}}(A, B)
\]
is equal to the composition 
\[
\pi_1 \Map_{\mathcal{C}}(A, B[1]), 0)  \simeq \pi_0  \Map_{\mathcal{C}}(A, \Omega B[1])) \simeq \pi_0  \Map_{\mathcal{C}}(A, B)
\]
Since the fiber functor $\fib \colon \mathcal{D} \to \mathcal{C}$ sends $f \mapsto \iota_A$ and $g \mapsto \pi_B$, the result follows.
\end{proof}
\end{construction}
\subsection{Homological algebra in symmetric monoidal stable $\infty$-categories}
In this section we record some results on tensor products of fiber sequences in stable $\infty$-categories. These results are well-known in the triangulated setting, see for example \cite{traces}.
\begin{lemma}\label{lem_fib_stable_infty}
Let $\mathcal{C}^\otimes$ be a symmetric monoidal stable $\infty$-category for which the tensor product preserves finite limits in each variable. Let  
\begin{align*}
A_1 \to A_2 \to A_3 \\
B_1 \to B_2 \to B_3
\end{align*}
be fiber sequences in $\mathcal{C}$. Write $E_{ij} := A_i \otimes B_j$. Then the canonical map
\[
E_{11} \to \fib(E_{22} \to (E_{23} \times_{E_{33}} E_{32}))
\]
is an equivalence. 
\end{lemma}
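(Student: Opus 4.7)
The plan is to reduce the fiber in question to an iterated fiber of a $2\times 2$ square, and then use exactness of $-\otimes A_i$ and $B_j\otimes -$ to compute each iteration.

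First I would set up the commutative square
\[
\begin{tikzcd}
E_{22} \rar \dar & E_{23} \dar \\
E_{32} \rar & E_{33}
\end{tikzcd}
\]
and identify the total fiber $\mathrm{Tfib}:=\fib(E_{22}\to E_{23}\times_{E_{33}}E_{32})$ as the object to be computed. The key input is the following standard fact about stable $\infty$-categories: for any commutative square as above, there is an equivalence
\[
\mathrm{Tfib}\simeq \fib\!\bigl(\fib(E_{22}\to E_{23})\to \fib(E_{32}\to E_{33})\bigr).
\]
One proves this by applying the ``fiber of a composition'' fiber sequence $\fib(p)\to \fib(q\circ p)\to \fib(q)$ to the factorization $E_{22}\to E_{23}\times_{E_{33}}E_{32}\to E_{23}$: the middle term has fiber $\fib(E_{32}\to E_{33})$ over $E_{23}$, so the resulting fiber sequence reads $\mathrm{Tfib}\to \fib(E_{22}\to E_{23})\to \fib(E_{32}\to E_{33})$.

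Next I would carry out the computation. Since $B_1\to B_2\to B_3$ is a fiber sequence and $A_i\otimes(-)$ preserves finite limits, the sequence $E_{i1}\to E_{i2}\to E_{i3}$ is a fiber sequence for $i=2,3$. Therefore $\fib(E_{22}\to E_{23})\simeq E_{21}$ and $\fib(E_{32}\to E_{33})\simeq E_{31}$, and the induced map between them is obtained by tensoring $A_2\to A_3$ with $B_1$. Applying exactness of $(-)\otimes B_1$ to the fiber sequence $A_1\to A_2\to A_3$ gives $\fib(E_{21}\to E_{31})\simeq E_{11}$. Composing, $\mathrm{Tfib}\simeq E_{11}$.

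The only subtlety, and the main obstacle, is to check that the equivalence $E_{11}\xrightarrow{\sim}\mathrm{Tfib}$ produced by this chain of identifications coincides with the \emph{canonical} map $E_{11}\to \mathrm{Tfib}$ induced by the natural maps $E_{11}\to E_{22}$, $E_{11}\to E_{23}$, $E_{11}\to E_{32}$ (together with the obvious coherences). For this I would observe that all the fiber sequences and pullback squares in the argument sit inside the (componentwise exact) $3\times 3$ grid $(E_{ij})$ obtained by tensoring the two chosen pullback squares
\[
\begin{tikzcd}
A_1 \rar \dar & A_2 \dar \\ 0 \rar & A_3
\end{tikzcd}
\qquad
\begin{tikzcd}
B_1 \rar \dar & B_2 \dar \\ 0 \rar & B_3
\end{tikzcd}
\]
and that both the iterated fiber description and the direct map $E_{11}\to \mathrm{Tfib}$ are constructed by universal properties applied to this same grid; hence they agree. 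Once this naturality is in hand, the proof is complete.
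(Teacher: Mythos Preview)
Your proof is correct and follows essentially the same approach as the paper: both compute the total fiber of the square $(E_{22},E_{23},E_{32},E_{33})$ by iterating fibers, using that $A_i\otimes(-)$ and $(-)\otimes B_j$ preserve fiber sequences. The paper phrases this more tersely as ``limits commute with limits'' applied to a small diagram, whereas you spell out the iterated-fiber identification via the fiber-of-a-composition sequence and add an explicit naturality check; the content is the same.
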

\begin{proof}
Consider the diagram
\[
\begin{tikzcd}
\dar E_{22} \rar& E_{23} \dar \\
E_{32} \rar& E_{33} \\
\uar E_{32} \rar& E_{32} \uar
\end{tikzcd}
\]
Since limits commute with limits, we may identify the fiber of the vertical pullbacks with the vertical pullback of the fibers of the horzontal arrows. As the first is equal to $E_{11}$ and the second to $\fib(E_22 \to E_{23} \times_{E_{33}} E_{32})$, the result follows.
\end{proof}
Similarly, we have the dual statement.
\begin{lemma}\label{lem_fib_stable_infty_dual}
Let $\mathcal{C}^\otimes$ be a symmetric monoidal stable $\infty$-category, and let 
\begin{align*}
A_1 \to A_2 \to A_3 \\
B_1 \to B_2 \to B_3
\end{align*}
be fiber sequences in $\mathcal{C}$. Write $E_{ij} := A_i \otimes B_j$. Then the canonical map
\[
\cofib(E_{12} \cup_{E_{11}} E_{21} \to E_{22}) \to E_{11}
\]
is an equivalence. 
\end{lemma}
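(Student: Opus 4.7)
The plan is to dualize the argument of Lemma~\ref{lem_fib_stable_infty}. First I would note that, although no exactness hypothesis is stated, the argument requires $-\otimes B_j$ to preserve finite colimits in each variable; in the stable setting this is equivalent to the hypothesis of Lemma~\ref{lem_fib_stable_infty}, since for a functor between stable $\infty$-categories preserving the zero object, preservation of finite limits, finite colimits, and exactness all coincide. In particular, each row and column of the implicit $3 \times 3$ grid of objects $E_{ij}$ (for $i, j \in \{1,2,3\}$) is a fiber/cofiber sequence.

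The core step is a short cofiber calculation. I would consider the square
\[
\begin{tikzcd}
E_{11} \rar \dar & E_{12} \dar \\
E_{21} \rar & E_{22}
\end{tikzcd}
\]
and observe that, by the universal property of the pushout $E_{12} \cup_{E_{11}} E_{21}$, the object $\cofib(E_{12} \cup_{E_{11}} E_{21} \to E_{22})$ is precisely the \emph{total cofiber} of this $2 \times 2$ square. I would then compute the total cofiber iteratively via the pasting lemma for pushouts: taking horizontal cofibers first, one has $\cofib(E_{i1} \to E_{i2}) \simeq A_i \otimes B_3 = E_{i3}$ for $i = 1, 2$, and subsequently $\cofib(E_{13} \to E_{23}) \simeq A_3 \otimes B_3 = E_{33}$. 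An alternative would be to dualize the argument of Lemma~\ref{lem_fib_stable_infty} step by step, replacing fibers/pullbacks by cofibers/pushouts throughout.

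The hardest part should be purely bookkeeping: verifying that the iterated cofiber really coincides with the total cofiber via pasting in the stable setting, and tracking indices carefully. I note that, with the conventions of Lemma~\ref{lem_fib_stable_infty} (in which $A_1, B_1$ play the role of the fibers and $A_3, B_3$ of the cofibers of the given sequences), the natural target of the iterated cofiber is $E_{33}$ rather than $E_{11}$ as written in the statement; this appears to be a typographical slip.
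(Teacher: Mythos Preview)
Your proposal is correct and matches the paper's own proof, which consists of the single line ``Dual to Lemma~\ref{lem_fib_stable_infty}.'' Your observation that the target should be $E_{33}$ rather than $E_{11}$ is also correct: with the conventions in force ($A_1,B_1$ are the fibers, $A_3,B_3$ the cofibers), the iterated cofiber computation you describe lands in $E_{33}$, and this is precisely what is needed in the construction of the middle square in Lemma~\ref{lem_complicated_square}; the $E_{11}$ in the statement is a typo.
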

\begin{proof}
Dual to Lemma \ref{lem_fib_stable_infty}. 
\end{proof}
The following result essentially summarizes all of the homological algebra constructions in \cite{traces}. 
\begin{lemma}\label{lem_complicated_square}
Let $\mathcal{C}^\otimes$ be a symmetric monoidal stable $\infty$-category for which the tensor product preserves finite limits in each variable. Let 
\begin{align*}
A_1 \to A_2 \to A_3 \\
B_1 \to B_2 \to B_3
\end{align*}
be fiber sequences in $\mathcal{C}$. Write $E_{ij} := A_i \otimes B_j$. Then there exists a commutative diagram
\[
\begin{tikzcd}
\dar E_{11} \rar& E_{21} \dar \\
\dar E_{12} \rar&  \dar E_{12} \cup_{E_{11}} E_{21} \rar& E_{22} \dar  \\
\dar E_{13} \rar& \dar E_{13} \oplus E_{31} \rar& E_{23} \times_{E_{33}} E_{32} \dar \rar& E_{23} \dar  \\
0 \rar& E_{31} \rar& E_{32} \rar& E_{33}
\end{tikzcd}
\]
in which all squares are pullback (thus pushout) squares, and all maps are the canonical ones. 
\end{lemma}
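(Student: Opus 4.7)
My plan is to build the diagram incrementally, starting from two anchor squares that are essentially present by definition, and then propagating outwards by pasting arguments. A preliminary observation is crucial: since $\mathcal{C}$ is stable, a commutative square is a pushout if and only if it is a pullback, so I can freely verify whichever is convenient. Moreover, the hypothesis that $\otimes$ preserves finite limits in each variable combined with stability ensures that $\otimes$ also preserves finite colimits (in particular pushouts) in each variable.

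The two anchor squares are: the top-left square with corners $E_{11}, E_{21}, E_{12}, P$ where $P := E_{12}\cup_{E_{11}} E_{21}$ is a pushout by construction of $P$, and the bottom-right square with corners $Q, E_{23}, E_{32}, E_{33}$ where $Q := E_{23}\times_{E_{33}} E_{32}$ is a pullback by construction of $Q$. The small split square with corners $(E_{13}, E_{13}\oplus E_{31}, 0, E_{31})$ is manifestly a pushout. From here I propagate outwards by pasting. To identify the middle-left square $(E_{12}, P, E_{13}, E_{13}\oplus E_{31})$ as a pushout, I stack it below the top-left anchor and check that the outer $3\times 2$ rectangle is a pushout: its span is $E_{13}\leftarrow E_{11}\to E_{21}$, and because $E_{11}\to E_{13}$ is null (a consecutive composition in the cofiber sequence $E_{11}\to E_{12}\to E_{13}$), its pushout is canonically $E_{13}\oplus\cofib(E_{11}\to E_{21}) = E_{13}\oplus E_{31}$. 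The pasting lemma for pushouts then gives the desired square, and the dual pasting argument (stacking the bottom-right anchor) handles the bottom-middle square.

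The central square $(P, E_{22}, E_{13}\oplus E_{31}, Q)$ is the delicate one. Here I plan to invoke Lemma~\ref{lem_fib_stable_infty}, which identifies $\fib(E_{22}\to Q)\simeq E_{11}$, and then to run a parallel computation giving $\fib(P\to E_{13}\oplus E_{31})\simeq E_{11}$ by the same method, this time viewing $E_{11}\to E_{12}\oplus E_{21}\to P$ as the relevant fiber sequence. Comparing the induced maps of fibers will show that the central square has trivial total fiber and is therefore a pullback.

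The hard part will not be any single identification but rather the $\infty$-categorical bookkeeping: each interpolating object and each structure map must be produced as part of a \emph{single coherent diagram} rather than merely recognized up to unspecified equivalence. The cleanest way to arrange this is to realize the whole bicomplex as the value of a functor on a subposet of $[3]\times[3]$ obtained from iterated (co)fibers of the two given sequences, and to use functoriality to transport the canonical pushout/pullback squares. Once this framework is in place, the identifications above upgrade from object-level equivalences to equivalences of squares, which is what the lemma actually requires.
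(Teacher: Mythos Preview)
Your proposal is correct and takes essentially the same approach as the paper: anchor on the defining squares for $P$, $Q$, and the split square, then propagate by pasting, invoking Lemma~\ref{lem_fib_stable_infty} for the step involving $Q$ (the paper phrases your null-map argument for the outer rectangle $(E_{11},E_{21},E_{13},E_{13}\oplus E_{31})$ as a ``coproduct of two pushout squares''). The one minor difference is that for the central square the paper instead shows the composite horizontal rectangle $(E_{12},E_{22},E_{13},Q)$ is cartesian---this is how it reads Lemma~\ref{lem_fib_stable_infty}---and then pastes with the already-established middle-left square, which sidesteps your explicit comparison of vertical fibers (and makes your proposed ``parallel computation'' of $\fib(P\to E_{13}\oplus E_{31})$ unnecessary, since that fiber is already supplied by the middle-left square).
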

\begin{proof}
Taking the coproduct of the squares
\[
\begin{tikzcd} 
\dar E_{11} \rar&\dar E_{21} & 0 \rar\dar& 0\dar \\
0 \rar& E_{31} & E_{13} \rar& E_{31}
\end{tikzcd}
\]
we obtain the pullback (thus pushout)  square
\[
\begin{tikzcd} 
E_{11} \rar \dar & \dar E_{21} \\
E_{13} \rar& E_{13} \oplus E_{31} 
\end{tikzcd}
\]
and thus by functoriality of pushouts a commutative diagram
\[
\begin{tikzcd}
\dar E_{11} \rar& E_{21} \dar \\
\dar E_{12} \rar&  \dar E_{12} \cup_{E_{11}} E_{21}  \\
E_{13} \rar& E_{13} \oplus E_{31} 
\end{tikzcd}
\]
By Lemma \ref{lem_fib_stable_infty} there exists a pullback (thus pushout)  square
\[
\begin{tikzcd}
\dar E_{12} \rar& E_{22} \dar \\
E_{13} \rar& E_{23} \times_{E_{33}} E_{32}
\end{tikzcd}
\]
hence again by functoriality of pushouts we get an induced diagram
\[
\begin{tikzcd}
\dar E_{11} \rar& E_{21} \dar \\
\dar E_{12} \rar&  \dar E_{12} \cup_{E_{11}} E_{21} \rar& E_{22} \dar  \\
E_{13} \rar& E_{13} \oplus E_{31} \rar& E_{23} \times_{E_{33}} E_{32}
\end{tikzcd}
\]
The two bottom-right pullback (thus pushout)  squares of the final diagram are constructed dually to the construction of the two upper-right pullback (thus pushout)  squares, the final square is then obvious. 
\end{proof}
\begin{lemma}\label{lem_homotopy_pushout_123}
Let $\mathcal{C}^\otimes$ be a symmetric monoidal stable $\infty$-category for which the tensor product preserves finite limits in each variable. Let  
\begin{align*}
A_1 \to A_2 \to A_3 \\
B_1 \to B_2 \to B_3
\end{align*}
be fiber sequences in $\mathcal{C}$. Write $E_{ij} := A_i \otimes B_j$. Suppose $T \in \mathcal{C}$ is any object, and we are given a diagram
\[
\begin{tikzcd}
\dar E_{11} \rar& E_{12} \dar \\
\dar E_{21} \rar& E_{22} \dar \\
E_{31} \rar& T
\end{tikzcd}
\]
in $\mathcal{C}$. Then there exists an extension to a commutative diagram
\[
\begin{tikzcd}
E_{21} \dar \rar & E_{22}\dar \arrow[bend left = 30]{ddr}\\
E_{31} \rar \arrow[bend right = 30]{drr} & E_{31} \times_{E_{33}} E_{32} \arrow{dr} \\
& & T
\end{tikzcd}
\]
in $\mathcal{C}$. 
\end{lemma}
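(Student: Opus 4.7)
My approach is to recognize the corner object in the target diagram as a pushout of the lower portion of the hypothesis diagram, and then to invoke the universal property of that pushout. (I read the subscript ``$E_{31}$'' in the target as a typo for ``$E_{23}$'', since $E_{31}\times_{E_{33}}E_{32}$ does not receive a natural map from $E_{22}$; the strategy below proceeds under this corrected reading.)

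First I would observe that the bottom commutative square of the hypothesis diagram,
\[
\begin{tikzcd}
E_{21} \rar \dar & E_{22} \dar \\
E_{31} \rar & T,
\end{tikzcd}
\]
provides, by the universal property of the pushout, a canonical map $\varphi \colon E_{22} \cup_{E_{21}} E_{31} \to T$. Any of the data involving $E_{11}$ and $E_{12}$ in the hypothesis is used only to ensure that the relevant compatibilities are coherent, but is not directly needed for the construction of $\varphi$.

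Second, I would establish the canonical equivalence $E_{22}\cup_{E_{21}}E_{31}\xrightarrow{\sim} E_{23}\times_{E_{33}}E_{32}$. Tensoring the fiber sequence $A_1\to A_2\to A_3$ with $B_1$ gives a cofiber sequence $E_{11}\to E_{21}\to E_{31}$, so $E_{31}\simeq\cofib(E_{11}\to E_{21})$ and consequently
\[
E_{22}\cup_{E_{21}}E_{31}\simeq\cofib(E_{11}\to E_{22}).
\]
An application of the octahedral axiom to the factorization $E_{11}\to E_{12}\to E_{22}$ fits this cofiber into a cofiber sequence $E_{13}\to\cofib(E_{11}\to E_{22})\to E_{32}$ whose boundary map $E_{32}\to E_{13}[1]$ is the natural combined boundary coming from the tensor product of the two original fiber sequences. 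By Lemma~\ref{lem_fib_stable_infty}, the pullback $E_{23}\times_{E_{33}}E_{32}$ fits into the same fiber sequence $E_{13}\to E_{23}\times_{E_{33}}E_{32}\to E_{32}$ with the same boundary, whence the equivalence. Alternatively, this identification can be read directly off the composite of the two pushout squares in the top-right corner of the diagram of Lemma~\ref{lem_complicated_square}.

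Combining the two steps yields the desired map $E_{23}\times_{E_{33}}E_{32}\to T$, and the commutativity of the target diagram is automatic from the universal property of the pushout together with the naturality of the canonical maps $E_{22}\to E_{23}\times_{E_{33}}E_{32}$ and $E_{31}\to E_{23}\times_{E_{33}}E_{32}$ into the pushout. The principal obstacle lies in verifying the coherence of the boundary-map comparison in the second step, but this is precisely the compatibility already packaged in Lemma~\ref{lem_complicated_square}, which I would invoke rather than reprove by hand.
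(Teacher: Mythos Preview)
Your proposal is correct and follows essentially the same route as the paper: both arguments recognize that the square with corners $E_{21}$, $E_{22}$, $E_{31}$, and the pullback object is a pushout (via pasting with the cofiber square $E_{11}\to E_{21}\to E_{31}$ and invoking Lemma~\ref{lem_fib_stable_infty}), then use the universal property with the given maps to $T$. You are also right that the subscript in the statement should read $E_{23}\times_{E_{33}}E_{32}$ rather than $E_{31}\times_{E_{33}}E_{32}$; the paper's own proof cites Lemma~\ref{lem_fib_stable_infty}, which produces exactly $E_{23}\times_{E_{33}}E_{32}$, confirming the typo.
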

\begin{proof}
Consider the diagram
\[
\begin{tikzcd}
\dar E_{11} \rar& E_{21} \dar \rar & E_{22}\dar \arrow[bend left = 90]{dd}\\
0 \rar& E_{31} \arrow{dr} \rar & E_{31} \times_{E_{33}} E_{32} \\
& & T
\end{tikzcd}
\]
Clearly the left square is a pullback (thus pushout)  square, and the rectangle is a pullback (thus pushout)  square by Lemma \ref{lem_fib_stable_infty}. Thus the right square is a pushout square, which yields the result.
\end{proof}
\subsection{Notes on trace maps}
\begin{definition}[Dualizable object]
Let $\mathcal{C}^\otimes$ be a symmetric monoidal $\infty$-category with unit object $\catO$, and let $\mathcal{E} \in \mathcal{C}$. We say that $\mathcal{E}$ is \emph{dualizable} if there exists an object $\mathcal{E}^\vee \in \mathcal{C}$ and maps
\begin{align*}
\ev \colon \mathcal{E} \otimes \mathcal{E}^\vee  \to \catO \\
\coev \colon \catO \to \mathcal{E}^\vee \otimes \mathcal{E}
\end{align*}
such that the compositions
\begin{align*}
\mathcal{E}^\vee \simeq \catO \otimes \mathcal{E}^\vee  \xrightarrow{\coev \otimes \id} \mathcal{E}^\vee \otimes \mathcal{E} \otimes \mathcal{E}^\vee \xrightarrow{\id \otimes \ev} \mathcal{E}^\vee \otimes \catO \simeq \mathcal{E}^\vee  \\
\mathcal{E} \simeq \mathcal{E} \otimes \catO \xrightarrow{\id \otimes \coev} \mathcal{E} \otimes \mathcal{E}^\vee \otimes \mathcal{E} \xrightarrow{\ev \otimes \id} \catO \otimes \mathcal{E}\simeq \mathcal{E} 
\end{align*}
are homotopic to the identity. A dualizable object is said to be \emph{invertible} if the evaluation and coevaulation maps are isomorphisms.
\end{definition}
For any dualizable object $\mathcal{E}$ and any map $\mathcal{E} \otimes M \to \mathcal{E} \otimes N$ we may consider its trace $M \to N$, see Definition \ref{def_trace_map}. We give some general properties of this trace map.
\begin{lemma}\label{lem_atiyah_threea}
Let $\mathcal{C}^\otimes$ be a symmetric monoidal $\infty$-category with unit object $\catO$, and let $\mathcal{L}$ be an invertible object of $\mathcal{C}$. For any three objects $M, N, K \in \mathcal{C}$ and any two maps
\begin{align*}
\alpha \colon \mathcal{L} \otimes_{\catO} M \to \mathcal{L} \otimes_{\catO} N \\
\beta \colon \mathcal{L} \otimes_{\catO} N \to \mathcal{L} \otimes_{\catO} K 
\end{align*}
we have $\tr(\beta \circ \alpha) = \tr(\beta) \circ \tr(\alpha)$. 
\end{lemma}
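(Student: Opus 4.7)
The plan is to prove multiplicativity of the trace by a direct computation, reducing the lemma to the fact that the categorical ``dimension'' of an invertible object equals the identity of the unit. First I would unpack both sides of the desired equality using Definition \ref{def_trace_map}. Writing out $\tr(\beta) \circ \tr(\alpha)$ and $\tr(\beta \circ \alpha)$ as explicit compositions involving $\coev$, $\ev$, and the braiding $\sigma$, one sees they agree up to the presence, in $\tr(\beta) \circ \tr(\alpha)$, of an extra ``middle piece''
\[
u_N \colon \mathcal{L}^\vee \otimes \mathcal{L} \otimes N \xrightarrow{\sigma \otimes N} \mathcal{L} \otimes \mathcal{L}^\vee \otimes N \xrightarrow{\ev \otimes N} N \xrightarrow{\coev \otimes N} \mathcal{L}^\vee \otimes \mathcal{L} \otimes N
\]
sandwiched between the factors $\mathcal{L}^\vee \otimes \alpha$ and $\mathcal{L}^\vee \otimes \beta$. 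The lemma thus reduces to showing $u_N$ is homotopic to the identity.

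Now $u_N$ factors as $f \otimes N$ for $f := \coev \circ \ev \circ \sigma_{\mathcal{L}^\vee, \mathcal{L}} \colon \mathcal{L}^\vee \otimes \mathcal{L} \to \mathcal{L}^\vee \otimes \mathcal{L}$. Since $\coev \colon \catO \to \mathcal{L}^\vee \otimes \mathcal{L}$ is an isomorphism (as $\mathcal{L}$ is invertible), $f$ corresponds under conjugation by $\coev$ to the endomorphism $\coev^{-1} \circ f \circ \coev = \ev \circ \sigma \circ \coev$ of $\catO$, which is by definition the categorical dimension $\dim(\mathcal{L})$. Thus it suffices to show $\dim(\mathcal{L}) \simeq \id_\catO$.

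The main obstacle is this final dimension computation; it is really where the invertibility of $\mathcal{L}$ is used in an essential way. For the invertible objects to which the lemma is applied in the sequel --- namely line bundles in $\derD(X)$ for $X$ a scheme --- one may verify $\dim(\mathcal{L}) = \id_\catO$ by reducing to the local case $\mathcal{L} \simeq \catO_X$, where, after canonical identifications, all the structure maps $\ev$, $\coev$, and $\sigma$ become identity morphisms and the triangle identities make the claim immediate. More abstractly, in the presence of invertibility of $\ev$ and $\coev$, one may identify $\ev \circ \sigma_{\mathcal{L}^\vee, \mathcal{L}}$ with $\coev^{-1}$, so that $\ev \circ \sigma \circ \coev$ is the identity.
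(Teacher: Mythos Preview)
Your approach is the paper's approach: both reduce to showing that the middle piece $(\coev \otimes N)\circ(\ev \otimes N)\circ(\sigma \otimes N)$ is the identity of $\mathcal{L}^\vee\otimes\mathcal{L}\otimes N$. You are in fact more careful than the paper in tracking the braiding $\sigma$ (the paper's displayed computation silently drops it) and in recognizing that the claim is exactly $\dim(\mathcal{L})=\id_\catO$.

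However, your final abstract justification --- that invertibility of $\ev$ and $\coev$ alone forces $\ev\circ\sigma_{\mathcal{L}^\vee,\mathcal{L}}=\coev^{-1}$ --- is false. Take $\mathcal{L}$ to be the odd line in the symmetric monoidal category of super vector spaces over a field of characteristic $\neq 2$: both $\ev$ and $\coev$ are isomorphisms, yet $\dim(\mathcal{L})=\ev\circ\sigma\circ\coev=-1$. This is even a counterexample to the lemma as stated: with $M=N=K=\catO$ and $\alpha=\beta=\id_{\mathcal{L}}$ one gets $\tr(\beta\circ\alpha)=-1$ while $\tr(\beta)\circ\tr(\alpha)=1$. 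So neither your abstract argument nor the paper's proof (which, by suppressing $\sigma$, makes the same implicit leap) actually establishes the lemma in the stated generality.

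Your local argument for line bundles in $\derD(X)$ is correct, and this is the only case the paper ever uses (in the proof of Proposition~\ref{prop_compare_chern}, via Corollary~\ref{corol_atiyah_threeb}). So for the paper's purposes your proof is fine; just delete the last sentence claiming the abstract identification $\ev\circ\sigma=\coev^{-1}$, or add the hypothesis $\dim(\mathcal{L})=\id_\catO$ to the lemma.
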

\begin{proof}
Since $\mathcal{L}$ is invertible, the composition
\[
\mathcal{L}^\vee \otimes \mathcal{L} \xrightarrow{\mathrm{ev}} \catO \xrightarrow{\mathrm{coev}} \mathcal{L}^\vee \otimes \mathcal{L}
\]
is the identity map. Thus
\begin{align*}
\tr&(\beta) \circ \tr(\alpha) \\
&= (\ev \otimes K) \circ (\mathcal{L}^\vee \otimes \beta) \circ (\coev \otimes N) \circ (\ev \otimes N) \circ (\mathcal{L}^\vee \otimes \alpha) \circ (\coev \otimes M) \\
 &= (\ev \otimes K) \circ (\mathcal{L}^\vee \otimes \beta) \circ (\mathcal{L}^\vee \otimes \alpha) \circ (\coev \otimes M) \\
  &= \tr(\beta \circ \alpha)
\end{align*}
as required.
\end{proof}
\begin{prop}[Additivity of traces]\label{prop_add_traces}
Let $\mathcal{C}^\otimes$ be a symmetric monoidal stable $\infty$-category. Suppose 
\[
\begin{tikzcd}
X \rar& Y \rar& Z \rar{+1} &  \
\end{tikzcd}
\]
is a fiber sequence in $\mathcal{C}$ of dualizable objects. Given a commutative diagram
\[
\begin{tikzcd}
M \otimes X \rar \dar{f} & \dar{g} M \otimes Y \rar& M \otimes Z \rar{+1}\dar{h} & \ \\
N \otimes X \rar& N \otimes Y \rar& N \otimes Z \rar{+1} &\ 
\end{tikzcd}
\]
in which the lower and upper fiber sequence are obtained by tensoring the original fiber sequence, one has
\[
\tr_X(f) + \tr_Z(h) = \tr_Y(g)
\]
in $\pi_0 \Hom_\mathcal{C}(M, N)$. 
\end{prop}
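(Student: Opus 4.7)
The plan is to follow the classical strategy for additivity of traces, adapted to the stable $\infty$-categorical setting by exploiting the pushout/pullback diagram of Lemma \ref{lem_complicated_square}. Since duality on dualizable objects is a contravariant exact functor, the fiber sequence $X \xrightarrow{i} Y \xrightarrow{p} Z$ induces a fiber sequence $Z^\vee \xrightarrow{p^\vee} Y^\vee \xrightarrow{i^\vee} X^\vee$. First I would apply Lemma \ref{lem_complicated_square} with $(A_1, A_2, A_3) = (X, Y, Z)$ and $(B_1, B_2, B_3) = (Z^\vee, Y^\vee, X^\vee)$, placing $E_{22} = Y \otimes Y^\vee$ in the middle. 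The resulting $3\times 3$ diagram contains, as part of its structure, a canonical map from a pushout/fiber piece of $Y \otimes Y^\vee$ onto $E_{13} \oplus E_{31} = (X \otimes X^\vee) \oplus (Z \otimes Z^\vee)$.

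The heart of the argument is then to show that the coevaluation $\coev_Y \colon \catO \to Y^\vee \otimes Y$ decomposes through this diagram: the composite of $\coev_Y$ with the appropriate projection into $E_{13} \oplus E_{31}$ equals (after the symmetry $Y \otimes Y^\vee \simeq Y^\vee \otimes Y$) the pair $(\coev_X, \coev_Z)$. This is the substantive input of the proof and expresses the compatibility of duality with the fiber sequence structure. A dual analysis applies to the evaluation maps $\ev_X, \ev_Y, \ev_Z$. Given the morphism of fiber sequences in the hypothesis, the central map $g$ then respects the filtration on $M \otimes Y$, and in particular $g$ sends the copy of $M \otimes X$ inside $M \otimes Y$ to the copy of $N \otimes X$ inside $N \otimes Y$ via $f$, and induces $h$ on the quotient $M \otimes Z \to N \otimes Z$.

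Combining these ingredients, I would expand $\tr_Y(g)$ via its defining composite involving $\coev_Y$, $g$, and $\ev_Y$. Substituting the decompositions of $\coev_Y$ and $\ev_Y$ from the previous step and using commutativity of the hypothesized diagram, one sees that $\tr_Y(g)$ equals the sum (in $\pi_0 \Hom_\mathcal{C}(M, N)$, which is an abelian group by stability) of a term built from $\coev_X$, $f$, $\ev_X$ and a term built from $\coev_Z$, $h$, $\ev_Z$; these are exactly $\tr_X(f)$ and $\tr_Z(h)$ by definition. The main obstacle is the decomposition step for $\coev_Y$: in a triangulated category this requires choosing lifts and tracking signs, but in the stable $\infty$-categorical context the universal property of the pushouts in Lemma \ref{lem_complicated_square}, together with the characterization of $\coev_Y$ via the triangle identities (and possibly an application of Lemma \ref{lem_homotopy_pushout_123} to extend the relevant compatibility to the cocone under $E_{22}$), should make the identification canonical, avoiding any non-functorial choices.
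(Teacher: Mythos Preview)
Your proposal is essentially correct and follows the same route as the paper, which explicitly invokes Lemma \ref{lem_homotopy_pushout_123} (and its dual) to manufacture the needed factorizations of the (co)evaluations through the auxiliary objects $V = E_{12} \cup_{E_{11}} E_{21}$ and $W = E_{23} \times_{E_{33}} E_{32}$. One small clarification: there is no direct projection $Y \otimes Y^\vee \to (X \otimes X^\vee) \oplus (Z \otimes Z^\vee)$; rather, both receive maps from $V$ (and dually both map into $W$), and the substantive step is that $\coev_Y$ lifts along $V \to Y \otimes Y^\vee$ compatibly with $(\coev_X,\coev_Z)$ --- this lift is exactly what the dual of Lemma \ref{lem_homotopy_pushout_123} supplies, so your ``possibly'' is in fact a ``necessarily''.
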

\begin{proof}
A modern formulation of \cite{traces}. Write
\begin{align*}
V &:= (X \otimes Y^\vee) \cup_{(X \otimes Z^\vee)} Y \otimes Z^\vee \\
W &:= (Z \otimes Y^\vee) \times_{(Z \otimes X^\vee)} X \otimes Z^\vee
\end{align*}
Some calculations with adjoints yields a commutative diagram
\[
\begin{tikzcd}
\dar X \otimes Z^\vee \rar& \dar X \otimes Y^\vee \rar& X \otimes X^\vee \arrow{dd}{\mathrm{ev}} \\
\dar \rar Y \otimes Z^\vee & Y \otimes Y^\vee \arrow{dr}{\mathrm{ev}} \\
Z \otimes Z^\vee \arrow{rr}{\mathrm{ev}} & & \mathbbm{1}
\end{tikzcd}
\]
so by applying Lemma \ref{lem_homotopy_pushout_123} and its symmetric twin we obtain a commutative diagram
\[
\begin{tikzcd}
Y \otimes Y^\vee \arrow{dr} & X \otimes X^\vee \arrow[bend left = 30]{ddr}{\mathrm{ev}} \dar \\
Z \otimes Z^\vee \arrow[bend right = 30]{drr}{\mathrm{ev}} \rar&  W \arrow{dr} \\
& & \mathbbm{1}
\end{tikzcd}
\]
such that the composition $Y \otimes Y^\vee \to W \to \mathbbm{1}$ is homotopic to the evaluation map. Tensoring this diagram with $N$ and the dual of this diagram (involving coevaluations) with $M$,  using the construction of the middle square from Lemma \ref{lem_complicated_square} and functoriality of pushouts (twice), we obtain a commutative diagram
\[
\begin{tikzcd}
& M \arrow{dl}[swap]{(M \otimes \mathrm{coev}, M \otimes \mathrm{coev})} \dar \arrow{dr}{M \otimes \mathrm{coev}} & \\
(M \otimes X \otimes X^\vee) \oplus (M \otimes  Z \otimes Z^\vee) \arrow{dd}{(f \otimes X^\vee, h \otimes Z^\vee)} \arrow{dr} & \lar M \otimes V \rar &  M \otimes (Y \otimes Y^\vee) \arrow{dd}{g \otimes Y^\vee} \arrow{dl} \\
 & M \otimes W \dar &   \\
\arrow{dr}[swap]{(N \otimes \mathrm{ev}, N \otimes \mathrm{ev})} (N \otimes X \otimes X^\vee) \oplus (M \otimes  Z \otimes Z^\vee) \rar & \dar N \otimes W & N \otimes (Y \otimes Y^\vee) \arrow{dl}{N \otimes \mathrm{ev}} \lar \\
& N & 
\end{tikzcd}
\]
The result follows by comparing the outer compositions. 
\end{proof}

\emergencystretch 1em
\printbibliography

@preamble{ " \newcommand{\noop}[1]{} " }

@article{addingtonthomas14,
    author  = {Nicolas Addington and Richard Thomas},
	shorthand = {Addington and Thomas},
    title   = {Hodge {T}heory and Derived Categories of Cubic Fourfolds},
    year    = {2014},
    journal = {Duke Mathematical Journal},
    volume  = {163},
    number  = {10},
    pages   = {1885--1927}
}

@article{bzfrna,
	author  = {David Ben--Zvi and John Francis and David Nadler},
	shorthand = {Ben--Zvi and Francis and Nadler},
	title   = {Integral Transforms and Drinfeld Centers in Derived Algebraic Geometry},
	year    = {2010},
	journal = {Journal of the American Mathematical Society},
	volume  = {23},
	number  = {4},
	pages   = {909--966}
}

@article{integralpadic,
    author  = {Bhargav Bhatt and Matthew Morrow and Peter Scholze},
	shorthand = {Bhatt and Morrow and Scholze},
    title   = {Topological Hochschild Homology and Integral $p$--adic Hodge Theory},
    year    = {2019},
    journal = {{P}ublications {M}athématiques de l'{IHÉS}},
    volume  = {129},
    pages   = {199--310}
}

@article{bloch,
    author  = {André Bloch},
	shorthand = {Bloch},
    title   = {Semi--Regularity and de Rham Cohomology},
    year    = {1972},
    journal = {Inventiones Mathematicae},
    volume  = {17},
    pages   = {51--66}
}

@article{buchflen,
    author  = {Ragnar--Olaf Buchweitz and Hubert Flenner},
	shorthand = {Buchweitz and Flenner 2003},
    title   = {A Semiregularity Map for Modules and Applications to Deformations},
    year    = {2003},
    journal = {Compositio Mathematica},
    volume  = {137},
    number  = {2},
    pages   = {135--210},
}

@article{buchflen2,
    author  = {Ragnar--Olaf Buchweitz and Hubert Flenner},
	shorthand = {Buchweitz and Flenner 2006},
    title   = {The global decomposition theorem for Hochschild (co--)homology of singular spaces via the Atiyah--Chern character},
	year    = {2006}, 
    journal = {Advances in Mathematics},
    volume  = {217},
    pages   = {243--281}
}

@article{cs19,
    author  = {Kęstutis Česnavičius and Peter Scholze},
	shorthand = {Česnavičius and Scholze},
    title   = {Purity for flat cohomology},
    journal = {Annals of Mathematics},
    year    = {2024},
	volume  = {199},
	number  = {1},
	pages   = {51--180}
}

@article{deligne68,
    author  = {Pierre Deligne},
	shorthand = {Deligne 1968},
    title   = {{T}héorème de {L}efschetz et critères de dégénérescence de suites spectrales},
    year    = {1968},
    journal = {{P}ublications {M}athématiques de l'{IHÉS}},
    volume  = {35},
    pages   = {107--126}
}

@article{grothendieck,
    author  = {Alexander Grothendieck},
    title   = {On the de Rham Cohomology of Algebraic Varieties},
    journal = {{P}ublications {M}athématiques de l'{IHÉS}},
    volume  = {29},
    pages   = {95--103},
    year    = {1966}
}

@article{harthshornedr,
    author  = {Robin Hartshorne},
	shorthand = {Hartshorne},
    title   = {On the de Rham Cohomology of Algebraic Varieties},
    journal = {{P}ublications {M}athématiques de l'{IHÉS}},
    volume  = {45},
    pages   = {5--99},
    year    = {1975},
}

@article{huybrechtsmacristellari,
  author    = {Daniel Huybrechts and Emanuele Macr{\`i} and Paolo Stellari},
  shorthand = {Huybrechts and Macr{\`i} and Stellari},
  title     = {Derived equivalences of K3 surfaces and orientation},
  year      = {2007},
  journal   = {Duke Mathematical Journal},
  volume    = {149},
  pages     = {461--507}
}

@article{huto,
    author  = {Daniel Huybrechts and Richard Thomas},
	shorthand = {Huybrechts and Thomas},
    title   = {Deformation--Obstruction theory for complexes via Atiyah and Kodaira--Spencer classes},
    year    = {2010},
    journal = {Mathematische annalen},
    volume  = {346},
    number  = {3},
    pages   = {545--569}
}

@article{simplicial-res,
    author  = {Srikanth Iyengar},
	shorthand = {Iyengar},
    title   = {Andre--Quillen homology of commutative algebras},
    journal = {Interactions between Homotopy Theory and Algebra, Contemp. Math},
    year    = {2006},
    volume  = {436},
    pages   = {203--234}
}

@article{lieblich05,
    author  = {Max Lieblich},
	shorthand = {Lieblich},
    title   = {Moduli of complexes on a proper morphism},
    journal = {Journal of Algebraic Geometry},
    year    = {2005},
    volume  = {15},
    pages   = {175--206}
}

@article{liebols,
    author  = {Max Lieblich and Martin Olsson},
	shorthand = {Lieblich and Olsson},
    title   = {Deformation theory of perfect complexes and traces},
    year    = {2023},
    journal = {Annals of K--Theory},
	volume  = {7},
	number  = {4},
	pages   = {651--694}
}

@article{liebols2,
    author  = {Max Lieblich and Martin Olsson},
	shorthand = {Lieblich and Olsson},
    title   = {Fourier--Mukai partners of K3 surfaces in positive characteristic},
    year    = {2011},
    journal = {Annales Scientifiques de l'\'Ecole Normale Supérieure},
	volume  = {48},
	number  = {5},
	pages   = {1001--1033}
}

@article{traces,
	author  = {Jon Peter May},
	shorthand = {May},
	title   = {The Additivity of Traces in Triangulated Categories},
	year    = {2001},
	journal = {Advances in Mathematics},
	volume  = {163},
	number  = {1},
	pages   = {34--73}
}

@article{rumasa,
	shorthand = {Martín and Ruipérez and de Salas},
    author  = {Ana Martín and Daniel Ruipérez and Fernando de Salas},
    title   = {{R}elative integral functors for singular fibrations and singular partners},
    year    = {2007},
    journal = {Journal of the European Mathematical Society},
    volume  = {11},
    pages   = {597--625}
}

@article{toda, 
	author  = {Yukinobu Toda},
	shorthand = {Toda},
	title   = {Deformations and Fourier--Mukai transforms},
	year    = {2009},
	journal = {Journal of Differential Geometry},
	volume  = {81},
	pages   = {197--224}
}

@article{totaro,
	author  = {Burt Totaro},
	shorthand = {Totaro},
	title   = {Hodge Theory of Classifying Stacks},
	year    = {2018},
	journal = {Duke Mathematical Journal},
	volume  = {167},
	number  = {8},
	pages   = {1573--1621}
}

@article{hkr,
    author  = {Amnon Yekutieli},
	shorthand = {Yekutieli},
    title   = {The Continuous Hochschild Cochain Complex of a Scheme},
    year    = {2001},
    journal = {Canadian Journal of Mathematics},
    volume  = {54},
    pages   = {1319--1337}
}

@book{alma,
    author    = {Francis Borceux},
	shorthand = {Borceux},
    title     = {Handbook of categorical algebra. 1, Basic category theory},
	year      = {1994},
    publisher = {Cambridge University Press}
}

@book{berthelot-yellow-book,
    author    = {Pierre Berthelot and Arthur Ogus},
	shorthand = {Berthelot and Ogus},
    title     = {Notes on Crystalline Cohomology},
    year      = {1978},
    publisher = {Princeton University Press}
}

@book{huybrechts,
	author    = {Daniel Huybrechts},
	shorthand = {Huybrechts},
	title     = {Fourier--Mukai transforms in Algebraic Geoemtry},
    year      = {2006},
	publisher = {Oxford Science Publications}
}

@book{huyleh, 
	author    = {Daniel Huybrechts and Manfred Lehn}, 
	shorthand = {Huybrechts and Lehn},
	title     = {The Geometry of Moduli Spaces of Sheaves}, 
	year      = {2010}, 
	publisher = {Cambridge University Press}
}

@book{ill71,
    author    = {Luc Illusie},
	shorthand = {Illusie 1971--1},
    title     = {Complexe Cotangent et D\'eformations I},
    year      = {1971},
	publisher = {Springer Berlin}
}

@book{ill71two,
    author    = {Luc Illusie},
	shorthand = {Illusie 1971--2},
	title     = {Complexe Cotangent et D\'eformations II},
	year      = {1971},
	publisher = {Springer Berlin}
}

@book{htt,
	author    = {Jacob Lurie},
	shorthand = {Lurie 2009},
	title     = {{H}igher {T}opos {T}heory},
	publisher = {Princeton University Press},
	year      = {2009}
}

@book{ha,
	author    = {Jacob Lurie},
	shorthand = {Lurie 2017},
	title     = {{H}igher {A}lgebra},
	year      = {2017},
	publisher = {Harvard University}
}

@book{quillen-rings,
    author    = {Daniel G. Quillen},
	shorthand = {Quillen 1968},
    title     = {On the homology of commutative rings},
    year      = {1968},
}

@book{sag,
	author    = {Jacob Lurie},
	shorthand = {Lurie 2018},
	title     = {Spectral Algebraic Geometry},
	year      = {2018},
    pubstate  = {\bibstring{prepublished}}
}

@article{bhatt-cddrc,
    author   = {Bhargav Bhatt},
	shorthand = {Bhatt 2012--1},
    title    = {Completions and derived de Rham cohomology},
    journal  = {arXiv},
    year     = {2012},
    url      = {https://www.arxiv.org/abs/1207.6193},
    pages    = {1207.6193},
    pubstate = {\bibstring{prepublished}}
}

@article{bhatt-padic,
    author   = {Bhargav Bhatt},
	shorthand = {Bhatt 2012--2},
    title    = {$p$--adic derived de Rham cohomology}, 
    journal  = {arXiv},
    year     = {2012},
    url      = {https://www.arxiv.org/abs/1204.6560},
    pages    = {1204.6560},
    pubstate = {\bibstring{prepublished}}
}

@article{bhatt-lurie,
    author   = {Bhargav Bhatt and Jacob Lurie},
	shorthand = {Bhatt and Lurie},
    title    = {Absolute prismatic cohomology},
    journal  = {arXiv},
    year     = {2022},
    url      = {https://www.arxiv.org/abs/2201.06120},
    pages    = {2201.06120},
    pubstate = {\bibstring{prepublished}}
}

@article{caldararu1,
    author   = {Andrei Căldăraru},
	shorthand = {Căldăraru 2003--1},
    title    = {The Mukai pairing, I: The Hochschild Structure},
    journal  = {arXiv},
    year     = {2003},
	url      = {https://www.arxiv.org/abs/0308079},
    pages    = {0308079},
    pubstate = {\bibstring{prepublished}}
}

@article{caldararu1b,
    author     = {Andrei Căldăraru and Simon Willerton},
	shorthand = {Căldăraru and Willerton},
    title      = {The Mukai pairing, I: a categorical approach},
    journal  = {arXiv},
    year     = {2007},
	url      = {https://www.arxiv.org/abs/0707.2052},
    pages    = {0707.2052},
    pubstate = {\bibstring{prepublished}}
}

@article{caldararu2,
    author   = {Andrei Căldăraru},
	shorthand = {Căldăraru 2003--2},
    title    = {The Mukai pairing, II: the Hochschild--Kostant--Rosenberg isomorphism},
    journal  = {arXiv},
    year     = {2003},
	url      = {https://www.arxiv.org/abs/0308080},
    pages    = {0308080},
    pubstate = {\bibstring{prepublished}}
}

@article{pridham,
    author   = {John Pridham},
	shorthand = {Pridham 2012},
    title    = {Semiregularity as a consequence of Goodwillie's theorem},
	journal  = {arXiv},
    year     = {2012},
	url      = {https://www.arxiv.org/abs/1208.3111},
    pages    = {1208.3111},
    pubstate = {\bibstring{prepublished}}
}

@misc{stacks-project,
    author       = {The {Stacks Project Authors}},
	shorthand    = {Stacks},
    title        = {\textit{Stacks Project}},
    howpublished = {\url{https://stacks.math.columbia.edu}},
    year         = {2019}
}

@misc{bhattgauges,
    author = {Bhargav Bhatt},
	shorthand = {Bhatt 2022},
    title  = {Notes on Prismatic $F$--gauges},
    year   = {2022},
    note   = {Lecture notes},
    url    = {https://www.math.ias.edu/~bhatt/teaching/mat549f22/lectures.pdf}
}

@misc{khanstacks,
    author    = {Adeel Khan},
	shorthand = {Khan},
    title     = {A Modern Introduction to Algebraic Stacks},
    year      = {2022},
	url       = {https://www.preschema.com/lecture-notes/2022-stacks/stacksncts.pdf},
    publisher = {Faculdade de Engenharia da Universidade do Porto}
}

@phdthesis{thesislurie,
	author = {Jacob Lurie},
	shorthand = {Lurie 2004},
	title  = {Derived algebraic geometry},
	year   = {2004},
	school = {Massachusetts Institute of Technology}
}

@phdthesis{mao,
    shorthand = {Mao 2022},
	author = {Zhouhang Mao},
	title  = {{R}evisiting {D}erived {C}rystalline {C}ohomology},
	year   = {2022},
	school = {Universit\'e Paris--Saclay}
}

@phdthesis{rienks,
    shorthand = {Rienks 2024},
	author = {Wouter Rienks},
	title  = {{D}eforming {F}ourier--{M}ukai Transforms},
	year   = {2024},
	school = {Universiteit van Amsterdam}
}

@misc{mohan,
    title   = {K\"ahler differentials on an Artinian local ring},
    author  = {Mohan},
    note    = {URL:https://mathoverflow.net/q/381583 (version: 2021-01-18)},
    url     = {https://mathoverflow.net/q/381583},
	year    = {2021}
}
\end{document}